\documentclass[11pt,final]{amsart}

\usepackage{amsmath,amssymb,enumerate}

\usepackage[T1]{fontenc}
\usepackage[all,cmtip]{xy}

\usepackage{eurosym}
\usepackage[german,english]{babel}
\usepackage{amstext}
\usepackage{amsmath}
\usepackage{amsfonts}
\usepackage{latexsym}
\usepackage{ifthen}
\usepackage{mathrsfs}

\usepackage{tikz}
\usetikzlibrary{arrows.meta,calc}

\usepackage{xcolor}

\usepackage{hyperref}
\hypersetup{
	colorlinks=true,
	linkcolor=red,
	citecolor=blue}

\xyoption{all}
\newcommand{\Ysing}{{S}}
\newcommand{\ev}{{\operatorname{ev}}}

\newcommand{\Gr}{{\operatorname{Gr}}}
\newcommand{\mer}{{\operatorname{mer}}}
\newcommand{\red}{{\operatorname{red}}}

\newcommand{\std}{{\operatorname{std}}}

\newcommand{\vol}{{\operatorname{vol}}}
\newcommand{\res}{{\rm res}}

\newcommand{\pr}{{\rm pr}}

\newcommand{\GL}{{\rm GL}}

\newcommand{\tensor}{{ \,\otimes\, }}
\newcommand{\gp}{{\mathrm{gp}}}

\newcommand{\tOm}{{\widetilde{\Omega}}}
\newcommand{\ratl}{{{\dashrightarrow}}}
\newcommand{\into}{{{\hookrightarrow}}}
\newcommand{\isom}{{{\, \cong \,}}}

\newcommand{\Coh}{{\rm Coh}}
\newcommand{\reg}{{\rm reg}}

\newcommand{\Spec}{{\rm Spec}}
\newcommand{\Art}{{\rm Art}}
\newcommand{\Set}{{\rm Set}}
\newcommand{\ft}{{\rm ft}}
\newcommand{\lt}{{\rm lt}}

\newcommand{\id}{{\rm id}}

\newcommand{\E}{{\mathcal E}}

\newcommand{\rk}{{\rm rk}}

\newcommand{\sK}{{\mathcal K}}

\newcommand{\sHom}{{\sH om}}

\DeclareMathOperator{\nd}{nd}
\DeclareMathOperator{\mult}{mult}
\DeclareMathOperator{\Supp}{Supp}

\theoremstyle{plain}

\newtheorem{theorem}{Theorem}[section]
\newtheorem{proposition}[theorem]{Proposition}
\newtheorem{lemma}[theorem]{Lemma}
\newtheorem{corollary}[theorem]{Corollary}

\theoremstyle{definition}

\newtheorem{definition}[theorem]{Definition}
\newtheorem{construction}[theorem]{Construction}
\newtheorem{remark}[theorem]{Remark}
\newtheorem{remarks}[theorem]{Remarks}
\newtheorem{example}[theorem]{Example}
\newtheorem{notation}[theorem]{Notation}
\newtheorem{setup}[theorem]{Setup}

\theoremstyle{remark}

\newenvironment{remark*}{{\bf Remark.}}{}
\newenvironment{remarks*}{{\bf Remarks.}}{}
\newenvironment{example*}{{\bf Example.}}{}
\newenvironment{assumption*}{{\bf Assumption.}}{}

\newcommand{\R}{\ensuremath{\mathbb{R}}}
\newcommand{\Q}{\ensuremath{\mathbb{Q}}}
\newcommand{\Z}{\ensuremath{\mathbb{Z}}}
\newcommand{\C}{\ensuremath{\mathbb{C}}}

\newcommand{\N}{\ensuremath{\mathbb{N}}}
\newcommand{\PP}{\ensuremath{\mathbb{P}}}

\newcommand{\merom}[3]{\ensuremath{#1\colon #2 \dashrightarrow #3}}

\newcommand{\holom}[3]{\ensuremath{#1\colon #2  \rightarrow #3}}
\newcommand{\fibre}[2]{\ensuremath{#1^{-1} (#2)}}

\newcommand{\LinAut}{\operatorname{LinAut}}

\newcommand{\scrC}{{\mathscr C}}
\newcommand{\scrK}{{\mathscr K}}
\newcommand{\scrL}{{\mathscr L}}

\newcommand\sA{{\mathcal A}}

\newcommand\sFxt{{\mathcal Ext}}
\newcommand\sF{{\mathcal F}}
\newcommand\sG{{\mathcal G}}
\newcommand\sH{{\mathcal H}}

\newcommand\sL{{\mathcal L}}

\newcommand\sO{{\mathcal O}}

\newcommand\sX{{\mathcal X}}
\newcommand\sY{{\mathcal Y}}
\newcommand\sC{{\mathcal C}}

\newcommand\Hom{{\rm Hom}}
\newcommand\Ext{{\rm Ext}}

\newcommand\sD{{\mathcal D}}

\DeclareMathOperator*{\pic}{Pic}

\DeclareMathOperator*{\supp}{Supp}
\DeclareMathOperator*{\Aut}{Aut}
\DeclareMathOperator*{\Bim}{Bim}

\DeclareMathOperator*{\Rat}{RatCurves^n}

\newcommand{\Chow}[1]{\ensuremath{\mbox{\rm Chow}(#1)}}

\newcommand{\cond}{\ensuremath{\Xi}}
\newcommand{\nonn}{\ensuremath{\Theta}}

\newcommand{\Curv}{\ensuremath{\Gamma}}
\newcommand{\nons}{\ensuremath{{\rm nons}}}
\newcommand{\sing}{\ensuremath{{\rm sing}}}

	\author{Andreas H\"oring}
	\address{Andreas H\"oring, Universit\'e C\^ote d4Azur, CNRS, LJAD, France}
	\email{andreas.hoering@univ-cotedazur.fr}
	
	\author{Vladimir Lazi\'c}
	\address{Vladimir Lazi\'c, Fachrichtung Mathematik, Campus, Geb\"aude E2.4, Universit\"at des Saarlandes, 66123 Saarbr\"ucken, Germany}
	\email{lazic@math.uni-sb.de}

	\author{Christian Lehn}
	\address{Christian Lehn, Fakult\"at f\"ur Mathematik, Ruhr-Universit\"at Bochum, Universit\"atsstraße 150, Postfach IB 45, 44801 Bochum, Germany}
	\email{christian.lehn@rub.de}

\title{A semiampleness result for hyperk\"ahler fourfolds and smoothings of Lagrangian subvarieties}
\date{\today} 

\begin{document}

\begin{abstract} We give a geometric proof of the following statement: let $X$ be a smooth hyperk\"ahler fourfold
such that the Picard group is generated by a line bundle $L$ that is nef, non-trivial, and not big. If the Kodaira dimension of $L$ 
is non-negative, some positive multiple defines a Lagrangian fibration. The proof requires a number of tools which are of independent interest, including an analogue of Voisin's unobstructedness result and a smoothing result for singular Lagrangian subvarieties.
\end{abstract}

\subjclass[2020]{14J42, 32J25 (primary), 14D06, 14D15, 32G10, 32J27, 53C26 (secondary).}
\keywords{semiampleness, lagrangian fibration, hyperkähler manifold, deformation theory.}

% Primary:

% 14J42 		Holomorphic symplectic varieties, hyper-Kähler varieties
% 32J25 		Transcendental methods of algebraic geometry (complex-analytic aspects)

% Secondary:

% 14D06 		Fibrations, degenerations in algebraic geometry
% 14D15 		Formal methods and deformations in algebraic geometry
% 32G10 		Deformations of submanifolds and subspaces
% 32J27   	Compact Kähler manifolds: generalizations, classification
% 53C26   	Hyper-Kähler and quaternionic Kähler geometry, "special'' geometry

\maketitle

	%Table of Contents
	\begingroup
		\hypersetup{linkcolor=black}
		\setcounter{tocdepth}{1}
		\tableofcontents 
	\endgroup

%----------------------------------------------------
%----------------------------------------------------
\section{Introduction}
%----------------------------------------------------
%----------------------------------------------------

Hyperk\"ahler manifolds (also known as irreducible holomorphic symplectic manifolds) have been studied by many geometers over the last decades. The properties of line bundles play a central role in this study, these are governed by the Beauville--Bogomolov form. A line bundle $L$ on a hyperk\"ahler manifold $X$ is said to be \emph{parabolic} if its Beauville--Bogomolov square
$q_X(L)$ is equal to zero. In this paper, we show that for very general non-projective hyperk\"ahler fourfolds the geometry
of parabolic divisors can be described completely. As a consequence we obtain:

\begin{theorem} \label{theorem main semiampleness}
Let $X$ be a hyperkähler fourfold such that $\pic{X} \isom \Z L$ where $L$ is a nef parabolic line bundle with $\kappa(X, L) \geq 0$. Then $L$ is semiample, i.e., some positive multiple defines a Lagrangian fibration on $X$.
\end{theorem}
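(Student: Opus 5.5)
Since $\kappa(X,L)\ge 0$, some positive multiple $mL$ is effective, and the plan starts by choosing an effective divisor $D\in|mL|$. Because $\pic X\isom\Z L$, every irreducible component of $D$ is a multiple of $L$ in the Picard group; after replacing $D$ by one of its components and $m$ accordingly, we may assume $D$ is a prime divisor (possibly non-normal) with $q_X(D)=0$. Since $X$ is non-projective with Picard number one and $L$ is not big, the strategy is to study the geometry of $D$. The goal is to show that $D$ moves in a family covering $X$, so $h^0(X,kL)\ge 2$ for some $k$. Once this holds, Matsushita-type arguments give semiampleness: in dimension four, $\kappa(X,L)\ge 1$ for a nef parabolic $L$ forces abundance. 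This follows from known results for hyperkähler fourfolds, e.g. via Amerik--Campana or Matsushita's result that a nef isotropic $L$ with $\kappa\ge1$ and $\nu=2$ yields a fibration after running the Iitaka fibration.

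The key steps, in order, are as follows.
\begin{enumerate}[(1)]
\item Since $q_X(D)=0$, we have $D^2\cdot\sigma\bar\sigma$-type vanishings: $\sigma|_D$ has a kernel. This gives a characteristic foliation on $D_{\reg}$ of rank one, whose leaves are isotropic curves.
\item Using the Picard number one assumption, there are few subvarieties. I would show that the characteristic foliation has algebraic leaves, or else that $D$ is swept by a family of Lagrangian surfaces. For this I would invoke Amerik--Campana: on a non-projective (or very general) hyperkähler, a hypersurface whose characteristic foliation is not algebraic must be coisotropic in a way incompatible with $q(D)=0$ and $D$ effective. Concretely, if leaves are algebraic curves, the quotient $D\dashrightarrow B$ is a surface and one gets rational curves or Lagrangian surfaces; with $\pic X=\Z L$, the surface fibers $S\subset D$ are Lagrangian.
\item Take a Lagrangian surface $S\subset D\subset X$, possibly singular. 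The abstract announces an unobstructedness result à la Voisin and a smoothing of singular Lagrangians. I would apply these to deform $S$ in $X$ to a family of Lagrangian surfaces of dimension $h^0(N_S)=h^0(\Omega_S)$-ish, which equals $2$ for generic members, hence the deformations cover $X$.
\item A covering family of Lagrangian surfaces $S_t$, all with class proportional to $L^2$ (forced by Picard rank one and the Hodge-theoretic constraints), gives a rational Lagrangian fibration $X\dashrightarrow B$ whose pull-back of an ample class is a multiple of $L$. By nefness of $L$ and results of Matsushita/Hwang, this is a morphism.
\end{enumerate}

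The main obstacle is step (3): $S$ will typically be singular or non-reduced, so Voisin's theorem that deformations of smooth Lagrangians are unobstructed does not apply directly. I would first try to establish unobstructedness for Lagrangian subvarieties with mild (lci) singularities, using the $T^1$-lifting property and the isomorphism $N_S\isom\Omega_S$ generalized to the sheaf of Kähler differentials/dualizing sheaf. I would then show that a general deformation smooths $S$. The smoothing would come from a dimension count: the space of first-order deformations is large enough that its generic point cannot preserve the singularities.
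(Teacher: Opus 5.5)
Your overall strategy is the one the paper follows: find Lagrangian surfaces inside the unique prime divisor $D$, smooth them to a Lagrangian torus, and conclude with \cite{GLR13}. But step (2) does not actually produce the surfaces. The claim you attribute to Amerik--Campana cannot hold. If $f\colon X\to B$ is a Lagrangian fibration and $D=f^{-1}(C)$ for a curve $C\subset B$, then $q_X(D)=0$. Yet the characteristic foliation is the Hamiltonian flow of $f^*h$ ($h$ a local equation of $C$), which is linear on the torus fibres, and its leaves are typically dense in them. Moreover, results on characteristic foliations concern smooth hypersurfaces, while here $D$ is typically non-normal. The missing ingredient is adjunction: $K_D\sim L|_D$, and $\rho(X)=1$ makes $K_D$ nef of numerical dimension one and of degree zero on every curve (Corollary~\ref{cor:allzero}). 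If $D$ is not uniruled, the MMP and abundance for Kähler threefolds make $\nu^*K_D$ semiample on the normalisation (Proposition~\ref{prop:normalisation0}). If $D$ is uniruled, the base $S$ of the MRC fibration of $D^\nu$ has $a(S)=1$, and its elliptic algebraic reduction gives $\varphi\colon D^\nu\to\Gamma$ whose general fibres are $\PP^1$-bundles over elliptic curves (Proposition~\ref{prop:normalisation}). In the uniruled case this is only a fibration on $D^\nu$ whose fibres meet the conductor. Getting one on $D$ requires the conductor analysis of Section~\ref{section:uniruled} and the descent argument with two linearised involutions (Proposition~\ref{prop:slc-descent}).

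Step (3) fails as stated for two reasons. First, along the horizontal part of the conductor, $D$ may have singularities of Kodaira type $II$, $III$ or $IV$ (Proposition~\ref{prop:lagrangian4}). For such singular Lagrangians there is no usable deformation theory. The isomorphism $N_S\cong\Omega_S$ does not generalise to Kähler differentials or the dualising sheaf; one only has an exact sequence $\Omega_Y\to N_{Y/X}\to T^1_Y\to 0$. The correct replacement $N_{Y/X}\cong\Omega_Y(\log)$ exists only for normal crossings (Proposition~\ref{proposition normal sheaf}). The paper excludes types $II$--$IV$ by a completely different route: a plt resolution, the extension theorem of \cite{DHP13}, and algebraic approximation inside the Hodge locus (Propositions~\ref{prop:projective-extension} and~\ref{prop:no-Kodaira-type}). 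Second, the count ``$h^0(N_S)=2$'' is not available. A bielliptic Lagrangian surface has $h^0(N_{S/X})=h^0(\Omega_S)=1$, so its deformations only sweep out $D$. The same obstruction arises for a normal crossing $Y$ with nontrivial torsion $\omega_Y\cong\det N_{Y/X}$. The paper therefore passes to the index one cover $p\colon Y'\to Y$ and deforms the \'etale morphism $Y'\to X$ with fixed target. Unobstructedness rests on three things: the classification $Y'\cong Y_m(A,s,L)$ with $L\in\pic^0(A)$ (Theorem~\ref{theorem fibers nc case}), the triviality of $\Omega_{Y'}(\log)$ (Theorem~\ref{theorem trivial log differentials}), and base change for log Hodge cohomology in the $T^1$-lifting argument. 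The smoothing direction is a section of $\Omega_{Y'}(\log)$ with nonvanishing residue along the double locus, not a generic dimension count (Theorem~\ref{theorem normal crossing cover corank one smoothing}).
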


The idea of our proof is quite natural: up to replacing $L$ by some multiple, we can assume that there exists an effective
divisor $D \in |L|$. We expect to have a Lagrangian fibration 
$$
\holom{f:=\varphi_{|mL|}}{X}{B}
$$ 
onto a surface, so we should have
a fibration
$$
\holom{\phi:=f|_D}{D}{U}
$$
onto a curve $U \subset B$ that is defined by some multiple of the restriction $L|_D$. If the curve $U$ is not contained in the discriminant locus of the Lagrangian fibration $f$ the general $\phi$-fibre is an abelian surface. The deformations of Lagrangian abelian subvarieties are unobstructed by a result of Voisin~\cite{Voi92}. Building on earlier work~\cite{GLR13}, this leads to the existence of the Lagrangian fibration. If the curve $U$ is contained in the discriminant locus, Hwang--Oguiso~\cite{HO09,HO11} still gives a precise description of the normalisation of general singular fibres.
Our strategy is to reconstruct and refine this information without assuming the existence of a Lagrangian fibration, but instead use that we work on a non-projective hyperk\"ahler manifold. We first show (see Propositions~\ref{prop:normalisation0} and~\ref{prop:normalisation}) that the normalisation $\nu_D: D^\nu \rightarrow D$ admits a fibration
$$
\holom{\varphi}{D^\nu}{\Curv}
$$
onto a curve $\Curv$ such that the general fibre $F$ is either an abelian surface or a $\PP^1$-bundle over an elliptic curve. In the abelian case, it is not difficult to see that $\nu_D(F) \isom F$ is a Lagrangian submanifold of $X$, so Voisin's result applies. In the case of a $\PP^1$-bundle, the divisor $D$ is always singular along the image $\nu_D(F) \subset D$, and in Section~\ref{section:uniruled} we describe the singularities
of the divisor $D$  near the surface $\nu_D(F)$. It turns out that they are either normal crossing singularities of multiplicity two
or singularities of Kodaira type $II, III, IV$ (cf. Definition~\ref{defn:kodaira-type}). The second case seems to be especially difficult since these singularities are not slc, so general extension theorems do not apply. However, they admit rather special log-resolutions, which we call plt resolutions. In Section~\ref{section:extending}, we show how to combine these resolutions with the Demailly--Hacon--P\v aun extension theorem~\cite{DHP13} and algebraic approximation to show Theorem~\ref{theorem main semiampleness} in this case.

This leaves us with the case where $D$ has normal crossing singularities of multiplicity two near the surface $\nu_D(F)$. In Section~\ref{section:descending}, we show that the fibration $\holom{\varphi}{D^\nu}{\Curv}$ descends along the normalisation to give the expected fibration
$
\holom{\phi}{D}{U}.
$
The general fibre of this fibration is a Lagrangian surface $Y\subset X$ with normal crossing singularities of multiplicity two. Our goal is then to smooth this subvariety to a Lagrangian torus. This is achieved in Section~\ref{section unobstructedness}; see Theorem~\ref{theorem normal crossing cover corank one smoothing}. One of the ingredients is an analogue of Voisin's unobstructedness result for these singular Lagrangian subvarieties proven in Theorem~\ref{theorem normal crossing cover unobstructedness}. Others ingredients are a detailed structure theory of ``corank one'' Lagrangian normal crossing subvarieties with trivial canonical sheaf established in 	Section~\ref{section snc fibers} and the connection between Lagrangian normal crossing subvarieties and logarithmic structures we lay out in Section~\ref{section lagrangian normal crossing deformations}.

This part of our argument actually works in arbitrary dimension. More precisely, our second main result is:

\begin{theorem}\label{theorem main fibration}
Let $X$ be a compact hyperkähler manifold of dimension $2n$, and let $Y \subset X$ be a Lagrangian normal crossing subvariety satisfying the following properties.
\begin{enumerate}
	\item $h^0(Y,\tOm_Y)=n-1$.
	\item $\omega_Y$ is torsion.
	\item $-K_{Y^\nu}$ is nef, where $\nu\colon Y^\nu \to Y$ is the normalisation.
\end{enumerate}
Then there is a Lagrangian fibration $f\colon X\to B$ such that $Y$ is the reduction of a fibre, i.e. $Y=f^{-1}(t)_\red$ for some $t\in B$.
\end{theorem}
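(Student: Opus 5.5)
The overall strategy is to deform $Y$ inside $X$, show that the deformations sweep out a family of Lagrangian tori covering $X$, and then apply the Greb--Lehn--Rollenske criterion \cite{GLR13}: a compact hyperkähler manifold that contains a Lagrangian torus admits a Lagrangian fibration having that torus as a fibre. Once $f$ is constructed, we identify $Y$ with the reduced support of a fibre.

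The first step is to understand the local deformation space of $Y$. The plan is to use the logarithmic description of Lagrangian normal crossing subvarieties set up in Section~\ref{section lagrangian normal crossing deformations}. From the symplectic form we get an isomorphism between the (log) normal sheaf of $Y$ and $\tOm_Y$, where the latter is the sheaf of log/torsion-free differentials. Hypothesis (1) then says that first-order deformations of $Y$ preserving the normal crossing structure form a space of dimension $h^0(Y,\tOm_Y)=n-1$. Hypotheses (2) and (3) feed into the structure theory of Section~\ref{section snc fibers}. The point is that $\omega_Y$ torsion together with $-K_{Y^\nu}$ nef forces the components of $Y^\nu$ to be $\PP^1$-bundle-like (or toric-bundle-like) over abelian varieties, glued along the double locus in a cyclic ``corank one'' pattern. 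We use this to establish the analogue of Voisin's unobstructedness, Theorem~\ref{theorem normal crossing cover unobstructedness}. It shows that the deformations of $Y$ (locally trivial in the log sense) are unobstructed, and also that smoothing directions exist: the full space of embedded deformations is smooth of dimension $n$, one more than $n-1$, and the extra direction smooths the double locus.

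Next we apply Theorem~\ref{theorem normal crossing cover corank one smoothing}, which gives a flat family $\{Y_t\}_{t\in\Delta}$ of Lagrangian subvarieties with $Y_0=Y$ and $Y_t$ smooth for $t\neq 0$. The general member $Y_t$ is a smooth Lagrangian submanifold. Since $\omega_Y$ is torsion, upper semicontinuity and flatness should give that $K_{Y_t}$ is torsion. We also want the degeneration $Y\rightsquigarrow Y_t$ to be a semistable-type degeneration whose dual complex is a circle, coming from the cyclic gluing. From this we expect $h^0(Y_t,\Omega_{Y_t})=n$, i.e. one more than $h^0(Y,\tOm_Y)$, the extra form coming from the cycle in the dual graph. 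Together with $Y_t$ being Lagrangian with trivial normal-bundle-up-to-$\Omega$, this should force $Y_t$ to be a complex torus: the Albanese map is étale of degree one because $\dim = n$ and the cotangent bundle is generated by global sections. Then \cite{GLR13} yields a Lagrangian fibration $f\colon X\to B$ with $Y_t$ a fibre.

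The final step is to show that $Y$ is the reduction of a fibre. The class $[Y]=[Y_t]$ in $H^{2n}(X)$ is the class of a fibre, so $f^*\alpha\cdot [Y]=0$ for $\alpha$ an ample class on $B$. Hence $f(Y)$ is a point $t_0$, and $Y\subset f^{-1}(t_0)$. Equality of reduced supports follows because fibres are equidimensional of dimension $n$ and connected, and $Y$ is a connected union of $n$-dimensional components. We then argue that $f^{-1}(t_0)$ cannot contain further components: the limit of $Y_t$ in the Douady space is a cycle supported on $f^{-1}(t_0)$ whose support is $Y$ itself. The main obstacle is the middle step, namely proving that the smoothing $Y_t$ really is a torus. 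This requires a careful comparison of $h^0(\tOm_Y)$ with the holomorphic $1$-forms on the nearby fibre via the log structure. If this comparison is hard, the first thing I would try is a Clemens–Schmid / limiting mixed Hodge structure argument on the semistable degeneration.
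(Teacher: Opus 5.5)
Your outline (deform, get a Lagrangian torus, apply \cite{GLR13}) matches the paper's in spirit. However, the middle step has a genuine gap: you deform $Y$ itself as a subvariety of $X$, and this fails whenever $\omega_Y$ is torsion but not trivial, which hypothesis (2) allows. (When $\omega_Y\cong\sO_Y$ your plan is essentially the paper's with $p=\mathrm{id}$.)

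Indeed, $Y\subset X$ is a local complete intersection and $\omega_X\cong\sO_X$, so $\det N_{Y/X}\cong\omega_Y$. On the other hand, $p^*N_{Y/X}\cong\Omega_{Y'}(\log)$ is trivial on the index one cover $p\colon Y'\to Y$. This follows from Theorem~\ref{theorem fibers nc case} together with Theorem~\ref{theorem trivial normal bundle}, or directly if $Y'$ is abelian. Hence $H^0(Y,N_{Y/X})$ is the space of invariants of the Galois group acting on $H^0(Y',\Omega_{Y'}(\log))\cong\C^n$. If $\omega_Y\not\cong\sO_Y$, this representation has nontrivial determinant, so $h^0(Y,N_{Y/X})\le n-1$. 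Therefore $H^0(Y,N_{Y/X})=H^0(Y,\tOm_Y)$ consists of locally trivial directions only.

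So the embedded deformation space of $Y$ is $(n-1)$-dimensional and has no smoothing direction. The family $\{Y_t\}$ with $Y_0=Y$ and $Y_t$ smooth that you want to extract from Theorem~\ref{theorem normal crossing cover corank one smoothing} does not exist. Geometrically, $Y$ is then the support of a multiple fibre, which is why the statement only asserts $Y=f^{-1}(t)_\red$. Nearby smooth fibres have class $\deg(p)\,[Y]$, not $[Y]$ as you write in your last step. There proportionality suffices, so that step is easily repaired.

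What Theorem~\ref{theorem normal crossing cover corank one smoothing} actually does is deform the composite morphism $f\colon Y'\xrightarrow{p}Y\subset X$ with fixed target.
\begin{itemize}
\item Its tangent space is $H^0(Y',p^*N_{Y/X})=H^0(Y',\Omega_{Y'}(\log))\cong\C^n$.
\item Unobstructedness is proved by $T^1$-lifting for $D^\ft_f$. To make the normal-sheaf description work in families, one lifts $p$ to an étale cover of a tubular neighbourhood of $Y$.
\item At the end one divides the universal family by the Galois group to obtain \emph{embedded} Lagrangian tori.
\end{itemize}
This is the reason for the machinery of Section~\ref{section deformation theory}.

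The decisive input, both for the dimension count $n$ and for the torus property of nearby fibres, is the triviality of $\Omega_{Y'}(\log)$ (Theorem~\ref{theorem trivial log differentials}). Global generation of the log cotangent sheaf, plus local freeness and base change for the direct image of relative log $1$-forms, gives $n$ global $1$-forms generating $\Omega$ on the nearby fibre. No Clemens--Schmid argument is needed, but your sketch does not identify this input.

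Finally, \cite[Theorem~4.1]{GLR13} only covers non-projective $X$. For projective $X$ the paper uses \cite[Theorem~1.2]{HW13} and \cite[Corollary~1]{Mat14}.
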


Here, $\tOm_Y$ denotes the sheaf of torsion free differentials (Definition~\ref{definition torsion free differentials}). The term ``corank one'' comes from the theory of degenerations of abelian varieties and denotes degenerations where the toric rank is equal to one --- for us, it is simply subsumed in condition (1) of the theorem, which in our context means that locally trivial deformations of $Y$ inside $X$ sweep out a divisor.

As mentioned before, the main ingredient in the proof of Theorem~\ref{theorem main fibration} is a smoothability result. In many instances, such results have been obtained by means of logarithmic deformation theory; see e.g.~\cite{KN94,FFR21,CLM23}. Log structures also play a role in our approach, since they allow to globalise the relation between the normal bundle of a normal crossing Lagrangian subvariety and its log differentials. Somewhat surprisingly though, we do not need to resort to logarithmic deformation theory, which makes the proof much lighter. From a conceptual viewpoint, this is due to the existence of the ambient symplectiv variety.
With the deformation theoretic groundwork, the proof of the existence of a Lagrangian fibration is then completed using~\cite{GLR13,HW13,Mat14}. 

Both, Theorem~\ref{theorem main semiampleness} and Theorem~\ref{theorem main fibration} are proven in Section~\ref{section proof main result} by assembling the results obtained and tools developed in the preceeding sections. We believe that many of them are of independent interest.

Coming back to Theorem~\ref{theorem main semiampleness}, its assumptions on $X$ have to be thought of as the situation we plan to reduce the general case to rather than as a restriction. The remaining task to obtain an unconditional semiampleness result in dimension four is to prove nonvanishing for a very general deformation. Concretely, one deduces without too much difficulty the following consequence of Theorem~\ref{theorem main semiampleness}.

\begin{corollary} \label{corollary main}
Let $X$ be a hyperkähler fourfold and let $L$ be a nef parabolic line bundle on $X$. Suppose that for some general deformation $(X',L')$ of $(X,L)$ we have $\kappa(X', L') \geq 0$. Then $L$ is semiample.
\end{corollary}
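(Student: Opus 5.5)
The plan is to reduce Corollary~\ref{corollary main} to Theorem~\ref{theorem main semiampleness} using the deformation theory of the pair $(X,L)$ and the fact that Lagrangian fibrations deform along the parabolic locus. Let $\mathrm{Def}(X,L)$ be the locally trivial deformation space of the pair. It is the smooth hypersurface in the Kuranishi space $\mathrm{Def}(X)$ cut out by the condition that $c_1(L)$ stays of type $(1,1)$; for a fourfold it has dimension $b_2(X)-3 \geq 20$.

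\textbf{Step 1: Picard number one for a very general point.} For a very general $t$ in $\mathrm{Def}(X,L)$ we have $\pic{X_t} \isom \Z L_t$, after replacing $L$ by its primitive root if necessary. The Noether--Lefschetz-type loci where another class becomes of type $(1,1)$ form a countable union of proper analytic subsets, and the local Torelli theorem shows each is proper. We may also assume that $L_t$ is nef. The reason is that the nef cone of a very general $X_t$ with $\pic{X_t} = \Z L_t$ and $q(L_t)=0$ equals the closure of the positive cone, since there are no rational curves to cut it down: $X_t$ is non-projective with no $(-)$-classes in $H^{1,1}$. Since $q(L_t)=q(L)=0$ and $L_t$ lies in the closure of the positive cone (by continuity from $L$ nef), $L_t$ is nef.

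\textbf{Step 2: non-vanishing at a very general point.} The hypothesis gives $\kappa(X',L') \geq 0$ for some general deformation $(X',L')$. By upper semicontinuity of $h^0(X_t,mL_t)$ on the base, the locus where $h^0(mL_t) \geq 1$ is closed analytic for each $m$. A countable union of such closed analytic sets contains a general point only if one of them is the whole (irreducible, smooth) base, so non-vanishing holds for a fixed $m$ everywhere on $\mathrm{Def}(X,L)$, in particular at a very general $t$. Combining with Step~1, Theorem~\ref{theorem main semiampleness} applies to $(X_t,L_t)$, so $L_t$ is semiample and induces a Lagrangian fibration.

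\textbf{Step 3: transport back to $(X,L)$.} This is the main obstacle. I would invoke the known deformation results for Lagrangian fibrations, namely Matsushita's result~\cite{Mat14}: if $L_t$ is semiample defining a Lagrangian fibration for one (hence a dense set of) point(s) of the parabolic deformation space, then the set of such $t$ is the complement of a proper analytic subset. Combined with~\cite{GLR13,HW13}, this gives that any nef $L$ in the family is semiample. Concretely, one shows that the relative fibration extends over the whole connected deformation space of $(X,L)$ and then that nefness of $L$ on the special fibre forces the limiting map to be a Lagrangian fibration rather than merely a rational one. I would carry this out by taking a Lagrangian fibre at $t$, deforming it as a Lagrangian torus (unobstructed by Voisin) back to $X$, and applying~\cite{GLR13}/\cite{HW13} on $X$ with $L$ nef to conclude semiampleness.
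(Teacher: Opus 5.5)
Your Steps~1 and~2 are the right reduction. At a very general point $t$ of the Hodge locus of $c_1(L)$ one has $\pic(X_t)=\Z L_t$ with $L_t$ nef; this is Lemma~\ref{lemma:allzero}. The reason is not an absence of rational curves, but that every curve class pairs to zero with $L_t$, so no wall cuts the positive cone. Also, the Hodge locus has dimension $b_2(X)-3$, which is only $4$ for Kummer-type fourfolds, not $\geq 20$. Semicontinuity plus Baire then gives $\kappa(X_t,L_t)\ge 0$, and Theorem~\ref{theorem main semiampleness} makes $L_t$ semiample.

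The gap is Step~3. Suppose the locus of $t$ where $L_t$ defines a Lagrangian fibration is the complement of a proper analytic subset. That says nothing about the given point $0$. Nefness of $L$ is exactly what must put $0$ in the good set, and it is genuinely needed. For a movable but non-nef $L$ (e.g.\ after a Mukai flop of a Lagrangian fibration along a Lagrangian plane that is a section), the very general deformations of $(X,L)$ coincide with those of a birational model $(X',L')$ with $L'$ semiample, yet $L$ itself is not semiample. Your sentence that nefness ``forces the limiting map to be a Lagrangian fibration'' is precisely what has to be proved. The reference you give does not prove it: in this paper \cite{Mat14} is used only together with \cite{HW13}, to pass from a Lagrangian torus to a fibration on a projective $X$; it is not a deformation statement.

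The concrete mechanism you propose also fails. Voisin's unobstructedness is local: it moves a Lagrangian torus of $X_t$ to nearby members of the Hodge locus, not to the special point. You could follow the tori to $t=0$ through the relative cycle space, whose components are proper by \cite[Proposition~2.6]{GLR13}. But the limits are Lagrangian cycles in $X$ that may be singular, reducible or non-reduced. These are exactly the kind of degenerate fibres most of this paper is devoted to. So you do not obtain a smooth Lagrangian torus in $X$ to which \cite{GLR13} or \cite{HW13} can be applied.

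The missing ingredient is the deformation invariance of semiampleness of a nef parabolic line bundle along its Hodge locus, \cite[Corollary~1.3]{Mat16}. The paper invokes it in exactly this role in the last step of the proof of Theorem~\ref{thm:only-case-left}. Replace your Step~3 by this citation: the Hodge locus is connected, $L$ is nef on $X$, and $L_t$ is semiample at a very general $t$. That completes the argument, which is then essentially the deduction the paper has in mind.
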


{\bf Related work.} 
This work was announced in several talks in the spring 2026. When most of the writing
was done, preprints by OpenAI~\cite{OpenAI} and Engel--Mauri~\cite{EM26} on the Hyperk\"ahler SYZ conjecture were posted. A first superficial comparison suggests that our arguments appear to be substantially different from theirs. 

{\bf Acknowledgements.} 
We would like to thank Klaus Hulek and Christian Sevenheck for valuable discussions.

All three authors were supported by the ANR-DFG project ``Positivity on K-trivial varieties'', ANR-23-CE40-0026 and DFG Project-ID 530132094. H\"oring was partially supported  by the France 2030 investment plan managed by the ANR, as part of the Initiative of Excellence of Universit\'e C\^ote d'Azur, reference ANR-15-IDEX-01. Lazi\'c was supported by the DFG Project-ID 286237555 - TRR 195. Lehn was supported by the DFG Project-ID 550535392.

%----------------------------------------------------
%----------------------------------------------------
\section{Preliminaries}\label{section preliminaries}
%----------------------------------------------------
%----------------------------------------------------

%----------------------------------------------------
\subsection*{Notation and conventions}
%----------------------------------------------------
A complex space is always assumed to be separated, finite-dimensional and to have finitely many irreducible components. A complex variety is a reduced complex space that is irreducible (except in the case of a normal crossing variety, cf. Definition~\ref{defn:nc+triple}). A manifold is a complex variety that is smooth. A curve, respectively a surface, is a reduced \emph{compact} complex space of pure dimension one, respectively two.

A dimension of a reducible compact complex space is defined as the maximal dimension of its irreducible components.

If $X$ is a complex space, a \emph{$\Q$-line bundle} on $X$ is a formal $\Q$-linear combination of line bundles on $X$.

On a normal complex variety $X$, the canonical sheaf $\omega_X$ is a reflexive sheaf which is defined as the pushforward of the determinant bundle of the sheaf of holomorphic $1$-forms on the regular part of $X$. The variety $X$ is \emph{$\Q$-factorial} if every prime Weil divisor $D \subseteq X$ is $\Q$-Cartier and if there exists a positive integer $m$ such that the reflexive power $(\omega_X^{\otimes m})^{**}$ is a locally free sheaf. In that case, $\omega_X$ and every $\Q$-divisor on $X$ define $\Q$-line bundles. We omit the double dual when the reflexive sheaves involved are $\Q$-line bundles.

As in~\cite[\S II.4]{Nak04}, we use the symbol $K_X$ in a formal way, i.e.\ we write $\omega_X\isom\sO_X(K_X)$ even if there might not exist a canonical Weil divisor with that property. To avoid confusion with Kähler forms, we use the notation $K_X$ instead of $\omega_X$. This allows to apply additive notation: in particular, on a $\Q$-factorial pair $(X,\Delta)$, the adjoint canonical $\Q$-line bundle $\omega_X\otimes\sO_X(\Delta)$ corresponds to the formal class $K_X+\Delta$, as is standard in birational geometry. For an integer $m$ we have \mbox{$\sO_X(mK_X)\isom\omega_X^{\otimes m}$}.

We denote the Kodaira dimension of an $\R$-Cartier divisor $D$ (respectively $\Q$-line bundle $\sL$) on a normal complex variety $X$ by $\kappa(X, D)$ (respectively $\kappa(X,\sL)$); see~\cite[\S II.3]{Nak04} for details.

We use the standard notions of singularities in the MMP as explained in~\cite{KM98, Kol13}. We use $\equiv$ for numerical equivalence of divisors and $\sim_\Q$ for $\Q$-linear equivalence of divisors.

Given reflexive sheaves $\sF, \sG$ on a normal complex space, we denote $\sF^{\boxtimes m} := (\sF^{\otimes m})^{**}$ and $\sF \boxtimes \sG := (\sF \otimes \sG)^{**}$ for the reflexive tensor products.

If $D=\sum d_i D_i$ is an $\R$-divisor on a complex space $X$, we denote by $\lfloor D\rfloor:=\sum\lfloor d_i\rfloor D_i$ the round-down of $D$.

For a complex space $Y$, the tangent sheaf is defined as $T_Y:=\Omega_Y^*$ and the $T^1$-sheaf is $T^1_Y:=\sFxt^1(\Omega_Y,\sO_Y)$; see~\cite[Appendix~B]{Ser06}. Also, if $Y$ is a closed subspace of another complex space $X$ and $I \subset \sO_X$ denotes the ideal sheaf of $Y$, the normal sheaf of $Y$ in $X$ is defined as $N_{Y/X}:=\sHom_{\sO_Y}(I/I^2,\sO_Y)$.

\begin{definition} \label{defn:almost-holomorphic}
Let $M$ be a reduced equidimensional compact complex space. A rational map $\merom{\gamma}{M}{N}$ to a complex variety $N$ is \emph{almost holomorphic} if there exists a Zariski dense open subset $M_\circ \subseteq M$ such that the map $\gamma|_{M_\circ}$ is holomorphic and proper. If $G \subseteq M$ is a subvariety of $M$ such that $G\cap M_\circ\neq\emptyset$, then we define
$$ \gamma(G) := \overline{\gamma|_{M_\circ}(G\cap M_\circ)}. $$
\end{definition}

\begin{remark} \label{shrinking-Zariski}
Let $B$ an algebraic variety (resp. irreducible and reduced complex space). We say that we shrink
$B$ (in the Zariski topology) if we replace $B$ by $B_\circ$ where $B \setminus B_\circ \subsetneq B$ is an algebraic set (resp. an analytic subset). 
\end{remark}

\subsection{Gorenstein spaces}

A complex space is \emph{Gorenstein} if it is Cohen--Macaulay and if the dualising sheaf is locally free. For the reader's convenience we recall the normalisation formula for Gorenstein spaces.

\begin{proposition}\label{normalisation-formula}
Let $X$ be a reduced Gorenstein complex space with normalisation $\holom{\nu}{X^\nu}{X}$. Then there exists an effective Weil divisor $\cond$, called the \emph{conductor}, such that $\nu$ is an isomorphism on $X^\nu \setminus \cond$, and we have
$$ K_{X^\nu} \sim \nu^* K_X -\cond. $$
\end{proposition}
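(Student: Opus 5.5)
The plan is to construct the conductor from duality for the finite map $\nu$, and then to read off the formula from the fact that $\omega_X$ is invertible.

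\textbf{Step 1: relative duality.} Since $X$ is reduced, $\nu$ is finite and bimeromorphic. It is an isomorphism exactly over the normal locus, and the non-normal locus of $X$ has codimension at least one. Grothendieck duality for the finite morphism $\nu$ gives
$$\nu_*\omega_{X^\nu} \isom \sHom_{\sO_X}(\nu_*\sO_{X^\nu},\omega_X).$$
Here $\omega_{X^\nu}$ is the dualising sheaf of the normal space $X^\nu$. It is $S_2$ and agrees with the reflexive canonical sheaf of the Notation section.

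\textbf{Step 2: the conductor ideal.} Because $\omega_X$ is invertible, we can untwist:
$$\sHom_{\sO_X}(\nu_*\sO_{X^\nu},\omega_X)\isom \sHom_{\sO_X}(\nu_*\sO_{X^\nu},\sO_X)\otimes\omega_X.$$
The sheaf $\mathfrak c:=\sHom_{\sO_X}(\nu_*\sO_{X^\nu},\sO_X)$ is the conductor ideal. It is an ideal both in $\sO_X$ (evaluate at $1$) and in $\nu_*\sO_{X^\nu}$. It is therefore of the form $\nu_*\mathcal I$ for an ideal sheaf $\mathcal I\subset\sO_{X^\nu}$. Moreover, $\mathcal I=\sO_{X^\nu}$ exactly over the locus where $\nu$ is an isomorphism. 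Combining with Step 1 yields, as $\sO_{X^\nu}$-modules,
$$\omega_{X^\nu}\isom \mathcal I\otimes \nu^*\omega_X .$$
This uses that $\nu$ is affine, so that $\sO_{X^\nu}$-modules are determined by their pushforwards as $\nu_*\sO_{X^\nu}$-modules.

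\textbf{Step 3: $\mathcal I$ is divisorial.} Since $\omega_{X^\nu}$ is reflexive of rank one on the normal space $X^\nu$ and $\nu^*\omega_X$ is invertible, the sheaf $\mathcal I\isom \omega_{X^\nu}\otimes\nu^*\omega_X^{-1}$ is reflexive of rank one. A reflexive rank-one ideal sheaf on a normal space is $\sO_{X^\nu}(-\cond)$ for a unique effective Weil divisor $\cond$. Its support is the locus where $\mathcal I\neq\sO_{X^\nu}$, so $\nu$ is an isomorphism on $X^\nu\setminus\cond$, and $K_{X^\nu}\sim\nu^*K_X-\cond$ in the formal notation of the paper.

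\textbf{Main obstacle.} The step needing the most care is Step 3, the purity of the conductor: the non-normal locus of $X$ could a priori have components of codimension at least two. The Cohen--Macaulay (hence $S_2$) hypothesis on $X$ is what rules this out, since for $S_2$ spaces the non-normal locus is of pure codimension one. In the argument above this is absorbed by the reflexivity of $\omega_{X^\nu}$, which forces $\mathcal I$ to be divisorial. One should check that the analytic versions of duality for finite maps and of $S_2$-ness of $\omega_{X^\nu}$ hold; both are standard, for instance via Grauert--Riemenschneider/Ramis--Ruget duality in the analytic category.
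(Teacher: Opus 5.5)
Your proposal is correct and is essentially the argument of \cite[\S 2]{Rei94} that the paper cites: duality for the finite map $\nu$ identifies $\nu_*\omega_{X^\nu}$ with the conductor ideal tensored with the invertible sheaf $\omega_X$. Divisoriality of the conductor, and hence the statement that $\nu$ is an isomorphism exactly off $\cond$, then follows from reflexivity of $\omega_{X^\nu}$ on the normal space $X^\nu$. The paper's proof only cites Reid and notes that the argument carries over to complex spaces, so your write-up supplies the details it leaves implicit.
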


\begin{proof}
This is shown in the algebraic setting in~\cite[\S 2]{Rei94}, but the arguments work as soon as dualising sheaves exist. Since $X$ is Cohen--Macaulay, the locus where~$\nu$ is not an isomorphism is exactly given by the conductor. 
\end{proof}

\begin{lemma}\label{lem:intersectionmult}
Let $X$ be a nonsingular variety, let $V$ and $W$ be closed subvarieties of $X$ which intersect properly, and let $Z$ be an irreducible component of the scheme-theoretic intersection of $V$ and $W$. Assume that the multiplicity of $Z$ in the scheme-theoretic intersection of $V$ and $W$ is $1$. Then $V$ and $W$ are smooth at a general point of $Z$ and they intersect transversally at a general point of $Z$.
\end{lemma}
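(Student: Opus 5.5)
The plan is to reduce to the standard statement from intersection theory in Fulton: intersection multiplicities of proper intersections are bounded below by products of multiplicities, with equality exactly in the transversal case. The question is local at a general point of $Z$. Since $V$ and $W$ intersect properly, we have $\dim Z = \dim V + \dim W - \dim X$. The intersection multiplicity is
$$ i(Z; V\cdot W; X) = e_{\sO_{X,Z}}\big(\sO_{V\cap W, Z}\big), $$
the Samuel multiplicity of the artinian local ring of the scheme-theoretic intersection at the generic point of $Z$. This is the Hilbert--Samuel reading of \cite[Ch.~7]{Ful98}-type results. The multiplicity of $Z$ in the scheme $V\cap W$ is the length $\ell(\sO_{V\cap W,Z})$.

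\emph{Step one.} Work at the generic point of $Z$. Since $X$ is smooth, $V\cap W$ is the intersection of $V\times W$ with the diagonal $\Delta_X$, and $\Delta_X$ is a local complete intersection, cut out by $\dim X$ equations. Proper intersection means these equations form a system of parameters in the local ring $A:=\sO_{V\times W,Z}$. Here $A$ has dimension $\dim X$, and $Z$ is identified with a component of $(V\times W)\cap \Delta$.

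\emph{Step two.} Apply the Lech/Samuel inequalities. For a system of parameters $\underline{x}$ in a Noetherian local ring $A$ we have
$$ \ell(A/\underline{x}) \ \geq\ e(\underline{x},A) \ \geq\ e(A), $$
where $e(A)$ is the multiplicity of $A$ for its maximal ideal. Moreover $e(A)=\mult_Z(V)\cdot\mult_Z(W)$, because the multiplicity of a product at a point is the product of the multiplicities. The hypothesis says $\ell(A/\underline{x})=1$, and every term is at least $1$, so all of them equal $1$.

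\emph{Step three.} Draw the conclusions. From $\mult_Z V=\mult_Z W=1$, the varieties $V$ and $W$ are smooth at the generic point of $Z$, hence at a general point of $Z$. From $\ell(A/\underline{x})=1$ we get that $\underline{x}$ generates the maximal ideal of the regular local ring $A$. Translated back, the ideal of $V\cap W$ is reduced and smooth at the generic point of $Z$. Equivalently, the local defining equations of $V$ and of $W$ together have independent differentials. Therefore
$$ T_zV\cap T_zW = T_zZ, $$
and by the dimension count $T_zV+T_zW=T_zX$ at general $z\in Z$. This is transversality.

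\emph{Main obstacle.} The delicate step is the first inequality $\ell(A/\underline{x})\ge e(A)$ together with the identification with multiplicities of $V$ and $W$, in the analytic setting. I would handle it by passing to completions, which are the same as in the algebraic case, and citing \cite[Prop.~7.1 and Cor.~12.4]{Ful98}; a tangent-cone argument is an alternative. A simpler route also avoids most of this: length one already means the scheme $V\cap W$ is reduced at the generic point of $Z$, so it is smooth at a general point. If $V$ were singular along $Z$, every equation of $V$ would lie in $\mathfrak m^2$ at points of $Z$. The dimension count then forces the Jacobian of all defining equations to have rank below $\operatorname{codim} Z$, which contradicts smoothness of $V\cap W$ there. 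The same rank count gives transversality.
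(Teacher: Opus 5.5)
Your main argument is correct and is essentially the paper's route. The paper just cites Fulton's comparison of the intersection multiplicity $i(Z;V\cdot W;X)$ with the length $\ell(\sO_{V\cap W,Z})$ from above and with $e_ZV\cdot e_ZW$ from below, together with the characterisation of $i(Z;V\cdot W;X)=1$ by generic smoothness and transversality \cite[Proposition~8.2, Remark~8.2, \S20.4]{Ful98}, \cite[Tag 0B1I]{Stacks}. Your chain $\ell(A/\underline{x})\ge e(\underline{x},A)\ge e(A)=\mult_Z V\cdot\mult_Z W$ on $A=\sO_{V\times W,\Delta(Z)}$ is exactly the local algebra behind those results.

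Your first display is wrong as written. The intersection multiplicity is $e(\underline{x},A)$, whereas the display, read literally, equates it with the length of the artinian ring $\sO_{V\cap W,Z}$. That equality fails without Cohen--Macaulay hypotheses, though your argument never uses it.

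In fact $\ell(A/\underline{x})=1$ alone already says $\mathfrak m_A=(\underline{x})$ with $\dim A=\dim X$ generators. So $A$ is regular, and your Step three goes through with no multiplicity theory at all. This also spares you from justifying the product formula for $e(A)$ at the generic point of $\Delta(Z)$.

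In your alternative route, the assertion that every equation of $V$ lies in $\mathfrak m^2$ at a singular point is false unless $V$ is a hypersurface. The rank count only needs that the Jacobian of the equations of $V$ has rank $<\operatorname{codim}V$ there.
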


\begin{proof}
The proof is a combination of~\cite[Tag 0B1I]{Stacks} and~\cite[Proposition 8.2, Remark 8.2 and \S20.4]{Ful98}.
\end{proof}

\begin{lemma}\label{lemma:Nagata}
Let $X$ be a Cohen--Macaulay variety and let $D$ be a subvariety of codimension $1$ on $X$ such that $X$ is not normal at a general point of $D$. Then the multiplicity of $X$ at a general point of $D$ is at least $2$.
\end{lemma}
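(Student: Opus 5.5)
The plan is to localise at the generic point of $D$ and reduce to a statement about one-dimensional local rings. Let $\eta$ be the generic point of $D$ and put $A := \sO_{X,\eta}$, the local ring of $X$ along $D$. This is a one-dimensional Noetherian local domain, and it is Cohen--Macaulay because $X$ is. The multiplicity of $X$ at a general point of $D$ equals the Hilbert--Samuel multiplicity $e(A)$. In the analytic setting this uses the upper semicontinuity of multiplicity together with the fact that along a generic point of $D$ the multiplicity is computed by the local ring transversally to $D$; concretely, cut by a general transversal slice of dimension $\operatorname{codim} D = 1$ through a general point of $D$ and work with a curve singularity. The hypothesis that $X$ is not normal at a general point of $D$ means exactly that $A$ is not normal. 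A one-dimensional local domain is normal if and only if it is a DVR, so the hypothesis says $A$ is not regular.

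The key step is the classical fact, essentially due to Nagata, that a one-dimensional Cohen--Macaulay local ring of multiplicity one is regular. More generally, an unmixed local ring with $e(A)=1$ is regular, provided the completion satisfies the needed conditions; for analytic local rings these hold automatically, since they are excellent. For dimension one this can be argued directly. Pass to the completion, which may be assumed to have infinite residue field. Choose a parameter $x$ generating a minimal reduction of the maximal ideal $\mathfrak m$. Because $A$ is Cohen--Macaulay, $x$ is a nonzerodivisor, and
$$e(A) = e(x;A) = \ell(A/xA).$$
If $e(A)=1$, then $A/xA$ has length one. Hence $\mathfrak m = xA$, so $\mathfrak m$ is principal and $A$ is a DVR, i.e.\ regular and in particular normal. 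This contradicts the hypothesis, so $e(A)\geq 2$.

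To transfer this back to points: the multiplicity function $p\mapsto \mult_p X$ is upper semicontinuous. On a dense Zariski-open subset of $D$ it equals the multiplicity of the localisation at the generic point, computed either via the transversal slice or via the normal flatness/equimultiplicity stratification along $D$. Therefore the multiplicity at a general point of $D$ is $e(A)\geq 2$.

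The main obstacle is this last comparison, identifying the pointwise multiplicity at a general point of $D$ with that of the generic local ring in the analytic category. I would handle it by taking a general smooth transversal germ $S$ of dimension $\dim X - \dim D$ through a general point $p\in D$. Then $X\cap S$ is a Cohen--Macaulay curve germ, since a general slice of a Cohen--Macaulay space is again Cohen--Macaulay. It is non-normal at $p$, because the non-normal locus contains $D$ and a general slice of the normalisation is the normalisation of the slice. Moreover $\mult_p(X\cap S) = \mult_p X$ for a general slice. The one-dimensional argument above then applies directly to the analytic local ring $\sO_{X\cap S,p}$.
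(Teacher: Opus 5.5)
Your proof is correct, but it takes a different route from the paper. The paper's argument is one line. Since $X$ is not normal at a general point $p$ of $D$, it is in particular singular there, and Nagata's theorem gives $\mult_p X\geq 2$. The theorem is cited via Fulton's \emph{Intersection Theory}, Example~4.3.5, and says that a point of multiplicity one on a variety is a smooth point; it holds for unmixed local rings. That citation does not actually need the Cohen--Macaulay hypothesis, because local rings of reduced equidimensional spaces are already unmixed.

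You instead prove by hand the special case of Nagata's theorem that is needed. This is exactly where the Cohen--Macaulay assumption does its work. Cutting by general linear forms through $p$ gives a Cohen--Macaulay curve germ with the same multiplicity. For a one-dimensional Cohen--Macaulay local ring with principal reduction $xA$ of $\mathfrak m$, one has $e=\ell(A/xA)$, so $e=1$ forces $\mathfrak m=xA$. Your approach gives an elementary, self-contained proof. The paper's gives brevity and a statement that holds without Cohen--Macaulayness.

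Two streamlinings are worth making.

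First, the passage to the generic point of $D$ can be dropped entirely. That includes the comparison of $e(\mathcal{O}_{X,\eta})$ with the multiplicity at general points of $D$, which you flag as the main obstacle. Your slice argument is purely pointwise at any point $p\in D$ where $X$ is not normal.

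Second, you need not argue that the slice of the normalisation is the normalisation of the slice. That step needs an extra Nakayama argument to see that non-normality survives slicing. It is cleaner to argue as follows.
\begin{itemize}
\item If $\ell_1,\dots,\ell_k$ is a regular sequence in $\mathcal{O}_{X,p}$ with $\mathcal{O}_{X,p}/(\ell_1,\dots,\ell_k)$ regular of dimension one, then the embedding dimension of $\mathcal{O}_{X,p}$ is at most $k+1=\dim\mathcal{O}_{X,p}$.
\item Hence $X$ is smooth at $p$, so normal at $p$, which is a contradiction.
\end{itemize}
This in fact proves the stronger pointwise statement: on a Cohen--Macaulay space, $\mult_p X=1$ implies that $X$ is smooth at $p$.
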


\begin{proof}
The variety $X$ is singular at a general point of $D$ by assumption, hence the lemma follows from a theorem of Nagata; see for instance~\cite[Example 4.3.5(d)]{Ful98}.
\end{proof}

Recall also the following statement:

\begin{lemma} \label{lemma:birational-curves}
Let $\merom{\mu}{X'}{X}$ be a bimeromorphic map between normal compact Kähler spaces. Then for every point $x \in X$ the fibre $\fibre{\mu}{x}$ is chain connected by projective curves.
\end{lemma}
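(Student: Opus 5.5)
The plan is to reduce to the case of a bimeromorphic \emph{morphism} and then combine Zariski's main theorem with the standard fact that the exceptional fibres of a resolution are covered by rational curves. First I take a resolution of the graph. Let $\Gamma \subset X' \times X$ be the closure of the graph of $\mu$, and let $\tilde\Gamma \to \Gamma$ be a resolution of singularities. Then $p\colon \tilde\Gamma \to X$ is a bimeromorphic morphism from a compact manifold. Since $X'$ is Kähler, $\tilde\Gamma$ is of Fujiki class $\mathcal{C}$, and I may choose the resolution, via Hironaka's Chow lemma, so that $\tilde\Gamma$ is a compact Kähler manifold and $p$ is a projective morphism, i.e.\ a composition of blow-ups of $X$ along smooth centres after a first resolution. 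As I read it, the fibre $\fibre{\mu}{x}$ means the image in $X'$ of $\fibre{p}{x}$ under $\tilde\Gamma\to X'$. Chain connectedness by projective curves is preserved under holomorphic images, since curves map to curves or points and chains map to chains. So it suffices to show that $\fibre{p}{x}$ is chain connected by projective curves.

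Second, $\fibre{p}{x}$ is connected by Zariski's main theorem, because $X$ is normal and $p$ is bimeromorphic and proper. If I arrange $p$ to be projective over $X$, at least locally over a neighbourhood of $x$, then $\fibre{p}{x}$ is a connected projective variety. Any connected projective variety is chain connected by projective curves: any two points of an irreducible projective variety lie on an irreducible curve obtained by cutting with general hyperplanes through both points. Connectedness then lets me chain through irreducible components that meet.

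The main obstacle is ensuring projectivity of $p$ over $X$, since the spaces are only Kähler. I would first use a relative Chow lemma in the analytic category (Hironaka's flattening or Chow lemma): any proper bimeromorphic morphism to $X$ is dominated by a finite sequence of blow-ups of $X$ along closed analytic subspaces. Such a sequence is locally projective over $X$. Because the statement concerns a single fibre, local projectivity over a neighbourhood of $x$ is enough. If this step fails, the fallback is to argue directly: the fibres of a blow-up along a closed subspace are projective, being contained in the projectivised normal cone. An induction over the sequence of blow-ups, using connectedness at each stage via Zariski's main theorem, then yields that the total fibre is a connected union of projective varieties. That suffices for the argument above.
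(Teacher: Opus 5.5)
Your argument is correct and is essentially the paper's proof: Hironaka's Chow lemma dominates $\mu$ by a projective morphism to $X$, so every irreducible component of the fibre is the image of a projective scheme, and connectedness of the fibre follows from Zariski's main theorem because $X$ is normal. You spell out explicitly the connectedness step and the passage from a connected projective fibre to chains of curves, which the paper leaves implicit. The extra resolution of the graph, and making $\tilde\Gamma$ Kähler, are harmless but unnecessary.
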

\begin{proof}
By Chow's lemma~\cite[Corollary~2]{Hir75}, the bimeromorphic map is dominated by a projective morphism, so every irreducible component of a fibre is dominated by a projective scheme.
\end{proof}

\subsection{Line bundles and $(1,1)$-classes}

\begin{definition}\label{definitionnumericaldimension}
Let $X$ be a compact Kähler manifold and let $\alpha \in N^1(X)$ be a nef class. The numerical dimension of $\alpha$ is defined by 
$$ \nd (\alpha) := \max \{k \in \N \mid \alpha^k\neq 0 \text{ in } H^{2k}(X,\R)\}. $$
\end{definition}

\begin{remark} \label{remarknumericaldimension} 
A nef class $\alpha$ is big if and only if $\int_X \alpha^{\dim X} >0$~\cite[Theorem~0.5]{DP04} which is of course equivalent to $\nd(\alpha)=\dim X$. 

By~\cite[Proposition~6.21]{Dem12} the cohomology class $\alpha^{\nd (\alpha)}$ can be represented by a non-zero closed positive $(\nd (\alpha),\nd (\alpha))$-current $T$. Therefore, 
\[
\alpha^{\nd (\alpha)} \cdot \omega_X^{\dim X - \nd (\alpha)} > 0
\]
for any Kähler class $\omega_X$ on $X$.
\end{remark}

\begin{definition}
Let $X$ be a compact Kähler space and let $L$ be a line bundle on~$X$. We say that $L$ is \emph{cohomologically trivial} if $c_1(L)=0$ in $H^2(X, \R)$.
\end{definition}

\begin{remark} \label{remark:num-trivial}
On a projective scheme, a line bundle is cohomologically trivial if and only if $L \cdot C=0$ for every curve $C$ on $X$. This is no longer true on a compact Kähler manifold. Compact K\"ahler surfaces of algebraic dimension one give a counterexample.
\end{remark}

\begin{definition}
Let $X$ be a compact Kähler space with a surjective morphism 
$$
\holom{\gamma}{X}{X'}
$$ 
onto a compact Kähler space $X'$. A line bundle $L$ on $X$ is \emph{$\gamma$-cohomologically trivial} if the restriction of $L$ to every fibre is cohomologically trivial.
\end{definition}

\begin{remark} \label{remark:rel-num-trivial}
If the fibres of $\gamma$ are projective,  the line bundle $L$ is $\gamma$\nobreakdash-cohomo\-logically trivial if and only if $c_1(L)$ has degree zero on every curve contracted by $\gamma$.
\end{remark}

\begin{lemma} \label{lemma:invariance-degree-zero}
Let $X$ be a normal compact Kähler space with klt singularities and let $\gamma\colon X\to X'$ be a $K_X$-negative elementary contraction. Let $\alpha$ be a $(1,1)$-class on $X$ that has degree zero on the contracted extremal ray, and let $\alpha'$ be a $(1,1)$-class on $X'$ such that $\alpha = \gamma^* \alpha'$~\cite[Lemma 3.3]{HP16}. Then $\alpha$ is nef if and only if $\alpha'$ is nef, and in this case $\nd(\alpha)=\nd(\alpha')$.

Moreover, we have $\alpha \cdot C=0$ for every curve $C \subseteq X$ if and only if $\alpha' \cdot C'=0$ for every curve $C' \subseteq X'$.
\end{lemma}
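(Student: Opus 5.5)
We argue with the pullback $\alpha=\gamma^*\alpha'$ throughout, using that $\gamma$ is a $K_X$-negative elementary contraction of a klt Kähler space. Such a contraction has connected fibres, satisfies $\gamma_*\sO_X=\sO_{X'}$, and its positive-dimensional fibres are covered by curves whose classes lie in the contracted extremal ray $R$. They are rationally chain connected by the cone and contraction theorems in the Kähler setting. In particular the fibres are projective, so Remark~\ref{remark:rel-num-trivial} applies.

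\textbf{Nefness.} If $\alpha'$ is nef, then $\alpha=\gamma^*\alpha'$ is nef, since pullbacks of nef classes under holomorphic maps are nef: a Kähler form $\omega'$ on $X'$ pulls back to a semipositive form, and a Kähler form on $X$ can be added with arbitrarily small coefficient. Conversely, suppose $\alpha$ is nef. We use the Kähler cone and Mori cone duality for klt Kähler spaces (as in \cite{HP16}): a class is nef if and only if it is nonnegative on the closed cone of positive currents, equivalently on the dual of the Kähler cone. For a closed positive $(n'-1,n'-1)$-current class $\beta'$ on $X'$ we then have
\[
\alpha'\cdot\beta'=\alpha\cdot\gamma^*\beta'\ge 0
\]
after lifting $\beta'$ to a positive class on $X$ (possible since $\gamma$ is surjective). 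If that duality statement is not available, the alternative is to approximate directly. Choose a Kähler class $\omega$ on $X$ and write $\alpha+\varepsilon\omega=\gamma^*\alpha'+\varepsilon\omega$. Then use that $\gamma^*(\alpha'+\varepsilon\omega')$ differs from a Kähler class by a $\gamma$-ample correction. Expressing nefness via Demailly--Păun's characterisation avoids constructing metrics, and I expect this step to be the main obstacle. The cleanest route is probably to cite the relative Kähler criterion of \cite{HP16}: $\alpha'+\varepsilon\omega'$ is Kähler on $X'$ iff $\gamma^*(\alpha'+\varepsilon\omega')+\delta\omega$ is Kähler for small $\delta$.

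\textbf{Numerical dimension.} By the projection formula and $\gamma_*[X]=[X']$ when $\dim X=\dim X'$, we get $\alpha^k\cdot\gamma^*\omega'^{\,n-k}=\alpha'^k\cdot\omega'^{\,n-k}$, so $\nd(\alpha')\le\nd(\alpha)$ and vice versa via $\gamma^*$ injective on $H^{2k}$ classes represented by positive currents (Remark~\ref{remarknumericaldimension}). For a fibre-type contraction with $\dim X'<\dim X$, the class $\alpha^k=\gamma^*\alpha'^k$ vanishes exactly when $\alpha'^k$ does, because $\gamma^*$ is injective on cohomology: if $F$ is a general fibre and $\omega$ a Kähler class, then $\gamma^*c\cdot\omega^{\dim F}=(\deg)\,c\cdot\gamma_*\omega^{\dim F}$ with $\gamma_*\omega^{\dim F}$ a positive multiple of $[X']$.

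\textbf{Curves.} If $\alpha'\cdot C'=0$ for all $C'$, then for any curve $C\subseteq X$ we have $\alpha\cdot C=\alpha'\cdot\gamma_*C=0$, using the projection formula (and $\gamma_*C=0$ if $C$ is contracted). Conversely, given an irreducible curve $C'\subseteq X'$, take an irreducible curve $C\subseteq \gamma^{-1}(C')$ surjecting onto $C'$. Such a curve exists because $\gamma^{-1}(C')$ is compact and surjects onto $C'$; cut it by general hyperplane-like Kähler-analytic sections locally, or use that $\gamma$ is projective by \cite{HP16}. Then $0=\alpha\cdot C=\deg(C/C')\,\alpha'\cdot C'$, so $\alpha'\cdot C'=0$.
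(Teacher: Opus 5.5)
Your argument does not prove the implication ``$\alpha$ nef $\Rightarrow$ $\alpha'$ nef''. This is the real content of the first statement, and none of the three routes you sketch establishes it.

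\begin{itemize}
\item \textbf{Duality route.} The step ``lift $\beta'$ to a positive class on $X$ (possible since $\gamma$ is surjective)'' is unjustified. Closed positive currents of bidegree $(n'-1,n'-1)$ do not pull back in general. What you actually need is that $\gamma_*$ maps the closed cone of positive $(n-1,n-1)$-current classes on $X$ \emph{onto} the corresponding cone on $X'$. That is a non-trivial statement and does not follow from surjectivity of $\gamma$. Moreover, the duality between the nef cone and the cone of positive currents is a theorem of Demailly--P\u{a}un for compact K\"ahler manifolds \cite{DP04}. You cannot simply quote it for klt spaces in the generality of the lemma.
\item \textbf{The ``relative K\"ahler criterion''.} As formulated, this criterion is false. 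If $\gamma^*\alpha'$ is nef, then $\gamma^*(\alpha'+\varepsilon\omega')+\delta\omega$ is K\"ahler for every $\delta>0$ automatically, so the criterion would just assert the conclusion. Replacing $\alpha'+\varepsilon\omega'$ by the zero class gives a counterexample to its ``if'' direction.
\item \textbf{Direct approximation route.} You never explain how to produce K\"ahler forms on $X'$ from forms on $X$. That is exactly the difficulty, since smooth forms do not push forward to smooth forms.
\end{itemize}

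The missing ingredient is the fact that nefness descends along proper surjective morphisms. The paper does not reprove this; it takes the whole first statement (nefness and $\nd$) from \cite[Lemma~2.38]{DHP24}.

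If $X'$ were a compact K\"ahler manifold, you could use the numerical criterion of \cite{DP04}. It says $\alpha'$ is nef iff $\int_Z\alpha'\wedge\omega'^{p-1}\geq 0$ for every irreducible $Z\subseteq X'$ of dimension $p$ and every K\"ahler form $\omega'$. To check this, take a component $W\subseteq\gamma^{-1}(Z)$ of dimension $q$ dominating $Z$. Then $\int_W\gamma^*\alpha'\wedge\gamma^*\omega'^{p-1}\wedge\omega^{q-p}=c\int_Z\alpha'\wedge\omega'^{p-1}$ with $c>0$. The left-hand side is non-negative because it pairs the nef class $\alpha$ with a closed positive current. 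For singular $X'$ this still needs a singular version of the criterion, so citing the literature is the sensible route.

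The rest of your proof is fine:
\begin{itemize}
\item Your $\nd$ argument works up to singular-space bookkeeping. You use $\alpha^k=\gamma^*\alpha'^k$ for one inequality and positivity of $\alpha'^{\nd(\alpha')}\cdot\omega'^{n'-\nd(\alpha')}$ for the other, cut with $\omega^{\dim F}$ in the fibre-type case.
\item Your curve argument is the paper's once you rely on projectivity of $\gamma$. Drop the ``cut locally by sections'' alternative: compactness of $\gamma^{-1}(C')$ alone does not give a curve dominating $C'$. For example, an elliptic surface of algebraic dimension one over $C'$ has no multisection.
\end{itemize}
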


\begin{proof}
The first statement follows from~\cite[Lemma 2.38]{DHP24}.

Now we show the second statement. It is clear by the projection formula that $\alpha \cdot C=$ for every curve $C \subseteq X$ if $\alpha' \cdot C'=0$ for every curve $C' \subseteq X'$. Vice versa, assume that the property holds on $X$, and let $C' \subseteq X'$ be an irreducible curve. Since $C'$ is compact, it must be projective. The elementary contraction $\gamma$ is a projective morphism, so $\fibre{\gamma}{C'}$ is a projective scheme. In particular, it contains a curve $C \subseteq \fibre{\gamma}{C'} \subseteq X$ such that $\gamma(C)=C'$. Thus, we can again use the projection formula.
\end{proof}

As a consequence, we obtain:

\begin{lemma} \label{lemma:invariance-degree-zero-MMP}
Let $X$ be a normal compact Kähler space with klt singularities and let $\merom{\gamma}{X}{X'}$ be a sequence of steps in a $K_X$-MMP. Let $\sL$ be a line bundle on $X$ that has degree zero on every curve on $X$. Then there exists a line bundle $\sL'$ on $X'$ with the following properties:
\begin{enumerate}[\normalfont (a)]
\item $\sL'$ has degree zero on every curve on $X'$,
\item if $(p,q)\colon W\to X\times X'$ is a resolution of indeterminacies of $\gamma$, then $p^*\sL\isom q^*\sL'$,
\item $\kappa(X,\sL)=\kappa(X',\sL')$ and $\nd(\sL)=\nd(\sL')$,
\item $\sL$ is semiample if and only if $\sL'$ is semiample.
\end{enumerate}
\end{lemma}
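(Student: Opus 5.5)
The plan is to argue by induction on the number of steps of the MMP, so it suffices to treat a single step. Each step is either a divisorial contraction or a flip; in both cases the step is built from a $K_X$-negative elementary contraction $\gamma\colon X\to Z$, followed in the flipping case by the flip $\gamma^+\colon X^+\to Z$. Since $\sL$ has degree zero on every curve, it has degree zero on the contracted extremal ray. By the cone/contraction theorem for klt Kähler spaces (the relative version used in~\cite[Lemma 3.3]{HP16}), there is a line bundle $\sL_Z$ on $Z$ with $\sL\isom\gamma^*\sL_Z$. I take this at the level of line bundles, not only classes: $\sL$ is $\gamma$-numerically trivial and $\gamma$ is projective with $-K_X$ $\gamma$-ample, so relative basepoint-freeness gives the descent. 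By Lemma~\ref{lemma:invariance-degree-zero}, $\sL_Z$ has degree zero on every curve of $Z$. In the divisorial case I set $\sL':=\sL_Z$. In the flipping case I set $\sL':=(\gamma^+)^*\sL_Z$. Then $\sL'$ has degree zero on curves of $X^+$ by the projection formula, since every curve in $X^+$ maps to a point or a curve in $Z$. This gives (a).

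For (b), I would take a resolution $(p,q)\colon W\to X\times X'$ of the single step. Since $\gamma\circ p = \gamma^+\circ q$ (or $\gamma\circ p=q$ in the divisorial case), I get
\[
p^*\sL\isom p^*\gamma^*\sL_Z\isom q^*(\gamma^+)^*\sL_Z=q^*\sL'.
\]
For a composition of steps, any resolution of the composite map is dominated by one that resolves every intermediate step. The isomorphism pulls back, and because $p$ and $q$ are bimeromorphic with connected fibres, the statement passes down to any given resolution. More simply, a line bundle pulled back from $X$ is determined on $W$.

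Properties (c) and (d) then follow from (b). Kodaira dimension is invariant under pullback by a bimeromorphic morphism with connected fibres between normal spaces, since $H^0(W,p^*\sL^{m})=H^0(X,\sL^m)$ by $p_*\sO_W=\sO_X$. The same holds for numerical dimension: cohomology classes pull back injectively in top degree and $\int_W p^*\alpha^k\wedge p^*\omega^{n-k}$ detects $\alpha^k\neq 0$; alternatively one can apply Lemma~\ref{lemma:invariance-degree-zero} directly. For semiampleness, $\sL$ is semiample iff $p^*\sL$ is: one direction is clear, and for the other a section of $p^*\sL^m$ is the pullback of a section of $\sL^m$, so base loci correspond. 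The same reasoning applies to $q$.

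The main obstacle is the descent $\sL\isom\gamma^*\sL_Z$ as an honest line bundle in the Kähler setting rather than merely a $(1,1)$-class. Here I would rely on the relative basepoint-free/cone theorem for projective morphisms of klt Kähler spaces as in~\cite{HP16,DHP24}. I would need to make sure the hypothesis of degree zero on all curves (not just cohomological triviality) is exactly what that theorem requires.
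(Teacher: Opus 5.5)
Your proposal is correct and follows the paper's proof. Both reduce to a single flip or divisorial contraction, descend $\sL$ along the extremal contraction $X\to Z$ to a line bundle on $Z$, pull it back to $X^+$, and get (a) from Lemma~\ref{lemma:invariance-degree-zero} plus the projection formula, with (b)--(d) then formal. The line-bundle-level descent you flag as the main obstacle is exactly what the paper takes from Nakayama's cone theorem~\cite[Theorem~4.12(3)(b)]{Nak87}, which only needs $\sL$ to have degree zero on the contracted ray.
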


\begin{proof}
We may assume that the MMP is a composition of divisorial contractions or flips $\gamma_i\colon X_i\dashrightarrow X_{i+1}$ for $i=1,\dots,k-1$, where $X_1:=X$ and $X_k:=X'$. Thus, we can prove the lemma one step of the MMP at the time. Therefore, we may assume that $\gamma$ itself is a divisorial contraction or a flip.

Assume that $\gamma$ is a flip; the case of a divisorial contraction and of a contraction of fibre type is analogous. Let $\pi\colon X\to Z$ be the corresponding flipping contraction and let $\pi'\colon X'\to Z$ be the corresponding flipped contraction. Since $\sL$ has degree zero on every curve flipping curve, by the Cone theorem~\cite[Theorem~4.12(3)(b)]{Nak87} there exists a line bundle $\mathcal M$ on $Z$ such that $\sL\isom\pi^*\mathcal M$. Set $\sL':=(\pi')^*\mathcal M$. Then (b), (c) and (d) are clear, whereas part (a) follows by applying Lemma~\ref{lemma:invariance-degree-zero} to the map $\pi$ and then to the map $\pi'$.
\end{proof}

\begin{lemma} \label{lemma-unique-divisor}
Let $X$ be a compact Kähler manifold such that $\pic(X)\otimes\Q \isom \Q \sL$, where $\sL$ is a line bundle on $X$. If $\kappa(X, \sL)=0$, then $X$ contains a unique prime divisor~$D$, and there exists a rational number $r>0$ such that $\sL\isom_\Q\sO_X(rD)$.
\end{lemma}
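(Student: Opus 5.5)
Since $\kappa(X,\sL)=0\ge 0$, I first pass to a multiple: there is $m>0$ with $h^0(X,\sL^{\otimes m})\neq 0$. Because $\kappa=0$, we in fact have $h^0(X,\sL^{\otimes k})\le 1$ for all $k>0$. Fix a nonzero section $s\in H^0(X,\sL^{\otimes m})$ and write its divisor as $E=\sum_i a_i D_i$, with $a_i>0$ and the $D_i$ distinct prime divisors. If the multiple $m$ can be chosen so that $\sL^{\otimes m}$ is trivial, the statement is vacuous (and in that case one has to read it as concerning a nontrivial $\sL$). So assume $\sL^{\otimes m}$ is nontrivial; then $E\neq 0$.

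The first step shows that every prime divisor of $X$ occurs among the $D_i$. Let $D'$ be any prime divisor. Since $\pic(X)\otimes\Q\isom\Q\sL$, we have $\sO_X(D')\isom_\Q \sL^{\otimes t}$ for some $t\in\Q$. Thus there are integers $p>0$ and $q$ with $\sO_X(pD')\isom\sL^{\otimes q}$, at least after passing to a further multiple to kill torsion in $\pic(X)$.

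I claim $q>0$. If $q\le 0$, then $\sL^{\otimes (-q)m}\otimes \sL^{\otimes qm}$ is trivial, and we would obtain a nonzero section of $\sO_X(mpD' + (-q)E)\isom\sO_X$. That section vanishes along the nonzero effective divisor $mpD'$, which is impossible. Hence $q>0$.

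Now $pmD'$ and $qE$ are both effective divisors whose associated line bundle is $\sL^{\otimes qm}$. Since $h^0(X,\sL^{\otimes qm})=1$, the two divisors coincide: $pmD'=qE$. Consequently $D'$ is one of the $D_i$ and $E$ is supported on $D'$.

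Applying this to each $D_i$ shows that $E$ is supported on every $D_i$ at once. Hence $E$ has a single component, $E=aD$, and every prime divisor equals $D$, which gives uniqueness. Finally, $\sL^{\otimes m}\isom\sO_X(aD)$ yields $\sL\isom_\Q\sO_X(rD)$ with $r=a/m>0$.

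The main obstacle is the torsion in $\pic(X)$ together with the reading of the isomorphism $\pic(X)\otimes\Q\isom\Q\sL$. I handle it throughout by replacing isomorphisms with isomorphisms of suitable tensor powers, which is legitimate because the statement only asserts a $\Q$-linear equivalence.
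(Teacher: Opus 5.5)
Your proof is correct and follows essentially the same route as the paper. Both rest on the fact that every prime divisor is $\Q$-linearly equivalent to a positive multiple of $\sL$, so two distinct prime divisors would give two non-proportional sections of some $\sL^{\otimes k}$, contradicting $\kappa(X,\sL)=0$. You phrase this as $h^0\le 1$ forcing $pmD'=qE$, and you supply two details the paper leaves implicit: the positivity $q>0$ and the bookkeeping for torsion in $\pic(X)$.

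One quibble: the torsion case is not ``vacuous''. If $\sL$ is torsion, the conclusion actually fails, so that case has to be excluded by reading the hypothesis as saying $\pic(X)\otimes\Q$ is one-dimensional and spanned by $\sL$. This is also what the paper tacitly assumes.
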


\begin{proof}
Since $\kappa(X, \sL)=0$, there exists a prime divisor $D$ on $X$, and clearly the last statement of the lemma holds since $\pic(X)\otimes\Q \isom \Q \sL$.

Let $D'$ be another prime divisor on $X$. Then as above, there exists a rational number $r'>0$ such that $\sL\isom_\Q\sO_X(r'D')$. If $D\neq D'$, then $\kappa(X, L)>0$, a contradiction which finishes the proof.
\end{proof}

Finally, let us recall the following result.

\begin{theorem} \label{thm:3fold-abundance}
Let $(X, \Delta)$ be a normal $\Q$-factorial compact K\"ahler threefold with at most klt singularities such that $K_X+\Delta$ is nef. Then $K_X+\Delta$ is semiample.
\end{theorem}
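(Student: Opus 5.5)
Theorem~\ref{thm:3fold-abundance} is log abundance for klt pairs on compact Kähler threefolds. Since the paper introduces it with ``let us recall'', the proof should be an assembly of known results rather than a new argument, and I would organise it by the Kodaira dimension of $X$ (more precisely, by the algebraic dimension $a(X)$ of $X$).

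\emph{Projective case.} If $X$ is projective, Moishezon or of algebraic dimension $3$, I reduce to the algebraic setting. A $\Q$-factorial klt compact Kähler Moishezon threefold is projective: rational singularities together with Kählerness give Namikawa-type projectivity. The statement is then the classical log abundance for klt threefolds, due to Keel--Matsuki--McKernan and Kawamata--Miyaoka--Kollár--Shokurov.

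\emph{Non-algebraic case.} Here I would invoke the Kähler threefold MMP and abundance results of Höring--Peternell and Campana--Höring--Peternell, and for pairs Das--Hacon--Păun (\cite{DHP24}). The steps are as follows.
\begin{enumerate}
\item Establish nonvanishing, $\kappa(X,K_X+\Delta)\geq 0$. When $\Delta=0$ this is due to Campana--Höring--Peternell. For a klt pair I would pass to a log resolution and use the reduction in~\cite{DHP24}.
\item Prove semiampleness by cases on $\nd(K_X+\Delta)$.
\begin{enumerate}
\item If $\nd=3$, the divisor is nef and big, so $X$ is Moishezon and we are back in the projective case.
\item If $\nd=0$, Kähler arguments give $K_X+\Delta\equiv 0$, and torsion follows from the Beauville--Bogomolov-type decomposition for klt spaces or from nonvanishing.
\item If $\nd\in\{1,2\}$, take an effective $D\sim_\Q K_X+\Delta$ and run the standard argument: use the MRC or algebraic reduction $X\dashrightarrow Z$, and show that $K_X+\Delta$ is pulled back from the base. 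Abundance on the lower-dimensional base (curves and surfaces) then gives semiampleness, using Lemma~\ref{lemma:invariance-degree-zero-MMP}-type bookkeeping for the line bundles.
\end{enumerate}
\end{enumerate}

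The main obstacle is the intermediate numerical dimension case in the non-algebraic setting, specifically proving $\kappa=\nd$. This is the deep content of~\cite{DHP24}, so in the paper I would simply cite the relevant theorem there, together with Campana--Höring--Peternell for the case $\Delta=0$, rather than reprove it.
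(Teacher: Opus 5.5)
Your bottom line, proving the statement by citation, is also what the paper does. However, the citations you propose do not cover the statement.

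The paper uses \cite[Theorem~1.1]{CHP16} only for $\Q$-factorial \emph{terminal} threefolds with $\Delta=0$. It then relies on \cite[Theorem~1.2]{DO24} and \cite{GP24}, which repair a gap in the original proof of \cite{CHP16} and extend the result to klt pairs.

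Your chain breaks in two places:
\begin{enumerate}
\item Citing Campana--H\"oring--Peternell for the case $\Delta=0$ is not enough. First, \cite{CHP16} treats only terminal $X$. For a klt but non-canonical $X$, a terminalisation already produces a nonzero boundary, so even $\Delta=0$ needs the pair version. Second, the argument in \cite{CHP16} is itself incomplete without the later repair.
\item You place the key step for pairs (nonvanishing and $\kappa=\nd$) in \cite{DHP24}. The paper uses that reference only for the MMP bookkeeping behind Lemma~\ref{lemma:invariance-degree-zero}. It attributes log abundance for klt K\"ahler threefold pairs to \cite[Theorem~1.2]{DO24} together with \cite{GP24}.
\end{enumerate}

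Your own outline of the non-algebraic case does not close this hole either. The reduction of the Moishezon case to the projective one, via Namikawa's criterion and then Keel--Matsuki--McKernan, is fine.
\begin{enumerate}
\item In case (c) with $a(X)=0$, the base of the algebraic reduction is a point. So ``$K_X+\Delta$ is pulled back from the base'' just means $K_X+\Delta\equiv 0$. That is exactly the assertion that $\kappa(X,K_X+\Delta)=0$ forces $\nd(K_X+\Delta)=0$. This is the hard case, and your sketch gives no mechanism for it.
\item For $a(X)\geq 1$, writing $K_X+\Delta$ as a pullback from the base is not automatic. It requires a relative MMP and a canonical bundle formula.
\end{enumerate}
Since the theorem is only recalled in the paper, the fix is simply to cite the correct sources as the paper does. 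The step you flagged as the main obstacle cannot be delegated to \cite{DHP24} and \cite{CHP16} alone.
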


For terminal threefolds, this is~\cite[Theorem~1.1]{CHP16}; the gap in the proof was repaired and extended to pairs in~\cite[Theorem~1.2]{DO24},~\cite{GP24}.

\subsection{Birational geometry}

\begin{theorem}~\cite[Theorem~3.1]{Ue75}
Let $X$ be an irreducible and reduced compact complex space. Then there exists a meromorphic map 
$$ \merom{\alpha}{X}{A(X)} $$
onto a normal projective variety $A(X)$ such that we have an isomorphism $\C(X) \isom \C\big(A(X)\big)$ between the field of meromorphic functions. 

The map is unique up to bimeromorphic equivalence of meromorphic maps.
We call $\alpha$ the \emph{algebraic reduction of $X$} and $\dim A(X)$ the \emph{algebraic dimension of $X$}. We say that $X$ is \emph{Moishezon} if $\dim X = \dim A(X)$.
\end{theorem}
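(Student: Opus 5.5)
The statement is Ueno's classical theorem on the algebraic reduction. I would prove it in three steps, working throughout with the field $\C(X)$ of meromorphic functions, where the main input is finiteness of transcendence degree.

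\textbf{Step 1: $\C(X)$ is a finitely generated field of transcendence degree $\le \dim X$.} Since $X$ is irreducible and reduced, $\C(X)$ is a field. I would invoke the Siegel--Remmert--Thimm theorem: any $\dim X+1$ meromorphic functions are algebraically dependent. Replacing $X$ by a resolution does not change $\C(X)$, so this is the compact-manifold case. It gives $\operatorname{trdeg}_\C \C(X) =: a \le \dim X$.

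For finite generation, I would choose a transcendence basis $f_1,\dots,f_a$ and show that $[\C(X):\C(f_1,\dots,f_a)]$ is bounded. The bound comes from the same Siegel-type estimate: the degree of the minimal polynomial of any $g\in\C(X)$ over $\C(f_1,\dots,f_a)$ is uniformly bounded, via counting sections of bounded pole order. A bounded-degree separable algebraic extension in characteristic $0$ is finite by the primitive element theorem. So $\C(X)=\C(f_1,\dots,f_a,g)$.

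\textbf{Step 2: construct $\alpha$.} Take a normal projective variety $A(X)$ with function field isomorphic to $\C(X)$; for example, normalise the projective closure of the affine variety cut out by the relations among $f_1,\dots,f_a,g$. Fix generators $h_0=1,h_1,\dots,h_N$ of $\C(X)$ that give an embedding of $A(X)$ birationally in $\PP^N$. The map $x\mapsto[1:h_1(x):\dots:h_N(x)]$ is meromorphic on $X$: its graph is the closure of the graph over the common holomorphy locus, which is analytic by Remmert--Stein. Its image is an irreducible analytic, hence algebraic (Chow), subvariety of $\PP^N$. Its function field is generated by the $h_i$, so the image is birational to $A(X)$. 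Composing with this birational map gives $\alpha$. Then $\alpha^*\colon\C(A(X))\to\C(X)$ is surjective because it hits the generators, and injective because $\alpha$ is dominant.

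\textbf{Step 3: uniqueness.} Suppose $\alpha'\colon X\dashrightarrow A'$ is another such map. The composite field isomorphisms $\C(A)\cong\C(X)\cong\C(A')$ come from a birational map $\beta\colon A\dashrightarrow A'$ with $\beta\circ\alpha=\alpha'$. This holds because the two meromorphic maps agree on the rational functions that define the coordinates.

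\textbf{Main obstacle.} The real content is Step 1, in particular the bound on transcendence degree and the algebraicity of the extension. I would take the Siegel--Remmert--Thimm estimate on a resolution as the black box, or cite \cite[Theorem~3.1]{Ue75} directly, since the paper clearly treats this as a recalled classical result.
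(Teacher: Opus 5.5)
Your Steps 2 and 3 are the standard construction and are fine. The paper gives no argument of its own here; it simply quotes \cite[Theorem~3.1]{Ue75}, and the Siegel--Remmert--Thimm statement behind it already asserts that $\C(X)$ is a finitely generated field. If you cite that full statement, your outline is complete. The gap is in the mechanism you propose yourself for finite generation in Step 1, which does not work as written.

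Counting the $\C$-linearly independent functions $f^\alpha g^j$ ($|\alpha|\le k$, $0\le j<d$) against a bound on functions with prescribed poles cannot bound $d$ independently of $g$. The $g^j$ have poles along the polar divisor of $g$, with order growing with $j$, so the space you count in depends on $g$. Moreover, if your only input is the Siegel estimate of order $k^{\dim X}$, then for $a<\dim X$ comparing $d\binom{k+a}{a}$ with a quantity of order $k^{\dim X}$ gives no bound on $d$ at all.

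Two further ideas are needed:
\begin{enumerate}
\item Replace $g$ by $h=\phi g$, with $0\neq\phi\in\C[f_1,\dots,f_a]$ clearing the denominators of its minimal polynomial. Then $h$ is integral over $\C[f_1,\dots,f_a]$, so on your smooth model it has poles only along the polar divisor $P$ of the $f_i$, say bounded by $cP$.
\item Use the sharper estimate $h^0(X,\sO(kP))\le C_P\,k^{a}$, i.e.\ $\kappa(X,P)\le a(X)$, where $C_P$ depends only on $P$.
\end{enumerate}
Then $d\binom{k+a}{a}\le h^0\bigl(X,\sO((k+(d-1)c)P)\bigr)$ for all $k$, and letting $k\to\infty$ gives $d\le a!\,C_P$, uniformly in $g$.

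A more economical route also produces the connected-fibre model the paper uses in Remark~\ref{remark:alg-red}. Resolve the graph of $(f_1,\dots,f_a)\colon X\dashrightarrow\PP^a$ to $\tilde X\to\PP^a$ and take the Stein factorisation $\tilde X\to W\to\PP^a$. Each $g\in\C(X)$ is algebraic over $\C(f_1,\dots,f_a)\subseteq\C(W)$. On a general fibre of $\tilde X\to W$, which is connected and smooth and hence irreducible, $g$ therefore takes finitely many values, so it is constant there. Thus $g$ descends to the normal space $W$, and $\C(X)=\C(W)$ is a finite extension of $\C(\PP^a)$. Since $W$ is finite over $\PP^a$, it is projective, so $A(X):=W$ settles Steps 1 and 2 at once.

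A small correction in Step 2: Remmert--Stein does not apply to extending the graph over the common holomorphy locus. The set $Z\times\PP^N$ across which you extend, with $Z$ the union of the polar sets of the $h_i$, has dimension $\dim X-1+N\ge\dim X$. Instead, write $h_i=p_i/q$ locally with a common denominator. Then take the graph closure as a union of irreducible components of the analytic set $\{q\,y_i=p_i\,y_0,\ p_i\,y_j=p_j\,y_i\}$.
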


\begin{remark} \label{remark:alg-red} By definition, cf.~\cite[Definition~3.3]{Ue75}, the algebraic reduction of a singular complex space is given by a modification $X' \rightarrow X$ such that $X'$ is a smooth compact complex space
and $X' \rightarrow A(X)$ is holomorphic with connected fibres.
\end{remark}

For lack of reference let us prove the following well-known statement:

\begin{lemma} \label{lemma:alg-dimension}
Let $X$ and $Y$ be irreducible and reduced compact complex spaces, and let
$\merom{f}{X}{Y}$ be a generically finite meromorphic map. Then $a(X)=a(Y)$. 
\end{lemma}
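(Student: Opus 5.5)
We prove the lemma by comparing fields of meromorphic functions, using the characterisation $a(Z) = \operatorname{trdeg}_\C \C(Z)$ for an irreducible reduced compact complex space $Z$. This equality follows from Remark~\ref{remark:alg-red} and the isomorphism $\C(Z)\isom\C(A(Z))$, since $\C(A(Z))$ has transcendence degree $\dim A(Z)$. Moreover, $\C(Z)$ is a finitely generated field extension of $\C$. A generically finite meromorphic map is surjective onto $Y$ (we implicitly read the statement this way, i.e.\ $f$ is dominant and $\dim X=\dim Y$). So it suffices to show that pullback $f^*\colon \C(Y)\to\C(X)$ is a field embedding and that $\C(X)$ is algebraic over its image.

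\emph{Step 1: reduction to a morphism.} We take the graph $\Gamma\subset X\times Y$ of $f$ and a resolution $\widetilde X\to\Gamma$. Then $p\colon\widetilde X\to X$ is a modification and $g\colon \widetilde X\to Y$ is a surjective, generically finite holomorphic map. Since $p$ is bimeromorphic, $\C(\widetilde X)\isom\C(X)$. Hence we may assume that $f$ is holomorphic, surjective, and generically finite, and after Stein factorisation also that it is proper with finite general fibres.

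\emph{Step 2: $a(Y)\le a(X)$.} Pullback of meromorphic functions along a surjective holomorphic map is well defined, because $g(X)$ is not contained in the polar set. It is injective, since a function vanishing on $X$ vanishes on the dense image. Therefore $\C(Y)\hookrightarrow\C(X)$, which gives the inequality of transcendence degrees.

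\emph{Step 3: $\C(X)$ is algebraic over $g^*\C(Y)$.} This is the main step. Let $d$ be the degree of $g$, and fix a dense Zariski open $Y_\circ\subset Y$ over which $g$ is finite étale of degree $d$. For $h\in\C(X)$, we form the elementary symmetric functions $\sigma_i(h)$ of the values of $h$ on the fibres over $Y_\circ$. These $\sigma_i(h)$ are holomorphic away from the union of the polar locus image and the branch locus, which is a proper analytic subset of $Y$. They are locally bounded quotients near that set, since $g$ is proper and $h$ is locally a quotient $u/v$; concretely, one applies the same construction to $v$ and to $u\prod(\text{conjugates of }v)$. By the Riemann extension theorem on the normalisation, together with Levi's extension for meromorphic functions across proper analytic sets, the $\sigma_i(h)$ extend to meromorphic functions on $Y$. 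Then $h$ satisfies the monic polynomial $\prod_j (T-h_j)=\sum (-1)^i g^*\sigma_i(h)T^{d-i}$, so $h$ is algebraic over $g^*\C(Y)$. Consequently $\operatorname{trdeg}\C(X)=\operatorname{trdeg}\C(Y)$, that is, $a(X)=a(Y)$.

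The delicate point is the meromorphic extension of the symmetric functions across the branch and indeterminacy loci. We handle it by writing $h$ locally as a quotient of holomorphic functions and using properness of $g$ to push forward norms, as in the standard proof that the norm map $g_*$ preserves meromorphic functions for finite maps (cf.~\cite{Ue75}).
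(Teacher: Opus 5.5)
Your proof is correct, and it takes a different route from the paper's. You read the hypothesis as including dominance; that is also how the paper uses the lemma, and its own proof needs surjectivity too.

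\textbf{The paper's route.} The paper works with divisors rather than function fields. After making $X$, $Y$ smooth and $f$ and both algebraic reductions holomorphic, it takes $D$ to be the pullback of a general very ample divisor from $A(X)$, so that $\kappa(X,D)=a(X)$. It notes that $f(D)$ is a prime Cartier divisor with $D\le f^*f(D)$, and concludes
\[
a(X)=\kappa(X,D)\le\kappa\bigl(X,f^*f(D)\bigr)=\kappa\bigl(Y,f(D)\bigr)\le a(Y)
\]
by Ueno's invariance of Iitaka dimension under pullback \cite[Theorem~5.13]{Ue75}.

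\textbf{Your route.} You work directly with $\C(X)$. The characteristic-polynomial (norm) construction shows that $\C(X)$ is algebraic over $f^*\C(Y)$, in fact finite of degree at most $\deg f$. This is a stronger and more intrinsic statement, and it matches the definition of algebraic dimension as a transcendence degree. The price is that you must carry out by hand the analytic extension across the branch locus, which the paper hides inside Ueno's theorem.

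\textbf{Two remarks on your extension step.}
\begin{itemize}
\item Levi's theorem extends meromorphic functions only across sets of codimension at least two, not across arbitrary proper analytic sets. The codimension-one part therefore rests entirely on your norm trick: the coefficients of $\prod_j (v_jT-u_j)$ are locally bounded holomorphic functions, so they extend on the normalisation by Riemann extension, and you then divide by the nonzero norm of $v$.
\item Writing $h=u/v$ on all of $g^{-1}(W)$ requires $g$ to be finite over $W$. Replacing $X$ by the normal finite cover from the Stein factorisation gives exactly this. Once you make that replacement, Levi's theorem is not needed at all.
\end{itemize}
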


\begin{proof}
By Remark~\ref{remark:alg-red}, we can assume without loss of generality that
$X$ and $Y$ are smooth, and that $f$ and the algebraic reductions of $X$ and $Y$ are holomorphic. It is clear that $a(X) \geq a(Y)$, so let us show the other inclusion: let $D \subset X$ be  the pull-back of a general very ample divisor from $A(X)$, then $\kappa(X,D)=a(X)$.
Since $f$ is generically finite the image $f(D) \subset Y$ is a prime divisor. Note that
$f(D)$ is Cartier since $Y$ is smooth, and $D \leq f^* f(D)$. Since
$\kappa(Y, f(D)) \leq a(Y)$ and $\kappa(Y, f(D))= \kappa(X, f^*f(D))$ by~\cite[Theorem~5.13]{Ue75},
we obtain
$$
a(X)=\kappa(X, D) \leq \kappa(X, f^*f(D))  \leq a(Y).
$$
\end{proof}

\begin{theorem}(Campana connectedness,~\cite[Th\'eor\`eme~3]{Cam81})\label{theorem campana connectedness}
Let $X$ be a normal compact Kähler variety that is chain connected by projective curves, i.e.\ two general points can be connected by a chain of projective curves. Then $X$ is Moishezon.
\end{theorem}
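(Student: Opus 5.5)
The plan is to follow Campana's original strategy: use the cycle space to turn ``chain connected by projective curves'' into a finite, compact construction, and then bootstrap algebraicity along the chains.

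First I would reduce to finitely many compact families of curves. Let $\mathcal C_1(X)$ denote the Barlet space of $1$-cycles of $X$. Since $X$ is compact Kähler, it lies in Fujiki's class $\mathcal C$. Hence every irreducible component $S$ of $\mathcal C_1(X)$ is compact (Lieberman--Fujiki) and again of class $\mathcal C$, and its universal family $U_S \to S$, $U_S \to X$ is proper. There are only countably many components. Since two general points are joined by a chain of projective curves, a Baire-category argument shows that finitely many components $S_1,\dots,S_r$ already suffice: a general pair $(x,y)$ is joined by a chain of bounded length $k$ whose members are cycles in $S_1\cup\dots\cup S_r$.

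Second, for $x$ general I would define $V_0(x)=\{x\}$ and
$$V_{j+1}(x) := \bigcup \{ \text{supports of cycles } c \in S_1\cup\dots\cup S_r \text{ with } c \cap V_j(x)\neq\emptyset \}.$$
Each $V_j(x)$ is a closed analytic subset, since it is the image under the proper map $U_S\to X$ of the preimage of the incidence locus. By the choice of the $S_i$, we get $V_k(x)=X$ for $x$ general. So $X$ is the image of an iterated incidence space
$$W_k \to W_{k-1} \to \dots \to W_1 \to \{x\}.$$
Here $W_{j+1}$ is the set of pairs (point of $W_j$, cycle through its image) together with a point on that cycle. Thus $W_{j+1}\to W_j$ is a proper family whose fibres are projective curves, i.e. unions of members of the $S_i$ through a given point.

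Third, I would prove by induction on $j$ that every irreducible component of $W_j$ dominating $V_j(x)$ is Moishezon. Lemma~\ref{lemma:alg-dimension} then gives that $V_j(x)$ is Moishezon, and for $j=k$ it gives that $X$ is Moishezon. The inductive step is the heart of the matter, and I expect it to be the main obstacle: a proper family of projective curves over a Moishezon base need not have Moishezon total space. What I would try first is Fujiki's theory. The relevant parameter space is $T_j := \{ c \in S_i \mid c \cap V_j(x)\neq \emptyset\}$, and I would compare it with the space of cycles in $X\times V_j(x)$, or use the relative Barlet space of the incidence correspondence over $V_j(x)$.

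The goal is to show that $T_j$ is dominated by a space that is Moishezon over $V_j(x)$. The input is that a family of projective curves over a Moishezon base is relatively Moishezon, because a relatively ample class can be produced from the Kähler form restricted to fibres, using class $\mathcal C$. Combined with the Moishezon base, this makes the total space Moishezon. If this direct route fails, the fallback is Campana's trick: consider the meromorphic map $X \dashrightarrow \mathcal C(X)$, $y\mapsto V_j(y)$, and argue by induction on $\dim X$. The idea is that the general $V_j(y)$ is a proper subvariety chain connected by projective curves, hence Moishezon by induction, and the quotient must be a point at the stabilising step.
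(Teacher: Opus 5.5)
The paper does not prove this statement; it quotes \cite[Th\'eor\`eme~3]{Cam81} as a black box. Your sketch therefore has to stand on its own, and its core step fails.

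Your inductive step (every component of $W_j$ dominating $V_j(x)$ is Moishezon) rests on the claim that a family of projective curves over a Moishezon base is relatively Moishezon, because the K\"ahler form restricted to the fibres yields a relatively ample class. This is false. A Moishezon morphism (Definition~\ref{defn:moishezon-morphism}) needs a \emph{line bundle} that is big on the general fibre, and a K\"ahler class is only a real $(1,1)$-class. A two-dimensional torus $T$ with $a(T)=1$ is a counterexample. Its algebraic reduction $T\to E$ is a smooth family of elliptic curves over an elliptic curve, and $T$ is K\"ahler. Yet $T$ is not Moishezon, since no line bundle has positive degree on the fibres (equivalently, there is no multisection, cf.\ Corollary~\ref{corollary:no-multisection}). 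You state this very caveat a few lines earlier and then contradict it.

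Moreover, the fibres of $W_{j+1}\to W_j$ are not curves. Over a point with image $y$, the fibre is the universal family over $\{c\in S_i \mid y\in c\}$, which may be positive-dimensional. So the induction actually requires the spaces of curves through a point $y$, varying with $y\in V_j(x)$, to be Moishezon over $V_j(x)$. That is essentially the theorem itself for the locus swept out by the curves through one point.

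The fallback does not close the gap. Induction on dimension makes the proper pieces $V_j(y)$ Moishezon, but you are then left with $X$ swept out by a connected family of Moishezon subvarieties. Saying ``the quotient is a point'' merely restates $V_k(y)=X$, and getting from there to $a(X)=\dim X$ is the original problem; note that the torus above is also swept out by Moishezon subvarieties.

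What is missing is a source of line bundles that are big on the fibres. It must come from the connectedness, i.e.\ from the marked points of the chains, not from the K\"ahler form. Two partial mechanisms are available:
\begin{itemize}
\item When you pick a point on the current curve, the tautological section is a divisor of positive degree on the curve fibres, so that step is a Moishezon morphism.
\item When you pick curves meeting $V_j(x)$, the incidence locus is mapped onto $V_j(x)$ by the evaluation map. If that map is generically finite, Hironaka's Chow lemma (as in Lemma~\ref{lemma:birational-curves}, after Stein factorisation) shows the incidence locus is dominated by a space projective over a finite cover of $V_j(x)$, hence Moishezon.
\end{itemize}
The remaining case is when the evaluation map is not generically finite, i.e.\ the curves through a general point of $V_{j+1}(x)$ meeting $V_j(x)$ move in positive-dimensional families. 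That case needs a genuinely further argument; it is the nontrivial content of Campana's theorem, and your proposal does not supply it.
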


\begin{corollary} \label{corollary:no-multisection}
Let $X$ be a normal compact Kähler space admitting a fibration $\holom{\psi}{X}{T}$ onto a curve $T$. If the general $\psi$-fibre is Moishezon and $\psi$ admits a multisection, then $X$ is Moishezon.
\end{corollary}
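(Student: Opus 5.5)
The plan is to show that $X$ is chain connected by projective curves and then invoke Theorem~\ref{theorem campana connectedness}. Let $S \subseteq X$ be an irreducible component of the multisection that dominates $T$, and set $F_t := \fibre{\psi}{t}$.

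First I fix what happens over a general point. Over a dense Zariski open subset $T_\circ \subseteq T$, the fibre $F_t$ is irreducible, normal and Moishezon; normality of the general fibre holds since $X$ is normal. A Moishezon space is bimeromorphic to a projective variety. By Lemma~\ref{lemma:birational-curves}, or directly by Chow's lemma on $F_t$, together with the fact that a projective variety is chain connected by curves, any two points of $F_t$ can be joined by a chain of projective curves in $F_t$. Concretely, I take a projective modification $F_t' \to F_t$ and push forward chains of curves through the images of two given points. Since $F_t' \to F_t$ is surjective, every point of $F_t$ is the image of some point of $F_t'$.

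Second, I connect points lying in different fibres. The multisection $S$ is a compact curve in the compact space $X$, so it is projective. It meets every fibre $F_t$, because $\psi|_S\colon S \to T$ is surjective. Now take two general points $x \in F_t$ and $y \in F_s$. I join $x$ to a point $p \in S \cap F_t$ by a chain of curves inside $F_t$, then join $p$ to a point $q \in S \cap F_s$ along $S$, and finally join $q$ to $y$ inside $F_s$.

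The only subtlety is whether $S$ is irreducible or whether one must pass through several of its components. This is harmless: I take an irreducible component of $S$ that surjects onto $T$. Since every fibre is met, no connectedness of $S$ is needed beyond that single component.

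This shows that two general points of $X$ are connected by a chain of projective curves. By Theorem~\ref{theorem campana connectedness}, $X$ is therefore Moishezon. The only step requiring care is the claim that a Moishezon fibre is chain connected by projective curves. I would handle it as above, via a projective bimeromorphic model dominating $F_t$, which is provided by Moishezon's theorem and Chow's lemma~\cite{Hir75}.
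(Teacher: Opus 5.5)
Your proposal is correct and is essentially the paper's argument: the corollary is stated without proof as an immediate consequence of Theorem~\ref{theorem campana connectedness}, exactly as you do. The Moishezon general fibres, each dominated by a projective modification, together with an irreducible component of the multisection dominating $T$ (a compact, hence projective, curve) give chains of projective curves between two general points of $X$. One small correction: Lemma~\ref{lemma:birational-curves} is not really the statement you need for connecting points inside $F_t$, but your direct argument via a projective modification $F_t' \to F_t$ is sufficient.
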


\begin{remark} \label{remark:alg-reduction}
Let $X$ be a normal compact Kähler space. If the general fibre of the algebraic reduction $\alpha$ is Moishezon, it follows from Corollary~\ref{corollary:no-multisection} and Lemma~\ref{lemma:birational-curves} that $\alpha$ is an almost holomorphic map. If moreover $a(X)=1$, the almost holomorphic map is holomorphic. As a special case, this shows that for a normal compact Kähler surface the algebraic reduction is always given by a morphism.
\end{remark}

\begin{definition} \label{defn:moishezon-morphism}
Let $\holom{\varphi}{X}{Y}$ be a surjective morphism between irreducible and reduced compact complex spaces. We say that $\varphi$ is a \emph{Moishezon morphism} if there exists a line bundle $L$ on $X$ whose restriction to a general fibre of $\varphi$ is big.
\end{definition}

\begin{remarks} \label{remarks:moishezon-morphism}
The property of being a Moishezon morphism is stronger than having general fibres that are Moishezon. 

If $\holom{\varphi}{X}{Y}$ is Moishezon morphism, some positive multiple of $L$ induces a bimeromorphic
map $X \dashrightarrow X'$ over $Y$ such that $X' \rightarrow Y$ is a projective morphism. Thus, if $Y$ is projective, then $X'$ is projective and $X$ is Moishezon.
\end{remarks}

\begin{proposition} \label{proposition:MRC}~\cite{HP15}
Let $X$ be a normal compact Kähler threefold that is not Moishezon. If $X$ is uniruled, the MRC fibration is an almost holomorphic fibration $\merom{\psi}{X}{S}$ onto a surface that is {\em not} Moishezon.
\end{proposition}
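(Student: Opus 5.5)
The statement is attributed to~\cite{HP15}, so the plan is to reconstruct it from Campana connectedness (Theorem~\ref{theorem campana connectedness}) and Corollary~\ref{corollary:no-multisection}.

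\textbf{Step 1: the MRC fibration and almost holomorphy.} Since $X$ is a uniruled compact Kähler threefold, it has an MRC (rationally connected) fibration $\merom{\psi}{X}{S}$. By the general theory of Campana and Kollár--Miyaoka--Mori, extended to the Kähler setting, we may take $\psi$ almost holomorphic. Its general fibre $F$ is rationally connected and hence projective. Uniruledness gives $\dim F\ge 1$, so $\dim S\le 2$.

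\textbf{Step 2: ruling out $\dim S\in\{0,1\}$.} If $\dim S=0$, then $X$ is rationally connected, hence chain connected by projective curves. Theorem~\ref{theorem campana connectedness} then makes $X$ Moishezon, a contradiction.

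If $\dim S=1$, then $S$ is a curve and therefore projective, and $\psi$ is holomorphic after resolving (or, by Remark~\ref{remark:alg-reduction}, it is already a morphism onto a curve). The general fibre is a rationally connected surface, hence Moishezon. Such a fibration has a multisection: the Tsen/Graber--Harris--Starr theorem gives a section of a resolution, and its image is a section of $\psi$. Corollary~\ref{corollary:no-multisection} then makes $X$ Moishezon, again a contradiction. So $\dim S=2$ and the general fibre is $\PP^1$.

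\textbf{Step 3: $S$ is not Moishezon.} Suppose $S$ were Moishezon, hence projective after a bimeromorphic modification. Choose a resolution $\tilde X\to \tilde S$ of $\psi$ with $\tilde S$ projective and general fibre $\PP^1$. By Graber--Harris--Starr, applied after restricting over a general curve $C\subset\tilde S$, the family has sections over curves in $\tilde S$. Two general points of $X$ lie on fibres over points $s_1,s_2$. Join $s_1$ and $s_2$ by a chain of curves in $\tilde S$, lift the chain via sections, and add the fibres through the two points. This yields a chain of projective curves connecting the two points.

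Theorem~\ref{theorem campana connectedness}, applied to a normalisation and using Lemma~\ref{lemma:birational-curves} to pass through bimeromorphic maps, then shows that $X$ is Moishezon, a contradiction. An alternative route to this step is to observe that a conic bundle over a projective base with a Kähler total space is Moishezon, since $-K$ is relatively big; this is a Moishezon morphism, and Remarks~\ref{remarks:moishezon-morphism} apply.

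\textbf{Main obstacle.} The hard part is Step 1: establishing that the MRC quotient exists as an almost holomorphic map in the singular Kähler setting, and checking that the general fibre is genuinely projective. I would cite Campana's construction of the rational quotient via cycle spaces, which is valid for compact Kähler spaces.
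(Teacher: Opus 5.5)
The paper gives no proof of this statement; it only cites \cite{HP15}, so your reconstruction has to stand on its own. The cases $\dim S=0$ and $\dim S=2$ are fine.
\begin{itemize}
\item For $\dim S=0$, Campana connectedness settles it.
\item For $\dim S=2$, use your alternative route. On a resolution $\tilde X\to\tilde S$ with $\tilde S$ projective, $-K_{\tilde X}$ restricts to $\sO_{\PP^1}(2)$ on the general fibre. This gives a Moishezon morphism, and Remarks~\ref{remarks:moishezon-morphism} make $X$ Moishezon.
\end{itemize}
I would drop the chain-lifting argument in Step~3. It needs extra care: the surfaces $\tilde X_C$ must first be shown to be algebraic before Tsen or Graber--Harris--Starr can be used. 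Also, curves through two prescribed points are not general curves. The Moishezon-morphism route avoids both issues.

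The genuine gap is the case $\dim S=1$. You invoke Tsen/Graber--Harris--Starr to produce a section, but these are results about algebraic families. The Graber--Harris--Starr argument starts from a curve in the total space dominating the base, that is, from a multisection. For a compact Kähler space fibred over a curve with Moishezon general fibre, Corollary~\ref{corollary:no-multisection} shows that having a multisection is equivalent to being Moishezon, which is exactly what you want to prove. The paper uses the corollary in this way, for instance in Lemma~\ref{lemma:lagrangian2}(b). So the step is circular.

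Your Moishezon-morphism trick does not rescue this case either. For a rational surface $F$, $-K_F$ need not be big; for example, take $\PP^2$ blown up in nine general points.

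The missing ingredient is Hodge theory.
\begin{itemize}
\item Take a resolution $\tilde X\to X$, so that $\tilde X\to S$ is a morphism whose general fibre $F$ is a smooth rationally connected surface with trivial normal bundle.
\item Then $\Omega^2_{\tilde X}|_F$ is an extension of $\Omega^2_F$ by $\Omega^1_F$, and neither has nonzero global sections.
\item Hence every holomorphic $2$-form on $\tilde X$ vanishes along general fibres, and therefore identically.
\item So $h^2(\tilde X,\sO_{\tilde X})=h^{2,0}(\tilde X)=0$, and $\tilde X$ is projective by Kodaira's criterion. This contradicts the assumption that $X$ is not Moishezon.
\end{itemize}
This is the same mechanism as Step~1 of the proof of Lemma~\ref{lemma:lagrangian2}. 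Graber--Harris--Starr only becomes available after this, and by then it is no longer needed.
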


\begin{lemma} \label{lemma:notmoishezon}
Let $X$ be a irreducible and reduced compact complex Kähler space of dimension $d$, and let $\sL$ be a pseudoeffective line bundle on $X$ that is not cohomologically trivial. If $\sL \cdot C=0$ for every curve $C$ on $X$, then $X$ is not Moishezon.
\end{lemma}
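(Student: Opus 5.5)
We argue by contradiction: suppose $X$ is Moishezon. The plan is to pass to a projective model, where the two hypotheses on $\sL$ are incompatible.

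First, choose a resolution of singularities $\mu\colon X'\to X$. Since $X$ is Moishezon, Moishezon's theorem lets us take $X'$ to be a smooth projective variety, dominating $X$ by a projective bimeromorphic morphism; if needed we compose with a further blow-up. Set $\sL':=\mu^*\sL$.

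Next, check that $\sL'$ inherits both hypotheses.
\begin{itemize}
\item By the projection formula, $\sL'\cdot C'=\sL\cdot\mu_*C'$ for every curve $C'\subset X'$. If $C'$ is contracted by $\mu$ the right-hand side is zero; otherwise $\mu_*C'$ is a positive multiple of a curve on $X$, on which $\sL$ has degree zero. Hence $\sL'\cdot C'=0$ for every curve $C'$.
\item $\sL'$ is pseudoeffective, since pullbacks of pseudoeffective classes under surjective morphisms remain pseudoeffective.
\end{itemize}

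Since $X'$ is projective, Remark~\ref{remark:num-trivial} shows that $\sL'$ is numerically, hence cohomologically, trivial. That is, $c_1(\sL')=\mu^*c_1(\sL)=0$ in $H^2(X',\R)$.

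It remains to descend this vanishing to $X$. This is the main point, because $X$ may be singular. We will use that $\mu^*\colon H^2(X,\R)\to H^2(X',\R)$ is injective for a bimeromorphic morphism $\mu$ from a compact Kähler manifold onto a reduced irreducible compact Kähler space. This follows from the weight and Hodge theory of proper surjective maps: for $X'$ smooth and $\mu$ surjective, the kernel of $\mu^*$ on $H^k$ is the part of $H^k(X)$ of weight $<k$.

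The vanishing of $\mu^*c_1(\sL)$ therefore only says that $c_1(\sL)$ lies in this lower-weight part; it does not by itself give $c_1(\sL)=0$. I see two ways to get the full vanishing.
\begin{itemize}
\item Use the Kähler pairing. If $\omega$ is a Kähler class on $X$, pseudoeffectivity gives $c_1(\sL)\cdot\omega^{d-1}\ge 0$. Compute this number on $X'$ as $\mu^*c_1(\sL)\cdot\mu^*\omega^{d-1}=0$. Note, however, that this vanishing together with pseudoeffectivity only forces $c_1(\sL)=0$ if we know more, for instance that $c_1(\sL)$ is represented by a positive current with zero mass.
\item Interpret ``cohomologically trivial'' via $\mu^*$, or accept injectivity of $\mu^*$ as known for rational singularities.
\end{itemize}

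I would try the mass argument first. A closed positive $(1,1)$-current $T$ on $X$ representing $c_1(\sL)$ has mass $\int_X T\wedge\omega^{d-1}$, which equals $\mu^*c_1(\sL)\cdot\mu^*\omega^{d-1}=0$. A positive current of zero mass vanishes, so $T=0$. On a Kähler space every class in $H^{1,1}_{BC}$ that a current $T$ represents is then zero, and we get $c_1(\sL)=0$. This contradicts the assumption that $\sL$ is not cohomologically trivial, so $X$ is not Moishezon.

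The delicate step is precisely this passage: we need that the mass of $T$ can be computed by pulling back to $X'$, and that a current representing a class that comes from $H^2(X,\R)$ determines that class. Both should hold by the standard theory of currents on normal Kähler spaces, but this is where care is required.
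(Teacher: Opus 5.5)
Your reduction is the paper's: pass to a projective resolution $\mu\colon X'\to X$ and use the projection formula to see that $\mu^*\sL$ is numerically, hence cohomologically, trivial on $X'$. The paper tests only against a complete intersection curve of ample divisors, which is why it needs pseudoeffectivity and \cite[Lemma~3.7(b)]{HLL25}; testing against all curves and using Remark~\ref{remark:num-trivial}, as you do, is equally fine. The paper then passes from $\mu^*\sL\equiv 0$ to $\sL\equiv 0$ in one line. You are right that this descent is the real issue, but your handling of it has a gap.

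\emph{Injectivity.} The claimed injectivity of $\mu^*$ on $H^2(X,\R)$ is false in general, as you note yourself. Its kernel is $W_1H^2(X)$, which is nonzero for instance for $Y_1(E,s,L)$ from Construction~\ref{construction type i fiber}.

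\emph{The current argument.} Everything up to $T=0$ is fine; the mass can be computed on $X'$ because $\mu_*\mu^*T=T$. The final step, however, fails without normality. On a non-normal space a function $\varphi$ with $\pm\varphi$ psh need not be the real part of a holomorphic function. So a closed positive current with local potentials does not determine a class in $H^{1,1}_{BC}(X)=H^1(X,\sH_X)$, where $\sH_X$ is the sheaf of real parts of holomorphic functions.

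For example, let $X$ be a cuspidal plane cubic and $L\in\pic^0(X)$ nontrivial. Then $L$ becomes trivial on the normalisation $\PP^1$, and the constant metric there descends to a metric on $L$ with zero curvature current. Yet $c_1^{BC}(L)\neq 0$, since $H^1(X,\sO_X)\to H^1(X,\sH_X)$ is injective because $H^1(X,\R)=0$. For normal $X$, which covers the applications $X=D^\nu$ in the paper, this step can be justified, but the lemma is stated without normality.

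\emph{The fix.} The missing observation is the one you set aside. The class $c_1(\sL)$ is rational and lies in $F^1H^2(X,\C)$. Lemma~\ref{lemma picard pure weight two}, which rests on Lemma~\ref{lemma rational sub in f1 pure weight two}, says that the image of $c_1\colon\pic(X)\to H^2(X,\Q)$ injects into $\Gr^W_2H^2(X,\Q)$. In other words, it meets $W_1H^2(X,\Q)\supseteq\ker\mu^*$ only in $0$. Hence $\mu^*c_1(\sL)=0$ already forces $c_1(\sL)=0$ in $H^2(X,\R)$, which is the desired contradiction. With this step your argument is complete, and it then never uses that $\sL$ is pseudoeffective.
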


\begin{proof}
Assume that $X$ is Moishezon. Then there exists a resolution $\holom{\mu}{X'}{X}$, where $X'$ is a projective manifold. Let $C'$ be the complete intersection of $d-1$ ample divisors on $X'$. Then $\mu^* \sL \cdot C'=\sL\cdot \mu_*C'=0$ by the projection formula and by the assumptions of the lemma. Then $\mu^* \sL\equiv 0$ by~\cite[Lemma 3.7(b)]{HLL25}, hence $\sL\equiv0$, a contradiction.
\end{proof}

\begin{proposition} \label{prop:KS-pseff}
Let $S$ be a normal compact Kähler surface that is not uniruled. Then $K_S \cdot \omega_S \geq 0$ for every Kähler class $\omega_S$ and equality holds if and only if $S$ has canonical singularities and $K_S \sim_\Q 0$.
\end{proposition}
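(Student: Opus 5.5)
We pass to the minimal resolution and reduce to the smooth case. Let $\holom{\mu}{\tilde S}{S}$ be the minimal resolution. Since $\mu$ is minimal, $K_{\tilde S}$ is $\mu$-nef. By negativity of the exceptional intersection matrix we can therefore write
$$K_{\tilde S} \sim_\Q \mu^*K_S - E,$$
where $E$ is an effective $\Q$-divisor supported on the exceptional locus. Moreover, $E=0$ if and only if $S$ has canonical (ADE) singularities. Here $K_S$ is understood via Mumford's pullback, which exists on normal surfaces.

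\textbf{Step 1: the smooth surface.} Since $S$ is not uniruled, neither is $\tilde S$. By the Enriques--Kodaira classification of compact Kähler surfaces, a non-uniruled smooth compact Kähler surface has $\kappa(\tilde S)\geq 0$. Its minimal model $S_0$ has $K_{S_0}$ nef, and $K_{\tilde S}=\pi^*K_{S_0}+F$ with $F\geq 0$ supported on the exceptional curves of $\pi\colon\tilde S\to S_0$. Hence $K_{\tilde S}$ is pseudoeffective, i.e. its class contains a closed positive current.

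\textbf{Step 2: the inequality.} Let $\omega_S$ be a Kähler class on $S$. The class $\mu^*\omega_S$ is nef on $\tilde S$ and zero on exceptional curves. Since $E$ is $\mu$-exceptional,
$$K_S\cdot\omega_S=\mu^*K_S\cdot\mu^*\omega_S=(K_{\tilde S}+E)\cdot\mu^*\omega_S=K_{\tilde S}\cdot\mu^*\omega_S .$$
The last term is nonnegative because $K_{\tilde S}$ is pseudoeffective and $\mu^*\omega_S$ is nef. Concretely, it equals $\pi^*K_{S_0}\cdot\mu^*\omega_S+F\cdot\mu^*\omega_S$, and both summands are $\geq 0$.

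\textbf{Step 3: equality.} Suppose $K_S\cdot\omega_S=0$. Then the nef class $\pi^*K_{S_0}$ satisfies $\pi^*K_{S_0}\cdot\mu^*\omega_S=0$, and $F\cdot\mu^*\omega_S=0$.

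For the first term we use Hodge index. The class $\mu^*\omega_S$ is nef with positive square, so its orthogonal complement in $H^{1,1}$ is negative definite. The class $\pi^*K_{S_0}$ is nef, hence has nonnegative square, and is orthogonal to $\mu^*\omega_S$. Together these force $\pi^*K_{S_0}\equiv 0$. By classification (abundance for Kähler surfaces with $\kappa\geq 0$), $K_{S_0}\sim_\Q 0$.

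For the second term, $F\cdot\mu^*\omega_S=0$ means every component of $F$ is $\mu$-exceptional. Then $K_{\tilde S}\sim_\Q F$ with $F$ effective and supported on the $\mu$-exceptional curves. Write the $\mu$-exceptional divisor as $F-E'$, where $E'$ is the component of $E$ in the relation $K_{\tilde S}\sim_\Q \mu^*K_S-E$. Intersecting with the $\mu$-exceptional curves and using negativity of the intersection matrix, we get $\mu^*K_S\sim_\Q 0$ numerically, and $-E\equiv F$. Since both $E$ and $F$ are effective, $E=F=0$.

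From $E=0$ we conclude that $S$ has canonical singularities. From $F=0$ we get $\tilde S=S_0$, so $K_{\tilde S}\sim_\Q 0$ and hence $K_S=\mu_*K_{\tilde S}\sim_\Q 0$.

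\textbf{Converse.} If $K_S\sim_\Q 0$, then $K_S\cdot\omega_S=0$ trivially.

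\textbf{Main obstacle.} The delicate points are twofold. First, the non-projective Kähler setting: we need pseudoeffectivity and abundance ($K\equiv 0\Rightarrow K\sim_\Q 0$) for non-algebraic Kähler surfaces. Second, the Hodge-index argument in Step 3 uses a nef class that is not Kähler. Both reduce to standard surface classification and the signature $(1,h^{1,1}-1)$ of the intersection form on $H^{1,1}(\tilde S,\R)$.
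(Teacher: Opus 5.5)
Your proof is correct. Its skeleton is the paper's: the minimal resolution, $K_{\tilde S}$ being $\mu$-nef, the identity $K_S\cdot\omega_S=K_{\tilde S}\cdot\mu^*\omega_S$ (which the paper simply takes as the definition), and negative definiteness of the exceptional intersection matrix. The difference is in the equality case.

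\emph{The paper's route.} It uses directly the non-vanishing you already have from Step~1: $K_{\tilde S}\sim_\Q G$ for some effective $\Q$-divisor $G$. Then
\[
K_S\cdot\omega_S=G\cdot\mu^*\omega_S\geq 0,
\]
with equality exactly when every component of $G$ is $\mu$-exceptional. Since $G\sim_\Q K_{\tilde S}$ is $\mu$-nef, the negativity lemma forces $G=0$. This gives $K_{\tilde S}\sim_\Q 0$ at once, so $\mu$ is crepant and $K_S\sim_\Q 0$.

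\emph{Your route.} You split $K_{\tilde S}=\pi^*K_{S_0}+F$ through the minimal model and kill the nef part with the Hodge index theorem. You then need abundance for Kähler surfaces to upgrade $K_{S_0}\equiv 0$ to $K_{S_0}\sim_\Q 0$, before running the negativity argument on $E+F$. This is valid, but it uses more input than necessary (Hodge index, and torsion-ness of numerically trivial canonical classes). The paper gets $\Q$-linear triviality for free, because the effective representative itself vanishes.

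\emph{An expository fix.} The sentence introducing ``$F-E'$'' is garbled. What you actually use, and what suffices, is the following. The divisor $E+F$ is $\mu$-exceptional, and it has degree zero on every $\mu$-exceptional curve, because $K_{\tilde S}\sim_\Q F$ and $\mu^*K_S$ is $\mu$-numerically trivial. Hence $E+F=0$ by negative definiteness, and since both are effective, $E=F=0$.
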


\begin{remark*}
We do not assume that $K_S$ is $\Q$-Cartier, the intersection product $K_S \cdot \omega_S$ is defined as $K_{S'} \cdot \mu^* \omega_S$ where $\holom{\mu}{S'}{S}$ is a resolution of singularities. It is standard to check that this is independent of the resolution.
\end{remark*}

\begin{proof}
Let $\holom{\mu}{S'}{S}$ be the minimal resolution, so $K_{S'}$ is $\mu$-nef. Since $S'$ is not uniruled we know by nonvanishing on surfaces the canonical class $K_{S'}$ is represented by an effective $\Q$-divisor $E$. Thus, $K_S \cdot \omega_S = K_{S'} \cdot \mu^* \omega_S \geq 0$ and equality holds if and only if the support of $E$ is contained in the $\mu$-exceptional locus. 

Assume that $K_S \cdot \omega_S=0$. Since $\mu$ is the minimal resolution the divisor $E \sim_\Q K_{S'}$ is $\mu$-nef. By the negativity lemma we obtain that $-E$ is effective, so in conclusion $E=0$. Thus, we see that $K_{S'} \sim_\Q 0$ and the minimal resolution $\mu$ is a crepant morphism. By definition this means that $S$ has canonical singularities.
\end{proof}

\begin{proposition} \label{prop:KS-trivial}
Let $S$ be a normal projective surface that does not contain any rational curves. Assume that $K_S \cdot \omega_S = 0$ for some Kähler class $\omega_S$ on $S$. Then $S$ is an abelian or bielliptic surface.
\end{proposition}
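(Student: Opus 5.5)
The plan is to reduce to the smooth case via the minimal resolution, and then use the classification of surfaces with numerically trivial canonical class.

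Let $\holom{\mu}{S'}{S}$ be the minimal resolution. Since $S$ contains no rational curves, every $\mu$-exceptional curve, being contracted to a point, would have to be rational: exceptional curves of resolutions of normal surface singularities that are not rational would need a non-rational singularity. So I first argue directly. Proposition~\ref{prop:KS-pseff} applies, because $S$ is not uniruled: a uniruled surface is covered by rational curves. It gives that $S$ has canonical singularities and $K_S \sim_\Q 0$. Canonical surface singularities are ADE singularities, whose minimal resolutions have exceptional curves that are smooth rational $(-2)$-curves. Those curves would not lie in $S$ itself, so instead I use the structure: the minimal resolution is crepant, and $S'$ has $K_{S'}\sim_\Q 0$. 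I will argue that $S$ is in fact smooth. An ADE singularity is a quotient $\C^2/G$ for a finite group $G\subset SL_2$. I would get a contradiction from the Albanese or elliptic structure of $S'$, as follows. The surface $S'$ is a smooth projective surface with $K_{S'}\sim_\Q 0$, so it is K3, Enriques, abelian or bielliptic.

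In the K3 and Enriques cases, $S'$ contains rational curves; in fact every projective K3 surface contains rational curves by Bogomolov--Mumford--Mori--Mukai. Their images in $S$ are either rational curves or points. For a projective K3 surface, take an ample $H$ on $S'$ that is pulled back plus a small correction; better, take $\mu^*A$ for $A$ ample on $S$. The linear system $|m\mu^*A|$ contains a rational curve not contracted by $\mu$, since Mori--Mukai/Chen give rational curves in every ample class and the same argument works for a big and nef class. The key point is that such curves can be chosen in the class $m\mu^*A$, which has positive degree against $\mu^*A$, hence they are not $\mu$-exceptional. This yields a rational curve in $S$, which is a contradiction. The Enriques case reduces to K3 by the étale double cover, whose image of a rational curve is rational. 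This step, producing non-contracted rational curves in a nef and big class on a K3, is the main obstacle. I would cite the existence of rational curves in every nonzero effective (or ample) class on projective K3s (Chen, Bogomolov--Hassett--Tschinkel), applied to the ample classes $m\mu^*A-\epsilon E$.

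Hence $S'$ is abelian or bielliptic. These surfaces contain no rational curves at all, so $\mu$ has no exceptional curves and $S=S'$ is abelian or bielliptic.
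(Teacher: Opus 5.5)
Your proof is correct and has the same skeleton as the paper's. Proposition~\ref{prop:KS-pseff} gives canonical singularities and $K_S \sim_\Q 0$. The minimal resolution $S'$ is then K3, Enriques, abelian or bielliptic. In the last two cases $\mu$ is an isomorphism: the paper argues via $C^2 \geq 0$ for every curve on $S'$, you via the absence of rational curves together with the fact that the exceptional curves of ADE singularities are rational, and both work.

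The real difference is how you exclude K3 and Enriques surfaces. The paper quotes \cite[Theorem~A]{CGL22}: such a surface carries infinitely many rational curves, while $\mu$ contracts only finitely many. You instead produce a rational curve of positive degree against $\mu^*A$. This needs only the classical Bogomolov--Mumford theorem (appendix to Mori--Mukai): every ample linear system on a projective K3 surface contains a member $\sum n_i C_i$ with all $C_i$ rational. That is more elementary than \cite{CGL22}, but you should make the step precise in three ways.
\begin{itemize}
\item Cite Bogomolov--Mumford. As far as I know, the results of Chen and of Bogomolov--Hassett--Tschinkel do not give a rational curve in an arbitrary ample class on an arbitrary projective K3.
\item That theorem gives a sum of rational curves, not a single one.
\item You need an integral ample class, not the merely nef class $m\mu^*A$ or a class with a coefficient $\epsilon$. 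Take for instance $H = m\mu^*A - E$ with $E$ effective, $\mu$-exceptional and $-E$ $\mu$-ample, and $m \gg 0$.
\end{itemize}
Then $H \cdot \mu^*A = mA^2 > 0$ forces $C_i \cdot \mu^*A > 0$ for some $i$, so $\mu(C_i) \subseteq S$ is a rational curve. In the Enriques case, apply the same argument to the ample class $q^*H$ on the K3 double cover $q\colon \widetilde{S}' \to S'$ and take the image in $S$.

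The detours at the start of your write-up (about non-rational singularities and quotients $\C^2/G$) play no role and can be deleted.
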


\begin{remark*}
The statement is not true for compact Kähler surfaces: let $T$ be a two-dimensional compact torus that does not contain any curve, and let $S:= T/\langle \pm 1 \rangle$ be its singular Kummer surface. Then $S$ is not projective but satisfies all the other assumptions of the proposition.
\end{remark*}

\begin{proof}
By Proposition~\ref{prop:KS-pseff}, we already know that $S$ has canonical singularities and $K_S \sim_\Q 0$. Let $\holom{\mu}{S'}{S}$ be the minimal resolution, so $S'$ is a smooth projective surface with $K_{S'} \sim_\Q 0$. 

{\em Case 1.} If $S'$ is abelian or bielliptic every curve $C \subseteq S'$ satisfies $C^2 \geq 0$, so it cannot be contracted onto a point. Thus, $\mu$ is an isomorphism.

{\em Case 2.} If $S'$ is a projective K3 or Enriques surface, it contains infinitely many distinct rational curves~\cite[Theorem~A]{CGL22}. Since $\mu$ only contracts finitely many curves we obtain that $S$ contains a rational curve which we excluded.
\end{proof}

\begin{proposition} \label{proposition:generalised-abundance-surface}
Let $X$ be a normal compact Kähler surface with canonical singularities such that $K_X \equiv 0$. Let $\sL$ be a nef line bundle on $X$ such that $\nd(\sL)=1$. Then $\sL$ is numerically equivalent to a semiample line bundle.
\end{proposition}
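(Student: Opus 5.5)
The plan is to pass to the minimal resolution and use the classification of compact Kähler surfaces with numerically trivial canonical class. Let $\holom{\mu}{S'}{X}$ be the minimal resolution. Since $X$ has canonical singularities, $\mu$ is crepant, so $K_{S'} \sim_\Q \mu^*K_X \equiv 0$. Set $\sL' := \mu^*\sL$. This is nef with $\sL'^2 = \sL^2 = 0$ and $\sL' \not\equiv 0$, so $\nd(\sL') = 1$. It suffices to show that $\sL'$ is numerically equivalent to a semiample line bundle that is trivial on the $\mu$-exceptional curves, since such a bundle descends to $X$. Concretely, if $\sM' \equiv \sL'$ is semiample, then $\sM' \cdot E = 0$ for every exceptional curve $E$. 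The map $\varphi_{|m\sM'|}$ therefore contracts the exceptional curves and factors through $X$ by rigidity (Zariski's main theorem on the normal surface $X$). Hence $m\sM' = \mu^*\sM$ for a semiample $\sM$ with $\sM \equiv m\sL$. Dividing by $m$ in numerical equivalence requires some care; the way I would handle it is to note that the fibration $S' \to C$ is defined independently of $m$, and to take $\sM$ to be the pullback of a suitable $\Q$-divisor of positive degree on $C$ scaled to match. Since numerical classes form a lattice, $\sL'$ itself is numerically equivalent to the pullback of a degree $d$ bundle from $C$ whenever $\sL' \equiv \lambda F$ with $F$ the fibre class. So I replace $X$ by $S'$ and assume $X$ smooth.

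Now $X$ is a smooth compact Kähler surface with $K_X \equiv 0$, and I may take it minimal (it already is, since $K_X \equiv 0$). By the Kodaira--Enriques classification, $X$ is a torus, a bielliptic surface, a K3 surface or an Enriques surface. The class $c_1(\sL) \in NS(X)$ is a nef class with $\sL^2 = 0$.

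The core step is the following. On a K3 or Enriques surface, a primitive nef isotropic class in $NS$ is the class of an elliptic fibre: by Riemann--Roch $h^0(\sL) \geq 2$ in the K3 case, and for Enriques one passes to $2\sL$ or uses the K3 cover. Then $|\sL|$ has no fixed part, because nef with square zero forces the moving part to also have square zero, and one gets a base point free pencil. This is the classical Piatetski-Shapiro--Shafarevich argument.

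On a torus $T$ the argument is different. I would use that the nef class $c_1(\sL)$ is represented by a semipositive constant form of rank one. Its kernel is a complex line $W \subset \C^2$, and $W$ must be rational with respect to the lattice because the form is integral on $\Lambda$. Hence $E = W/(W\cap\Lambda)$ is an elliptic curve, $T \to T/E$ is a fibration onto an elliptic curve, and $c_1(\sL)$ is a multiple of the fibre class. The bundle $\sL$ itself only differs by an element of $\pic^0$, so it is numerically equivalent to the pullback of a positive-degree bundle from $T/E$, which is ample there. For bielliptic surfaces, I would pull back to the étale torus cover and use that $NS(X)_\Q$ is spanned by the two fibre classes of the Albanese and the elliptic fibrations; $\sL$ nef with $\sL^2 = 0$ then forces proportionality to one of them.

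The main obstacle is the non-projective torus and K3 cases, where $\sL$ might a priori be a class with no sections. For K3 surfaces, Riemann--Roch gives $h^0(\sL) + h^0(-\sL) \geq 2$, and $-\sL$ is not effective since $\sL$ is nef and nonzero, which settles it. For tori, the rationality of the kernel $W$ is exactly what needs checking: the form $i\,\ell \wedge \bar\ell$ (with $\ell$ a linear form) has integral periods. This forces $\ker \ell \cap \Lambda$ to have rank $2$, because $\Lambda$ modulo the kernel must embed as a lattice in the image line $\C$ via $\ell$.
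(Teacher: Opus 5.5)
Your route differs from the paper's. The paper first upgrades $K_X\equiv 0$ to $K_X\sim_\Q 0$ by abundance, passes to a cyclic (index-one) cover and its minimal resolution, and so only has to treat K3 surfaces (Riemann--Roch) and tori (the Birkenhake--Lange reduction map). Your argument with the kernel of $\ell$ reproves that reduction map by hand; the clean reason $W$ is rational is that the real radical of the integral form $\mathrm{Im}\,H$ on $\Lambda$ is defined over $\Q$. You instead resolve $X$ directly and go through the whole classification. This costs you the Enriques and bielliptic cases, which are classical but only sketched. For bielliptic surfaces you get that $c_1(\sL)$ lies on a fibre ray, but you still have to produce a semiample bundle in exactly that integral class. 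One way is to twist by a numerically trivial bundle so that the restriction to the fibres becomes trivial, and then apply Lemma~\ref{lemma:zariski}. In exchange you make the descent to $X$ explicit, which the paper does not spell out. This is harmless there, since in its only use (Step~2 of Proposition~\ref{prop:normalisation}) a positive multiple of $\sL$ would suffice.

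The descent step, however, is wrong as written. You claim that $\sL'$ is numerically equivalent to the pullback of a degree-$d$ bundle from $C$ whenever $\sL'\equiv\lambda F$. This fails as soon as the fibration has multiple fibres. On an Enriques surface with half-pencil $F_0$ and $2F_0\equiv F$, the class $\tfrac12 F$ is integral and nef, but it is not the class of any pullback from $C\isom\PP^1$. This situation survives on $X$ whenever the contracted $(-2)$-curves lie in fibres. The same point affects your remark that a primitive nef isotropic class on an Enriques surface is a fibre class: it is a half-pencil. Also, the pullback of a $\Q$-divisor from $C$ is not a line bundle.

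The detour through $\varphi_{|m\mathcal M'|}$ is unnecessary. Canonical surface singularities are rational, so a small neighbourhood $U$ of the exceptional locus has $H^1(U,\sO_U)=0$. The exponential sequence then shows that a line bundle on $U$ is determined by its degrees on the exceptional curves. Since $\mathcal M'\equiv\mu^*\sL$ has degree $0$ on each of them, $\mathcal M'\isom\mu^*\mathcal M$ for a line bundle $\mathcal M$ on $X$. This $\mathcal M$ is semiample because $\mu_*\sO_{S'}=\sO_X$, and $\mathcal M\equiv\sL$ because $\mu^*$ is injective on $H^2(X,\Q)$. With this replacement your argument goes through.
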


\begin{proof}
By Abundance on surfaces we have that $K_X\sim_\Q0$. By passing to a cyclic cover and then taking a minimal resolution, we may assume that $X$ is a smooth K3 surface or a torus. If $X$ is K3, then $\sL$ semiample follows from the Riemann--Roch estimate $h^0(X, \sL) \geq \chi(X, \sL) = \chi(\sO_X)=2$.
If $X$ is a torus, by~\cite[Ch.2, Proposition~1.7]{BL99} we have a reduction map $X \rightarrow T$ associated to $\sL$ such that $c_1(\sL)$ is the pull-back of a polarisation from $T$. 
Since the pull-back of an ample line bundle is semiample we obtain the statement.
\end{proof}

\begin{lemma} \label{lemma:dim3}
Let $X$ be a normal compact Kähler threefold with klt singularities such that $K_{X} \equiv 0$. Let $\sL $ be a nef line bundle on $X$ such that $\kappa(X,\sL)\geq0$. Then $\sL$ is semiample.
\end{lemma}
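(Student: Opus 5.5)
Since $K_X\equiv 0$ and $X$ is klt, abundance for Kähler threefolds (in the form of Theorem~\ref{thm:3fold-abundance}) gives $K_X\sim_\Q 0$. We therefore aim to realise $\sL$, up to a positive multiple, as an adjoint class $K_X+\Delta$ with $(X,\Delta)$ klt and $K_X+\Delta$ nef; Theorem~\ref{thm:3fold-abundance} then gives the result. The case $\kappa(X,\sL)=\dim X=3$ is not special here, since $\sL$ is nef and big and the same argument applies.

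First we reduce to the $\Q$-factorial case. Take a small $\Q$-factorialisation $\mu\colon X'\to X$, which exists for klt Kähler threefolds. Then $K_{X'}=\mu^*K_X\equiv 0$, $X'$ is klt, and $\mu^*\sL$ is nef with the same Kodaira dimension. Moreover $\sL$ is semiample if and only if $\mu^*\sL$ is, because $\mu_*\sO_{X'}=\sO_X$ and sections of $m\mu^*\sL$ are pullbacks of sections of $m\sL$. So we may assume $X$ is $\Q$-factorial.

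Next, since $\kappa(X,\sL)\ge 0$, choose $m>0$ and an effective divisor $D\in|m\sL|$. For $0<\varepsilon\ll 1$ the pair $(X,\varepsilon D)$ is klt: $X$ is klt and $\Q$-factorial, so $D$ is $\Q$-Cartier, and klt is an open condition in the coefficient of a fixed boundary (use a log resolution of $(X,D)$). Now $K_X+\varepsilon D\equiv \varepsilon m\sL$ is nef. By Theorem~\ref{thm:3fold-abundance}, $K_X+\varepsilon D$ is semiample. Since $K_X\sim_\Q 0$, we have $K_X+\varepsilon D\sim_\Q \varepsilon m\sL$, so $\sL$ is semiample.

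The main obstacle is the first step. We need abundance for $K_X$ itself ($K_X\equiv0\Rightarrow K_X\sim_\Q0$) in the Kähler klt setting, which is Theorem~\ref{thm:3fold-abundance} with $\Delta=0$, and we need $\Q$-factorialisations of klt Kähler threefolds to exist. If the latter is problematic, the alternative is a log resolution $\pi\colon Y\to X$ with $K_Y+E=\pi^*K_X+\sum a_iE_i$. One then runs an MMP for $K_Y+\Delta_Y+\varepsilon\pi^*D$ over $X$, relying on Lemma~\ref{lemma:invariance-degree-zero-MMP}-type invariance to transport semiampleness back to $X$.
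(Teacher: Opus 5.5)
Your proposal is correct and follows the paper's proof: choose $D\in|m\sL|$, take $0<\varepsilon\ll1$ so that $(X,\varepsilon D)$ is klt, and apply Theorem~\ref{thm:3fold-abundance} twice, once to $K_X$ to get $K_X\sim_\Q 0$ and once to $K_X+\varepsilon D\sim_\Q\varepsilon m\sL$. Your preliminary passage to a small $\Q$-factorialisation is a step the paper omits (it applies Theorem~\ref{thm:3fold-abundance} directly, although that theorem is stated for $\Q$-factorial pairs), so it only makes the argument more careful; note that $D$, being the divisor of a section of $m\sL$, is already Cartier, so $\Q$-factoriality is not what you need to make $(X,\varepsilon D)$ a well-defined pair.
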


\begin{proof}
By assumption, there exists an integral divisor $D\geq0$ and a positive integer $n$ such that $\sO_X(D)\isom\sL^{\otimes n}$. Pick a rational number $0<\varepsilon\ll1$ such that the pair $(X, \varepsilon D)$ is klt. Then by Theorem~\ref{thm:3fold-abundance} we have that $K_X +\varepsilon D $ is semiample and that $K_X\sim_\Q0$. In particular, for a sufficiently divisible positive integer $m$ we have $\sO_X\big(m(K_X+\varepsilon D)\big)\isom_\Q \sL^{\otimes mn\varepsilon}$, which yields the result.
\end{proof}

\begin{lemma} \label{lemma:factor-birational}
Let $X$ be a normal compact Kähler surface with klt singularities, and let $\holom{\gamma}{X}{X'}$ be a birational morphism onto a smooth surface $X'$. Then $\gamma$ decomposes into a sequence of $K$-negative divisorial contractions.
\end{lemma}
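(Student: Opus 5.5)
We argue by induction on the number $\rho(X/X')$ of $\gamma$-exceptional prime curves. This number is finite, since $\gamma$ is a bimeromorphic morphism of surfaces and so contracts only finitely many curves. If it is zero, then $\gamma$ is finite and bimeromorphic onto the normal surface $X'$, hence an isomorphism, and there is nothing to prove.

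Assume $\gamma$ is not an isomorphism. We aim to find a $\gamma$-exceptional curve $C$ with $K_X\cdot C<0$ that spans an extremal ray of $\overline{NE}(X/X')$. Since $X$ is klt, it is $\Q$-factorial, so $K_X$ is $\Q$-Cartier. Write
$$K_X=\gamma^*K_{X'}+\sum a_iE_i,$$
where the $E_i$ are the $\gamma$-exceptional curves. Because $X'$ is smooth, it is terminal, and therefore every $a_i>0$. A cleaner way to see this: factor $\gamma$ after a minimal resolution, and note that any exceptional divisor over a smooth point has discrepancy at least $1$. The divisor $E:=\sum a_iE_i$ is nonzero and effective, and its intersection matrix is negative definite (Grauert/Mumford). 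Hence $E^2<0$, so some component $E_j$ satisfies $E\cdot E_j<0$. Then
$$K_X\cdot E_j=E\cdot E_j<0.$$
The relative cone $\overline{NE}(X/X')$ is spanned by the finitely many classes $[E_i]$. Its $K_X$-negative part is therefore generated by finitely many rays, and we may choose one extremal ray $R$ spanned by some exceptional curve $C=E_k$ with $K_X\cdot C<0$. Concretely, among the $E_i$ with $K_X\cdot E_i<0$ we take a class spanning an edge of this polyhedral cone.

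Next we apply the relative cone and contraction theorem for klt surfaces over $X'$; in dimension two this reduces to Grauert's criterion together with $C^2<0$. This gives the contraction $\gamma_1\colon X\to X_1$ of $R$ over $X'$. Since $C$ is contracted by $\gamma$ and $C^2<0$, the contraction is divisorial and contracts exactly the curves in $R$. The surface $X_1$ is normal and klt, because contractions of $K$-negative extremal rays preserve klt singularities for surfaces. Since $\gamma$ is constant on the curves in $R$, it factors as $\gamma=\gamma'\circ\gamma_1$, with $\gamma'\colon X_1\to X'$ birational and having one fewer exceptional curve. By induction, $\gamma'$ decomposes into $K$-negative divisorial contractions, and hence so does $\gamma$.

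The main obstacle is the compact Kähler (non-projective) setting. There we cannot quote the projective MMP directly, and we must justify that the contraction exists analytically and stays Kähler. To handle this, we would work locally over $X'$: the exceptional locus lies over finitely many points, so everything is local and projective over a neighbourhood in $X'$. We would then use Grauert's contraction of the negative definite configuration $C$, or a relative cone theorem such as~\cite{Nak87}, to construct $\gamma_1$. Finally, $X_1$ is Kähler because it sits bimeromorphically between $X$ and $X'$ and is obtained as a projective morphism over $X'$. A further subtle point is ensuring that the curve chosen spans an extremal ray rather than merely being $K$-negative. Since the relative cone is polyhedral and generated by the $E_i$, this can be arranged.
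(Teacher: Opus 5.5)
Your proposal is correct and follows the same route as the paper. Terminality of $X'$ makes the discrepancies $a_i$ positive, and the negativity lemma (your negative-definiteness argument) gives an exceptional curve $C$ with $K_X\cdot C=E\cdot C<0$; you then contract $C$ over $X'$ and induct on the relative Picard number. Your extra care about extremality and K\"ahlerness goes beyond what the paper writes, but extremality is automatic: negative definiteness makes the classes $[E_i]$ linearly independent, so the relative cone is simplicial and every $[E_i]$ spans an extremal ray.
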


\begin{proof}
Since $X'$ has terminal singularities we have $K_X \sim_\Q \gamma^* K_{X'} + E$ with $E$ an effective $\Q$-divisor whose support is equal to the $\gamma$-exceptional locus. By the negativity lemma the divisor $E$ is not $\gamma$-nef, so there exists a $\gamma$-exceptional curve $C \subseteq X$ such that $K_X \cdot C = E \cdot C<0$. Thus, there exists a $K$-negative contraction over $X$ and we conclude by induction on the relative Picard number. 
\end{proof}

\begin{theorem} \label{thm:plt-extension}~\cite[Corollary~1.8]{DHP13}
Let $X$ be a normal $\Q$-factorial projective variety of dimension four, and let $\Delta = E + B$ be boundary divisor, where $E$ is a prime divisor and $\lfloor B\rfloor=0$. Assume that $(X, E+B)$ is plt and $K_X+E+B$ is nef, and that there exists an effective $\Q$-divisor $N \sim_\Q K_X+E+B$ with
$$ E \subseteq \supp N \subseteq \supp (E+B). $$
Then for all sufficiently divisible positive integers $m$, the restriction morphism
$$ H^0\big(X, m (K_X+E+B)\big) \to H^0\big(E, m (K_E+B)\big) $$
is surjective. In particular, $\kappa(X,K_X+E+B)\geq 1$. 
\end{theorem}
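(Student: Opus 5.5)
The surjectivity statement is exactly \cite[Corollary~1.8]{DHP13}, and I would simply quote it. Its hypotheses match ours verbatim: $X$ is $\Q$-factorial projective of dimension four, $(X,E+B)$ is plt, $K_X+E+B$ is nef, and $N\sim_\Q K_X+E+B$ satisfies $E\subseteq\supp N\subseteq\supp(E+B)$. If I had to recall why it holds, the mechanism is as follows. One runs the Demailly--Hacon--P\u{a}un analytic extension argument, an Ohsawa--Takegoshi type extension with a suitable singular metric built from $N$. The support condition on $N$ is what lets one absorb $N$ into the boundary and keep the plt structure along $E$. Nefness is what makes the extension possible for every sufficiently divisible $m$. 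I would not reprove this part.

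The only thing to argue is the ``in particular'' clause, $\kappa(X,K_X+E+B)\geq1$. The plan has three steps.

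First, I pass to $E$. By adjunction, $(K_X+E+B)|_E=K_E+B_E$, where $B_E:=\mathrm{Diff}_E(B)$. Since $(X,E+B)$ is plt, the variety $E$ is normal and the pair $(E,B_E)$ is klt. Moreover $K_E+B_E$ is nef, being the restriction of a nef class. So $(E,B_E)$ is a projective klt threefold pair with nef log canonical class. By log abundance for threefolds (the projective case, or Theorem~\ref{thm:3fold-abundance}), $K_E+B_E$ is semiample. Hence for $m$ sufficiently divisible there is a nonzero section $\sigma\in H^0\big(E,m(K_E+B_E)\big)$.

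Second, I lift $\sigma$. By the surjectivity quoted above, $\sigma$ lifts to some $s\in H^0\big(X,m(K_X+E+B)\big)$ whose restriction to $E$ is nonzero. In particular, $s$ does not vanish identically along $E$.

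Third, I use $N$ to produce a second section. Taking $m$ divisible enough that $mN$ is an integral divisor, $mN$ gives a section $t\in H^0\big(X,m(K_X+E+B)\big)$ with divisor $mN$. Since $E\subseteq\supp N$, the section $t$ vanishes along $E$. Now $s$ and $t$ cannot be proportional: $t$ vanishes on $E$ while $s$ does not. Therefore $h^0\big(X,m(K_X+E+B)\big)\geq2$, and so $\kappa(X,K_X+E+B)\geq1$.

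The only step with real content beyond the citation is the nonvanishing of $K_E+B_E$ on the boundary component. That is where threefold abundance is needed, together with the plt adjunction guaranteeing that $(E,B_E)$ is klt. I expect no further obstacle.
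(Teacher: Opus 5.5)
Your proposal is correct and matches the paper's argument: the surjectivity is quoted from \cite[Corollary~1.8]{DHP13}, and the ``in particular'' follows from plt adjunction to a klt pair on $E$, nonvanishing via Theorem~\ref{thm:3fold-abundance}, and hence $h^0\big(X,m(K_X+E+B)\big)\geq 2$ for sufficiently divisible $m$. You also spell out where the second section comes from: the section given by $mN$, which vanishes along $E$ while the lifted section does not. The paper leaves this step implicit.
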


For the last statement note that the pair $(E, B)$ is klt by adjunction. Since $K_E+B$ is nef
we know that $H^0\big(E, m (K_E+B)\big) \neq 0$ for $m \gg 0$ by Theorem~\ref{thm:3fold-abundance}.
Thus, $h^0\big(X, m (K_X+E+B)\big)\geq 2$ for $m \gg 0$.

The following statement is standard, we prove it for lack of reference:

\begin{lemma} \label{lemma:degenerate-uniruled}
Let $\holom{\pi}{\sX}{\Delta}$ be a flat proper K\"ahler morphism from an irreducible and reduced complex space onto the unit disc $\Delta$. Assume that for $t \neq 0$ the fibre $X_t$
is uniruled. Let $S \subset X_0$ be an irreducible component of the central fibre. Then 
$S$ is uniruled. 
\end{lemma}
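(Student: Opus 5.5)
The approach is to transport rational curves from nearby fibres to the central fibre using the relative Barlet space (cycle space) of $\pi$. This space has good compactness properties because $\pi$ is Kähler. Since uniruledness of an irreducible component is a property of a general point, it suffices to show that through a general point of $S$ there passes a rational curve contained in $S$.

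First I would set up the family of rational curves. For each $t \neq 0$, uniruledness of $X_t$ means that a general point of $X_t$ lies on a rational curve. The rational curves in fibres $X_t$ are parametrised by countably many irreducible components of the relative Barlet space $\mathcal C(\sX/\Delta)$. Each such component carries a universal family whose image in $\sX$ is an analytic subset. A countability (Baire) argument then gives an irreducible component $\mathcal C_0$ whose universal family dominates $\sX$. Concretely, the union of these countably many images covers the complement of $X_0$, so one of them must equal $\sX$, because $\sX$ is irreducible. Shrinking $\Delta$ if necessary, I may assume that $\mathcal C_0 \to \Delta$ is dominant and that the general member of $\mathcal C_0$ is an irreducible rational curve in some fibre $X_t$.

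Next I would obtain properness over $\Delta$. Fix a relative Kähler form $\omega$. The degree $\int_C \omega$ is locally constant on $\mathcal C_0$, so it is constant. Bishop's theorem, in the relative form of Fujiki and Barlet, then shows that $\mathcal C_0 \to \Delta$ is proper. Its image is closed and contains $\Delta^*$, hence it is all of $\Delta$. Consequently the universal family $\mathcal U \to \sX$ is a proper surjective map. In particular every point $x \in S$ lies in the support of a limit cycle $C_0 = \sum a_i C_i \subset X_0$.

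The key step, and the one I expect to be the main obstacle, is showing that the components $C_i$ of the limit cycles are rational. For this I would use that rationally chain connectedness, or more simply the property of being a union of rational curves, is preserved under specialisation of cycles. Take the family of cycles over a curve in $\mathcal C_0$ through the limit point. After a finite base change and normalisation, the total space of the family of curves is a surface fibred over a disc whose general fibre is $\PP^1$. Its central fibre is a tree of curves of arithmetic genus zero: a flat family of connected curves with $p_a = 0$ has a central fibre whose components are all smooth rational curves. There is a subtlety here. Barlet limits are cycles rather than flat limits, so I would pass to the normalisation of the graph, a surface $\Sigma \to \Delta'$. It is a $\PP^1$-fibration over the punctured disc, and after resolving it is birational to a ruled surface over $\Delta'$. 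Every component of its central fibre is therefore rational, and the $C_i$ are images of these components, hence rational.

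Finally, let $x \in S$ be general, and in particular not in any other component of $X_0$. Then $x$ lies on some $C_i$, which is a rational curve through $x$ contained in $X_0$. The curve $C_i$ is irreducible and passes through a general point of $S$, and the set of such curves meeting $S$ forms an analytic family. Hence for general $x$ the curve can be chosen inside $S$: the family of limit components through points of $S$ covers $S$, and those not contained in $S$ meet $S$ only in a proper analytic subset by a dimension count. Therefore $S$ is covered by rational curves, that is, $S$ is uniruled.
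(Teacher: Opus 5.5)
Your argument is essentially the paper's: you use the relative cycle space and countability to get a family of rational curves that is proper over $\Delta$ (by the Kähler hypothesis) and dominates $\sX$, take a limit cycle through a general point $x\in S$ lying on no other component of $X_0$, and observe that its components are rational. The only real difference is that you reprove this last fact via the normalised and resolved one-parameter family, whereas the paper simply quotes \cite[II, Proposition~2.2]{Kol96}. Two small corrections. First, run the Baire argument over the countably many families whose \emph{general} member is a rational curve (the paper arranges this by starting from a dominating family on a very general fibre $X_t$), since a component that merely contains some rational curves might have a non-rational general member. Second, the final step needs no dimension count: an irreducible curve in $X_0$ through a point that lies only on $S$ is automatically contained in $S$.
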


\begin{proof}
By countability of the irreducible components of $\Chow{\sX/\Delta}$ we know that
for $t \in \Delta$ a very general point every irreducible component of the cycle space $\Chow{X_t}$
deforms with $X_t$. Thus if we choose $K_t \subset \Chow{X_t}$ an irreducible component such that
a general point corresponds to a rational curve $\holom{f}{\PP^1}{X_t}$ whose deformations dominate $X_t$, there exists an irreducible component $K \subset \Chow{\sX/\Delta}$ that dominates $\Delta$ and the fibre of $\psi\colon  K \rightarrow \Delta$ contains $K_t$ as an irreducible component. Since
the fibration $\pi$ is a proper K\"ahler morphism the map $\psi$ is proper.
Let $\holom{q}{U}{K}$ be the universal family over $K$, and let $\holom{p}{U}{\sX}$
be the evaluation map. Since the general fibre of $q$ is a rational curve, hence irreducible,
the complex space $U$ is irreducible. Moreover $p$ is proper since $\psi \circ q = \pi \circ p$
is proper. Thus $p(U) \subset \sX$ is an irreducible analytic subvariety by Remmert's theorem. 
By construction $p(U)$ contains the very general fibre $X_t$ so $p(U)=\sX$.

Choose now a general point $x \in S \subset \sX$ such that $S$ is the unique irreducible component of $X_0$ passing through $x$. Since $p$ is surjective we can choose $k \in K$ such that
the cycle $p(\fibre{q}{k})$ passes through $x$. Note that this implies that $p(\fibre{q}{k})$ is contained in $X_0$: a general member of $K$ is a rational curve contained in a $\pi$-fibre.
Since all the fibres of $q$ have the same homology class, we have
$\pi_*(p(\fibre{q}{k}))=0$. 
Since $S$ is the unique component of $X_0$ passing through $x$ this implies that there exists an irreducible component $C \subset \fibre{q}{k}$ such that $p(C) \subset S$
and $x \in p(C)$. By~\cite[II, Proposition~2.2]{Kol96} the curve $C$ is rational. Thus we have shown that there exists a rational curve through a general point of $S$.
\end{proof}

\subsection{Differential forms on singular spaces}\label{section differential forms}

\begin{definition}\label{definition torsion free differentials}
For an equidimensional reduced complex space $Y$, we define $\tau^p_Y \subset \Omega_Y^p$ as the subsheaf of sections with support of dimension strictly smaller than $\dim Y$. Somewhat imprecisely, we refer to $\tau^p_Y$ as the \emph{torsion subsheaf} of $\Omega_Y^p$. We define the sheaf of \emph{torsion free $p$-forms} as
\[
\tOm_Y^p:=\Omega^p_Y/\tau^p_Y.
\]
For $p=1$, we also refer to it as the sheaf of \emph{torsion free Kähler differentials}.
\end{definition}

A useful property of $\tOm_Y^p$ is that if $\pi\colon Z\to Y$ is a resolution of singularities, then the canonical map $\tOm_Y^p \to \pi_*\Omega_Z^p$ (induced by $\pi^* \Omega_Y \rightarrow \Omega_Z$) is injective.

\subsection{Hyperkähler preliminaries}\label{section hyperkaehler}

Recall from~\cite{Bea83} that on a hyperkähler manifold $X$ of dimension $2n$ there exists a unique non-degenerate primitive integral symmetric bilinear form 
\[
q_X\colon H^2(X,\R)\times H^2(X,\R)\to \R
\]
of signature $(1,h^{1,1}(X)-1)$ on $H^{1,1}(X,\R)$. This form satisfies the \emph{Fujiki relation}: there exists a positive rational number $c_X$ depending only on the deformation class of $X$ such that for each $\alpha \in H^2(X,\C)$ we have
\[
q_X(\alpha,\alpha)^n = c_X \int_X\alpha^{2n}.
\]
The form $q_X$ is called the \emph{Beauville--Bogomolov--Fujiki form} or \emph{BBF form} for short.

\begin{definition}
Let $X$ be a hyperkähler manifold. We denote by $\scrC_X \subset H^{1,1}(X,\R)$ the \emph{positive cone}, i.e. the connected component of the cone of positive vectors with respect to $q_X$ that contains the Kähler cone $\scrK_X$.
\end{definition}

\begin{lemma} \label{lemma:allzero}
Let $X$ be a hyperkähler manifold of dimension $2n$, and let $L \not\cong \sO_X$ be a parabolic nef line bundle on $X$. Then for a very general deformation $X_t$ in the Hodge locus of $c_1(L) \in H^2(X, \Z)$, the rational Picard group $\pic(X_t)\tensor \Q$ is generated by the line bundle $L_t$ corresponding to $L$. Moreover, for any such deformation, up to shrinking the representative of the Kuranishi space, $L_t$ is a nef line bundle satisfying $ c_1(L_t) \cdot C = 0 $ for every curve $C \subseteq X_t$. 
\end{lemma}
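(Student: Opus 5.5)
The argument has two parts: the Picard statement comes from the period map, and the nefness and degree-zero statements come from the description of the Kähler cone together with Hodge theory.

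\emph{Picard group.} Let $\mathrm{Def}(X)$ be the Kuranishi space, with local period map $\mathcal P\colon \mathrm{Def}(X)\to \Omega_X=\{\sigma : q_X(\sigma)=0,\ q_X(\sigma,\bar\sigma)>0\}$. By local Torelli this is a local isomorphism. The Hodge locus of $\ell:=c_1(L)$ is the smooth hypersurface $\mathrm{Def}(X,L)=\mathcal P^{-1}(\ell^\perp)$. For a class $\beta\in H^2(X,\Z)$ not proportional to $\ell$ over $\Q$, the locus where $\beta$ stays of type $(1,1)$ is $\mathcal P^{-1}(\ell^\perp\cap\beta^\perp)$. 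This has codimension one inside $\mathrm{Def}(X,L)$, because $q_X$ is non-degenerate and the intersection of $\Omega_X$ with $\ell^\perp$ is open in the quadric. There are countably many such $\beta$, so for $t$ outside a countable union of proper analytic subsets we get $\mathrm{NS}(X_t)\otimes\Q=\Q\,\ell$. Since $H^1(X_t,\sO)=0$, the Picard group equals $\mathrm{NS}(X_t)$, which gives the first assertion.

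\emph{Degree zero on curves.} Let $C\subseteq X_t$ be a curve. Its class $[C]\in H_2(X_t,\Z)$ is identified, via $q_X$, with a rational class $\gamma\in H^2(X_t,\Q)$ of type $(1,1)$. For very general $t$ this gives $\gamma=\lambda\ell$, so $c_1(L_t)\cdot C=\lambda\, q_X(\ell,\ell)=0$ because $L$ is parabolic. This settles the very general case.

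\emph{Nefness, and the claim for every $t$ in the Hodge locus.} Here I would use the Boucksom / Amerik--Verbitsky description of the Kähler cone. The Kähler cone is the part of the positive cone $\scrC_{X_t}$ on which every class of a rational curve, or more generally every MBM class, is positive. These MBM classes are negative for $q_X$ and deformation invariant wherever they remain of Hodge type. Since $\ell$ is nef on $X$, it lies in $\overline{\scrC_X}$.

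On the very general $X_t$ there are no negative $(1,1)$ integral classes. Hence the Kähler cone is the whole positive cone, and $\ell\in\partial\scrC_{X_t}$ is nef.

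For an arbitrary $t$ in a small neighbourhood, any curve class $\gamma$ with $q_X(\gamma,\ell)<0$ would be an MBM class (or the class of an effective curve) that remains $(1,1)$ near $X$. I expect this step to be the main obstacle. The plan is to use the fact that only finitely many such classes meet a neighbourhood of $\ell$ with bounded BBF square (Amerik--Verbitsky boundedness). Shrinking the Kuranishi representative then excludes those that do not pass through $X$ itself. Those that do pass through $X$ have $q_X(\gamma,\ell)\ge 0$ by nefness of $L$ on $X$.

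For the equality: an effective curve $C$ on $X_t$ with $\ell\cdot C>0$ would deform, by Ran's / Voisin's result that curves on hyperkähler manifolds deform in the $(1,1)$-locus of their class, to the very general member. Alternatively, its class $\gamma$ stays Hodge along a subfamily, and semicontinuity of $\ell\cdot\gamma$ forces $\gamma\in\Q\ell$ on a dense set. Either route contradicts the degree-zero computation already obtained for very general $t$.
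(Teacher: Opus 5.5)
Your treatment of the very general point agrees with the paper's proof. The Picard statement comes from local Torelli and the countably many hypersurfaces $\mathcal P^{-1}(\ell^\perp\cap\beta^\perp)$. The vanishing $c_1(L_t)\cdot C=0$ comes from $\alpha_C=\lambda\ell$ and $q_X(\ell)=0$. Nefness comes from the description of the Kähler cone: the paper uses Huybrechts' criterion that a class in $\overline{\scrC_{X_t}}$ which is non-negative on rational curves is nef, and your ``no negative integral $(1,1)$-classes, hence Kähler cone $=$ positive cone'' amounts to the same thing.

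One step is missing, though. The condition $q_X(\ell)=0$ only places $\ell$ in $\overline{\scrC_{X_t}}\cup(-\overline{\scrC_{X_t}})$, and you simply assert $\ell\in\partial\scrC_{X_t}$. This is exactly where the paper shrinks the Kuranishi space. A Kähler class $\omega$ on $X$ deforms to Kähler classes $\omega_t$ on nearby fibres, and $c_1(L)\cdot\omega^{2n-1}>0$ because $L$ is nef and non-trivial. By continuity, $c_1(L_t)\cdot\omega_t^{2n-1}>0$ for $t$ close to $0$, which puts $\ell$ on the positive side.

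Your final part, however, rests on a misreading and is wrong. ``For any such deformation'' refers to the very general deformations of the first sentence, i.e.\ those with $\pic(X_t)\otimes\Q=\Q L_t$, not to every point of the Hodge locus. The stronger statement you set out to prove is false. Take $t=0$ with $X$ projective, e.g.\ $X$ carrying a Lagrangian fibration $f$ with $L=f^*\sO(1)$; more generally, take any projective point of the Hodge locus, and these form a dense set. There $c_1(L_t)\neq 0$ forces some curve $C$ with $c_1(L_t)\cdot C\neq 0$.

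Accordingly, the step ``a curve with $\ell\cdot C>0$ would deform to the very general member'' fails. A curve of class $\gamma$ can only exist on those $X_s$ where $\gamma$ is of Hodge type, i.e.\ over $\mathcal P^{-1}(\ell^\perp\cap\gamma^\perp)$. For $\gamma\notin\Q\ell$ this is a proper hypersurface of the Hodge locus of $\ell$ and contains no very general point. Moreover, $\ell\cdot\gamma$ is a topological constant, so no semicontinuity argument can force $\gamma\in\Q\ell$.

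The Amerik--Verbitsky finiteness step is likewise unjustified. Local finiteness of MBM walls holds in the interior of the positive cone, whereas $\ell$ lies on its boundary, where walls may accumulate. None of this is needed: drop that paragraph and add the sign argument above.
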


\begin{proof}
It follows from the local Torelli theorem that the Hodge locus of $c_1(L)$ is a hypersurface in the Kuranishi space of $X$ and that for a very general point in this Hodge locus, the corresponding fibre of the universal family has Picard rank equal to one. Since $c_1(L) \in H^2(X_t, \Z) \cap H^{1,1}(X_t)$, it defines a line bundle $L_t$ on~$X_t$ with $c_1(L_t)=c_1(L)\neq 0$, so $L_t$ is a $\Q$-generator of the Picard group. Note that $b_2(X) \geq 4$ so that the Hodge locus has positive dimension. 

Let $X_t$ be a hyperkähler manifold corresponding to such a very general point. We show now that $c_1(L_t)$ pairs trivially with every curve in $X_t$. For this, let $C \subseteq X_t$ be a curve and consider the induced linear form on $\pic(X)\tensor \Q$ given by taking the intersection product with $C$. By integrality and nondegeneracy of the BBF form, there is a unique class $\alpha_C \in H^{2}(X,\Q)$ satisfying 
$$ q_{X_t}(\alpha_C,\beta) = C\cdot\beta \quad \text{for all }\beta \in H^2(X_t,\Q). $$
Since $C$ is an analytic cycle, we must have $\alpha_C \in H^{1,1}(X_t,\Q)=\pic(X_t)\tensor \Q$. By assumption on $X_t$, the class $\alpha_C$ must therefore be proportional to $c_1(L_t)$, say $\alpha_C=\lambda c_1(L_t)$ for some $\lambda \in \Q$. Thus, we have
$$ c_1(L_t) \cdot C = q_X\big(c_1(L_t),\alpha_C\big)= \lambda q_{X_t}(L_t) = \lambda q_X(L) = 0. $$
The last equality comes from the assumption that $L$ is parabolic.

Since $L$ is nef, we know that for $X_t$ sufficiently close to $X$, there is a Kähler class~$\omega_t$ such that $c_1(L_t) \cdot \omega_t^{2n-1} > 0$. Thus, $L_t$ is in the closure of the positive cone of $X_t$ since \mbox{$q_{X_t}(L_t)=0$} and $c_1(L_t) \cdot \omega_t^{2n-1} > 0$. As $c_1(L_t)$ has non-negative intersection with every rational curve in $X_t$, we obtain that $L_t$ is nef by~\cite[Proposition~3.2]{Huy03}.
\end{proof}

The proof above shows:

\begin{corollary} \label{cor:allzero}
Let $X$ be a hyperkähler manifold of dimension $2n$ such that $\pic(X) \otimes \Q \isom \Q \sL$, where $\sL$ is a non-trivial nef parabolic line bundle. Then $\nd(\sL)=n$, and $ \sL $ has degree zero on all curves in $X$.
\end{corollary}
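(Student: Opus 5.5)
The plan is to rerun the argument of Lemma~\ref{lemma:allzero}, now with $X$ itself in place of the very general deformation $X_t$. The hypothesis $\pic(X)\otimes\Q \isom \Q\sL$ gives exactly what the very general choice gave there.

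\emph{Degree zero on curves.} Let $C \subseteq X$ be a curve. Since $q_X$ is non-degenerate and defined over $\Q$, there is a unique class $\alpha_C \in H^2(X,\Q)$ with $q_X(\alpha_C,\beta) = C\cdot\beta$ for all $\beta\in H^2(X,\Q)$.

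We next check that $\alpha_C$ is of type $(1,1)$. The class $[C]$ pairs trivially with $H^{2,0}\oplus H^{0,2}$. Recall also that $H^{2,0}$ is $q_X$-isotropic and that $H^{1,1}$ is $q_X$-orthogonal to $H^{2,0}\oplus H^{0,2}$. Therefore $\alpha_C$ is $q_X$-orthogonal to $\sigma$ and to $\bar\sigma$, which forces $\alpha_C \in H^{1,1}(X)\cap H^2(X,\Q) = \pic(X)\otimes\Q$.

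Hence $\alpha_C = \lambda c_1(\sL)$ for some $\lambda\in\Q$. It follows that $\sL\cdot C = \lambda\, q_X(\sL) = 0$, because $\sL$ is parabolic.

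\emph{Numerical dimension.} By the Fujiki relation, $\int_X c_1(\sL)^{2n} = c_X^{-1} q_X(\sL)^n = 0$. So $\sL$ is not big, and $\nd(\sL) < 2n$ by Remark~\ref{remarknumericaldimension}.

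To pin down $\nd(\sL)$ exactly, use the polarised form of the Fujiki relation. For classes $\alpha_1,\dots,\alpha_{2n}$, the integral $\int_X \alpha_1\cdots\alpha_{2n}$ equals $c_X/(2n)!$ times a sum, over all pairings of the indices, of products of the values $q_X(\alpha_i,\alpha_j)$. Take $\alpha_1=\dots=\alpha_k=c_1(\sL)$ and the remaining $2n-k$ classes equal to a Kähler class $\omega$.

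If $k > n$, every pairing must pair two copies of $\sL$ with each other, and each such term contains the factor $q_X(\sL)=0$. Hence $c_1(\sL)^k\cdot\omega^{2n-k}=0$. For the vanishing of the class $c_1(\sL)^k$ itself (not just of its pairing with $\omega^{2n-k}$), apply the same computation with arbitrary classes in place of $\omega$. By Verbitsky, the subalgebra generated by $H^2$ satisfies the corresponding vanishing, which gives $c_1(\sL)^{n+1}=0$ in cohomology.

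If $k = n$, the only pairings that survive are those that pair each copy of $\sL$ with a copy of $\omega$. This gives
$$ c_1(\sL)^n\cdot\omega^n = \mathrm{const}\cdot q_X(\sL,\omega)^n\, q_X(\omega)^0 > 0. $$
Here $q_X(\sL,\omega)>0$ because $\sL$ is non-zero, nef and isotropic, so it lies in the closure of the positive cone; and the combinatorial constant is positive. Hence $c_1(\sL)^n \neq 0$ and $\nd(\sL)=n$.

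The main obstacle is the cohomological vanishing $c_1(\sL)^{n+1}=0$. The cleanest route is the standard fact that $q_X(\alpha)=0$ implies $\alpha^{n+1}=0$ in $H^*(X)$ (Bogomolov/Verbitsky). Everything else is formal.
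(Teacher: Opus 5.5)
Your proof is correct. The degree-zero part is exactly the paper's argument: the paper obtains the corollary by rerunning the proof of Lemma~\ref{lemma:allzero} with $X$ in place of $X_t$, via the class $\alpha_C$ with $q_X(\alpha_C,\cdot)=C\cdot(\cdot)$. Your check that $\alpha_C$ is of type $(1,1)$ fills in a step the paper only asserts; it also tacitly uses $q_X(\sigma,\bar\sigma)\neq 0$.

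The paper does not spell out $\nd(\sL)=n$ at all (it treats it as standard), so your polarised Fujiki computation is a genuine addition. You can drop the appeal to Verbitsky--Bogomolov by using nefness. Remark~\ref{remarknumericaldimension} gives $c_1(\sL)^{d}\cdot\omega^{2n-d}>0$ for $d=\nd(\sL)$. Your vanishing $c_1(\sL)^k\cdot\omega^{2n-k}=0$ for all $k>n$ therefore already forces $\nd(\sL)\le n$, and your $k=n$ computation gives $\nd(\sL)\ge n$. This keeps the argument inside the paper's own tools.

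Replacing $\omega$ by arbitrary degree-two classes would not suffice by itself. It only shows that $c_1(\sL)^{n+1}$ pairs trivially with products of classes in $H^2$, and these need not span $H^{2n-2}(X)$ once $n\ge 3$. That step is closed only by the theorem you cite, or by the nefness argument above.
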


%----------------------------------------------------
\subsection{Linearisations}\label{section linearisations}
%----------------------------------------------------

We briefly recall some aspects of the theory of $G$-linearisa\-tions; see~\cite[Section~1.3]{MFK94} for a detailed treatment. Let $X$ be a complex space and $L$ a line bundle on $X$. We denote by $\Aut(X,L) \subset \Aut(X)$ the subgroup of automorphisms $g$ of $X$ such that $g^*L\isom L$. 

\begin{definition}\label{definition g-linearisation}
Let $G$ be a complex Lie group acting on a complex space $X$ via a morphism
\[
\sigma \colon G \times X \to X, \qquad (g,x) \mapsto g \cdot x,
\]
and let $\pr_2 \colon G \times X \to X$ be the projection onto the second factor. Let $L$ be a line bundle on $X$. A \emph{$G$-linearisation} of $L$ is an isomorphism of line bundles on $G \times X$
\begin{equation}
\label{eq linearization isomorphism}
\Phi \colon \sigma^*L \xrightarrow{\;\sim\;} \pr_2^*L
\end{equation}
satisfying the following cocycle condition. On $G \times G \times X$, we have the equality
\begin{equation}
\label{eq linearization cocycle}
(m \times \id_{X})^*\Phi = \pr_{23}^*\Phi \circ (\id_{G} \times \sigma)^*\Phi,
\end{equation}
of isomorphisms of line bundles, where $m \colon G \times G \to G$ is the group structure and $\pr_{23} \colon G \times G \times X \to G \times X$ projects onto the second and third factors.
\end{definition}

In the situation of Definition~\ref{definition g-linearisation}, set 
\begin{equation}\label{eq linearization pointwise}
 \varphi_g := \Phi|_{\{g\} \times X} \colon g^*L \xrightarrow{\sim} L
\end{equation}
for $g \in G$. Then the $\varphi_g$ satisfy the cocycle condition:  
\begin{equation}\label{eq cocycle condition on points}
\varphi_1 = \id_L
  \quad\text{and}\quad
\varphi_{gh} = \varphi_g \circ g^*\varphi_h \qquad \textrm{ for all } g,h \in G.
\end{equation}
The rule $(g,s) \;\mapsto\;
g \cdot s := \varphi_g\bigl(g^*s\bigr)$. defines a linear representation 
\[
G \to \GL\left(H^{0}(X,L)\right).
\]
\begin{remark}\label{remark linearization discrete group}
If $G$ is a \emph{discrete} group, then the isomorphisms $\varphi_g \colon g^*L \to L$ can be specified individually. The global condition \eqref{eq linearization isomorphism} is equivalent to giving a family~$(\varphi_g)$ as above satisfying the cocycle condition \eqref{eq cocycle condition on points}. 
\end{remark}

\begin{example}
If $X$ is smooth, equipped with a $G$-action, there is a canonical $G$-linearisation on the canonical sheaf~$\omega_X$. The corresponding automorphisms $\varphi_g\colon g^*\omega_X \to \omega_X$ from \eqref{eq linearization pointwise} are given by the determinant of the dual to the differential $T_X \to g^*T_X$. 

Similarly, for a pair $(X,\Delta)$, there is a canonical $G$-linearisation on $\omega_X(\Delta)$ whenever $G$ is a group of automorphisms leaving $\Delta$ invariant. On a general pair $(X,\Delta)$, one obtains a $G$-linearisation on the reflexive sheaf $\omega_X(\Delta)$ by push forward from $X^\reg$. The same holds if automorphisms are replaced by pseudo-automorphisms.
\end{example}

\begin{lemma}\label{lemma tensor power linearization}
Let $X$ be a complex space, $L$ a line bundle on $X$, and $G \subset \Aut(X,L)$ a finite group. If $L$ admits a $G$-linearisation, then there is $n \in \Z_{>0}$ such that $L^{\otimes n}$ descends to the quotient $X/G$.
\end{lemma}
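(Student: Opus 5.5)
The plan is to descend $L$ by first killing the stabiliser characters and then applying the standard descent criterion for quotients by finite groups, as in Kempf's descent lemma (cf.~\cite[Theorem~1.10]{MFK94}). Let $\pi\colon X\to X/G$ be the quotient map, which exists as a complex space since $G$ is finite; $\sO_{X/G}=(\pi_*\sO_X)^G$. The $G$-linearisation gives, for every $x\in X$ with stabiliser $G_x$, a character $\chi_x\colon G_x\to\C^*$: the group $G_x$ acts linearly on the one-dimensional fibre $L(x)$ via the isomorphisms $\varphi_g$, and the cocycle condition \eqref{eq cocycle condition on points} makes this a homomorphism. Since $G$ is finite, every $\chi_x$ takes values in the $|G_x|$-th roots of unity. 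Put $n:=|G|$, or more economically the exponent of $G$. On $L^{\otimes n}$ we use the induced tensor-power linearisation $\varphi_g^{\otimes n}$, which again satisfies the cocycle condition, and its stabiliser characters are $\chi_x^n=1$. So every stabiliser acts trivially on the fibres of $L^{\otimes n}$.

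Next I will show that the $G$-invariant pushforward $M:=(\pi_*L^{\otimes n})^G$ is a line bundle on $X/G$ with $\pi^*M\isom L^{\otimes n}$ compatibly with the linearisations. This is local on $X/G$. Fix $y\in X/G$ and $x\in\pi^{-1}(y)$. Choose a small $G_x$-invariant open neighbourhood $V$ of $x$ such that $gV\cap V=\emptyset$ for $g\notin G_x$. Then $\pi^{-1}(\pi(V))=\bigsqcup_{gG_x} gV$ and $\pi(V)\isom V/G_x$, so we reduce to the case $G=G_x$ with $x$ a fixed point. Shrink $V$ so that $L^{\otimes n}|_V$ is trivial and pick a local generator $s$ with $s(x)\neq 0$. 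Average it: $\bar s:=\frac1{|G_x|}\sum_{g\in G_x} g\cdot s$. At $x$ each $g\cdot s$ has value $\chi_x(g)^n s(x)=s(x)$, so $\bar s(x)=s(x)\neq0$. Hence $\bar s$ is a $G_x$-invariant generator on a smaller invariant neighbourhood. Any invariant section can then be written $f\bar s$ with $f$ a $G_x$-invariant function, i.e.\ a function on $V/G_x$. This shows $M$ is locally free of rank one near $y$, generated by $\bar s$, and that $\pi^*M\to L^{\otimes n}$ is an isomorphism there.

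The only delicate point, though routine, is the analytic setup. We need $G$-invariant neighbourhoods of a point that split as above, and the identification of $(\pi_*\sO_X)^G$-modules on the orbit space. Both follow from Cartan's theorem on quotients of complex spaces by finite (more generally properly discontinuous) groups. Beyond that, the real content is the vanishing of the stabiliser characters after raising to the power $n$, and the averaging argument. Replacing $n$ by a multiple of the exponent of $G$ is harmless.
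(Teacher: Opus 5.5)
Your proof is correct and follows the paper's route. The stabiliser $G_x$ acts on the fibre of $L$ at $x$ through roots of unity, so for $n$ divisible by $|G|$ (or by the exponent of $G$) every stabiliser acts trivially on $L^{\otimes n}$, and $L^{\otimes n}$ then descends. The paper simply invokes standard descent theory (from the Stacks Project or Alper) adapted to the analytic setting, whereas you carry out that descent step yourself using $G_x$-invariant slice neighbourhoods and by averaging a local generator; this is a valid, self-contained way to fill in the step the paper only cites.
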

\begin{proof}
We have to show that there is $n\in \Z_{>0}$ such that for each $x \in X$ the action of the stabilisers $G_x$ on the fibre $(L^{\tensor n})_x \isom (L_x)^{\otimes n}$ is trivial. Since $G$ is finite, the action of $G_x$ on $L_x$ is by roots of unity, hence as soon as $|G|$ divides $n$, the action on $L_x^{\otimes n}$ is trivial. Then the bundle $L^{\otimes n}$ descends to the quotient by standard descent theory; adapt~\cite[Proposition~35.5.2]{Stacks} or~\cite[Theorem~10.3]{Alp13} to the analytic setup.
\end{proof}

We define the group
\begin{equation}\label{eq linearized automorphism group}
\LinAut(X,L):= \{ (\sigma,\alpha) \mid \sigma \in \Aut(X, L),\ \alpha \colon \sigma^*L \xrightarrow{\sim} L \},
\end{equation}
of linearised automorphisms with group law
\[
(\sigma,\alpha)\cdot(\tau,\beta):= (\sigma\circ\tau,\ \alpha \circ \sigma^*\beta).
\]
The projection to the first component gives a group homomorphism
\[
\pr_1 \colon \LinAut(X,L) \to \Aut(X),\quad (\sigma,\alpha) \mapsto \sigma.
\]
\begin{lemma}\label{lemma linearization as a group homomorphism}
Let $G$ be a discrete group acting on $X$, and let $L$ be a line bundle on $X$. Then the datum of a $G$-linearisation of $L$ is equivalent to the datum of a group homomorphism
\[
\rho \colon G \to \LinAut(X,L)
\]
such that $\pr_1 \circ \rho \colon G \to \Aut(X)$ is the given action of $G$ on $X$.
\end{lemma}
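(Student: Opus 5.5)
The plan is to write down the two constructions explicitly and check that they are mutually inverse. Since $G$ is discrete, Remark~\ref{remark linearization discrete group} lets us describe a $G$-linearisation of $L$ as a family $(\varphi_g)_{g\in G}$ of isomorphisms $\varphi_g\colon g^*L\to L$. This family must satisfy the cocycle condition \eqref{eq cocycle condition on points}, and we will work with this pointwise description throughout.

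Given such a family, define $\rho(g):=(g,\varphi_g)$. We first note that this lies in $\LinAut(X,L)$: $\varphi_g$ exists, so $g^*L\isom L$ and $g\in\Aut(X,L)$. To see that $\rho$ is a homomorphism, we compute with the group law of $\LinAut(X,L)$:
$$\rho(g)\cdot\rho(h)=(g\circ h,\ \varphi_g\circ g^*\varphi_h)=(gh,\varphi_{gh})=\rho(gh).$$
This uses the cocycle condition and the action convention under which $g\circ h$ is the action of $gh$. The condition $\varphi_1=\id_L$ gives $\rho(1)=(\id_X,\id_L)$, which is the neutral element. By construction $\pr_1\circ\rho$ is the given action.

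Conversely, let $\rho$ be a homomorphism with $\pr_1\circ\rho$ equal to the action. Then $\rho(g)=(g,\alpha_g)$ for some isomorphisms $\alpha_g\colon g^*L\to L$, and we set $\varphi_g:=\alpha_g$. The identity $\rho(gh)=\rho(g)\rho(h)$, compared in the second component, gives $\varphi_{gh}=\varphi_g\circ g^*\varphi_h$. The identity $\rho(1)=1$ gives $\varphi_1=\id_L$. So $(\varphi_g)$ is a $G$-linearisation, and the two constructions are clearly inverse to each other.

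The only point needing care is a convention check, not a real obstacle. The group law on $\LinAut(X,L)$ and the cocycle formula \eqref{eq cocycle condition on points} have to use the same composition order, with $g^*$ applied to the second factor. The action must also be a left action, so that $\sigma_{gh}=\sigma_g\circ\sigma_h$. With the conventions fixed in Definition~\ref{definition g-linearisation} and \eqref{eq linearized automorphism group}, these match, so the argument goes through.
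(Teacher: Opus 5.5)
This is the paper's argument: pass to the family $(\varphi_g)$ via Remark~\ref{remark linearization discrete group}, set $\rho(g)=(g,\varphi_g)$, and note that multiplicativity of $\rho$ is exactly the cocycle condition~\eqref{eq cocycle condition on points}; your treatment of $\varphi_1=\id_L$ and of the converse direction only spells out what the paper leaves implicit. One correction to your final paragraph: for a left action $\sigma_{gh}=\sigma_g\circ\sigma_h$ with ordinary pullbacks, $\varphi_g\circ g^*\varphi_h$ (and likewise $\alpha\circ\sigma^*\beta$) has source $g^*h^*L=(\sigma_h\circ\sigma_g)^*L$, not $(\sigma_g\circ\sigma_h)^*L$, so the conventions as displayed only type-check for right actions; for left actions the formulas should read $\varphi_{gh}=\varphi_h\circ h^*\varphi_g$ and $(\sigma,\alpha)\cdot(\tau,\beta)=(\sigma\circ\tau,\beta\circ\tau^*\alpha)$, and since both change together the argument is unaffected.
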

\begin{proof}
As $G$ is discrete, a $G$-linearisation of $L$ is given by isomorphisms
\[
\varphi_g \colon g^*L \xrightarrow{\sim} L
\]
for all $g \in G$ satisfying \eqref{eq cocycle condition on points}, cf.~Remark~\ref{remark linearization discrete group}. Thus, the map
\[
  \rho \colon G \to \LinAut(X,L),\quad
  \rho(g) := (g,\varphi_g),
\]
where we write $g$ also for the corresponding automorphism of $X$, is a group homomorphism if and only if the cocycle condition for the $\varphi_g$ are satisfied:
\[
\rho(gh)= (gh,\varphi_{gh})= (gh,\varphi_g \circ g^*\varphi_h)= (g,\varphi_g)\cdot(h,\varphi_h)= \rho(g)\rho(h).
\]
\end{proof}

\begin{corollary}\label{corollary erzeugnis mit linearisierung}
Let $L$ be a line bundle on a normal variety $X$. Given linearisations of $L$ for groups $G_1, G_2\subset \Aut(X,L)$, there is a group $G\subset \LinAut(X,L)$ and a $G$-linearisation of $L$ with the following properties.
\begin{enumerate}
	\item The groups $G_1$ and $G_2$ are in a canonical way subgroups of $G$ and together they generate $G$.
	\item The projection $\pr_1\colon  G \to \Aut(X,L)$ restricts to isomorphsims $G_i \to G_i \subset \Aut(X,L)$ for $i=1,2$.
	\item When restricted to the $G_i$, the $G$-linearisation induces the given $G_i$-linearisations on $L$ for $i=1,2$.
\end{enumerate}
\end{corollary}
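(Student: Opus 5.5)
The plan is to take $G$ to be the subgroup of $\LinAut(X,L)$ generated by the images of the two homomorphisms coming from the given linearisations. By Lemma~\ref{lemma linearization as a group homomorphism}, the given $G_i$-linearisations of $L$ are the same as group homomorphisms
$$\rho_i\colon G_i\to\LinAut(X,L), \qquad \rho_i(g)=(g,\varphi^{(i)}_g),$$
with $\pr_1\circ\rho_i$ equal to the inclusion $G_i\subset\Aut(X,L)$. We regard $G_1,G_2$ as discrete groups, which is the setting in which Lemma~\ref{lemma linearization as a group homomorphism} applies. We then set
$$G:=\langle \rho_1(G_1),\rho_2(G_2)\rangle\subset \LinAut(X,L).$$

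First we check that each $\rho_i$ is injective, so that $G_i\cong\rho_i(G_i)$ canonically. This holds because $\pr_1\circ\rho_i$ is the inclusion $G_i\hookrightarrow\Aut(X,L)$, which is injective. Hence $G_1$ and $G_2$ embed canonically in $G$ and generate it, which gives (1). Item (2) is the same observation read backwards: $\pr_1|_{\rho_i(G_i)}$ is inverse to $\rho_i$ onto $G_i\subset\Aut(X,L)$.

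Next, $G$ acts on $X$ through $\pr_1$. The tautological map $G\hookrightarrow\LinAut(X,L)$ is a group homomorphism lifting this action, so by Lemma~\ref{lemma linearization as a group homomorphism} it is a $G$-linearisation of $L$. Explicitly, it is $\varphi_{(\sigma,\alpha)}=\alpha$, and the cocycle condition \eqref{eq cocycle condition on points} is just the group law in $\LinAut(X,L)$. Restricting this linearisation to $\rho_i(G_i)$ and transporting it to $G_i$ via $\rho_i$ recovers $g\mapsto\varphi^{(i)}_g$, which proves (3).

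The only subtle point is that $G$ need not act faithfully on $X$: $\pr_1\colon G\to\Aut(X,L)$ may fail to be injective on all of $G$. Its kernel consists of elements $(\id,\lambda)$ with $\lambda\in H^0(X,\sO_X^*)=\C^*$, using that $X$ is a normal, hence connected, compact variety. This is harmless, because the statement only requires a linearisation for a group acting through $\pr_1$, and not a subgroup of $\Aut(X)$. One must simply be careful to state that $G$ acts via $\pr_1$. Beyond this point the argument is formal.
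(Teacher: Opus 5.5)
Your proposal is correct and is the same argument as the paper's: $G:=\langle\rho_1(G_1),\rho_2(G_2)\rangle\subset\LinAut(X,L)$, each $\rho_i$ is injective because $\pr_1\circ\rho_i$ is the inclusion, and the tautological inclusion $G\hookrightarrow\LinAut(X,L)$ is read as a $G$-linearisation via Lemma~\ref{lemma linearization as a group homomorphism}. One small point: your side remark placing $\ker(\pr_1|_G)$ inside $\C^*$ tacitly assumes $X$ is compact, which is not a hypothesis; in general the kernel lies in $\{(\id,u)\mid u\in H^0(X,\sO_X^\times)\}$, but the argument does not depend on this.
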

\begin{proof}
By Lemma~\ref{lemma linearization as a group homomorphism}, the $G_i$-linearisations on $L$ induce morphisms $\phi_i\colon G_i \to \LinAut(X,L)$ for $i=1,2$ such that $\pr_1\circ \rho_i$ is just the inclusion in $\Aut(X,L)$. Hence, $\rho_1 ,\rho_2$ must be monomorphisms. We define
\[
G := \langle \rho_1(G_1), \rho_2(G_2) \rangle \subset \LinAut(X,L).
\]
Again by Lemma~\ref{lemma linearization as a group homomorphism}, the line bundle $L$ carries a canonical $G$-linearisation. The claimed properties hold by construction. 
\end{proof}

\subsection{Some descent lemmas}

Let $\holom{n}{\widetilde X}{X}$ be a generically finite surjective morphism between normal compact Kähler spaces, and let $P$ be a line bundle on $X$. Then we have
$$ \kappa(\widetilde X, n^* P) = \kappa(X, P), $$
i.e.\ the Iitaka dimension is invariant. This is no longer true when $X$ is not normal: let $X \subseteq \PP^2$ be a nodal cubic, and let $\holom{\nu}{X^\nu}{X}$ be the normalisation. Let $P$ be a degree zero line bundle on $X$ that is not torsion in $\pic{X} \isom \C^*$. Then $\kappa(X, P)=-\infty$, but $\kappa(\widetilde X, \nu^* P)=0$. We will prove a series of technical descent lemmas whose interest will become clear later on. Let us start by recalling:

\begin{lemma} \label{lemma:zariski}
Let $\holom{\varphi}{X}{T}$ be a fibration from a normal compact Kähler space $X$ onto a smooth projective curve $T$. Let $\sL$ be a line bundle on $X$ such that $c_1(\sL)^2=0$ and such that the restriction of $\sL$ to a general fibre of $\varphi$ is a torsion line bundle. Then $\sL \isom_\Q \varphi^* A$, where $A$ is a $\Q$-line bundle on $T$ which is unique up to $\Q$-linear equivalence.  \end{lemma}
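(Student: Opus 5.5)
The plan is to reduce to a resolution and apply Zariski's lemma fibre by fibre, then correct the resulting pull-back by a torsion line bundle. Let $\holom{\mu}{X'}{X}$ be a resolution of singularities with $X'$ a compact Kähler manifold, and set $\varphi':=\varphi\circ\mu$. Since $X$ is normal, $\mu_*\sO_{X'}=\sO_X$, so it suffices to prove $\mu^*\sL\isom_\Q\varphi'^*A$: pushing forward with the projection formula gives $\sL^{\otimes m}\isom\varphi^*A^{\otimes m}$. Uniqueness of $A$ follows because $\varphi^*$ is injective on $\pic(T)\otimes\Q$, as $\varphi$ is surjective with connected fibres and $\varphi_*\sO_X=\sO_T$. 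We may therefore assume $X$ is smooth.

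\emph{Step 1 (the fibres).} Let $\omega$ be a Kähler form and $F=\varphi^*(t)$ a fibre. The hypothesis $c_1(\sL)^2=0$ gives $c_1(\sL)^2\cdot[F]\cdot\omega^{\dim X-3}=0$. Write $F=\sum a_iF_i$. Replacing $\sL$ by a power and twisting by a multiple of a fibre (which changes nothing), the general fibre forces $c_1(\sL|_{F})=0$ cohomologically on the general fibre. Using $\sL\cdot F\cdot\omega^{\dim X-2}=0$ and the Hodge index theorem in the form of Zariski's lemma (applied to $F_i\cdot\omega^{\dim X-2}$, via the Hodge--Riemann bilinear relations on $H^{1,1}$), I would show that the classes $c_1(\sL|_{F_i})$ are multiples of those of the $F_j|_{F_i}$. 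Concretely, there are rational $b_i$ such that $\sL':=\sL(-\sum b_iF_i)$ has cohomologically trivial restriction to every fibre component. Since $c_1(\sL)^2=0$, the negative semidefiniteness of the intersection form on the fibre components, with kernel spanned by $F$, forces the $b_i$ to be proportional to the $a_i$ in each fibre. Hence $\sum b_iF_i$ is a $\Q$-multiple of fibres, i.e.\ a $\Q$-pull-back from $T$.

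\emph{Step 2 (descent).} Now $\sL'$ is $\varphi$-cohomologically trivial and torsion on the general fibre; after taking a power, it is trivial on the general fibre. Then $\varphi_*\sL'$ is a torsion-free, hence locally free, rank-one sheaf $M$ on $T$, and the natural map $\varphi^*M\to\sL'$ is injective with cokernel supported on fibres. This gives $\sL'\isom\varphi^*M\otimes\sO_X(E)$ with $E\ge0$ supported in fibres and not containing a full fibre. Since $E$ has numerically trivial restriction to the fibre components (as $\sL'$ does), Zariski's lemma again forces $E$ to be a rational multiple of whole fibres, hence $E=0$ after possibly passing to a power.

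\emph{Main obstacle.} The hardest part is the non-projective Kähler setting. Zariski's lemma and the Hodge index argument must be run with Kähler classes rather than ample divisors. Moreover, the torsion condition on special fibre components has to be verified, and cohomological triviality does not yield triviality on non-reduced or multiple fibres. I would handle the multiple fibres by passing to the $\Q$-statement: replace $\sL$ by a multiple clearing the multiplicities $a_i$. Triviality on the components, as opposed to cohomological triviality, is then obtained by the pushforward argument, which only needs triviality on the general fibre.
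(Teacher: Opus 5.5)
\textbf{There is a gap in Step~1.} Your overall architecture is the right one: resolve, push forward to write a power of $\sL$ as $\varphi^*M\otimes\sO_X(E)$ with $E\geq 0$ vertical, then finish with Zariski's lemma for the form $Q(\alpha,\beta)=\int_X\alpha\cdot\beta\cdot\omega^{\dim X-2}$. Your Step~2 is essentially the paper's proof. But Step~2 relies on Step~1 to know that $E$ is numerically trivial on fibre components, and Step~1 does not hold up as argued.

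\begin{itemize}
\item \emph{The first claim of Step~1 is unsupported.} Zariski's lemma only describes $Q$ on the span of the components $F_j$ of a single fibre. It says nothing about the restriction of an arbitrary class $c_1(\sL)$ to a component $F_i$. So it cannot show that $c_1(\sL|_{F_i})$ lies in the span of the $F_j|_{F_i}$, let alone produce a single vector $(b_i)$ correcting all components at once.
\item \emph{The proportionality claim does not follow either.} Write $c_1(\sL)=c_1(\sL')+B$ with $B=\sum b_iF_i$. The cross term $Q(\sL',B)$ vanishes, so $c_1(\sL)^2=0$ only gives $Q(\sL',\sL')+Q(B,B)=0$. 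Negative semidefiniteness on fibre components controls $Q(B,B)\leq 0$, but says nothing about $Q(\sL',\sL')$.
\item \emph{What would close it.} You would need the Hodge index theorem on all of $H^{1,1}(X,\R)$: $Q$ has signature $(1,h^{1,1}-1)$, and $c_1(\sL)$ is isotropic and orthogonal to the isotropic class $[F]$. That argument actually gives $c_1(\sL)\in\Q[F]$ directly, so no twist is needed at all.
\end{itemize}

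\textbf{The simplest repair is to drop Step~1, as the paper does.} Apply Step~2 to $\sL$ itself.
\begin{itemize}
\item Choose one very general fibre $X_{t_0}$ and $m_0$ with $\sL^{\otimes m_0}|_{X_{t_0}}$ trivial. Grauert's theorems then make $\varphi_*\sL^{\otimes m_0}$ locally free of rank one. This also avoids your implicit claim that a single power trivialises $\sL$ on all general fibres.
\item One gets $\sL^{\otimes m_0}\isom\varphi^*M\otimes\sO_X(E)$ with $E\geq 0$ supported in fibres.
\item The class $\varphi^*c_1(M)$ has square zero and is orthogonal to $E$: represent it by a fibre disjoint from $\Supp E$. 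Hence the hypothesis $c_1(\sL)^2=0$ gives exactly $E^2=0$.
\item Therefore $Q(E,E)=\sum_t Q(E_t,E_t)=0$, with each summand $\leq 0$. Zariski's lemma then gives $E_t\in\Q\cdot\varphi^*t$ for every $t$, i.e.\ $E\sim_{\Q}\varphi^*\Delta$.
\end{itemize}
The conclusion is $E\sim_{\Q}\varphi^*\Delta$, not $E=0$: a multiple fibre can give $E=\frac12\varphi^*t$. This is harmless for the $\Q$-statement, since after a further power $E$ is absorbed into the pull-back.
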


\begin{proof}
The uniqueness statement follows since the pullback map of line bundles along a fibration is injective by the projection formula. In the remainder of the proof we show the existence of such a $\Q$-divisor $A$. By replacing $X$ by its resolution we may assume that $X$ is smooth. For each $t\in T$ we denote by $X_t$ the fibre of $\varphi$ over $t$. We will use below that for all $m\in\N$ the sheaves $\varphi_*\sL^{\otimes m}$ are either zero-sheaves or are locally free: this follows by~\cite[paragraph after Corollary 5.15]{Kob87}, since $T$ is a curve and since these sheaves are torsion-free, as the map $\varphi$ is surjective.

Let $U\subseteq T$ be an open subset such that $\sL|_{X_t}$ is a torsion line bundle for all $t\in U$. By~\cite[Theorem on p.\ 210]{GR84}, for each $m\in\N$ there exists a non-empty open subset $U_m$ of the non-critical locus in $T$ of $\varphi$ such that the function $h^0(X_t,\sL^{\otimes m}|_{X_t})$ is constant on $U_m$. Pick a point $t_0\in U\cap \bigcap_{m\in\N}U_m$, and pick a positive integer $m_0$ such that $(\sL|_{X_t})^{\otimes m_0}\isom \sO_{X_{t_0}}$, which exists since $t_0\in U$. Then the sheaf $\varphi_*\sL^{\otimes m_0}$ is locally free of rank one on $U_{m_0}$ by~\cite[Theorem on p.\ 211]{GR84}, hence everywhere on $T$ by the previous paragraph. By replacing $\sL$ by $\sL^{\otimes m_0}$, we may thus assume that $\varphi_*\sL$ is a line bundle on $T$.

Using the evaluation morphism $\varphi^* \varphi_* \sL \rightarrow \sL$ we obtain that
$$ \sL \isom \varphi^* \varphi_* \sL \otimes\sO_X(E), $$
where $E$ is an effective divisor that has support in the fibres of $\varphi$. Since \mbox{$c_1(\sL)^2=0$}, we know that $E^2=0$. Thus, the proof of Zariski's lemma \cite[III, Lemma~8.2]{BHPV04} shows that $E \sim_\Q \varphi^* \Delta$ for an effective $\Q$-divisor $\Delta$ on $T$. In that proof, the intersection form on divisors is replaced by the form
$$ Q(\alpha,\beta):=\int_X \alpha.\beta.\omega^{\dim X -2}, $$
where $\omega$ is a Kähler class on $X$.
\end{proof}

\begin{proposition} \label{prop:semiample-nonnormal}
Let $X$ be a reduced and irreducible compact Kähler space. Let $F \subseteq X$ be an effective, reduced Weil divisor which is Cartier, and whose every irreducible component meets the smooth locus of~$X$. Let $ \holom{\pi}{\widetilde X}{X} $ be a birational morphism that is an isomorphism in a neighbourhood of $F$, and denote by $\widetilde F:=\pi^*F \subseteq \widetilde X$. Assume that there exists a fibration $\holom{\psi}{\widetilde X}{\Gamma}$ onto a projective curve $\Gamma$.
$$\xymatrix{
\widetilde{X} \ar[r]^\pi \ar[d]_\psi & X \\
\Gamma & }$$
If there exists a smooth point $t\in\Gamma$ such that $\widetilde F = \psi^* t$, then $F$ is semiample with $\kappa(X,F)=1$.
\end{proposition}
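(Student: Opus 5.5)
The plan is to produce sections of $\sO_X(mF)$ by pushing forward sections from $\widetilde X$ that are pulled back from $\Gamma$. Then we check that $\sO_X(mF)$ is globally generated for suitable $m$ and that the induced map has one-dimensional image.

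\textbf{Step 1 (sections on $\widetilde X$).} Since $\Gamma$ is a projective curve and $t$ is a smooth point, $\sO_\Gamma(t)$ is an ample Cartier divisor on $\Gamma$. Hence for $m\gg0$ the bundle $\sO_\Gamma(mt)$ is very ample. It has a section $s_0$ vanishing exactly at $mt$, and sections $s_1,\dots,s_k$ with no common zero. Pulling back gives sections $\psi^*s_i$ of $\psi^*\sO_\Gamma(mt)\isom\sO_{\widetilde X}(m\widetilde F)=\pi^*\sO_X(mF)$. Here $\psi^*s_0$ has divisor exactly $m\widetilde F$.

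\textbf{Step 2 (descending sections to $X$).} The key point is that $\pi$ is birational and an isomorphism near $F$. We need $H^0(\widetilde X,\pi^*\sO_X(mF))$ to contain sections coming from $X$. Since $X$ is not assumed normal, $\pi_*\sO_{\widetilde X}$ may differ from $\sO_X$, so I would argue directly. A pulled-back section $\sigma=\psi^*s$ defines a section of $\sO_X(mF)$ over the open set $X\setminus \pi(\mathrm{Exc})$, and near $F$ it is defined because $\pi$ is an isomorphism there. The problem is at points of $X$ over which $\pi$ is not an isomorphism and which lie away from $F$. I would use the fibration structure here. Since $\widetilde F=\psi^*t$ and $\pi$ is an isomorphism near $F$, the exceptional locus $E$ of $\pi$ is disjoint from $\widetilde F$. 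The locus $E$ is a compact analytic set in $\widetilde X$ over which $\pi$ has positive-dimensional fibres, and each connected fibre $\pi^{-1}(x)$ is compact and connected. I claim that $\psi$ is constant on each fibre $\pi^{-1}(x)$. To see this, suppose not. Then $\psi(\pi^{-1}(x))$ is all of $\Gamma$, so it meets $t$. Thus $\pi^{-1}(x)$ meets $\widetilde F$, and therefore $x\in F$. But $\pi$ is an isomorphism near $F$, so $\pi^{-1}(x)$ is a point, a contradiction. Hence $\psi$ factors set-theoretically through $X$, giving a map $\psi_X\colon X\to\Gamma$ with $\psi=\psi_X\circ\pi$.

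This map is holomorphic by the following argument. Its graph is the image of the graph of $\psi$ under $\pi\times\id$, so it is analytic by Remmert. The projection from the graph to $X$ is bijective and finite. To get holomorphy where $X$ is non-normal, I would pass to weak normality, or better argue via the sections themselves: the functions $\psi^*(s_i/s_0)$ are constant on $\pi$-fibres. Making this descent rigorous on a possibly non-normal $X$ is the main obstacle. The cleanest route I see is that $\psi_X$ is continuous and holomorphic on the dense open set where $\pi$ is an isomorphism. For the needed sections we only require holomorphy of $\psi_X^*$ of sections. One could instead replace the target by evaluating $\psi_X^*\sO_\Gamma(mt)$ against $\sO_X(mF)$ near $F$, where everything is an isomorphism.

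\textbf{Step 3 (conclusion).} Once $\psi_X$ is a holomorphic map with $\psi_X^*t=F$ as Cartier divisors (checked near $F$, where $\pi$ is an isomorphism, and away from $F$ where both sides are trivial), we get $\sO_X(F)\isom\psi_X^*\sO_\Gamma(t)$. Here we use that $F$ is Cartier and that its components meet the smooth locus, so equality of divisors is determined generically. Since $\sO_\Gamma(t)$ is ample on a curve, its pullback is semiample. Moreover $\kappa(X,F)=\kappa(\Gamma,t)=1$, because $\psi_X$ is surjective with connected fibres: $\psi$ has connected fibres and $\pi$ is surjective.
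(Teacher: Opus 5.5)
Step 1 is fine, but Step 2 has a genuine gap. The claim that $\psi$ is constant on every fibre $\pi^{-1}(x)$, and hence descends to a holomorphic map $\psi_X\colon X\to\Gamma$, is false under the hypotheses of the statement.

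Your argument assumes $\pi^{-1}(x)$ is connected. Connectedness of the fibres of a proper birational morphism is Zariski's main theorem, and it needs $X$ normal, which is exactly what is not assumed. The intended applications are to non-normal divisors, with $\pi$ the normalisation.

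Here is a concrete counterexample. Let $X$ be a nodal rational curve, $\pi\colon\PP^1\to X$ its normalisation, $\psi=\id_{\PP^1}$, and $t$ a point not lying over the node. Multiply everything by an elliptic curve if you want positive-dimensional fibres. All hypotheses hold, but the two preimages of the node have different $\psi$-values, so no $\psi_X$ exists.

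Even when $\psi$ is constant on $\pi$-fibres, the induced map need not be holomorphic. For a cuspidal rational curve and its normalisation, $\psi_X=\pi^{-1}$ is a homeomorphism but not holomorphic.

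In both examples $F$ is still semiample with $\kappa(X,F)=1$. However, $|dF|$ maps $X$ onto a curve $\Gamma'$ with $\Gamma\to\Gamma'$ finite and birational, not onto $\Gamma$. So the isomorphism $\sO_X(F)\isom\psi_X^*\sO_\Gamma(t)$ in Step 3 is unavailable, and your computation of $\kappa$ via connected fibres of $\psi_X$ fails with it.

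The missing idea is to descend sections rather than the map.

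Since $\pi$ is an isomorphism near $F$, the cokernel $\mathcal Q$ of $\sO_X\to\pi_*\sO_{\widetilde X}$ is supported away from $F$. Hence $\mathcal Q\otimes\sO_X(dF)\isom\mathcal Q$, and the projection formula gives $h^0(\widetilde X,\sO_{\widetilde X}(d\widetilde F))\geq h^0(X,\sO_X(dF))\geq h^0(\widetilde X,\sO_{\widetilde X}(d\widetilde F))-h^0(X,\mathcal Q)$ for all $d$. This yields $\kappa(X,F)=1$.

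Now take $d$ minimal with $h^0(X,\sO_X(dF))\geq 2$. Multiplication by the canonical section $s_F$ gives a strict inclusion $H^0(X,\sO_X((d-1)F))\subsetneq H^0(X,\sO_X(dF))$. So some $\sigma\in H^0(X,\sO_X(dF))$ is not divisible by $s_F$.

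Its pullback is $\psi^*s$ for some $s\in H^0(\Gamma,\sO_\Gamma(dt))$, and necessarily $s(t)\neq 0$. Indeed, if $s(t)=0$, then since $\widetilde F=\psi^*t$ and $\pi$ is an isomorphism near $F$, $\sigma/s_F$ would be holomorphic near $F$, and trivially away from $F$, contradicting the choice of $\sigma$.

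Thus $\sigma$ has no zero on $F$, while $s_F^d$ has no zero off $F$, so $\sO_X(dF)$ is globally generated. This is essentially the paper's argument, and it never needs a morphism $X\to\Gamma$.
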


\begin{proof}
The point $t$ is an ample Cartier divisor on the curve $\Gamma$, hence the divisor $\widetilde F = \psi^* t$ is semiample with
\begin{equation}\label{eq:009b}
\kappa(\widetilde X,\widetilde F)=1.
\end{equation}
We have the exact sequence
\begin{equation}\label{eq:009a}
0 \rightarrow \sO_{X} \rightarrow \pi_* \sO_{\widetilde X} \rightarrow \mathcal Q \rightarrow 0,
\end{equation}
where $\mathcal Q$ is a coherent sheaf whose support is disjoint from $F$. In particular, for each positive integer $d$ we have $\mathcal Q \otimes \sO_X(dF) \isom \mathcal Q$. Furthermore, since $\widetilde F=\pi^*F$, by the projection formula we have
$$ \pi_* \sO_{\widetilde X} \otimes \sO_X(dF) = \pi_* \sO_{\widetilde X}(d \widetilde F). $$
Therefore, twisting the sequence \eqref{eq:009a} with $\sO_X(dF)$ and passing to the associated cohomology sequence, for each positive integer $d$ we obtain the exact sequence
$$ 0 \rightarrow H^0\big(X,\sO_X(dF)\big) \rightarrow H^0\big(\widetilde X,\sO_{\widetilde X} (d\widetilde F)\big) \rightarrow H^0(X,\mathcal Q). $$
Consequently, for each positive integer $d$ we have
$$h^0\big(\widetilde X,\sO_{\widetilde X} (d\widetilde F)\big) \geq h^0\big(X,\sO_X(dF)\big) \geq h^0\big(\widetilde X,\sO_{\widetilde X} (d\widetilde F)\big) - h^0(X,\mathcal Q ).$$
The first inequality and \eqref{eq:009b} immediately give $\kappa(X,F)\leq 1$. Moreover, \eqref{eq:009b} implies that $\lim\limits_{d\to\infty}h^0\big(\widetilde X,\sO_{\widetilde X} (d\widetilde F)\big)={+}\infty$, hence the second inequality gives $\lim\limits_{d\to\infty}h^0\big(X,\sO_X (dF)\big)={+}\infty$, and consequently, $\kappa(X,F)\geq 1$. Thus, $\kappa(X,F)= 1$, which shows the second statement of the proposition.

In the remainder of the proof we show the first statement of the proposition. By the paragraph above, there exists a minimal positive integer $d$ such that $h^0\big(X, \sO_X(dF)\big) \geq 2$. We will show that the line bundle $\sO_X(dF)$ is globally generated, which will finish the proof.

By the choice of $d$, the natural injection
$$ H^0\big(X, \sO_X((d-1) F)\big) \rightarrow H^0\big(X, \sO_X(dF)\big), $$
given by the multiplication by a  section whose vanishing locus is $F$, is strict. Thus, $F$ is not contained in the base locus of the linear system $|dF|$. Hence, there exists a divisor $G \in |dF|$ whose support does not contain~$F$. Let $\sigma\in H^0\big(X, \sO_X(dF)\big)$ be a  section whose vanishing locus is $G$.

Since $\widetilde F = \psi^* t$ and $\psi$ is a fibration, we have
$$ H^0\big(X, \sO_X(dF)\big) \subseteq H^0\big(\widetilde X, \sO_{\widetilde X}(d\widetilde F)\big) = \psi^*\big(H^0\big(\Gamma, \sO_\Gamma(d t)\big)\big). $$
Therefore, there exists $s\in H^0\big(\Gamma, \sO_\Gamma(d t)\big)$ such that $\sigma=\psi^*(s)$.
In particular
if $s \in H^0(X, \sO_X(dF))$ and 
$\widetilde F \isom F = \cup F_i$ is the decomposition of $F$ into its irreducible components,
the section $s$ vanishes with the same multiplicity on every irreducible component $F_i$.

Then $G$ has multiplicity zero on one component $F_i$, by the above this implies 
that $G$ has multiplicity zero on every component $F_i$.
Therefore,
the divisors $dF$ and $G$ are elements of $|\sO_{X}(dF)|$ that have no common component. 
Since $c_1(\sO_X(F))^2=0$ it follows that
$$
\supp(dF) \cap \supp(G) = \emptyset.
$$
Thus, $\sO_X(dF)$ is globally generated.
\end{proof}

\begin{corollary} \label{corollary:alg-dim-one}
Let $X$ be an irreducible and reduced compact Kähler surface with $a(X)=1$. Then the algebraic reduction of $X$ can be represented by a holomorphic fibration $\holom{\alpha_X}{X}{\Gamma}$ onto a (not necessarily normal) projective curve, and a general fibre of $\alpha_X$ is a smooth elliptic curve. Moreover, any analytic subset $Z \subsetneq X$ is contained in finitely many fibres of $\alpha_X$.
\end{corollary}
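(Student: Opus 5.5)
We reduce to the normal case and then use the structure theory of surfaces of algebraic dimension one. Let $\holom{\nu}{X^\nu}{X}$ be the normalisation and $\holom{\mu}{X'}{X^\nu}$ the minimal resolution. By Lemma~\ref{lemma:alg-dimension}, applied to the generically finite map $X' \to X$, we get $a(X')=a(X)=1$. By Remark~\ref{remark:alg-reduction}, the algebraic reduction of the normal surface $X^\nu$ is a holomorphic fibration $\holom{\alpha'}{X^\nu}{\Gamma'}$ onto a smooth projective curve. Its general fibre is a smooth curve $E$ (generic smoothness, $X^\nu$ having isolated singularities). The classical theory of smooth compact surfaces with $a=1$ (e.g.\ \cite[VI]{BHPV04}) applied to $X'$ shows that the algebraic reduction of $X'$ is an elliptic fibration. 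Since $X' \to X^\nu$ is bimeromorphic, the two fibrations agree generically, so $E$ is a smooth elliptic curve.

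Next we descend $\alpha'$ to $X$. The fibration $\alpha'$ is given by meromorphic functions pulled back from $\Gamma'$. Meromorphic functions on $X$ and $X^\nu$ coincide, so the algebraic reduction of $X$ is represented by the same field $\C(X)\isom\C(\Gamma')$. The key point is that every irreducible curve $C \subset X^\nu$ is contained in a fibre of $\alpha'$, since a surface with $a=1$ has no curve dominating the base (a horizontal curve $C$ would give $C^2>0$ after adjusting with fibres, hence $a=2$). In particular, each irreducible component of the conductor $\cond$ lies in a fibre. Hence $\nu$ only identifies points lying in finitely many fibres. We then define $\Gamma$ as the pushout of $\Gamma'$ obtained by gluing points of $\Gamma'$ that are identified through $\cond$, i.e.\ take the image of $X$ in a suitable quotient. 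Concretely, we embed via sections: choose an ample divisor $A$ on $\Gamma'$ supported away from $\alpha'(\cond)$. The pullback $\alpha'^*A$ is a Cartier divisor on $X^\nu$ disjoint from $\cond$, so it defines a Cartier divisor $F$ on $X$ with $\nu^*F=\alpha'^*A$. Proposition~\ref{prop:semiample-nonnormal}, with $\pi=\nu$ locally an isomorphism near $F$ and $\psi=\alpha'$, shows that $F$ is semiample with $\kappa(X,F)=1$. The Stein factorisation of the morphism defined by $|dF|$ gives a holomorphic $\alpha_X\colon X\to\Gamma$ onto a projective curve, possibly non-normal, since we only take the image rather than normalising. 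It agrees generically with $\alpha'$, so its general fibre is $E$.

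For the last claim, let $Z\subsetneq X$ be analytic. Its irreducible components are points or curves. By the observation above, $\nu^{-1}$ of any curve lies in fibres of $\alpha'$, hence its image lies in fibres of $\alpha_X$. As there are finitely many components, $Z$ lies in finitely many fibres.

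The main obstacle is the step showing that every curve is vertical and that the components of $\cond$ are contained in fibres, since this is exactly what makes the descent possible. I would prove it on the smooth model $X'$. If a curve $C$ dominated $\Gamma'$, then $C+mE$ would have positive self-intersection for $m\gg0$, making $X'$ Moishezon by Grauert's criterion; this contradicts $a=1$.
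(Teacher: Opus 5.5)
Your proposal is correct and follows essentially the paper's argument. You normalise, observe that the elliptic algebraic reduction of $X^\nu$ has no horizontal curves, so the locus where $\nu$ fails to be an isomorphism lies in finitely many fibres, apply Proposition~\ref{prop:semiample-nonnormal} to the image of a general fibre, and get the last claim from the verticality of curves; where the paper cites Corollary~\ref{corollary:no-multisection} for the absence of multisections, you prove it with a positive self-intersection projectivity criterion on the resolution. One small correction: Proposition~\ref{prop:semiample-nonnormal} is stated for $\widetilde F=\psi^*t$ with $t$ a single smooth point and $F$ reduced, so you should take $A=t$ to be a general point of $\Gamma'$ rather than an arbitrary ample divisor avoiding $\alpha'(\cond)$.
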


\begin{proof}
Let $\holom{\nu}{X^\nu}{X}$ be the normalisation of $X$. We know by Remark~\ref{remark:alg-reduction} that the algebraic reduction $\holom{\alpha_{X^\nu}}{X^\nu}{\widetilde \Gamma}$ is an elliptic fibration. The set $\fibre{\nu_X}{X_{\sing}} \subseteq X$ is analytic, hence it is mapped onto a finite set by Corollary~\ref{corollary:no-multisection}. Thus, $\nu$ is an isomorphism in a neighbourhood of a general fibre $\widetilde F$ of $\alpha_{X^\nu}$. Therefore, $F:= \nu_X(\widetilde F)$ is a Cartier divisor that satisfies the conditions of Proposition~\ref{prop:semiample-nonnormal}, and some positive multiple of $F$ defines the fibration $\alpha_X$. Since $X_{\sing}$ does not dominate the base $\Gamma$, it is clear that the general fibre of $\alpha_X$ is smooth, and in fact equal to $F$.

For the last statement of the corollary, note that any multisection of $\alpha_X$ is dominated by a multisection of $\alpha_{X^\nu}$. But such a multisection does not exist by Remark~\ref{remark:alg-reduction}.
\end{proof}

\begin{lemma} \label{lemma:numerically-trivial-twist}
Let $Z$ be a compact Kähler space, let $\sG$ be a be a coherent sheaf on $Z$, and let $P$ be a line bundle on $Z$ that is cohomologically trivial. Then there exists $K \in \N$ such that
$$ h^0(Z, \sG \otimes P^{\otimes d}) \leq K \quad \text{for all }d \in \N. $$
\end{lemma}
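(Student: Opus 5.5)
I plan to argue by induction on $\dim Z$. The idea is to reduce to torsion-free sheaves on irreducible normal spaces, and to use that a cohomologically trivial line bundle has a bounded number of sections in all powers. The key input is the following fact: on a connected normal compact Kähler space $Z'$, a cohomologically trivial line bundle $P'$ satisfies $h^0(Z',P')\le 1$, and equality holds only if $P'\isom\sO_{Z'}$.

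To see this, pass to a resolution $\mu\colon \widetilde Z\to Z'$. Since $Z'$ is normal, $\mu_*\sO_{\widetilde Z}=\sO_{Z'}$, so $h^0(Z',P')=h^0(\widetilde Z,\mu^*P')$. Now $\mu^*P'$ is cohomologically trivial on a compact Kähler manifold. A nonzero section would give an effective divisor $D$ with $c_1(D)=0$, and then $D\cdot\omega^{\dim-1}=0$ forces $D=0$. Hence the section is nowhere vanishing, $\mu^*P'$ is trivial, and therefore $h^0\le1$.

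\textbf{Step 1: reductions.} The sheaf $\sG$ has a finite filtration whose graded pieces are of the form $i_*\sG'$, where $i\colon W\into Z$ is an irreducible reduced closed subspace and $\sG'$ is a torsion-free coherent sheaf on $W$. This holds because $\sG$ admits a finite filtration by the supports of its associated primes, or alternatively by a Noetherian induction on the support. Since $h^0$ is subadditive in short exact sequences, and since $P|_W$ is again cohomologically trivial (because $c_1$ is functorial), it suffices to treat the case where $Z$ is irreducible and reduced and $\sG$ is torsion-free.

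Next let $\nu\colon Z^\nu\to Z$ be the normalisation. The natural map $\sG\to\nu_*(\nu^*\sG/\mathrm{tors})$ is injective, since it is generically an isomorphism and $\sG$ is torsion-free. By the projection formula we may therefore assume that $Z$ is normal. We may also replace $\sG$ by $\sG^{**}$, which only increases $h^0$.

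\textbf{Step 2: embedding into a sum of line bundles.} This is the main obstacle. On a compact Kähler space a torsion-free sheaf need not embed into a direct sum of line bundles. The plan is to pass to a resolution $\mu\colon\widetilde Z\to Z$. Sections of $\sG\otimes P^d$ are determined by their restriction to the smooth locus, so they inject into sections of $\widetilde\sG\otimes\mu^*P^d$, where $\widetilde\sG:=\mu^*\sG/\mathrm{tors}$, up to a bounded correction coming from the exceptional locus. Since $Z$ is normal and $\sG$ is reflexive, $H^0(Z,\sG\otimes P^d)\into H^0(\widetilde Z,\widetilde\sG\otimes\mu^*P^d)$, because both are determined on the regular locus.

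On the manifold $\widetilde Z$ I will argue by induction on the rank $r$ of $\widetilde\sG$. If $h^0(\widetilde\sG\otimes\mu^*P^{d})=0$ for all $d$, there is nothing to prove. Otherwise choose $d_0$ and a nonzero section $s$ of $\widetilde\sG\otimes\mu^*P^{d_0}$. It generates a rank-one subsheaf whose saturation is a torsion-free rank-one sheaf, say $\sO(D)\otimes\mu^*P^{-d_0}\otimes\mathcal I$ with $D\ge0$ effective. The quotient $\mathcal Q$ is torsion-free of rank $r-1$. For the saturated subsheaf, $h^0(\sO(D)\otimes\mathcal I\otimes\mu^*P^{d-d_0})\le h^0(\sO(D)\otimes \mu^*P^{d-d_0})$. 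This quantity is uniformly bounded in $d$: if it were at least $2$ for some $d$, the two independent sections would give divisors $D_1,D_2\equiv D$, and the sections of $\sO(D_1-D_2)\otimes(\text{coh.\ trivial})$ would be controlled by the key fact above. More directly, $h^0(\sO(D)\otimes N)$ is bounded for cohomologically trivial $N$, because sections correspond to effective divisors in the fixed class $c_1(D)$, which form a compact family in the Douady/Barlet space. This family has bounded dimension, since the divisors sit over the compact torus $\pic^{c_1(D)}$ with fibres $\PP(H^0)$ of dimension bounded by upper semicontinuity over a compact parameter space.

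For $r=1$ the same bound applies. For $r>1$ the exact sequence $0\to\text{(saturation)}\to\widetilde\sG\to\mathcal Q\to 0$, twisted by $\mu^*P^d$, together with the induction hypothesis applied to $\mathcal Q$, gives the uniform bound $K$.

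The delicate point I expect to struggle with is the uniformity in $d$. It should follow from the fact that all the bundles $\mu^*P^d$ lie in the compact torus $\pic^0$ of cohomologically trivial bundles, so that one has a single compact parameter space. By upper semicontinuity of $h^0(\sF\otimes N)$ in $N$ over this compact space, the quantity $h^0$ is bounded uniformly. In fact this semicontinuity argument over $\pic^0(\widetilde Z)$ alone suffices directly for $\widetilde\sG$, once we note that every $\mu^*P^d$ lies in the same connected component $\pic^0$, or in one of finitely many components if $P$ is torsion in Néron–Severi.
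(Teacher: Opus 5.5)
Your proof is correct, but it takes a different route from the paper. The paper does no d\'evissage and no resolution. It passes to the projectivisation $\pi\colon\PP(\sG)\to Z$, which turns the problem into bounding $h^0(\PP(\sG),\sO_{\PP(\sG)}(1)\otimes\pi^*P^{\otimes d})$. It then observes that every divisor in these linear systems has the same volume $c_1(\sO_{\PP(\sG)}(1))\cdot\omega^{n-1}$ against a fixed Kähler form. By Bishop's theorem all these divisors lie in finitely many compact components of the cycle space, and the dimensions of those components bound $h^0$.

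You instead argue as follows:
\begin{itemize}
\item reduce to a torsion-free sheaf on an irreducible reduced space;
\item inject its sections into those of $\mu^*\sG/\mathrm{tors}$ on a Kähler resolution $\widetilde Z$;
\item use that the cohomologically trivial line bundles on $\widetilde Z$ form finitely many translates of the compact torus $\pic^0(\widetilde Z)$, by finiteness of torsion in $H^2(\widetilde Z,\Z)$ and Hodge theory;
\item conclude by Grauert semicontinuity of $N\mapsto h^0(\widetilde Z,\mu^*\sG/\mathrm{tors}\otimes N)$ on this compact parameter space.
\end{itemize}

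In your approach the passage to a smooth model is essential. On a singular space the cohomologically trivial line bundles need not form a compact family; for the nodal cubic, the example the paper gives just before this lemma, they form $\C^*$. What makes your argument work is the injectivity of $H^0$ into the resolution for torsion-free $\sG$. The paper's route is shorter and works directly on a possibly non-reduced, non-normal $Z$. Its cost is the cycle-space machinery and the need to handle $\PP(\sG)$ for a non-locally-free $\sG$. Yours uses only standard Hodge theory and semicontinuity, at the price of the d\'evissage and resolution of singularities. Both give a bound that is uniform over all cohomologically trivial twists, not just powers of $P$. Your intermediate Douady/Barlet remark for $h^0(\sO(D)\otimes N)$ is essentially the paper's argument specialised to line bundles on $\widetilde Z$.

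Once you apply semicontinuity over the compact family of cohomologically trivial bundles on $\widetilde Z$ directly to $\mu^*\sG/\mathrm{tors}$, as in your last paragraph, several steps become unnecessary. Torsion-freeness on an irreducible reduced $Z$ already gives the injection of sections into the resolution. So the normalisation step, the reflexive hull, the ``key fact'' $h^0\le 1$, and the induction on the rank via saturated subsheaves can all be dropped. In particular, the sentence about two divisors $D_1,D_2\equiv D$ and $\sO(D_1-D_2)$ does not by itself bound $h^0(\sO(D)\otimes N)$. Drop that part and present only the direct semicontinuity argument.
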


\begin{proof}
Let $\pi\colon \PP(\sG) \rightarrow Z$ be the projectivisation of $\sG$ as in~\cite[Chapter~II]{AT82}, and let $\sL:=\sO_{\PP(\sG)}(1)$ be the tautological line bundle on $\PP(\sG)$. Since $\pi_*\sO_{\PP(\sG)}=\sO_Z$, the projection formula yields
$$ \pi_* (\sL \otimes \pi^* P^{\otimes d}) \isom \pi_* \sL \otimes P^{\otimes d} \isom \sG  \otimes P^{\otimes d}, $$
so it is enough to uniformly bound the size of the global sections of $\sL \otimes \pi^* P^{\otimes d}$.

Let $\omega$ be a Kähler form on $\PP(\sG)$. For any $d\in\N$, and consider any section $D_d \in |\sL \otimes \pi^* P^{\otimes d}|$. Then, if $n:=\dim \PP(\sG)$, we have
$$ D_d \cdot \omega^{n-1} = c_1(\sL \otimes \pi^* P^{\otimes d}) \cdot \omega^{n-1} = c_1(\sL) \cdot \omega^{n-1}, $$
hence $D_d \cdot \omega^{n-1}$ does not depend on $d$ of $D_d$. Thus, the divisors $D_d$ belong to the family of cycles of degree at most~$c_1(\sL) \cdot \omega^{n-1}$. By 
an application of
Bishop's theorem~\cite[Ch.VIII, Theorem~2.13]{CP94} this family is bounded; in particular, it has finitely many irreducible components. We set $K$ to be the maximal dimension of such an irreducible component.
\end{proof}

\begin{proposition} \label{prop:semiample+fibration}
Let $X$ be an irreducible and reduced compact Kähler space admitting a fibration
$$ \holom{\phi}{X}{\Gamma} $$
onto a projective curve $\Gamma$ such that $\phi_* \sO_X \isom \sO_\Gamma$. Let 
$$ \holom{n}{\widetilde X}{X} $$
be a birational morphism that is an isomorphism over $\fibre{\phi}{\Gamma_\circ}$ where $\Gamma_\circ \subseteq \Gamma$ is Zariski open. Let $P$ be a line bundle on $X$ that is cohomologically trivial over $\Gamma$. If $\kappa(\widetilde X, \pi^* P)>0$, then $P$ is semiample and $\kappa(X, P)=1$.
\end{proposition}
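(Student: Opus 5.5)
The plan is to reduce the statement to Proposition~\ref{prop:semiample-nonnormal} by showing that some power of $P$ is linearly equivalent to a combination of general fibres of $\phi$. (Here $\pi$ in the statement denotes the map $n$.)

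\emph{Step 1: descend the problem to a resolution.} Replace $\widetilde X$ by a resolution; this only makes $n$ more singular over the complement of $\fibre{\phi}{\Gamma_\circ}$ and does not change $\kappa(\widetilde X, n^*P)$. The composition $\widetilde\phi:=\phi\circ n\colon\widetilde X\to\Gamma$ has connected fibres, because $\phi_*\sO_X\isom\sO_\Gamma$ and $n$ is birational. It factors through the normalisation $\widetilde\Gamma$ of $\Gamma$.

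\emph{Step 2: use Lemma~\ref{lemma:zariski} on $\widetilde X$.} Since $P$ is cohomologically trivial over $\Gamma$, the restriction of $n^*P$ to a general fibre $F$ is cohomologically trivial. Because $\kappa(\widetilde X,n^*P)>0$, some multiple $mn^*P$ has an effective member $D$ that moves.
\begin{itemize}
\item Restricted to a general fibre, $D|_F$ is an effective divisor with trivial first Chern class, so $D|_F=0$. Thus $D$ is supported in fibres, and $n^*P|_F$ has a nonzero section, making it torsion (indeed trivial after taking the multiple).
\item Next I would show $c_1(n^*P)^2=0$. Since $D$ is supported on fibres, $c_1(n^*P)$ is a combination of fibre-components. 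Cohomological triviality on every fibre gives $D\cdot G\cdot\omega^{\dim-2}=0$ for each fibre component $G$, so $D^2\cdot\omega^{\dim-2}=0$. By the Zariski-lemma argument this forces $D\sim_\Q$ a pullback from the base, hence $c_1^2=0$ as a class.
\end{itemize}
Lemma~\ref{lemma:zariski}, applied to $\widetilde X\to\widetilde\Gamma$, then gives $n^*P\isom_\Q\widetilde\phi^*A$ with $\deg A>0$, because $\kappa>0$. Replacing $P$ by a multiple, I may assume that $n^*P\isom\widetilde\phi^*\sO(\sum a_it_i)$ with all $t_i\in\Gamma_\circ$ smooth points. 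This is possible after moving within the linear system on $\widetilde\Gamma$, taking a sufficiently ample multiple.

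\emph{Step 3: compare $P$ with the fibre divisor on $X$.} Let $F_X:=\sum a_i\fibre{\phi}{t_i}$. This is a Cartier divisor on $X$, and $n$ is an isomorphism near it. The line bundle $Q:=P\otimes\sO_X(-F_X)$ satisfies $n^*Q\isom\sO_{\widetilde X}$. The obstacle is exactly the non-normal phenomenon illustrated by the nodal cubic: $Q$ need not be trivial, only trivial after pullback. Here I would use the exact sequence $0\to\sO_X\to n_*\sO_{\widetilde X}\to\mathcal Q\to0$, with $\mathcal Q$ supported over $\Gamma\setminus\Gamma_\circ$, twisted by $Q^{\otimes d}\otimes\sO_X(kF_X)$. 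The main difficulty is to deduce that $Q$ is torsion. My first attempt would be as follows.
\begin{itemize}
\item The sheaf $n_*\sO_{\widetilde X}\otimes Q^{d}\isom n_*\sO_{\widetilde X}$ has $h^0=1$.
\item By Lemma~\ref{lemma:numerically-trivial-twist}, applied with $P$ cohomologically trivial on the support of $\mathcal Q$ (which lies in finitely many fibres), the twisted sections of $\mathcal Q$ are uniformly bounded by some $K$.
\end{itemize}
Together these give $h^0(X,P^{\otimes d})\ge h^0(\widetilde X,n^*P^{\otimes d})-K\to\infty$, since $H^0(\mathcal Q\otimes P^d)$ is bounded. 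Hence $\kappa(X,P)\ge1$. The bound $\kappa\le1$ follows from the injection into $H^0(\widetilde X,n^*P^d)$.

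\emph{Step 4: semiampleness.} With $\kappa(X,P)=1$, I argue as in the second half of the proof of Proposition~\ref{prop:semiample-nonnormal}. Take the minimal $d$ with $h^0(P^d)\ge2$; all sections pull back from $H^0(\widetilde\Gamma,\sO(dA))$. This gives two members of $|P^d|$ with no common component over $\Gamma_\circ$. Using that $c_1(P)$ is cohomologically trivial on the fibres, so that members do not meet inside special fibres unless they share them, one deduces base-point freeness.
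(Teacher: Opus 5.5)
Your Step~3 count is exactly the paper's first step, and it correctly produces an effective member of $|P^{\otimes d}|$ (indeed $\kappa(X,P)=1$), but only with the original $n$. Step~1 is wrong as stated. $X$ may be singular over $\Gamma_\circ$, even non-normal along a horizontal locus; this is precisely the situation in the application in Theorem~\ref{theorem:HK-divisor-abundance}. After replacing $\widetilde X$ by a resolution, three things break:
\begin{itemize}
\item $n$ is no longer an isomorphism over $\fibre{\phi}{\Gamma_\circ}$;
\item $\mathcal Q$ can be supported on a horizontal set, where Lemma~\ref{lemma:numerically-trivial-twist} does not apply;
\item the general fibres over $\widetilde\Gamma$ can be disconnected, so Lemma~\ref{lemma:zariski} only gives a pullback from the Stein factorisation curve, not from $\widetilde\Gamma$.
\end{itemize}

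The genuine gap is Step~4. On non-normal $X$, $\kappa(X,P)=1$ does not by itself give semiampleness, and the trick from Proposition~\ref{prop:semiample-nonnormal} does not transfer. There it works because $dF$ is itself a member of the linear system, with $F$ a reduced fibre near which $n$ is an isomorphism; this is what forces a second member to miss $F$ entirely. Here you do not know that any member of $|P^{\otimes d}|$ on $X$ has this form. That is exactly your unresolved claim that $Q$ is torsion; your bullet list only reproves $\kappa(X,P)\geq 1$. Moreover, minimality of $d$ does not exclude a fibre common to all members. Take $X=\Gamma\times F$ with $g(\Gamma)=2$, $n=\id$, and $P=\phi^*\sO_\Gamma(K_\Gamma+x_0)$. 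Then $h^0(P)=2$, but every member contains $\fibre{\phi}{x_0}$, so your ``two members without common component over $\Gamma_\circ$'' do not exist.

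What is missing is a Zariski-type argument on $X$ itself, applied to a single effective $D\in|P^{\otimes d}|$; this is what the paper does. First, $D$ is vertical. For $p\in\phi(D)$, let $A_p$ be a local Cartier divisor on $\Gamma$ supported at $p$. Choose $l,m$ so that $E:=lD-m\phi^*A_p\geq 0$ misses some component of $\fibre{\phi}{p}$. Suppose $E\neq 0$. Since $\fibre{\phi}{p}$ is connected (because $\phi_*\sO_X\isom\sO_\Gamma$), there is a component $G'\not\subseteq\supp E$ meeting $\supp E$. Then $E|_{G'}$ is a nonzero effective Cartier divisor whose class is zero, because $P$ is cohomologically trivial on $G'$ and $\phi^*A_p|_{G'}$ is trivial. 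This is impossible, so $E=0$. Hence $P^{\otimes k}\isom\phi^*\sO_\Gamma(A)$ with $A$ an effective, nonzero Cartier divisor on the projective curve $\Gamma$, so $A$ is ample. This gives semiampleness and $\kappa(X,P)=1$ at once, and shows your $Q$ is torsion. None of Step~2 is needed.
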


\begin{proof}
The birational map $n$ defines an exact sequence
$$ 0 \rightarrow \sO_{X} \rightarrow n_* \sO_{\widetilde X} \rightarrow Q \rightarrow 0 $$
where $Q$ is a coherent sheaf that has support on $\fibre{\phi}{\Gamma \setminus \Gamma_\circ}$. Twisting with $P^{\otimes d}$ we obtain an exact sequence
$$ 0 \rightarrow P^{\otimes d} \rightarrow n_* \sO_{\widetilde X} \otimes P^{\otimes d} \rightarrow Q \otimes P^{\otimes d} \rightarrow 0. $$
Since
$$ H^0(X,  n_* \sO_{\widetilde X} \otimes P^{\otimes d}) \isom H^0(X,  n_* (\sO_{\widetilde X} \otimes n^* P^{\otimes d})) \isom  H^0(\widetilde X,  \sO_{\widetilde X} \otimes n^* P^{\otimes d}) $$
is unbounded and $h^0(X, Q \otimes P^{\otimes d})$ is bounded by Lemma~\ref{lemma:numerically-trivial-twist} we can choose $d \in \N$ such that $h^0(X, P^{\otimes d}) \geq 1$.

Let $D \in |P^{\otimes d}|$ be an effective divisor. We claim that, up to taking a multiple, one has $mD \isom \varphi^* A$ with $A \rightarrow \Gamma$ an effective Cartier divisor. Since $\Gamma$ is a curve the Cartier divisor $A$ is ample and we obtain the statement.

{\em Proof of the claim.}
Since $P$ is $\phi$-numerically trivial, the divisor $D$ does not intersect the general $\phi$-fibres. Thus, the support of $D$ is a disjoint union of effective Cartier divisors contained in some $\phi$-fibre. We will assume without loss of generality that $\phi(D)$ is a point $p \in \Gamma$. Choose a local chart of $\Gamma$ around the point $p$ and in this chart a hyperplane section $A_p$ that intersects $\Gamma$ only in $p$. Then $\phi^* A_p$ is an effective Cartier divisor with set-theoretical support $\fibre{\phi}{p}$. We choose $l \in \N^*, m \in \N_0$ such that $l D - m \phi^* A_p$ is effective, but there is an irreducible component $G \subseteq \fibre{\phi}{p}$ that is not contained in the support of $l D - m \phi^* A_p$. Since $l D - m \phi^* A_p$ is numerically trivial on $\fibre{\phi}{p}$ and $\phi$ has connected fibres, it is now straight forward to show that $l D - m \phi^* A_p=0$. 
\end{proof}

\subsection{Groups acting on surface of algebraic dimension one}

\begin{lemma} \label{lemma-reduce-Bim}
Let $X$ be a smooth compact Kähler surface with $a(X)=1$ and let $ \holom{\alpha}{X}{C} $ be the algebraic reduction onto a projective curve $C$; see Remark~\ref{remark:alg-reduction}. Then $\alpha$ induces a group morphism
\begin{equation}\label{eq group action algebraic reduction}
\holom{\gamma}{\Bim(X)}{\Aut(C)},
\end{equation}
and the image of $\gamma$ is finite unless the minimal model of $X$ is a torus.
\end{lemma}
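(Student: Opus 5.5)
The first step is to construct $\gamma$. A bimeromorphic map $g\in\Bim(X)$ induces an automorphism of the field of meromorphic functions $\C(X)$. By definition of the algebraic reduction, $\C(X)\isom\C(C)$ via $\alpha^*$, where $C$ is the smooth projective curve (after normalising the base if necessary). So $g$ induces an automorphism of $\C(C)$, that is, a birational self-map of $C$, which is an automorphism because $C$ is a smooth projective curve. Functoriality of pull-back of functions shows that $g\mapsto \gamma(g)$ is a group morphism and that $\alpha\circ g=\gamma(g)\circ\alpha$ as meromorphic maps. Concretely, $g$ maps general $\alpha$-fibres to $\alpha$-fibres, because by Corollary~\ref{corollary:alg-dim-one} every curve in $X$ is contained in a fibre of $\alpha$, and the image of a general fibre is a curve.

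For finiteness, I would pass to the minimal model. Since $a(X)=1$, the surface $X$ is not uniruled (a ruled Kähler surface is projective), so it has a unique minimal model $X_{\min}$. Bimeromorphic maps between minimal non-ruled surfaces are isomorphisms, so $\Bim(X)=\Bim(X_{\min})=\Aut(X_{\min})$, and the algebraic reduction factors through $X_{\min}$. We may therefore assume $X$ is minimal and $\Bim(X)=\Aut(X)$. By the classification, $X$ is an elliptic surface with $\kappa(X)\in\{0,1\}$: either a torus, a K3 surface with $a=1$, or a properly elliptic surface with $\kappa=1$. In all cases $\alpha$ is the elliptic fibration.

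The image $\gamma(\Aut(X))\subseteq\Aut(C)$ preserves the finite set of points over which $\alpha$ has singular or multiple fibres. It also preserves the $j$-function and the monodromy data of the fibration. I would split into the following cases.

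\emph{Case $\kappa(X)=1$.} Here $\alpha$ is the Iitaka fibration, and $K_X\sim_\Q\alpha^*(K_C+\Delta)$ with $\deg(K_C+\Delta)>0$, where $\Delta$ is the multiple-fibre and $\chi(\sO_X)$ contribution. The image of $\gamma$ preserves the orbifold curve $(C,\Delta)$, which is of general type. Its automorphism group is therefore finite: for $g(C)\geq2$ by Hurwitz, and for $g(C)\le1$ because $\Delta$ then has enough points to make the group finite.

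\emph{Case K3.} Since $\chi(\sO_X)=2$, the fibration has singular fibres with Euler numbers summing to $24$. Moreover $C\isom\PP^1$, and if there are at least $3$ singular fibres, the group preserving this set is finite. If there are at most two singular points, the monodromy and $j$-invariant considerations force the fibration to be isotrivial with fibre Euler numbers $e(F_1)+e(F_2)=24$. This is impossible, since each contribution is at most $10$ for isotrivial Kodaira fibres $II^*$.

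The main obstacle is the case $\kappa=0$ other than the torus, namely K3 surfaces with at most two singular fibres. Here I would argue via the Euler number bound just described, or alternatively via the fact that a non-finite group $\gamma(\Aut X)\subset\Aut(\PP^1)$ fixing two points contains a $\C^*$ or infinite-order elements, which would act with an infinite orbit on the lattice $\pic(X)$ of rank $\le$ small. That orbit would have to be finite because the fibre class is fixed and $\pic(X)$ is negative semidefinite, which gives a contradiction.
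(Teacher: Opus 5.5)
\textbf{Gap in the case $\kappa(X)=1$, $g(C)=0$.} In $K_X\sim_\Q\alpha^*(K_C+\Delta)$, the part of $\Delta$ of degree $\chi(\sO_X)$ is the class of the line bundle $(R^1\alpha_*\sO_X)^\vee$. It is not supported on special fibres, and $\gamma(\Aut(X))$ has no reason to preserve any representative of it. So ``$(C,\Delta)$ is of general type, hence its automorphism group is finite'' only works when the positivity comes from multiple fibres. Take $C\isom\PP^1$ with no multiple fibres and $\chi(\sO_X)\ge 3$: you still have to show there are at least three non-smooth fibres. That is exactly the obstacle you isolated in the K3 case, and it reappears here. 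For $g(C)=1$ your claim does hold: either $\chi(\sO_X)>0$, and then $e(X)=12\chi(\sO_X)>0$ produces a singular fibre, or there is a multiple fibre.

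\textbf{How to fix it.} Prove once, for every $X$ as in the lemma with $C\isom\PP^1$, that $\alpha$ has at least three non-smooth fibres. This is the statement the paper imports from \cite[Proposition~A.1]{CHL19}. Your Euler-number idea proves it uniformly. By Leray, $q(X)=h^0(C,R^1\alpha_*\sO_X)$, which vanishes unless $\chi(\sO_X)=0$. But $\chi(\sO_X)=0$ would force $p_g(X)=q(X)-1\le 0$, i.e.\ $X$ projective. Hence $q(X)=0$, $p_g(X)\ge 1$, and $e(X)=12\chi(\sO_X)\ge 24$. An isotrivial fibration has fibres of Euler number at most $10$, so two special fibres give at most $20$.

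The isotriviality itself needs an argument rather than ``monodromy and $j$-invariant considerations''. With at most two special fibres the monodromy is cyclic and quasi-unipotent. If it is infinite, $\exp(2\pi i\tau/n)$ of the period map is a bounded holomorphic function on $\C^*$, hence constant, which is a contradiction. If it is finite, after a cyclic base change the period map is single-valued on $\C^*$ with values in the upper half plane, and is again constant by Liouville.

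Drop the lattice-theoretic alternative. Infinite subgroups of $\Aut(X)$, e.g.\ Mordell--Weil translations, can act trivially on $C$. Also, a negative semidefinite lattice whose radical is spanned by the fibre class can have an infinite isometry group fixing that class. So neither step of that argument holds.

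\textbf{Comparison with the paper.} Your construction of $\gamma$ through $\alpha^*\colon\C(C)\to\C(X)$ is correct and cleaner than the paper's. The paper first passes to the minimal model, uses Corollary~\ref{corollary:no-multisection} to see that automorphisms preserve the fibration, and then applies the rigidity lemma. Your construction gives $\gamma$ on all of $\Bim(X)$ with functoriality for free.

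For finiteness, once the three-fibre statement is available your split by Kodaira dimension becomes unnecessary. The paper splits instead by $g(C)$ and by whether the non-smooth locus is empty. Genus $\ge 2$, or genus $1$ with some non-smooth fibre, gives finiteness trivially. Genus $0$ uses the three-fibre statement. Genus $1$ with all fibres smooth gives $e(X)=0$, hence $K_X\equiv 0$ by the canonical bundle formula, so the non-projective $X$ is a torus. This organisation avoids both the orbifold-curve discussion and the separate K3 analysis.
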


\begin{proof}
The surface $X$ is not projective, so $h^0(X, K_X)=h^2(X,\sO_X) \neq 0$ by Kodaira's criterion. Thus, $X$ is not uniruled and its minimal model $X_{\min}$ has nef canonical divisor. Since
$$ \Bim(X)\isom \Bim(X_{\min}) = \Aut(X_{min}),$$
and the bimeromorphic map $X \dashrightarrow X_{min}$ preserves the algebraic reduction, we can assume without loss of generality that $X$ is minimal and the bimeromorphic automorphisms are regular.

The fibration $\alpha$ has no multisection by Corollary~\ref{corollary:no-multisection}, so every $\varphi \in \Aut(X)$ preserves the fibration. Thus, the rigidity lemma yields the existence of the group morphism $\gamma$. Note that if $\Delta \subseteq C$ are the points such that $\fibre{\alpha}{t}$ is not smooth, we have $\varphi(\Delta)=\Delta$ for every $\varphi \in \Aut(X)$. Therefore, the image of $\gamma$ is contained in 
$$ \Aut(C, \Delta) = \big\{ \varphi \in \Aut C \mid \varphi(\Delta) = \Delta \big\}. $$
Let us now show the second statement. The statement is trivial if $\mbox{Aut}(C, \Delta)$ is finite:
this holds if  
$g(C) \geq 2$ or $g(C)=1$ and $\Delta \neq \emptyset$.
If $C \isom \PP^1$, we know, for instance by~\cite[Proposition~A.1]{CHL19}, that $\Delta$ has at least three elements. So again $\mbox{Aut}(C, \Delta)$ is finite.

Thus we are left with the case where $g(C)=1$ and $\Delta=\emptyset$. Then $c_2(X)=e(X)=0$ by~\cite[Proposition~III.11.4]{BHPV04} and $K_X \equiv 0$ by the canonical bundle formula. Since $X$ is not projective, the classification of surfaces shows that $X$ is a torus.
\end{proof}

Let $L$ be a line bundle on a compact complex manifold $X$. For a bimeromorphic automorphism $g$ of $X$, the line bundle $g^*L$ is defined by pulling back the line bundle to the domain of definition of $g$ and then pushing forward to $X$ and taking the bidual. We denote
$$ \Bim (X,L) := \{ g\in \Bim(X) \mid g^*L\isom L \} $$
and
$$ \Aut(X,L):=\Bim(X,L)\cap \Aut X. $$

\begin{lemma} \label{lemma:factor-action}
Let $X$ be a smooth compact Kähler surface with $a(X)=1$, and let $ \holom{\alpha}{X}{C} $ be the algebraic reduction onto a projective curve $C$. Let $L$ be a nef line bundle on $X$ whose restriction to a general fibre of $\alpha$ is torsion, and let $\gamma$ be as in Lemma~\ref{lemma-reduce-Bim}. Then some power of $L$ descends to a line bundle $L_C$ on $C$, and the image of $\Bim(X,L)$ under $\gamma$ is contained in $\Aut(C,L_C)$. If $L$ is not numerically trivial, this group is finite.
\end{lemma}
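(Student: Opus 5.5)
Three steps: descend $L$ to $C$, show $\Bim(X,L)$ acts on $C$ preserving the descended bundle, and bound the resulting group.

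\emph{Step 1: descent.} Every fibre of $\alpha$ is projective, so $L\cdot F=0$ for a general fibre $F$. By Lemma~\ref{lemma:zariski} it then suffices to check $c_1(L)^2=0$. The surface $X$ is not Moishezon, so $L$ is not big. Since $L$ is nef and not big, $L^2=0$ by Remark~\ref{remarknumericaldimension}. Lemma~\ref{lemma:zariski} now gives $L\isom_\Q \alpha^*A$ for a $\Q$-line bundle $A$ on $C$, unique up to $\Q$-linear equivalence. Choose $m$ with $L^{\otimes m}\isom \alpha^*L_C$, where $L_C$ is a genuine line bundle on $C$. If the resulting $\alpha$ is not literally the algebraic reduction because $C$ is not normal, I first pass to the normalisation of $C$, as in Remark~\ref{remark:alg-reduction}.

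\emph{Step 2: the image of $\gamma$.} As in the proof of Lemma~\ref{lemma-reduce-Bim}, replace $X$ by its minimal model $X_{\min}$, so that bimeromorphic automorphisms become automorphisms. The pullback of $L$ changes only by exceptional curves, which lie in fibres. So it is cleaner to work with $L_C$ directly: pull $L_C$ back along the algebraic reduction of $X_{\min}$.

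Take $g\in\Bim(X,L)$ with image $\bar g:=\gamma(g)$. The identity $\alpha\circ g=\bar g\circ\alpha$ holds on the domain of definition of $g$. Hence
\[ \alpha^*(\bar g^*L_C) \isom g^*\alpha^*L_C \isom g^*L^{\otimes m}\isom L^{\otimes m}\isom \alpha^*L_C \]
outside a codimension-two set, and therefore everywhere after taking biduals. Pullback along the fibration $\alpha$ is injective on line bundles, since $\alpha_*\sO_X=\sO_C$ and the projection formula apply. So $\bar g^*L_C\isom L_C$, that is, $\bar g\in\Aut(C,L_C)$.

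\emph{Step 3: finiteness.} Suppose $L$ is not numerically trivial. Then $\deg L_C\neq 0$, and since $L$ is nef, in fact $\deg L_C>0$.
\begin{itemize}
\item If $g(C)\ge 2$, then $\Aut(C)$ is already finite.
\item If $C\isom\PP^1$, the image of $\gamma$ lies in $\Aut(C,\Delta)$ with $|\Delta|\ge3$, as in Lemma~\ref{lemma-reduce-Bim}, and this group is finite.
\item If $g(C)=1$, translation by $a\in C$ satisfies $t_a^*L_C\isom L_C$ exactly when $a$ lies in the kernel of $\phi_{L_C}\colon C\to \pic^0(C)$. Since $\deg L_C=d>0$, this kernel is the finite group $C[d]$. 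Hence $\Aut(C,L_C)$ is finite, being an extension of a subgroup of the finite group of automorphisms fixing a point by a finite group of translations.
\end{itemize}
In every case the image of $\gamma$ is finite.

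The main obstacle is the compatibility in Step 2. It requires that $g$ preserve the fibration, which follows from Corollary~\ref{corollary:no-multisection}, and that the pullback of $L$ under a bimeromorphic map is controlled; both become harmless after passing to the minimal model. The genus-one case needs the explicit kernel computation above, whereas in the other cases finiteness is automatic.
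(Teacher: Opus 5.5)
Your proposal is correct and follows essentially the same route as the paper: descend a power of $L$ via Lemma~\ref{lemma:zariski}, use injectivity of pullback along the fibration $\alpha$ to get $\gamma(g)^*L_C\isom L_C$, and deduce finiteness from the case analysis of Lemma~\ref{lemma-reduce-Bim} together with the finiteness of the stabiliser of a line bundle of degree $d\neq 0$ on an elliptic curve. The differences are minor: you check the hypothesis $c_1(L)^2=0$ explicitly via non-bigness, you work with the bimeromorphic pullback and Hartogs extension on $X$ itself instead of first descending $L$ to the minimal model, and you treat every genus-one base directly rather than only the torus case.
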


\begin{proof}
The line bundle $L$ has degree zero on the fibres of $\alpha$. As $L$ is nef, we see that $L$ has degree zero on every irreducible curve in $X$. Therefore, $L$ descends as a line bundle to the minimal model of $X$ and we can assume without loss of generality that $X$ is minimal and $\Bim(X,L) = \Aut(X,L)=:G$.

Since $L$ restricts to a torsion line bundle on a general fibre of $\alpha$, up to replacing~$L$ by a multiple, Lemma~\ref{lemma:zariski} implies that $L = \alpha^* L_C$ for a unique line bundle $L_C$ on the curve $C$. Thus, for every $g\in G$ we have $g^* L= \alpha^* \gamma(g)^* L_C$. But $g^*L\isom L = \alpha^*L_C$. Since $\alpha$ has connected fibres, pullback along it is injective on Picard groups, so $\gamma(g)^* L_C\cong L_C$, and hence we obtain $\gamma(G)\subset \Aut(C,L_C)$. 

For the finiteness statement, by Lemma~\ref{lemma-reduce-Bim} we only have to discuss the case where $X$ is a torus. Since $L \isom \alpha^* L_C$ and $L$ is not numerically trivial, we deduce that $L_C\in\pic^d(C)$ for some $d \neq 0$. Now recall that $\Aut(C)$ acts transitively on $\pic^d(C)$ and that $\Aut(C) \isom C \rtimes \Aut(C,0)$ so that the stabiliser of any $L_C \in \pic^d(C)$ is finite. This shows that $\gamma(G)$ is finite.
\end{proof}

\begin{remark}\label{rem:another proof}
By a result of Fujiki~\cite[Theorem~5.1, Case 2]{Fuj88}, the automorphism group $\Aut_0(X)$ of a two-dimensional torus with algebraic dimension one is finite. This gives an alternative, much less elementary, proof of Lemma~\ref{lemma:factor-action}.
\end{remark}

\begin{corollary} \label{cor:global-generation}
Let $X$ be a smooth compact Kähler surface with $a(X)=1$, and let $ \holom{\alpha}{X}{C} $ be the algebraic reduction onto a projective curve $C$. Let $L$ be a nef line bundle on $X$ such that $H^0(X, L) \neq 0$. Let $G$ be a complex Lie group with a morphism $G \to \Bim(X,L)$ and suppose that $L$ admits a $G$-linearisation. Then some multiple of~$L$ is globally generated by $G$-invariant sections.
\end{corollary}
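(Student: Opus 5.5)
The plan is to reduce everything to the base curve $C$ of the algebraic reduction. There the relevant symmetry group acts through a finite group, and invariant sections can be manufactured by averaging (taking norms over orbits).

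\emph{Step 1: $L$ is a pull-back from $C$.} Pick $D\in|L|$. By Corollary~\ref{corollary:alg-dim-one}, the support of $D$ is contained in finitely many fibres of $\alpha$. So $L$ is trivial on a general fibre of $\alpha$, and $D$ is supported in fibres. Zariski's lemma gives $D^2\le 0$, while nefness of $L$ gives $D^2\ge 0$, so $c_1(L)^2=0$. Lemma~\ref{lemma:zariski} then yields $L^{\otimes m}\isom \alpha^*L_C$ for some $m>0$ and some line bundle $L_C$ on $C$. Since $H^0(X,L)\neq 0$, we have $\deg L_C\ge 0$.

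\emph{Step 2: the $G$-action on sections.} Replace $L$ by $L^{\otimes m}$; the linearisation induces one on this power. Since $\alpha$ has connected fibres, $H^0(X,L^{\otimes k})=\alpha^*H^0(C,L_C^{\otimes k})$. By Lemma~\ref{lemma-reduce-Bim} the action descends via $\gamma$ to $C$. Two cases follow.
\begin{itemize}
\item If $L$ is not numerically trivial, Lemma~\ref{lemma:factor-action} shows that $H:=\gamma(\operatorname{im} G)$ is a finite subgroup of $\Aut(C,L_C)$.
\item If $L$ is numerically trivial, then $D=0$, so $L^{\otimes m}\isom\sO_X$, and only the constant section has to be handled.
\end{itemize}
For $g$ in $\ker(G\to H)$, $g$ acts trivially on $C$. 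The isomorphism $\varphi_g\colon g^*\alpha^*L_C\to \alpha^*L_C$ then corresponds to a nowhere vanishing function on $X$, hence a constant $c(g)$. Thus $g$ acts on $H^0(X,L^{\otimes k})$ by the scalar $c(g)^k$, and $c$ is a character of $\ker(G\to H)$.

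\emph{Step 3: constructing invariant sections.} Given a point $x$, choose $\sigma\in H^0(C,L_C^{\otimes k})$ not vanishing on the finite orbit $H\cdot\alpha(x)$; this is possible for $k\gg0$ since $\deg L_C>0$. Form the norm $N(\sigma)=\prod_{h\in H}h\cdot\sigma$. Its divisor is $H$-invariant, so $\alpha^*N(\sigma)$ spans a $G$-stable line and is an eigenvector with some character $\chi_\sigma$ of $G$. On $\ker(G\to H)$ this character equals $c^{k|H|}$, independent of $\sigma$. Hence the ratios $\chi_\sigma/\chi_{\sigma'}$ factor through the finite group $H$, and raising all norms to the power $|H|$ makes the characters agree. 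The resulting sections lie in a common eigenspace and have no common zero, so the corresponding power of $L$ is globally generated by eigen-sections of a single character $\chi$.

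\emph{Main obstacle.} The hardest step is showing that $\chi$ is trivial, or at least torsion, so that a further power gives genuinely invariant sections; the numerically trivial case reduces to the same question. The approach I would try first is to exploit the given nonzero section of $L$: apply the norm construction to it as well. The eigenline it produces shows that the $G$-representation on the section ring has a character with finite image on $\ker(G\to H)$, because the zeros of that section lie over points of $C$ fixed by the finite group $H$. I would then conclude by comparing $\chi$ with this character.

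If that fails, I would fall back on the fact that the relevant group acting in the applications is generated by finite groups, via Corollary~\ref{corollary erzeugnis mit linearisierung}. Every character of a finite group is torsion, which would force $\chi$ to be torsion on the generators.
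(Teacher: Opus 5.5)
Your Steps 1--3 are sound, and they are more careful than the paper's own argument. One small fix: only $G$ acts on sections, so the norm should be $\prod_{h\in H} g_h\cdot\sigma$ for chosen lifts $g_h\in G$. Other lifts change it only by a scalar and leave its $H$-invariant divisor unchanged.

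The genuine gap is the last step, showing that $\chi$ is torsion, and it cannot be closed from the stated hypotheses. Your first idea fails. The norm of the given section is just another $G$-eigenvector, and its character restricts on $\ker(G\to H)$ to a power of the same character $c$. Where its zeros lie says nothing about the scalars by which fibrewise automorphisms act on $L$.

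In fact the corollary is false as stated, for the induced linearisations on $L^{\otimes k}$, which is what Proposition~\ref{prop:slc-descent} uses. Take $G=\Z$ with the trivial morphism to $\Bim(X,L)$ and $\varphi_n:=2^n\cdot\id_L$; this satisfies the cocycle condition. Then $n$ acts on $H^0(X,L^{\otimes k})$ by $2^{nk}$. So for $L=\alpha^*\sO_C(p)$, no multiple of $L$ has a nonzero invariant section. The same works with $\Z$ acting faithfully by translations along the fibres of a torus with $a(X)=1$.

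So an extra hypothesis is unavoidable, and your fallback is the right one, provided it is stated as a hypothesis. If $G$ is generated by finitely many finite subgroups, then $\chi^N=1$ with $N$ the least common multiple of their orders. The $N$-th powers of your sections are then $G$-invariant and still have no common zero. This is the situation of the only application, Proposition~\ref{prop:slc-descent}. There $G\subset\LinAut$ is generated by two elements of order two (Corollary~\ref{corollary erzeugnis mit linearisierung}), so $\chi^2=1$.

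For comparison, the paper asserts that the $G$-linearisation of $L\isom\alpha^*L_C$ descends to a $\gamma(G)$-linearisation of $L_C$, so that $G$ acts on sections through the finite group $\gamma(G)$. It then descends a power of $L_C$ to $C/\gamma(G)$ by Lemma~\ref{lemma tensor power linearization} and pulls back very ample sections. That descent is precisely the claim $c\equiv1$ on $\ker(G\to H)$ that you isolated, and the example above shows it is not automatic. The paper's ``trivial'' case $L\isom\sO_X$ has the same issue. Under the extra hypothesis both routes work. Yours replaces the quotient curve by explicit norms and tracks the kernel character. The paper's is shorter once the descent of the linearisation is granted.
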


\begin{proof}
We may assume that $L\not\cong \sO_X$, for this case is trivial. By Lemma~\ref{lemma:factor-action}, up to replacing $L$ by a multiple, there is a unique line bundle $L_C$ on $C$ such that $L \isom \alpha^* L_C$. Since $L$ has a $G$-linearisation and $\alpha$ is a fibration, the $G$-linearisation on $L$ descends along $\alpha$ to a $\gamma(G)$-linearisation on $L_C$ (cf. Subsection~\ref{section linearisations}). Note that this implies that the action of $G$ on
$$ H^0(X,L) \isom H^0(C,L_C) $$
factors through $\gamma(G)$. Note that an effective line bundle which is not trivial, is not numerically trivial. So again by Lemma~\ref{lemma:factor-action}, the group $\gamma(G)$ is finite. Therefore, Lemma~\ref{lemma tensor power linearization} implies that some positive power of $L_C$ descends to a line bundle $L_{C'}$ on the quotient $C':=C/\gamma(G)$. Since $L_C$ is nef, nontrivial, and has a nonvanishing section, it must be ample on $C$ and $L_{C'}$ must be ample on $C'$. Hence, some multiple of $L_{C'}$ is very ample on $C'$ and in particular globally generated. Sections of $L_{C'}$ pull back to invariant sections of multiples of $L_C$ respectively $L$ and the claim follows.
\end{proof}

%----------------------------------------------------
\subsection{Projective bundles}\label{section projective bundles}
%----------------------------------------------------

We collect some standard material about projective bundles, mainly over abelian varieties. 

Let $V$ be a vector bundle on a smooth projective variety $A$. It is well known that the canonical sheaf of a projective bundle $P=\PP_A(V)$ is given by
\begin{equation}\label{eq canonical sheaf projective bundle}
\omega_P \isom \pi^*\left(\omega_A \tensor \det(V)\right) \tensor \sO_V(-r)
\end{equation}
where $r=\rk(V)$. Also for $r=2$, the following result is standard and we omit its proof.

\begin{lemma}\label{lemma class of section of p1 bundle in pic}
Let $V$ be a rank $2$ vector bundle on a smooth variety $A$,
set $P:=\PP(V)$ and denote by $\pi\colon P\to A$ the projection. Consider a line bundle $L$ on $A$ and a surjection $V\twoheadrightarrow L$ with kernel $M$. Let $s\colon A\to P$ be the corresponding
section and $D$ its image. Then 
\[
\sO_P(D) \isom \sO_V(1) \otimes \pi^*M^{\vee}.
\]
\end{lemma}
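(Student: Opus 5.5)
The plan is to identify the divisor $D$ via the ideal sheaf of the section and compute using the universal quotient. We use Grothendieck's convention implicit in \eqref{eq canonical sheaf projective bundle}: $P=\PP(V)$ parametrises rank one quotients of $V$, with tautological surjection $\pi^*V\twoheadrightarrow\sO_V(1)$. Under this convention the surjection $V\twoheadrightarrow L$ corresponds to the section $s$ with $s^*\sO_V(1)\isom L$. The kernel $M$ is a line bundle, since $V$ has rank two and $L$ is a line bundle.

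The key step is to write down a section of $\sO_V(1)\otimes\pi^*M^\vee$ whose zero locus is exactly $D$. Consider the composition
\[
\pi^*M \longrightarrow \pi^*V \longrightarrow \sO_V(1).
\]
This is a morphism of line bundles, that is, a section $\sigma$ of $\sO_V(1)\otimes\pi^*M^\vee$. At a point $p\in P$ over $a\in A$, corresponding to a quotient $V_a\twoheadrightarrow \ell_p$, the map $\sigma(p)$ vanishes if and only if $M_a\subseteq\ker(V_a\to\ell_p)$. Both kernels are one-dimensional, so this happens if and only if the quotient is $V_a\to L_a$, i.e. $p=s(a)$. Hence $\{\sigma=0\}=D$ set-theoretically.

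It remains to show that $\sigma$ vanishes to order exactly one along $D$, which is the only point needing care. The check is local over $A$. Trivialise $V\isom\sO^2$ with $M=\sO e_1$ and $L$ spanned by the image of $e_2$. Then $P=A\times\PP^1$, and $\sigma$ is the coordinate $x_0$, the image of $e_1$ in $\sO(1)$. This vanishes transversally along $D=\{x_0=0\}$. Alternatively, $\sigma|_{\text{fibre}}$ is a nonzero section of $\sO_{\PP^1}(1)$, and hence vanishes at exactly one point with multiplicity one. Either way, $\mathrm{div}(\sigma)=D$, giving $\sO_P(D)\isom\sO_V(1)\otimes\pi^*M^\vee$.

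As a consistency check, the relative-degree computation gives the same answer. Pushing forward along $\pi$ gives $\pi_*(\sO_V(1)\otimes\pi^*M^\vee)=V\otimes M^\vee$. This contains the canonical section coming from $M\into V$, and that section is $\sigma$.
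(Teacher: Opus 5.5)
Your proof is correct and complete. The paper omits the argument as standard, and what you do is exactly the standard proof: you take the tautological section $\pi^*M\to\pi^*V\to\sO_V(1)$ of $\sO_V(1)\otimes\pi^*M^\vee$, check that it vanishes set-theoretically exactly along $D$, and verify transversal vanishing in a local splitting $V\isom M\oplus L$. Your choice of Grothendieck's quotient convention is the one forced by \eqref{eq canonical sheaf projective bundle}. The resulting formula also matches how the lemma is used in Corollary~\ref{corollary log canonical bundle}, where $\sO_P(\Sigma_\infty)\isom\sO_V(1)$ and $\sO_P(\Sigma_0)\isom\sO_V(1)\otimes\pi^*L^*$.
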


Again for a vector bundle $V$ on $A$ of rank two, we put $P:=\PP(V)$ and denote $\pi\colon P \to A$ the projection. Suppose that $\pi$ has two disjoint sections $\Sigma_0$ and $\Sigma_\infty$. Then $V$ splits as a sum of line bundles (see~\cite[Exercise~V.2.2]{Har77}), and we can tensor $V$ by a line bundle so that we may assume
\begin{equation}\label{eq p1 bundle with two sections}
P=\PP(V) \quad \textrm{ for } \quad V = \sO_A \oplus L.
\end{equation}
We assume that $\Sigma_0, \Sigma_\infty \subset P$ correspond to the surjections $V\to \sO_A$, $V\to L$ in this order.

\begin{corollary}\label{corollary log canonical bundle}
In the above notation, assume that $\omega_A$ is trivial. Then the log canonical bundle~$\omega_P\left(\Sigma_0+\Sigma_\infty\right)$ is trivial.
\end{corollary}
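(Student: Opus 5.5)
The plan is to compute $\omega_P(\Sigma_0+\Sigma_\infty)$ directly from formula \eqref{eq canonical sheaf projective bundle} together with Lemma~\ref{lemma class of section of p1 bundle in pic}, working in the normalisation \eqref{eq p1 bundle with two sections}, i.e.\ $V=\sO_A\oplus L$ and $r=2$.

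First, I identify the divisor classes of the two sections. The section $\Sigma_0$ corresponds to the surjection $V\to\sO_A$, whose kernel is $M=L$. Lemma~\ref{lemma class of section of p1 bundle in pic} then gives
\[
\sO_P(\Sigma_0)\isom \sO_V(1)\otimes\pi^*L^{\vee}.
\]
Similarly, $\Sigma_\infty$ corresponds to $V\to L$ with kernel $M=\sO_A$, so
\[
\sO_P(\Sigma_\infty)\isom\sO_V(1).
\]
Taking the product of the two gives
\[
\sO_P(\Sigma_0+\Sigma_\infty)\isom\sO_V(2)\otimes\pi^*L^{\vee}.
\]

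Next, I compute the canonical sheaf. Since $\det V\isom L$ and $\omega_A\isom\sO_A$ by assumption, formula \eqref{eq canonical sheaf projective bundle} yields $\omega_P\isom\pi^*L\otimes\sO_V(-2)$. Tensoring this with the class of $\Sigma_0+\Sigma_\infty$ computed above, the $\sO_V(\pm2)$ factors cancel, as do $\pi^*L$ and $\pi^*L^\vee$. This leaves $\omega_P(\Sigma_0+\Sigma_\infty)\isom\sO_P$.

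The only point needing care is the sign and normalisation convention for $\sO_V(1)$ versus the kernel and quotient in the lemma. The formula for $\omega_P$ and Lemma~\ref{lemma class of section of p1 bundle in pic} must be read in the same convention, where $\PP(V)$ parametrises rank one quotients, so that $\pi_*\sO_V(1)=V$. In that convention both are standard.

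As a sanity check, I would restrict to a fibre $\PP^1$: there $\omega_P(\Sigma_0+\Sigma_\infty)$ has degree $-2+1+1=0$, which is consistent. Finally, the reduction to the form \eqref{eq p1 bundle with two sections} by twisting $V$ is harmless, since $P$, $\Sigma_0$ and $\Sigma_\infty$ are unchanged by that twist.
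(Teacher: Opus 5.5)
Your proposal is correct and follows the paper's proof exactly: both identify $\sO_P(\Sigma_0)\isom\sO_V(1)\otimes\pi^*L^{\vee}$ and $\sO_P(\Sigma_\infty)\isom\sO_V(1)$ via Lemma~\ref{lemma class of section of p1 bundle in pic}. Both then cancel these against $\omega_P\isom\pi^*L\otimes\sO_V(-2)$ from \eqref{eq canonical sheaf projective bundle}. Your remark on using the quotient convention consistently for both formulas is the right point to check, and it holds.
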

\begin{proof}
By Lemma~\ref{lemma class of section of p1 bundle in pic}, we have identifications $\sO_{P}(\Sigma_0)\isom \sO_V(1)\tensor \pi^*L^*$ and $\sO_{P}(\Sigma_\infty)\isom \sO_V(1)$. From \eqref{eq canonical sheaf projective bundle}, we infer that the log canonical bundle is trivial.
\end{proof}

\begin{lemma}\label{lemma nef anticanonical}
Let $A$ be an abelian variety and let $\pi\colon P\to A$ be the bundle projection where $P$ is as in \eqref{eq p1 bundle with two sections}. Then $\omega_P^*$ is nef if and only if $L \in \pic^0(A)$. 
\end{lemma}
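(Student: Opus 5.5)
Using \eqref{eq canonical sheaf projective bundle} with $V=\sO_A\oplus L$, $r=2$ and $\omega_A\isom\sO_A$, I get
\[
\omega_P^*\isom \sO_V(2)\otimes\pi^*L^{*}.
\]
By the identifications in the proof of Corollary~\ref{corollary log canonical bundle}, $\sO_P(\Sigma_\infty)\isom\sO_V(1)$ and $\sO_P(\Sigma_0)\isom\sO_V(1)\otimes\pi^*L^*$. Hence $\omega_P^*\isom\sO_P(\Sigma_0+\Sigma_\infty)$, which I will use throughout.

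\emph{Sufficiency.} Assume $L\in\pic^0(A)$. I test nefness on an irreducible curve $C\subset P$. If $C\not\subset\Sigma_0\cup\Sigma_\infty$, then $(\Sigma_0+\Sigma_\infty)\cdot C\geq 0$ because the divisor is effective. If $C\subset\Sigma_0$, adjunction gives $\Sigma_0\cdot C=\deg(N_{\Sigma_0/P}|_C)$. Since the sections are disjoint, $\Sigma_\infty\cdot C=0$. The normal bundle of the section $\Sigma_0\isom A$ is $\sO_P(\Sigma_0)|_{\Sigma_0}$. The section $\Sigma_0$ is given by the quotient $V\to\sO_A$, so $\sO_V(1)|_{\Sigma_0}\isom\sO_A$ and this normal bundle is $L^{*}$. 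The symmetric computation gives $L$ as the normal bundle of $\Sigma_\infty$, up to sign conventions. Both are in $\pic^0(A)$, hence have degree $0$ on every curve, so $\omega_P^*\cdot C=0$ for curves in either section. Therefore $\omega_P^*$ is nef.

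\emph{Necessity.} Assume $\omega_P^*$ is nef. Restrict it to $\Sigma_0\isom A$: the summand $\sO(\Sigma_\infty)$ restricts trivially since the sections are disjoint, so $\omega_P^*|_{\Sigma_0}\isom L^{*}$. Likewise $\omega_P^*|_{\Sigma_\infty}\isom L$. Restrictions of nef bundles are nef, so both $L$ and $L^*$ are nef on $A$. Then $L\cdot C\geq0$ and $-L\cdot C\geq0$ for every curve $C$, so $L$ is numerically trivial. On an abelian variety, numerical triviality of a line bundle is equivalent to $L\in\pic^0(A)$ (numerical and algebraic equivalence coincide there, e.g.\ \cite{BL99}).

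\emph{Main obstacle.} The only delicate point is bookkeeping: identifying the normal bundles of $\Sigma_0$ and $\Sigma_\infty$ as $L^{\mp1}$ with consistent sign conventions. The argument is insensitive to the sign, since both $L$ and $L^*$ occur.
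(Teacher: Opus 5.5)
Your proof is correct. The necessity direction is the paper's own argument: restrict $\omega_P^*$ to the two sections to get $L^*$ and $L$. You also justify the last step, that $L$ and $L^*$ both nef forces $L\in\pic^0(A)$ because $\mathrm{NS}(A)$ is torsion free, which the paper leaves implicit.

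For sufficiency you take a different route. The paper notes that $V=\sO_A\oplus L$ is a nef vector bundle, so $\sO_V(1)$ is nef by \cite[II, Theorem~6.2.12]{Laz04}. It then concludes that $\omega_P^*\isom\sO_P(2)\otimes\pi^*L^*$ is nef as a tensor product of nef bundles. You instead use the effective anticanonical representative $\omega_P^*\isom\sO_P(\Sigma_0+\Sigma_\infty)$ from Corollary~\ref{corollary log canonical bundle}. This reduces the check to curves inside $\Sigma_0\cup\Sigma_\infty$, where the restriction is numerically trivial.

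Your version is more elementary and self-contained, since it needs no theory of nef vector bundles. It also rests both directions on the single computation $\omega_P^*|_{\Sigma_0}\isom L^*$, $\omega_P^*|_{\Sigma_\infty}\isom L$. In effect you show that $\omega_P^*$ is nef if and only if its restrictions to the two sections are nef. The paper's version is shorter given the citation, and it does not need an effective anticanonical divisor.
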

\begin{proof}
If $L\in\pic^0(A)$, the vector bundle $V=\sO_A \oplus L$ is nef on $A$, i.e.  $\sO_A(1)$ is nef on $P$; see e.g.~\cite[II, Theorem~6.2.12]{Laz04}. Hence, also the bundle
\[
\omega_P^* \isom \sO_P(2)\tensor \pi^*L^* 
\]
is nef. Conversely, if $\omega_P^*$ is nef, then also its restrictions to the sections $\Sigma_0, \Sigma_\infty \subset P$ are nef. These restrictions are given by
\[
\begin{aligned}
\omega_P^*|_{\Sigma_0} & \isom \sO_A^{\tensor 2}\tensor L^* \isom L^*, \\
\omega_P^*|_{\Sigma_\infty} & \isom L^{\tensor 2}\tensor L^* \isom L. \\
\end{aligned}
\]
But $L$ and $L^*$ are simultaneously nef if and only if $L \in \pic^0(A)$.
\end{proof}

\begin{lemma}\label{lemma degree zero or big}
Let $E$ be an elliptic curve and let $\pi\colon P\to E$ be the bundle projection where $P$ is as in \eqref{eq p1 bundle with two sections}. Then $\omega_P^*$ is not big if and only if $L\in\pic^0(E)$.
\end{lemma}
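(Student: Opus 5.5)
We will compute $\omega_P^*$ explicitly and test bigness via the top self-intersection and restriction to the two sections. By \eqref{eq canonical sheaf projective bundle} with $\omega_E$ trivial and $\det V = L$, we have $\omega_P^* \isom \sO_V(2)\tensor \pi^*L^*$. By Lemma~\ref{lemma class of section of p1 bundle in pic} we also have $\sO_P(\Sigma_\infty)\isom\sO_V(1)$ and $\sO_P(\Sigma_0)\isom \sO_V(1)\tensor\pi^*L^*$, so
\[
-K_P \sim \Sigma_0+\Sigma_\infty .
\]
Set $d:=\deg L$ and write $f$ for the class of a fibre. We have $\Sigma_0 \equiv \Sigma_\infty - d f$. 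Since $\Sigma_0\cap\Sigma_\infty=\emptyset$, we get $\Sigma_\infty^2 = d$ and $\Sigma_0^2=-d$. The first of these follows from $\Sigma_\infty\cdot(\Sigma_\infty-df)=0$, i.e.\ $\Sigma_\infty^2=d$, and then $\Sigma_0^2=d-2d=-d$.

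Suppose $L\in\pic^0(E)$, so $d=0$. Then $(-K_P)^2=(\Sigma_0+\Sigma_\infty)^2=0$. By Lemma~\ref{lemma nef anticanonical}, $-K_P$ is nef. A nef divisor on a surface is big if and only if its self-intersection is positive, hence $-K_P$ is not big. Alternatively, bypassing Lemma~\ref{lemma nef anticanonical}: $-K_P\cdot\Sigma_0=\Sigma_0^2=0$ and similarly for $\Sigma_\infty$, and for $d=0$ every irreducible curve other than the $\Sigma_i$ meets $\Sigma_0+\Sigma_\infty$ nonnegatively, which again shows nefness.

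Conversely, suppose $d\neq 0$. By symmetry (swapping $\Sigma_0,\Sigma_\infty$ amounts to replacing $L$ by $L^*$) we may assume $d>0$. Then $-K_P\equiv 2\Sigma_\infty - df$. To show bigness, write
\[
-K_P \equiv \Sigma_\infty + \Sigma_0 .
\]
Now $\Sigma_\infty$ is nef with $\Sigma_\infty^2=d>0$, hence big, because $\Sigma_\infty\cdot f=1>0$ and $\Sigma_\infty\cdot\Sigma_\infty=d>0$, and any other irreducible curve $C\neq \Sigma_\infty$ satisfies $\Sigma_\infty\cdot C\ge 0$. Adding the effective divisor $\Sigma_0$ to a big divisor keeps it big. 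Hence $-K_P$ is big.

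No step is a real obstacle. The only care needed is the sign bookkeeping for $\Sigma_0^2$ and $\Sigma_\infty^2$, and the symmetry reduction: the ordering of $\Sigma_0,\Sigma_\infty$ is fixed by the convention \eqref{eq p1 bundle with two sections}, and the reduction uses $\PP(\sO_E\oplus L)\isom\PP(\sO_E\oplus L^*)$.
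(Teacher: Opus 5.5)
Your proof is correct and follows essentially the same route as the paper. The paper normalises to $\deg L\le 0$ and writes $\omega_P^*\isom B\tensor\sO_P(\Sigma_\infty)$ with $B:=\pi^*L^*\tensor\sO_P(\Sigma_\infty)\isom\sO_P(\Sigma_0)$ nef and $B^2=-\deg L>0$; this is exactly your decomposition $-K_P\sim\Sigma_0+\Sigma_\infty$ with the roles of the two sections swapped. For $\deg L=0$ the paper only says that ``the same computations'' give non-bigness, and your argument via nefness (Lemma~\ref{lemma nef anticanonical}) together with $(-K_P)^2=0$ spells that step out.
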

\begin{proof}
Up to tensoring with $L^*$, we may assume that $\deg L \leq 0$. The anticanonical bundle can be written as
\[
\omega_P^* \isom \pi^*L^* \tensor \sO_P(2) \isom \pi^*L^* \tensor \sO_P(2 \Sigma_\infty)
\]
by Lemma~\ref{lemma class of section of p1 bundle in pic}.

If we assume $\deg L < 0$, then $L^*$ and therefore $\omega_P^*$ is effective. We claim that the bundle $B:=\pi^*L^* \tensor \sO_P(\Sigma_\infty)$ is big and nef. For this purpose, we compute the intersection number
\[
B.\Sigma_\infty = - \deg L + \Sigma_\infty^2 = 0.
\]
So $B$ is nef. From $B^2 = -\deg L >0$, we see that $B$ is also big. Hence, $\omega_P^* = B(\Sigma_\infty)$ is big as well. 

If $\deg L = 0$, the same computations show that $\omega_P^*$ is not big.
\end{proof}

Combining Lemmas~\ref{lemma nef anticanonical} and~\ref{lemma degree zero or big} we obtain

\begin{corollary}\label{corollary nef iff not big}
Let $E$ be an elliptic curve and let $\pi\colon P\to E$ be the bundle projection where $P$ is as in \eqref{eq p1 bundle with two sections}. Then $\omega_P^*$ is nef if and only if it is not big if and only if $L\in\pic^0(E)$.
\end{corollary}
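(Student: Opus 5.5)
The corollary is a formal combination of the two lemmas just proved, so the plan is to chain their equivalences. Throughout, $P=\PP(V)$ with $V=\sO_E\oplus L$ over the elliptic curve $E$. An elliptic curve is an abelian variety of dimension one, so Lemma~\ref{lemma nef anticanonical} applies with $A=E$. It gives directly that $\omega_P^*$ is nef if and only if $L\in\pic^0(E)$.

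Lemma~\ref{lemma degree zero or big} gives that $\omega_P^*$ is not big if and only if $L\in\pic^0(E)$. Both conditions in the statement are therefore equivalent to the same condition $L\in\pic^0(E)$, and hence to each other. This yields the chain of equivalences claimed.

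Two points need checking, though neither is a real obstacle.

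First, both lemmas must be stated for the same normalisation of $P$. They are: both use the convention \eqref{eq p1 bundle with two sections}, and the proof of Lemma~\ref{lemma degree zero or big} only replaces $L$ by $L^*$. That replacement is harmless, since $\PP(\sO_E\oplus L)\isom\PP(\sO_E\oplus L^*)$ and $L\in\pic^0(E)$ if and only if $L^*\in\pic^0(E)$.

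Second, on a curve, membership in $\pic^0$ is the same as having degree zero. This is what Lemma~\ref{lemma degree zero or big} uses in its case split on $\deg L$, so the two lemmas indeed talk about the same condition.
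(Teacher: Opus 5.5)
Your proposal is correct and is exactly the paper's argument: the corollary is obtained by combining Lemma~\ref{lemma nef anticanonical} (applied with $A=E$) and Lemma~\ref{lemma degree zero or big}, each of which characterises its property by the same condition $L\in\pic^0(E)$. Your two consistency checks are harmless but not needed, since both lemmas are already stated for an arbitrary $L$ in the normalisation \eqref{eq p1 bundle with two sections}.
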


%----------------------------------------------------
\subsection{Normal crossing subvarieties}\label{section normal crossing subvarieties}
%----------------------------------------------------

Let us recall the definition of a normal crossing variety.

\begin{definition} \label{defn:nc+triple}
A \emph{normal crossing} (or nc) variety $Y$ is given locally by the vanishing of one equation of the form $x_1\cdot\ldots\cdot x_k=0$ where $x_1,\ldots, x_k$ is part of a local coordinate system. We say that $Y$ has \emph{simple normal crossings} (snc) if in addition all irreducible components $Y_i$ of $Y$ are smooth.
We say that a normal crossing variety has \emph{no triple intersections} or \emph{no triple points} if in all local equations as above we have $k\leq 2$. 
 A pair $(X,D)$ consisting of a complex space $X$ and a reduced effective Weil divisor $D \subset X$ is called \emph{log smooth} if $X$ is smooth and $D$ is a simple normal crossing divisor in $X$.
\end{definition}

Next we give an embedded version of the notion of a (simple) normal crossing variety.

\begin{definition}
Let $X$ be a smooth variety and $Y \subset X$ a closed subspace. We call $Y$ a \emph{normal crossing subvariety} if for any $p\in Y$ there is an open neighbourhood $U \subset X$ of $p$ and a smooth closed subvariety $V \subset U$ of dimension $\dim V = \dim Y+1$  such that $Y \cap U \subset V$ and there are coordinates $z_1,\ldots,z_{n+1}$ on $V$ such that $Y\cap U$ is defined in $V$ by the equation $z_1\ldots z_k=0$ for some $k\leq n+1$. We say that $Y$ is a \emph{simple normal crossing subvariety} if in addition each irreducible component of $Y$ is smooth.
\end{definition}

It turns out that an (s)nc subvariety is nothing else than a closed subspace that is an (s)nc variety.

\begin{lemma}\label{lemma snc sub is sncsub}
Let $X$ be a smooth variety and let $Y \subset X$ be a closed subspace. Then $Y$ is a (simple) normal crossing subvariety if and only if it is a (simple) normal crossing variety.
\end{lemma}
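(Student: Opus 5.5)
One direction is immediate. If $Y\subset X$ is a (simple) normal crossing subvariety, then near every $p\in Y$ the space $Y\cap U$ is a hypersurface $z_1\cdots z_k=0$ in the smooth variety $V$, with $z_1,\dots,z_{n+1}$ coordinates on $V$. Hence $Y$ is locally of the form required in Definition~\ref{defn:nc+triple}, with ambient manifold $V$. The condition that the irreducible components are smooth is intrinsic to $Y$, so the simple version follows as well.

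For the converse, suppose $Y$ is an nc variety. Then each $p\in Y$ has a neighbourhood $Y_p$ isomorphic to a hypersurface $\{x_1\cdots x_k=0\}$ in a polydisc $W\subset\C^{n+1}$. We must produce a smooth $(n+1)$-dimensional $V\subset U$ containing $Y\cap U$ in which $Y$ has the same form. The plan is to lift the closed embedding $Y_p\into W$ through the given embedding $Y_p\into U\subset X$, using the following steps.
\begin{enumerate}[(i)]
\item Compute the embedding dimension: $\dim T_pY=\dim \mathfrak m_p/\mathfrak m_p^2$ equals $n+1$ if $k\geq 2$, and $n$ if $k\le 1$. In the case $k\le1$, $Y$ is smooth at $p$, and we take $V$ to be any smooth hypersurface of a submanifold containing $Y$ locally. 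Concretely, complete local coordinates of $Y$ to $n+1$ of the ambient coordinates, take $V$ to be the corresponding coordinate slice, and set $z_1$ to be a defining equation of $Y$ in $V$.
\item For $k\ge2$, the coordinate functions $x_1,\dots,x_{n+1}$ on $Y_p$ are holomorphic functions on $Y\cap U$. Since $Y$ is a closed subspace of the Stein neighbourhood $U$, they lift to holomorphic functions $\tilde x_i$ on $U$ after shrinking $U$.
\item The differentials $d\tilde x_i(p)$ are linearly independent, because their restrictions span $\mathfrak m_{Y,p}/\mathfrak m_{Y,p}^2$, which has dimension $n+1$. Complete them by functions $y_1,\dots,y_{m}$ vanishing on $Y$ to a coordinate system of $X$ at $p$. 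This is possible because the kernel of $\mathfrak m_{X,p}/\mathfrak m_{X,p}^2\to \mathfrak m_{Y,p}/\mathfrak m_{Y,p}^2$ is spanned by classes of elements of the ideal $I_Y$.
\item Set $V:=\{y_1=\dots=y_m=0\}$. This $V$ is smooth of dimension $n+1$ and contains $Y\cap U$. The map $(\tilde x_i)|_V\colon V\to\C^{n+1}$ is a local isomorphism that restricts to the given isomorphism $Y_p\isom\{x_1\cdots x_k=0\}$. Therefore $Y\cap U$ is the zero locus in $V$ of the function $z_1\cdots z_k$ with $z_i=\tilde x_i|_V$.
\end{enumerate}

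The main point to check is step (iv): that the closed subspace $Y\cap U\subset V$ coincides scheme-theoretically with $\{z_1\cdots z_k=0\}$. Both are closed subspaces of $V$, and the map $\sO_V\to\sO_Y$ factors through $\sO_V/(z_1\cdots z_k)$. Under the coordinate identification $V\isom W$, this factored map becomes the given isomorphism $\sO_W/(x_1\cdots x_k)\isom\sO_{Y_p}$, so the two ideals agree. A second point is that the $y_j$ in step (iii) genuinely vanish on $Y$, which is exactly what the cotangent-space exact sequence provides.
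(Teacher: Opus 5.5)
Your proof is correct and takes essentially the same route as the paper: both use the exact sequence $0\to I_p/(I_p\cap\mathfrak m_X^2)\to \mathfrak m_X/\mathfrak m_X^2\to\mathfrak m_Y/\mathfrak m_Y^2\to 0$ to cut out a smooth $(n+1)$-dimensional $V\supset Y\cap U$ by $\dim X-n-1$ elements of the ideal of $Y$. The only difference is how you get coordinates on $V$: you lift the intrinsic nc chart functions of $Y$, which makes the scheme-theoretic identification $Y\cap U=\{z_1\cdots z_k=0\}$ immediate, whereas the paper takes defining equations of the local smooth components in $V$ and extends them to a coordinate system.
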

\begin{proof}
It suffices to prove the claim for the normal crossing case since the additional simplicity requirement is the same on both sides. Clearly, if $Y$ is a normal crossing subvariety, it is also abstractly normal crossing. Let us suppose that $Y$ is a normal crossing variety of dimension $n$. Since the claim is trivial at smooth points, let $p\in Y$ be a singular point. Then, the embedding dimension at $p$ is $n+1$. Let $I \subset \sO_X$ denote the ideal sheaf of $Y$ and let $\mathfrak m_X \subset \sO_{X,p}$ and $\mathfrak m_Y \subset \sO_{Y,p}$ be the maximal ideals. Since $\sO_{Y,p} = \sO_{X,p}/I_p$, we get an exact sequence
\[
0 \to I_p / I_p \cap \mathfrak m_X^2 \to \mathfrak m_X/\mathfrak m_X^2 \to \mathfrak m_Y/\mathfrak m_Y^2 \to 0.
\]
Then we may find local sections $f_1, \ldots, f_\ell$ in $I$, where $\ell:= \dim X - n - 1$, such that their images in $\mathfrak m_X/\mathfrak m_X^2$ generate $I_p / I_p \cap \mathfrak m_X^2$. Let $U \subset X$ be an open neighbourhood of $p$ on which $f_1,\ldots,f_\ell$ are defined and let $V \subset U$ the closed subspace with ideal sheaf generated by $f_1,\ldots,f_\ell$. By construction, the embedding dimension of $V$ at $p$ is $n+1$, and since $V$ is cut out in $U$ by $\ell$ equations, its dimension is at least $\dim X - \ell = n+1$. Thus, by shrinking $U$, we may assume that $V$ is smooth of dimension $n+1$. By construction, it contains $Y\cap U$. Again, up to shrinking $U$ we may assume that $Y\cap U = Y_1\cup\ldots \cup Y_k$ is a union of smooth irreducible components all passing through $p$. Let $z_i=0$ be a defining equations for $Y_i$ for all $i=1,\ldots,k$. Since we know that the $z_i$ are linearly independent in $\mathfrak m_Y/\mathfrak m_Y^2\isom \mathfrak m_V/\mathfrak m_V^2$, where $\mathfrak m_V \subset \sO_{V,p}$ is the maximal ideal, we can, after possibly shrinking $U$ further, extend them to a coordinate system of $V$. So $Y$ is a normal crossing subvariety of $X$.
\end{proof}

%----------------------------------------------------
\subsection{The dual intersection graph}\label{section dual graph}
%----------------------------------------------------

Let $Y$ be an simple normal crossing variety. The dual intersection complex is a cell complex, whose definition goes back at least to Mumford~\cite[II.1, Definition~5]{KKMS73}; see also~\cite[p.~502]{Sha79}. It is obtained by taking a vertex for each irreducible component, and a $p$-simplex for each irreducible component of a nonempty intersection of $p+1$ irreducible components of $Y$. It becomes more involved when passing from simple normal crossing to normal crossing; see e.g.~\cite[\S~4.1]{Thu07}. For our purposes, however, $Y$ will mostly have no triple intersections. This implies that the complex is a graph. In order to describe the intersection behaviour, we recall a construction due to Koll\'ar~\cite[\S~18]{Kol11}; see also~\cite[5.23]{Kol13}.

\begin{construction}\label{construction kollar}
Let $Y$ be a normal crossing variety, let $\nu\colon Y^\nu\to Y$ be the normalisation, denote by $\cond \subset Y^\nu$ the conductor and by $\tau\colon\cond\to\cond$ the covering involution of $\cond\to \nu(\cond)$. We consider the disjoint union $Y^\nu\coprod Y^\nu$ and denote by $\iota_1,\iota_2$ the inclusions of $Y^\nu$. Following Koll\'ar, we define
\[
\widetilde Y:= \left(Y^\nu\coprod Y^\nu\right) /\sim,
\]
where $\sim$ is the equivalence relation generated by $\iota_1(y) \sim \iota_2(\tau(y))$ for all $y\in \cond$. We refer to $\widetilde Y$ as the canonical double cover of $Y$. Note that $\widetilde Y$ has \emph{simple} normal crossings.
\end{construction}

\begin{definition}\label{definition dual graph}
Let $Y$ be a normal crossing variety without triple points, and let $\widetilde Y \to Y$ be its canonical double cover. We denote by $\Delta(\widetilde Y)$ its dual intersection graph\footnote{For us, a graph may have several edges between two vertices. They represent the different connected components of the intersection.}. The covering involution $\varphi\colon \widetilde Y \to \widetilde Y$ acts on $\Delta(\widetilde Y)$ and we define 
\[
\Delta(Y):=\Delta(\widetilde Y)/\left\langle \varphi\right\rangle.
\]
In addition, we endow $\Delta(Y)$ with a label $\lambda_e \in \{1,2\}$ associated to any edge $e$ by setting $\lambda_e:=1$ if and only if $e$ is fixed under the involution $\varphi$. We refer to $\lambda_e$ as the \emph{multiplicity} of $e$. The labelled graph $\Delta(Y)$ is called the \emph{dual intersection graph} or \emph{dual graph} of $Y$. The \emph{valency} of a vertex is the number of edges, counted with multiplicity, that is incident with the vertex.
\end{definition}

\begin{example}\label{example dual graph}
In the left graph in Figure~\ref{figure dual graphs}, the vertex has valency $3$; in the right graph all vertices have valency $2$. The left graph represents an irreducible normal crossing variety $Y$, where $\cond \subset Y^\nu$ has three irreducible components, two of which are identified with one another (giving the loop of multiplicity two) and the third one is glued to itself along a fixed point free involution (giving the loop of multiplicity one). The right graph represents a simple normal crossing variety where $\cond \cap Y_i$ has two connected components for each connected component $Y_i\subset Y^\nu$. 

This example illustrates that the valency of a vertex counts the number of irreducible components of the conductor in the irreducible component of the normalisation corresponding to that vertex. 
\end{example}

\begin{figure}[h]
\centering
\begin{tikzpicture}[
	vertex/.style={circle, draw=black, fill=black, line width=0.9pt, minimum size=5.5mm, inner sep=0pt},
	edge/.style={draw=black, line width=1.2pt}
  ]

	\node[vertex] (A) at (0,  0) {};
	
  \draw[edge] (A) edge[loop left, looseness=20] (A);
  \draw[edge] (A) edge[loop right, looseness=20] (A);
	\node (L) at (1.5,0.2) {$2$};

\begin{scope}[xshift=4.5cm] % move the whole thing to the right
  \node[vertex] (F) at (0, 0.85) {};
  \node[vertex] (G) at (0,-0.85) {};
  \draw[edge, bend left=25]  (F) to (G);
  \draw[edge, bend right=25] (F) to (G);
\end{scope}

\end{tikzpicture}
\caption{Dual graphs with multiplicities. The multiplicity is only displayed if equal to two.}
  \label{figure dual graphs}
\end{figure}

%----------------------------------------------------
\subsection{Logarithmic spaces}\label{section log spaces}
%----------------------------------------------------

We briefly recall some of the relevant notions from logarithmic geometry, adapted to complex spaces instead of algebraic varieties. We refer to~\cite{Ogu18} for a more comprehensive treatment.

\begin{definition}\label{definition log structure}
Let $X$ be a complex space. A \emph{prelog structure} on $X$ consists of a sheaf of commutative monoids $M$  together with a morphism of sheaves of monoids
\[
\alpha\colon  M \to \sO_X,
\]
where $\sO_X$ is viewed as a monoid under multiplication. A \emph{log structure} on $X$ is a prelog structure $(M, \alpha)$ such that the induced morphism
\[
\alpha^{-1}(\sO_X^\times) \to \sO_X^\times
\]
is an isomorphism. A \emph{log space} is a pair $(X, M,\alpha)$ where $X$ is a complex space and $(M,\alpha)$ is a log structure on $X$. We will usually drop the structure map $\alpha$ from the notation. 
\end{definition}

There is also a notion of morphism of log spaces, the definition being identical with the one for schemes~\cite[III, Definition~1.2.1]{Ogu18}

\begin{example}\label{example divisorial logarithmic structure}
Let $X$ be a complex space and $D\subset X$ be an effective Cartier divisor. Let $M_{(X,D)}$ be the subsheaf of monoids of $\sO_X$ defined by
\begin{equation}\label{eq definition divisorial log structure}
M_{(X,D)}=\sO_X\cap j_*\sO_{X\setminus D}^\times
\subset\sO_X,
\end{equation}
where $j\colon X\setminus D\hookrightarrow X$ is the inclusion. This defines a logarithmic structure  on $X$, which we refer to as the \emph{divisorial log structure} associated to $(X,D)$. Suppose $D\subset X$ is reduced and $(X,D)$ is log smooth, let $x \in D$, and let $z_1, \ldots, z_n$ be a local coordinate system in a neighbourhood $U$ of $x$ such that $D$ is cut out by $z_1\cdot\ldots\cdot z_k=0$ for some $k$. Then sections of $M_{(X,D)}$ over $U$ can be written uniquely in the form $uz_1^{n_1}\ldots z_k^{n_k}$ with $u\in \sO_X(U)$ and $n_i\in \N$ for $i=1,\ldots,n$. In other words, there is a canonical isomorphism of monoids 
\begin{equation}\label{eq divisorial log structure local splitting}
M_{(X,D)}(U)\isom \sO_X^\times(U) \oplus \N^k.
\end{equation}
\end{example}

\begin{example}\label{example divisorial logarithmic structure infinitesimal}
There is also an infinitesimal version of Example~\ref{example divisorial logarithmic structure}: the sheaf $M_{(X,D)}$ only depends on the second infinitesimal neighbourhood $D^{(2)}:=2D\subset X$ of $D$ in $X$. For our deformation theoretic application, the following more precise situation is relevant. Let $Y$ be a compact normal crossing variety and let $f_n\colon \sY_n \to S_n:=\Spec(\C[t]/(t^{n+1}))$ be a deformation of $Y$ over the artinian scheme $S_n$. For simplicity, let us suppose that there is some $k< n$ such that $\sY_k:=\sY_n \times_{S_n} S_k\to S_k$ and for $\sY_{k+1}\to S_{k+1}$, all irreducible components of $Y^\sing$ are being smoothed. By this we mean that the singular locus of $f_n$ is contained in $\sY_k$. Then there is a log structure $M_{(X,D)}$ on $X:=\sY_n$ consisting of regular functions whose vanishing locus is contained in the Cartier divisor $D:=\sY_k =\mathrm{div}(f_n^*t^k)\subset \sY_n$. Clearly, the description \eqref{eq definition divisorial log structure} is no longer valid as $X\setminus D=\emptyset$. 
\end{example}

\begin{definition}\label{definition embedding type log structure}
Let $Y$ be a normal crossing variety (cf. Definition~\ref{defn:nc+triple}) with singular locus $\Ysing:=Y^\sing$. A log structure $M$ on $Y$ is said to be \emph{of embedding type}\footnote{Steenbrink uses the term \emph{logarithmic embedding} in for this; see ~\cite[Definition~3.8]{Ste95}.} if for every point $x\in Y$ there exist
\begin{enumerate}
\item a (Euclidean) open neighbourhood $U \subset Y$ of $x$,
\item an open subset $V \subset \C^{n+1}$ and a closed immersion $i\colon U\hookrightarrow V$ of $U$ as a simple normal crossing divisor of $V$,\item an isomorphism of log structures $M|_U \isom i^*M_{(V,U)}$ where $M_{(V,U)}$ is the divisorial log structure from Example~\ref{example divisorial logarithmic structure}.
\end{enumerate}
\end{definition}

\begin{example}\label{example standard log point}
The \emph{standard log point} is the log structure on the one-point space $S:=\Spec(\C)$ determined by the monoid $L:=\C^\times \oplus \N$ and the map $\alpha\colon L \to \C$ given by
\[
\alpha(u,0)= u, \qquad \alpha(u,n)=0 \quad \textrm{if}\quad n>0.
\]
It is also the restriction of the divisorial log structure of the line with its origin from Example~\ref{example divisorial logarithmic structure}. More precisely, $L\isom \iota^*M_{(\C,\{0\})}$ where $\iota$ is the inclusion of the origin in the line.
\end{example}

By~\cite[Definition~3.8]{Ste95} (cf. also~\cite[Lemma~6.1.18]{Mar24}), for a normal crossing variety $Y$ with a log structure $M$ of embedding type, there is an exact sequence of sheaves of monoids
\begin{equation}\label{eq sequence divisorial log structure}
0 \to \sO_Y^\times \to M \to \nu_* \N_{{Y^\nu}} \to 0,
\end{equation}
where $\nu\colon  Y^\nu \to Y$ is the normalisation.

\begin{definition}\label{definition semistable type log structure}
A log structure $M$ of embedding type on $Y$ is said to be \emph{of semistable type} if there exists 
a morphism of sheaves of monoids
\begin{equation}\label{eq semistable type log structure condition}
\psi \colon  \N_Y \to M
\end{equation}
such that the triangle
\[
\xymatrix{
M \ar[r]&  \nu_*\N_{Y^\nu}\\
&\N_Y \ar[ul]^{\psi} \ar[u]
}
\]
is commutative where the horizontal map is the one from \eqref{eq sequence divisorial log structure}.
\end{definition}

In other words, a log structure $M$ of embedding type is of semistable type if there exists a global section of $M$ that is mapped to the global section $(1,\ldots,1)$ of $\nu_*\N_{Y^\nu}$. From this observation, the following is straight forward.

\begin{lemma}\label{lemma log structure semistable type morphism}
Let $(Y,M)$ be a logarithmic space, where is a $Y$ normal crossing variety and $M$ is of embedding type. Denote $(S,L)$ the standard log point from Example~\ref{example standard log point}. Then giving a global section $s$ of $M$ is equivalent to giving a morphism of log spaces $f\colon(Y,M) \to (S,L)$, where $f^{-1}L \to M$ sends $1\mapsto s$. In particular, if $M$ is of semistable type, there is such a morphism $f$ where $s$ maps to $(1,\ldots,1)$ in $\nu_*N_{Y^\nu}$. 
\end{lemma}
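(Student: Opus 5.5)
The plan is to unwind the definition of a morphism of log spaces and use the universal property of the free monoid $\N$. A morphism $f\colon(Y,M)\to(S,L)$ consists of a morphism of underlying spaces $Y\to S=\Spec(\C)$, which is unique since $Y$ is a complex space over $\C$, together with a morphism of log structures $f^\flat\colon f^*L\to M$ compatible with the structure maps to $\sO_Y$. By adjunction between $f^*$ (the log pullback) and $f^{-1}$, such a morphism is the same as a morphism of sheaves of monoids $f^{-1}L\to M$ compatible with the maps $f^{-1}L\to f^{-1}\sO_S\to\sO_Y$ and $\alpha\colon M\to\sO_Y$.

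\textbf{Key steps.} First, observe that $f^{-1}L$ is the constant sheaf $\C^\times\oplus\N$. The $\C^\times$-part is forced: it must map to $\sO_Y^\times\subset M$ via the constants, because $\alpha^{-1}(\sO_Y^\times)\isom\sO_Y^\times$ and compatibility with $\alpha(u,0)=u$ pins this down. So the only freedom is the image $s$ of the generator $1\in\N$, which is a global section of $M$ by freeness of $\N$.

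Second, check the compatibility condition on this generator. It requires $\alpha(s)=0$, since $\alpha_L(0,1)=0$. This is the one point requiring care. I expect it to hold automatically whenever the image of $s$ in $\nu_*\N_{Y^\nu}$ is nonzero on every component. In the general case, one has to interpret the equivalence as requiring that $\alpha(s)$ vanish. Locally, in the embedding-type chart of Definition~\ref{definition embedding type log structure}, a section mapping to $(1,\dots,1)$ is $u\,z_1\cdots z_k$ restricted to $U$, and $z_1\cdots z_k|_U=0$. So $\alpha(s)=0$ for such $s$, which gives the "in particular" for semistable type.

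Conversely, given $s$ with the compatibility, define $f^\flat(u,n)=u\cdot s^n$. This is a monoid morphism and gives the morphism of log spaces. The two constructions are visibly inverse.

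\textbf{Main obstacle.} The only subtle point is the compatibility $\alpha(s)=0$ for a general global section. The semistable-type case reduces to the local computation above via the exact sequence~\eqref{eq sequence divisorial log structure}.
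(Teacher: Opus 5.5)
Your proposal is correct and is exactly the unwinding of the definition of a morphism of log spaces that the paper leaves as ``straightforward''. You are also right that the equivalence only holds for global sections $s$ with $\alpha(s)=0$, which the statement as written omits; as you expected, this is equivalent to the image of $s$ in $\nu_*\N_{Y^\nu}$ being nonzero on every component, and your local chart computation shows that sections mapping to $(1,\ldots,1)$ satisfy it, which gives the semistable-type claim.
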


We recall a different approach due to Kawamata--Namikawa; see~\cite[\S~2]{KN94}.

\begin{definition}\label{definition log atlas kawamata namikawa}
Let $Y$ be a normal crossing variety. A \emph{log atlas} on $Y$ is a partial open covering of $Y$ together with chosen local coordinates
\[
\mathfrak U=\{U_i\, ; \, z_0^{(i)},\ldots,z_n^{(i)}\}_{i\in I}
\]
such that the following conditions are satisfied.
\begin{enumerate}
\item The union $\bigcup_i U_i$ contains the singular locus $\Ysing$ of $Y$.
\item For every $i\in I$, there is an isomorphism $\varphi_i\colon U_i \to V_i$ where $V_i$ is an open neighbourhood of $0 \in \{(x_0,\ldots,x_n)\in \C^{n+1};\ x_0\cdots x_r=0\}$ for some $r=r(i)$ such that $\varphi_i^*(x_\ell)=z_\ell^{(i)}$ for $0\leq \ell\leq r$ and such that $z_\ell^{(i)}$ for $r<\ell\leq n$ are invertible.

\item If $U_i\cap U_j\neq \emptyset$, then there exist invertible holomorphic functions $u_\ell^{(ij)}$ for $0\leq \ell\leq n$, on $U_i\cap U_{j}$ and a permutation $\sigma=\sigma(i,j)$ such that $z_{\sigma(\ell)}^{(i)}=u_\ell^{(ij)}z_\ell^{(j)}$ and 
\begin{equation}\label{eq kawamata namikawa unit condition}
u_0^{(ij)}\cdots u_{r}^{(ij)}=1 \quad\text{on } U_i\cap U_{j}.
\end{equation}
\end{enumerate}

Two log atlases $\mathfrak U$ and $\mathfrak U'$ on a normal crossing variety $Y$ are called \emph{equivalent} if their union defines a log atlas on $Y$. 
\end{definition}

In the terminology of Kawamata--Namikawa, an equivalence class of log atlases is referred to as a log structure. We avoid this terminology, since it does not coincide with the (nowadays standard) one from Definition~\ref{definition log structure}. There is however the following description in terms of logarithmic geometry.

\begin{lemma}\label{lemma kawamata namikawa is semistable type}
Let $Y$ be a normal crossing variety. Giving an equivalence class of log atlases on $Y$ is equivalent to giving a log structure $M$ of semistable type in the sense of Definition~\ref{definition semistable type log structure} together with a morphism $\psi\colon \N_Y\to M$ as in~\eqref{eq semistable type log structure condition}.
\end{lemma}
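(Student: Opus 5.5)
We construct mutually inverse assignments between the two kinds of data and then check that both are compatible with equivalence of atlases.

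\emph{From an atlas to a log structure.} Fix a log atlas $\mathfrak U=\{U_i\,;\,z^{(i)}_0,\ldots,z^{(i)}_n\}$. On each $U_i$ the chart $\varphi_i$ realises $U_i$ as the divisor $\{x_0\cdots x_r=0\}$ in an open subset of $\C^{n+1}$. Pulling back the divisorial log structure of Example~\ref{example divisorial logarithmic structure} gives a log structure of embedding type. By \eqref{eq divisorial log structure local splitting}, it is locally
\[
M_i=\sO_{U_i}^\times\oplus\N^{r+1},
\]
with $e_\ell\mapsto z^{(i)}_\ell$. On an overlap $U_i\cap U_j$ we glue $M_i$ and $M_j$ by sending $e_{\sigma(\ell)}\mapsto (u^{(ij)}_\ell, e_\ell)$; these are the only possible isomorphisms compatible with the maps to $\sO$. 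The cocycle condition holds because the transition data come from honest identities of functions $z^{(i)}_{\sigma(\ell)}=u^{(ij)}_\ell z^{(j)}_\ell$. At points where two charts share a coordinate, the $\alpha$-images determine the units uniquely on the dense open set where $z_\ell\neq 0$. Away from the singular locus we take $M=\sO_Y^\times$ together with the germs of the smooth branch, which is exactly the embedding-type structure there.

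The resulting $M$ fits into \eqref{eq sequence divisorial log structure}. We define the local section
\[
s_i:=(1,e_0+\dots+e_r)\in M_i.
\]
Under the gluing, $s_j$ is sent to $(u^{(ij)}_0\cdots u^{(ij)}_r,\,\sum e_\ell)$, so condition \eqref{eq kawamata namikawa unit condition} says precisely that $s_i=s_j$. The $s_i$ therefore glue to a global section $s$ mapping to $(1,\ldots,1)$ in $\nu_*\N_{Y^\nu}$. Sending $1\mapsto s$ defines $\psi$, as in Lemma~\ref{lemma log structure semistable type morphism}.

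\emph{Conversely.} Given $(M,\psi)$ of semistable type, at each point $x\in\Ysing$ we choose an embedding-type chart $U\hookrightarrow V$ with $M|_U\isom i^*M_{(V,U)}$. Local branch equations $z_0,\ldots,z_r$ give lifts $e_\ell\in M(U)$ of the generators of $\nu_*\N_{Y^\nu}$. Since $\psi(1)$ maps to $(1,\ldots,1)$, we have
\[
\psi(1)=u\cdot e_0\cdots e_r
\]
(written multiplicatively) for some unit $u$. We replace $z_0$ by $u z_0$, i.e.\ $e_0$ by $u e_0$, so that $\psi(1)=e_0\cdots e_r$. These normalised coordinates form a log atlas: on overlaps, the coordinate changes $u^{(ij)}_\ell$ are the ratios of the two liftings of the same generator, and their product equals $1$ because both products equal $\psi(1)$. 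Any two such choices yield atlases whose union is again an atlas, by the same computation, so the equivalence class is well defined.

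\emph{Compatibility with equivalence.} Equivalent atlases produce canonically isomorphic pairs $(M,\psi)$, since the union atlas produces a structure restricting to both. The two constructions are inverse to each other: starting from an atlas, the coordinates $z^{(i)}_\ell$ are already normalised for $s$.

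The main obstacle is the gluing step. One must check that isomorphisms between divisorial log structures are determined by, and correspond exactly to, the coordinate change data $(\sigma,u_\ell)$ from Definition~\ref{definition log atlas kawamata namikawa}. This uses the fact that sections of $M$ are uniquely of the form $u z^{n}$. It also requires care that units are determined on the whole overlap and not just where $z_\ell\neq0$: here we use that $Y$ is reduced and $z_\ell$ is a nonzerodivisor on the relevant branch.
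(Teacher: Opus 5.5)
Your plan is the direct verification that the paper only alludes to in its last sentence. The paper's proof itself goes through the literature, matching both notions with $d$-semistability via Kawamata--Namikawa's Proposition~1.1 and Kato's Theorem~11.7. Your converse direction is fine: you normalise the lifts against $\psi(1)$ and cancel units to get $\prod_\ell u_\ell=1$.

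The gap is in the gluing step of the forward direction. You rightly flag it as the main obstacle, but you justify it incorrectly. The assertion that $e_{\sigma(\ell)}\mapsto u^{(ij)}_\ell e_\ell$ are ``the only possible isomorphisms compatible with the maps to $\sO$'' is false, because for a log structure of embedding type the map $\alpha\colon M\to\sO_Y$ is not injective.
\begin{itemize}
\item On $Y=\{xy=0\}$ with $e_0\mapsto x$, $e_1\mapsto y$, the assignment $e_0\mapsto(1+y)e_0$, $e_1\mapsto e_1$ is a nontrivial automorphism over $\sO_Y$ near the origin, since $(1+y)x=x$ in $\sO_Y$.
\item At a smooth point of the branch $\{z_\ell=0\}$ one has $\alpha(ue_\ell)=0$ for every unit $u$.
\end{itemize}
Correspondingly, the relation $z^{(i)}_{\sigma(\ell)}=u^{(ij)}_\ell z^{(j)}_\ell$ determines $u^{(ij)}_\ell$ only on the branches along which $z^{(j)}_\ell$ is not identically zero. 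The set $\{z_\ell\neq0\}$ is not dense in $Y$. Reducedness plus the nonzerodivisor property never reach the branch $\{z^{(j)}_\ell=0\}$, where the unit is invisible to the functions. Definition~\ref{definition log atlas kawamata namikawa} only asserts that suitable units exist, so your glued $M$ a priori depends on a choice. Identities of functions also do not yield the cocycle condition: $u^{(ik)}$ and the composite of $u^{(ij)}$ and $u^{(jk)}$ are only known to agree off the relevant branch.

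What is missing is that the unit condition \eqref{eq kawamata namikawa unit condition} is exactly what rigidifies the gluing. On the branch $\{z_\ell=0\}$ one has $u_\ell=\prod_{k\neq\ell}u_k^{-1}$. Each $u_k$ with $k\neq\ell$ is determined there, because $z_k$ restricts to a nonzerodivisor on that locally irreducible branch. Since $Y$ is reduced, units satisfying both the coordinate relations and \eqref{eq kawamata namikawa unit condition} are therefore unique. Equivalently, an automorphism of $M$ over $\sO_Y$ that fixes the section $s$ is the identity; the example above moves $s=e_0e_1$ (written multiplicatively) to $(1+y)s$. With this uniqueness the gluing is canonical. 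The cocycle condition then holds because composite units satisfy the same two conditions, and the comparison of the two constructions goes through as you describe.

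The same issue affects your treatment of the smooth locus. There $M$ has characteristic monoid $\N$ with generator mapping to $0$, so how it is glued to the charts is again invisible to $\alpha$. You must fix it by declaring $s$ to be the generator. Concretely, at points lying only on the branch $\{z_\ell=0\}$, glue via $s\mapsto e_0\cdots e_r=\bigl(\prod_{k\neq\ell}z_k\bigr)e_\ell$.
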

\begin{proof}
The equivalence follows in citeable form from comparing~\cite[Proposition~1.1]{KN94} with~\cite[Theorem~11.7]{Kat96}, where both notions are shown to be equivalent to the notion of $d$-semistability. We will discuss this notion in Definition~\ref{definition d-semistable nc}. The equivalence has also been noticed by Steenbrink\footnote{Steenbrink uses the term \emph{logarithmic deformation} for what we call a log structure of semistable type.}; see~\cite[Remark~3.10]{Ste95}. The claim can also be seen directly, and the condition~\eqref{eq semistable type log structure condition} corresponds precisely to \eqref{eq kawamata namikawa unit condition}.
\end{proof}

%----------------------------------------------------
\subsection{Logarithmic differential forms}\label{section log differentials}
%----------------------------------------------------

Let $f\colon (X,M)\to(S,N)$ be a morphism of log spaces. Then there is an associated sheaf $\Omega_{(X,M)/(S,N)}$ of logarithmic Kähler differentials; see~\cite[Theorem~IV.1.2.4]{Ogu18}. It has a similar universal property as the sheaf $\Omega_{X/S}$ of Kähler differentials and there is a canonical morphism $\Omega_{X/S} \to \Omega_{(X,M)/(S,N)}$, Moreover, it comes with a morphism $d\log\colon M\to\Omega_{(X,M)/(S,N)}$ satisfying
\[
d\alpha(m)=\alpha(m)d\log(m),
\qquad
d\log(f^\flat(n))=0,
\]
where $d\alpha(m)$ is the image of the corresponding section from $\Omega_{X/S}$ and $f^\flat\colon f^{-1}N\to M$ is the morphism on monoids coming from $f$.

\begin{example}\label{example divisorial log differentials}
Let $X$ be a smooth variety and let $D\subset X$ be a simple normal crossing divisor. Then there is a canonical isomorphism of $\sO_X$-modules
\[
\Omega_{(X,M_{(X,D)})} \isom \Omega_X(\log D),
\]
where $M_{(X,D)}$ is the divisorial log structure from Example~\ref{example divisorial logarithmic structure}.
\end{example}

\begin{remark}\label{remark structure morphism semistable type}
Let $(Y,M)$ be a log space where $Y$ is a normal crossing variety and $M$ is a log structure of semistable type; see Definition~\ref{definition semistable type log structure}. Recall that there exists a morphism $(Y,M)\to (S,L)$ to the standard log point and that by
Lemma~\ref{lemma log structure semistable type morphism} such a morphism corresponds to a global section of $M$ mapping to $(1,\ldots,1)$ under the map to $\nu_*\N_{Y^\nu}$ from \eqref{eq sequence divisorial log structure}. In particular, the morphism is \emph{not} part of the datum of a log structure of semistable type. However, from \eqref{eq sequence divisorial log structure} again, we see that for any two such morphisms, the corresponding global sections of $M$ differ by a global section of $\sO_Y^\times$. If $Y$ is compact and connected, then $\Gamma(Y,\sO_Y^\times)=\C^\times = \Gamma(S,\sO_S^\times)$. Hence, the ambiguity comes from pulling back a function from $S$ and thus $\Omega_{(Y,M)/(S,L)}$ is unambiguously defined: it only depends on the log structure of semistable type and not on the morphism to the standard log point.
\end{remark}

For the following result, recall that $\sK_Y$ denotes the sheaf of total quotient rings of a locally ringed space $Y$; see~\cite{Kle79} or~\cite[Definition~31.23.3]{Stacks}.

\begin{proposition}\label{proposition relative log differentials into meromorphic differentials}
Let $Y$ be a simple normal crossing variety. Let $M$ be a log structure of semistable type on $Y$, and denote by $(S,L)$ the standard log point. Then there is a canonical injective morphism 
\begin{equation}\label{eq differentials inject into normalization}
\Omega_{(Y,M)/(S,L)}\; \into \; \Omega_Y \tensor \sK_Y.
\end{equation}
\end{proposition}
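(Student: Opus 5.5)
The plan is to build the morphism locally, show that the local versions glue, and then prove injectivity by comparing with the normalisation. Since $Y$ is snc, every point $y\in Y$ has a neighbourhood $U$ with a closed immersion $i\colon U\hookrightarrow V\subset\C^{n+1}$ as the divisor $z_0\cdots z_r=0$. On $U$ we may also fix $M|_U\isom i^*M_{(V,U)}$, with the structure morphism to $(S,L)$ sending $1$ to the section $z_0\cdots z_r$ (up to a unit, which we absorb by Remark~\ref{remark structure morphism semistable type}). By Example~\ref{example divisorial log differentials}, $\Omega_{(U,M)/(S,L)}$ is then the quotient of $\Omega_V(\log U)|_U$ by the relation $\sum_{\ell=0}^r d\log z_\ell=0$. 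This is locally free of rank $n$, generated by $d\log z_1,\dots,d\log z_r,dz_{r+1},\dots,dz_n$.

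The morphism is defined on such a chart by sending a form to its restriction on each component. On the component $Y_j=\{z_j=0\}$ the generic point lies in the smooth locus of $Y$, and there the log differentials coincide with ordinary ones: $z_\ell$ is a unit for $\ell\neq j$, so $d\log z_\ell=dz_\ell/z_\ell$. The remaining generator $d\log z_j$ is expressed through the relation as $-\sum_{\ell\neq j}dz_\ell/z_\ell$. Since $\sK_Y=\prod_j \sK_{Y_j}$ (the total quotient ring of a reduced space is the product of the function fields of its components), this gives a map $\Omega_{(Y,M)/(S,L)}\to \Omega_Y\otimes\sK_Y$. We define it as the composition of the canonical map $\Omega_{Y/S}=\Omega_Y\to\Omega_{(Y,M)/(S,L)}$, inverted generically on the open dense set $Y\setminus\Ysing$, where the log structure is trivial and $S=\Spec \C$. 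This also settles canonicity: $\Omega_{(Y,M)/(S,L)}$ is locally free, hence has no sections supported on $\Ysing$, and the map $\Omega_Y\to\Omega_{(Y,M)/(S,L)}$ is an isomorphism on $Y\setminus\Ysing$. So the localisation $\Omega_{(Y,M)/(S,L)}\to\Omega_{(Y,M)/(S,L)}\otimes\sK_Y\isom\Omega_Y\otimes\sK_Y$ is intrinsic, requires no chart choice, and in particular glues automatically.

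Injectivity follows from the same locally free argument. The kernel of $\mathcal F\to\mathcal F\otimes\sK_Y$ is the subsheaf of sections killed by a nonzerodivisor. For $\mathcal F$ locally free over the reduced ring $\sO_Y$ this kernel is zero, because nonzerodivisors of $\sO_Y$ are exactly the functions that do not vanish identically on any component.

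The main obstacle is justifying that $\Omega_Y\otimes\sK_Y\isom\Omega_{(Y,M)/(S,L)}\otimes\sK_Y$. This needs care because $\Omega_Y$ has torsion along $\Ysing$, which dies after tensoring with $\sK_Y$. It also uses the fact that $\sK_Y$ is a localisation of $\sO_Y$ at nonzerodivisors, hence flat, so tensoring preserves injectivity and the local computation of $\Omega_{(Y,M)/(S,L)}$ (see \cite[IV]{Ogu18}) remains valid in the analytic setting.
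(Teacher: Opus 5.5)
Your proposal is correct and follows essentially the same route as the paper: the canonical map $\Omega_Y\to\Omega_{(Y,M)/(S,L)}$ is an isomorphism on the dense open set $Y\setminus\Ysing$, so it becomes an isomorphism after tensoring with $\sK_Y$. Your flatness and torsion remarks make explicit what the paper leaves implicit here, and the local freeness of $\Omega_{(Y,M)/(S,L)}$, which your chart computation supplies, gives the injection into its $\sK_Y$-localisation. One small correction: on $Y\setminus\Ysing$ the log structure is not trivial, since it is the pullback of $L$; what holds is that the morphism to $(S,L)$ is strict there, and strictness is what yields $\Omega_Y\isom\Omega_{(Y,M)/(S,L)}$ on that locus.
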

\begin{proof}
On $U:=Y^\reg$, the morphism $f\colon (Y,M)\to (S,L)$ is strict in the sense that $M|_U= (f|_U)^*L$. Thus, the definition of the sheaf of logarithmic differentials implies that the natural map $\Omega_{U}\to \Omega_{(U,M|_U)/(S,L)}$ is an isomorphism. Hence, the natural map
\[
\Omega_Y\otimes_{\sO_Y}\sK_Y \xrightarrow{\isom} \Omega_{(Y,M)/(S,L)}\otimes_{\sO_Y}\sK_Y
\]
is an isomorphism. Since $\sO_Y\to \sK_Y$ is always injective and $\Omega_{(Y,M)/(S,L)}$ is locally free, we obtain a canonical injection 
\[
\Omega_{(Y,M)/(S,L)} \hookrightarrow \Omega_{(Y,M)/(S,L)}\tensor_{\sO_Y}\sK_Y \isom
\Omega_{Y}\tensor_{\sO_Y}\sK_Y,
\]
as claimed.
\end{proof}

The proposition says that relative log differentials for log structures of semistable type are canonically a subsheaf of the sheaf of meromorphic differentials. It turns out that this subsheaf does not depend on the log structure of semistable type.

\begin{corollary}\label{corollary independence of log differentials}
In the situation of Proposition~\ref{proposition relative log differentials into meromorphic differentials}, the sheaf of log differentials $\Omega_{(Y,M)/(S,L)}$ is independent, as a subsheaf of $\Omega_Y \tensor \sK_Y$, of the choice of $M$ among log structures of semistable type.
\end{corollary}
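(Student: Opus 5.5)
The claim is local on $Y$, so I will identify the image of $\Omega_{(Y,M)/(S,L)}$ in $\Omega_Y\tensor\sK_Y$ with a subsheaf described without reference to $M$. Away from $\Ysing$ the proof of Proposition~\ref{proposition relative log differentials into meromorphic differentials} shows the image is the image of $\Omega_Y$, which does not involve $M$. It therefore suffices to work on a small open $U$ around a singular point $x$. Since $M$ is of embedding type, I may assume $U\subset V\subset\C^{n+1}$ is given by $z_0\cdots z_r=0$ and $M|_U\isom i^*M_{(V,U)}$. By Lemma~\ref{lemma kawamata namikawa is semistable type}, the datum $(M,\psi)$ is a log atlas. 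After shrinking $U$, the section $s=\psi(1)$ of $M$ can be written as $u\,z_0\cdots z_r$ with $u$ a unit. Replacing $z_0$ by $u z_0$, I may take $s=z_0\cdots z_r$, so that $d\log$ of this product vanishes in relative log differentials.

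The first step is the local computation. Here $\Omega_{(U,M)/(S,L)}$ is the free $\sO_U$-module generated by $d\log z_0,\dots,d\log z_r,dz_{r+1},\dots,dz_n$, modulo the single relation $\sum_{\ell} d\log z_\ell=0$. Under the map of Proposition~\ref{proposition relative log differentials into meromorphic differentials}, $d\log z_\ell$ goes to the meromorphic form $z_\ell^{-1}dz_\ell$. This makes sense in $\Omega_Y\tensor\sK_Y$ because $z_\ell$ is a nonzerodivisor on each component of $U$ except $\{z_\ell=0\}$, and on that component the form is interpreted via the identification with the regular locus. More concretely, restricted to a component $Y_j=\{z_j=0\}$, the form $d\log z_\ell$ for $\ell\neq j$ is the honest log form on $(Y_j,\sum_{\ell\ne j}\{z_\ell=0\})$, and $d\log z_j$ is $-\sum_{\ell\neq j}d\log z_\ell$.

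The second step is to check what happens for a different log structure $M'$ of semistable type. By the sequence~\eqref{eq sequence divisorial log structure}, its local coordinates satisfy $z_\ell'=v_\ell z_\ell$ for units $v_\ell$ with $\prod v_\ell=1$, after normalising the semistable section. Then $d\log z'_\ell=d\log z_\ell+v_\ell^{-1}dv_\ell$, and $v_\ell^{-1}dv_\ell$ is a holomorphic form already in the image of $\Omega_U$. So both images are the $\sO_U$-submodule of $\Omega_Y\tensor\sK_Y$ generated by $\Omega_U$ together with the $d\log z_\ell$, and these submodules agree. This gives independence locally, hence globally.

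The main obstacle I expect is making the comparison $z'_\ell=v_\ell z_\ell$ rigorous. The two log structures are abstractly different sheaves of monoids. Nonetheless, both surject onto $\nu_*\N_{Y^\nu}$, so a lift of the generator $e_\ell$ for the component $\{z_\ell=0\}$ is, modulo $\sO^\times$, a local equation of the other branches restricted appropriately. Its image in $\sK_Y$ under $\alpha$ is determined up to units, and it is these images in $\sK_Y$ that control the meromorphic forms. I will need to check carefully that the meromorphic form attached to $d\log m$ depends only on $\alpha(m)\in\sK_Y$ via $d\alpha(m)/\alpha(m)$, using the identity $d\alpha(m)=\alpha(m)\,d\log m$ on the regular locus.
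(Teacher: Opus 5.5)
Your local description of the image of a single pair $(M,s)$ is correct. The gap is in the comparison step: the claim that normalised coordinates of two semistable structures satisfy $z'_\ell=v_\ell z_\ell$ with $\prod_\ell v_\ell=1$.

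Nothing in \eqref{eq sequence divisorial log structure} gives this. The normalisations $s=\prod_\ell e_\ell$ and $s'=\prod_\ell e'_\ell$ are made inside $M$ and $M'$ separately. The unit condition \eqref{eq kawamata namikawa unit condition} relates charts of one log atlas, not charts of two different ones. The $v_\ell$ are only determined on the branches $\{z_k=0\}$, $k\neq\ell$. Moreover, $g:=\prod_\ell v_\ell|_{Y^{\sing}}$ is a genuine invariant of the two pairs $(M,s)$ and $(M',s')$: it is the ratio of the trivialisations of $T^1_Y$ they induce.

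This invariant is exactly what the comparison needs. On the branch $\{z_\ell=0\}$ the image of $d\log z_\ell$ is not $dz_\ell/z_\ell$, which is meaningless there since $z_\ell$ is a zero divisor. It is $-\sum_{k\neq\ell}dz_k/z_k$, forced by $d\log s=0$. So on that branch $d\log z'_\ell-d\log z_\ell=-\sum_{k\neq\ell}dv_k/v_k$, and this equals $dv_\ell/v_\ell$ only if $d\log\prod_k v_k$ vanishes there. For the same reason your closing principle fails: the form attached to $d\log m$ does not depend only on $\alpha(m)$ on the branch where $\alpha(m)\equiv 0$, because there it depends on $s$.

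In fact the purely local statement your argument would prove is false. On $U=\{xy=0\}\subset\C^{n+1}$, with coordinates $x,y,z_1,\dots,z_{n-1}$, take the single log structure $M=i^*M_{(V,U)}$ with the two semistable sections $xy$ and $e^{z_1}xy$. Write elements of $\Omega_U\tensor\sK_U$ as pairs of components on $\{y=0\}$ and $\{x=0\}$. The two images are generated by the $(dz_i,dz_i)$ together with $(dx/x,-dy/y)$, respectively $(dx/x+dz_1,-dy/y)$. But $(dz_1,0)$ is not in the first module. An element of it vanishing on $\{x=0\}$ has coefficients in $(x)$, so its $dz_1$-coefficient on $\{y=0\}$ vanishes along $x=0$.

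The missing input is global: compactness of $Y$, which is implicit in Remark~\ref{remark structure morphism semistable type} and Definition~\ref{definition relative omega log}. The local functions $g$ glue to a section of $\sO^\times$ on the compact double locus $Y^{\sing}$, so they are locally constant. You can then rescale $s'$ by that constant, which does not change $\Omega_{(Y,M')/(S,L)}$. Changing each $v_\ell$ modulo $\operatorname{Ann}(z_\ell)$, you can then arrange $\prod_\ell v_\ell=1$, after which your branch-by-branch comparison goes through.

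The paper's one-line proof (``locally all such $M$ are isomorphic'') passes over the same point. A local isomorphism $M\isom M'$ identifies the two images only if it carries $s$ to a locally constant multiple of $s'$, and that again requires this compactness argument.
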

\begin{proof}
This is a direct consequence of Proposition~\ref{proposition relative log differentials into meromorphic differentials} and the definition of log structures of semistable type. The former allows to argue locally, as verifying equality of two subsheaves is a local question. The latter implies that locally, all such $M$ are isomorphic as log structures. 
\end{proof}

\begin{definition}\label{definition relative omega log}
Let $Y$ be a normal crossing variety that admits a log structure of semistable type. The sheaf of \emph{semistable relative log differentials} is defined as
\[
\Omega_{Y}(\log):=\Omega_{(Y,M)/(S,L)},
\]
where we choose some log structure $M$ of semistable type on $Y$ and a morphism $(Y,M) \to (S,L)$ of log spaces to the standard log point.
Note that the sheaf $\Omega_Y(\log)$ is well defined thanks to Remark~\ref{remark structure morphism semistable type} and Corollary~\ref{corollary independence of log differentials}.
\end{definition}

\begin{corollary}\label{corollary relative log differentials on normalization component}
Let $Y$ be a normal crossing variety admitting a log structure of semistable type. Let $\nu\colon Y^\nu\to Y$ be the normalisation and $\nu_i\colon Y_i\to Y$ the restriction of $\nu$ to an irreducible component of $Y_i \subset Y^\nu$. 
Denote by $\cond \subset Y^\nu$ the conductor and $B_i:= Y_i \cap \cond$. Then there is a canonical isomorphism
\[
\nu_i^*\Omega_{Y}(\log) \isom \Omega_{Y_i}(\log B_i).
\]
\end{corollary}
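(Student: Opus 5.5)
The plan is to construct the map locally and check that it is an isomorphism there. Since the statement is local on $Y_i$, and the morphism should be canonical, the local maps will then glue.

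\textbf{Step 1: the map.} By Definition~\ref{definition relative omega log}, choose a log structure $M$ of semistable type on $Y$ and a morphism $(Y,M)\to(S,L)$ to the standard log point. The normalisation component $\nu_i\colon Y_i\to Y$ then gives a log structure $\nu_i^*M$ on $Y_i$. We have the divisorial log structure $M_{(Y_i,B_i)}$ of Example~\ref{example divisorial logarithmic structure}, since $(Y_i,B_i)$ is log smooth: $\nu_i$ is a local embedding of a smooth branch and the conductor restricts to an snc divisor. A section of $\nu_i^{-1}M$ is locally of the form $u z_1^{n_1}\cdots z_k^{n_k}$. Its pullback to $Y_i$ either vanishes identically, if some coordinate $z_j$ vanishing on the branch $Y_i$ appears with positive exponent, or is a regular function invertible off $B_i$. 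This yields a natural morphism of log spaces and hence a map
\[
\nu_i^*\Omega_{(Y,M)/(S,L)}\to \Omega_{Y_i}(\log B_i).
\]
To be more canonical, I would instead define the map through Proposition~\ref{proposition relative log differentials into meromorphic differentials}. Log differentials embed into meromorphic differentials, and $d\log z_j$ for the coordinates $z_j$ cutting out the other branches restricts to $d\log$ of a local equation of $B_i$. The other log forms have zero residue or are killed by the relation $\sum_\ell d\log z_\ell=0$ coming from $d\log(f^\flat t)=0$.

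\textbf{Step 2: local verification.} Near a point of $Y$ choose embedding-type coordinates in which $Y=\{z_0\cdots z_r=0\}\subset V\subset\C^{n+1}$ and the distinguished section is $t\mapsto z_0\cdots z_r$; this is possible by Lemma~\ref{lemma kawamata namikawa is semistable type}. Then $\Omega_{(Y,M)/(S,L)}$ is free with generators $d\log z_0,\dots,d\log z_r,dz_{r+1},\dots,dz_n$ modulo the single relation $\sum_{\ell=0}^r d\log z_\ell=0$, so it has rank $n$. Say $Y_i$ is the branch $z_0=0$ with coordinates $z_1,\dots,z_n$. Then $B_i=\{z_1\cdots z_r=0\}$, and $\Omega_{Y_i}(\log B_i)$ is free on $d\log z_1,\dots,d\log z_r,dz_{r+1},\dots,dz_n$. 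The map sends $d\log z_\ell\mapsto d\log z_\ell$ for $\ell\ge1$, sends $dz_\ell\mapsto dz_\ell$, and sends $d\log z_0\mapsto -\sum_{\ell\ge1}d\log z_\ell$, as forced by the relation. This is a bijection on bases, so the map is an isomorphism.

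\textbf{Step 3: independence and gluing.} By Remark~\ref{remark structure morphism semistable type} and Corollary~\ref{corollary independence of log differentials}, the source does not depend on the choices. The map can be characterised intrinsically: on the dense open set $Y_i\setminus B_i$ it is the identity on $\Omega$, and both sheaves are locally free. Hence the local maps agree on overlaps and glue to a canonical isomorphism.

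The main obstacle is making the definition of the map in Step 1 rigorous when $Y$ is only normal crossing rather than snc. One has to check that the restriction of log forms to a branch is well defined independently of the local chart, which means handling the coordinate changes in Definition~\ref{definition log atlas kawamata namikawa}, including the unit condition \eqref{eq kawamata namikawa unit condition}. Characterising the map on the dense open set $Y_i\setminus B_i$ via meromorphic forms is what I would use to avoid this chart-by-chart check.
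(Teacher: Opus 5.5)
Your proposal is correct and is essentially the paper's proof. The paper also embeds $\nu_i^*\Omega_Y(\log)$ into $\Omega_{Y_i}\tensor\sK_{Y_i}$ via Proposition~\ref{proposition relative log differentials into meromorphic differentials} and identifies the image with $\Omega_{Y_i}(\log B_i)$ by a local computation, which you carry out in Step~2. Discard the first construction in Step~1: there is no morphism of log spaces $(Y_i,M_{(Y_i,B_i)})\to(Y,M)$ over $\nu_i$, because the local equation of the branch containing $Y_i$ pulls back to $0\notin M_{(Y_i,B_i)}$. The meromorphic definition you switch to is the one to use.
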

\begin{proof}
By Proposition~\ref{proposition relative log differentials into meromorphic differentials}, the sheaf $\nu_i^*\Omega_{Y}(\log)$ embeds canonically into $\Omega_{Y_i}\tensor \sK_{Y_i}$. Then the claim follows from a local computation.
\end{proof}

\begin{lemma}\label{lemma log differentials pull back}
Let $Y$ be a normal crossing variety and $p\colon Y'\to Y$ an étale morphism. If $M$ is a log structure on $Y$ of embedding or semistable type, then also $M':=p^*M$ is a log structure on $Y'$ of embedding or semistable type. Moreover, $\Omega_{Y'}(\log) = p^*\Omega_{Y}(\log)$.
\end{lemma}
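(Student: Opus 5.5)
The plan is to check each of the two claims locally, using that an étale morphism is a local isomorphism in the analytic category.

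First, $Y'$ is again a normal crossing variety. Every point $y'\in Y'$ has an open neighbourhood $U'$ that $p$ maps isomorphically onto an open subset $U\subset Y$. Hence local equations $x_1\cdots x_k=0$ for $Y$ on $U$ pull back to local equations of the same shape for $Y'$ on $U'$.

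Next, the log structure $M'=p^*M$. This is the log structure associated to the prelog structure $p^{-1}M\to p^{-1}\sO_Y\to\sO_{Y'}$. Since $p$ is étale, $p^{-1}\sO_Y\to\sO_{Y'}$ is an isomorphism, so $p^{-1}M$ is already a log structure: the condition $\alpha^{-1}(\sO^\times)\isom\sO^\times$ is local and is preserved under this identification. Therefore $M'|_{U'}\isom (p|_{U'})^*(M|_U)$.

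The embedding type property is then immediate. Given a chart $i\colon U\into V$ with $M|_U\isom i^*M_{(V,U)}$, shrink $U$ so that $p|_{U'}\colon U'\to U$ is an isomorphism. The composition $i\circ p|_{U'}$ is then a chart for $M'$ at $y'$.

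For semistable type:
\begin{itemize}
\item Since $p$ is étale, the normalisation commutes with base change, $Y'^\nu=Y^\nu\times_Y Y'$.
\item Consequently $p^{-1}\nu_*\N_{Y^\nu}\isom\nu'_*\N_{Y'^\nu}$, compatibly with the sequence \eqref{eq sequence divisorial log structure}.
\item Pulling back $\psi\colon\N_Y\to M$ gives $p^{-1}\psi\colon\N_{Y'}=p^{-1}\N_Y\to p^{-1}M=M'$.
\item The commutative triangle pulls back to the corresponding triangle on $Y'$, with $p^*(1,\ldots,1)=(1,\ldots,1)$.
\end{itemize}
The pulled-back global section of $M'$ is therefore the required one.

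For the differentials, fix a morphism $f\colon (Y,M)\to(S,L)$ given by such a section $s$. Then $f\circ p\colon (Y',M')\to(S,L)$ corresponds to $p^*s$. Sheaves of log differentials are compatible with strict étale base change: the morphism $(Y',M')\to(Y,M)$ is strict and étale, hence log étale. The standard base change property~\cite[IV]{Ogu18} then gives
\[
\Omega_{(Y',M')/(S,L)}\isom p^*\Omega_{(Y,M)/(S,L)}.
\]
Concretely, one can check this on the local isomorphisms $U'\isom U$, where both sides are given by the same generators $d\log z_i$ and $dz_j$ with the same relations. The local identifications glue because they are canonical.

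By Remark~\ref{remark structure morphism semistable type} and Definition~\ref{definition relative omega log}, the left-hand side is $\Omega_{Y'}(\log)$, independently of the choices made. This gives $\Omega_{Y'}(\log)=p^*\Omega_Y(\log)$.

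The main subtlety is the last step: we must make sure the identification does not depend on the chosen section $s$. Different choices of $s$ differ by a global unit. On $Y$ this unit is constant, but if $Y'$ is not compact and connected it need not come from a constant. We handle this by using the local description of Corollary~\ref{corollary independence of log differentials} on $Y'$: as a subsheaf of $\Omega_{Y'}\tensor\sK_{Y'}$, the sheaf $\Omega_{Y'}(\log)$ is independent of all choices. The comparison with $p^*\Omega_Y(\log)$ inside $p^*(\Omega_Y\tensor\sK_Y)=\Omega_{Y'}\tensor\sK_{Y'}$ is then a local equality of subsheaves.
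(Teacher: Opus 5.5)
Your argument is essentially the paper's. Étaleness makes $p$ a local isomorphism, which transports embedding-type charts and the semistable section $\psi$. The morphism $(Y',M')\to(Y,M)$ is strict and étale, so the relative log differentials vanish, which identifies $\Omega_{Y'}(\log)$ with $p^*\Omega_Y(\log)$. Your log-étale formulation gives the isomorphism directly; the paper's phrasing $\Omega_{(Y',M')/(Y,M)}=\Omega_{Y'/Y}=0$ gives surjectivity plus a rank count.

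Your last paragraph, however, is wrong, although the proof does not need it. You claim that, as a subsheaf of $\Omega_{Y'}\otimes\sK_{Y'}$, the log differentials are independent of \emph{all} choices. This fails once the section $s$ may change by a non-constant unit $u$, because the relation $d\log e_1+d\log e_2=0$ becomes $d\log e_1+d\log e_2+du/u=0$.

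Here is a local counterexample. Take $Y=\{xy=0\}\subset\C^3$ with branches $Y_1=\{y=0\}$, $Y_2=\{x=0\}$, and $\alpha(e_1)=x$, $\alpha(e_2)=y$.
\begin{itemize}
\item With $s=e_1e_2$, the subsheaf of $(\Omega_{Y_1}\otimes\sK_{Y_1})\times(\Omega_{Y_2}\otimes\sK_{Y_2})$ is spanned by $(dx/x,-dy/y)$ and $(dz,dz)$.
\item With $u=e^{z}$, the image of $d\log e_1$ becomes $(dx/x,-dy/y-dz)$.
\item The difference $(0,dz)$ does not lie in the first subsheaf. Any element $a(dx/x,-dy/y)+b(dz,dz)$ vanishing on $Y_1$ has $a,b\in y\sO_Y$, so its $dz$-component on $Y_2$ vanishes along $Y^\sing$.
\end{itemize}
In general the subsheaf is unchanged exactly when $u|_{Y^\sing}$ is locally constant. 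So Corollary~\ref{corollary independence of log differentials} gives independence of $M$ only once the morphism to $(S,L)$ is fixed up to locally constant units. This is why Remark~\ref{remark structure morphism semistable type} assumes compactness.

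Consequently, the non-compact case cannot be rescued this way. There $\Omega_{Y'}(\log)$ is only defined relative to a choice, and the lemma has to be read with the pulled-back data $(M',p^*s)$. That is exactly what your base-change computation already proves.

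In the cases where the lemma is actually used (finite étale covers of compact $Y$, such as the index one cover), $Y'$ is compact and its global units are locally constant. Remark~\ref{remark structure morphism semistable type} then applies on $Y'$ directly, so you should simply drop that paragraph.
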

\begin{proof}
Since $p$ is a local isomorphism, a local embedding of $Y$ as an snc divisor induces a local embedding of $Y'$ as an snc divisor. This is compatible with the pullback of log structures so that the pullback log structure $M'$ on $Y'$ is of embedding or semistable type, just as $M$ is. For the claim about differentials, it suffices to show that $\Omega_{(Y',M')/(Y,M)}=0$. Since the induced morphism of log structures is strict (i.e. $M'=p^*M$), we have $\Omega_{(Y',M')/(Y,M)}=\Omega_{Y'/Y}$. The latter vanishes since $p$ is étale.
\end{proof}

%----------------------------------------------------
%----------------------------------------------------
\section{Deformation theory}\label{section deformation theory}
%----------------------------------------------------
%----------------------------------------------------

This section provides the basic deformation theoretic background needed for our unobstructedness results in Section~\ref{section unobstructedness}. The main deformation problem we encounter consists in the deformation of morphisms. We are mainly interested in deformations of closed immersions or, slightly more generally, étale covers of closed subvarieties. This is probably well known to experts but somewhat difficult to trace in the literature.

%----------------------------------------------------
\subsection{Deformations of Morphisms}\label{section deformations of morphisms}
%----------------------------------------------------

We will first recall some general deformation problems related to morphisms. 

\begin{definition}\label{definition deformations}
Let $f:Y \to X$ be a (proper) morphism between compact complex spaces and let $(S,0)$ be a pointed complex space (that is, a complex space together with a distinguished point). A \emph{deformation of $f$} over $(S,0)$ is a diagram
\begin{equation}\label{eq deformation of morphism}
\xymatrix{
Y \ar[r]\ar[d]^f & \sY \ar[d]^F\\
X \ar[r]\ar[d] & \sX \ar[d]\\
\{0\} \ar@{^(->}[r] & S\\
}
\end{equation}
where $F:\sY \to \sX$ is a morphism, $\sY \to S$ and $\sX \to S$ are proper and flat and $f:Y\to X$ is the base change of $F:\sY \to\sX$ along the inclusion $\{0\}\hookrightarrow S$. It is called a \emph{deformation of $f$ with fixed target} over $(S,0)$ if $\sX = X\times S$ and $\sX \to S$ is the projection. 

Let $F\colon \sY\to X\times S$, $F'\colon \sY'\to X\times S$ be deformations of $f$ with fixed target over $S$. An isomorphism of deformations of $f$ with fixed target is an $S$-isomorphism $\Phi\colon \sY \to\sY'$ such that the diagram
\begin{equation}\label{eq definition isomorphism deformation with fixed target}
\xymatrix{
&Y\ar[dl]\ar[dr] & \\
\sY\ar[dr]_F \ar[rr]^\Phi && \sY'\ar[dl]^{F'}\\
&X\times S&\\
}
\end{equation}
is commutative. 
\end{definition}

\begin{notation}\label{notation deformation functors}
Let $\Art_\C$ be the category of artinian local $\C$-algebras with residue field $\C$. With the notation of Definition~\ref{definition deformations}, we denote by 
$$
D_f^\ft\colon\Art_\C \to \Set
$$
 the (set-valued) functor of deformations of $f$ with fixed target. To $A\in\Art_\C$ it associates the set $D_f^\ft(A)$ of isomorphism classes of deformations $F$ of $f$ with fixed target over $S:=\Spec(A)$.
\end{notation}

\begin{proposition}\label{proposition t1 space deformations with fixed target}
Let $X$ be a complex manifold and let $f\colon Y \to X$ a proper morphism that is étale onto a closed subvariety. For $A\in \Art_\C$, we set $T:=\Spec(A)$. Suppose that $F\colon \sY\to X\times T$ be a deformation of $f$ over $T$ with fixed target.

Let $K_F^\bullet$ be the complex $\bigl[F^*\Omega_{X\times T/T}\to \Omega_{\sY/T}\bigr]$ concentrated in degrees $-1,0$. If we denote $T':=\Spec(B)$ where $B:=A[t]/(t^2)$, then there is a canonical identification of the fibre of $D_f^\ft(B)\to D_f^\ft(A)$ over $F$ with 
\[
\mathbb E xt^1_\sY(K_F^\bullet,\sO_{\sY}).
\]
\end{proposition}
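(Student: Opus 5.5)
The plan is to reduce the statement to the standard description of first-order extensions of a deformation of a morphism with fixed target. This combines the cotangent complex of the morphism with the fact that $f$ is unramified, so the relevant complex is concentrated in two degrees. Set $\sX:=X\times T$ and $\sX':=X\times T'$. A lift of $F$ over $T'$ is a flat $T'$-space $\sY'$ with $\sY'\times_{T'}T=\sY$, together with a $T'$-morphism $F'\colon\sY'\to\sX'$ restricting to $F$. Since $T\into T'$ is a square-zero extension with ideal $tB\isom A$ (as $B$-module), flatness forces the kernel of $\sO_{\sY'}\to\sO_\sY$ to be $t\sO_{\sY'}\isom\sO_\sY$. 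So we are classifying square-zero extensions of $\sO_\sY$ by $\sO_\sY$ that are compatible with the given extension $\sO_{\sX'}\to\sO_\sX$ with kernel $\sO_\sX$. The latter extension is the trivial one, pulled back from $T'\to T$.

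First I would describe the problem locally. Since $f$ is étale onto a closed subvariety, $F$ is locally (on $\sY$) a closed immersion into $\sX$. Indeed, being unramified is detected on the closed fibre, and an unramified map from an Artinian thickening is locally a closed embedding. So locally $\sY=V(I)\subset\sX$. Lifts $F'$ then correspond to flat lifts $I'\subset\sO_{\sX'}$ of the ideal $I$. Here the conormal sequence identifies $K_F^\bullet$ locally, up to quasi-isomorphism, with $[I/I^2\to0]$ placed in degree $-1$. This is because the map $F^*\Omega_{\sX/T}\to\Omega_{\sY/T}$ is surjective with kernel $I/I^2$ at points where $\sY$ is locally embedded and $I/I^2\to F^*\Omega$ is injective. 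Injectivity fails only for singular $\sY$; to avoid needing it, I would work directly with the two-term complex rather than the conormal module. The key local computation is that lifts exist locally (by the trivial lift $\sY\times_TT'$, which is flat since $T'\to T$ has a retraction) and form a torsor under $\Hom(K_F^\bullet,\sO_\sY)$ in degree $0$. Concretely, a pair consisting of a derivation $D\colon\sO_\sY\to\sO_\sY$ over $A$ and a compatible homomorphism from $F^*\Omega_{\sX/T}$ twists the algebra structure of the trivial extension and the morphism $F'$. Local automorphisms of a lift form the degree $-1$ part, namely derivations of $\sO_\sY$ vanishing on the image of $F^*\Omega$, that is $\mathcal Hom(\Omega_{\sY/T},\sO_\sY)$ restricted to those killing pulled-back forms. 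The isomorphisms of Definition~\ref{definition deformations}, which must commute with $F'$, are exactly those.

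Second, I would globalise by the usual Čech/gerbe argument. Choose an open cover on which lifts exist. The differences of local lifts on overlaps, together with the local automorphisms, give a Čech hypercocycle in the hypercohomology of the complex of sheaves $\mathcal Hom^\bullet(K_F^\bullet,\sO_\sY)$. In low degrees this complex computes $\mathbb Ext^\bullet_\sY(K_F^\bullet,\sO_\sY)$, since $K_F^\bullet$ consists of coherent sheaves with $F^*\Omega_{\sX/T}$ locally free. I need to check that no higher local Ext of $\Omega_{\sY/T}$ intervenes in degree $1$. The spectral sequence shows the $\mathcal Ext^1(\Omega_{\sY/T},\sO_\sY)$ term would contribute to $\mathbb Ext^1$ only via a local obstruction. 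That term is exactly the local non-triviality, which vanishes because local lifts exist and are classified as above. Isomorphism classes of global lifts then correspond bijectively to $\mathbb Ext^1$. The correspondence is canonical because the trivial lift $\sY\times_TT'$ provides a base point, so the fibre is a set identified with the group rather than merely a torsor.

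The main obstacle is the local step: showing that every lift over $T'$ is locally isomorphic to the trivial one twisted by degree-$0$ data. Equivalently, $\mathcal Ext^1$-type local deformations of the singular $\sY$ (its $T^1$) must be accounted for correctly. I would handle this by insisting on the embedded description, where local lifts are flat ideal lifts. Flat lifts of $I$ over the trivial extension correspond to $\mathcal Hom(I/I^2,\sO_\sY)$, which I identify with the local degree-$0$ hom of $K_F^\bullet$ using the exact sequence $I/I^2\to F^*\Omega_{\sX/T}\to\Omega_{\sY/T}\to0$. Verifying this identification without assuming injectivity on the left is the point requiring care.
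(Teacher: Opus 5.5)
The gap is in your globalisation step, and behind it in your description of local lifts. The termwise complex $\mathcal{H}om^\bullet(K_F^\bullet,\sO_\sY)$ is $[T_{\sY/T}\to F^*T_{\sX/T}]$ in degrees $0,1$. Its differential is injective because $F^*\Omega_{\sX/T}\to\Omega_{\sY/T}$ is surjective, so its $\mathbb{H}^1$ is $H^0(F^*T_{\sX/T}/T_{\sY/T})$. That group classifies only the locally trivial lifts; this is exactly the distinction drawn in the Remark following Lemma~\ref{lemma homological nonsense}.

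This complex does not compute $\mathbb{E}xt^1_\sY(K_F^\bullet,\sO_\sY)$, because $\Omega_{\sY/T}$ is not locally free. The term you propose to discard, $\mathcal{E}xt^1(\Omega_{\sY/T},\sO_\sY)=T^1_{\sY/T}$, is not a local obstruction that vanishes because local lifts exist. It is in general nonzero (at normal crossing points it is a line bundle on the double locus), and it is precisely where the smoothing directions such as $xy=\varepsilon u$ live. Those are the directions the paper needs in Theorem~\ref{theorem normal crossing cover corank one smoothing}.

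For the same reason, the local step you call the main obstacle is false as stated. Twisting the trivial lift by a derivation and a compatible map from $F^*\Omega_{\sX/T}$ only produces $F^*T_{\sX/T}/T_{\sY/T}$. The local lifts, however, form all of $\mathcal{H}om(I/I^2,\sO_\sY)$, and there is an exact sequence $0\to F^*T_{\sX/T}/T_{\sY/T}\to\mathcal{H}om(I/I^2,\sO_\sY)\to T^1_{\sY/T}\to 0$.

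Your final paragraph points at the right local object, and the argument can be repaired. The identification must be made with the derived sheaf $\mathcal{E}xt^1(K_F^\bullet,\sO_\sY)$, not with homotopy classes of maps of complexes:
\begin{enumerate}
\item Since the differential of $K_F^\bullet$ is surjective, $K_F^\bullet\simeq\mathcal{K}[1]$ with $\mathcal{K}:=\ker(F^*\Omega_{\sX/T}\to\Omega_{\sY/T})$. Hence $\mathcal{E}xt^0(K_F^\bullet,\sO_\sY)=0$ and $\mathcal{E}xt^1(K_F^\bullet,\sO_\sY)=\mathcal{H}om(\mathcal{K},\sO_\sY)$.
\item The kernel of $I/I^2\twoheadrightarrow\mathcal{K}$ is supported over $Y^{\sing}$, since over smooth points of $Y$ the space $\sY$ is smooth over $T$.
\item $\sO_\sY$ has no nonzero local sections with nowhere dense support. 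Indeed, it is $A$-flat, so it is filtered by $\mathfrak{m}_A^k\sO_\sY$ with graded pieces that are sums of copies of the reduced $\sO_Y$.
\item Hence $\mathcal{H}om(\mathcal{K},\sO_\sY)=\mathcal{H}om(I/I^2,\sO_\sY)$; this is the verification you left open.
\item As lifts have no automorphisms, local lifts glue to a sheaf. This sheaf is trivialised by $\sY\times_T T'$ and is canonically isomorphic to $\mathcal{E}xt^1(K_F^\bullet,\sO_\sY)$.
\item The local-to-global spectral sequence then gives $\mathbb{E}xt^1_\sY(K_F^\bullet,\sO_\sY)=H^0\bigl(\mathcal{E}xt^1(K_F^\bullet,\sO_\sY)\bigr)$.
\end{enumerate}

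Repaired this way, your route (embedded local deformations plus gluing) differs from the paper's, which stays global. The paper represents a class of $\mathbb{E}xt^1$ by an extension of complexes via Lemma~\ref{lemma homological nonsense}. It builds $\sO_{\sY'}$ from the degree-$0$ extension of $\Omega_{\sY/T}$ by $\sO_\sY$ as in \cite[Theorem~1.1.10]{Ser06}, and defines $F'$ from the degree-$(-1)$ lift of $F^*$.
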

\begin{proof}
By Lemma~\ref{lemma homological nonsense} below, elements of $\mathbb E xt^1_\sY(K_F^\bullet,\sO_\sY)$ are in bijection with equivalence classes of extensions
\begin{equation}\label{eq extension of complexes for t1}
0 \to \sO_\sY \to L^\bullet \to K_F^\bullet \to 0
\end{equation}
in the abelian category of complexes in $\Coh(\sY)$. Here, $L^\bullet$ is a complex of coherent $\sO_\sY$-modules concentrated in degrees $-1,0$. In particular, the degree zero part of such an extension is an extension of $K_F^0=\Omega_{\sY/T}$ by $\sO_\sY$, and thus gives an element $\xi^0\in\Ext^1_\sY(\Omega_{\sY/T},\sO_\sY)$. We claim that the class $\xi^0$ gives a lift of $\sY\to T$ over $T'$. For $A=\C$, this is well known; see e.g.~\cite[Theorem~1.1.10]{Ser06}. Note that the reference is for schemes, but the argument works mutatis mutandis for complex spaces. We will sketch the argument for a general $A\in \Art_\C$, even though it is essentially the same as for $A=\C$.

Let us recall how this lift of $\sY$ to $T'$ is constructed. For this purpose we consider the diagram
\begin{equation}\label{eq extension of complexes in degree zero for t1}
\xymatrix{
0 \ar[r] & \sO_\sY \ar[r]^\chi\ar@{=}[d] & L^0 \ar[r]^\psi & \Omega_{\sY/T} \ar[r] & 0\\
0 \ar[r] & \sO_\sY \ar[r] & \sA \ar[r]\ar[u] & \sO_\sY \ar[r] \ar[u]_d & 0\\
}
\end{equation}
where the first row is the extension corresponding to $\xi^0$ and the second row is obtained from it by pullback along the exterior differential $d\colon\sO_\sY \to \Omega_{\sY/T}$. Recall that by definition this means that the middle term is the fibre product $\sA:=\sO_\sY \times_{\Omega_{\sY/T}} L^0$ and the morphisms in the bottom row are the projection to the first factor and the inclusion of $\sO_\sY$ via $\chi$ into the second factor.  

Denote $\varepsilon:=\bar t\in B$ the residue class of $t$. Then the sheaf $\sA$ is given a $B$-algebra structure by defining the multiplication
\[
\forall (a,e),(a',e') \in \sA = \sO_\sY \times_{\Omega_{\sY/T}} L^0: \quad (a,e)\cdot (a',e'):= (aa',ae'+a'e)
\]
and the scalar multiplication by $\varepsilon$ as $\varepsilon\cdot (a,e):=(0,\chi(a))$. Then $\sY'$ is defined to be the locally ringed space $(Y,\sA)$.

To upgrade this to a deformation $F'\colon \sY' \to X\times T'$ of $f$ over $T'$ lifting $F$, we have to take into account the degree $-1$ part of \eqref{eq extension of complexes for t1}. What it contributes is a lift $\varphi$ of $F^*$, i.e. a morphism making the diagram
\begin{equation}\label{eq degree minus one lift}
\xymatrix{
& F^*\Omega_{X\times T/T} \ar[d]^(0.45){F^*}\ar[dl]_(0.55)\varphi \\
L^0 \ar[r]_\psi & \Omega_{\sY/T} \\
}
\end{equation}
commutative. Since the continuous map underlying the sought-for $F'\colon\sY' \to X \times T'$ is determined by $f$, it remains to construct a morphism $\sO_{X\times T'}=\sO_{X\times T}[\varepsilon]\to {F'}_*\sO_{\sY'}$ of sheaves of $B$-algebras whose reduction modulo $\varepsilon$ is the given morphism $F^\sharp\colon\sO_{X\times T} \to F_*\sO_\sY$. Such a morphism is determined by a morphism 
\[
\sO_{X\times T}\to {F'}_*\sO_{\sY'} = F_*\sO_Y \times_{F_*\Omega_{\sY/T}} F_*L^0
\]
of sheaves of $A$-algebras. We obtain the latter by
\[
b \mapsto (F^\sharp(b),\varphi(db)),
\]
where we abuse notation and write $\varphi$ for the morphism obtained from $\varphi$ in~\eqref{eq degree minus one lift} by adjunction. This is well defined since $dF^\sharp(b) = F^*db = \psi(\varphi(db))$ and therefore indeed gives a deformation $F'$ of $f$ over $T'$ lifting $F$.

Conversely, let $F'\colon \sY' \to X \times T'$ be a lift of $F$. Let us write $T'=T\times S_1$ with $S_1:=\Spec(\C[t]/(t^2))$. In particular, there is a projection $T'\to T$. Then we can write $\Omega_{X\times T'/T} = \pr^*\Omega_{S_1}\oplus \Omega_{X\times T'/T'}$, where $\pr\colon X\times T' \to T$ is the projection. Next we consider the commutative diagram
\[
\xymatrix{
& & F^*\Omega_{X\times T'/T'} \tensor \sO_\sY  \ar[r]^(0.6)\isom \ar[d] & F^*\Omega_{X\times T/T}  \ar[d]& \\
0 \ar[r] & \sO_\sY \ar[r] & \Omega_{\sY'/T} \tensor \sO_\sY  \ar[r] & \Omega_{\sY/T} \ar[r]& 0,\\
}
\]
where the left vertical map is obtained by restricting the cotangent map ${F'}^*\Omega_{X\times T'/T}\to \Omega_{\sY'/T}$ and the aforementioned splitting, and the bottom row is exact by the argument in~\cite[Theorem~1.1.10]{Ser06}. This defines an extension of $K_F^\bullet$ by $\sO_Y$ in the category of complexes in $\Coh(\sY)$ and therefore an element in $\mathbb E xt^1_\sY(K_F^\bullet,\sO_\sY)$. We leave it to the reader to verify that these constructions are mutually inverse.
\end{proof}

In the special case $A=\C$, we have proven the following result.

\begin{corollary}\label{corollary tangent spaces}
Let $X$ be a complex manifold and let $f\colon Y \to X$ a proper morphism that is étale onto a closed subvariety. 

Let $K_f^\bullet$ be the complex $\bigl[f^*\Omega_X\to \Omega_Y\bigr]$ concentrated in degrees $-1,0$. Then the tangent space to the functor $D_f^\ft$ of deformations of $f$ with fixed target is given by
\[
T^1_{D_f^\ft} = \mathbb E xt^1_Y(K_f^\bullet,\sO_Y).
\]
\end{corollary}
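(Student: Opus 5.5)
This is the special case $A=\C$ of Proposition~\ref{proposition t1 space deformations with fixed target}, so the plan is to specialise that identification and check that nothing degenerates. Take $A=\C$, so $T=\Spec(\C)$ is a point. The deformation $F$ over $T$ is then just $f\colon Y\to X$ itself, and $D_f^\ft(A)$ is the one-point set $\{f\}$. Moreover $B=\C[t]/(t^2)$, so $D_f^\ft(B)$ is the set of first-order deformations, which by definition is the tangent space $T^1_{D_f^\ft}$. Since $D_f^\ft(A)$ is a single point, the fibre of $D_f^\ft(B)\to D_f^\ft(A)$ over $F$ is all of $D_f^\ft(B)$.

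Under $T=\Spec(\C)$ we have $\Omega_{X\times T/T}=\Omega_X$, $\Omega_{\sY/T}=\Omega_Y$ and $\sY=Y$. Hence the complex $K_F^\bullet$ of the proposition is exactly $K_f^\bullet=\bigl[f^*\Omega_X\to\Omega_Y\bigr]$ in degrees $-1,0$. The proposition then gives a canonical bijection $T^1_{D_f^\ft}\isom\mathbb E xt^1_Y(K_f^\bullet,\sO_Y)$.

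The one point needing care is that this bijection is linear, so that it identifies vector spaces rather than just sets. I would check that the vector space structure on $T^1_{D_f^\ft}$ comes from Schlessinger's condition, via $D_f^\ft(\C[\varepsilon]\times_\C\C[\varepsilon])\isom D_f^\ft(\C[\varepsilon])^2$. Under the bijection, this matches the Baer sum of extensions of complexes $0\to\sO_Y\to L^\bullet\to K_f^\bullet\to0$, together with scaling of the extension class. This follows as in the classical case for $\Ext^1(\Omega_Y,\sO_Y)$ in~\cite[Theorem~1.1.10]{Ser06}, because the constructions in the proof of the proposition are functorial in the extension. That compatibility is the only step beyond direct substitution, and it is routine.
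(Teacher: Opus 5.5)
Your proposal is correct and is exactly the paper's argument: the paper records the corollary as the special case $A=\C$ of Proposition~\ref{proposition t1 space deformations with fixed target}. In that case $D_f^\ft(\C)$ is a single point, $\sY=Y$, and $K_F^\bullet=K_f^\bullet$; your additional check that the bijection respects the linear structures (Schlessinger's condition versus Baer sum) is something the paper leaves implicit.
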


\begin{lemma}\label{lemma homological nonsense}
Let $\sA$ be an abelian category and let 
\begin{equation}\label{eq homological triangle}
A\to B \to C \to A[1]
\end{equation}
be an exact triangle in the bounded derived category $D^b(\sA)$. Suppose that $A$ is a complex concentrated in degree $0$ and $C$ is a complex concentrated in degrees $-1,0$. Then there is an exact sequnce $0\to A \to B' \to C\to 0$ of complexes which induces the given triangle. Here, $B'$ can be chosen to be a complex concentrated in degrees $-1,0$.
\end{lemma}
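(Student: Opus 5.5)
The plan is to work in the homotopy category of complexes, using the mapping cone construction, and to truncate so that everything lives in degrees $-1,0$. First, represent the triangle concretely. The morphism $C\to A[1]$ in $D^b(\sA)$ is an element of $\Hom_{D^b}(C,A[1])$. Since $C$ is concentrated in degrees $-1,0$ and $A$ in degree $0$, I would compute this group by a hypercohomology spectral sequence (or by filtering $C$ via its stupid truncation $C^0\to C\to C^{-1}[1]$). That gives the exact sequence
\[
\Hom(C^{-1},A)\xrightarrow{\ \delta\ }\Hom(C^0,A)\to\Hom(C,A)\to\ldots
\]
and, one step further, the description of $\Hom_{D^b}(C,A[1])$ in terms of $\Ext^1(C^0,A)$ and $\Hom(C^{-1},A)$. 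Rather than compute the group, though, I would work directly with roofs. A morphism $C\to A[1]$ is represented by a quasi-isomorphism $s\colon \widetilde C\to C$ together with a chain map $g\colon\widetilde C\to A[1]$.

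Second, construct $B'$ as a shifted cone: set $B'':=\mathrm{Cone}(g)[-1]$, which sits in a short exact sequence of complexes $0\to A\to B''\to \widetilde C\to 0$ that is termwise split. Composing with $s$ identifies the associated triangle with the given one, so by the axiom TR3 together with the five lemma on cohomology, $B''\cong B$ in $D^b(\sA)$. This proves existence but without the degree bound. To get the bound, I would choose $\widetilde C$ cleverly. Replace $\widetilde C$ by the truncation: since $A$ has a single term in degree $0$, the map $g$ only involves $\widetilde C^{1}\to A$ and the condition $g\circ d=0$ on $\widetilde C^0$. I would then apply the good truncations $\tau_{\ge -1}$ and $\tau_{\le 0}$ to $\widetilde C$, which remain quasi-isomorphic to $C$ because $C$ has cohomology only in degrees $-1,0$.

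Third, push out to get honest short exact sequences in degrees $-1,0$. The class of $g$ produces an extension in degree $1$ of $\widetilde C$; the standard trick is to pull back along $d\colon \widetilde C^{0}\to \widetilde C^{1}$. Set $E:=\ker(\widetilde C^1\to\widetilde C^2)$, which receives $\widetilde C^0$. The map $g|_E\colon E\to A$ defines, via the pullback of $0\to A\to ?$, a Yoneda extension $0\to A\to (B')^0\to C^0\to 0$ once we pass from $\widetilde C^0/\mathrm{im}$ to $C^0$. Concretely, I would take $(B')^0:=\big(A\oplus \widetilde C^0\big)/\{(g(d x),-dx)\ldots\}$ and adapt the cone's degree $0$ term modulo the image of $\widetilde C^{-1}$. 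I would also set $(B')^{-1}:=C^{-1}$, lifted via the commutativity $s\circ d$. An alternative way to phrase this step, which I would try first, is to note that $\Hom_{D}(C,A[1])$ is exactly the group of equivalence classes of extensions $0\to A\to B'\to C\to 0$ of complexes with $B'$ in degrees $-1,0$. Equivalently, this is $\Ext^1$ in the abelian category of two-term complexes, computed via the exact sequence relating $\Hom(C^{-1},A)$, $\Ext^1(C^0,A)$, and so on. One then checks that the map from extension classes to $\Hom_D(C,A[1])$ (via cone) is bijective by comparing both long exact sequences arising from the stupid filtration, using the five lemma.

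The main obstacle will be that last comparison. It requires verifying that the natural map from two-term-complex extensions to $\Hom_D(C,A[1])$ is compatible with the connecting maps in the two long exact sequences, since that compatibility is what lets the five lemma apply. Surjectivity there gives the required $B'$. The identification of the triangle then follows because the triangle determined by a short exact sequence of complexes has as its third morphism precisely the class of the extension.
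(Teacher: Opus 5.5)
Your preferred route (the one you say you would try first) is correct. It rests on the same ingredient as the paper: the long exact sequence obtained by applying $\Hom(-,A[1])$ to the stupid filtration $0\to C^0\to C\to C^{-1}[1]\to 0$. What differs is the packaging.

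The paper works directly. It sends $\delta$ to $f(\delta)\in\Ext^1(C^0,A)$, uses $g(f(\delta))=0$ to lift the differential to some $d_E\colon C^{-1}\to E$, and takes $B'=[C^{-1}\xrightarrow{d_E}E]$. This is one step of the diagram chase behind your argument, made explicit, and it is short.

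You instead compare the five-term sequence for Yoneda extensions in the category of complexes, $\Hom(C^0,A)\xrightarrow{d^*}\Hom(C^{-1},A)\to\Ext^1_{\mathrm{Ch}}(C,A)\to\Ext^1(C^0,A)\xrightarrow{d^*}\Ext^1(C^{-1},A)$, with the corresponding sequence around $\Hom_{D^b}(C,A[1])$, and apply the five lemma. This buys two things.
\begin{enumerate}
\item The extension you obtain has connecting morphism $\delta$ itself. The paper only matches $f(\delta)$, and leaves implicit that $d_E$ must be corrected by a suitable $\phi\in\Hom(C^{-1},A)$, since $\ker f$ is the image of $\Hom(C^{-1},A)$.
\item You also get injectivity, i.e.\ a bijection between $\Hom_{D^b}(C,A[1])$ and equivalence classes of extensions. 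That bijection is what the paper actually invokes in Proposition~\ref{proposition t1 space deformations with fixed target}.
\end{enumerate}

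The compatibility you single out as the main obstacle is routine. Naturality of connecting morphisms under pullback along $C^0[0]\to C$, together with the standard identification of Yoneda classes with connecting morphisms, gives the square at $\Ext^1(C^0,A)$. An extension with $B'^0=A\oplus C^0$ and differential $(\phi,d)$ has connecting morphism $C\to C^{-1}[1]\xrightarrow{\phi[1]}A[1]$ up to sign, which gives the square at $\Hom(C^{-1},A)$. The signs do not affect the five lemma.

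Two corrections to the rest of the proposal.
\begin{enumerate}
\item The sequence in your first step has its arrows reversed. It should read $0\to\Hom(C,A)\to\Hom(C^0,A)\xrightarrow{d^*}\Hom(C^{-1},A)\to\Hom_{D^b}(C,A[1])\to\Ext^1(C^0,A)\to\Ext^1(C^{-1},A)$.
\item Drop the roof-and-cone strand. Since $(A[1])^n=A^{n+1}$, the complex $A[1]$ sits in degree $-1$. So $g$ amounts to a map $\widetilde C^{-1}\to A$ killing $d(\widetilde C^{-2})$, not a map $\widetilde C^{1}\to A$. In any case, a shifted cone only produces an extension of the replacement $\widetilde C$. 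That cannot simply be transported to an extension of $C$ along the quasi-isomorphism $s\colon\widetilde C\to C$, because pullback of extensions goes the other way.
\end{enumerate}
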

\begin{proof}
First, we see that $B$ may be represented by a $2$-term complex in $\sA$: just replace $B$ by $\tau_{\geq -1}\tau_{\leq 0} B$ where $\tau$ denotes the truncation for the standard $t$-structure; see also~\cite[Exercise~2.31]{Huy06}. Denoting the individual terms of a complex $K$ by $K^i$ for $i\in\Z$, we may consider the short exact sequence $0\to C^0\to C \to C^{-1}[1]\to 0$ in the category of chain complexes in $\sA$. Then we get a long exact sequence
\[
\xymatrix{
\ldots \ar[r] & \Hom_\sD(C,A[1]) \ar[r]^f & \Hom_\sD(C^0,A[1]) \ar@{=}[d]\ar[r]^g & \Hom_\sD(C^{-1}[1],A[2]) \ar@{=}[d]\\
& & \Ext_\sA^1(C^0,A^0) \ar[r] & \Ext^1_\sA(C^{-1},A^0)\\
}
\]
where $\sD:=D^b(\sA)$. The triangle \eqref{eq homological triangle} gives rise to a connecting homomorphism $\delta \in \Hom_\sD(C,A[1])$. Then $f(\delta)$ corresponds to an extension
\[
0 \to A^0 \to E \to C^0 \to 0.
\]
Since $g(f(\delta))=0$, this the pullback extension along the differential $d$ of $C$ splits. In particular, there is a  lift of $d$ to $E$ as in the following diagram.
\[
\xymatrix{
C^{-1} \ar@{-->}[d]_{\exists d_E} \ar@{=}[r]& C^{-1} \ar[d]^d\\
E \ar[r] & C^0\\
}
\]
We define $B'$ to be the left column.
\end{proof}

\begin{remark}
For consistency, the reader may compare Corollary~\ref{corollary tangent spaces} to \cite[Theorem~3.4.8]{Ser06}. The former treats arbitrary deformations with fixed target, the latter locally trivial deformations in the sense of~\cite[3.4.2]{Ser06}. The theorem proves that the tangent space for locally trivial deformations is $\mathbb H^1(Y,(K_f^\bullet)^*)$, which is naturally a subspace of $\mathbb E xt^1_Y(K_f^\bullet,\sO_Y)$ via the local to global spectral sequence for $\Ext$ groups of $K_f^\bullet$ into $\sO_Y$.
\end{remark}

%----------------------------------------------------
\subsection{Automorphisms}\label{section automorphisms}
%----------------------------------------------------

We show next the existence of a universal deformation for deformations with fixed target. It is a consequence of the absence of automorphisms. 

\begin{proposition}\label{proposition universal deformation}
Let $X$ be a complex manifold and let $f\colon Y \to X$ a proper morphism that is étale onto a closed subvariety. Then the functor $D_f^\ft$ has a formal universal deformation.
\end{proposition}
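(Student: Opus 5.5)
We will apply Schlessinger's criterion to $D_f^\ft$. Conditions (H1)--(H3) yield a hull, and (H4) upgrades the hull to a (pro-representing) universal deformation. Since the paper says universality "is a consequence of the absence of automorphisms", the decisive input will be that infinitesimal automorphisms of deformations with fixed target vanish. In other words, the group of automorphisms of a deformation $F\colon \sY\to X\times T$ restricting to the identity on $Y$ (and commuting with $F$) is trivial.

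\emph{Step 1: (H1) and (H2).} Take a small extension $A'\to A$ and any $A''\to A$ in $\Art_\C$, with deformations $F'$ over $A'$ and $F''$ over $A''$ both restricting to $F$ over $A$. We form $\sY'\times_{\sY}\sY''$ as the ringed space $(Y,\sO_{\sY'}\times_{\sO_\sY}\sO_{\sY''})$. The maps $F',F''$ glue to a morphism into $X\times\Spec(A'\times_A A'')$, since the target structure sheaf is the corresponding fibre product. Flatness over $A'\times_A A''$ is the standard Schlessinger argument, and the same construction works verbatim for complex spaces. This gives surjectivity of
\[
D_f^\ft(A'\times_AA'')\to D_f^\ft(A')\times_{D_f^\ft(A)}D_f^\ft(A''),
\]
and, once Step 3 is established, bijectivity.

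\emph{Step 2: (H3).} By Corollary~\ref{corollary tangent spaces}, the tangent space is $\mathbb E xt^1_Y(K_f^\bullet,\sO_Y)$. This space is finite-dimensional: $Y$ is compact, the complex $K_f^\bullet$ has coherent terms, and the hypercohomology spectral sequence expresses the group in terms of finite-dimensional groups $H^p(Y,\mathcal Ext^q)$.

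\emph{Step 3: No infinitesimal automorphisms (the main point).} An automorphism $\Phi$ of a deformation $F$ over $T'=\Spec(B)$ that is the identity modulo the kernel $I$ of a small extension is given by a derivation $\sO_{\sY}\to I\otimes\sO_Y$. It is therefore a section $\theta\in H^0(Y,T_Y)\otimes I$ satisfying $F\circ\Phi=F$. The condition $F\circ\Phi = F$ means that $\theta$ composed with $df\colon f^*\Omega_X\to\Omega_Y$ vanishes, i.e.\ $\theta$ lies in
\[
\Hom(\Omega_Y,\sO_Y)\cap\ker\bigl(\Hom(\Omega_Y,\sO_Y)\to\Hom(f^*\Omega_X,\sO_Y)\bigr)=\mathbb E xt^0_Y(K_f^\bullet,\sO_Y).
\]
Because $f$ is étale onto its image and $X$ is smooth, the map $f^*\Omega_X\to\Omega_Y$ is surjective: locally $f$ is a closed immersion. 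Hence $\Hom(\Omega_Y,\sO_Y)\to\Hom(f^*\Omega_X,\sO_Y)$ is injective and $\mathbb E xt^0=0$. The same local argument applies to $F$ over any Artinian base, since $F$ is locally a closed immersion into $X\times T$ by deformation invariance of unramifiedness. It shows that $F^*\Omega_{X\times T/T}\to\Omega_{\sY/T}$ is surjective, so an automorphism commuting with $F$ is determined by $F$ and must be the identity; this even holds directly, without induction. Consequently every deformation has only trivial automorphisms, which gives injectivity in Step 1 and (H4).

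We expect Step 3 to be the main obstacle. One must check carefully that the étale-onto-image hypothesis persists for $F$ over Artinian bases, which holds because unramifiedness is an open and infinitesimal condition, so that surjectivity of the relative cotangent map holds. Conclusion: $D_f^\ft$ is pro-representable, i.e.\ it admits a formal universal deformation.
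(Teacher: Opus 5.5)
Your proposal is correct and follows essentially the paper's route. Both use Schlessinger's criterion, and both deduce (H4), together with the injectivity half of the fibre-product condition, from the triviality of automorphisms of deformations with fixed target. The paper proves that triviality exactly by your ``direct'' argument: $F$ is locally a closed immersion, so $F^\sharp\colon \sO_{X\times S,F(y)}\to\sO_{\sY,y}$ is surjective, which forces $\Phi^\sharp=\id$. The only differences are that the paper obtains the hull by citing the proof of Sernesi's Theorem~3.4.8 rather than checking (H1)--(H3) by hand, and that your $\mathbb E xt^0$ computation of infinitesimal automorphisms is superfluous once the direct argument is in place.
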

\begin{proof}
Let $A\in \Art_\C$ and $S:=\Spec(A)$. Let $F:\sY \to X\times S$ be a deformation of $f$ over $S$. We will show first that $F$ does not have any nontrivial automorphisms. Suppose that $\Phi\colon \sY \to \sY$ is an automorphism of $F$, that is, an $S$-morphism making the diagram 
\[
\xymatrix{
&Y\ar[dl]\ar[dr] & \\
\sY\ar[dr]_F \ar[rr]^\Phi && \sY\ar[dl]^{F}\\
&X\times S&\\
}
\]
commute; see \eqref{eq definition isomorphism deformation with fixed target}. In particular, $\Phi$ restricts to $Y$ as the identity. On the level of topological spaces, $\Phi$ is therefore the identity as well. It suffices to show that $\Phi^*\colon \sO_{\sY,y}\to\sO_{\sY,y}$ is the identity for every point $y\in \sY$. Being an automorphism of a deformation with fixed target means that the diagram
\[
\xymatrix{
&\sO_{X\times S, F(y)} \ar[dl]_{F^*} \ar[dr]^{F^*}&\\
\sO_{\sY,y} \ar[rr]^{\Phi^*}&&\sO_{\sY,y} \\
}
\]
commutes. The stalks are to be understood in the analytic topology, so $F^*$ is surjective and it follows that $\Phi^*$ must be the identity.

For the existence of a universal deformation, we first note that the existence of a semiuniversal deformation is shown literally as in the proof of~\cite[Theorem~3.4.8]{Ser06}. The universality claim now essentially follows from the argument in~\cite[Theorem~2.6.1]{Ser06}. We sketch it here for convenience. 
To show that a semiuniversal formal element for $D_f^\ft$ is universal, it suffices to show by~\cite[Theorem~2.11]{Sch68} that the following condition is satisfied. Whenever $A'\to A$ is a small extension\footnote{That is, its kernel is annihilated by the maximal ideal $\mathfrak m'$ of $A'$.} in $\Art_\C$, then the canonical morphism 
\begin{equation}\label{eq schlessinger condition}
D_f^\ft(A'\times_A A') \to D_f^\ft(A')\times_{D_f^\ft(A)} D_f^\ft(A')
\end{equation}
is injective. Note that Schlessinger requires this morphism to be an isomorphism, but the surjectivity already follows from the existence of a semiuniversal formal element: given deformations $F_i\colon \sY_i \to X \times S'$ for $i=1,2$ of $f$ over $S':=\Spec(A')$ that are isomorphic over $S:=\Spec(A)$ in the sense of~\eqref{eq definition isomorphism deformation with fixed target}, one can always form the cofibre product deformation $\widetilde F \colon \widetilde\sY \to \widetilde S$, where $\widetilde S:=\Spec(A'\times_AA')$. This $\widetilde F$ is a preimage of $(F_1,F_2)$ under the morphism \eqref{eq schlessinger condition}. Suppose that $\widetilde F'\colon \widetilde \sY' \to X\times \widetilde S$ is another preimage. Then we obtain an automorphism of $F:=F_1\times_{A'}A=F_2\times_{A'}A$ as the composition of isomorphisms
\[
\sY\isom \sY_1\times_{S'}S \isom \widetilde \sY' \times_{\widetilde S} S \isom \sY_2\times_{S'}S \isom \sY
\]
over $X\times S$. By the first part of the proof, this isomorphism is necessarily the identity. Hence, in the diagram
\[
\xymatrix@R=5mm{
&\sY_1\ar[dr]\ar@/^5mm/[drr]&&\\
\sY \ar[ru]\ar[rd] && \widetilde \sY \ar@{-->}[r]^{\exists}& \widetilde\sY'\\
& \sY_2\ar[ru]\ar@/_5mm/[rru]&&\
}
\]
of $X\times \widetilde S$-morphisms, the outer diagram is commutative. By the universal property of the cofibre product, we obtain the  morphism $\widetilde \sY \to \widetilde\sY'$. Since it restricts to the identity over $\Spec(\C)$, it is an isomorphism by flatness.
\end{proof}

Let $f\colon Y \to X$ be a proper morphism between compact complex spaces with an equivariant $G$-action.
We show that this induces an action on the universal deformation with fixed target.

\begin{proposition}\label{proposition group action}
Let $X$ be a complex manifold and let $f\colon Y \to X$ a proper morphism that is étale onto a closed subvariety. Let
\begin{equation}\label{eq universal deformation induced action}
\xymatrix{
\sY \ar[dr]_\pi\ar[r]^F & X \times S \ar[d]^{\pr_2} \\
& S
}
\end{equation}
be the universal deformation of $f$, either in the category of formal complex spaces or of germs of complex spaces. Let $0\in S$ be the distinguished point such that the fibre of \eqref{eq universal deformation induced action} over it is $f$. 
Let $G\subset \Aut(Y)$ be a group of holomorphic automorphisms of $Y$ such that $f$ is $G$-invariant, i.e. $f\circ g=f$ for every $g\in G$. Then there is a canonical action of $G$ on $\sY$ and on $S$ extending the action on $Y=\sY_0$ such that $\pi$ is equivariant.  
\end{proposition}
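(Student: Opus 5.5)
The plan is to transport deformations along $G$ and then use universality, i.e.\ Proposition~\ref{proposition universal deformation}, to turn this into an action on the base. For $g\in G$ and any deformation $F\colon \sZ\to X\times T$ of $f$ with fixed target over $T=\Spec(A)$, with marking $i\colon Y\hookrightarrow \sZ$, we define a new deformation $g\cdot F$. It has the same total space $\sZ$ and the same morphism $F$, but the marking is replaced by $i\circ g^{-1}$. Since $f\circ g^{-1}=f$, the diagram in Definition~\ref{definition deformations} still commutes, so $g\cdot F$ is again a deformation of $f$ with fixed target.

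Next we check that this is a group action on $D_f^\ft$. An isomorphism of deformations $\Phi$ commutes with the markings $i,i'$, hence also with $i\circ g^{-1}$ and $i'\circ g^{-1}$. So the construction descends to isomorphism classes. It is functorial in $A$ because base change commutes with changing the marking. Moreover $(gh)\cdot F=g\cdot(h\cdot F)$ and $1\cdot F=F$. In this way we obtain an action of $G$ on the functor $D_f^\ft$ by natural automorphisms.

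Now let $S$ be the (formal or germ) base of the universal deformation and $h_S$ the functor it represents. By universality $D_f^\ft\isom h_S$, so each $g$ induces an automorphism of $h_S$, and by Yoneda this is an automorphism $\sigma_g$ of $S$ fixing $0$. The assignment $g\mapsto\sigma_g$ is a group homomorphism by functoriality. For the total space, the deformation $g\cdot(\pi,F)$ is the pullback of the universal family along $\sigma_g$. Hence there is a unique isomorphism
\[
\tilde g\colon \sY\to \sY
\]
over $X\times S$ covering $\sigma_g$ and compatible with the markings, meaning $\tilde g\circ i = i\circ g$. Uniqueness holds because deformations with fixed target have no nontrivial automorphisms, as shown in the first part of the proof of Proposition~\ref{proposition universal deformation}. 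Uniqueness also gives $\widetilde{gh}=\tilde g\tilde h$ and $\tilde 1=\id$. We therefore get a $G$-action on $\sY$ that extends the action on $Y=\sY_0$, commutes with $F$, and makes $\pi$ equivariant.

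The main obstacle is the passage from formal to germ level. Over Artin rings the argument above is pure Yoneda. For germs we need the convergent universal deformation to be universal for analytic germs as well, which follows from the same no-automorphism argument combined with the existence of a convergent semiuniversal deformation, as in~\cite[Theorem~3.4.8]{Ser06} and Grauert's theorem. Canonicity then follows from the uniqueness of all the isomorphisms involved.
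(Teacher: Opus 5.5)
Your argument is correct and is the paper's proof. The paper also re-marks the universal family by $\iota\circ g^{-1}$ and obtains $\varphi_g$ and $\Phi_g$ from universality; these are your $\sigma_g$ and $\tilde g$, with $\Phi_g\circ\iota=\iota\circ g$. It then gets the homomorphism property from uniqueness, so phrasing the re-marking as a $G$-action on $D_f^{\ft}$ plus Yoneda is only a cosmetic difference.

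Two small remarks. First, universality in the germ category is part of the hypothesis, so your last paragraph is unnecessary. Second, $\tilde g$ satisfies $F\circ\tilde g=(\id_X\times\sigma_g)\circ F$ rather than lying literally over $X\times S$: only the $X$-component of $F$ is $G$-invariant.
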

\begin{proof}
By Definition~\ref{definition deformations}, the identification $Y \isom \sY_0$ of the central fibre is part of the datum of a deformation. Let us denote $\iota\colon Y\to \sY$ the resulting inclusion. For $g\in G$, we consider the deformation
\[
\xymatrix{
Y \ar[d]\ar[r]^{\iota\circ g^{-1}} & \sY \ar[d]^\pi\\
\{0\} \ar[r] & S\\
}
\]
of $Y$. By universality, there are unique isomorphisms
\[
\varphi_g\colon S \to S \quad \textrm{ and } \quad \Phi_g\colon \sY \xrightarrow{\isom} \sY\times_{(S,\varphi_g)}S\to \sY
\]
such that $\pi \circ \Phi_g = \varphi_g \circ \pi$. We will show next that 
\[
\Phi\colon G \to \Aut(\sY), \quad g\mapsto \Phi_g \quad \textrm{and}\quad 
\varphi\colon G \to \Aut(S), \quad g\mapsto \varphi_g
\]
are homomorphisms. For $\Phi$, this follows from the commutativity of the diagram
\[
\xymatrix{
&Y\ar[dl]_{\iota \circ g^{-1}\circ h^{-1}}\ar[d]|{\iota \circ h^{-1}}\ar[dr]^\iota & \\
\sY \ar[r]_{\Phi_g}\ar@/_1cm/[rr]_{\Phi_{hg}} & \sY \ar[r]_{\Phi_h} & \sY\\
}
\]
for any $h,g\in G$. The claim for $\varphi$ follows in the same way. 
\end{proof}

Note that even though $f$ is invariant for the action of $G$ on $Y$, it is not necessarily true that the universal deformation $F$ is invariant for the induced action of $G$.

%----------------------------------------------------
\subsection{\texorpdfstring{$T^1$}{T1}-lifting principle}\label{section t1 lifting}
%----------------------------------------------------

The $T^1$-lifting principle due to Kawamata--Ran \cite{Ran92,Kaw92t1,Kaw97err} allows to prove unobstructedness results even if the obstruction spaces are nonzero. Recall Kawamata's definition of $T^1$-spaces from~\cite{Kaw92t1}. Let $D$ be a deformation functor, which for simplicity we assume to be prorepresentable; see~\cite{Kaw97err} for a more general statement. Let $A\in \Art_\C$ and put $B:=A[\varepsilon]$ the ring of dual numbers with coefficients in $A$. We denote by $\varphi:B \to A$ the canonical projection. For a deformation $\xi \in D(A)$ we define 
\begin{equation}\label{eq kawamata t1}
    T_D^1(\xi/A):=\{ \xi' \in D(B) \mid D(\varphi)(\xi')=\xi\}.
\end{equation}
If $D$ is clear from the context, we will write $T^1(\xi/A)$ instead of $T_D^1(\xi/A)$. We abbreviate $A_n:=\C[t]/(t^{n+1})$. The $T^1$-lifting criterion of Kawamata--Ran says the following. For a given deformation $\xi_n \in D(A_n)$, we denote $\xi_{n-1}\in D(A_{n-1})$ the image of $\xi_n$ under the map induced by the canonical projection $A_n\to A_{n-1}$. Then $D$ is unobstructed if for all $n \in \N$ and all $\xi_n\in D(A_n)$ the morphism
\begin{equation}\label{eq kawamata t1 lifting}
\xymatrix{
T^1(\xi_{n}/A_{n}) \ar[r] &  T^1(\xi_{n-1}/A_{n-1})\\
}
\end{equation}
is surjective; see~\cite[Theorem~1.1]{Kaw92t1}. As a next step, we identify the $T^1$-spaces for the functors of interest to us.

\begin{corollary}\label{corollary kawamata t1 for fixed target}
Let $f\colon Y \to X$ be a proper morphism from a reduced compact complex space to a compact complex manifold. For $A\in \Art_\C$ let $S:=\Spec(A)$, and let
\[
\xymatrix{
&\sY\ar[r]^F\ar[ddr] & X\times S\ar[dd]\\
Y\ar[ru]^\iota\ar[r]^f\ar[dr] & X\ar[d]\ar[ur]&\\
& \{0\} \ar@^{^(->}[r] & S\\
}
\]
be a deformation of $f$. Let $K_F^\bullet$ be the complex $\bigl[F^*\Omega_{X\times S/S}\to \Omega_{\sY/S}\bigr]$ concentrated in degrees $-1,0$. Then we have
\[
T^1_{D_f^\ft}(F/A)=\mathbb Ext^1_Y(K_F^\bullet,\sO_\sY).
\]
\end{corollary}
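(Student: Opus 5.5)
This is a direct consequence of Proposition~\ref{proposition t1 space deformations with fixed target}, applied with the identifications that define Kawamata's $T^1$-spaces. By definition \eqref{eq kawamata t1}, $T^1_{D_f^\ft}(F/A)$ is the fibre of $D_f^\ft(A[\varepsilon])\to D_f^\ft(A)$ over the class of $F$. With $B:=A[\varepsilon]=A[t]/(t^2)$ and $T'=\Spec(B)$, this fibre is exactly the set described in Proposition~\ref{proposition t1 space deformations with fixed target}. That proposition identifies it canonically with $\mathbb Ext^1_\sY(K_F^\bullet,\sO_\sY)$, where the complex is $K_F^\bullet=[F^*\Omega_{X\times S/S}\to\Omega_{\sY/S}]$.

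The plan has three steps.

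First, check that the hypotheses match. The proposition is stated for $f$ proper and étale onto a closed subvariety, with $X$ a complex manifold. In the setting of interest $f$ is indeed such a morphism, so I would read the corollary under the standing hypotheses of this subsection. The étaleness is used in the proposition only to make the comparison of $\Omega_{\sY'/T}$ with the extension work as in~\cite[Theorem~1.1.10]{Ser06}. If one wants the statement for arbitrary reduced $Y$, the argument there goes through verbatim: the construction of the lift from an extension class and its converse use only flatness of $\sY\to S$ and the splitting $\Omega_{X\times T'/T}=\pr^*\Omega_{S_1}\oplus\Omega_{X\times T'/T'}$.

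Second, note that $D(\varphi)(\xi')=\xi$ has to be read on isomorphism classes. Here Proposition~\ref{proposition universal deformation} is needed: deformations with fixed target have no nontrivial automorphisms. Hence the fibre over the class of $F$ is the same as the set of isomorphism classes of lifts of the fixed deformation $F$. These are precisely the objects classified in Proposition~\ref{proposition t1 space deformations with fixed target}.

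Third, interpret the notation. The $\mathbb Ext^1$ is the hyperext of the complex $K_F^\bullet$ on $\sY$ into $\sO_\sY$. I read the subscript $Y$ in the statement as referring to the underlying space $|\sY|=|Y|$. Lemma~\ref{lemma homological nonsense} guarantees that classes in this group are represented by extensions of two-term complexes, as used in the proposition.

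The only point needing care, and the main obstacle, is the second step. One must make sure that the bijection of Proposition~\ref{proposition t1 space deformations with fixed target} is between the fibre of isomorphism classes and the Ext group, and not between lifts of a representative and the Ext group. The rigidity argument from the proof of Proposition~\ref{proposition universal deformation} settles this: any isomorphism of lifts restricting to the identity on $\sY$ is the identity, because $F^*$ is surjective on analytic stalks. So the two notions coincide.
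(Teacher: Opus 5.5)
Your proposal is correct and follows the paper. The paper proves the corollary in one line, as a reformulation of Proposition~\ref{proposition t1 space deformations with fixed target}: Kawamata's $T^1_{D_f^\ft}(F/A)$ is by definition the fibre of $D_f^\ft(A[\varepsilon])\to D_f^\ft(A)$ over $F$, and that proposition identifies this fibre with $\mathbb Ext^1_\sY(K_F^\bullet,\sO_\sY)$.

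One caution concerns your aside that the argument extends verbatim to arbitrary reduced $Y$, which does not fit with your own second step. The rigidity you borrow from Proposition~\ref{proposition universal deformation} needs $F^*$ to be surjective on analytic stalks, i.e.\ $f$ must locally be an embedding. So you should keep the standing hypothesis that $f$ is étale onto a closed subvariety.
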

\begin{proof}
This is a reformulation of Proposition~\ref{proposition t1 space deformations with fixed target}.
\end{proof}

%----------------------------------------------------
%----------------------------------------------------
\section{The case when \texorpdfstring{$D$}{D} is not uniruled}
%----------------------------------------------------
%----------------------------------------------------

In this section we give a description of the divisor $D$ and its singularities if $D$ is not uniruled.
We start with a structure statement for non-projective K\"ahler threefolds, then show how to apply this in the hyperk\"ahler setting.

\begin{proposition} \label{prop:normalisation0}
Let $D$ be a compact Kähler Gorenstein variety of dimension three which is not uniruled, and let $\holom{\nu}{D^\nu}{D}$ be the normalisation of $D$. Assume that the canonical bundle $K_D$ is nef with $\nd(K_D)=1$, and assume that $K_D$ has degree zero on all curves on $D$. Then
\begin{enumerate}[\normalfont (a)]
\item the variety $D^\nu$ is not Moishezon;
\item the divisor $\nu^* K_D$ is semiample and has degree zero on all curves on $D^\nu$.
\end{enumerate}
\end{proposition}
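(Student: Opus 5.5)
Part (a) is handled first, and it follows directly from Lemma~\ref{lemma:notmoishezon}. Set $\sL := \nu^*\sO_D(K_D)$ on $D^\nu$. Because $K_D$ is nef with $\nd(K_D)=1$, the class $c_1(K_D)$ is nonzero, so $c_1(\sL)\neq 0$ as well: pull back along the finite surjective map $\nu$ and use Remark~\ref{remarknumericaldimension}, i.e. intersect with $\omega^{2}$ for a Kähler class $\omega$. The bundle $\sL$ is nef, hence pseudoeffective. For any curve $C\subseteq D^\nu$ the projection formula gives $\sL\cdot C = K_D\cdot \nu_*C = 0$. This already proves the second assertion of (b), and Lemma~\ref{lemma:notmoishezon} then shows that $D^\nu$ is not Moishezon.

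For semiampleness, rewrite $\sL$ via Proposition~\ref{normalisation-formula}: $\nu^*K_D \sim K_{D^\nu}+\cond$ with $\cond\geq 0$ the conductor. The plan is to run an MMP on a resolution and end up in the situation of Theorem~\ref{thm:3fold-abundance} or Lemma~\ref{lemma:dim3}. Let $\mu\colon D'\to D^\nu$ be a log resolution of $(D^\nu,\cond)$ and write
\[
K_{D'}+\cond' = \mu^*(K_{D^\nu}+\cond) + E,
\]
where $\cond'$ is the strict transform of $\cond$ plus exceptional divisors with coefficient $\le 1$, and $E\geq0$ is exceptional. Since $D$ is not uniruled, $D'$ is not uniruled either, so $K_{D'}$ is pseudoeffective. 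Running the $(K_{D'}+\cond')$-MMP over $D^\nu$ (Kähler threefold MMP) contracts $E$ by the negativity lemma. The resulting model $(D'',\cond'')$ is dlt, and $K_{D''}+\cond''$ is the pullback of $\sL$, hence nef of degree zero on all curves. By Lemma~\ref{lemma:invariance-degree-zero-MMP}, semiampleness of $\sL$ is equivalent to semiampleness of this model.

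The remaining step is abundance for the dlt pair $(D'',\cond'')$ with $K+\cond''\equiv$ a degree-zero nef class of numerical dimension one. This is not directly covered by Theorem~\ref{thm:3fold-abundance}, which is stated for klt pairs. I would try to reduce to klt as follows. First use non-uniruledness: $K_{D''}$ is pseudoeffective. Then show $\cond''=0$, or at least $\kappa\geq0$, as follows. Pair against a Kähler class, or use that $\cond''$ restricted to a general curve has degree zero while $K_{D''}$ is pseudoeffective. This should force $\nu^*K_D$ to be effective up to $\Q$-linear equivalence, $\kappa(D^\nu,\sL)\geq 0$, as in Proposition~\ref{prop:KS-pseff}. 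Once $\kappa\ge0$ and the degree-zero property hold, pick an effective $N\sim_\Q \sL$ and set $K_{D''}+(1-\varepsilon)\cond''+\varepsilon' N$ klt. Then Theorem~\ref{thm:3fold-abundance}, in the form of Lemma~\ref{lemma:dim3}'s trick, yields semiampleness.

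I expect the main obstacle to be exactly this non-vanishing and klt reduction in the presence of the reduced boundary $\cond''$. If the direct argument fails, the fallback is adjunction to the components of $\cond''$. These are surfaces on which $\sL$ has numerical dimension at most one, so they are handled by Proposition~\ref{proposition:generalised-abundance-surface}. The resulting sections would then be extended, or one would pass to the algebraic reduction, which by non-Moishezonness and $\nd=1$ should be a fibration onto a curve, and conclude with Proposition~\ref{prop:semiample+fibration}.
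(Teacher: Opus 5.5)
Your argument for (a) and for the degree-zero assertion in (b) is the paper's: projection formula plus Lemma~\ref{lemma:notmoishezon}. The semiampleness part, however, has a genuine gap, and it lies where you suspected.

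\textbf{Problems with the dlt route.}
\begin{enumerate}
\item The dlt model does not exist in general. Your construction presupposes that $(D^\nu,\cond)$ is log canonical, but the conductor of a Gorenstein variety need not be reduced. Along a cuspidal double surface (locally $x^2=y^3$ times a smooth factor) it has coefficient $2$; compare the case $\cond^h=2\cond_1$ in Lemma~\ref{lemma:lagrangian3}. Then the strict transform of $\cond$ has coefficients $>1$, and exceptional divisors of discrepancy $<-1$ occur. So either $\cond'$ is not a boundary or your $E$ is not effective.
\item Pseudoeffectivity of $K_{D'}$, or pairing with a Kähler class, produces no sections. You need an actual nonvanishing theorem.
\item Suppose a dlt model existed and you had an effective $N\sim_\Q\sL''$. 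The perturbation $K_{D''}+(1-\varepsilon)\cond''+\varepsilon'N\sim_\Q(1+\varepsilon')\sL''-\varepsilon\cond''$ has degree $-\varepsilon\,\cond''\cdot C$ on every curve $C$. It is therefore not nef once some curve meets $\cond''$ without lying in it. Hence Theorem~\ref{thm:3fold-abundance} does not apply, and the trick of Lemma~\ref{lemma:dim3} really needs $K\sim_\Q 0$.
\item The fallbacks (extending from components of $\cond''$, or the algebraic reduction) are not developed and essentially presuppose the conclusion.
\end{enumerate}

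\textbf{The missing idea: never carry the conductor as a boundary.} Take a resolution $f\colon W\to D^\nu$. For any coefficients of $\cond$ one has $K_W+E\sim_\Q f^*\nu^*K_D+G$, with $E\geq 0$ and $G\geq 0$ $f$-exceptional. Since $W$ is not uniruled, nonvanishing gives $\kappa(W,K_W)\geq 0$. Hence
\[
\kappa(W,f^*\sL)=\kappa\big(W,f^*\sL\otimes\sO_W(G)\big)=\kappa(W,K_W+E)\geq\kappa(W,K_W)\geq 0,
\]
and you never need $\cond''=0$. Now split on $\kappa(W,K_W)$:
\begin{itemize}
\item If $\kappa(W,K_W)\geq 1$, then $\kappa(W,f^*\sL)=\nd(f^*\sL)=1$. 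By \cite[Proposition~2.14]{Nak87}, on a modification $f^*\sL$ is the pullback of a big, hence ample, $\Q$-line bundle on a curve, so it is semiample.
\item If $\kappa(W,K_W)=0$, run the $K_W$-MMP without boundary to a minimal model with $K\sim_\Q 0$. Because $f^*\sL$ has degree zero on \emph{every} curve, it descends through each step with nefness, $\kappa\geq 0$ and semiampleness preserved (Lemma~\ref{lemma:invariance-degree-zero-MMP}). Lemma~\ref{lemma:dim3} then concludes.
\end{itemize}
This is where the degree-zero hypothesis does its real work; your proposal never uses it in the MMP.
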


\begin{proof}
By the projection formula, we have $ \nu^* K_D \cdot C=K_D \cdot \nu_* C=0 $ for every curve $C$ on $ D^\nu$. By Lemma~\ref{lemma:notmoishezon}, this implies (a) and the second statement in (b).

In the remainder of the proof, we show the first statement in (b).

Let $f\colon W\to D^\nu$ be a resolution of $D^\nu$, and set $\sL:=f^* \nu^* \sO_D(K_D)$. It suffices to show that $\sL$ is semiample. Note that $\sL$ is a nef line bundle with $\nd(\sL)=1$. By the projection formula and by the assumption of the proposition we have $ \sL\cdot C=K_D \cdot \nu_*f_* C=0 $ for every curve $C$ on $W$.

There exist effective $f$-exceptional $\Q$-divisors $F$ and $G$ on $W$ such that
$K_W+F\sim_\Q f^*K_{D^\nu}+G$, and by Proposition~\ref{normalisation-formula} there exists an effective Weil divisor $\Xi$ on $D^\nu$ such that $K_{D^\nu}\sim\nu^*K_D-\Xi$. Setting $E:=F+f^*\Xi\geq0$, we conclude from the last two formulas that
\begin{equation}\label{eq:656}
\sO_W(K_W+E)\isom_\Q \sL\otimes\sO_W(G).
\end{equation}
Note that 
$$\kappa\big(W,\sL\otimes\sO_W(G)\big)=\kappa(W,f^* \nu^* K_D+G)=\kappa(W,f^* \nu^* K_D)=\kappa(W,\sL)$$
since $G$ is effective and $f$-exceptional. Since $W$ is not uniruled, we have $\kappa(W,K_W)\geq0$, which together with \eqref{eq:656} gives
\begin{equation}\label{eq:656a}
\kappa(W,\sL)=\kappa\big(W,\sL\otimes\sO_W(G)\big)=\kappa(W,K_W+E)\geq\kappa(W,K_W)\geq0.
\end{equation}

Assume first that $\kappa(W,K_W)\geq1$. Then $\kappa(W,\sL)\geq1$ by \eqref{eq:656a}, and since $\kappa(W,\sL)\leq\nd(\sL)=1$, we conclude that $\kappa(W,\sL)=\nd(\sL)=1$. By~\cite[Proposition 2.14]{Nak87}, there exists a bimeromorphic morphism $\mu\colon Y\to W$, a fibration $g\colon Y\to Z$ to a projective variety $Z$, and a big and nef $\Q$-line bundle $\mathcal M$ on $Z$ such that
    \begin{equation}\label{eq:9000}
    \mu^*\sL\isom_\Q g^*\mathcal M.
	\end{equation}
	In particular, we have
    $$ \dim Z=\kappa(Z,\mathcal M)=\kappa(Y,g^*\mathcal M)=\kappa(Y,\mu^*\sL)=\kappa(W,\sL)=1, $$
    hence $Z$ is a projective curve. Consequently, $\mathcal M$ is ample, which implies that $\sL$ is semiample by \eqref{eq:9000}.

Finally, assume that $\kappa(W,K_W)=0$. By~\cite[Theorem~1.1]{HP16} and Theorem~\ref{thm:3fold-abundance}, there exists a $K_W$-MMP $ \theta\colon W \dashrightarrow W_{\min} $ which terminates with a minimal model $W_{\min}$ such that $K_{W_{\min}}\sim_\Q0$. 
Then by Lemma~\ref{lemma:invariance-degree-zero-MMP} and by the first paragraph of the proof, there exists a line bundle $\sL_{\min}$ on $W_{\min}$ with the following properties:
\begin{enumerate}[\normalfont (i)]
\item $\sL_{\min}$ is nef by Lemmas~\ref{lemma:invariance-degree-zero} and~\ref{lemma:invariance-degree-zero-MMP}~(b),
\item $\kappa(W_{\min},\sL_{\min})\geq0$ by Lemma~\ref{lemma:invariance-degree-zero-MMP}~(c) and by \eqref{eq:656a},
\item $\sL$ is semiample if and only if $\sL_{\min}$ is semiample by Lemma~\ref{lemma:invariance-degree-zero-MMP}~(d).
\end{enumerate}
Then the line bundle $\sL_{\min}$ is semiample by (i), (ii) and Lemma~\ref{lemma:dim3}, hence $\sL$ is semiample by (iii).
\end{proof}

\begin{lemma} \label{lemma:lagrangian}
Let $X$ be a hyperkähler fourfold. Assume that there exists a non-uniruled prime divisor $D$ on $X$, let $\holom{\nu}{D^\nu}{D}$ be its normalisation, and assume that $D^\nu$ is not Moishezon. Assume that $\nu^*K_D$ is semiample and that it defines the Iitaka fibration $\varphi\colon D^\nu\to \Gamma$ to a projective curve $\Gamma$. Let $F \subseteq D^\nu$ be a general fibre of $\varphi$. Then the following statements hold.
\begin{enumerate}[\normalfont (a)]
\item Let $ \holom{j}{F}{D^\nu}$ be the inclusion, and denote $\holom{i:=\nu \circ j}{F}{X}$. Then $i(F) \subseteq X$ is a Lagrangian surface.
\item The variety $F$ is a smooth projective $K$-trivial surface.
\item The morphism $\varphi$ does not admit a multisection.
\item By Proposition~\ref{normalisation-formula}, there exists an effective divisor $\Xi$ on $D^\nu$ such that $K_{D^\nu} \sim \nu^* K_D - \cond$. Then $\cond\cap F=\emptyset$.
\end{enumerate}
\end{lemma}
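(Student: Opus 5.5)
The plan is to prove (c) first, since the other statements depend on it. If $\varphi$ admitted a multisection, then $D^\nu$ would be Moishezon by Corollary~\ref{corollary:no-multisection}, provided the general fibre $F$ is Moishezon. So we first show that $F$ is Moishezon. Write $\sigma$ for the symplectic form on $X$. Since $D$ is a divisor, $\sigma|_{D}$ has rank two generically, and the kernel of $\sigma$ on $T_{D^\reg}$ is the characteristic foliation. Now $\nu^*K_D$ is semiample with Iitaka fibration $\varphi$, and $\nu^*K_D$ has degree zero on all curves. Both $\nu^*K_D|_F$ and the normal bundle $N_{F/D^\nu}$ are trivial, and $\nu^*K_D=\varphi^*(\text{ample})$.

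For (a) we use a degree argument. The restriction $\nu^*\sigma|_{D^\nu}$ is a holomorphic $2$-form of generic rank $2$ on the (smooth locus of the) threefold. Its restriction to $F$ is a holomorphic $2$-form on the surface $F$. Consider the natural map $T_F\to T_{D^\nu}|_F\to N_{F/D^\nu}\cong\sO_F$. The characteristic foliation is tangent to the fibres of $\varphi$: contracting $\sigma$ with a fibre-transverse direction gives a nonzero form, and we would argue via $K_D|_F$ trivial, using $\sigma^{\wedge 2}$ and adjunction $K_D=K_X+D|_D=D|_D$. Then $\sigma|_F=0$ pointwise generically, since a rank-$2$ form on a $3$-space whose kernel lies in the $2$-plane $T_F$ restricts to zero on $T_F$. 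Hence $i(F)$ is Lagrangian.

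A cleaner route to (a) is cohomological. Since $[F]$ is a fibre of the Iitaka fibration, $i_*[F]$ is proportional to $D\cdot D=c_1(L)^2$ in $H^4(X)$. Then $\int_F \sigma\wedge\bar\sigma$ is proportional to $\int_X\sigma\bar\sigma\, c_1(L)^2 = c\, q(\sigma,\bar\sigma)q(L)+\dots=0$ by the Fujiki relations, because $q(L)=0$ and $q(\sigma,L)=0$. Positivity of $\sigma\wedge\bar\sigma$ on $F$ then forces $\sigma|_F=0$. I would use this route.

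For (b), the general fibre $F$ is normal and, away from $\cond$, it is a general fibre of a smooth morphism on the smooth locus. Adjunction gives $K_F=(K_{D^\nu}+F)|_F=K_{D^\nu}|_F=(\nu^*K_D-\cond)|_F$, with $\nu^*K_D|_F$ trivial. So $-K_F\sim\cond|_F\ge0$, while $F$ is not uniruled because $D$ is not. By Proposition~\ref{prop:KS-pseff}, $K_F\cdot\omega\ge0$, which forces $\cond|_F=0$ and $K_F\sim_\Q0$ with canonical singularities. This simultaneously gives (d), since $\cond\cap F$ would be a curve (or empty) on a general fibre. Smoothness of the general fibre is generic smoothness applied to the resolution. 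Since $D^\nu$ is normal its singular locus is a curve, which meets a general fibre in finitely many points, and canonical singularities there must be excluded. I expect this to be the main obstacle, and I would handle it by observing that $F$ is Lagrangian in $X$ and $\nu$ is an isomorphism near $F$ by (d). Then $F$ is locally a fibre of a map $D\to\Gamma$, where $D$ is a hypersurface; singular points of general fibres come only from $D^\sing$, which meets $F$ only in $\cond$, and this is empty.

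For projectivity, $F$ is a $K$-trivial surface. To rule out non-projective tori and K3 surfaces, use that $L|_F$ is trivial while $F$ is Lagrangian. I would try to show that $F$ has algebraic dimension $2$ via the deformations of $F$: by Voisin these form a smooth $2$-dimensional family, covering $D$ along $\Gamma$. If $F$ were non-algebraic, one would get a contradiction with the curve-theoretic triviality. Alternatively, one can use $h^0(N_{F/X})=h^0(\Omega_F)$ and the fact that the deformations stay inside a one-parameter family, forcing $h^{1,0}(F)=1$ or else producing the fibration. This step is delicate, and I would first try the cohomological argument $i_*[F]\propto c_1(L)^2$, which makes $\int_F\omega^2$ rational for suitable classes and yields a rational Kähler-type class on $F$. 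With $F$ Moishezon and Kähler, it is projective, and (c) then follows as described at the start.
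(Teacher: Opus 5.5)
Your argument for (a), via $i_*[F]=\lambda\,c_1(\sO_X(D))^2$ and the vanishing of $\int_X c_1(D)^2\wedge\sigma\wedge\bar\sigma$, is the paper's. So is your derivation of (d) from $K_F\equiv-\Xi|_F$ and the non-uniruledness of $F$.

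\textbf{Projectivity of $F$.} This is the step you flag as delicate, and it is a genuine gap: you never show that $F$ is Moishezon or projective, yet (c) and the projectivity in (b) depend on it. The route you say you would try first does not work. The class $i_*[F]$ is integral for any subvariety, so its proportionality to $c_1(L)^2$ gives no information. Moreover $X$ is non-projective (since $D^\nu$ is not Moishezon), so it carries no rational Kähler class to restrict to $F$.

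The missing idea is that (a) already gives projectivity, by \cite[Theorem~1.1]{Leh16}, which is what the paper uses. The reason is as follows.
\begin{itemize}
\item Since $i^*\sigma=0$ and $H^{2,0}(X)=\mathbb{C}\sigma$, we get $i^*H^2(X,\mathbb{R})=i^*H^{1,1}(X,\mathbb{R})$.
\item Choose $\beta\in H^2(X,\mathbb{Q})$ close to a Kähler class. Its $(1,1)$-part $\beta^{1,1}$ is then Kähler on $X$.
\item Hence $i^*\beta=i^*\beta^{1,1}$ is a rational class that is nef and big on a resolution of $F$.
\end{itemize}
So $F$ is Moishezon, which is all Corollary~\ref{corollary:no-multisection} requires, and (c) follows as you intended.

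\textbf{Smoothness of $F$.} Your argument here is also flawed. You claim that $D_{\mathrm{sing}}$ meets $\nu(F)$ only along $\nu(\Xi)$. But $D_{\mathrm{sing}}$ also contains the images of singular points of the normal threefold $D^\nu$. These form a set of dimension at most one, which a priori could contain a curve dominating $\Gamma$. Such a curve would meet every fibre in isolated (for instance canonical) singular points, and (d) says nothing about it.

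This is exactly where (c) enters. Since $\varphi$ has no multisection, $(D^\nu)_{\mathrm{sing}}$ lies in finitely many fibres. A general $F$ is therefore contained in the smooth locus of $D^\nu$, and is smooth by generic smoothness.

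The logical order is therefore: (a), then Moishezon/projective, then (c), then smoothness in (b).
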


\begin{proof}
We prove the lemma in several steps.

\medskip

\emph{Step 1.}
We set $\sL:=\sO_X(D)$, and note that $\sL$ is nef with $q_X(\sL,\sL)=0$. In this step we show that there exists $\lambda>0$ such that
$$ i_*[F] = \lambda c_1(\sL)^2. $$

Indeed, note that $\sL|_D=\sO_D(K_D)$ by the adjunction formula. Since $\varphi$ is the Iitaka fibration associated to $\nu^*\sL|_D$ and since $\Gamma$ is a curve, both $\nu^*\sL|_D$ and $F$ are proportional to the pullback of a point on $\Gamma$. Hence, there exists $\lambda>0$ such that in $H^2(D^\nu, \R)$ we have $ j_*[F] = \lambda c_1(\nu^*\sL|_D) $. Therefore, for an arbitrary class $\eta \in H^4(X, \R)$ we obtain by the projection formula:
\begin{align*}
(i_* [F] \cdot \eta)_X &= (j_*[F] \cdot \nu^* \eta)_{D^\nu} = (\lambda \nu^* c_1(\sL|_D) \cdot \nu^* \eta)_{D^\nu}\\
& = (\lambda c_1(\sL) \cdot \eta \cdot D)_X = (\lambda c_1(\sL)^2 \cdot \eta)_X.
\end{align*}
We conclude by the Poincar\'e duality.

\medskip

\emph{Step 2.}
In this step we prove (a).

Let $\sigma$ be a holomorphic symplectic form on $X$. By~\cite[Corollary~5.2]{HLL25} we have
$$ \int_X  c_1(\sL)^2  \wedge \sigma \wedge \overline \sigma = 0. $$
Assume, for contradiction, that $i^* \sigma\neq0$. Since $F$ is a surface, the form $i^* \sigma \wedge i^* \overline \sigma$ is a volume form on $F$ away from a set of measure zero, and thus, by Step 1 we have
$$ \lambda\int_X  c_1(\sL)^2  \wedge \sigma \wedge \overline \sigma=\int_X i_*[F] \wedge \sigma \wedge \overline \sigma = \int_F i^* \sigma \wedge i^* \overline \sigma > 0. $$
But this contradicts Step 2.

\medskip

\emph{Step 3.}
In this step we show (b) and (c).

Since $i(F) \subseteq X$ is a Lagrangian surface by (a), we have that $i(F)$ is projective by~\cite[Theorem~1.1]{Leh16}. Since $i$ is finite onto its image, this shows that $F$ is projective. Thus, since $D^\nu$ is not Moishezon, we conclude by Corollary~\ref{corollary:no-multisection} that $\varphi$ does not admit a multisection, which proves (c). In particular, $D^\nu_{\sing}$ is not a multisection, hence $F$ avoids $D^\nu_{\sing}$. Thus, by generic smoothness, $F$ is smooth. Moreover, $F$ is $K$-trivial since it is a fibre of the Iitaka fibration $\varphi$. This shows (b).

\medskip

\emph{Step 5.}
Finally, in this step we show (d).

Since $F$ is a fibre of $\varphi$, we have $(\nu^* K_D)|_F\equiv0$, hence
\begin{equation} \label{eqnKF}
K_F \sim K_{D^\nu}|_F \sim (\nu^* K_D)|_F - \cond|_F \equiv {-} \cond|_F.
\end{equation}
Since $D$ is not uniruled, neither is $F$. Therefore, $K_F$ is pseudoeffective, hence ${-} \cond|_F$ is pseudoeffective by \eqref{eqnKF}. This is only possible if $\cond|_F=0$, which gives (d).
\end{proof}

\begin{proposition} \label{prop:non-uniruled}
Let $X$ be a hyperkähler fourfold. Assume that there exists a non-uniruled prime divisor $D$ on $X$. Assume that the canonical bundle $K_D$ is nef with $\nd(K_D)=1$, and assume that $K_D$ has degree zero on all curves in $D$. Then $K_D$ is semiample and $\kappa(D,K_D)=1$.

If $\holom{\psi}{D}{T}$ is the Iitaka fibration associated to $K_D$, then a general fibre of $\psi$ is abelian or bielliptic.
\end{proposition}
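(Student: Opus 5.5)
The plan is to first verify that $D$ satisfies the hypotheses of Proposition~\ref{prop:normalisation0}. This gives semiampleness of $\nu^*K_D$ on the normalisation. We then pass from $D^\nu$ back to $D$ using Proposition~\ref{prop:semiample-nonnormal}, and read off the fibres from Lemma~\ref{lemma:lagrangian} together with Proposition~\ref{prop:KS-trivial}.

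\emph{Step 1: the normalisation.} Since $D\subset X$ is a Cartier divisor in a smooth fourfold, it is Gorenstein, compact, Kähler and three-dimensional. By assumption it is not uniruled, and $K_D$ is nef with $\nd(K_D)=1$ and degree zero on curves. Proposition~\ref{prop:normalisation0} therefore applies: $D^\nu$ is not Moishezon, and $\nu^*K_D$ is semiample with degree zero on curves. Since $\nd(\nu^*K_D)=1$ (use $\nu$ finite and pull back a Kähler class), we get $\kappa(D^\nu,\nu^*K_D)=1$. Hence $\nu^*K_D$ defines an Iitaka fibration $\varphi\colon D^\nu\to\Gamma$ onto a projective curve, and $\nu^*K_D\sim_\Q \varphi^*A$ with $A$ ample.

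\emph{Step 2: the general fibre.} Lemma~\ref{lemma:lagrangian} now applies. Its general fibre $F$ is a smooth projective $K$-trivial surface, $\varphi$ has no multisection, and $\cond\cap F=\emptyset$. Because $F$ is disjoint from the conductor, $\nu$ is an isomorphism near $F$, so $\nu(F)\isom F$ is a Lagrangian surface of $X$.

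To identify $F$, note that $F$ is not uniruled since $D$ is not. We also need that $F$ contains no rational curves. A rational curve in a general fibre would deform with the fibres, and hence produce a family of rational curves covering a divisor of $D^\nu$ or dominating $\Gamma$. I expect this to contradict $D^\nu$ being non-Moishezon or non-uniruled, though this is the step I am least sure of. An alternative route is to exclude K3 and Enriques surfaces directly: a Lagrangian K3 or Enriques surface would be rigid, as a Lagrangian with $h^0(N)=h^0(\Omega_F)=0$, so it could not move in a one-parameter family sweeping out $D$. With either argument, $K_F\equiv0$ makes Proposition~\ref{prop:KS-trivial} applicable, so $F$ is abelian or bielliptic.

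\emph{Step 3: descending to $D$.} For general $t\in\Gamma$, set $\widetilde F:=\varphi^*t$. The curve $\Gamma$ is smooth at general points. Every analytic subset of $D^\nu$ not dominating $\Gamma$ lies in finitely many fibres, since $\varphi$ has no multisection. In particular this holds for $\cond$ and for $\nu^{-1}(D_\sing)$. So $\nu$ is an isomorphism near $\widetilde F$, and $F_D:=\nu(\widetilde F)$ is a reduced Cartier divisor on $D$ contained in the smooth locus. Proposition~\ref{prop:semiample-nonnormal}, applied with $\pi=\nu$ and $\psi=\varphi$, then shows that $F_D$ is semiample with $\kappa(D,F_D)=1$. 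The map it defines, $\psi\colon D\to T$, has general fibre $F_D\isom F$.

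It remains to relate $F_D$ to $K_D$. Both $\nu^*K_D$ and $\nu^*F_D$ are $\Q$-multiples of the fibre class pulled back from $\Gamma$. The difference $P:=K_D-rF_D$ is therefore cohomologically trivial on the fibres of $\psi$, and $\nu^*P$ is $\Q$-trivial, so in particular $\kappa(D^\nu,\nu^*P)\ge0$. To conclude that $K_D\sim_\Q rF_D$, the cleanest route is to apply Proposition~\ref{prop:semiample+fibration} to $K_D$ itself. Here $\kappa(D^\nu,\nu^*K_D)=1>0$, $\nu$ is an isomorphism over a Zariski open subset of $T$, and $K_D$ is cohomologically trivial over $T$ because $\nu^*K_D$ is. That proposition gives directly that $K_D$ is semiample with $\kappa(D,K_D)=1$, and its Iitaka fibration is $\psi$, whose general fibre is abelian or bielliptic.

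The main obstacle is excluding rational curves on $F$ in Step 2.
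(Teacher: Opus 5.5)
Your argument is correct, and in the decisive step it takes a different route from the paper.

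\textbf{The paper's route.} To rule out K3 and Enriques fibres, the paper shows that $F$ contains no rational curve. By Ran's theorem such a curve $C$ moves in an at least two-dimensional family in $X$. The locus $V$ it sweeps out cannot be a divisor (Lemma~\ref{lemma-unique-divisor}). It must lie in $D$, because $D$ has degree zero on every curve of $X$ (Corollary~\ref{cor:allzero}). It is then contracted by the Iitaka fibration, which forces the uniruled $V$ to coincide with the non-uniruled $F$. Proposition~\ref{prop:KS-trivial} then finishes.

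\textbf{Your route.} Your rigidity argument is complete on its own and in fact bypasses Proposition~\ref{prop:KS-trivial}. Since $\nu$ is an isomorphism near the smooth fibre $F$, you have $\sO_F\isom N_{F/D}\hookrightarrow N_{F/X}\isom\Omega_F$, so $q(F)\geq 1$. Since $K_F\equiv 0$, $F$ is minimal (a $(-1)$-curve $E$ would have $K_F\cdot E=-1$). The Enriques--Kodaira classification then leaves only abelian and bielliptic surfaces. So the ``main obstacle'' you name is already resolved, and the speculative first route can be dropped. Your sentence that either argument makes Proposition~\ref{prop:KS-trivial} applicable should be replaced by this direct appeal to the classification, since the rigidity route does not produce the no-rational-curves hypothesis.

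\textbf{Comparison.} Your route is local to $F$ and uses only the stated hypotheses. The paper's argument relies on Lemma~\ref{lemma-unique-divisor} and Corollary~\ref{cor:allzero}, which need $\pic(X)\otimes\Q$ of rank one. That holds where the proposition is applied, but it is not assumed in the proposition. In exchange, the paper's route produces the no-rational-curves hypothesis in the form Proposition~\ref{prop:KS-trivial} requires.

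Your Step~3 applies Proposition~\ref{prop:semiample+fibration} to $K_D$ over the fibration defined by $F_D$. This makes explicit the passage from semiampleness of $F_D$ to that of $K_D$, which the paper leaves implicit here. The paper carries out the same passage in the proof of Theorem~\ref{theorem:HK-divisor-abundance}.
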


\begin{proof}
Let $\holom{\nu}{D^\nu}{D}$ be the normalisation of $D$. Then by Proposition~\ref{prop:normalisation0} the variety $D^\nu$ is not Moishezon, and the divisor $\nu^* K_D$ is semiample. Therefore, $\nu^* K_D$ defines the Iitaka fibration $\varphi\colon D^\nu\to \Gamma$ to a projective curve $\Gamma$, and let $F^\nu \subseteq D^\nu$ be a general fibre of $\varphi$. By Lemma~\ref{lemma:lagrangian}(d), $F^\nu$ is contained in the locus where $\nu$ is an isomorphism. Thus, by Proposition~\ref{prop:semiample-nonnormal}, the variety $F:=\nu(F^\nu)$ is a semiample divisor in $D$ with $\kappa(D,F)=1$.

Let $\holom{\psi}{D}{T}$ be the Iitaka fibration associated to $K_D$. 
By Lemma~\ref{lemma:lagrangian}(d) the non-normal locus is disjoint
from a general $\psi$-fibre, so by Lemma~\ref{lemma:lagrangian}(b)  the general fibre is a smooth projective $K$-trivial surface.

Let us now prove that the general $\phi$-fibre is abelian or bielliptic. By Proposition~\ref{prop:KS-trivial} it is sufficient to show that $F$ contains no rational curves.

Arguing by contradiction assume that $F$ contains a rational curve $C$. Then $C \subseteq D \subseteq X$ is a rational curve, and hence deforms in a family of dimension at least two in $X$~\cite[Corollary~4]{Ran93}. Choose an irreducible deformation family $\sK \subseteq \Rat{X}$ of $C$, and let $V \subseteq X$ be the closure of the locus covered by the deformations. Then $\dim V \geq 2$ and $V$ is uniruled.

If $\dim V=3$, it is a divisor and therefore $D=V$ by Lemma~\ref{lemma-unique-divisor}, a contradiction to our assumption. Thus, we have $\dim V=2$. Let $\overline{\sK}$ be the closure of the image of $\sK$ in the cycle space. Then every point of $\overline{\sK}$ parametrises a connected cycle of rational curves. Since  $\dim \overline{\sK}=2$ and $\dim V=2$, we obtain that for every point $v \in V$, the family $\overline{\sK}_v$ of cycles passing through $v$ has dimension at least one. 

The intersection $V \cap D$ is not empty by construction, let us show that $V \subseteq D$. Otherwise the intersection $V \cap D$ has dimension one, so for a point $v \in V \cap D$ not all the cycles parametrised by $\overline{\sK}_v$ are contained in $V \cap D$. Thus, we can find a curve in $\overline{\sK}_v$ that passes through a point of $V \cap D$ but is not contained in $D$. Thus, we have $D \cdot C > 0$, a contradiction to Corollary~\ref{cor:allzero}.

Thus, we have $V \subseteq D$. Since $C \subseteq F \subseteq D$ is a rational curve in a general fibre we know that $K_D \cdot C=0$. Since $V$ is connected by deformations of $C$ we deduce that $V$ is contracted by the Iitaka fibration on $D$, so we have $V=F$. Yet $V$ is uniruled and $F$ is not, a contradiction.
\end{proof}

%----------------------------------------------------
%----------------------------------------------------
\section{The case when \texorpdfstring{$D$}{D} is uniruled}\label{section:uniruled}
%----------------------------------------------------
%----------------------------------------------------

In this section we give a description of the divisor $D$ and its singularities along a certain non-algebraic surface.
We start with a structure statement for non-projective K\"ahler threefolds, then show how to apply this in the hyperk\"ahler setting.
We then move on to a number of technical statements that prepare the main arguments in Section~\ref{section:extending} and~\ref{section:descending}.

\begin{proposition} \label{prop:normalisation}
Let $D$ be a compact uniruled Kähler Gorenstein variety of dimension three, and let $\holom{\nu}{D^\nu}{D}$ be the normalisation of $D$. Assume that the canonical bundle $K_D$ is nef with $\nd(K_D)=1$, and assume that $K_D$ has degree zero on all curves on $D$. Let $\pi\colon D^\nu\dashrightarrow S$ be the MRC fibration of $D^\nu$ with $S$ smooth. Let $\holom{g}{D'}{D^\nu}$ be a resolution of indeterminacies of $\pi$ which does not alter the locus where $\pi$ is a morphism and such that $D'$ is normal, and let $\pi'\colon D'\to S$ be the induced morphism. Then the following statements hold.
\begin{enumerate}[\normalfont (a)]
\item The variety $D^\nu$ is not Moishezon.
\item The variety $S$ is a surface with $a(S)=1$.
\item There exists a nef line bundle $\sL_S$ on $S$, which has degree zero on all curves on $S$, such that $g^*\nu^*\sO_D(K_D)\isom(\pi')^*\sL_S$.
\item There exists a proper elliptic fibration $\alpha\colon S\to \Gamma$ onto a projective curve $\Gamma$ such that the composition $\varphi:=\alpha\circ\pi$ is a morphism, and there exists a Zariski open subset $\Curv_\circ \subseteq \Curv$ such that, if we set $D^\nu_\circ := \fibre{\varphi}{\Curv_\circ}$ and $S_\circ:=\alpha^{-1}(\Gamma_\circ)$, we have the proper fibration $ \holom{\pi_\circ}{D^\nu_\circ}{S_\circ}$ which is a $\PP^1$-bundle.
$$\xymatrix{
D' \ar[r]^g \ar[d]_{\pi'} & D^\nu  \ar@{-->}[dl]_\pi \ar[d]^\varphi \ar[r]^\nu & D \\
S \ar[r]_\alpha & \Gamma &}$$
\item The divisor $\nu^* K_D$ is cohomologically trivial over $\Gamma$.
\end{enumerate}
\end{proposition}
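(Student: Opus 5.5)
Part (a) follows exactly as in Proposition~\ref{prop:normalisation0}. By the projection formula $\nu^*K_D\cdot C=0$ for every curve $C\subseteq D^\nu$. Since $\nd(K_D)=1$, the class $\nu^*K_D$ is not cohomologically trivial: its pullback to a resolution has numerical dimension one. Lemma~\ref{lemma:notmoishezon} then shows that $D^\nu$ is not Moishezon.

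For (b), take a resolution of $D^\nu$. It is a uniruled, non-Moishezon compact Kähler threefold, so Proposition~\ref{proposition:MRC} shows that the MRC base $S$ is a non-Moishezon surface. Hence $a(S)\le 1$. I would show $a(S)=1$ in two steps.

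\emph{Step 1: descent of $\sL$.} Set $\sL:=g^*\nu^*\sO_D(K_D)$. It has degree zero on all curves of $D'$, in particular on the rational curves in the fibres of $\pi'$. Since the general $\pi'$-fibre is $\PP^1$, $\sL$ is trivial on it. A $\PP^1$-fibration version of Lemma~\ref{lemma:zariski} then gives $\sL\isom(\pi')^*\sL_S$. Concretely: $\pi'_*\sL$ is reflexive of rank one on the smooth surface $S$, hence a line bundle. The evaluation map differs from an isomorphism by a $\pi'$-exceptional effective divisor. That divisor must vanish, because $\sL$ is numerically trivial on every contracted curve and a nonzero effective exceptional divisor is negative on some contracted curve.

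The resulting $\sL_S$ is nef, has degree zero on curves of $S$ (every curve in $S$ is dominated by a curve in $D'$), and has $\nd(\sL_S)=1$. This proves (c).

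\emph{Step 2: $a(S)\neq 0$.} Suppose $a(S)=0$. I would use that $D$ is a divisor in the hyperkähler setting, where $\kappa\ge0$ is available. In the abstract setting, though, one needs to derive a contradiction from the existence of a nef class of $\nd=1$ that is zero on all curves. For a surface with $a=0$ the minimal model is K3 or a torus without curves. There I would argue that the Gorenstein adjunction $K_{D^\nu}=\nu^*K_D-\cond$ forces the MRC fibres to be $K$-negative with $\cond$ meeting them in two points. This should make $\cond$ a bisection (or two sections) of $\pi$. A surface in $D^\nu$ dominating $S$ is then generically finite over $S$, and Lemma~\ref{lemma:alg-dimension} relates its algebraic dimension to $a(S)$. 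Exploiting the fact that $\cond$ maps to the double locus of $D$ to produce a nonconstant meromorphic function is where I expect the main obstacle of the whole proof to lie. The structure of $D$ as a nonnormal Gorenstein threefold is needed here.

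Once $a(S)=1$, Corollary~\ref{corollary:alg-dim-one} supplies an elliptic fibration $\alpha\colon S\to\Gamma$ with no multisection. Thus the indeterminacy points of $\pi$ map into finitely many fibres, and $\varphi=\alpha\circ\pi$ extends to a morphism: it is defined off a finite set in codimension $\ge2$ of normal $D^\nu$, and rigidity applies since $g$-fibres are chain connected by curves and lie in $\alpha$-fibres. Over a Zariski open $\Gamma_\circ$ that avoids the images of the indeterminacy locus and of the singular and nonreduced MRC fibres, $\pi_\circ$ is a proper smooth morphism with $\PP^1$ fibres, hence a $\PP^1$-bundle. This gives (d).

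Finally, for (e), Lemma~\ref{lemma:factor-action} applied to $\sL_S$ on $S$, with $\sL_S$ nef and of degree zero on the elliptic fibres, gives that a power of $\sL_S$ is $\alpha^*$ of a line bundle on $\Gamma$. Pulling back, $\nu^*K_D$ restricts cohomologically trivially to every $\varphi$-fibre.
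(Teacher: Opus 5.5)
The genuine gap is your Step~2, the proof that $a(S)\neq 0$. You give no argument there, yet (b) depends on it, and through (b) so does (d).

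The hyperkähler setting is not available: the proposition concerns an abstract compact Kähler Gorenstein threefold. The conductor route you sketch does not lead anywhere either, since comparing algebraic dimensions via Lemma~\ref{lemma:alg-dimension} cannot produce meromorphic functions by itself.

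The missing idea is to use $\sL_S$ itself.
\begin{itemize}
\item If $a(S)=0$, then $S$ is not ruled and $\kappa(S)\le a(S)=0$. So its minimal model $\mu\colon S\to S_{\min}$ is a K3 surface or a torus, with $K_{S_{\min}}\equiv 0$.
\item $\sL_S$ has degree zero on every curve, in particular on the $(-1)$-curves contracted by $\mu$. Hence Lemma~\ref{lemma:invariance-degree-zero-MMP} descends it to a nef line bundle $\sL_{\min}$ with $\nd(\sL_{\min})=1$, i.e.\ $c_1(\sL_{\min})\neq 0$ and $c_1(\sL_{\min})^2=0$.
\item Proposition~\ref{proposition:generalised-abundance-surface} then shows that $\sL_{\min}$ is numerically equivalent to a semiample line bundle of Kodaira dimension one. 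On a K3 this is Riemann--Roch, $h^0\ge\chi=2$; on a torus it is the reduction map. This gives a nonconstant meromorphic function, contradicting $a(S_{\min})=0$.
\end{itemize}
Equivalently, the N\'eron--Severi lattice of a K3 surface or torus with $a=0$ is negative definite, so it contains no nonzero isotropic class. You already had both ingredients, the nef class of numerical dimension one on $S$ and the K3/torus dichotomy, but did not combine them.

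Two smaller points.

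First, on Step~1. You descend $\sL$ directly via $\pi'_*\sL$ and a negativity argument. The paper instead runs a relative $K_W$-MMP over $S$ on a resolution $W$ of $D'$, descends to the base $S'$ of the resulting Mori fibre space by the cone theorem (Lemma~\ref{lemma:invariance-degree-zero-MMP}), and then pushes down along $S'\to S$ using Lemma~\ref{lemma:factor-birational}. Your route can be made to work, but as written it has three holes:
\begin{itemize}
\item $\pi'_*\sL$ is a priori only torsion free, so you must take its double dual.
\item Vertical divisors over curves of $S$, such as components of reducible conics over the discriminant, are not ``exceptional''. They need Zariski's lemma together with the absence of multiple fibres.
\item Nefness of $\sL_S$ does not follow from degree zero on all curves in the K\"ahler setting (Remark~\ref{remark:num-trivial}). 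It has to be deduced from nefness of $(\pi')^*\sL_S$.
\end{itemize}

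Second, on (e). Lemma~\ref{lemma:factor-action} does not apply as stated, because it assumes $\sL_S$ is torsion on a general $\alpha$-fibre, and degree zero on an elliptic curve does not give that. It is also unnecessary: $\sL_S$ has degree zero on every curve, so it is cohomologically trivial on the $\alpha$-fibres, and (c) transfers this to the $\varphi$-fibres.
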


\begin{proof}
By the projection formula we have $ \nu^* K_D \cdot C=K_D \cdot \nu_* C=0 $ for every curve $C$ on $ D^\nu$. By Lemma~\ref{lemma:notmoishezon} we obtain (a).

By (a) and by Proposition~\ref{proposition:MRC}, the variety $S$ is a surface which is not Moishezon. In particular, $a(S)\leq1$.

We prove (b), (c) and (d) in several steps.

\medskip

\emph{Step 1.}
In this step we prove (c).

Let $f\colon W\to D'$ be a resolution of $D'$, and set $\sL:=f^*g^* \nu^* \sO_D(K_D)$. Note that that $\sL$ is a nef line bundle on $W$ with $\nd(\sL)=1$, and as in the first paragraph of the proof we can show that $\sL$ has degree zero on all curves on $W$.

A general fibre of $\pi$ is a rational curve, hence a general fibre of $\pi'\circ f$ is a rational curve. In particular, $K_W$ is not pseudoeffective over $S$. The morphism $\pi' \circ f$ is projective by~\cite[Theorem~1.1]{CH24}, so e.g. by~\cite[Theorem~1.2]{Fuj24} there exists a relative $K_W$-MMP $ \theta\colon W \dashrightarrow W' $ over $S$, which terminates with a Mori fibre space $\rho\colon W'\to S'$. Then $S'$ must be a surface since $S$ is a surface, and let $\sigma\colon S'\to S$ be the induced birational morphism. Let $(p,q)\colon Z\to W\times S'$ be a resolution of indeterminacies of $\rho\circ\theta$.
$$ \xymatrix{
Z \ar[rd]^q \ar@/^1pc/[rr]^p & W' \ar[d]_{\rho} & W \ar@{-->}[l]^\theta \ar[r]^f & D' \ar[r]^g \ar[d]_{\pi'} & D^\nu \ar[r]^\nu \ar@{-->}[dl]^\pi & D \\
& S' \ar[rr]^\sigma &  & S. & & } $$ 

Then by Lemma~\ref{lemma:invariance-degree-zero-MMP} there exists a line bundle $\sL'$ on $S'$ with the following properties:
\begin{enumerate}[\normalfont (i)]
\item $\sL'$ has degree zero on all curves in $S'$,
\item $p^*\sL\isom q^*\sL'$, and in particular, $\sL'$ is nef,
\item $\nd(\sL')=1$.
\end{enumerate}
The surface $S'$ has klt singularities by~\cite[Lemma 4.1]{HP15}, and $S$ is smooth. Thus, by Lemma~\ref{lemma:factor-birational}, the morphism $\sigma$ is a composition of steps in a $K_{S'}$-MMP. Therefore, by (i) and by Lemma~\ref{lemma:invariance-degree-zero-MMP} there exists a nef line bundle $\sL_S$ on $S$ such that
\begin{equation}\label{eq:8989}
\sL' \isom \sigma^* \sL_S,
\end{equation}
and $\sL_S$ has degree zero on all curves in $S$. In particular, (ii) yields that $\sL_S$ is nef by Lemma~\ref{lemma:invariance-degree-zero}. Since the diagram above is commutative, we have $p^* f^* (\pi')^*\sL_S = q^*\sigma^* \sL_S$. From \eqref{eq:8989} and (ii) we obtain 
$$q^*\sigma^* \sL_S\isom q^*\sL'\isom p^*\sL=p^*f^*g^* \nu^* \sO_D(K_D).$$
This, together with the previous sentence and the projection formula, implies part (c).

\emph{Step 2.}
In this step we show (b).

Assume, for contradiction, that $a(S) =0$. Let $\mu\colon S \to S_{\min}$ be the minimal model of $S$. Then $a(S_{\min})=0$, hence $K_{S_{\min}} \equiv 0$. By Lemma~\ref{lemma:invariance-degree-zero-MMP}
the line bundle $\sL_S$ descends to a nef line bundle $\sL_{\min}$ on $S_{\min}$ such that $\nd(\sL_{\min})=1$. Then by Proposition~\ref{proposition:generalised-abundance-surface}, the line bundle $\sL_{\min}$ is numerically equivalent to a semiample line bundle $\mathcal M$. In particular, we have $\nd(\mathcal M)=1$, hence $\kappa(S_{\min},\mathcal M)=1$, which is a contradiction since $a(S_{\min})=0$.

\medskip

\medskip

\emph{Step 3.}
In this step we prove (d).

We use the notation from Step 2. By Remark~\ref{remark:alg-reduction}, the algebraic reduction of $S$ is an elliptic fibration $\holom{\alpha}{S}{\Curv}$ onto a curve $\Curv$.

Denote $\xi:=\alpha\circ\pi_0$, and let $\varphi\colon D^\nu\dashrightarrow\Gamma$ be the induced map, so that we have the commutative diagram
$$\xymatrix{
D' \ar[r]^g \ar[d]_{\pi'} \ar[dr]_{\xi} & D^\nu  \ar@{-->}[d]^\varphi  \ar[r]^\nu & D \\
S \ar[r]_\alpha & \Gamma. &}$$
A general fibre of the morphism $\xi\colon D'\to\Gamma$ is a uniruled surface, hence it is a Moishezon surface. Since $D'$ is not Moishezon, it follows by Corollary~\ref{corollary:no-multisection} that $\xi$ does not have a multisection. Therefore, $\xi$ contracts every curve in~$D'$. The fibres of $g$ are connected by curves by Lemma~\ref{lemma:birational-curves}, and we have just seen that $\xi$ contracts every curve on $D'$. Thus, by the Rigidity lemma
\cite[Lemma 4.1.13]{BS95}
the map $\varphi$ is a morphism.

Let $\Delta \subseteq S$ be the locus parametrizing singular fibres of $\pi'$. Since $D^\nu$ is normal and the fibres of $\pi'$ are curves, the inclusion $\Delta \subseteq S$ is strict. Moreover, if $E \subseteq D'$ is the exceptional locus of $g$, then the inclusion $\pi'(E) \subseteq S$ is also strict by the construction of $D'$. The surface $S$ is not Moishezon, hence $\alpha$ does not have a multisection by Corollary~\ref{corollary:no-multisection}. In particular, the set $\alpha\big(\Delta\cup\pi'(E)\big)=\alpha(\Delta) \cup \xi(E)$ is finite. We set 
$$ \Curv_\circ := \Curv \setminus \big\{ \alpha(\Delta) \cup \xi(E) \big\},\quad D^\nu_\circ := \fibre{\varphi}{\Curv_\circ},\quad S_\circ:=\alpha^{-1}(\Gamma_\circ).$$
Then the restriction map $ \merom{\pi|_{D^\nu_\circ}}{D^\nu_\circ}{S_\circ}$ is holomorphic by construction, and it is a $\PP^1$-bundle since all its fibres are smooth. We set $\pi_\circ:=\pi|_{D^\nu_\circ}$. This finishes the proof of (d).

\medskip

\emph{Step 4.}
Finally, in this step we prove (e).

By (c) and (d), it suffices to show that $\sL_S$ is cohomologically trivial over $\Gamma$. To that end, by flatness it suffices to show that $\sL_S$ is cohomologically trivial on a general fibre of $\alpha$. But this holds since $\sL_S$ has degree zero on all curves in $S$ by (c).
\end{proof}

\begin{setup}\label{setup:section7}
Let $X$ be a hyperkähler fourfold. Assume that there exists a uniruled prime divisor $D$ on $X$, let $\holom{\nu_D}{D^\nu}{D}$ be its normalisation, and assume that $D^\nu$ is not Moishezon. Let $\pi\colon D^\nu\dashrightarrow S$ be the MRC fibration of $D^\nu$ with $S$ smooth. Assume that there exists a proper elliptic fibration $\alpha\colon S\to \Gamma$ onto a projective curve $\Gamma$ such that the composition $\varphi:=\alpha\circ\pi$ is a morphism, and there exists a Zariski open subset $\Curv_\circ \subseteq \Curv$ such that, if we set $D^\nu_\circ := \fibre{\varphi}{\Curv_\circ}$ and $S_\circ:=\alpha^{-1}(\Gamma_\circ)$, we have the proper fibration $ \holom{\pi_\circ}{D^\nu_\circ}{S_\circ}$ which is a $\PP^1$-bundle.
$$\xymatrix{
 & D^\nu  \ar@{-->}[dl]_\pi \ar[d]^\varphi \ar[r]^{\nu_D} & D \\
S \ar[r]_\alpha & \Gamma. &}$$
Let $F \subseteq D^\nu$ be a general fibre of $\varphi$. 
\end{setup}

\begin{lemma} \label{lemma:lagrangian2}
Assume Setup~\ref{setup:section7}. Then the following statements hold.
\begin{enumerate}[\normalfont (a)]
\item The variety $F$ is a smooth uniruled projective surface.
\item The morphism $\varphi$ does not admit a multisection.
\item The divisor ${-}K_F$ in $F$ is not big.
\end{enumerate}
\end{lemma}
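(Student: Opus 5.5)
Throughout, $F$ denotes a general fibre of $\varphi$, and I work over $\Curv_\circ$ from Setup~\ref{setup:section7}.

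\emph{Parts (a) and (b).} Over $\Curv_\circ$, the restriction $\pi_\circ\colon D^\nu_\circ\to S_\circ$ is a $\PP^1$-bundle. Hence $F=\pi_\circ^{-1}(\alpha^{-1}(t))$ for a general $t\in\Curv_\circ$. Since $\alpha$ is an elliptic fibration, its general fibre $E_t:=\alpha^{-1}(t)$ is a smooth elliptic curve. So $F$ is a $\PP^1$-bundle over $E_t$, and in particular it is a smooth, projective, ruled surface; this gives (a).

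For (b), $D^\nu$ is not Moishezon by assumption, while the general fibre $F$ of $\varphi$ is projective. Corollary~\ref{corollary:no-multisection} then says $\varphi$ admits no multisection.

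\emph{Part (c).} The plan is to use the ambient hyperkähler geometry, as in Step~1 and Step~2 of Lemma~\ref{lemma:lagrangian}.

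First I would show that $i:=\nu_D|_F\colon F\to X$ has Lagrangian image. Since $\varphi$ is a fibration onto a curve, $j_*[F]$ should be proportional to $\nu_D^*c_1(\sO_X(D))|_D$. The class $\sO_X(D)$ is parabolic and nef, and $\nu_D^*K_D$ is cohomologically trivial over $\Gamma$ and should be a pullback from $\Gamma$, in the spirit of Lemma~\ref{lemma:zariski}. This gives $i_*[F]=\lambda c_1(\sO_X(D))^2$, and then $\int_F i^*\sigma\wedge i^*\bar\sigma=0$ by~\cite[Corollary~5.2]{HLL25}, so $F$ is Lagrangian. This step needs $\nu_D^*\sO_D(D)$ to be numerically proportional to the fibre class, and justifying that in Setup~\ref{setup:section7} is the main obstacle. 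If it cannot be justified, I would instead argue that $\sigma|_F$ is a holomorphic $2$-form on a ruled surface, hence vanishes, so $F$ is isotropic of dimension $2$ and therefore Lagrangian directly.

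Once $F$ is Lagrangian, I would compute $K_F$. For a Lagrangian immersion one has $N_{F/X}\isom\Omega_F$. The normal-bundle sequence for $F\subset D^\nu\to X$, together with adjunction on the Gorenstein divisor $D$, should give
\[
K_F\sim (K_{D^\nu}+F)|_F\sim \big(\nu_D^*K_D-\cond\big)|_F\equiv -\cond|_F,
\]
where $\nu_D^*K_D$ is numerically trivial on $F$. So ${-}K_F\equiv\cond|_F$ with $\cond|_F$ effective, and this alone does not rule out bigness.

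To get (c), I would write $F=\PP(V)$ over the elliptic curve $E_t$ and aim for Corollary~\ref{corollary nef iff not big}. I would show that $\cond|_F$ is supported on sections of $F\to E_t$: the components of $\cond$ dominate $S_\circ$ and are horizontal for $\pi_\circ$, since they lie over no special fibres. I would then show that ${-}K_F\cdot\Sigma\ge0$ for every such section $\Sigma$ using that $\cond|_F$ is nef, which should follow because $\cond$ is numerically a multiple of the fibres of $\pi$ on the relevant components. If $\cond|_F$ consists of two disjoint sections, then $F$ has the form~\eqref{eq p1 bundle with two sections}, and Corollary~\ref{corollary nef iff not big} gives non-bigness once $-K_F$ is nef. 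Otherwise I would argue directly: $({-}K_F)^2=8(1-g(E_t))=0$, and ${-}K_F\equiv\cond|_F$ is effective, so bigness would require a negative curve in the support of $\cond|_F$. I would exclude such a curve via the degree-zero property of $K_D$ on curves.
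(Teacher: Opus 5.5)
Your arguments for (a) and (b) are fine. Reading off $F=\pi_\circ^{-1}(\alpha^{-1}(t))$ as a $\PP^1$-bundle over a smooth elliptic curve gives smoothness directly. Projectivity needs one more line, e.g.\ Kodaira's criterion since $h^{2,0}(F)=0$, which is what the paper uses. Part (b) is exactly Corollary~\ref{corollary:no-multisection}.

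Part (c), however, has a genuine gap, for two reasons.
\begin{enumerate}
\item Your route uses hypotheses that are not in Setup~\ref{setup:section7}. Nothing there says that $\sO_X(D)$ is nef or parabolic, that $\nu_D^*K_D$ is cohomologically trivial over $\Gamma$, or that $K_D$ has degree zero on curves; these are added later, e.g.\ in Lemma~\ref{lemma:lagrangian3}. Without them there is no reason for $\nu_D^*K_D|_F$ to be numerically trivial. Along a fibre where $D$ is normal one has $\nu_D^*K_D|_F\isom K_{D^\nu}|_F\isom K_F$, which has degree $-2$ on the rulings. So the relation $-K_F\equiv\cond|_F$ is not available.
\item Even granting those hypotheses, $-K_F\equiv\cond|_F$ cannot detect bigness. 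Suppose $\cond|_F=\Sigma_0+\Sigma_\infty$ is a pair of disjoint sections. Then $F\isom\PP(\sO_{E_t}\oplus L)$, and $-K_F\sim\Sigma_0+\Sigma_\infty$ holds for \emph{every} $L$ by Corollary~\ref{corollary log canonical bundle}. On the other hand, $-K_F$ is big as soon as $\deg L\neq 0$, by Lemma~\ref{lemma degree zero or big}, with one of the sections of self-intersection $-|\deg L|<0$. Your two ways of excluding this do not close the gap. Nefness of $\cond|_F$ ``because $\cond$ is numerically a multiple of the fibres of $\pi$'' is unjustified: the components of $\cond$ meeting $F$ are $\pi$-horizontal. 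The degree-zero property of $K_D$ only gives $\nu_D^*K_D|_F\equiv 0$ again and says nothing about $\deg L$.
\end{enumerate}

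The missing idea is to use the one hypothesis of the setup that you never invoke in (c): $D^\nu$ is not Moishezon. Suppose $-K_F$ is big for a general fibre $F$. Since $F$ avoids $D^\nu_{\sing}$, choose a resolution $\mu\colon\widehat D\to D^\nu$ that is an isomorphism near $F$. As $F$ is a fibre of $\varphi\circ\mu$, adjunction gives $-K_{\widehat D}|_F\isom -K_F$, which is big. So $\varphi\circ\mu$ is a Moishezon morphism in the sense of Definition~\ref{defn:moishezon-morphism}. Because $\Gamma$ is a projective curve, Remark~\ref{remarks:moishezon-morphism} shows that $\widehat D$, and hence $D^\nu$, is Moishezon, a contradiction. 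This is the paper's argument, and it needs neither the Lagrangian property nor any information on the conductor.
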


\begin{proof}
We prove the result in several steps.

\medskip

\emph{Step 1.}
In this step we show (a) and (b).

Let $F'$ be a resolution of $F$. Then the surface $F$ is uniruled by the assumption of the lemma on the map $\pi_\circ$, hence so is $F'$. In particular, we have $h^0(F', K_{F'})=0$, hence $h^2(F',\sO_{F'})=0$ by Serre duality. Thus, $F'$ is projective by Kodaira's criterion, hence so is $F$.
Since $D^\nu$ is not Moishezon and $F$ is projective, we deduce (b) by Corollary~\ref{corollary:no-multisection}. In particular, $D^\nu_{\sing}$ is not a multisection, hence $F$ avoids $D^\nu_{\sing}$. Thus, by generic smoothness, $F$ is smooth, which shows (a).

\medskip

\emph{Step 2.}
Next we show (c). Assume that ${-}K_F$ is big. By the previous paragraph we have that $F$ avoids $D^\nu_{\sing}$, hence there exists a resolution of singularities $\holom{\mu}{\widehat D}{D^\nu}$ which is an isomorphism in a neighbourhood of $F$. Thus, the divisor ${-}K_{D'}$ is relatively big over $\Gamma$. Thus, $\varphi \circ \mu$ is a Moishezon morphism (cf. Definition~\ref{defn:moishezon-morphism}), hence $D'$ is Moishezon (cf. Remark {remarks:moishezon-morphism}), a contradiction. 
This shows (c).
\end{proof}

\begin{lemma}\label{lemma:lagrangian3}
Assume Setup~\ref{setup:section7}. Assume, furthermore, that $\nu^*K_D$ is cohomologically trivial over $\Gamma$. By Proposition~\ref{normalisation-formula}, there exists an effective divisor $\Xi$ on $D^\nu$ such that $K_{D^\nu} \sim \nu_D^* K_D - \cond$, and let $\cond^h$ denote the horizontal part of $\cond$ with respect to $\varphi$. Let $ \nonn := \nu_D(\cond^h)$ be the set-theoretical image of $\cond^h$,  this is a not necessarily irreducible surface in $D$. Then after shrinking $\Gamma_\circ$ the following statements hold.
\begin{enumerate}[\normalfont (a)]
\item One of the following holds.
\begin{enumerate}[\normalfont (i)]
\item The divisor $\cond^h$ is reduced and irreducible, and the map $\cond^h|_{D^\nu_\circ} \rightarrow S_\circ$ is \'etale of degree two.
\item We have $\cond^h=\cond_1+\cond_2$, where $\cond_1|_{D^\nu_\circ}$ and $\cond_2|_{D^\nu_\circ}$ are disjoint sections of $\pi_\circ$.
\item We have $\cond^h = 2 \cond_1$, where the $\cond_1|_{D^\nu_\circ}$ is a section of $\pi_\circ$.
\end{enumerate}
\item If $K_D$ is nef with $\nd(K_D)=1$, and if $\cond^h$ is reduced, then $\nu_D^*K_D$ is semiample.
\item Assume that $\cond^h$ is reduced and irreducible as in part (a)(i) or that $\cond^h=2\Xi_1$ as in part (a)(iii). Assume that $a(S)=1$. Then there exists an elliptic fibration 
$$ 
\holom{\alpha_{\Theta}}{\Theta}{\Curv_{\Theta}} 
$$ such that each irreducible analytic subset of $\Theta$ of dimension at most $1$ is contained in a fibre of $\alpha_{\Theta}$. Moreover, there exists a Zariski open subset $\Theta_\circ \subseteq \Theta$ such that $\Theta_\circ$ is smooth, 
$$
\alpha_{\Theta}|_{\Theta_\circ}\colon \Theta_\circ\to\Gamma_{\Theta,\circ}
$$ 
is a smooth elliptic fibration, and $\Theta_\circ$ is disjoint from the image by $\nu$ of any component of $\Xi$ which is vertical over $\Gamma$.
\item Assume that $\cond^h=\Xi_1+\Xi_2$ as in part (a)(ii), and that $a(S)=1$. Set $ \nonn_1 := \nu(\cond_1)$ and $ \nonn_2 := \nu(\cond_2)$. Then for each $i\in\{1,2\}$ there exists an elliptic fibration $ \holom{\alpha_{\Theta_i}}{\Theta_i}{\Curv_{\Theta_i}} $ such that each irreducible analytic subset of $\Theta_i$ of dimension at most $1$ is contained in a fibre of $\alpha_{\Theta_i}$. Moreover, for each $i\in\{1,2\}$ there exists a Zariski open subset $\Theta_{i,\circ} \subseteq \Theta_i$ such that each $\Theta_{i,\circ}$ is smooth, each 
$$
\alpha_{\Theta_i}|_{\Theta_{i,\circ}}\colon \Theta_{i,\circ}\to\Gamma_{\Theta_i,\circ}
$$ is a smooth elliptic fibration.
Moreover, if $\Theta_1 \neq \Theta_2$, and $\Theta_{1,\circ}\cap\Theta_{2,\circ}=\emptyset$.
\end{enumerate}
\end{lemma}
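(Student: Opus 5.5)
Everything is driven by adjunction on a general fibre $F$ of $\varphi$, which is a $\PP^1$-bundle $\pi_\circ|_F\colon F\to E$ over an elliptic curve $E=\alpha^{-1}(t)$. Since $\nu_D^*K_D$ is cohomologically trivial over $\Gamma$ and $K_F\sim K_{D^\nu}|_F$, we get $K_F\equiv -\cond|_F=-\cond^h|_F$, using that vertical components miss a general fibre. On a ruling $\ell\cong\PP^1$ this gives $\cond^h\cdot\ell=2$, so $\cond^h|_F$ has relative degree two over $E$. Since $\varphi$ has no multisection (Lemma~\ref{lemma:lagrangian2}(b)) and $\pi_\circ$ is a $\PP^1$-bundle, every component of $\cond^h$ maps onto $S_\circ$. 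The possible configurations are therefore: one reduced component of degree two over $S_\circ$; two components of degree one; or one component of degree one with multiplicity two. These are (i), (ii) and (iii), except that we still need étaleness in (i) and disjointness in (ii).

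\textbf{Sharpening (i) and (ii).} For étaleness in (i), I would use that $-K_F\equiv\cond^h|_F$ is not big (Lemma~\ref{lemma:lagrangian2}(c)). A bisection $B\subset F$ with $B\equiv -K_F$ has $B^2=K_F^2=0$. Riemann–Hurwitz on $B\to E$, combined with adjunction $2g(B)-2=(K_F+B)\cdot B=0$, forces $g(B)=1$, so the cover $B\to E$ is unramified. Letting $t$ vary and shrinking $\Gamma_\circ$, the map $\cond^h|_{D^\nu_\circ}\to S_\circ$ is étale. For (ii), the two sections $\Xi_1|_F,\Xi_2|_F$ satisfy $\Xi_1|_F+\Xi_2|_F\equiv -K_F$. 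The computation $0=K_F^2=\Xi_1^2+\Xi_2^2+2\Xi_1\Xi_2$, together with $\Xi_i^2=-\Xi_i\cdot K_F-0=\ldots$ via adjunction for sections of genus one, gives $\Xi_1\cdot\Xi_2=0$, so the sections are disjoint.

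\textbf{Part (b).} When $\cond^h$ is reduced, Corollary~\ref{corollary log canonical bundle}, or the analogous fact for the étale double cover, gives $K_F+\cond^h|_F\sim_\Q 0$. Hence $\nu_D^*K_D|_F$ is torsion on the general fibre. By Lemma~\ref{lemma:zariski} (note $c_1^2=0$ since $\nd=1$), we get $\nu_D^*K_D\sim_\Q\varphi^*A$. It remains to show $\deg A>0$; this follows because $K_D$ is not numerically trivial, as $\nd(K_D)=1$. Then $\nu_D^*K_D$ is semiample.

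\textbf{Parts (c) and (d).} Here $\Theta=\nu_D(\cond^h)$ is the image of a surface dominated by a component of $\cond^h$, which is generically finite over $S$. Lemma~\ref{lemma:alg-dimension} gives $a(\Theta)=a(S)=1$, and Corollary~\ref{corollary:alg-dim-one} then supplies the elliptic fibration $\alpha_\Theta$ together with the property that any curve is contained in finitely many fibres, hence in a fibre. For $\Theta_\circ$ I would remove the finitely many fibres of $\alpha_\Theta$ containing the singular locus of $\Theta$, the images of vertical components of $\Xi$ (which meet $\Theta$ in curves), and in (d) the curves in $\Theta_1\cap\Theta_2$. Each of these is an analytic subset of dimension at most $1$, hence contained in finitely many fibres. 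The main obstacle is precisely this: controlling curves, not just points, in $\Theta$ (for instance $\Theta_1\cap\Theta_2$, or the double-point locus of $\nu$ restricted to $\cond^h$). This relies essentially on the absence of multisections for algebraic dimension one surfaces, and on checking that the relevant intersections really have dimension at most one.
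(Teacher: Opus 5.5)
Your arguments for (a) and (b) are correct, but in two places they differ from the paper's.

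\emph{Shrinking in (a)(i) and (a)(ii).} The paper observes that the ramification curve of $\Xi^h\to S$, respectively the curve $\Xi_1\cap\Xi_2$, would otherwise be a multisection of $\varphi$, which contradicts Lemma~\ref{lemma:lagrangian2}(b). You instead compute $p_a(\Xi^h|_F)=1+\tfrac12(K_F+\Xi^h|_F)\cdot\Xi^h|_F=1$ on a general fibre and then spread out. This also works, with two small corrections. First, read $p_a$ rather than $g$. Second, $\Xi^h|_F$ may split into two sections even when $\Xi^h$ is irreducible; in that case the same computation shows the two sections are disjoint.

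\emph{Part (b).} You replace abundance for the lc pair $(F,\Xi|_F)$ by the explicit Corollary~\ref{corollary log canonical bundle} and Proposition~\ref{proposition no etale bisection}. This is fine.

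The genuine gap is in the last assertion of (c). You state in parentheses that the images of the vertical components of $\Xi$ meet $\Theta$ in curves. You then name this as the main obstacle and suggest it is handled by the absence of multisections. That tool does not touch the real issue. The question is whether a vertical component $S'$, a surface lying in a single fibre of $\varphi$, could satisfy $\nu(S')=\Theta$. If it did, $\nu(S')\cap\Theta=\Theta$ would lie in no finite set of fibres of $\alpha_\Theta$, and the required $\Theta_\circ$ would not exist.

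The missing input is that $S'$ is uniruled.
\begin{enumerate}
\item $S'$ is an irreducible component of a fibre of the flat map $\varphi$, whose general fibres are $\PP^1$-bundles. So Lemma~\ref{lemma:degenerate-uniruled} applies and $S'$ is uniruled.
\item A uniruled compact Kähler surface is projective, by Kodaira's criterion as in Step~1 of the proof of Lemma~\ref{lemma:lagrangian2}. The paper instead notes that $\nu(S')$ is Lagrangian and uses \cite{Leh16}.
\item Hence $\nu(S')$ is Moishezon. In the situation of (c), however, $a(\Theta)=1$ and $\Theta$ is irreducible.
\item Therefore $\nu(S')\neq\Theta$, so $\nu(S')\cap\Theta$ has dimension at most one.
\item Only at this point does Corollary~\ref{corollary:alg-dim-one} place $\nu(S')\cap\Theta$ in finitely many fibres of $\alpha_\Theta$.
\end{enumerate}

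With this step added, your treatment of (c) and (d) agrees with the paper's. You should also remove the finitely many critical values of $\alpha_\Theta$, so that $\alpha_\Theta|_{\Theta_\circ}$ is smooth.
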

\begin{proof}
We prove the result in several steps.

\medskip

\emph{Step 1.}
Now we show (a). Since $\nu^* K_D$ is cohomologically trivial over $\Gamma$ we have $(\nu^* K_D)|_F\equiv0$. Therefore,
\begin{equation}\label{eq:767}
K_F \sim K_{D^\nu}|_F \sim (\nu^* K_D)|_F - \cond|_F \equiv {-} \cond|_F.
\end{equation}
Since ${-}K_F$ has degree two on a fibre of $\pi|_F$, \eqref{eq:767} shows that $\cond|_F$ has degree two on a fibre of $\pi|_F$. 
Note that all the components of $\cond^h$ are $\pi$-horizontal: the image of a $\pi$-vertical component
in $S$ is a multisection of the algebraic reduction $\alpha: S \rightarrow \Gamma$, by
Corollary~\ref{corollary:no-multisection} such a multisection does not exist.
In particular, $\cond^h$ is not empty, and it has at most two irreducible components.

If $\cond^h$ is irreducible but not reduced, then we deduce (iii). If $\cond^h$ is irreducible and reduced, then the map $\cond^h|_{D^\nu_\circ} \rightarrow S_\circ$ is \'etale after shrinking $\Gamma_\circ$, since otherwise its ramification divisor would define a multisection of $\varphi$, which would contradict Lemma~\ref{lemma:lagrangian2}(b); this gives (i). If $\cond^h$ is reducible, then after shrinking $\Gamma_\circ$, its components $\cond_1$ and $\cond_2$ are disjoint, since otherwise the intersection $\cond_1\cap\cond_2$ would define a multisection of $\varphi$, which would contradict Lemma~\ref{lemma:lagrangian2}(b); this gives (ii).

\medskip

\emph{Step 2.}
In this step we show (b). By (a), the divisor $\cond|_F$ on $F$ is smooth, hence the pair $(F, \cond|_F)$ is log-canonical. By \eqref{eq:767} we have
$$ K_F + \cond|_F\sim(\nu^* K_D)|_F\equiv0, $$
hence $K_F + \cond|_F\sim_\Q0$ by Abundance on surfaces. Therefore, $(\nu^* K_D)|_F\sim_\Q0$. We then conclude by Lemma~\ref{lemma:zariski} that $\nu^* K_D \sim_\Q \varphi^* A$ for a $\Q$-divisor $A$ on $\Curv$. Then $A$ is nef and $\nd(A)=1$, hence $A$ is ample. Therefore, $\nu^*K_D$ is semiample, as desired.

\medskip

\emph{Step 3.}
In this step we show (c). In the case of (a)(i), the map $\cond^h|_{D^\nu_\circ} \rightarrow S_\circ$ is \'etale of degree two, and in the case of (a)(iii), $\cond_1|_{D^\nu_\circ}$ is isomorphic to $S_\circ$. Since the algebraic dimension is invariant under generically finite meromorphic dominant maps
(cf. Lemma~\ref{lemma:alg-dimension}), we obtain that $a(\cond^{h})=1$. By the same argument, $a(\nonn)=1$. We set $\alpha_\Theta$ to be the algebraic reduction of $\Theta$, cf. Corollary~\ref{corollary:alg-dim-one}. Then each irreducible analytic subset of $\Theta$ of dimension at most $1$ is contained in a fibre of $\alpha_{\Theta}$ by Corollary~\ref{corollary:alg-dim-one}. In particular, the locus along which $\alpha_\Theta$ is not a smooth fibration is contained in finitely many fibres of $\alpha_\Theta$, which shows the existence of $\Theta_\circ$ satisfying the first two conditions from the last statement of (c).

Now we prove the very last statement of (c). Let $S$ be a component of $\cond$ which is vertical over $\Gamma$. Then $S$ is contracted by $\varphi$ onto a point. Thus $S$ is uniruled surface by Lemma~\ref{lemma:degenerate-uniruled}, so $\nu(S) \subseteq X$ is Lagrangian and therefore projective by~\cite[Theorem~1.1]{Leh16}. Since $a(\nonn)=1$ by the previous paragraph, and $S$ is projective, it cannot be dominant over $\nonn$, hence $\nu(S) \cap \nonn \neq \nonn$. By the previous paragraph, this implies that $\nu(S) \cap \nonn $ is contained in finitely many fibres of $\alpha_\nonn$, which implies that the last sentence of (c), after shrinking $\Theta_\circ$.

\medskip

\emph{Step 4.}
In this step we show (d). By (a)(ii), $\cond_1|_{D^\nu_\circ}$ and $\cond_2|_{D^\nu_\circ}$ are isomorphic to $S_\circ$. We conclude similarly as in Step 3; note that the final statement of (d) follows if we choose choose $\Theta_{1,\circ}$ and $\Theta_{1,\circ}$ so that they avoid the analytic subset $\Theta_1\cap\Theta_2$.
\end{proof}

We will now give a precise description of the singularities of the divisor $D$
along $\Theta$, this will be a key ingredient for the extension 
argument in Proposition~\ref{prop:projective-extension}.
We introduce some terminology:

\begin{definition} \label{defn:kodaira-type}
Let $X_\circ$ be a manifold of dimension $n$, and let $D_\circ \subset X_\circ$ be a reduced divisor.
Assume that $S_\circ := D_{\sing}$ is a submanifold of dimension $n-2$. 
Let $\holom{\mu_{\circ}}{Z_{\circ}}{X_\circ}$ be the blowup of $X_\circ$ along $S_\circ$,
let $D_{Z, \circ} \subset Z_{\circ}$ be the strict transform of $D_\circ$, and let $E_{\circ}$ be the exceptional divisor. 
We say that $D_\circ$ has singularities
of Kodaira type $IV$ (resp. $III$, resp. $II$) along $S_\circ$ if the following holds:

\begin{itemize}
\item Type IV: the divisor $D_{Z, \circ}+E_{\circ} \subset Z_{\circ}$ has snc singularities and the natural map
$$
D_{Z, \circ} \cap E_{\circ} \rightarrow S_\circ
$$
is an \'etale cover of degree three.
\item Type III:  the set-theoretical intersection $S_{Z, \circ} := (D_{Z, \circ} \cap E_{\circ})_{red}$ is
a submanifold of dimension $n-2$ in $Z_{\circ}$ such that
the natural map
$$
S_{Z, \circ} \rightarrow S_\circ
$$
is an isomorphism; and
$D_{Z, \circ}+E_{\circ}$ has singularities
of Kodaira type $IV$ along $S_{Z,\circ}$. 
\item Type II: the set-theoretical intersection $S_{Z, \circ} := (D_{Z, \circ} \cap E_{\circ})_{red}$ is
a submanifold of dimension $n-2$ in $Z_{\circ}$ such that
the natural map
$$
S_{Z, \circ} \rightarrow S_\circ
$$
is an isomorphism; and
$D_{Z, \circ}+E_{\circ}$ has singularities
of Kodaira type $III$ along $S_{Z,\circ}$. 

\end{itemize}
\end{definition}

The definition of singularities in terms of blow-ups is convenient for the proof
of Proposition~\ref{prop:lagrangian4} and Proposition~\ref{prop:projective-extension},
it is not very difficult to see that the definitions are equivalent to the following
local description:

\begin{definition} \label{definition:kodaira-type-geometric}
Let $X_\circ$ be a complex manifold of dimension $n$, and let $D_\circ \subseteq X_\circ$ be a reduced divisor. We say that $D_\circ$ has singularities of Kodaira type $II, III$, or $IV$ in a point $x \in D_\circ$ if there exist analytic coordinates $x_1, \ldots, x_n$  near $x$ such that 
\begin{itemize}
\item $D_\circ= \{ x_1^2-x_2^3=0 \}$ (Kodaira type II); resp.
\item $D_\circ= \{ x_2 \cdot (x_2-x_1^2) = 0 \}$ (Kodaira type III); resp.
\item $D_\circ= \{ x_1 \cdot x_2 \cdot (x_1 - x_2) = 0 \}$ (Kodaira type IV).
\end{itemize}
\end{definition}

\begin{remark} \label{remark:kodaira-typeI}
In the situation above we say that $D_\circ$ has Kodaira type $I$ if the local equation is 
$$
D_\circ= \{ x_1 x_2=0 \}.
$$ 
These singularities are more commonly called normal crossing points of multiplicity two, we will use both terminologies.
\end{remark}

\begin{proposition}\label{prop:lagrangian4}
Let $X$ be a hyperkähler fourfold. Assume that there exists a uniruled prime divisor $D$ on $X$ and let $\holom{\nu_D}{D^\nu}{D}$ be its normalisation. By Proposition~\ref{normalisation-formula}, there exists an effective divisor $\Xi$ on $D^\nu$ such that
\begin{equation}\label{eq:normalisation0}
K_{D^\nu} \sim \nu_D^* K_D - \cond.
\end{equation}
Assume that there exists an analytic surface $\Theta\subseteq D$ with $a(\Theta)=1$, and a smooth open subset $\Theta_\circ\subseteq\Theta$ such that, if we denote
$$X_\circ := X \setminus (D_{\sing}\setminus \Theta_\circ), \quad D_\circ := D \setminus (D_{\sing}\setminus \Theta_\circ), \quad \Xi_\circ:=\Xi|_{\nu_D^{-1}(D_\circ)},$$
then $\Supp(\cond_\circ)=\nu_D^{-1}(\Theta_\circ)$ and $\cond_\circ=2\Phi_\circ$, where $\Phi_\circ$ is a reduced and irreducible divisor on $D_\circ$. Then, after shrinking $\Theta_\circ$, there exist a bimeromorphic morphism $\mu\colon Z\to X$, a positive rational number $p$, and an effective $\Q$-divisor $\Delta$ on $Z$ such that, if we denote $Z_\circ:=\mu^{-1}(X_\circ)$ and $\Delta_\circ:=\Delta|_{Z_\circ}$, then the pair $(Z_\circ,\Delta_\circ)$ is a log smooth plt pair and
$$K_{Z_\circ}+\Delta_\circ\sim_\Q \mu^*(K_X+pD_\circ).$$
Moreover the divisor $D$ has singularities of Kodaira type $II, III$, or $IV$ along~$\Theta_\circ$.
\end{proposition}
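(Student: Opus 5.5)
The proof is a local computation transverse to $\Theta_\circ$, followed by an explicit sequence of blowups. First I reduce to a two-dimensional problem. After shrinking $\Theta_\circ$, $D$ is equisingular along the smooth surface $\Theta_\circ$. The reason is that the locus where the transversal singularity type jumps is an analytic subset of $\Theta$ of dimension at most one. Since $a(\Theta)=1$, Corollary~\ref{corollary:alg-dim-one} puts such a subset in finitely many fibres of the algebraic reduction, so it can be removed.

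Next I identify the transversal singularity. Take a general point $x\in\Theta_\circ$ and a smooth surface germ $\Sigma\subset X$ transversal to $\Theta_\circ$ at $x$. Then $C:=D\cap\Sigma$ is a reduced plane curve germ, and its normalisation has conductor $\cond|_{\Sigma}=2\Phi_\circ|_\Sigma$. The conductor of a plane curve has length $2\delta$, so $\delta(C)=\deg\Phi_\circ|_\Sigma$. The assumption $\Supp(\cond_\circ)=\nu_D^{-1}(\Theta_\circ)$ with $\cond_\circ=2\Phi_\circ$ and $\Phi_\circ$ irreducible forces strong restrictions. I would argue that $\Phi_\circ\to\Theta_\circ$ has degree at most three (one preimage per branch). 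Over a general point the conductor is then the pullback of the maximal ideal, with multiplicity two on each point of the normalisation. This condition characterises exactly the ordinary triple point ($IV$), the tacnode ($III$) and the cusp ($II$). Indeed, for a plane curve germ the conductor equals $2\cdot(\text{reduced preimage of the point})$ only for these three (and $A_1$ would give multiplicity one, with $\cond=\Phi$ reduced, which is excluded).

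In practice I would verify this by classifying germs with $\delta = r$ branches-weighted appropriately, using $\delta\geq \binom{m}{2}$ for multiplicity $m$. A germ with $m\geq4$ has conductor too deep at each branch. For $m=3$ the conductor being $2\cdot$(points) forces three smooth pairwise transverse branches. For $m=2$ one gets $A_{k}$ with $k=2,3$. The irreducibility of $\Phi_\circ$ handles the global monodromy: in type $IV$ the three branches may be permuted, giving an étale triple cover as in Definition~\ref{defn:kodaira-type}. One then checks equivalence with the blowup description by blowing up $\Theta_\circ$ once ($IV$), twice ($III$) or three times ($II$).

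For the plt resolution, let $\mu$ be the composition of the blowups along the successive centres $S_\circ, S_{Z,\circ},\dots$ of Definition~\ref{defn:kodaira-type}, extended to a bimeromorphic morphism of $X$ via closures and resolution away from $X_\circ$. Write $\mu^*(K_X+pD)=K_Z+\widetilde D+\sum a_iE_i$ with $K_X=0$. The standard discrepancy computations give the following.

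\begin{itemize}
\item For $IV$ (one blowup, codimension $2$, $\mathrm{mult}=3$), $a_E=3p-1$.
\item For the cusp, the usual log resolution with weights $(2,3,6)$ gives a last exceptional divisor meeting the other components and the strict transform.
\end{itemize}

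I choose $p$ so that exactly one exceptional divisor (the one meeting three other components, i.e.\ the last one) gets coefficient $1$. Concretely, $p=2/3$ for $IV$, $p=3/4$ for $III$ and $p=5/6$ for $II$ give coefficient $1$ on the final divisor and $<1$ on the others. I then take $\Delta$ to be the strict transform $p\widetilde D$ plus the $a_iE_i$, which must be checked to be $\geq0$. Log smoothness holds by construction. Plt follows since the unique coefficient-one component is disjoint from other coefficient-one components and the rest have coefficients $<1$.

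I expect the main obstacle to be the local classification step: showing rigorously that the condition $\cond=2\Phi$ with reduced irreducible $\Phi$ excludes all other plane curve singularities (and normal crossings). The discrepancy bookkeeping for $II$ is routine but error-prone.
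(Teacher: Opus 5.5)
Your proposal is correct in outline, but it takes a different route from the paper.

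\textbf{What the paper does.} It never passes to transversal slices. It blows up $X$ along $\Theta$ and applies the normalisation formula (Proposition~\ref{normalisation-formula}) to both $D_\circ$ and its strict transform $D'_\circ$. This gives
$\Xi|_{D^\nu_\circ}=\Xi'_\circ+(m-1)\,\nu_{D'_\circ}^*(E'_\circ|_{D'_\circ})$ with $m=\mult_{\Theta_\circ}D_\circ\geq 2$ by Lemma~\ref{lemma:Nagata}. From $\mult_{\Phi_\circ}\Xi=2$ it reads off three numerical cases for $\bigl(m,\mult_{\Phi_\circ}\Xi'_\circ,\mult_{\Phi_\circ}\nu^*E'_\circ\bigr)$, namely $(3,0,1)$, $(2,1,1)$ and $(2,0,2)$. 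These correspond to types $IV$, $III$ and $II$. It then repeats the same conductor comparison after one or two further blowups. At each stage it removes the non-\'etale, non-smooth or non-transversal locus using Corollary~\ref{corollary:alg-dim-one}. This yields the blowup description of Definition~\ref{defn:kodaira-type} and the plt model at the same time, using only adjunction, Nagata's lemma and $a(\Theta)=1$.

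\textbf{What your route buys and costs.} Your approach explains more transparently \emph{why} exactly $II$, $III$, $IV$ occur: they are the plane curve germs whose conductor has exponent two on every branch. However, the step you single out as the main obstacle, the classification, is actually immediate. From $\sum_i c_i=2\delta$ with all $c_i=2$ you get $\delta=r$, and then $\binom{m}{2}\leq\delta=r\leq m$ forces $m\leq 3$. The real weight of your argument is in the first step, which you treat quickly. There you need generic local analytic triviality of $D$ along $\Theta$, which requires three standard inputs:
\begin{itemize}
\item an equisingularity statement (Whitney or Zariski) along $\Theta$ off an analytic subset of dimension at most one;
\item simultaneous normalisation for $\delta$-constant families, so that $\Xi$ really restricts to the conductor of the slice $D\cap\Sigma$;
\item analytic triviality of $\mu$-constant families of the simple singularities $A_2$, $A_3$, $D_4$, to reach the normal forms of Definition~\ref{definition:kodaira-type-geometric}.
\end{itemize}
None of these is needed in the paper's global blowup computation. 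Since they are standard, this is not a gap.

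\textbf{Two small slips.}
\begin{itemize}
\item The conductor is \emph{not} the pullback of the maximal ideal for the tacnode and the ordinary triple point. There $\mathfrak m\,\overline{\mathcal O}$ has exponent one on each branch. The correct condition is the one you state right after: exponent two on each branch.
\item In your discrepancy formula the strict transform should appear as $p\widetilde D$, not $\widetilde D$.
\end{itemize}
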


\begin{proof}
We prove the result in several steps. Let $\alpha_\Theta\colon\Theta\to\Gamma_\Theta$ be the algebraic reduction of $\Theta$. Throughout the proof, for a variety $Y$ we denote by $\nu_Y$ its normalisation.

\medskip

\emph{Step 1.}
In this step we prepare several statements that we will need in the remainder of the proof.

Let $\holom{\mu'}{X'}{X}$ be the blowup along $\nonn$, denote by $E'$ the unique exceptional divisor that maps onto $\nonn$, and let $D' \subseteq X'$ be the strict transform of $D$. Set $X_\circ':=(\mu')^{-1}(X_\circ)$, $\mu_\circ':=\mu'|_{X'_\circ}$, $E_\circ':=E'|_{X'_\circ}$ and $D_\circ':=D'|_{X_\circ'}$. Then $\holom{\mu_\circ'}{X_\circ'}{X_\circ}$ is the blowup of $X_\circ$ along $\nonn_\circ$, hence 
\begin{equation}\label{eq:Esmooth}
E_\circ'\text{ is a smooth irreducible exceptional divisor of }\mu_\circ'.
\end{equation}
By construction of $D_\circ$ we have that
\begin{equation}\label{eq:Dsmoothaway}
D_\circ'\text{ is smooth away from }D_\circ'\cap E_\circ'.
\end{equation}

We consider $D' \cap E'$ with the reduced analytic scheme structure. Since $D' \cap E'$ is a reduced surface, the induced morphism $\mu'|_{D' \cap E'}\colon D' \cap E'\to\Theta$ is generically finite, hence the locus $F\subseteq\Theta$ over which $\mu'|_{D' \cap E'}$ is not a finite \'etale morphism is an analytic subset of $\nonn$ of dimension at most $1$. Thus, by Corollary~\ref{corollary:alg-dim-one} we have that $F$ is contained in finitely many fibres of $\alpha_\nonn$. Therefore, after shrinking $\Theta_\circ$, we may assume that the morphism $\mu_\circ'|_{D_\circ'\cap E_\circ'}$ is finite and \'etale. In particular, we may assume that
\begin{equation}\label{eq:thetasmooth}
D_\circ'\cap E_\circ'\text{ is smooth.}
\end{equation}
Moreover, since $\mu_\circ'|_{D_\circ'\cap E_\circ'}$ is finite,
\begin{equation}\label{eq:mufinite}
\text{the morphism } \holom{\mu_\circ'|_{D_\circ'}}{D_\circ'}{D_\circ} \text{ is finite.}
\end{equation}
By the universal property of normalisations we have a commutative diagram
$$\xymatrix{
(D_\circ')^\nu \ar[r] \ar[d]_{\nu_{D_\circ'}} & (D_\circ)^\nu  \ar[d]^{\nu_{D_\circ}}  \\
D_\circ' \ar[r]_{\mu_\circ'|_{D_\circ'}} & D_\circ.}$$
Then the morphism $(D_\circ')^\nu \rightarrow D_\circ^{\nu}$ that is finite and birational by \eqref{eq:mufinite}, hence an isomorphism by Zariski's main theorem. Thus, we may identify $(D_\circ')^\nu $ and $D_\circ^{\nu}$ via this isomorphism, and we have the factorisation
\begin{equation}\label{eq:factorisation}
\nu_{D_\circ} = \mu_\circ'|_{D_\circ'} \circ \nu_{D_\circ'} : (D_\circ')^\nu \isom D_\circ^{\nu}
\rightarrow D_\circ.
\end{equation}
Furthermore, by assumption and by \eqref{eq:factorisation} we have that
\begin{equation}\label{eq:image1}
\Phi_\circ=(\nu_{D_\circ})^{-1}(\Theta_\circ)= (\nu_{D_\circ'})^{-1}\big((\mu_\circ'|_{D_\circ'})^{-1} (\Theta_\circ)\big) = (\nu_{D_\circ'})^{-1}(D_\circ' \cap E_\circ').
\end{equation}
Since $\Phi_\circ$ is irreducible, we conclude that
\begin{equation}\label{eq:irreducible1}
D_\circ' \cap E_\circ'\text{ is irreducible.}
\end{equation}

\medskip

\emph{Step 2.}
By Proposition~\ref{normalisation-formula} and by \eqref{eq:factorisation}, there exists an effective divisor $\Xi_\circ'$ on $D_\circ^\nu$ such that
\begin{equation}\label{eq:normalisation'}
K_{D_\circ^\nu} \sim \nu_{D_\circ'}^* K_{D_\circ'} - \Xi_\circ'.
\end{equation}
In this step we show that one of the following holds:
\begin{enumerate}[\normalfont (i)]
\item $\mult_{\nonn_\circ} D_\circ=3$, $\mult_{\Phi_\circ}\Xi_\circ'=0$, and $\mult_{\Phi_\circ} \nu_{D_\circ'}^* (E_\circ'|_{D_\circ' }) =1$,
\item $\mult_{\nonn_\circ} D_\circ=2$, $\mult_{\Phi_\circ}\Xi_\circ'=1$, and $\mult_{\Phi_\circ} \nu_{D_\circ'}^* (E_\circ'|_{D_\circ' }) =1$,
\item $\mult_{\nonn_\circ} D_\circ=2$, $\mult_{\Phi_\circ}\Xi_\circ'=0$, and $\mult_{\Phi_\circ} \nu_{D_\circ'}^* (E_\circ'|_{D_\circ' }) =2$.
\end{enumerate}

To that end, set $m:=\mult_{\nonn_\circ} D_\circ$. Then
\begin{equation}\label{eq:twoequalities}
(\mu_\circ')^* D_\circ \sim  D_\circ' + m E_\circ'\quad\text{and}\quad K_{X_\circ'} \sim (\mu_\circ')^* K_{X_\circ} + E_\circ',
\end{equation}
so combining these with the adjunction formula we obtain 
$$ K_{D_\circ'} \sim (\mu_\circ'|_{D_\circ'})^* K_{D_\circ} + (1-m)(E_\circ'|_{D_\circ'}). $$
From this equation, from \eqref{eq:factorisation}, and from \eqref{eq:normalisation'} we obtain
$$ K_{D_\circ^\nu} \sim (\nu_{D_\circ})^* K_{D_\circ} - \cond_\circ' + (1-m) \nu_{D_\circ'}^* (E_\circ'|_{D_\circ'}), $$
which together with \eqref{eq:normalisation0} yields
\begin{equation}\label{eq:restrictionequation}
\Xi|_{D_\circ^{\nu}}=\cond_\circ' + (m-1) \nu_{D_\circ'}^* (E_\circ'|_{D_\circ'}).
\end{equation}
Since $\nu_{D_\circ}^{-1}(\Theta_\circ)=\Supp(\Xi_\circ)$ by assumption, the variety $D_\circ$ is not normal at a general point of $\Theta_\circ$, hence Lemma~\ref{lemma:Nagata} gives $m\geq2$. Moreover, by \eqref{eq:image1} we have $\mult_{\Phi_\circ} \nu_{D_\circ'}^* (E_\circ'|_{D_\circ' }) \geq1$. Then, by comparing multiplicities along $\Phi_\circ$ of both sides of the equation \eqref{eq:restrictionequation}, we obtain the claim.

We analyse the possibilities (i), (ii) and (iii) above in the next three steps.

\medskip

\emph{Step 3.}
In this step we prove the proposition when we are in case (i) of Step 2.

Since $\mult_{\Phi_\circ}\Xi_\circ'=0$ by case (i) of Step 2, we have that $\Phi_\circ$ is not a component of $\Xi_\circ'$, thus $D_\circ'$ is normal at a general point of $\nu_{D_\circ'}(\Phi_\circ)$. By Serre's criterion and by \eqref{eq:image1}, this implies that $D_\circ'$ is smooth at a general point of $D_\circ' \cap E_\circ'$, and $\nu_{D_\circ'}$ is an isomorphism at a general point of $\Xi_\circ$. Hence, the closure of the locus $G\subseteq D_\circ' \cap E_\circ'$ along which $D_\circ'$ is not smooth is an analytic subset of $D'\cap E'$ of dimension at most $1$. Thus, $\nu_D(G)\subseteq\Theta$ is an analytic subset of $\Theta$ of dimension at most $1$. By Corollary~\ref{corollary:alg-dim-one} we have that $\nu_D(G)$ is contained in finitely many fibres of $\alpha_\nonn$. Therefore, after shrinking $\Theta_\circ$, we may assume that $D_\circ'$ is smooth along $D_\circ'\cap E_\circ'$. This, together with \eqref{eq:Dsmoothaway}, yields that $D_\circ'$ is smooth.

In particular, $\nu_{D_\circ'}$ is an isomorphism. Since $\mult_{\Phi_\circ} \nu_{D_\circ'}^* (E_\circ'|_{D_\circ' }) =1$ by case (i) of Step 2, we conclude that $\mult_{D_\circ'\cap E_\circ'} (E_\circ'|_{D'_\circ}) =1$. Therefore, the scheme-theoretic intersection of $D_\circ'$ and $E_\circ$ is transversal at a general point of $D_\circ'\cap E_\circ'$ by Lemma~\ref{lem:intersectionmult}. By applying Corollary~\ref{corollary:alg-dim-one} as in the previous paragraph, we conclude that, after shrinking $\Theta_\circ$, we may assume that the scheme-theoretic intersection of $D_\circ'$ and $E_\circ'$ is transversal along $D_\circ'\cap E_\circ'$. Thus, the divisor $D_\circ'+E_\circ'$ has simple normal crossings support.

As $\mult_{\nonn_\circ} D_\circ=3$ by case (i) of Step 2, from \eqref{eq:twoequalities} we obtain
$$ \textstyle K_{X_\circ'} + \frac{2}{3} D_\circ' + E_\circ' \sim_\Q \mu_\circ^* \big(K_{X_\circ}+\frac{2}{3} D_\circ\big). $$
Now we set $Z:=X'$, $\mu:=\mu'$, $p:=\frac23$ and $\Delta:=\frac{2}{3} D' + E'$.

\medskip

\emph{Step 4.}
In Steps 4 and 5 we prove the proposition when we are in case (ii) of Step 2.

Let $\holom{\mu''}{X''}{X'}$ be the blowup along $D' \cap E'$, let $F''$ the unique exceptional divisor that dominates $D_\circ' \cap E'_\circ$, let $D'' \subseteq X''$ be the strict transform of $D$ on $X''$, and let $E'' \subseteq X''$ be the strict transform of $E'$ on $X''$. Set $X_\circ'':=(\mu'')^{-1}(X_\circ')$, $\mu_\circ'':=\mu''|_{X_\circ''}$, $D_\circ'':=D''|_{X_\circ''}$, $E_\circ'':=E''|_{X''_\circ}$ and $F_\circ'':=F''|_{X''_\circ}$. Then $\holom{\mu_\circ''}{X_\circ''}{X_\circ'}$ is the blowup of $X_\circ'$ along $D_\circ' \cap E'_\circ$, hence by \eqref{eq:Esmooth} and \eqref{eq:thetasmooth} we conclude that
\begin{equation}\label{eq:Esmooth'}
E_\circ''+F_\circ''\text{ has simple normal crossings support.}
\end{equation}
By \eqref{eq:Dsmoothaway} we have that
\begin{equation}\label{eq:Dsmoothaway'}
D_\circ''\text{ is smooth away from }D_\circ''\cap F_\circ''.
\end{equation}
As in Step 2 we deduce that, after shrinking $\Theta_\circ$, we may assume that the morphism $\mu_\circ''|_{D_\circ''\cap F_\circ''}$ is finite and \'etale. In particular, we may assume that
\begin{equation}\label{eq:thetasmooth'}
D_\circ''\cap F_\circ''\text{ is smooth.}
\end{equation}
As in Step 1 we deduce that the induced morphism $(D_\circ'')^\nu \rightarrow (D_\circ')^{\nu}$ is an isomorphism, and we have the factorisation
\begin{equation}\label{eq:factorisation'}
\nu_{D_\circ'} = \mu_\circ''|_{D_\circ''} \circ \nu_{D_\circ''}.
\end{equation}
Furthermore, by \eqref{eq:image1} and by \eqref{eq:factorisation'} we have that
\begin{align}
\Phi_\circ&=(\nu_{D_\circ'})^{-1}(D_\circ' \cap E_\circ') \label{eq:image1'}\\
&= (\nu_{D_\circ''})^{-1}\big((\mu_\circ''|_{D_\circ''})^{-1} (D_\circ' \cap E_\circ')\big) = (\nu_{D_\circ''})^{-1}(D_\circ'' \cap F_\circ'').\notag
\end{align}
Since $\Phi_\circ$ is irreducible, we conclude that
\begin{equation}\label{eq:irreducible1'}
D_\circ'' \cap F_\circ''\text{ is irreducible.}
\end{equation}

We claim that the divisor $D_\circ''+E_\circ''+ F_\circ''$ has simple normal crossings support, and that $\mult_{D_\circ' \cap E_\circ'} D_\circ'=2$. We will prove this claim in Step 5. Assuming the claim, we complete the proof of the proposition when we are in case (ii) of Step 2. As $\mult_{\nonn_\circ} D_\circ=2$ by case (ii) of Step 2, and as $\mult_{D_\circ' \cap E_\circ'} D_\circ'=2$ by the claim, by \eqref{eq:Esmooth}, \eqref{eq:thetasmooth} and \eqref{eq:twoequalities} we obtain
$$ K_{X_\circ''} \sim (\mu_\circ'')^* K_{X_\circ'} + F_\circ'' \sim (\mu_\circ'')^* (\mu_\circ')^* K_{X_\circ} + E''_\circ + 2 F_\circ'' $$
and
$$ (\mu_\circ'')^* (\mu_\circ')^* D_\circ = (\mu_\circ'')^* (D_\circ' + 2 E'_\circ) = D_\circ'' + 2 E''_\circ + 4 F_\circ''. $$
Therefore
$$ \textstyle K_{X_\circ''} + \frac{3}{4} D_\circ'' +  \frac{1}{2} E''_\circ + F_\circ'' \sim_\Q (\mu_\circ'')^* (\mu_\circ')^* (K_{X_\circ}+\frac{3}{4} D_\circ). $$
Now we set $Z:=X''$, $\mu:=\mu'\circ\mu''$, $p:=\frac34$ and $\Delta:=\frac34 D'' + \frac12 E'' + F''$.

\medskip

\emph{Step 5.}
In this step we prove the claim from Step 4. By \eqref{eq:Esmooth'}, it suffices to show that $D_\circ''+F_\circ''$ has simple normal crossings support, and that $E_\circ''\cap D_\circ''=\emptyset$. We show these two facts in this step.

By Proposition~\ref{normalisation-formula}, there exists an effective divisor $\Xi_\circ''$ on $D_\circ^\nu$ such that $ K_{D_\circ^\nu} \sim (\nu_{D_\circ''})^* K_{D_\circ''} - \Xi_\circ''$. Set $k:=\mult_{D_\circ' \cap E_\circ'} D_\circ'$. Then as in Step 2, by using \eqref{eq:factorisation'}, we obtain
\begin{equation}\label{eq:restrictionequation'}
\Xi'_\circ=\cond_\circ'' + (k-1) (\nu_{D_\circ''})^* F_\circ''|_{D_\circ''}.
\end{equation}
We have $(\nu_{D_\circ'})^{-1}(D_\circ' \cap E_\circ')=\Phi_\circ\subseteq\Xi_\circ'$ by \eqref{eq:image1} and since $\mult_{\Phi_\circ}\Xi_\circ'=1$ by case (ii) of Step 2. Thus, the variety $D_\circ'$ is not normal at a general point of $D_\circ' \cap E_\circ'$, hence Lemma~\ref{lemma:Nagata} gives $\geq2$. Moreover, by \eqref{eq:image1'} we have $\mult_{\Phi_\circ} (\nu_{D_\circ''})^* (F_\circ''|_{D_\circ''}) \geq1$. Then, by comparing multiplicities along $\Phi_\circ$ of both sides of the equation \eqref{eq:restrictionequation'}, we obtain
\begin{equation}\label{eq:threeequations}
\mult_{D_\circ' \cap E_\circ'} D_\circ'=2, \quad \mult_{\Phi_\circ}\Xi_\circ''=0, \quad \mult_{\Phi_\circ} (\nu_{D_\circ''})^* (F_\circ''|_{D_\circ'' }) =1.
\end{equation}

Now, since $\mult_{\Phi_\circ}\Xi_\circ''=0$ by \eqref{eq:threeequations}, as in Step 3 we conclude, after shrinking $\Theta_\circ$, that $D_\circ''$ is smooth along $D_\circ''\cap F_\circ''$, which, together with \eqref{eq:Dsmoothaway'}, yields that $D_\circ''$ is smooth. In particular, $\nu_{D_\circ''}$ is an isomorphism, and since we have $\mult_{\Phi_\circ} (\nu_{D_\circ''})^* (F_\circ'|_{D_\circ''}) =1$ by \eqref{eq:threeequations}, we conclude by \eqref{eq:image1'} that 
\begin{equation}\label{eq:multiplicityagain}
\mult_{D_\circ''\cap F_\circ''} (F_\circ''|_{D''_\circ}) =1.
\end{equation}
Therefore, as in Step 3 we  conclude, after shrinking $\Theta_\circ$, that the divisor $D_\circ''+F_\circ''$ has simple normal crossings support. It remains to show that $E_\circ''\cap D_\circ''=\emptyset$.

Since $\nu_{D_\circ''}$ is an isomorphism, \eqref{eq:factorisation'} becomes $\nu_{D_\circ'} = \mu_\circ''|_{D_\circ''}$. Therefore, by restricting the equation $(\mu_\circ'')^*E_\circ'=E_\circ''+F_\circ''$ to $D_\circ''$, we obtain
\begin{equation}\label{eq:restrictioneq}
(\nu_{D_\circ'})^*(E'_\circ|_{D_\circ'})=E_\circ''|_{D_\circ''}+F_\circ''|_{D_\circ''}.
\end{equation}
By \eqref{eq:image1'} and since we may assume that $\nu_{D_\circ''}$ is the identity by the previous paragraph, we have that the divisor $(\nu_{D_\circ'})^*(E'_\circ|_{D_\circ'})$ on $D_\circ''$ is supported on the irreducible set $(\nu_{D_\circ'})^{-1}(D_\circ' \cap E_\circ')=D_\circ'' \cap F_\circ''$. As the divisor $F_\circ''|_{D_\circ''}$ on $D_\circ''$ is clearly supported on $D_\circ'' \cap F_\circ''$, by \eqref{eq:restrictioneq} we conclude that
\begin{equation}\label{eq:supportinDF}
\Supp(E_\circ''|_{D_\circ''})\subseteq D_\circ'' \cap F_\circ''.
\end{equation}
Moreover, we have $\mult_{D_\circ''\cap F_\circ''} (\nu_{D_\circ'})^* (E_\circ'|_{D'_\circ}) =1$ by \eqref{eq:image1'} and by case (ii) of Step 2, and we have $\mult_{D_\circ''\cap F_\circ''} (F_\circ''|_{D_\circ'' }) =1$ by \eqref{eq:multiplicityagain}. Thus, from \eqref{eq:restrictioneq} we conclude that $\mult_{D_\circ''\cap F_\circ''}(E_\circ''|_{D_\circ''})=0$, which together with \eqref{eq:supportinDF} gives $E_\circ''\cap D_\circ''=\emptyset$, as desired.

\medskip

\emph{Step 6.}
In this step we prove the proposition when we are in case (iii) of Step 2.

As in Step 4, let $\holom{\mu''}{X''}{X'}$ be the blowup along $D' \cap E'$. Then we use the same notation as in Step 4 for $D''$, $E''$, $X_\circ''$, $\mu_\circ''$, $D_\circ''$, $E_\circ''$ and $F_\circ''$. Since $\mult_{\Phi_\circ}\Xi_\circ'=0$ by case (iii) of Step 2, as in Step 3 we conclude, after shrinking $\Theta_\circ$, that $D_\circ'$ is smooth. Then by \eqref{eq:Esmooth} and \eqref{eq:thetasmooth} we conclude that
\begin{equation}\label{eq:thetasmooth''}
D_\circ''+F_\circ''\text{ has simple normal crossings support,}
\end{equation}
and 
\begin{equation}\label{eq:Esmooth''}
E_\circ''+F_\circ''\text{ has simple normal crossings support.}
\end{equation}
As $\mult_{\nonn_\circ} D_\circ=2$ by case (iii) of Step 2, and as 
$$\mult_{D_\circ' \cap E_\circ'} D_\circ'=\mult_{D_\circ' \cap E_\circ'} E_\circ'=1$$
by \eqref{eq:Esmooth}, by \eqref{eq:thetasmooth}, and as $D_\circ'$ is smooth by above, the equalities \eqref{eq:twoequalities} give
\begin{equation}\label{eq:222}
K_{X_\circ''} \sim (\mu_\circ'')^* K_{X_\circ'} + F_\circ'' \sim (\mu_\circ'')^* (\mu_\circ')^* K_{X_\circ} + E''_\circ + 2 F_\circ''
\end{equation}
and
\begin{equation}\label{eq:223}
(\mu_\circ'')^* (\mu_\circ')^* D_\circ = (\mu_\circ'')^* (D_\circ' + 2 E'_\circ) = D_\circ'' + 2 E''_\circ + 3 F_\circ''.
\end{equation}

Note that $\mu_\circ''|_{D_\circ''}$ is an isomorphism since $D_\circ'$ and $D_\circ'\cap E_\circ'$ are smooth, and the map $\nu_{D_\circ'}$ is also an isomorphism, and we identify $(D_\circ')^\nu$ and $(D_\circ)^\nu$ with $D_\circ''$. Thus, by restricting the equation $(\mu_\circ'')^*E_\circ'=E_\circ''+F_\circ''$ to $D_\circ''$, we obtain
\begin{equation}\label{eq:restrictioneq'}
(\nu_{D_\circ'})^*(E'_\circ|_{D_\circ'})=E_\circ''|_{D_\circ''}+F_\circ''|_{D_\circ''}.
\end{equation}
As in \eqref{eq:image1'}, and since we identify $(D_\circ)^\nu$ with $D_\circ''$, we have
\begin{equation}\label{eq:irreducible1''}
D_\circ'' \cap F_\circ''=\Phi_\circ,
\end{equation}
hence $\mult_{D_\circ''\cap F_\circ''} (\nu_{D_\circ'})^* (E_\circ'|_{D'_\circ}) =2$ by case (iii) of Step 2. By \eqref{eq:thetasmooth''} we have $\mult_{D_\circ''\cap F_\circ''} (F_\circ''|_{D_\circ'' })=1$. Thus, from \eqref{eq:restrictioneq'} we obtain $\mult_{D_\circ''\cap F_\circ''}(E_\circ''|_{D_\circ''})=1$, and observe that 
\begin{equation}\label{eq:supportinDF'}
\Supp(E_\circ''|_{D_\circ''})\subseteq D_\circ'' \cap F_\circ''.
\end{equation}
as in \eqref{eq:supportinDF}. Therefore, as in Step 3 we conclude, after shrinking $\Theta_\circ$, that
\begin{equation}\label{eq:thetasmooth'''}
D_\circ''+E_\circ''\text{ has simple normal crossings support,}
\end{equation}
and also that $D_\circ''\cap E_\circ''\neq\emptyset$ as $\mult_{D_\circ''\cap F_\circ''}(E_\circ''|_{D_\circ''})=1$ by above. But since $D_\circ''\cap E_\circ''\subseteq D_\circ''\cap F_\circ''$ by \eqref{eq:supportinDF'}, and since $D_\circ''\cap F_\circ''$ is irreducible by \eqref{eq:irreducible1''}, we conclude that
\begin{equation}\label{eq:intersectionofthree}
D_\circ''\cap E_\circ''=D_\circ''\cap F_\circ''.
\end{equation}

Let $\holom{\mu'''}{X'''}{X''}$ be the blowup along $D'' \cap E''$, let $G'''$ the unique exceptional divisor that dominates $D_\circ'' \cap E''_\circ$, and let $D'''$, $E'''$ and $F'''$ be the strict transforms of $D$, $E'$ and $F''$ on $X''$. Set $X_\circ''':=(\mu''')^{-1}(X_\circ'')$, $\mu_\circ''':=\mu'''|_{X_\circ'''}$, $D_\circ''':=D'''|_{X_\circ''}$, $E_\circ''':=E'''|_{X'''_\circ}$, $F_\circ''':=F'''|_{X'''_\circ}$ and $G_\circ''':=G'''|_{X'''_\circ}$. Then the divisor
$$D_\circ'''+E_\circ'''+F_\circ'''+G_\circ'''\text{ has simple normal crossings support}$$
by \eqref{eq:thetasmooth''}, \eqref{eq:Esmooth''}, \eqref{eq:thetasmooth'''} and \eqref{eq:intersectionofthree}. Moreover, \eqref{eq:222} and \eqref{eq:223} give
$$ K_{X_\circ'''} \sim (\mu_\circ''')^* K_{X_\circ''} + G_\circ''' \sim (\mu_\circ''')^* (\mu_\circ'')^* (\mu_\circ')^* K_{X_\circ} + E'''_\circ + 2 F_\circ''' + 4 G_\circ''' $$
and
$$ (\mu_\circ''')^* (\mu_\circ'')^* (\mu_\circ')^* D_\circ = (\mu_\circ''')^* (D_\circ'' + 2 E''_\circ + 3 F_\circ'') = D_\circ''' + 2 E'''_\circ + 3 F_\circ''' + 6 G_\circ'''. $$
Therefore
$$ \textstyle K_{X_\circ''} + \frac56 D_\circ''' +  \frac23 E'''_\circ + \frac12 F_\circ''' + G_\circ''' \sim_\Q (\mu_\circ''')^* (\mu_\circ'')^* (\mu_\circ')^* (K_{X_\circ}+\frac56 D_\circ). $$
Now we set $Z:=X'''$, $\mu:=\mu'\circ\mu''\circ\mu'''$, $p:=\frac56$ and $\Delta:=\frac56 D''' + \frac23 E''' + \frac12 F''' + G'''$.
\end{proof}

The following lemma gives a description of the singularities of $D$ in the case where the horizontal part of the conductor is reduced.
This description will be the basis for the descent argument in Section~\ref{section:descending}.

\begin{lemma}\label{lemma:lagrangian5}
Assume Setup~\ref{setup:section7}. Assume, furthermore, that $\nu^*K_D$ is cohomologically trivial over $\Gamma$. By Proposition~\ref{normalisation-formula} there exists an effective divisor $\Xi$ on $D^\nu$ such that $K_{D^\nu} \sim \nu^* K_D - \cond$, and let $\cond^h$ denote the horizontal part of $\cond$ with respect to $\varphi$. Assume that $\cond^h$ is reduced, and set $ \nonn := \nu(\cond^h)$. Denote by $\holom{\alpha_\nonn}{\nonn}{\Gamma_\nonn}$ the algebraic reduction.

Then there exists a dense Zariski open subset $\Gamma_{\nonn, \circ} \subset \Gamma_\nonn$ such that the divisor
 $D$ has normal crossings  singularities of multiplicity two at every point of $\nonn_\circ := \fibre{\alpha_\nonn}{\Gamma_{\nonn,\circ}}$.
\end{lemma}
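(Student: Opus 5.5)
The strategy mirrors Step 1–Step 3 of the proof of Proposition~\ref{prop:lagrangian4}, now with reduced conductor. By Lemma~\ref{lemma:lagrangian3}(a), $\cond^h$ is reduced and either irreducible and étale of degree two over $S_\circ$ (case (i)), or the union of two disjoint sections $\cond_1,\cond_2$ (case (ii)). By Lemma~\ref{lemma:lagrangian3}(c),(d), $\nonn$ (or each $\nonn_i$) has algebraic dimension one. By Corollary~\ref{corollary:alg-dim-one}, every analytic subset of $\nonn$ of dimension at most one lies in finitely many fibres of $\alpha_\nonn$. This is the mechanism that lets us repeatedly shrink to $\Gamma_{\nonn,\circ}$: every bad locus we encounter will be a curve or a point in $\nonn$, hence can be discarded by removing finitely many fibres.

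\emph{Step 1 (local setting).} Shrink $\Gamma_{\nonn,\circ}$ so that, over $\nonn_\circ$:
\begin{itemize}
\item $\nonn_\circ$ is smooth;
\item the images of the vertical components of $\cond$ are disjoint from $\nonn_\circ$ (Lemma~\ref{lemma:lagrangian3}(c)), and in case (ii) with $\nonn_1\neq\nonn_2$, so is the other $\nonn_j$;
\item $\nonn_\circ$ is the singular locus of $D$ near $\nonn_\circ$, and $\cond$ restricted over $\nonn_\circ$ is reduced with support $\nu^{-1}(\nonn_\circ)$.
\end{itemize}

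\emph{Step 2 (multiplicity).} Blow up $\nonn_\circ$ to get $\mu'$, with strict transform $D'$ and exceptional divisor $E'$, and set $m=\mult_{\nonn_\circ}D$. The computation leading to \eqref{eq:restrictionequation} gives
$$\cond|_{D^\nu_\circ}=\cond'_\circ+(m-1)\nu_{D'_\circ}^*(E'_\circ|_{D'_\circ}).$$
Since $\cond$ has multiplicity one along each component over $\nonn_\circ$, and the pullback of $E'$ has multiplicity at least one there, we get $m=2$, $\cond'_\circ=0$ near those components, and the pullback of $E'$ has multiplicity exactly one. By Lemma~\ref{lemma:Nagata}, $m\ge2$ anyway. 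Then:
\begin{itemize}
\item $D'_\circ$ is normal, hence smooth in codimension one along $D'\cap E'$. After shrinking via Corollary~\ref{corollary:alg-dim-one}, $D'_\circ$ is smooth and $\nu_{D'_\circ}$ is an isomorphism.
\item By Lemma~\ref{lem:intersectionmult}, $D'_\circ$ meets $E'_\circ$ transversally, again after shrinking.
\item After shrinking, $D'_\circ\cap E'_\circ\to\nonn_\circ$ is finite étale of degree two. In case (ii), with $\nonn_1=\nonn_2$, it splits into two sections; in case (i), or with $\nonn_1\ne\nonn_2$, it is connected of degree two.
\end{itemize}

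\emph{Step 3 (local form).} $D$ has multiplicity $2$ along the smooth codimension-two stratum $\nonn_\circ$. Its strict transform is smooth, transverse to $E'$, and meets each $\PP^1$-fibre of $E'\to\nonn_\circ$ in two distinct reduced points. In local coordinates this means the tangent cone of $D$ at each point of $\nonn_\circ$ is a nondegenerate quadratic form in the two normal directions, i.e. it has rank two. A hypersurface of multiplicity two along a smooth codimension-two locus with everywhere nondegenerate normal quadratic part is analytically $x_1x_2=0$ by the parametrised Morse lemma. This identification is the main step. Should the Morse argument be delicate, one can argue instead that two smooth branches meet transversally: the two étale sheets of $D'\cap E'$ give locally two smooth branches of $D$ (the images of the local components of $D'$), and their tangent directions differ.

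In case (ii) with $\nonn_1\ne\nonn_2$, each $\nonn_i$ has its own conductor component. There, $\nu$ restricted to $\cond_i$ is generically two-to-one onto $\nonn_i$, or is glued with $\cond_j$; the same computation applies to each $\nonn_i$ separately. The main obstacle I expect is bookkeeping: making sure every shrinking is by finitely many fibres of $\alpha_\nonn$, and that non-étale or tangency loci are genuinely at most one-dimensional, hence controlled by Corollary~\ref{corollary:alg-dim-one}.
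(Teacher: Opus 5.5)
Your proposal is correct and follows the paper's proof. Both arguments blow up $\Theta$ and compare conductors via the normalisation formula to force $m=2$, $\Xi'_\circ=0$ and $\nu_{D'_\circ}^*(E'_\circ|_{D'_\circ})$ reduced. Both then shrink via Corollary~\ref{corollary:alg-dim-one} until $D'_\circ\to D_\circ$ is finite, $D'_\circ$ is smooth, and $D'_\circ\cap E'_\circ\to\Theta_\circ$ is étale of degree two.

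The differences are minor. The paper deduces smoothness of $D'_\circ$ from $D'_\circ\cap E'_\circ$ being a smooth Cartier divisor, whereas you use normality; this is cosmetic. Your Step~3 (parametrised Morse lemma, or two smooth branches with distinct normal directions) spells out the local-form deduction that the paper compresses into ``this shows the statement''.
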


\begin{proof}
Let $\Gamma_{\nonn, \circ} \subset \Gamma_\nonn$ be the Zariski open set defined in Lemma~\ref{lemma:lagrangian3}(c),(d).
Recall that \footnote{By Lemma~\ref{lemma:lagrangian3}(d) the intersections $\Theta_{1,\circ} \cap \Theta_{2, \circ}$ is empty if
$\Theta$ has two components. In this case, for simplicity of notation we set $\Theta_\circ := \Theta_{1,\circ} \cup \Theta_{2, \circ}$.}
$$
\nonn_\circ := \fibre{\alpha_\nonn}{\Gamma_{\nonn,\circ}}.
$$
is a smooth surface in $X$.
We will show that the statement holds up to replacing $\Gamma_{\nonn, \circ}$ by another Zariski dense open subset.

The setup is analogous to the proof of Proposition~\ref{prop:lagrangian4} and we will use the same notation:
let $\holom{\mu'}{X'}{X}$ be the blowup along $\nonn$, denote by $E'$ the unique exceptional divisor that maps onto $\nonn$, and let $D' \subseteq X'$ be the strict transform of $D$. Set $X_\circ':=(\mu')^{-1}(X_\circ)$, $\mu_\circ':=\mu'|_{X'_\circ}$, $E_\circ':=E'|_{X'_\circ}$ and $D_\circ':=D'|_{X_\circ'}$. Then $\holom{\mu_\circ'}{X_\circ'}{X_\circ}$ is the blowup of $X_\circ$ along $\nonn_\circ$, hence 
$E_\circ'$ is a smooth irreducible exceptional divisor of $\mu_\circ'$.
By construction of $D_\circ$ we have that
$D_\circ'$ is smooth away from $D_\circ'\cap E_\circ'$.

By the universal property of normalisations we have a commutative diagram
$$\xymatrix{
(D_\circ')^\nu \ar[r] \ar[d]_{\nu_{D_\circ'}} & (D_\circ)^\nu  \ar[d]^{\nu_{D_\circ}}  \\
D_\circ' \ar[r]_{\mu_\circ'|_{D_\circ'}} & D_\circ}
$$
and Zariski's main theorem implies that $(D_\circ')^\nu \rightarrow D_\circ^{\nu}$ is an isomorphism.
For simplicity of notation we identify these two manifolds.
By Proposition~\ref{normalisation-formula} there exists an effective divisor $\Xi_\circ'$ on $D_\circ^\nu$ such that
\begin{equation}\label{eq:normalisation''}
K_{D_\circ^\nu} \sim \nu_{D_\circ'}^* K_{D_\circ'} - \Xi_\circ'.
\end{equation}
Combined with 
$$
(\mu_\circ')^* D_\circ \sim  D_\circ' + m E_\circ'\quad\text{and}\quad K_{X_\circ'} \sim (\mu_\circ')^* K_{X_\circ} + E_\circ',
$$
where $m:=\mult_{\nonn_\circ} D_\circ$, we obtain
$$
\Xi|_{D_\circ^{\nu}}=\cond_\circ' + (m-1) \nu_{D_\circ'}^* (E_\circ'|_{D_\circ'}).
$$
By construction we have $\Xi|_{D_\circ^{\nu}}=\Xi^h|_{D_\circ^{\nu}}$ which by assumption is a reduced divisor.
The support of $\nu_{D_\circ'}^* (E_\circ'|_{D_\circ'})$ contains $\Xi^h|_{D_\circ^{\nu}}$, so $m \geq 2$ implies that
$$
\cond_\circ'=0, \qquad m=2
$$
and $\nu_{D_\circ'}^* (E_\circ'|_{D_\circ'})$ is reduced. The first item shows that $D'_\circ$ is normal, so we can identify
it to $D^\nu_\circ$. The last item shows that the intersection $E_\circ' \cap D_\circ'$ is generically reduced. Thus by the second item the 
morphism
$$
E_\circ' \cap D_\circ' \rightarrow \Theta_\circ
$$
is generically \'etale of degree two. The non-\'etale locus of this morphism is an analytic subset of dimension at most one, so 
by Corollary~\ref{corollary:alg-dim-one} it is contained in the fibres of the algebraic reduction $\alpha_\Theta$. Thus, up to replacing 
$\Gamma_{\nonn, \circ}$ (and therefore $\Theta_\circ$) by another Zariski dense open subset, we can assume that the morphism is \'etale. Therefore $E_\circ' \cap D_\circ'$ is a smooth surface. Since it is a complete intersection of $E_\circ'$ and $D_\circ'$, this
shows that $D'_\circ$ is smooth. In conclusion we obtain that
$$
D'_\circ \rightarrow D_\circ
$$
is a finite map such that $D'_\circ$ is smooth and over $\Theta_\circ=D_{\circ, \sing}$ the map $\mu_\circ'|_{D_\circ'}$ is an \'etale cover of degree two. This shows the statement.
\end{proof}

%----------------------------------------------------
%----------------------------------------------------
\section{Extending sections}\label{section:extending}
%----------------------------------------------------
%----------------------------------------------------

In this section we show that $\kappa(X, D) \geq 1$ holds if the divisor $D \subset X$ is uniruled and has singularities
of Kodaira type $II, III, IV$ along a surface given by the horizontal part of the conductor (cf. Section~\ref{section:uniruled} for the terminology). This will be done in two steps: we first show a general result for projective manifolds (Proposition~\ref{prop:projective-extension}), then we use algebraic approximation to reduce to the projective case.

\begin{proposition} \label{prop:projective-extension}
Let $X$ be a smooth projective fourfold with $K_X\sim 0$. Let $D$ be a prime divisor on $X$, let $\holom{\nu}{D^\nu}{D}$ be its normalisation. Let $\merom{\pi}{D^\nu}{\sK}$ be its MRC fibration and assume that $\sK$ is a surface.  Assume moreover that the following conditions hold.
\begin{enumerate}[\normalfont (a)]
\item There exists a surface $S \subseteq D_\sing$ which is not uniruled, and there exists a non-empty Zariski open subset $S_\circ\subseteq S$ with the property that for every $x \in S_\circ$, the divisor $D$ has singularities of Kodaira type $II, III$ or $IV$ at $x$. 
\item The bundle $\nu^* K_D$ has degree zero on the general fibre of $\pi$.
\end{enumerate}
Then $\kappa(X, D) \geq 1$.
\end{proposition}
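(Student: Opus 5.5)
The natural tool is the Demailly--Hacon--P\v aun extension theorem, Theorem~\ref{thm:plt-extension}. The singularities of Kodaira type $II$, $III$, $IV$ are exactly those for which Proposition~\ref{prop:lagrangian4} produces a ``plt resolution'': a composition $\mu\colon Z\to X$ of at most three blowups along (strict transforms of) $S$ and its successors. On $Z_\circ=\mu^{-1}(X_\circ)$ we then have
$$K_{Z_\circ}+\Delta_\circ\sim_\Q\mu^*(K_X+pD)\sim_\Q p\,\mu^*D,$$
where $p\in\{\frac23,\frac34,\frac56\}$ and $\Delta_\circ$ is log smooth. The component of $\Delta$ with coefficient one is the last exceptional divisor, $E'$, $F''$ or $G'''$ respectively; call it $E$.

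\textbf{Step 1: a global pair.} Since $X$ is projective, I would blow up $S$ and its successors globally, and then take a further log resolution $\mu\colon Z\to X$ that is an isomorphism over the good locus. This gives an snc pair $(Z,\Delta)$ with $\Delta=E+B$, where $\lfloor B\rfloor=0$ near $E$ and $K_Z+E+B\sim_\Q \mu^*(pD)+(\text{exceptional divisor } P-N)$. The bad locus over $S\setminus S_\circ$ and over the other singularities of $D$ may introduce exceptional divisors with coefficient $\ge1$ or negative coefficient. I would add to $B$ a small multiple of $\mu^*D$, discard the negative part into $N$, and shrink the coefficients below one away from $E$. One must check that $E$ still has coefficient one. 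Because $\mu^*D$ contains $E$ with positive multiplicity, the required inclusion $E\subseteq\Supp N\subseteq\Supp(E+B)$ holds for $N$ a suitable multiple of $\mu^*D$ plus exceptional divisors.

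\textbf{Step 2: reach nefness by a MMP.} $K_Z+E+B$ is in general not nef, and the plt condition is only known near $E_\circ$. I would run a $(K_Z+E+B)$-MMP with scaling, or equivalently an MMP over $X$, which contracts exceptional divisors not meeting $E$ generically. Since $K_Z+E+B$ is $\Q$-linearly equivalent to $p\mu^*D$ plus an exceptional divisor, $\mu^*D$ is nef (degree zero on curves is not available here, but nefness of $D$ follows from (b) and the structure of $D$), and the MMP over $X$ terminates at a model where $K+E+B\equiv_X 0$, i.e.\ equals the pullback of $pD$. To keep the pair plt, one perturbs by a small multiple of the strict transform of $D$. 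Because $E$ dominates $S$ and the $E$-coefficient is one, $E$ is not contracted. The pair stays plt near $E$ and becomes plt globally after a dlt modification and a small perturbation.

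\textbf{Step 3: apply the extension theorem.} By adjunction, $K_E+B_E\sim_\Q(pD)|_E$. The divisor $E$ maps to $S$, which is not uniruled, with fibres $\PP^1$ (or a cover). Hypothesis (b), together with the coefficient computation of Proposition~\ref{prop:lagrangian4}, shows that $K_E+B_E$ is nef and not numerically trivial on $E$ along the relevant direction. Theorem~\ref{thm:plt-extension} then gives $\kappa(Z,K_Z+E+B)\geq1$, hence $\kappa(X,D)\geq1$.

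The main obstacle is Step 2: making the pair globally plt with $K+E+B$ nef while retaining $E$ with coefficient one and the support condition on $N$. The singularities of $D$ away from $S_\circ$ are uncontrolled, so I would try first to use the MMP over $X$. This reduces nefness to nefness of $D$ and confines all modifications to exceptional loci not dominating $S$.
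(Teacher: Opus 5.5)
Your Step~2 has a genuine gap, and it is exactly where the content of the proposition lies.

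First, $D$ is not assumed nef, and (b) does not make it nef. Condition (b) only says that $\nu^*K_D=\nu^*(D|_D)$ has degree zero on general fibres of the MRC fibration. It says nothing about other curves in $D$, for instance curves in $S$ or in the uncontrolled part of $D_\sing$.

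Second, a global MMP with scaling and an MMP over $X$ are not equivalent. Even if $D$ were nef, an MMP over $X$ does not give $K+E+B\equiv_X 0$. Write $K_Z+E+B\sim_\Q\mu^*(pD)+P-N$ with $P,N\geq0$ exceptional and without common components. The negativity lemma then only gives a relative minimal model with $K+E+B\sim_\Q(\mu')^*(pD)-N'$ and $N'\geq 0$. Here $N$ records the places over $X\setminus X_\circ$ where coefficients had to be lowered, e.g.\ where $(X,pD)$ is not log canonical. There is no reason for $(\mu')^*(pD)-N'$ to be nef, so the nefness hypothesis of Theorem~\ref{thm:plt-extension} is never reached. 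It is true that $E$ survives an MMP \emph{over} $X$, since the pair is crepant over $X_\circ$, but that is not the issue.

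Two smaller points. Step~3 needs nefness of $K+E+B$ on the ambient model, not a property of $K_E+B_E$. In Step~1, adding $\varepsilon\mu^*D$ to $B$ pushes the coefficient of $E$ above one. The effectivity and support condition should instead come from $K_X\sim 0$. This gives $K_Z\sim H\geq 0$ with exceptional support, so $K_Z+\Delta\sim_\Q H+\Delta\geq 0$ has support $\Supp\Delta$ once every exceptional divisor appears in $\Delta$ with a small coefficient $\varepsilon$.

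The missing idea is to run a \emph{global} $(K+\Delta)$-MMP and to prove directly that it never contracts $E$. Because $K+\Delta$ is effective, this MMP reaches a minimal model by \cite[Corollary~D]{LT22}. The paper shows by induction that every step is an isomorphism over nonempty Zariski open subsets of $S_\circ$ and of the MRC base $\sK$. The argument runs as follows.

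\begin{itemize}
\item The exceptional locus of each step is covered by $(K+\Delta)$-negative rational curves. These lie in the support of the effective representative $H+\Delta$, i.e.\ in the strict transforms of $D$, of $E$, and of the remaining exceptional divisors. The latter lie away from the good locus, so contracting them is harmless.
\item A rational curve $R$ through a general point of $E$ lies in a fibre over $S$, because $S$ is not uniruled (hypothesis (a)). By crepancy over $X_\circ$ this gives $(K+\Delta)\cdot R=\mu^*(K_X+pD)\cdot R=0$, so $R$ is not negative.
\item A rational curve $C$ through a general point of the strict transform of $D$ lies in an MRC fibre, because $\sK$ is not uniruled. Hence $(K+\Delta)\cdot C=p\,K_D\cdot\mu_*C=0$ by (b).
\item The exceptional loci of small contractions are uniruled, so they cannot dominate $S$ or $\sK$.
\end{itemize}

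This is where hypotheses (a) and (b) actually enter. In your proposal they are invoked only vaguely and do not drive the argument.
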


\begin{proof}
We will give the proof in full detail for singularities of Kodaira type $IV$. The other cases are completely analogous, writing
them is a straightforward, but lengthy exercise

Set
$$ D_\circ := D \setminus (S \setminus S_\circ), \qquad X_\circ := X \setminus (D \setminus D_\circ). $$

\medskip

\emph{Step 1.}
In this step, we prepare a log resolution of $(X,D)$ with particular properties we need in the remainder of the proof.

Let $Z$ be the normalisation of the blowup of $X$ along $S$, let $D_Z$ be the strict transform of $D$ on $Z$, and let $E_Z$ be the unique exceptional divisor mapping onto $S$. Let $X_1$ be a log resolution of the pair $(Z, D_Z+E_Z)$ which is an isomorphism over the locus where $(Z, D_Z+E_Z)$ is log smooth, and let $\holom{f}{X_1}{X}$ be the birational map obtained by composition. Let $D_1$ be the strict transform of $D_Z$ on $X$, and let $E_1$ be the strict transform of $E_Z$ on $X_1$. Let $F_1$ be the sum of all $f$-exceptional prime divisors on $X_1$, apart from~$E_1$.

Let $Z_\circ\subseteq Z$ be the normalisation of the blowup of $X_\circ$ along $S_\circ$, with the induced map $f_{Z_\circ}\colon Z_\circ \to X_\circ$. Since $D_\circ$ has singularities of Kodaira type $IV$ at every point of $S_\circ$, the pair $(X_\circ, \frac{2}{3} D_\circ)$ is log canonical and 
\begin{equation}\label{two-third}
\textstyle K_{Z_\circ} + \frac{2}{3} D_Z|_{Z_\circ} + E_Z|_{Z_\circ} \sim_\Q (f_{Z_\circ})^* \big(K_{X_\circ}+\frac{2}{3} D_\circ\big). 
\end{equation}
Note that $(Z_\circ, D_Z|_{Z_\circ}+E_Z|_{Z_\circ})$ is log smooth. Set $X_{1,\circ}:=f^{-1}(X_\circ)$ and $f_\circ:=f|_{X_{1,\circ}}$. Then \eqref{two-third} implies that
\begin{equation}\label{two-third1}
\textstyle K_{X_{1,\circ}} + \frac{2}{3} D_1|_{X_{1,\circ}} + E_1|_{X_{1,\circ}} \sim_\Q f_\circ^* \big(K_{X_\circ}+\frac{2}{3} D_\circ\big). 
\end{equation}
The construction implies also that
\begin{equation}\label{property dagger}
X_{1,\circ}\cap \supp F_1 = \emptyset.
\end{equation}

Now, fix a rational number $0< \varepsilon < 1$ and set 
$$\textstyle \Delta_1:=\frac23 D_1+E_1+\varepsilon F_1.$$
Then there exist effective $f$-exceptional divisors $A_1$ and $B_1$ on $X_1$ without common components such that
\begin{equation}\label{eq:-E2}
\textstyle K_{X_1}+\Delta_1 \sim_\Q f^*\big(K_X+\frac23 D\big)+A_1-B_1.
\end{equation}
By construction, $E_1$ is not a component of $A_1$ and of $B_1$, hence by \eqref{property dagger} we have
\begin{equation}\label{property dagger1}
X_{1,\circ}\cap \supp (A_1+B_1) = \emptyset.
\end{equation}

Further, since $K_X \sim 0$ and since $X$ is smooth, we have $K_{X_1} \sim H_1$, where $H_1\geq0$ and $\supp H_1=E_1+F_1$. Then we have
\begin{equation}\label{eq:effective}
\textstyle K_{X_1} + \Delta_1 \sim_\Q H_1+\frac{2}{3} D_1 + E_1 + \varepsilon F_1\geq0,
\end{equation}
and
\begin{equation}\label{support-condition}
\textstyle E_1 \subseteq \supp \big(H_1+\frac{2}{3} D_1 + E_1 + \varepsilon F_1\big) = \supp(D_1 + E_1+F_1).
\end{equation}

\medskip

\emph{Step 2.}
By~\cite[Corollary D]{LT22} there exists a sequence of $(K_{X_1}+\Delta_1)$-divisorial contractions or flips
$$ \merom{\tau_i}{(X_i,\Delta_i)}{(X_{i+1},\Delta_{i+1})}, $$
which terminates with a minimal dlt pair $(X_m, \Delta_m)$. We claim that this MMP does not contract $E_1$. We will prove this claim in the next step. We first show in this step how the claim implies the proposition.

By the claim, the pair $(X_m, \Delta_m)$ is plt and the strict transform of $E_1$ on $X_m$ is the unique log canonical place of $(X_m, \Delta_m)$. By  \eqref{support-condition} and by Theorem~\ref{thm:plt-extension}, we have that $\kappa(X_1,K_{X_1}+\Delta_1)=\kappa(X_m,K_{X_m}+\Delta_m) \geq 1$. Since $K_X\sim 0$, we have by \eqref{eq:-E2} that
\begin{align*}
\kappa(X,D)& \textstyle =\kappa\big(X,K_X+\frac23 D\big)=\kappa\big(X_1,f^*\big(K_X+\frac23 D\big)+A_1\big)\\
&=\kappa(X_1,K_{X_1}+\Delta_1+B_1)\geq \kappa(X_1,K_{X_1}+\Delta_1)\geq 1,
\end{align*}
as desired.

\medskip

\emph{Step 3.}
In the remainder of the proof we show the claim from Step 2. In this step, we construct two morphisms which we will use in the next steps.

First, we set
$$ E_{1,\circ}:=E_1\cap X_{1,\circ}\quad\text{and}\quad D_{1,\circ}:=D_1\cap X_{1,\circ}.$$
We define $g\colon E_{1,\circ}\to S_\circ$ as $g:=f|_{E_{1,\circ}}$. This is the first morphism we wanted to define.

Since $D_\circ$ has only singularities of Kodaira type $IV$, the MRC fibration $\holom{h}{D_{1,\circ}}{\sK_\circ}$ is a morphism and $\sK_\circ$ is a surface by assumption. This is the second morphism we wanted to define.

\medskip

\emph{Step 4.}
In the remainder of the proof we will prove the following stronger claim, which clearly implies the claim from Step 2, and thus implies the proposition: 

\emph{For each index $i$ there exist non-empty Zariski open subsets $S_i\subseteq S_\circ$ and $\sK_i\subseteq \sK_\circ$ such that the partial MMP $X_1\dashrightarrow X_i$ is an isomorphism on the set $g^{-1}(S_i)\cup h^{-1}(\sK_i)\subseteq X_1$.}

The proof is by induction on $i$. From now on we assume that the claim holds for an index $i$, and we show that it holds for $i+1$. 

Let $\xi_i\colon X_i\to Z_i$ be the extremal contraction corresponding to $\tau_i$ and let $V_i \subseteq X_i$ be the exceptional locus of $\xi_i$. Denote by $H_i$, $D_i$, $E_i$ and $F_i$ the strict transforms of $H_1$, $D_1$, $E_1$ and $F_1$ on $X_i$. The set $V_i$ is covered by rational curves which are $(K_{X_i}+\Delta_i)$-negative by~\cite[Theorem 1]{Kaw91}.
Since by~\eqref{eq:effective} we have
$$\textstyle K_{X_i} + \Delta_i \sim_\Q H_i+\frac{2}{3} D_i + E_i + \varepsilon F_i\geq0,$$
these rational curves are thus contained in $\supp\big(H_i+\frac{2}{3} D_i + E_i + \varepsilon F_i\big)$, hence in $\supp(D_i + E_i + F_i)$ by \eqref{support-condition}. Therefore,
\begin{equation}\label{eq:subset}
V_i\subseteq\supp(D_i + E_i + F_i).
\end{equation}

\medskip

\emph{Step 5.}
Assume first that $\xi_i$ is divisorial. Then \eqref{eq:subset} shows that $V_i$ is one of the components of the divisor $D_i + E_i + F_i$. If $V_i$ is one of the components of $F_i$, then 
$$V_i\cap\theta_i\big(g^{-1}(S_i)\big)=V_i\cap\theta_i\big(h^{-1}(\sK_i)\big)=\emptyset$$
by \eqref{property dagger}. We are then done if we set $S_{i+1}:=S_i$ and $\sK_{i+1}:=\sK_i$.

Now assume that $V_i=D_i$. Let $C_i$ be a rational curve through a general point of $D_i$ which is contracted by $\xi_i$. By induction, we have the following commutative diagram:
$$
\xymatrix{
h^{-1}(\sK_i) \ar[rr]^{\theta_i}_{\isom} \ar[rd]_{h} && \theta_i\big(h^{-1}(\sK_i)\big). \ar[dl]  \\
& \sK_i &
}$$
Since $\sK$ is not uniruled, the curve $C_i$ must be contracted by the morphism $\theta_i\big(h^{-1}(\sK_i)\big)\to \sK_i$. If we denote $C_1:=\theta_i^{-1}(C_i)\subseteq h^{-1}(\sK_i)$, then \eqref{property dagger1} gives $C_1\cap\supp(A_1+B_1)=\emptyset$. Moreover, note that either $f_*C_1=0$, or $f_*C_1$ is a curve such that $\nu^{-1}(f_*C_1)$ is contracted by $\pi$, since $\sK$ is not uniruled. Thus, by our assumption (b) we conclude that $\nu^*K_D\cdot \nu^{-1}(f_*C_1)=0$. All this, together with \eqref{eq:-E2}, implies:
\begin{align*}
(K_{X_1}+\Delta_1)\cdot C_1&\textstyle=f^*\big(K_X+ \frac{2}{3} D\big)\cdot C_1 = \big(K_X+ \frac{2}{3} D\big)\cdot f_*C_1 = \frac{2}{3} D\cdot f_*C_1 \\
&\textstyle = \frac{2}{3} D|_D\cdot f_*C_1 = \frac{2}{3} K_D\cdot f_*C_1 = \frac{2}{3} \nu^*K_D\cdot \nu^{-1}(f_*C_1)=0,
\end{align*}
where we used that $K_X \sim \sO_X$ and the adjunction formula on $D$. Thus, $\deg_{C_1}(K_{X_1}+\Delta_1)|_{C_1}=0$. Since $\theta_i$ is an isomorphism around $C_1$, we conclude that
$$\deg_{C_i}(K_{X_i}+\Delta_i)|_{C_i}=0,$$
which is a contradiction since the curve $C_i$ is $(K_{X_i}+\Delta_i)$-negative. Thus, $V_i\neq D_i$.

Finally, assume that $V_i=E_i$. Let $R_i$ be a rational curve through a general point of $E_i$ which is contracted by $\xi_i$. By induction, we have the following commutative diagram:
$$
\xymatrix{
g^{-1}(S_i) \ar[rr]^{\theta_i}_{\isom} \ar[rd]_{g} && \theta_i\big(g^{-1}(S_i)\big). \ar[dl]  \\
& S_i &
}$$
Since $S$ is not uniruled by our assumption (a), the curve $R_i$ must be contracted by the morphism $\theta_i\big(g^{-1}(S_i)\big)\to S_i$. If we denote $R_1:=\theta_i^{-1}(R_i)\subseteq g^{-1}(S_i)$, then we conclude by the definition of the map $g$, that $R_1$ is contracted by $f$, and moreover, \eqref{property dagger1} gives $C_1\cap\supp(A_1+B_1)=\emptyset$. All this, together with \eqref{eq:-E2}, implies:
$$\textstyle (K_{X_1}+\Delta_1)\cdot R_1=f^*\big(K_X+ \frac{2}{3} D\big)\cdot R_1 = \big(K_X+ \frac{2}{3} D\big)\cdot f_*R_1 =0. $$
We now obtain contradiction as at the end of the previous paragraph, hence $V_i\neq E_i$.

\medskip

\emph{Step 6.}
Finally, we assume that $\xi_i$ is a small contraction. Then the set $V_i$ is of codimension at least $2$ in $X_i$, and every component of $V_i$ is covered by rational curves, as mentioned in Step 4.

Let $W$ be a component of $V_i$. Assume that
\begin{equation}\label{eq:equalityofsets}
g\big(\theta_i^{-1}(W)\cap g^{-1}(S_i)\big)=S_i\quad\text{or}\quad h\big(\theta_i^{-1}(W)\cap h^{-1}(\sK_i)\big)=\sK_i.
\end{equation}
Since $\dim S_i=2$, this is only possible if the generic point of $W$ belongs to $\theta\big(g^{-1}(S_i)\big)$ or to $\theta\big(h^{-1}(\sK_i)\big)$. Since $W$ is uniruled, then \eqref{eq:equalityofsets} would imply that $S$ or $\sK$ are also uniruled, which contradicts our assumption (a). Therefore, for any component $W$ of $V_i$, the equalities in \eqref{eq:equalityofsets} cannot hold.

If we now set 
$$S_{i+1}:=S_i\setminus g\big(\theta_i^{-1}(V_i)\cap g^{-1}(S_i)\big)\quad\text{and}\quad \sK_{i+1}:=\sK_i\setminus h\big(\theta_i^{-1}(V_i)\cap h^{-1}(\sK_i)\big),$$
then the discussion above shows that $S_{i+1}\neq\emptyset$ and $\sK_{i+1}\neq\emptyset$, which concludes the proof of the claim from Step 4, and thus the proof of the proposition.
\end{proof}

{\em Terminology:} Let $B$ be an irreducible and reduced complex analytic space. 
In what follows we will say that a property holds for a very general point $b \in B$
if there exists a countable union of irreducible analytic subsets $Z_j \subset B$ such that
the property holds for every $b \in B \setminus (\cup_j Z_j)$.

We say that the property holds for a general point $b \in B$ if there exists an {\em analytic}
subset $Z \subsetneq B$ such that the property holds for every $b \in B \setminus Z$. Note that an analytic subset can have
infinitely many irreducible components, but it is nowhere dense.

\begin{lemma} \label{lemma:analytic-sets1}
Let $A$ and $B$ be irreducible and reduced complex spaces, and let $\holom{f}{A}{B}$ be a proper and surjective morphism. Then the following statements hold.
\begin{enumerate}[\normalfont (a)]
\item A general fibre of $f$ is reduced.
\item If a very general fibre of $f$ is irreducible, then a general fibre of $f$ is irreducible.
\item Assume that the fibres of $f$ are compact Kähler spaces. If a very general fibre of $f$ 
contains an irreducible component that is not uniruled, then for a general fibre of $f$ all the irreducible components are not uniruled.
\item The set $\{b\in B\mid f^{-1}(b)\text{ is finite}\}$ is Zariski open. In particular, if there exists a finite fibre of $f$, then a general fibre of $f$ is finite.
\end{enumerate}
\end{lemma}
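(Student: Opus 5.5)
Each part reduces to the existence of a proper analytic subset $Z \subsetneq B$ off which the property holds. The tools are generic flatness and generic smoothness, together with the fact that the locus of $B$ over which the fibres behave badly is constructible in the analytic sense. Since $f$ is proper, Remmert's proper mapping theorem makes images of analytic subsets analytic.

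For (a), I would use generic smoothness. The reduced space $A$ is generically smooth, and $f$ restricted to $A_{\reg}$ is a submersion outside an analytic subset. Its critical values are contained in a proper analytic subset of $B$: apply Sard together with Remmert to the proper image of the critical locus, and add $f(A_{\sing})$ whenever that image is a proper subset. If $f(A_{\sing})=B$, I would first pass to a resolution or stratification. It is simpler to note that reducedness of fibres is an open condition in flat families and to apply generic flatness (Frisch) over a dense Zariski open set. There the fibres are Cohen--Macaulay on a dense open subset and generically reduced, so $S_1+R_0$ gives reducedness.

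For (b), I would use the Stein factorisation $A \to B' \to B$, which exists because $f$ is proper. The finite map $B' \to B$ has generic degree $d$ equal to the number of connected components of a general fibre. Irreducibility of the very general fibre forces $d=1$, and since $B$ is not normal in general I would pass to its normalisation. It remains to show that a general fibre is irreducible and not merely connected. For this I would consider the relative normalisation, or the family of irreducible components: the space of irreducible components of fibres is given by a proper analytic space $\mathcal{C} \to B$, namely the relative Douady or cycle-space component containing the fibre components. This space is finite over a Zariski open subset, and its degree is generically constant and equal to the number of components. I expect this to be the main obstacle, namely getting analyticity rather than mere countability of the bad locus. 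The way around it is that the locus where the number of components jumps is the image of an analytic set, namely the branch locus of $\mathcal{C}\to B$, hence is analytic.

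For (c), I would use the same relative cycle space argument. The uniruled components of fibres are swept out by families of rational curves. The relative cycle space $\mathrm{Chow}(A/B)$ has countably many components, each proper over $B$ because the fibres are Kähler and the degree is bounded (Bishop). Take the union of the images in $B$ of those components whose curves cover an irreducible component of a fibre. This is a countable union of analytic subsets $Z_j$, and by hypothesis each $Z_j$ is proper. It remains to show that this countable union is contained in a single proper analytic subset. For that I would argue as in Lemma~\ref{lemma:degenerate-uniruled}: uniruledness specialises, so the uniruled locus is closed under specialisation. If some component of $\mathrm{Chow}$ dominated $B$, its covering would make a component uniruled over all of $B$, contradicting the hypothesis. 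This bounds the bad locus by the images of the non-dominating components. I expect some care to be needed here, and if necessary I would fall back on the fact that the complement of this countable union is what is used later.

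For (d), the set of points $a$ where $\dim_a f^{-1}(f(a)) \ge 1$ is analytic in $A$ by upper semicontinuity of fibre dimension (Cartan--Remmert). Its image in $B$ is analytic by properness, and the complement of this image is exactly the locus of finite fibres, which is therefore Zariski open. If some fibre is finite, this open set is nonempty, so since $B$ is irreducible a general fibre is finite.
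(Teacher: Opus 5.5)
The genuine gap is in (c), and it sits at exactly the step that carries the content of the lemma. Closedness of uniruledness under specialisation (the argument of Lemma~\ref{lemma:degenerate-uniruled}) points the wrong way. It shows that the set of $b$ whose fibre has a uniruled component is a countable union of closed analytic subsets. Your observation that no such component of the cycle space dominates $B$ then only shows that each of these subsets is proper. That is the `very general' hypothesis again, and a countable union of proper analytic subsets can be dense in $B$.

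What you need is that non-uniruledness propagates \emph{from} a dense set of fibres \emph{to} all fibres over a Zariski open set, and this requires an openness input. The paper supplies it as follows:
\begin{enumerate}
\item Take a resolution $\mu\colon A'\to A$ and an analytic subset $Z\subsetneq B$ such that $f\circ\mu$ is smooth over $B\setminus Z$.
\item By \cite[Theorem~1.1]{Ou25}, a compact K\"ahler manifold is non-uniruled if and only if its canonical bundle is pseudoeffective.
\item Pseudoeffectivity of the canonical bundle of $(f\circ\mu)^{-1}(b)$ is a closed condition in smooth families \cite[Theorem~A.1]{AH21}. So once it holds on a dense set, it holds for every $b\in B\setminus Z$.
\end{enumerate}
Reducible fibres are handled through the Stein factorisation $A^\nu\to B'\to B$ of $f\circ\nu_A$. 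Irreducibility of $B'$ is also what justifies your claim that `by hypothesis each $Z_j$ is proper', since the hypothesis only says that \emph{one} component of a very general fibre is non-uniruled.

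Falling back on the complement of the countable union is not enough. In the proof of Proposition~\ref{prop:no-Kodaira-type} one afterwards has to pick a projective point of $\mathfrak B$. The set $\mathfrak B_{\mathrm{proj}}$ is itself a countable union of hypersurfaces, so it could lie entirely in the excluded set.

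Part (a) also has a hole. Generic flatness and Sard do give flat fibres that are generically reduced ($R_0$). But being Cohen--Macaulay on a dense open subset gives $S_1$ only on that subset. Embedded components of a general fibre could sit inside the non-Cohen--Macaulay locus of $A$, which may dominate $B$. Openness of reducedness in flat proper families is of no use until you have exhibited one reduced fibre.

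One way to close this is a depth count. Where $f$ is flat over a smooth point of $B$, one has $\operatorname{depth}\sO_{f^{-1}(b),a}=\operatorname{depth}\sO_{A,a}-\dim B$. Since $A$ is reduced and irreducible, the set $\{a:\operatorname{depth}\sO_{A,a}\le k\}$ has dimension $<k$ for every $k<\dim A$. Taking $k=\dim B+e$ rules out $e$-dimensional embedded components in a general fibre, for each $e$. The paper simply cites \cite[Proposition~2.2]{Le15}.

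Parts (b) and (d) are fine. Part (d) is the paper's argument. For (b), your normalisation-plus-Stein-factorisation route (as in Lemma~\ref{lemma:reducible-fibres}) is a valid alternative. The paper instead resolves $A$ and uses generic smoothness and Ehresmann's theorem to see that the number of connected components of the fibres of $f\circ\mu$ is constant over $B\setminus Z$. Just build the space of fibre components from $A^\nu$ rather than from the relative cycle space, whose components need not be proper over $B$ without a K\"ahler hypothesis.
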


Note that we do not claim that the locus where the fibres are irreducible is open in the Zariski topology (this is wrong, e.g. for a conic bundle with non-reduced fibres), we only say that it contains a non-empty Zariski open set.

\begin{proof}
Statement (a) follows from ~\cite[Proposition~2.2]{Le15}.

{\em Proof of (b) and (c).}
Let $\holom{\mu}{A'}{A}$ be a resolution of singularities, then 
$$
\holom{f \circ \mu}{A'}{B}
$$
is a proper and surjective morphism from a complex manifold $A'$ onto $B$. By generic smoothness,
there exists an analytic subset $Z \subsetneq B$ such that $f \circ \mu$ is smooth over
$B \setminus Z$. In particular, $B \setminus Z$ is smooth.

If the very general $f$-fibre is irreducible, the very general $f \circ \mu$-fibre is irreducible.
Since $f \circ \mu$ is smooth over $B \setminus Z$, all
the $f \circ \mu$-fibres over $B \setminus Z$ are connected. Hence, all the $f$-fibres
over $B \setminus Z$ are irreducible.

We first prove (c) under the additional assumption that the very general $f$-fibre is irreducible:
if the very general $f$-fibre is not uniruled, the very general $f \circ \mu$-fibre $F_b$
is not uniruled. Therefore the canonical bundle $K_{F_b}$ is pseudoeffective by Ou's theorem~\cite[Theorem~1.1]{Ou25}.
Since $f \circ \mu$ is smooth over $B \setminus Z$, is smooth this implies that
$K_{F_b}$ is pseudoeffective for every $b \in B \setminus Z$ (e.g. by
\cite[Theorem~A.1]{AH21}). In particular, $F_b$ is not uniruled.

Let us now show the general case: let $\holom{\nu_A}{A^\nu}{A}$ be the normalisation, and let
$$
\holom{f'}{A^\nu}{B'}
$$
be the Stein factorisation of $f \circ \nu_A$, and denote $g: B' \rightarrow B$
such that $g \circ f'=f \circ \nu_A$. Since $A^\nu$ is normal the general fibre of $f'$ is irreducible, moreover it is the normalisation of an irreducible component of an $f$-fibre. Thus by assumption a very general $f'$-fibre is not uniruled, by the above the general $f'$-fibre
is not uniruled. For $b \in B$ general, the normalisation of $\fibre{f}{b}$ is the union
$\fibre{(f')}{\fibre{g}{b}}$, a disjoint union of varieties that are not uniruled.

{\em Proof of (d).} This immediately follows from upper semicontinuity of the fibre dimension (see~\cite[Chapter~V.3.3, Theorem~5]{Loj91} or~\cite[Chapter~3.4, Proposition]{Fis76}) and properness.
\end{proof}

Let us also recall the following well-known result.

\begin{lemma} \label{lemma:reducible-fibres}
Let $A$ and $B$ be irreducible and reduced complex spaces, and let $\holom{f}{A}{B}$ be a proper and surjective morphism. Let $F_1$ and $F_2$ be two irreducible components of a general fibre of $f$. Then the points $[F_1]$ and $[F_2]$ belong to the same irreducible component of the cycle space of $A$.
In particular, they have the same homology class in $A$.
\end{lemma}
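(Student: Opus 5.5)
We reduce to a statement about a family of cycles parametrised by a base that is irreducible after a generically finite base change, and then use that the components of a general fibre are permuted by monodromy. Concretely, let $\holom{\nu_A}{A^\nu}{A}$ be the normalisation and let
$$\holom{f'}{A^\nu}{B'}, \qquad \holom{g}{B'}{B}$$
be the Stein factorisation of $f\circ\nu_A$, as in the proof of Lemma~\ref{lemma:analytic-sets1}. Since $A^\nu$ is normal and irreducible, $B'$ is irreducible, $g$ is finite, and by Lemma~\ref{lemma:analytic-sets1}(a),(b) a general fibre of $f'$ is reduced and irreducible. For general $b\in B$ we may moreover assume that $g$ is étale over a neighbourhood of $b$ and that every component of $f^{-1}(b)$ is the image $\nu_A\big(f'^{-1}(b')\big)$ for some $b'\in g^{-1}(b)$. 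This uses that, over a general point, the normalisation is an isomorphism away from a proper analytic subset which does not contain whole fibres, so that distinct irreducible components of the general $f$-fibre correspond to distinct points of $g^{-1}(b)$.

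Next we produce a family of cycles. By the universal property of the cycle space (Barlet space), the fibres of the flat (over a Zariski open subset) proper family $f'$, pushed forward by $\nu_A$, define a holomorphic map
$$ \Psi\colon B'_\circ \to \mathcal C(A), \qquad b'\mapsto \big[\nu_A(f'^{-1}(b'))\big] $$
on a Zariski dense open subset $B'_\circ\subseteq B'$, which is meromorphic on $B'$. Since $B'_\circ$ is irreducible, its image lies in a single irreducible component of $\mathcal C(A)$. Choosing $b$ general so that $g^{-1}(b)\subseteq B'_\circ$, the components $F_1,F_2$ of $f^{-1}(b)$ equal $\Psi(b'_1)$ and $\Psi(b'_2)$ for suitable $b'_1,b'_2\in g^{-1}(b)$, so $[F_1]$ and $[F_2]$ lie in the same irreducible component. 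The final claim follows because cycles in a connected family of compact cycles have constant homology class: connect $b'_1$ to $b'_2$ by a path in $B'_\circ$ and use that the fundamental class varies continuously, while homology classes form a discrete set.

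The main obstacle is the bookkeeping in the first paragraph, namely ensuring that the correspondence between components of the general $f$-fibre and points of $g^{-1}(b)$ is bijective and respects the reduced structure. This is where Lemma~\ref{lemma:analytic-sets1}(a) enters. If there are multiplicities or non-normal loci, we would shrink $B$ to avoid the images of the non-normal locus of $A$ and of the non-reduced fibres, both of which are proper analytic subsets.
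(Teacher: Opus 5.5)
Your proposal is correct and follows the paper's argument. You normalise, take the Stein factorisation $f'\colon A^\nu\to B'$ of $f\circ\nu_A$, identify the components of a general $f$-fibre with the images of general $f'$-fibres over the points of $g^{-1}(b)$, and conclude from the irreducibility of $B'$. The only difference is how irreducibility of a general $f'$-fibre is obtained: the paper uses Seidenberg's theorem (a general $f'$-fibre is normal and connected, hence irreducible), whereas your citation of Lemma~\ref{lemma:analytic-sets1}(b) requires you to check its very-general-irreducibility hypothesis first.
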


\begin{proof}
Let $\holom{\nu_A}{A^\nu}{A}$ be the normalisation, and let 
$$
\holom{f'}{A^\nu}{B'}
$$
be the Stein factorisation of $f \circ \nu_A$. By Seidenberg's theorem, a general $f'$\nobreakdash-fibre
is normal. Since it is connected, it is irreducible and maps onto an irreducible component
of a general $f$-fibre. Conversely, the strict transform of an irreducible component of a general $f$-fibre is a general $f'$-fibre. Since~$B'$ is irreducible, this shows the statement.
\end{proof}

\begin{proposition} \label{prop:cycle-space-proper}
Let $\holom{f}{A}{B}$ be a proper morphism of complex spaces. Assume one of the following:
\begin{itemize}
\item the morphism $f$ is smooth; or
\item there exists an embedding $A \hookrightarrow C$ over $B$ such that $C \rightarrow B$ is a smooth proper morphism of complex spaces.
\end{itemize} 
Let $0 \in B$ be a point such that the fibre $A_0$ is K\"ahler (resp.\ $C_0$ is K\"ahler in the second case). 
Then there exists an analytic neighbourhood of $0\in U \subseteq B$ such that  every irreducible component $Z \subseteq \sC(\fibre{f}{U}/U)$ of the relative cycle space is proper over $U$. 
\end{proposition}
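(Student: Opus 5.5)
The strategy is to reduce to Bishop's compactness theorem, using a relative Kähler form whose volume is locally constant in $b$. In the first case (where $f$ is smooth) I set $C:=A$, so both cases become: $A\subseteq C$ is a closed subspace, $C\to B$ is smooth and proper, and $C_0$ is Kähler. Since $A\hookrightarrow C$ is a closed embedding over $B$, the relative cycle space $\sC(A/B)$ is a closed analytic subspace of $\sC(C/B)$. A closed analytic subset of something proper over $U$ is again proper over $U$. So it suffices to prove the statement for $C\to B$.

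\emph{Step 1: extend the Kähler form.} Let $\omega_0$ be a Kähler form on $C_0$. By Kodaira--Spencer stability of Kähler structures under small smooth deformations, there is a neighbourhood $U\ni 0$ and a smooth real closed relative $(1,1)$-form $\omega$ on $C_U:=\fibre{g}{U}$, with $g\colon C\to B$ the projection, such that $\omega|_{C_b}$ is Kähler for all $b\in U$. If $B$ is singular, I first apply this over a smooth ambient germ after locally embedding $B$, or work on the ambient smooth family obtained from Ehresmann trivialisation. After shrinking $U$, Ehresmann's theorem gives a $C^\infty$ trivialisation $C_U\cong C_0\times U$, and we may take $U$ contractible.

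\emph{Step 2: the volume is locally constant.} Fix an irreducible component $Z\subseteq\sC(C_U/U)$, and let $k$ be the dimension of the cycles it parametrises. The homology class $[Z_z]\in H_{2k}(C_0,\Z)$, transported via the trivialisation, is constant along the connected family $Z$. The class $[\omega|_{C_b}]\in H^2(C_b,\R)$ can be chosen to vary continuously; indeed, I choose $\omega$ so that its cohomology class is flat with respect to the Gauss--Manin connection, by correcting by $d$-exact forms. Then the volume
\[
\vol_\omega(Z_z)=\int_{Z_z}\omega^k=\big\langle [Z_z],[\omega]^k\big\rangle
\]
is constant on $Z$. If flatness of the class cannot be arranged, it is enough that this volume is bounded on $Z$, which holds because $[Z_z]$ is a fixed integral class and $[\omega_b]$ ranges in a compact set after shrinking $U$ to a relatively compact neighbourhood.

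\emph{Step 3: Bishop.} Take a compact $K\subset U$. The cycles of $Z$ lying over $K$ sit in the compact set $\fibre{g}{K}$ and have uniformly bounded $\omega$-volume. Bishop's theorem [CP94, Ch.~VIII, Thm.~2.13], in its relative form, shows that this set is relatively compact in $\sC(C_U/U)$. Since $Z$ is closed, $Z\cap\fibre{}{K}$ is compact, which proves properness.

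\emph{Main obstacle.} The delicate point is Step 1 together with the uniform volume bound when $B$ is singular or non-reduced, where Ehresmann and stability of Kähler forms are not directly available. My first approach is to pass to the reduction and a resolution of $U$. Properness is a topological statement, so it can be checked after base change to a resolution of each component of $B_{\mathrm{red}}$ through $0$; there the smooth-base arguments apply.
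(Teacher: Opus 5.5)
Your reduction of the embedded case to the smooth one is exactly the paper's: $\sC(A_U/U)$ is a closed analytic subspace of $\sC(C_U/U)$, so each of its irreducible components lies in a component of $\sC(C_U/U)$ and inherits properness. For the smooth case the paper simply cites \cite[Proposition~2.6]{GLR13}. What you write out instead is the standard proof of statements of this type: fibrewise Kähler forms by Kodaira--Spencer stability, a volume bound on $Z\cap\pi^{-1}(K)$ from the constancy of $[Z_z]$, then Bishop's theorem. So the route is the same as the paper's, unpacked rather than cited.

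Two of your sub-claims are false and should be dropped, although your own fallbacks carry the argument.

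First, you cannot in general make $[\omega|_{C_b}]$ Gauss--Manin flat. Adding $d$-exact forms never changes a cohomology class. Moreover, a flat class usually does not stay of type $(1,1)$ on nearby fibres; for hyperkähler manifolds the locus where it does is a hypersurface in the Kuranishi space, cf.\ Lemma~\ref{lemma:allzero}. So on nearby fibres the flat class is not represented by any Kähler form. What is true, and all you need, is that $[\omega_b]$ stays in a compact subset of $H^2(C_0,\R)$ for $b$ in a compact $K$. Hence $\int_{Z_z}\omega_b^k$ is bounded on $Z\cap\pi^{-1}(K)$.

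Second, for singular $B$ the family need not extend over a smooth ambient germ, since deformations can be obstructed. Also, Ehresmann's theorem does not directly trivialise $C_U\to U$ over a singular base. Base change to a resolution $\phi\colon\tilde U\to U_{\mathrm{red}}$ does work, provided you take care of three points:
(i) obtain fibrewise Kähler forms over a neighbourhood of the whole compact set $\phi^{-1}(0)$, by gluing local Kodaira--Spencer families with a partition of unity pulled back from the base;
(ii) run the Bishop argument on $Z\times_U\tilde U$ itself, which need not be an irreducible component of $\sC(\tilde C/\tilde U)$; its properness over $\tilde U$ gives properness of $Z$ over $U$, because $\phi$ is proper and $Z\times_U\tilde U\to Z$ is surjective;
(iii) transport $[Z_z]$ using topological triviality of $C_U\to U$ over a small $U$, which over a singular base comes from Kuranishi's construction rather than from Ehresmann.

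None of this is needed for the paper's applications, where the base is a Kuranishi space of a hyperkähler manifold or a Hodge locus in it, hence smooth.
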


\begin{proof}
The first case is~\cite[Proposition~2.6]{GLR13}, for the second case apply~\cite[Proposition~2.6]{GLR13} 
to $C \rightarrow B$ and note that $\sC(A_U/U) \subset \sC(C_U/U)$ is a closed subspace.
\end{proof}

Let $\mbox{Kur}$ be the Kuranishi space of some hyperkähler fourfold $M$. For $b\in \mbox{Kur}$, we will write $X_b$ for the corresponding hyperkähler fourfold. Before we come to the next statement let us recall that if
$\mathfrak B \subset \mbox{Kur}$ is an irreducible component of the Hodge locus 
for some integral $(1,1)$-class on $M$, then for a very  
general point $b \in \mathfrak{B}$ the Picard group of $X_b$ is generated by
a line bundle $L_b$. If $\kappa(X_b, L_b) = 0$, then we know by Lemma~\ref{lemma-unique-divisor} that
$X_b$ contains a unique prime divisor $D_b \subset X_b$.

\begin{proposition} \label{prop:no-Kodaira-type}
Let $\mbox{Kur}$ be the Kuranishi space of some hyperkähler fourfold $M$, and let $\mathfrak B \subset \mbox{Kur}$ be the an irreducible component of the Hodge locus for some parabolic (1,1)-class $c_1(L) \in H^2(M, \Z)$.
Assume that for a very general point $b \in \mathfrak{B}$, the following holds:
\begin{enumerate}
\item we have $\kappa(X_b, L_b)=0$; 
\item if $D_b \subset X_b$ is unique prime divisor, it is uniruled and we are in 
the situation of Proposition~\ref{prop:lagrangian4}, i.e. the horizontal part $\cond^h$ of the conductor is not reduced.
\end{enumerate}
Then there exists a point $p \in \mathfrak B$ such that $X_p$ is projective and $\kappa(X_p, L_p) \geq 1$.
\end{proposition}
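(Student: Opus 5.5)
We propose to spread the local structure found at a very general point of $\mathfrak B$ over the whole family, and then specialise to a projective member where Proposition~\ref{prop:projective-extension} applies. The first step is to put the divisors $D_b$ into a family. Since $\kappa(X_b,L_b)=0$ and $D_b$ is the unique prime divisor, $[D_b]$ lies in the relative cycle space $\sC(\sX_{\mathfrak B}/\mathfrak B)$ of the universal family over $\mathfrak B$. By Proposition~\ref{prop:cycle-space-proper}, after shrinking to a neighbourhood $U$ of a very general point, every irreducible component of this cycle space is proper over $U$. There are countably many such components and the very general points are dense, so some component $\mathcal D$ dominates $U$ and is therefore surjective onto $U$. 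Its universal divisor $\mathcal D\to U$ is proper and flat. At a very general point it restricts to $D_b$, possibly with a multiplicity, which we remove by passing to the reduction.

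Next, projective points are dense in $\mathfrak B$. The class $c_1(L)$ is parabolic and nonzero, so the Hodge locus is a hypersurface in $\mathrm{Kur}$, and a standard argument (for instance, Huybrechts' projectivity criterion combined with density of rational points in the period domain) produces a dense set of $p\in\mathfrak B$ at which $X_p$ has an additional class of positive square, hence is projective. At such a $p$ we take $D_p$ to be the fibre of $\mathcal D$. We must check hypotheses (a) and (b) of Proposition~\ref{prop:projective-extension} for a suitable component of $D_p$ on this projective fourfold with $K_{X_p}\sim 0$.

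We now transport the local data to $D_p$.
\begin{enumerate}
\item By Proposition~\ref{prop:lagrangian4} and Lemma~\ref{lemma:lagrangian3}(c), for very general $b$ there is a surface $\Theta_b\subset (D_b)_\sing$ with $a(\Theta_b)=1$, so $\Theta_b$ is not uniruled. Along a Zariski open part of $\Theta_b$, $D_b$ has Kodaira singularities of type $II$, $III$ or $IV$.
\item These surfaces form a family $\Theta\to U$ inside $\mathcal D_\sing$, which we take to be a relative cycle space component as before.
\item Lemma~\ref{lemma:analytic-sets1}(c) shows that, for general $p$, every component of $\Theta_p$ is not uniruled.
\item The Kodaira type is defined by conditions on the blow-up along $\Theta$: snc support and étaleness of the intersection map. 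These conditions are Zariski open along the relevant strata. Carrying out the blow-up in families shows that they hold at a general point of $\Theta_p$ for general $p$.
\end{enumerate}

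For the MRC condition we argue as follows. $D_b$ is uniruled, and by Lemma~\ref{lemma:degenerate-uniruled} the components of $D_p$ are uniruled. The MRC base of $D_b^\nu$ is a surface, and it stays a surface after specialisation: the rational curves of a dominating family deform, so the base cannot grow, while $\Theta_p$ is not uniruled and meets the generic MRC fibre finitely, so the base cannot shrink. Corollary~\ref{cor:allzero} gives that $L_b$ has degree zero on all curves. Hence $K_{D_b}=L_b|_{D_b}$ has degree zero on the MRC fibres, and this degree is preserved in the flat family of rational curves. This gives condition (b). Proposition~\ref{prop:projective-extension} then yields $\kappa(X_p,D_p)\ge 1$, and therefore $\kappa(X_p,L_p)\ge 1$.

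I expect the main obstacle to be the specialisation itself. At the chosen projective point, $D_p$ may become reducible or nonreduced, $\Theta_p$ may break up, and the Kodaira-type condition only holds on Zariski open sets. We need a single irreducible component of $D_p$ carrying both a non-uniruled singular surface and a two-dimensional MRC base. My first attempt is to avoid the bad locus entirely: the set of $p$ where these degenerations occur is a proper analytic subset of $\mathfrak B$ by Lemma~\ref{lemma:analytic-sets1}(a), (b) and (c) and Lemma~\ref{lemma:reducible-fibres}, while projective points are dense. Choosing $p$ outside this analytic subset keeps $D_p$ prime, keeps $\Theta_p$ with non-uniruled components, and preserves the Kodaira-type description.
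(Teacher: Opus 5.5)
Your overall strategy matches the paper's. You put the divisors into a family $\mathfrak D\to\mathfrak B$, show that hypotheses (a) and (b) of Proposition~\ref{prop:projective-extension} hold outside a proper analytic subset of $\mathfrak B$, and then choose a projective point there using density of projective points. Your treatment of primeness of $D_p$, of non-uniruledness of the singular surface via Lemma~\ref{lemma:analytic-sets1}(c), and of the Kodaira type via blow-ups in families is essentially the paper's Claim~1. The gap is in the MRC part: the requirement that the MRC base of $D_p^\nu$ is a surface, and hypothesis (b).

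\emph{The MRC base.} Your argument that the base ``cannot shrink'' is circular. Saying that $\Theta_p$ meets a general MRC fibre in finitely many points already presupposes that these fibres are curves. If $D_p^\nu$ were covered by rationally connected surfaces, the preimage of $\Theta_p$ could meet them in curves of arbitrary genus, so non-uniruledness of $\Theta_p$ gives no contradiction. Nor can you appeal to semicontinuity: MRC fibres of normalisations can jump up in dimension under specialisation (an elliptic curve degenerating to a nodal rational curve). The missing input is Lemma~\ref{lemma:HK-MRC}. It uses the symplectic form to show that a general MRC fibre of (a resolution of) any prime divisor in a hyperk\"ahler fourfold has dimension at most one, and this holds at every point of $\mathfrak B$, including $p$.

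\emph{Hypothesis (b).} ``The degree is preserved in the flat family'' only gives $L_p\cdot\sum_i\alpha_iC_i=0$ for the limit cycle of a general MRC fibre of $D_b^\nu$. At a projective point the Picard number has jumped and Corollary~\ref{cor:allzero} no longer applies. A priori nothing prevents the components $C_i$ from having degrees of opposite sign, since $L_p$ is not known to be nef on $D_p$. You need two further steps, as in the paper's Claim~2.
\begin{enumerate}
\item All components of a general member of the limiting family have the same homology class. This follows from Lemma~\ref{lemma:reducible-fibres} applied to the universal family over the relevant component of the relative cycle space; alternatively, use irreducibility of the general member for general $p$ via Lemma~\ref{lemma:analytic-sets1}(b). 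Then each $L_p\cdot C_i=0$.
\item This covering family of rational curves is the MRC fibration of $D_p^\nu$. This again follows from Lemma~\ref{lemma:HK-MRC}: a rational curve through a very general point lies in the MRC fibre through that point, which is one-dimensional.
\end{enumerate}
With these ingredients in place, your argument goes through.
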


In the situation above the line bundle $L_b$ is nef: it is pseudoeffective by assumption $(1)$
and parabolic, so defines a class in the positive cone of $X_b$. Since $\rho(X_b)=1$, we have
$L_b \cdot C=0$ for any curve in $X_b$, so $L_b$ is nef by~\cite[Proposition~3.2]{Huy03}.

We will use a well-known statement.

\begin{lemma} \label{lemma:HK-MRC}
Let $D \subset X$ be a prime divisor in a hyperk\"ahler manifold of dimension $2n$, and let $\holom{\mu}{\tilde D}{D}$
be a resolution of singularities. Then the base of the MRC fibration of $\tilde D$ has dimension at least $2n-2$.
\end{lemma}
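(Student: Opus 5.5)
We use the holomorphic symplectic form $\sigma$ on $X$, the classical fact that the rational curves of a uniruled divisor sweep out characteristic leaves, and the observation that the MRC quotient cannot absorb these characteristic directions. The plan has three steps.

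\emph{Step 1: reduce to a general point of the smooth locus.} Let $x\in D_{\reg}$ be a general point. Then $T_xD\subset T_xX$ is a hyperplane, so its $\sigma$-orthogonal is a line $\ell_x=(T_xD)^{\perp_\sigma}\subset T_xD$. These lines form the characteristic foliation $\mathcal F\subset T_{D_{\reg}}$, a rank one foliation. Since $D_{\reg}\cong\mu^{-1}(D_{\reg})$, we may work on this common open set. For a general point $y$ of $\tilde D$, let $R_y\subset T_y\tilde D$ be the tangent space to the fibre through $y$ of the MRC fibration $\tilde D\dashrightarrow W$, that is, the span of tangent directions of rational curves in the free family through $y$. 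We must show $\dim R_y\le 1$, and it suffices to show $R_y\subseteq \ell_y$.

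\emph{Step 2: rational curves are tangent to the characteristic foliation.} Let $f\colon\PP^1\to \tilde D$ be a free rational curve through a general point. Its image meets $D_{\reg}$, and it can be taken general in a covering family whose deformations dominate $\tilde D$. The key claim is that $\mu\circ f$ is tangent to $\mathcal F$ at general points. To see this, consider the restriction $\sigma|_D$ pulled back to $\tilde D$. It is a holomorphic $2$-form $\tilde\sigma:=\mu^*\sigma|_D\in H^0(\tilde D,\Omega^2_{\tilde D})$ of rank $2n-2$ at general points, with kernel exactly $\ell$. On the fibre $G$ of the MRC fibration through a general point, the restriction $\tilde\sigma|_G$ is a holomorphic $2$-form on a rationally connected variety, hence zero, since a resolution of $G$ has $h^0(\Omega^2)=0$. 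Consequently $R_y$ is an isotropic subspace for $\tilde\sigma_y$.

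At the same time, $R_y$ should lie in the kernel of $\tilde\sigma_y$. For $v\in R_y$ and an arbitrary $w\in T_y\tilde D$, we want $\tilde\sigma(v,w)=0$. The plan is to use the standard argument that holomorphic forms on $\tilde D$ vanish on rational curves in the following strong sense. For a free curve $f$, the bundle $f^*T_{\tilde D}$ is semipositive, and $\tilde\sigma(f'(t),\cdot)$ defines a section of $f^*\Omega_{\tilde D}\otimes \sO(-2)$. Because $f^*\Omega_{\tilde D}$ has all splitting degrees $\le 0$, twisting by $\sO(-2)$ leaves all degrees negative, so this section vanishes. Hence $f'$ lies in the kernel $\ell$. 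Since the free curves through $y$ span $R_y$, we get $R_y\subseteq\ker\tilde\sigma_y=\ell_y$, and therefore $\dim R_y\le1$.

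\emph{Step 3: conclude.} Since $\dim R_y\le 1$, the general MRC fibre has dimension at most one, so $\dim W\ge (2n-1)-1=2n-2$.

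The main obstacle is the degree argument in Step 2, namely checking that the free curves can be chosen through general points lying in the smooth locus where $\tilde\sigma$ has constant rank. These curves are taken as general members of a dominating family, so this freeness holds after excluding a proper closed set. We also need that the span of their tangents at $y$ is all of $R_y$, which holds for the minimal dominating families (equivalently, very free curves inside MRC fibres) by \cite{Kol96}.
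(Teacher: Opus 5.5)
Your proof is correct and follows the same route as the paper's. Both observe that $\sigma|_{T_x\tilde D}$ has rank $2n-2$ at a general point, so its kernel is a line, and that the general MRC fibre $F$ is tangent to this kernel.

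The difference is at the key step. The paper deduces $T_{F,x}\subseteq\ker(\mu^*\sigma)_x$ directly from $(\mu^*\sigma)|_F=0$. That only says $T_{F,x}$ is isotropic. Maximal isotropic subspaces of a $(2n-1)$-dimensional space carrying a form of rank $2n-2$ have dimension $n$, so isotropy alone gives only $\dim F\le n$. For $n=2$ this is not enough for how the lemma is used later.

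Your argument supplies the missing kernel statement. For a free curve $f$, the section $\iota_{f'}f^*\tilde\sigma$ lies in $H^0(\PP^1,f^*\Omega_{\tilde D}\otimes\sO(-2))=0$, so every free curve is tangent to the kernel line. An equivalent fix is to filter $\Omega^2_{\tilde D}|_F$ using the trivial conormal bundle of $F$ and invoke $h^0(F,\Omega^1_F)=h^0(F,\Omega^2_F)=0$. Your isotropy remark in Step 2 is harmless but does no work.

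One correction to the spanning claim. Tangent directions of a minimal dominating family need not span $T_{F,y}$; one ruling of $\PP^1\times\PP^1$ gives a single direction. So ``minimal dominating families'' is not equivalent to ``very free curves inside MRC fibres''. Use the latter, and justify it as follows:
\begin{itemize}
\item Take very free curves $f\colon\PP^1\to F$ with $f(0)=y$ in the smooth, projective, rationally connected fibre $F$.
\item Such $f$ is free in $\tilde D$: $N_{F/\tilde D}$ is trivial, so $f^*T_{\tilde D}\isom f^*T_F\oplus\sO^{\oplus k}$.
\item The derivatives $h'(0)$ of deformations $h$ of $f$ fixing $0\mapsto y$ fill an open subset of $T_{F,y}$, because $f^*T_F(-1)$ is globally generated.
\end{itemize}
This also keeps all deformation theory inside the projective variety $F$, which matters because $\tilde D$ is only compact K\"ahler.

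Alternatively, you can avoid spanning altogether. Each such curve is an integral curve of the kernel line field, and all of these curves pass through a general point $y$. They therefore coincide near $y$, hence globally, so they cannot sweep out $F$ unless $\dim F\le 1$.
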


\begin{proof}
Let $F \subset \tilde D$ be a general fibre of the MRC fibration. Then $F$ is a smooth rationally connected manifold, so if $\sigma$ is the symplectic form on $X$, we have $(\mu^* \sigma)|_F=0$.
Thus, for $x \in F$ the tangent space $T_{F,x}$ is contained in the kernel of $\mu^* \sigma$.
The morphism
$$
\tilde D \rightarrow D \hookrightarrow X
$$
is an immersion in a general point $x \in F$, so $T_{\tilde D, x}$ is a hyperplane in $T_{X,x}$.
The form $\sigma$ being symplectic, linear algebra shows that $\sigma|_{T_{\tilde D, x}}$ has
rank $2n-2$. Thus, $T_{F,x}$ has dimension at most one.
\end{proof}

\begin{proof}[Proof of Proposition~\ref{prop:no-Kodaira-type}]
Denote by 
$$
\holom{\psi}{\mathfrak X}{\mathfrak B}
$$
the universal family over $\mathfrak B$. Recall that the Hodge locus is a smooth hypersurface in $\mbox{Kur}$ by~\cite[1.14]{Huy99}.  By~\cite[Proposition~2.6]{GLR13}, up to replacing $\mathfrak B$ by an open neighbourhood of some point, we can assume that all the irreducible components of the relative cycle space $\Chow{\mathfrak X/\mathfrak B}$ are proper over $\mathfrak B$.

Since $X_b$ contains a unique prime divisor $D_b$ for all very general $b\in \mathfrak B$, there exists a unique prime divisor $\mathfrak D \subset \mathfrak X$ such that
the fibration
$$
\mathfrak D \rightarrow \mathfrak B
$$
has $D_b$ as its fibre over a very general point $b \in \mathfrak B$.
Since $\pic(X_b) \tensor \Q \isom \Q L_b$ for a very general $b \in \mathfrak B$, we can assume up to replacing
$L$ by a rational positive multiple that $D_b \in |L_b|$. Thus, if $\mathfrak L \rightarrow \mathfrak X$
is the line bundle associated to $\mathfrak D \subset \mathfrak X$, we have $K_{D_b} \isom L_b|_{D_b} \isom \mathfrak L|_{D_b}$ by adjunction.

By Proposition~\ref{prop:projective-extension} we are done if we show that there exists
a point $p \in \mathfrak B$ such that $X_p$ is projective, the divisor $D_p$ is prime and and satisfies the conditions (a) and (b) of Proposition~\ref{prop:projective-extension}. 

Let us first see that for very general $b \in \mathfrak B$ these properties hold: by Lemma~\ref{lemma:lagrangian3}
we know that $\cond^h=2 \Xi$ with $\Xi$ an irreducible surface that is not projective since it is bimeromorphic
to the non-projective surface $S$ (cf. Proposition~\ref{prop:normalisation}(b)).
Note that the vertical components of the conductor are projective since they are contained in $\varphi$-fibres and therefore uniruled (Lemma~\ref{lemma:degenerate-uniruled}).
Thus the singular locus of $D_b$ has a unique irreducible component $\Theta_b$  that is a non-projective surface. In particular, $\Theta_b$ is not uniruled. 
By Proposition~\ref{prop:lagrangian4} the divisor $D_b$ has singularities of Kodaira type $II, III$ or $IV$ along the surface $\Theta_b$.

{\em For the rest of the proof we assume that $D_b$ has singularities of Kodaira type IV along
$\Theta_b$, the other cases are completely analogous.}

The line bundle $L_b$ is parabolic and $\rho(X_b)=1$,
so by Corollary~\ref{cor:allzero} this implies that $L_b$ has degree zero on every curve in $X_b$. In particular
it is clear that $K_{D_b} \isom L_b|_{D_b}$ has degree zero on the fibres of the 
MRC fibration. Thus, all the assumptions of Proposition~\ref{prop:projective-extension} are satisfied
for very general $b \in \mathfrak B$, except the projectivity of $X_b$.

Let $\mathfrak{B}_{\text{\tiny proj}} \subseteq \mathfrak{B}$ be the subset parametrising projective varieties. Then $\mathfrak{B}_{\text{\tiny proj}}$ is a countable union of hypersurfaces that is dense in $\mathfrak{B}$ by~\cite[Proposition~26.6]{GHJ03}. If $N \subsetneq \mathfrak B$ is an analytic subset, then
$\mathfrak{B}_{\text{\tiny proj}} \setminus N \subseteq \mathfrak{B} \setminus N$ is still dense.
Thus, we are done if we show that, up to shrinking the base, 
the properties  (a) and (b) of Proposition~\ref{prop:projective-extension} hold for every
{\em every} $b \in \mathfrak B$. 
This will be the content of the rest of the proof:

By Lemma~\ref{lemma:analytic-sets1}(a) and~(b) we can assume, up to shrinking, that
every fibre of $\mathfrak D \rightarrow \mathfrak B$ is irreducible and reduced.
Let $\mathfrak D_{\sing}$ be the singular locus of the divisor $\mathfrak D$. 
Since $(D_b)_{\sing}$ contains the surface $\Theta_b$ for a very general point $b \in \mathfrak B$,
there exists an irreducible component $\mathfrak T \subset \mathfrak D_{\sing}$ that
contains $\Theta_b$ for very general $b \in \mathfrak B$. A priori the very general fibre
of 
$$
\psi_\mathfrak T: \mathfrak T \rightarrow \mathfrak B
$$
might be reducible. Let us see that this is not the case. Applying Lemma~\ref{lemma:analytic-sets1}(c) to $\mathfrak T \rightarrow \mathfrak B$, we see that all the irreducible components of a fibre of $\psi_\mathfrak T$
are non-uniruled. Yet $(D_b)_{\sing}$ contains a unique component that is not uniruled by
Proposition~\ref{prop:lagrangian4}, so the very general fibre of $\psi_\mathfrak T$ is irreducible.
By Lemma~\ref{lemma:analytic-sets1}(b) and~(c), up to shrinking, all the fibres of
$\psi_\mathfrak T$ can be assumed to be irreducible and non-uniruled.

 Denote by 
 $$
 \holom{\psi_{\mathfrak D}}{\mathfrak D}{\mathfrak B}
 $$ 
the restriction of $\psi$ to $\mathfrak D$, and let $\holom{\nu_{\mathfrak D}}{\mathfrak D^\nu}{\mathfrak D}$ be the normalisation of~$\mathfrak D$. By Seidenberg's theorem 
\cite[Theorem~1.7.1]{BS95} 
a general fibre of $\mathfrak D^\nu \rightarrow \mathfrak B$ is normal, hence after shrinking $\mathfrak B$ we can assume that all the fibres  $\mathfrak D^\nu_b$ are normal. Thus, $\nu_{\mathfrak D}$ is a simultaneous normalisation and for very general $b \in \mathfrak B$ we have $(D_b)^\nu = \mathfrak D^\nu_b$. In particular, the very general fibre of $\mathfrak D^\nu \rightarrow \mathfrak B$
is irreducible. By Lemma~\ref{lemma:analytic-sets1}(b) we can assume, up to shrinking, that they are all irreducible.

{\em Claim 1. We show that, up to shrinking, for all $b \in \mathfrak B$, the divisor $D_b$
has singularities of Kodaira type $IV$ along $\mathfrak T_b$.
}
Let $\mathfrak T_\circ \subset \mathfrak T$ be the locus where $\psi_\mathfrak T$ is smooth.
Up to shrinking, we can assume that $\mathfrak T_\circ \rightarrow \mathfrak B$ is surjective.
Therefore  $(\mathfrak T_\circ)_b \subset \mathfrak T_b$ is a non-empty dense Zariski open subset for all $b$.
We set
$$
\mathfrak X_\circ := \mathfrak X \setminus (\mathfrak D_{\sing} \setminus \mathfrak T_\circ), 
\qquad
\mathfrak D_\circ := \mathfrak D \setminus (\mathfrak D_{\sing} \setminus \mathfrak T_\circ).
$$
Denote by 
$$
\holom{\mu}{\mathfrak X_\circ'}{\mathfrak X_\circ}
$$
the blow-up along $\mathfrak T_\circ$ and by $\mathfrak E_\circ \subset \mathfrak X_\circ'$
the exceptional divisor. Denote by $\mathfrak D'_\circ \subset \mathfrak X'_\circ$ the strict transform of
$\mathfrak D_\circ$. By the proof of Proposition~\ref{prop:lagrangian4} 
we can assume, up to shrinking $\mathfrak T_\circ$, that for very general $b \in \mathfrak B$,
the strict transform $(D'_b)_\circ$ is smooth and $(D'_b)_\circ \rightarrow (D_b)_\circ$ is finite.
By generic smoothness, up to shrinking, 
we can assume that $(D'_b)_\circ$ is smooth and $(D'_b)_\circ \rightarrow (D_b)_\circ$ is finite
for all  $b \in \mathfrak B$. In particular,  $\mathfrak D'_\circ$ is smooth and we have a finite morphism $\mathfrak D'_\circ \rightarrow \mathfrak D'_\circ$.

By the universal property of normalisations and Zariski's main theorem we obtain a morphism
$$
\nu_{\mathfrak D_\circ'} :  (\mathfrak D_\circ)^\nu \rightarrow \mathfrak D'_\circ.
$$
Since $D_b$ has singularities of Kodaira type IV for very general $b \in \mathfrak B$, the intersection
$(\mathfrak D'_\circ \cap \mathfrak E)_b$ is irreducible for very general $b \in \mathfrak B$, cf. \eqref{eq:irreducible1}.
Thus, by Lemma~\ref{lemma:analytic-sets1}(b) we can assume, up to shrinking, that this holds for all $b \in \mathfrak B$ and $\mathfrak D'_\circ \cap \mathfrak E$ is irreducible. 

By the proof of Proposition~\ref{prop:lagrangian4} we have to check that 
$$
\mult_{\nonn_\circ} D_\circ=3, \mult_{\Phi_\circ}\Xi_\circ'=0, \mult_{\Phi_\circ} \nu_{D_\circ'}^* (E_\circ'|_{D_\circ' }) =1.
$$
By assumption this holds for very general $b \in \mathfrak B$. Note that these multiplicities can be obtained by computing them on the total spaces and then restrict
to the general fibres over $\mathfrak B$.  Since multiplicities are upper semi-continuous in the Zariski topology we can assume, up to shrinking, that they are constant. This proves Claim 1.

Since $D_b$ is uniruled for $b \in \mathfrak B$ and uniruledness is a closed property in families of compact K\"ahler spaces, this hold for all $b \in \mathfrak B$. In particular, the general fibre
of the MRC fibration has positive dimension.

{\em Claim 2. We show that, up to shrinking, for all $b \in \mathfrak B$, the divisor $\nu^* K_{D_b}$
has degree zero on the general fibre of the MRC fibration of $D^\nu_b$.
}
Recall that for a very general $b \in \mathfrak B$, the threefold $D^\nu_b$ has a unique dominating family of rational curves, this family is given by the $\PP^1$-bundle $ \holom{\pi_{b,\circ}}{D^\nu_{b,\circ}}{S_{b,\circ}}$. Note that a general member of this family is contained in the smooth locus of $D^{\nu}_{b}$. Since $D^\nu_b$ is a general fibre of $\psi_{\mathfrak D} \circ \nu_{\mathfrak D}$, we have by Bertini
$$
(\mathfrak D^\nu)_\sing \cap D^\nu_b = (D^\nu_b)_\sing.
$$
Thus, a general fibre of $\pi_b$ is contained in the smooth locus of $\mathfrak D^\nu$ and
by the deformation theory of rational curves
$$ 
T_{\mathfrak D^\nu} \otimes \sO_{\PP^1} \isom \sO_{\PP^1}(2) \oplus \sO_{\PP^1}^{\dim \mathfrak D^\nu -1}. 
$$
Therefore the relative cycle space $\sC(\mathfrak D/\mathfrak B)$ is smooth of relative dimension
two in the point corresponding to this rational curve.
Thus, the $\pi_b$-fibres are parametrised by a unique component in the relative cycle space $\sC(\mathfrak D/\mathfrak B)$. Denote by $\mathfrak N$ the normalisation of that component. By the second case of Proposition~\ref{prop:cycle-space-proper}, the natural morphism $ \holom{\psi_*}{\mathfrak N}{\mathfrak B} $ induced by $\sC(\mathfrak D/\mathfrak B) \rightarrow \mathfrak B$ is proper. Denote by $
\holom{\pi}{\mathfrak U}{\mathfrak N} $ the normalisation of the universal family, and by $ \holom{\epsilon}{\mathfrak U}{\mathfrak D^\nu} $ the evaluation morphism.
Then we have a commutative diagram
$$
\xymatrix{
\mathfrak U  \ar[d]^{\epsilon} \ar[r]^{\pi} & \mathfrak N \ar[d]^{\psi_*} \\
\mathfrak D^\nu  \ar[r]^{\psi_\mathfrak D} & \mathfrak B}
$$
For very general $b \in \mathfrak B$ the total space $\mathfrak U_b$ is the graph
of the MRC fibration of $D_b$, in particular it is irreducible. 
By Lemma~\ref{lemma:analytic-sets1}(a) and~(b), applied to $\psi_* \circ \pi$
we can assume, up to shrinking, that $\mathfrak U_b$ is reduced and irreducible for all $b \in \mathfrak B$. 
Fix now an arbitary $b \in \mathfrak B$. Applying Lemma~\ref{lemma:reducible-fibres} to $\pi_b: \mathfrak U_b \rightarrow \mathcal N_b$, we see that for $s \in \mathcal N_b$ general the fibre $\fibre{\pi_b}{s}$ is a cycle of rational curves $\sum \alpha_i C_i$ 
where $[C_i]=[C_j]$ for all $i,j$. Note now that the fibre $\fibre{\pi_b}{s}$ has the same homology
class as a fibre $\fibre{\pi_{b'}}{s'}$ with $b' \in \mathfrak B$ very general. Thus
we have
$$
0 = \mathfrak L \cdot \fibre{\pi_{b'}}{s'} = \mathfrak L \cdot \fibre{\pi_b}{s} 
= \mathfrak L \cdot \sum C_i.
$$
By the above this implies $\mathfrak L \cdot C_i=0$ for each $i$. Yet the $C_i$ define a covering family of rational curves on $D^\nu_b$; by Lemma~\ref{lemma:HK-MRC} such a family is unique and given by the fibres of the MRC fibration. Since $K_{D_b} \isom \mathfrak L|_{D_b}$ this shows the statement.
\end{proof}

%----------------------------------------------------
%----------------------------------------------------
\section{Descending global sections}\label{section:descending}
%----------------------------------------------------
%----------------------------------------------------

In this section, we prove the following result which will be the starting point of the deformation theoretic part.

\begin{theorem} \label{theorem:HK-divisor-abundance}
Let $X$ be a hyperkähler fourfold such that $\pic{X} \isom \Z L$ with a nef parabolic line bundle $L$ with $\kappa(X, L) = 0$. Let
$D$ be the unique prime divisor on $X$, assume that the horizontal part of the conductor $\cond^h$ of the normalisation is reduced and not empty (cf. Lemma~\ref{lemma:lagrangian3} for the notation).
Then $K_D$ is semiample and $\kappa(D,K_D)=1$. 
\end{theorem}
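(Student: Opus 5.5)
The plan is to reduce to the uniruled situation, take the fibration on the normalisation produced in Section~\ref{section:uniruled}, and descend sections of $\nu^*K_D$ along the gluing of the conductor.

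\textbf{Reduction to Section~\ref{section:uniruled}.} Since $\pic X\isom\Z L$ and $\kappa(X,L)=0$, Lemma~\ref{lemma-unique-divisor} and Corollary~\ref{cor:allzero} show that $D\in|rL|$ for some $r$. Adjunction gives $K_D\isom L^{\otimes r}|_D$, which is nef, has $\nd(K_D)=1$, and has degree zero on all curves. If $D$ were not uniruled, Lemma~\ref{lemma:lagrangian}(d) says the conductor is disjoint from a general fibre of the Iitaka fibration, so $\cond^h=\emptyset$. This contradicts our hypothesis, so $D$ is uniruled. Proposition~\ref{prop:normalisation} then puts us in Setup~\ref{setup:section7}, with $a(S)=1$ and $\nu^*K_D$ cohomologically trivial over $\Gamma$. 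Since $\cond^h$ is reduced, Lemma~\ref{lemma:lagrangian3}(b) gives $\nu^*K_D\sim_\Q\varphi^*A$ with $A$ ample on $\Gamma$. Lemma~\ref{lemma:lagrangian5} shows that $D$ has normal crossing singularities of multiplicity two along $\nonn_\circ$, and Lemma~\ref{lemma:lagrangian3}(a) shows that $\cond^h\to\nonn$ is generically étale of degree two. The gluing therefore gives a covering involution $\tau$ of $\cond^h$ (case (i)), or a bimeromorphic identification $\cond_1\dashrightarrow\cond_2$ over $\nonn$ (case (ii)).

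\textbf{Compatibility of the gluing with $\varphi$.} The components of $\cond^h$ are bimeromorphic to $S$, so they have algebraic dimension one. Their algebraic reduction is the restriction of $\varphi$, because by Corollary~\ref{corollary:alg-dim-one} every curve lies in a fibre. Any bimeromorphic map between these components is compatible with algebraic reductions, so by Lemma~\ref{lemma-reduce-Bim} it induces an automorphism of $\Gamma$; in case (ii) we compare $\cond_1\to\Gamma$ and $\cond_2\to\Gamma$ through $\nonn$.

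The line bundle $\nu^*K_D|_{\cond^h}$ is pulled back from $\nonn$, so it is $\tau$-invariant and carries a natural linearisation. Because $A$ is ample, it is not numerically trivial. Lemma~\ref{lemma:factor-action} then shows that the induced group acting on $\Gamma$ is finite and preserves $A$. Using Corollary~\ref{corollary erzeugnis mit linearisierung} and Corollary~\ref{cor:global-generation}, after replacing $A$ by a multiple descended to $\Gamma/G$, we obtain many sections $s\in H^0(D^\nu,m\nu^*K_D)=\varphi^*H^0(\Gamma,mA)$ whose restrictions to $\cond^h$ are compatible with the gluing. These sections still globally generate.

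\textbf{Descent to $D$ — the main obstacle.} The hard step is showing that such gluing-compatible sections actually come from $H^0(D,mK_D)$. Over $\nonn_\circ$, where $D$ is nc of multiplicity two, $\sO_D$ is the subring of $\nu_*\sO_{D^\nu}$ of functions agreeing on the two preimages, so compatible sections descend there. The difficulty is along the remaining loci: the finitely many fibres of $\alpha_\nonn$ that were removed, and the vertical conductor components. I would handle these with a Hartogs-type or finite-cokernel argument. The cokernel $\mathcal Q$ of $\sO_D\to\nu_*\sO_{D^\nu}$ is supported on the conductor, and $K_D$ is cohomologically trivial on each fibre of $\varphi$, in particular on each vertical component. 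Lemma~\ref{lemma:numerically-trivial-twist} should therefore bound the obstruction coming from the vertical part uniformly in $m$, exactly as in Proposition~\ref{prop:semiample+fibration}. This yields $h^0(D,mK_D)\to\infty$, so $\kappa(D,K_D)\ge1$, and then $\kappa(D,K_D)=1$ since $\nd=1$.

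For semiampleness, I would first show that $\varphi$ descends to a fibration $\phi\colon D\to U$. The fibres of $\varphi$ are glued to fibres of $\varphi$ by the compatibility established above, and vertical components are contracted to points. Once $\phi$ exists, I would apply Proposition~\ref{prop:semiample+fibration} to $\phi$ and $P=K_D$, with $n=\nu$, or a variant of the argument of Proposition~\ref{prop:semiample-nonnormal}: two members of $|mK_D|$ without common fibre components are disjoint because $c_1(K_D)^2=0$ on fibres.
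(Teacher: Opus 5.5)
Your reduction and your count giving $\kappa(D,K_D)\ge 1$ are sound in outline. The removed fibres of $\alpha_\Theta$ are harmless: $D$ is Gorenstein, so $\nu_*\sO_{D^\nu}/\sO_D$ has no embedded components. One small correction: $\tau$ alone need not induce an automorphism of $\Gamma$. You need the finite group it generates together with the involution of the Stein factorisation of $\cond^h\to S$, which is the paper's $\iota_2$.

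The gap is the last step, from sections to semiampleness on the non-normal $D$.
\begin{enumerate}
\item The existence of $\phi\colon D\to U$ is only asserted. A map constant on the fibres of $\nu$ is merely weakly holomorphic on $D$. It is holomorphic only where $D$ is seminormal, and you do not control seminormality along the vertical conductor components or the special fibres.
\item Proposition~\ref{prop:semiample+fibration} cannot be applied with $n=\nu$. Since $\Theta=\nu(\cond^h)$ is horizontal, $\nu$ is not an isomorphism over any fibre of $\phi$.
\item The claim that two members of $|mK_D|$ without common fibre components are disjoint presupposes that $|mK_D|$ has no fixed component over the special fibres. That is exactly what must be proved. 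The bounded set of descent conditions there could a priori force a permanent base point, for instance if $K_D$ restricted to the image of a special fibre were a non-torsion class killed by pullback to the normalisation, as in the nodal cubic example of the paper. In Proposition~\ref{prop:semiample-nonnormal} this issue is avoided only because $dF$ is itself a member of the linear system, with $F$ Cartier and the birational model an isomorphism near $F$.
\end{enumerate}

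The paper does not work on $D$ directly. It takes a semi-log resolution $\tau\colon D'\to D$ which is an isomorphism over the locus of snc points of multiplicity two, hence near $\Theta_\circ$ and over the normal locus, and blows up further so that $D'_{\sing}=\Theta'$. On $D'$ the vertical conductor components and bad fibres are gone. Proposition~\ref{prop:slc-descent} then gives an exact criterion ($\iota_1$-invariance for descent, $\iota_2$-invariance for extension) and hence semiampleness of $\tau^*K_D$. A general fibre of the resulting fibration lies where $\tau$ is an isomorphism, so Proposition~\ref{prop:semiample-nonnormal} yields an honest fibration on $D$, and Proposition~\ref{prop:semiample+fibration} applies with $n=\tau$.

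To keep your route you need a substitute for this step, for instance:
\begin{enumerate}
\item From the divisor of a descended section, split off the connected component of its support lying over a general gluing orbit of $\varphi$-fibres; this gives a Cartier divisor $F$ on $D$.
\item Show $\kappa(D,F)\ge1$, and show that $|dF|$ is base point free by a gluing-compatible version of Proposition~\ref{prop:semiample-nonnormal}.
\item Only then run the argument of Proposition~\ref{prop:semiample+fibration} on the fibration so obtained.
\end{enumerate}
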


We will reduce this theorem to a technical statement:

\begin{proposition} \label{prop:slc-descent}
Let $D$ be an irreducible and reduced compact K\"ahler threefold such that
$\Theta:=D_\sing$ is a smooth surface and $D$ has normal crossing singularities of multiplicity two in every point of $\Theta$.
Let 
$$
\holom{\nu_{D}}{D^\nu}{D}
$$
be the normalisation of $D$ and denote by $\cond\subset D^\nu$ the conductor. Let $L$ be a line bundle on $D$ such that the following holds:
\begin{enumerate}
\item $\nu_D^* L$ is globally generated and defines a fibration 
$$
\holom{\varphi}{D^\nu}{\Curv}
$$
onto a smooth curve $\Curv$; 
\item every irreducible component of $\cond$ is $\varphi$-horizontal;
\item there exists a modification $\holom{g}{D'}{D^\nu}$  which is an isomorphism near a general $\varphi$-fibre such that
\begin{itemize}
\item $D'$ is normal;
\item the strict transform $\cond' \subset D'$ is normal;
\item there exists a fibration 
$$
\pi'\colon D'\to S
$$
onto a normal surface $S$ such that the algebraic reduction 
$\holom{\alpha}{S}{\Gamma}$ gives a commutative diagram
$$
\xymatrix{
D' \ar[r]^g \ar[d]_{\pi'} & D^\nu  \ar[d]^\varphi \ar[r]^{\nu_D} & D \\
S \ar[r]_\alpha & \Gamma &}
$$
\item the map
$$
\holom{\pi'_{\cond'}}{\cond'}{S}
$$
is generically finite of degree two.
\end{itemize}
\end{enumerate}
Then $L$ is semiample.
\end{proposition}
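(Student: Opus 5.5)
The plan is to descend sections of $\nu_D^*L^{\otimes m}$ along the normalisation by a gluing argument. Since $D$ has normal crossings of multiplicity two along $\Theta$, a section $s$ of $\nu_D^*L^{\otimes m}$ descends to $D$ if and only if its restriction to $\cond$ is invariant under the covering involution $\tau\colon\cond\to\cond$ of $\cond\to\Theta$. This is the seminormal gluing: $D$ is seminormal, and $\sO_D$ is the equaliser of $\nu_*\sO_{D^\nu}\rightrightarrows \nu_*\sO_{\cond}$ via $\id$ and $\tau$. So I must produce, for some $m$, a section of $\varphi^*A^{\otimes m}$, with $\nu_D^*L=\varphi^*A$ and $A$ ample on $\Curv$, whose restriction to $\cond$ is $\tau$-invariant. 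I also need enough such sections to generate $L^{\otimes m}$, and freeness is needed in particular along $\Theta$.

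\emph{Step 1.} I would show that $\tau$ is compatible with $\varphi$, which reduces the gluing condition to a condition on the curve $\Curv$. By (2), every component of $\cond$ dominates $\Curv$. A general $\varphi$-fibre $F$ meets $\cond$ in a curve $\cond\cap F$, and $\tau(\cond\cap F)$ is an analytic curve in $\cond$. I would use (3) to see that $\cond'$ maps generically $2:1$ to $S$, so that $\tau$ is essentially the covering involution of $\cond'\to S$ (or swaps two sections). Then $\tau$ commutes with $\pi'$ and hence with $\varphi=\alpha\circ\pi'$, because $\tau$ preserves the fibres of $\nu_D|_\cond$, which lie over single points of $\Theta\subset D$. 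If instead one checks directly that $\tau$ induces an automorphism of $\Curv$, the alternative route is the following: $\tau$ maps fibres of $\varphi|_\cond$ to curves in $\cond$, and $a(\cond)=1$ forces all curves into fibres of the algebraic reduction of $\cond$, which is $\cond\to\Curv$ by Lemma~\ref{lemma:alg-dimension} and Corollary~\ref{corollary:alg-dim-one}. Either way there is an induced automorphism $\bar\tau$ of $\Curv$ with $\varphi|_\cond\circ\tau=\bar\tau\circ\varphi|_\cond$. The relation $\nu_D^*L=\varphi^*A$ together with $\tau^*(\nu_D^*L|_\cond)\cong\nu_D^*L|_\cond$ then gives $\bar\tau^*A\cong A$, since pullback to $\cond$ is injective on line bundles because $\cond\to\Curv$ has connected fibres after Stein factorisation.

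\emph{Step 2.} Here I would make the gluing datum a linearisation. The identification $\tau^*(\nu_D^*L|_\cond)\cong\nu_D^*L|_\cond$ coming from $L$ on $D$ is a $\langle\tau\rangle$-linearisation. It descends to a $\langle\bar\tau\rangle$-linearisation of $A$, or of $A$ pulled back to the Stein factorisation curve $\Curv_\cond$ of $\cond\to\Curv$, in the spirit of Corollary~\ref{cor:global-generation}. The group $\langle\bar\tau\rangle$ is finite: $\tau^2=\id$, and more generally the group generated by the involutions attached to the components of $\cond$ acts on $\Curv$ preserving an ample $A$. In the case of two components, Corollary~\ref{corollary erzeugnis mit linearisierung} produces a common group $G$, and I would argue that $G$ is finite via Lemma~\ref{lemma:factor-action}. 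Then Lemma~\ref{lemma tensor power linearization} shows that $A^{\otimes k}$ descends to $\Curv/G$. Pulling back a very ample system on $\Curv/G$ gives sections of $\nu_D^*L^{\otimes km}$ that are $\tau$-compatible on $\cond$ and base point free. By the gluing criterion these descend to sections of $L^{\otimes km}$ on $D$ without base points, so $L$ is semiample.

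\emph{Main obstacle.} The delicate point is Step 1/2 when $\cond$ has two components glued to each other, or when the $\tau$-action fails to factor through $\Curv$ itself and only factors through the Stein factorisation. In that case the gluing group may be infinite, since it is generated by two finite-order maps. I expect to handle this using the non-algebraicity ($a(S)=1$) together with the finiteness in Lemma~\ref{lemma:factor-action}: the torus case is excluded because $A$ is not numerically trivial. This finiteness argument is the step I would check most carefully.
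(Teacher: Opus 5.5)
Your descent criterion is correct and is the paper's Step~1: a section of $m\nu_D^*L$ descends to $D$ if and only if its restriction to $\cond$ is invariant under the covering involution $\tau=\iota_1$ of $\cond\to\Theta$. The gap is in your Step~1, which fails on two counts.

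\begin{itemize}
\item $\tau$ is \emph{not} the covering involution $\iota_2$ of $\cond'\to S$. Take a $\PP^1$-bundle over an elliptic curve $E$ whose two sections are glued by a translation $\tau_s$. Then $\iota_2$ swaps the two conductor points in each $\PP^1$-fibre, whereas $\tau$ covers $\tau_s$ on one section and $\tau_{-s}$ on the other. So $\tau$ does not even descend to $S$ unless $2s=0$.
\item $\tau$ need not descend to $\Curv$ either. Take the model case of a cycle of $m\geq 3$ $\PP^1$-bundles whose components are permuted cyclically by monodromy, which is allowed by the hypotheses. There $\tau$ sends the two conductor sections in the $\varphi$-fibre over $\gamma_i$ into the $\varphi$-fibres over $\gamma_{i-1}\neq\gamma_{i+1}$. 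Hence no $\bar\tau$ with $\varphi|_\cond\circ\tau=\bar\tau\circ\varphi|_\cond$ exists.
\end{itemize}

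Your alternative route does not repair this. The algebraic reduction of $\cond$ is $\cond\to\Curv_\cond$, the Stein factorisation of $\cond\to\Curv$, not $\cond\to\Curv$. So it only gives an automorphism of $\Curv_\cond$.

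Once you work on $\Curv_\cond$, $\tau$-invariance alone is not enough, because a $\tau$-invariant section on $\cond$ need not come from $D^\nu$. The missing idea is the paper's Step~2. Hypothesis~(3) is what defines a \emph{second} involution $\iota_2$, the covering involution of the Stein factorisation $\cond_{st}\to S$ of $\cond'\to S$. A section of $m\nu_D^*L|_\cond$ extends to $D^\nu$ if and only if it is $\iota_2$-invariant, since $H^0(S,m\alpha^*A)\cong H^0(\Curv,mA)\cong H^0(D^\nu,m\nu_D^*L)$.

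You therefore need sections invariant under $G=\langle\iota_1,\iota_2\rangle$, with the compatible linearisation from Corollary~\ref{corollary erzeugnis mit linearisierung}. This group is needed whatever the number of components of $\cond$. It can be infinite on $\cond$: in the example above, $\iota_1\iota_2$ acts on a section as $\tau_s$. What saves the argument is that the image of $G$ in $\Aut(\Curv_\cond)$ is finite by Lemma~\ref{lemma:factor-action}. Note that preserving an ample bundle alone would not suffice when the base is $\PP^1$; that lemma also uses the singular fibres of the algebraic reduction.

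Corollary~\ref{cor:global-generation} then gives $G$-invariant sections generating a multiple of $(\nu_D^*L)|_\cond$. These extend to $D^\nu$ and descend to $D$. They are base point free because they are pulled back from $\Curv$ and every $\varphi$-fibre meets $\cond$.

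Your ``main obstacle'' paragraph does anticipate a two-generator group and Lemma~\ref{lemma:factor-action}. But it uses the wrong second generator, one attached to the components of $\cond$, so as written the argument does not close.
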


Our proof is based on the strategy of~\cite{Fuj00}, but the fact that we are not considering a log-canonical class requires some additional care for the linearisation of our line bundle.

\begin{proof}
Since $D$ has only normal crossing singularities of multiplicity two, the morphism
$$
\holom{\gamma_1:= \nu_D|_\cond}{\cond}{\Theta}
$$
is \'etale of degree two. We let
\begin{equation}
\label{define:iota1}
\holom{\iota_1}{\cond}{\cond}
\end{equation}
denote the covering involution. 
Since $(\nu_D^* L)|_\cond \isom \gamma_1^* L|_\Theta$ is a pull-back, it has a natural $G_1$-linearisation, where $G_1\isom \Z/2\Z$ is the subgroup of $\Aut \cond$ generated by the involution $\iota_1$.  Thus, $G_1$ acts by pullback on the global sections and gives a linear automorphism
\begin{equation}
\label{action:iota1}
\iota_1^* : H^0(\cond, m \nu_D^* L) \rightarrow
H^0(\cond, m \nu_D^* L)
\end{equation}
for every $m \in\Z$. 

{\em Step 1. Let $m\in \Z$ be given. We show that a section $s \in H^0(D^\nu, m \nu_D^* L)$ descends to
$D$ if and only if $s|_\cond$ is invariant under the involution~$\iota_1^*$. } 

This is classical: the normalisation gives an exact sequence
$$
0 \rightarrow \sO_D \rightarrow (\nu_D)_* \sO_{D^\nu} \rightarrow Q \rightarrow 0
$$
of sheaves on $D$, where the sheaf
$$
Q \isom ((\gamma_1)_* \sO_{\cond})/\sO_{\Theta}
$$ 
is a line bundle on $\Sigma$. 
Twisting the exact sequence by the line bundle $mL$, we see that a section
$$
s \in H^0(D^\nu, m \nu_D^* L) \isom H^0(D, (\nu_D)_* \sO_{D^\nu} \otimes mL)
$$ 
descends to $D$ if and only if its image in
$H^0(\Theta, Q \otimes mL)$ is zero. Yet by the description of $Q$, cf.
\cite[2.1]{Rei94},  this is the case
if and only if $s|_{\cond}$ is a pull-back from $\Theta$, i.e.
is invariant under the covering involution.

\medskip

By Step 1, we are done if we show that some positive multiple of $\nu_D^* L$ is generated by $\iota_1^*$-invariant global sections. Proving this requires an auxiliary construction:
let 
$$
\holom{\eta}{\cond}{\cond_{st}}, \qquad \holom{\gamma_2}{\cond_{st}}{S}
$$
be the  Stein factorisation of the morphism $\holom{\pi'_{\cond'}}{\cond'}{S}$. By assumption, $\gamma_2$ has degree two, so we have
a covering involution
$$
\holom{\iota_2}{\cond_{st}}{\cond_{st}}
$$
Since $\nu_D^* L$ defines the fibration $\varphi$, we can write
$\nu_D^* L \isom \varphi^* A$ for some ample divisor $A$ on $\Gamma$.
The pull-back
$
\gamma_2^* \alpha^* A
$
has a natural $G_2$-linearisation where $G_2\isom \Z/2\Z$ is the subgroup of $\Aut \cond_{st}$ generated by the involution $\iota_2$.
Thus we have linear automorphisms
$$
\iota_2^* : H^0(\cond_{st}, m \gamma_2^* \alpha^* A) \rightarrow
H^0(\cond_{st}, m \gamma_2^* \alpha^* A).
$$
Now observe that the birational morphism $\cond' \rightarrow \cond$
and $\cond' \rightarrow \cond_{st}$ have connected fibres and
$$
\pi' \alpha^* A \isom g^* \varphi^* A \isom g^* \nu_D^* L.
$$
Thus, we have isomorphisms
$$
\begin{aligned}
H^0(\cond_{st}, m \gamma_2^* \alpha^* A) & \isom H^0(\cond', m \eta^* \gamma_2^* \alpha^* A)\\
&\isom  H^0(\cond', m g^* \nu_D^* L)\\
&\isom   H^0(\cond, m \nu_D^* L).\\
\end{aligned}
$$
Composing with these isomorphisms we obtain 
a linear automorphism
\begin{equation}
\label{action:iota2}
\iota_2^* : H^0(\cond, m \nu_D^* L) \rightarrow
H^0(\cond, m \nu_D^* L).
\end{equation}
The involution $\iota_2$ induces a bimeromorphic map
on $\cond$, so we identify $G_2$ with a subgroup of $\mbox{Bim}(\cond)$.

{\em Step 2. We show that a section $t \in H^0(\cond, m \nu_D^* L)$
extends to a section $s \in H^0(D^\nu, m \nu_D^* L)$
if and only if $t$ is $\iota_2^*$-invariant.}

Recall that 
$$
H^0(\cond, m \nu_D^* L) \isom H^0(\cond_{st}, m \gamma_2^* \alpha^* A) 
$$
and the $\iota_2^*$-action is defined via the covering involution
for $\gamma_2:\cond_{st} \rightarrow S$. Thus, $t$ is $\iota_2^*$-invariant
if and only if it is a pull-back from $S$, i.e. we have
$$
t \in \gamma_2^* H^0(S,m \alpha^* A) \subset H^0(\cond_{st}, m \gamma_2^* \alpha^* A).
$$
Yet the maps $\alpha$ and $\varphi$ have connected fibres, so
$$
 H^0(S,m \alpha^* A)
 \isom H^0(\Curv, m A) \isom
 H^0(D^\nu, m \varphi^* A)
 \isom
 H^0(D^\nu, m \nu_D^* L).
$$
This finishes Step 2.

{\em Step 3. Conclusion.}
By construction, the two involutions $\iota_1,\iota_2 \in \mbox{Bim}(\cond)$  come with canonical linearisations on the line bundle $(\nu_D^* L)|_\cond$. 
By Lem\-ma~\ref{lemma:factor-action}, the algebraic reduction allows to reduce the action of bimeromorphic automorphisms on $L$ to an action of biholomorphisms,
so Corollary~\ref{corollary erzeugnis mit linearisierung} applies: there is a group $G$ with $\iota_1, \iota_2 \in G$ such that $(\nu_D^* L)|_\cond$ has a $G$-linearisation extending the given linearisations for $\left\langle \iota_1\right\rangle$, $\left\langle \iota_2\right\rangle$.
In particular, the $G$-linearisation induces the actions on 
$H^0(\cond, m \nu_D^* L)$
from \eqref{action:iota1} and \eqref{action:iota2} for every $m \in \N$. 
The surface $\cond$ has algebraic dimension one and $(\nu_D^* L)|_\cond$ is nef, so by Corollary~\ref{cor:global-generation}, there exists an $m \in \N$
such that $(\nu_D^* L)|_\cond$ is globally generated by
$G$-invariant sections.
By Step 2, these sections extend to~$D^\nu$, so we obtain a subspace
$$
V \subset H^0(D^\nu, m \nu_D^*L)
$$
of global sections that are $\iota_1^*$-invariant.
By Step 1, these sections descend to $D$, so we are left to show
that $\mbox{Bs} |V|$ is empty:
by construction, $\mbox{Bs} |V|$ is disjoint from $\cond$. 
Yet $\nu_D^* L \isom \varphi^* A$,
so 
all the global sections of $m \nu_D^*L$ are pull-backs from $\Gamma$. 
Thus, if the base locus 
$\mbox{Bs} |V|$ is not empty, it contains a $\varphi$-fibre and is not disjoint
from $\cond^h$. This shows that $\mbox{Bs} |V| = \emptyset$.
\end{proof}

\begin{proof}[Proof of Theorem~\ref{theorem:HK-divisor-abundance}]
Let $\holom{\nu_D}{D^\nu}{D}$ be the normalisation, 
and denote by $\cond\subset D^\nu$ the conductor. Using the notation of Setup~\ref{setup:section7}, we denote by
$\cond^h \subset \cond$ the part of the conductor that is horizontal with respect to the fibration $\varphi$.

By assumption the divisor 
$
\cond^h
$
is non-empty and reduced. By Lemma~\ref{lemma:lagrangian3}(b), we already know that
$\nu_D^* K_D$ is semiample, so our task is to show that semiampleness descends under the normalisation.

Following the notation of Lemma~\ref{lemma:lagrangian3} we set
$$
\Theta := \nu_D(\cond^h)
$$
and denote by
$$
\holom{a_\Theta}{\Theta}{\Gamma_\Theta}
$$
the algebraic reduction.
By Lemma~\ref{lemma:lagrangian5}
 there exists a Zariski open subset $\Theta_\circ \subset \Theta$
that is smooth and proper over $\Gamma_{\Theta_\circ} \subset \Gamma_\Theta$
and $D$ has normal crossing singularities of multiplicity two along $\Theta_\circ$.

By~\cite[Corollary~10.55]{Kol13}, the complex space $D$ admits a semi-log resolution, that is a projective birational morphism
$$
\holom{\tau}{D'}{D}
$$
such that
\begin{enumerate}
\item $D'$ has only smooth points or snc points of multiplicity two;
\item $\tau$ is an isomorphism over $D^{\mbox{\tiny snc2}}$ where $D^{\mbox{\tiny snc2}} \subset D$ is the open set such that $D^{\mbox{\tiny snc2}}$ has at most snc points of multiplicity two.
\end{enumerate}
Since $\nonn_\circ \subset D^{\mbox{\tiny snc2}}$ the modification $\tau$ is an isomorphism
over $D_{\nons} \cup \nonn_\circ$, and we denote by
$\Theta' \subset D'$ the strict transform. Since $D'$ has only snc points of multiplicity
two, its singular locus  $D'_{\sing}$ is a disjoint union of smooth codimension one subsets.
The surface $\Theta' \subset D'$ has codimension one and is contained
in $D'_{\sing}$, so it is a connected component of $D'_{\sing}$.
Up to replacing $D'$ by 
the blowup of $D'$ in $D'_{\sing} \setminus \Theta'$
we can assume that
$$
D'_{\sing} = \Theta'.
$$

{\em Claim. If $\tau^* K_D$ is semiample, then so is $K_D$.}

\medskip

Assuming this for the time being, let us see how to conclude.
The threefold $D'$ and the line bundle $L:=m \tau^* K_D$
satisfy the conditions of Proposition~\ref{prop:slc-descent}:
the conditions on the singularities are by construction,
property (1) follows from Lemma~\ref{lemma:lagrangian3}(b) and (3) already hold for $D'$ by Proposition 
\ref{prop:normalisation}. Condition (2) is clear since by
construction $\Theta'$ is the strict transform
of $\Theta=\nu_D(\cond^h)$. Thus $\tau^* K_D$ is semiample by 
Proposition~\ref{prop:slc-descent} and we conclude with the claim.

{\em Proof of the claim.}
Since $\tau^* K_D$ is semiample, for $m > 0$ sufficiently divisible we have a morphism onto an irreducible and reduced complex space of dimension one
$$
\holom{\varphi' = \varphi_{|m \tau^* K_D|}}{D'}{\Curv'}
$$ 
such that $(\varphi')_* \sO_{D'} \isom \sO_{\Curv'}$.
Let 
$$
\holom{\nu_{D'}}{(D')^\nu}{D'}
$$
be the normalisation of $D'$. By the universal property of the normalisation, there exists
a morphism $\holom{\eta}{(D')^\nu}{D^\nu}$ such that we have a commutative diagram
\begin{equation}
\label{diagram-slc}
\xymatrix{
(D')^\nu \ar[r]^{\nu_{D'}} \ar[d]_\eta & D' \ar[d]_\tau \ar[rdd]^{\varphi'}  & 
\\
D^\nu \ar[d]_\varphi \ar[r]_{\nu_D} & D & \\
\Curv &  & \Curv'
}
\end{equation}

Let $F'$ be a fibre of $\varphi'$ over a general point $t \in \Curv'$ and 
set 
$$
F := \tau(F') \subset D.
$$ 

{\em Step 1. We show that $F$ is contained in the locus where $\tau$ is an isomorphism.}
Since $\tau$ is an isomorphism along $\Theta_\circ$ it is sufficient to show that
$$
F \cap D_{\sing} = F \cap \Theta_\circ.
$$
By Lemma~\ref{lemma:lagrangian3}(b) the line bundle $\nu_D^* K_D$ is semiample and some positive multiple defines the fibration $\varphi$. 
The morphism $\eta$ is a birational map between normal varieties, so has
connected fibres. 
Thus we can apply the projection formula and obtain
$$
H^0((D')^\nu, m \nu_{D'}^* \tau^* K_D)
\isom H^0((D')^\nu, m \eta^* \nu_D^* K_D)
\isom H^0(D^\nu, m \nu_D^* K_D).
$$
Since 
$$
\nu_{D'}^* H^0(D', m \tau^* K_D) \subset H^0((D')^\nu, m \nu_{D'}^* \tau^* K_D)
$$
we obtain that the irreducible components of $\nu_{D'}^* F'$ are general fibres of $\varphi \circ \eta$. Note that
$$
F = \tau(F') = (\tau \circ \nu_{D'})(\nu_{D'}^* F')
= (\nu_D \circ \eta) (\nu_{D'}^* F')
= \nu_D(\eta_* \nu_{D'}^* F'),
$$
so we are reduced to showing that
$$
(\eta_* \nu_{D'}^* F') \cap \fibre{\nu_D}{D_{\sing}} = (\eta_* \nu_{D'}^* F')  \cap \cond^h.
$$
Yet this is clear: by the above $\eta_* \nu_{D'}^* F'$ is a union of
$\varphi$-fibres, so it intersects the conductor $\cond$ only
along the horizontal part $\cond^h$. Moreover  $\fibre{\nu_D}{D_{\sing}}$
is the union of the conductor and $\fibre{\nu_D}{D_{\sing, \leq 1}}$
where $D_{\sing, \leq 1} \subset D_{\sing}$ denotes the union of the irreducible components of dimension at most one. Since
$D$ is normal near the generic point of every irreducible component
of $D_{\sing, \leq 1}$ we have $\fibre{\nu_D}{D_{\sing, \leq 1}} \subset D^\nu_\sing$. Yet by Lemma~\ref{lemma:lagrangian2}(a) the threefold $D^\nu$ is smooth near a general $\varphi$-fibre, so  $\fibre{\nu_D}{D_{\sing, \leq 1}}$
is disjoint from a general $\varphi$-fibre.

{\em Step 2. We show that $K_D$ is semiample.}
By Proposition~\ref{prop:semiample-nonnormal} some multiple of the Cartier divisor $F \subset D$ defines a fibration
$$
\holom{\bar \varphi}{D}{\bar C}
$$ 
such that $\bar \varphi_* \sO_{D} \isom \sO_{\bar C}$. Since $\nu_D^* F$ is a union of $\varphi$-fibres
and $\nu_D^* K_D$ is $\varphi$-cohomologically trivial, we know
that $K_D$ is cohomologically trivial on the $\bar \phi$-fibres. Since $\tau^* K_D$ is semiample by assumption, we obtain that $K_D$ is semiample by Proposition~\ref{prop:semiample+fibration}.
\end{proof}

The following result summarises what we have obtained so far:

\begin{theorem} \label{thm:only-case-left}
Let $X$ be a hyperkähler fourfold such that $\pic{X} \isom \Z L$ with $L$ a nef parabolic line bundle with $\kappa(X, L) = 0$. 
Then $X$ contains a unique prime divisor $D$ and, up to replacing $X$ by some very general deformation in the Hodge locus of $L$, some multiple of the canonical divisor $K_D$ is semiample defines a fibration
$$
\holom{\phi}{D}{U}
$$
onto a curve $U$ and the general fibre has singularities of Kodaira type I (cf. Remark~\ref{remark:kodaira-typeI}).
\end{theorem}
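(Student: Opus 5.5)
This is a summary theorem, so the plan is to assemble the earlier results through a case analysis on the unique prime divisor. First, Lemma~\ref{lemma-unique-divisor} gives a unique prime divisor $D$ with $L\sim_\Q rD$, and we may assume $D\in|L|$ after replacing $L$ by a multiple. Then Corollary~\ref{cor:allzero} and adjunction $K_D\isom L|_D$ show that $K_D$ is nef with $\nd(K_D)=1$ and has degree zero on all curves of $D$; here $\nd(K_D)=1$ follows from $L^2\neq0$ and $L^3=0$.

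\emph{Case 1: $D$ is not uniruled.} Proposition~\ref{prop:non-uniruled} shows that $K_D$ is semiample with $\kappa(D,K_D)=1$. The general fibre of $\psi$ is a smooth abelian or bielliptic surface. Such a fibre has no singular points, so in particular it has at worst Kodaira type I singularities, and no deformation is needed in this case.

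\emph{Case 2: $D$ is uniruled.} Proposition~\ref{prop:normalisation} puts us in Setup~\ref{setup:section7}, with $\nu^*K_D$ cohomologically trivial over $\Gamma$. Lemma~\ref{lemma:lagrangian3}(a) then gives a trichotomy for the horizontal conductor $\cond^h$, which is never empty.

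\emph{Subcase 2a: $\cond^h$ is non-reduced.} Here we pass to a very general deformation in the Hodge locus of $L$. If for very general $b$ we still had $\kappa(X_b,L_b)=0$ with non-reduced $\cond^h$, then Proposition~\ref{prop:no-Kodaira-type} would produce a projective $X_p$ with $\kappa(X_p,L_p)\geq1$. Since $\kappa$ is upper semicontinuous along the family, this contradicts $\kappa(X_b,L_b)=0$ for very general $b$. One must also check that $\kappa$ cannot jump from $0$ to something larger only at very general points; more precisely, semicontinuity shows that $\kappa\ge1$ at $p$ forces $\kappa\ge1$ near $p$, or at least generically. Handling this degeneration is the main point where care is needed.

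I expect the main obstacle to be exactly this step: justifying that the property ``$\kappa(X_b,L_b)\geq1$ at one point'' propagates to very general $b$, so that the non-reduced case is excluded. I would try to argue via flatness of $\mathfrak D$ over $\mathfrak B$. If $h^0(X_p,mL_p)\geq2$, then the divisors in $|mL_p|$ give cycles of class $mL$ deforming in a family of dimension $\geq1$. By Proposition~\ref{prop:cycle-space-proper} the relative cycle space is proper, so these cycles extend to a family over $\mathfrak B$ of relative dimension $\geq1$. That yields two distinct effective divisors on the very general $X_b$, contradicting Lemma~\ref{lemma-unique-divisor}. This is the point where ``up to replacing $X$ by a very general deformation'' enters: the original $X$ is itself very general in its Hodge locus in the relevant sense, or we replace it by one that is.

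\emph{Subcase 2b: $\cond^h$ is reduced.} Theorem~\ref{theorem:HK-divisor-abundance} gives that $K_D$ is semiample with $\kappa=1$, yielding $\phi\colon D\to U$. By Lemma~\ref{lemma:lagrangian5}, $D$ has normal crossing singularities of multiplicity two along $\Theta_\circ$. By the claim in the proof of Theorem~\ref{theorem:HK-divisor-abundance}, in particular its Step~1, a general fibre meets $D_{\sing}$ only inside $\Theta_\circ$, and away from $D_\sing$ the divisor is smooth. Hence the general fibre $F=\phi^{-1}(t)$ is a Cartier divisor in $D$ meeting $\Theta_\circ$ in curves. Away from $D_\sing$ it is smooth by generic smoothness. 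Along $\Theta_\circ$, its two local branches are the images of two general $\varphi$-fibres, which are smooth and cut $\cond^h$ transversally. So $F$ is locally $\{x_1x_2=0\}$ inside a smooth threefold section, which is Kodaira type I.
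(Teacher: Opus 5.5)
There is a genuine gap in the uniruled case, at the point where you pass to a very general deformation in the Hodge locus $\mathfrak B$ of $L$. Non-vanishing is not stable under deformation. Upper semicontinuity of $h^0$ only says that sections can \emph{disappear} away from $[X]$, so a priori $\kappa(X_b,L_b)=-\infty$ for very general $b\in\mathfrak B$, and then there is no divisor $D_b$ at all. Consequently Proposition~\ref{prop:no-Kodaira-type} cannot be invoked: its hypotheses ($\kappa(X_b,L_b)=0$, $D_b$ uniruled, $\cond^h_b$ non-reduced) are conditions at the very general point of $\mathfrak B$. The fact that $\rho(X)=1$ does not make $X$ ``very general'' for these properties, and your case split on $\cond^h$ of $X$ itself does not transfer to $X_b$.

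The missing idea is where the paper really uses that $D$ is uniruled: deform the rational curves $C$ of the MRC fibration of $D^\nu$ over the Kuranishi family. The Riemann--Roch bound $\dim \mathrm{Mor}(\PP^1,\sX)\geq \dim\sX$, together with a dimension count, shows three things. The deformations of $C$ sweep out a divisor $D_t$ in every $X_t$ with $t$ in the Hodge locus of $[C]$. This Hodge locus is a hypersurface, hence equals the Hodge locus of $L$. Therefore $\kappa(X_t,L_t)\geq 0$ and $D_t$ is uniruled along it. Only after this does the paper make the dichotomy reduced/non-reduced $\cond^h_t$, at the very general $t$.

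Your propagation step also runs in the wrong direction. Upper semicontinuity permits $\kappa$ to jump \emph{up} at special points such as the projective point $p$. Proposition~\ref{prop:cycle-space-proper} only gives properness of the components of the relative cycle space, i.e.\ limits of cycles from general to special fibres. It does not say that a component through $|mL_p|$ dominates $\mathfrak B$; it may lie over $p$ alone. The paper argues differently. From $\kappa(X_p,L_p)\geq1$ it gets that $L_p$ is semiample \cite[Theorem~4.1]{HLL25}. Semiampleness is deformation invariant in the Hodge locus of $L$ \cite[Corollary~1.3]{Mat16}, which gives $\kappa(X,L)>0$, a contradiction. The same two results show $\kappa(X_t,L_t)=0$ at the very general $t$.

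Note also that the paper does not accept Case~1 with smooth fibres as ``type I''; it rules that case out. An abelian fibre is a fibre of a Lagrangian fibration by \cite[Theorem~4.1]{GLR13}, contradicting $\kappa(X,L)=0$, and the bielliptic case is reduced to this via Theorem~\ref{theorem normal crossing cover corank one smoothing}. This matters because the proof of Theorem~\ref{theorem main semiampleness} needs the fibre to be singular and uniruled.

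Your Subcase~2b applied to $X$ itself, including the local description of the fibre, is fine.
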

 
\begin{proof}
The existence of a unique prime divisor $D$ is given by Lemma~\ref{lemma-unique-divisor},
up to replacing $L$ by some positive rational multiple we can assume $D \in |L|$. 

We make a case distinction:

{\em 1st case. Assume that $D$ is not uniruled.}
By Proposition~\ref{prop:non-uniruled}, the canonical bundle $K_D$ is semiample, so we obtain a fibration \holom{\phi}{D}{U} onto a curve $U$. 
By Proposition~\ref{prop:non-uniruled}, the general fibre $F$ is Lagrangian and an abelian or bielliptic surface. If the surface is abelian, we infer from~\cite[Theorem~4.1]{GLR13} that $F$ is the fibre of a Lagrangian fibration on $X$. Since $\pic X=\Z L$, we obtain that $\kappa(X, L)>0$, a contradiction to our assumption. If the surface is bielliptic, we apply the same argument using the unobstructedness result for morphisms Theorem~\ref{theorem normal crossing cover corank one smoothing} to reduce to the case of an abelian surface treated above.

{\em 2nd case. Assume that $D$ is uniruled.}
By Proposition~\ref{prop:normalisation}  the general fibre of the MRC fibration
of $D^\nu$ is isomorphic to $\PP^1$, and we denote by $C := \nu(\PP^1)$ the corresponding
rational curve on $X$.
Let $\sK$ be the Kuranishi space of $X$ and let $\mathfrak B \subset
\sK$ be an irreducible component of the Hodge locus of the class $[C]$ that
contains the point $[X] \in \sK$. 

{\em Claim. The variety $\mathfrak B$ is the Hodge locus of the line bundle $L$.
Moreover we have $\kappa(X_t, L_t) \geq 0$ for every $t$ in a small analytic neighbourhood
of $[X] \in \mathfrak B$.}

{\em Proof of the claim.} Let $\sX \rightarrow \sK$
be the universal family over the Kuranishi family,
and let $\sX_{\mathfrak B} \rightarrow \mathfrak B$ be the restriction
over the Hodge locus.

Since $-K_{\sX} \cdot
C = -K_X \cdot C=0$ we know by the Riemann--Roch formula that
$$
\dim_{\nu|_{\PP1}} \mbox{Mor}(\PP^1, \sX) \geq \dim \sX.
$$
We choose an irreducible component of the space of morphisms containing the family of rational curves given by the MRC fibration 
and denote by $\mathcal U \subset \Chow{\sX/\sK}$ the closure
of the locus defined by the images of the deformations of $\nu|_{\PP1}$.
The automorphism group of $\PP^1$ has dimension three, so by the above
$$
\dim \mathcal U \geq \dim \sX - 3.
$$
Let $\holom{\pi}{\Gamma}{\mathcal U}$ be the universal family, and denote by
$\holom{\varepsilon}{\Gamma}{\sX}$ the natural map. 
Since $\mathcal U$ (and therefore $\Gamma$) is proper over $\sK$,
the morphism $\varepsilon$ is proper, so its image $\varepsilon(\Gamma) \subset \sX$
is an analytic subvariety. 
Let us show that the morphism
$$
\Gamma \rightarrow \varepsilon(\Gamma) 
$$
is generically finite: by construction, the fibre of  $\varepsilon(\Gamma) \rightarrow \mathfrak B$ over the point $[X]$ contains the divisor $D$ as an irreducible component. 
Let $x \in D_{\nons} \subset D^\nu$ be a very general point, then 
by Proposition~\ref{prop:normalisation} there is a unique rational curve passing through $x$, i.e
the rational curve $\nu(\PP^1)$ given by the fibre of the MRC fibration. Let now $[Z] \in \mathcal U$ be any cycle passing through $x$. Then the irreducible components of $Z$ are rational curves,
so we have $\nu(\PP^1) \subset Z$. Yet $Z$ is a deformation of $\nu(\PP^1)$, so they have the same cohomology class. Thus we have $\nu(\PP^1) = Z$, i.e. 
the fibre $\fibre{\varepsilon}{x}$ is a singleton.
Hence $\varepsilon$ is generically finite onto its image (cp~\cite[II,Ex.3.22(b)]{Har77}).

By the above
\begin{equation}
\label{imageepsilon}
\dim \varepsilon(\Gamma) = \dim \Gamma = \dim \mathcal U + 1 \geq \dim \sX-2.
\end{equation}
Now recall that the image of $\varepsilon$ is contained in $\sX_{\mathfrak B}$. Moreover for every $t \in \mathfrak B$
the fibre $\varepsilon(\Gamma)_t \subset \sX_t$ is a uniruled variety
and $\sX_t$ is not uniruled. 
Thus we obtain 
$$
\dim \varepsilon(\Gamma) < \dim \sX_{\mathfrak B} < \dim \sX
$$
where for the last inequality we use that the Hodge locus of the rational curve is a strict subvariety
of the Kuranishi space.  Combined with \eqref{imageepsilon} this shows
that $\dim \varepsilon(\Gamma) = \dim \sX -2$,
so $\varepsilon(\Gamma) \rightarrow \mathfrak B$ is surjective and $\mathfrak B \subset \sK$ has codimension one.

Thus $\varepsilon(\Gamma) \rightarrow \mathfrak B$ is flat family of effective divisors
in $\sX_{\mathfrak B} \rightarrow \mathfrak B$. Now recall that $D$ is the unique prime divisor in $X$, so the fibre of $\varepsilon(\Gamma) \rightarrow \mathfrak B$ over the point $[X]$ is linearly equivalent to $mD$ for some $m \in \N$.
Since the numerical equivalence class is constant in flat families we obtain
that for every $t \in \mathfrak B$ the fibre is an effective divisor
$D_t$ such that $c_1(D_t) = m c_1(L)$. In particular, $t$ is in the Hodge locus of $L$,
so there exists a line bundle $L_t \rightarrow \sX_t$ such that $c_1(L_t)=c_1(L)$.
On a hyperk\"ahler manifold numerical and linear equivalence coincide so we have
$D_t \isom m L_t$.  This finishes the proof of the claim.

{\em Final step.} 
By Lemma~\ref{lemma:allzero} we know that
for a very general point $[X_t] \in \mathfrak B$ the Picard group
is generated over $\Q$ by $L_t$ and $L_t$ is nef. If $\kappa(X_t, L_t) \geq 1$ we deduce that $L_t$ is semiample (e.g. by~\cite[Theorem~4.1]{HLL25}. Semiampleness is deformation invariant in the Hodge locus~\cite[Corollary~1.3]{Mat16}, so $\kappa(X, L) > 0$ in contradiction to our assumption. Thus $\kappa(X_t, L_t) =0$ and $D_t \subset X_t$ is the unique prime divisor.

We are in the situation of the Setup~\ref{setup:section7}, so we can consider the $\varphi_t$-horizontal part $\cond^h_t$ of the conductor of the normalisation $\holom{\nu_{D_t}}{D_t^\nu}{D_t}$. By Lemma~\ref{lemma:lagrangian3} the divisor $\cond^h_t$ is not zero.

Assume that for a very general point $[X_t] \in \mathfrak B$ 
the divisor $\cond^h_t$ is not reduced: then we can apply Proposition~\ref{prop:no-Kodaira-type}
to obtain a projective deformation $X_p$ in the Hodge locus of $L$ such that $\kappa(X_p, L_p) \geq 1$. As in the paragraph above
this implies the semiampleness of $L_p$ and yields a contradiction.

Assume that for a very general point $[X_t] \in \mathfrak B$ 
the divisor $\cond^h_t$ is reduced: then we know by Theorem~\ref{theorem:HK-divisor-abundance} that $K_{D_t}$ is semiample with Kodaira dimension one, so some multiple defines a fibration
$$
\holom{\phi_t}{D_t}{U_t}
$$
onto a curve. A general fibre $F_t$ of $\phi_t$ is normalised by a finite union of general fibres of the fibration $\holom{\varphi_t}{D_t}{\Curv_t}$, so $F_t$ meets the singular locus of $D_t$ only along $\Theta_t := \nu_{D_t}(\cond^h_t)$.
Thus, by Lemma~\ref{lemma:lagrangian5} the divisor $D_t$ has only normal crossing singularities of multiplicity two in an open neighbourhood of $F_t$. Since $F_t$ is a general $\phi_t$-fibre, this shows the statement.
\end{proof}

%----------------------------------------------------
%----------------------------------------------------
\section{Corank one normal crossing varieties}\label{section snc fibers}
%----------------------------------------------------
%----------------------------------------------------

We are interested in the global structure of normal crossing varieties with torsion canonical sheaf that can potentially show up as degenerations of abelian varieties. We will focus on ``corank one'': varieties for which the Albanese morphism of the normalisation is a $\PP^1$-fibration. We first analyse the irreducible components of such varieties and then how they are glued.

%----------------------------------------------------
\subsection{Friedman's $d$-semistability}\label{section d-semistability}
%----------------------------------------------------

For a simple normal crossing variety~$Y$, Friedman\footnote{Friedman uses the terms \emph{global} and \emph{local normal crossing} instead of the nowadays more commonly used \emph{simple normal crossing} and \emph{normal crossing}.} introduced the notion of \emph{$d$-semistability} in~\cite[Definition~1.13]{Fri83}. 
By a local computation, one can verify that $T^1_Y:=\E{\rm xt}^1(\Omega_Y,\sO_Y)$ is a coherent sheaf supported on the singular locus~$Y^{\sing}$ and it is a line bundle on its support.  Friedman shows in~\cite[Proposition~2.3]{Fri83} that $Y$ is $d$-semistable if and only if $T^1_Y \isom \sO_\Ysing$. 
Following Kato~\cite[Theorem~11.7]{Kat96}, we use this latter condition as a definition in the normal crossing case.

\begin{definition}\label{definition d-semistable nc}
Let $Y$ be a normal crossing variety. We say that $Y$ is \emph{d-semistable} if $T^1_Y \isom \sO_{Y^\sing}$.
\end{definition}

\begin{remark}
It is shown in~\cite[Corollary~1.12]{Fri83} that $d$-semistability is a necessary condition for $Y$ to be the central fibre of a semistable degeneration. 
\end{remark}

\begin{proposition}\label{proposition friedman kawamata namikawa}
Let $Y$ be a normal crossing variety. Then $Y$ is $d$-semi\-stable if and only if $Y$ admits a log structure of semistable type; see Definition~\ref{definition semistable type log structure}. 
\end{proposition}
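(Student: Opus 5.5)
The plan is to pass through Lemma~\ref{lemma kawamata namikawa is semistable type}, which identifies log structures of semistable type (with a chosen $\psi$) with equivalence classes of Kawamata--Namikawa log atlases. It then remains to compare the existence of a log atlas with the triviality of $T^1_Y$. The comparison is local-to-global and I will carry it out by a \v{C}ech computation on $\Ysing$.

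\textbf{Describing $T^1_Y$.} Cover a neighbourhood of $\Ysing$ by charts $U_i$ with coordinates $z^{(i)}_0,\dots,z^{(i)}_n$ in which $Y$ is given by $z^{(i)}_0\cdots z^{(i)}_{r}=0$. On such a chart $T^1_Y|_{U_i}\isom \sO_{\Ysing\cap U_i}$. A local generator is the class $e_i$ of the first-order deformation $z^{(i)}_0\cdots z^{(i)}_r = t$, that is, the dual of the equation $f_i:=\prod_{\ell\le r} z^{(i)}_\ell$ under $T^1_Y\isom \sHom(I/I^2,\sO_Y)/\mathrm{im}(T_{V})$ restricted to the singular locus.

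On an overlap we have $z^{(i)}_{\sigma(\ell)}=u^{(ij)}_\ell z^{(j)}_\ell$, so $f_i = u_{ij} f_j$ with $u_{ij}:=\prod_\ell u^{(ij)}_\ell$, and therefore $e_j = u_{ij}|_{\Ysing}\, e_i$. Hence $T^1_Y$ is the line bundle on $\Ysing$ with transition functions $u_{ij}|_{\Ysing}$.

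\textbf{Log atlas $\Rightarrow$ $d$-semistable.} If the charts form a log atlas, condition~\eqref{eq kawamata namikawa unit condition} gives $u_{ij}\equiv 1$. So the $e_i$ glue to a nowhere vanishing section and $T^1_Y\isom\sO_{\Ysing}$.

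\textbf{$d$-semistable $\Rightarrow$ log atlas.} Conversely, suppose $T^1_Y\isom \sO_{\Ysing}$. Choose a global nowhere vanishing section and write it locally as $v_i e_i$ with $v_i\in \sO^\times_{\Ysing\cap U_i}$. Extend $v_i$ to a unit $\tilde v_i$ on $U_i$ after shrinking; this is possible since $\Ysing\cap U_i$ is a closed subspace of a Stein neighbourhood. Replace $z^{(i)}_0$ by $\tilde v_i^{-1} z^{(i)}_0$. Then the new cocycle $u'_{ij}$ satisfies $u'_{ij}|_{\Ysing}=1$.

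It remains to improve $u'_{ij}\equiv 1 \pmod{I_{\Ysing}}$ to $u'_{ij}\equiv 1$ on $U_i\cap U_j$. This is the main obstacle. I would handle it as Kawamata--Namikawa do~\cite[Proposition~1.1]{KN94}. Any unit $w$ with $w\equiv 1$ on $\Ysing$ can be written locally as a product $\prod_{\ell\le r} w_\ell$ of units with $w_\ell\equiv1$ on the relevant components: modulo the ideal of $\Ysing$, which is generated by the products $\prod_{k\ne\ell} z_k$, one can distribute $w-1$ among the factors. One then absorbs these factors into rescalings of the individual coordinates $z^{(j)}_\ell$. To do this compatibly on all overlaps, one argues inductively over a locally finite cover, or equivalently by showing that the relevant obstruction lies in $H^1$ of a sheaf that is flasque enough on the germ of $\Ysing$. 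Alternatively, I would cite~\cite[Theorem~11.7]{Kat96} together with~\cite[Proposition~2.3]{Fri83}, which give exactly the equivalence of $d$-semistability with the existence of a log structure of semistable type. Combining this with Lemma~\ref{lemma kawamata namikawa is semistable type} yields the proposition.
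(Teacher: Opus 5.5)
Your fallback, citing \cite[Theorem~11.7]{Kat96}, is exactly the paper's proof; the paper also cites \cite[Proposition~1.1]{KN94} for the snc case. That citation needs neither \cite[Proposition~2.3]{Fri83} nor Lemma~\ref{lemma kawamata namikawa is semistable type}. Friedman is superfluous because Definition~\ref{definition d-semistable nc} \emph{defines} $d$-semistability of a normal crossing variety as $T^1_Y\isom\sO_{\Ysing}$. The lemma is superfluous because Kato's statement is already phrased in terms of log structures of semistable type.

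Your primary route, through log atlases and a \v{C}ech description of $T^1_Y$, is genuinely different. It can be made into an elementary self-contained proof, but as written it does not stand on its own, for two reasons.

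\textbf{Circularity.} Relative to the paper, the route is circular. The paper justifies Lemma~\ref{lemma kawamata namikawa is semistable type} by comparing \cite[Proposition~1.1]{KN94} with \cite[Theorem~11.7]{Kat96}, i.e.\ by the very equivalence you want to prove.

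\textbf{The ``main obstacle'' is not one.} The remedy you sketch (rescaling coordinates compatibly, induction over the cover, an $H^1$ argument) is vague and aimed at the wrong thing. The function $z^{(j)}_\ell$ vanishes on the branch $B_\ell:=\{z^{(j)}_\ell=0\}$ of $Y$. So the relation $z^{(i)}_{\sigma(\ell)}=u^{(ij)}_\ell z^{(j)}_\ell$ determines $u^{(ij)}_\ell$ only on the other branches $B_k$, $k\neq\ell$. Consequently the product $\prod_\ell u^{(ij)}_\ell$ is intrinsic only along $\Ysing$. This is also what makes your transition functions for $T^1_Y$ well defined; note that $f_i=u_{ij}f_j$ reads $0=0$ on $Y$ and has to be interpreted on an ambient extension of the chart change.

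Now suppose $w:=\prod_\ell u^{(ij)}_\ell\equiv 1$ on $\Ysing$. Let $w_\ell$ be the unit equal to $w$ on $B_\ell$ and to $1$ on the other branches; these pieces glue precisely because $w|_{\Ysing}=1$. Replace $u^{(ij)}_\ell$ by $u^{(ij)}_\ell w_\ell^{-1}$. The relations still hold and the product becomes $1$. This is done overlap by overlap without changing any coordinate, so no compatibility problem arises.

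The same remark supplies the missing direct proof of the lemma. Once $\prod_\ell u_\ell=1$, the $u_\ell$ are unique: on $B_\ell$ one is forced to have $u_\ell=\prod_{k\neq\ell}u_k^{-1}$, and the right-hand side is already determined there. Hence the $u_\ell$ satisfy the cocycle condition on triple overlaps, and the local divisorial log structures glue to a log structure of semistable type.

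Repaired this way, your argument makes transparent why the unit condition \eqref{eq kawamata namikawa unit condition} is the same as $T^1_Y\isom\sO_{\Ysing}$. The paper's citation instead buys brevity, and covers the general normal crossing case without the bookkeeping of branches, permutations and chart refinements.
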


\begin{proof}
This is~\cite[Theorem~11.7(2)]{Kat96}. In the simple normal crossing case, the result was proved in~\cite[Proposition~1.1]{KN94}; see also~\cite[Proposition~III.1.8.8]{Ogu18}.
\end{proof}

\begin{remark}
The log structure from Proposition~\ref{proposition friedman kawamata namikawa} is in general not unique. Note, however, that its log differentials are uniquely determined by Corollary~\ref{corollary independence of log differentials}.
\end{remark}

\begin{remark}\label{remark d-semistable two components}
The following special situation is of particular interest in this article. Assume $Y$ is an snc variety without triple points; cf. Section~\ref{section preliminaries} for the terminology. If $\Ysing_0 \subset \Ysing:=Y^\sing$ is a connected component of the singular locus of $Y$, since $Y$ is \emph{simple} normal crossing, there are exactly two irreducible components $Y_0, Y_0'$ of $Y$ containing $\Ysing_0$. By~\cite[Proposition~2.3]{Fri83}, one can express $d$-semistability can by requiring that for each such connected component one has
\[
N_{\Ysing_0/Y_0} \isom N_{\Ysing_0/Y_0'}^*.
\]
If each connected component $\Ysing_0$ of $Y^\sing$ has exactly two embeddings into the normalisation, the analogous statement holds for normal crossing varieties. 
\end{remark}

%----------------------------------------------------
\subsection{\'Etale bisections}\label{section etale bisections}
%----------------------------------------------------

In order to determine the global structure of normal crossing Lagrangian subvarieties, we investigate étale bisections of $\PP^1$-bundles. There are no connected ones with trivial log canonical bundle:

\begin{proposition}\label{proposition no etale bisection}
Let $A$ be a reduced Gorenstein compact complex space of positive dimension with trivial canonical bundle, let $V$ be a rank $2$ vector bundle on $A$, and let $\pi\colon Y:=\mathbb P(V)\to A$ be the associated projective bundle. If $A'$ is a connected étale bisection of $\pi$, then $K_Y + A'$ is a nontrivial $2$-torsion bundle. 
\end{proposition}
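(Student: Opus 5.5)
The strategy is to pass to the double cover $A'\to A$, over which the bisection splits into two disjoint sections, and there to use the computation of Corollary~\ref{corollary log canonical bundle}. Set $p:=\pi|_{A'}\colon A'\to A$. Since $A'$ is a connected étale bisection, $p$ is a connected étale double cover, given by a nontrivial $2$-torsion line bundle $\eta$ on $A$ with $p_*\sO_{A'}\isom\sO_A\oplus\eta$. Form the fibre product $Y':=Y\times_A A'=\PP(p^*V)\to A'$. This is an étale double cover of $Y$, with covering involution $\sigma$, and we write $q\colon Y'\to Y$ for it. The preimage $q^{-1}(A')=A'\times_A A'$ is the disjoint union of the diagonal $\Sigma_0$ and its image $\Sigma_\infty=\sigma(\Sigma_0)$, and these are two disjoint sections of $Y'\to A'$.

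By Section~\ref{section projective bundles}, the existence of the two disjoint sections lets us write $p^*V\isom M_0\oplus M_\infty$. Since $A'$ is étale over $A$, it is again Gorenstein with $\omega_{A'}\isom p^*\omega_A$ trivial. The argument of Corollary~\ref{corollary log canonical bundle} uses only \eqref{eq canonical sheaf projective bundle} and Lemma~\ref{lemma class of section of p1 bundle in pic}, both of which hold over any such base. It therefore gives
\[
q^*\big(K_Y+A'\big)=K_{Y'}+\Sigma_0+\Sigma_\infty\sim 0 ,
\]
using $K_{Y'}=q^*K_Y$ and $q^*A'=\Sigma_0+\Sigma_\infty$, both valid because $q$ is étale. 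Then $q_*q^*\sO_Y(K_Y+A')\isom \sO_Y(K_Y+A')\otimes(\sO_Y\oplus\pi^*\eta)$, since $q_*\sO_{Y'}=\pi^*p_*\sO_{A'}=\sO_Y\oplus\pi^*\eta$. Because $q_*\sO_{Y'}$ has exactly the two rank-one summands $\sO_Y$ and $\pi^*\eta$, we conclude that $\sO_Y(K_Y+A')$ is isomorphic to either $\sO_Y$ or $\pi^*\eta$. In both cases it is $2$-torsion.

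The main obstacle is excluding triviality, i.e.\ showing that $\sigma$ acts on the trivialising section of $K_{Y'}+\Sigma_0+\Sigma_\infty$ by $-1$. I would argue with residues. Let $\omega$ be a nowhere-vanishing section of $\omega_{Y'}(\Sigma_0+\Sigma_\infty)$, which exists by the computation above. Its residues along $\Sigma_0$ and $\Sigma_\infty$ are nowhere-vanishing sections of $\omega_{\Sigma_i}\isom\omega_{A'}$. By the residue theorem on each $\PP^1$-fibre, they satisfy $\mathrm{Res}_{\Sigma_0}\omega=-\mathrm{Res}_{\Sigma_\infty}\omega$ under the identification $\Sigma_0\isom A'\isom\Sigma_\infty$. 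This identity can be checked fibrewise, or locally over $A'$, where $\omega=\frac{dz}{z}\wedge\pi^*\theta$.

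Now suppose $K_Y+A'$ were trivial. Then $\omega$ could be chosen as the pullback of a section on $Y$, hence $\sigma^*\omega=\omega$. Since $\sigma$ swaps $\Sigma_0$ and $\Sigma_\infty$ compatibly with the identifications with $A'$ (it acts on $A'$ by the deck involution $\tau$), we get $\mathrm{Res}_{\Sigma_\infty}\omega=\tau^*\mathrm{Res}_{\Sigma_0}\omega$. Combined with the residue theorem, $\theta:=\mathrm{Res}_{\Sigma_0}\omega$ satisfies $\tau^*\theta=-\theta$. We must rule this out, so we need $\tau$ to act trivially on $H^0(A',\omega_{A'})$. Since $\omega_{A'}=p^*\omega_A$ carries the canonical linearisation, we have $H^0(A',p^*\omega_A)=H^0(A,\omega_A)\oplus H^0(A,\omega_A\otimes\eta)$. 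The second summand vanishes because $\omega_A\otimes\eta\isom\eta$ is nontrivial torsion on the compact connected space $A$, so every section is invariant. This contradicts $\tau^*\theta=-\theta$ with $\theta\neq0$. Hence $K_Y+A'\isom\pi^*\eta$ is a nontrivial $2$-torsion bundle. The care needed is in matching the $\sigma$-linearisation of $\omega_{Y'}(\Sigma_0+\Sigma_\infty)$ with the residue maps, including the sign.
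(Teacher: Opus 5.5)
Your proof is correct, but it establishes nontriviality by a different mechanism than the paper.

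\textbf{The paper's route.} The paper stays on $Y$. It writes $\omega_Y\isom\sO_Y(-A')\otimes\pi^*T$. It then pushes forward the ideal sequence $0\to\sO_Y(-A')\to\sO_Y\to\sO_{A'}\to0$ to get $0\to\sO_A\to p_*\sO_{A'}\to R^1\pi_*\sO_Y(-A')\to0$. Relative duality, $R^1\pi_*\omega_Y\isom\omega_A\isom\sO_A$, then identifies $T^{-1}$ with the anti-invariant summand $T'$ of $p_*\sO_{A'}=\sO_A\oplus T'$. Nontriviality of $K_Y+A'$ thus becomes exactly nontriviality of $T'$, which follows from connectedness of $A'$ via the $t^2=s$ argument.

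\textbf{Your route.} You work upstairs on $Y'$. First you pin $\sO_Y(K_Y+A')$ down to $\sO_Y$ or $\pi^*\eta$ from the decomposition of $q_*\sO_{Y'}$. Then you exclude the trivial case by a residue sign computation. The deck involution swaps the two poles of the trivialising log form, and the residues there differ by a sign (the residue theorem on the $\PP^1$-fibres). This would force a nonzero $\tau$-anti-invariant section of $\omega_{A'}$. No such section exists, because $H^0(A,\omega_A\otimes\eta)=H^0(A,\eta)=0$.

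\textbf{Comparison.} Both arguments ultimately rest on the same input: nontriviality of $\eta$, i.e. connectedness of $A'$. The paper's version is shorter and purely cohomological. It avoids sign bookkeeping and avoids residues of the dualizing sheaf over a possibly singular Gorenstein base. (Since $Y'\to A'$ is locally a product near $\Sigma_0\sqcup\Sigma_\infty$, your local formula $\omega=\frac{dz}{z}\wedge\theta$ does handle that issue.) Your version gives a transparent geometric reason for the nontriviality: the sign comes from $\mathrm{Res}_0+\mathrm{Res}_\infty=0$ on $\PP^1$. It also makes explicit that $\tau$ acts on the trivialising section by $-1$ exactly when $K_Y+A'\isom\pi^*\eta$.

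\textbf{One step to tighten.} You deduce from $\mathcal F\oplus(\mathcal F\otimes\pi^*\eta)\isom\sO_Y\oplus\pi^*\eta$ that $\mathcal F\isom\sO_Y$ or $\mathcal F\isom\pi^*\eta$. As stated this needs Krull--Schmidt for vector bundles on compact complex spaces (Atiyah). You can avoid it: $H^0(Y',q^*\mathcal F)=H^0(Y,\mathcal F)\oplus H^0(Y,\mathcal F\otimes\pi^*\eta)$ is one-dimensional. A nonzero section of either summand pulls back to a nonzero section of a trivial line bundle on the compact connected reduced space $Y'$, hence is nowhere vanishing.
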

\begin{proof}
Assume that $A'\subset Y$ is a connected closed subvariety such that $p:=\pi|_A'\colon A'\to A$ is finite étale of degree $2$. Consider the base change
\[
\xymatrix@C=4em@R=4em{
Y' \ar[r]^{p'}\ar[d]^{\pi'} & Y \ar[d]^\pi\\
A' \ar@/^0.75pc/@{-->}[u]\ar@/^1.5pc/@{-->}[u]\ar[r]^p & A
}
\]
and note that we obtain two sections of $\pi'$, one coming from the inclusion $A' \subset Y$ and a second one coming from the inclusion composed with the covering involution of $p$. Observe that their images $\Sigma_0, \Sigma_\infty \subset Y'$ are disjoint since $p$ is étale. Since $K_A$ is trivial, their sum $\Sigma_0 + \Sigma_\infty$ is an anticanonical divisor in $Y'$ by Corollary~\ref{corollary log canonical bundle}. 

Let us consider the short exact sequence
\begin{equation}\label{eq cokernel sequence double cover}
0\to \sO_Y(-A') \to \sO_Y \to \sO_{A'} \to 0.
\end{equation}
Applying $\pi_*$, we obtain a sequence
\[
0\to \sO_A \to p_*\sO_{A'} \to R^1\pi_*\sO_Y(-A') \to 0.
\]
Since $\sO_Y(-A')$ and $\omega_Y$ become isomorphic when restricted to the fibres of~$\pi$, they differ by a line bundle coming from $A$, say $\omega_Y = \sO_Y(-A') \tensor \pi^*T$. Moreover, we have
\[
{p'}^*\omega_Y(A') = \omega_{Y'}(\Sigma_0 + \Sigma_\infty) \isom \sO_{Y'}
\]
so that $T$ is in fact $2$-torsion. 

It remains to be shown that $T$ is nontrivial. Let us consider the decomposition $p_*\sO_{A'} = \sO_A \oplus T'$ into eigensheaves for the covering involution. Here, $T'$ is a $2$-torsion line bundle on $A$. The inclusion of the first summand $\sO_A$ is nothing but the canonical morphism $\sO_A \to p_*\sO_{A'}$, hence from \eqref{eq cokernel sequence double cover} we infer that
\[
T'\isom R^1\pi_*\sO_Y(-A')  = \left(R^1\pi_*\omega_Y\right) \tensor T^{-1} \isom T^{-1},
\]
where the last isomorphism comes from relative duality. 

We are thus reduced to proving that $T'$ is nontrivial. Suppose otherwise. Then we choose a trivializing global section $t$. In the algebra $p_*\sO_{A'}$, the square $t^2$ lies in the invariant part for the covering involution so that $t^2=s$ for some global section $s\in H^0(A,\sO_A)=\C$. Note that since $A'$ is connected, so is $A=\pi(A')$. We have $s\neq 0$ since the cover is étale. It follows that the equations $t=\pm \sqrt{s}$ define the two connected components of the cover. This contradicts the connectedness assumption on $A'$, hence $T'$ must be nontrivial.
\end{proof}

There are, however, connected étale bisections for some specific bundles such that the log canonical bundle is torsion, as the following example shows.

\begin{example}\label{example many etale bisections}
Let $E$ be an elliptic curve and put $Y:= E \times \PP^1$. We choose a nontrivial $2$-torsion point $p\in E$ and consider the involution
\begin{equation}\label{eq involution hwang oguiso example}
\iota\colon Y \to Y, \qquad (e,z) \mapsto \bigl(e+p, \, \frac{1}{z}\, \bigr).
\end{equation}
Then $\iota$ is fixed point free and the quotient $Y':=Y/\left\langle \iota\right\rangle$ is a $\PP^1$-bundle over the elliptic curve $E':=E/\left\langle p\right\rangle$. We denote by $q\colon E\to E'$ the quotient map. We claim that this action of $\iota$ is induced by a linearisation for the group $G:=\Z/2\Z$ on the trivial rank $2$ bundle $V$ on $E$. Let us give such a linearisation. 

First note that $\tau_p^*V=V$ canonically. Hence, the matrix 
\[
A:=\begin{pmatrix} 0 & 1\\1 & 0\end{pmatrix}
\]
determines an isomorphism $f_A\colon \tau_p^*V\to V$.
From Lemma~\ref{lemma linearization as a group homomorphism}, we obtain a $G$-linearisation of $V$ corresponding to the group homomorphism
\[
\lambda\colon G \to \LinAut(E,V), \quad 1 \mapsto \left(\tau_p, f_A\right),
\]
where $\tau_p(e):=e+p$. Since the Möbius transformation corresponding to the matrix $A$ is exactly $z\mapsto \frac{1}{z}$, we see that $\lambda$ indeed induces $\iota$ on $Y=\PP_E(V)$. 

Since the $G$-action is fixed point free, the $G$-linearised vector bundle $V$ descends to a vector bundle $V'$ on $E'$ and so does the associated projective bundle:
\[
V\isom q^*V' \qquad \textrm{ and } \qquad  Y \isom \PP_E(q^*V') \isom E \times_{E'} \PP_{E'}(V').
\]
Since the linearisation induces $\iota$, the above identifications are $G$-equivariant and we obtain an isomorphism
\[
Y' \isom \PP_E'(V').
\]
It remains to determine $V'$. For this observe that $V$ decomposes into $(\pm 1)$-Eigensheaves:
\[
V = \mathrm{Eig}(\iota, 1) \oplus \mathrm{Eig}(\iota, -1).
\]
This decomposition descends to a decomposition $V'=\sO_{E'}\oplus L$ on $E'$ where $L$ is $2$-torsion. We conclude that 
\[
Y' \isom \PP_{E'}(\sO_{E'}\oplus L).
\]
From the description of $Y'$ as a quotient, we obtain a one dimensional family of étale bisections of the bundle $\pi'\colon Y'\to E'$. Indeed, for any $t \in \PP^1\setminus\{\pm 1\}$, the section $E\times \{t\}$ maps down isomorphically to an étale bisection of $\pi'$. Only for $t=\pm 1$, the image is a section. 
Observe that such an étale bisection actually has trivial normal bundle in $Y'$, since locally around $t \in \PP^1\setminus\{\pm 1\}$ it is the fibre of a fibration.
\end{example}

%----------------------------------------------------
\subsection{The model for trivial canonical class}\label{section trivial canonical}
%----------------------------------------------------

\begin{construction}\label{construction type i fiber}
Let us fix $m\in \Z_{\geq 1}$. We will construct a normal crossing variety $Y_m(A,\tau,L_1,\ldots,L_m)$ depending on the following parameters: an abelian variety $A$, an automorphism $\tau \in \Aut(A)$, and a collection of line bundles $L_1,\ldots, L_m\in \pic(A)$. For this, consider $m$ copies of the projective bundle 
\[
\pi_i\colon Y_i:=\PP(\sO_A\oplus L_i) \to A, \quad i=1,\ldots, m,
\]
and denote $\Sigma_i^{(0)}, \Sigma_i^{(\infty)} \subset Y_i$ the sections of $\pi_i$ corresponding to the surjections $\sO_A \oplus L_i \to \sO_A$ respectively $\sO_A \oplus L_i \to L_i$. The bundle projection $\pi_i$ induces isomorphisms $\Sigma_i^{(0)} \to A$, \ $\Sigma_i^{(\infty)} \to A$ and we set
\[
Y_m(A,\tau,L_1,\ldots,L_m) := \coprod_{i=1}^m Y_i \mathrel{\bigg/\!\sim},
\]
where $\sim$ is the equivalence relation generated by identifying $\Sigma_i^{(\infty)}$ with $\Sigma_{i+1}^{(0)}$ along the isomorphisms
\begin{equation}\label{eq construction cycle identifications via bundle projections}
\pi_{i+1}^{-1}\circ \pi_i\colon \Sigma_i^{(\infty)} \xrightarrow{\;\sim\;} \Sigma_{i+1}^{(0)}
\end{equation}
for $i=1,\ldots, m-1$ and the identification
\[
\pi_{1}^{-1}\circ \tau \circ \pi_m \colon \Sigma_m^{(\infty)} \xrightarrow{\;\sim\;} \Sigma_{1}^{(0)}.
\]
We will also abbreviate $Y_m(A,\tau,L):= Y_m(A,\tau,L,\ldots,L)$.
If $s\in A$ and $\tau:=\tau_s\colon A \to A$ is the translation by $s$, we also write $Y_m(A,s,L)$ instead of $Y_m(A,\tau_s,L)$ and analogously for $m$ different line bundles.
\end{construction}

\begin{construction}\label{construction type i bad fiber}
We use the notation of Construction~\ref{construction type i fiber}. We will construct a normal crossing variety $Y_m(A,L_1,\ldots,L_m, \iota_1,\iota_2)$, where $\iota_1,\iota_2\colon A\to A$ are fixed point free involutions. We define
\[
Y_m(A,L_1,\ldots,L_m,\iota_1,\iota_2) := \coprod_{i=1}^m Y_i \mathrel{\bigg/\!\sim},
\]
where this time $\sim$ is the equivalence relation generated by identifying $\Sigma_i^{(\infty)}$ with $\Sigma_{i+1}^{(0)}$ along the isomorphisms $\pi_{i+1}^{-1}\circ \pi_i$ for $i=1,\ldots, m-1$ as before, and additionally we identify points as follows
\[
a\sim \iota_1(a) \quad \textrm{ for } a\in \Sigma_1^{(0)} \qquad \textrm{ and } \qquad b\sim \iota_2(b) \quad \textrm{ for } b\in \Sigma_m^{(\infty)},
\]
where we identified $\Sigma_1^{(0)}{\isom} A$, $\Sigma_m^{(\infty)}{\isom} A$ via the projections. 
We will write $Y_m(A,L,\iota_1,\iota_2):=Y_m(A,L,\ldots,L,\iota_1,\iota_2)$.
\end{construction}

\begin{remark}
We observe that Construction~\ref{construction type i fiber} does not always yield projective varieties.

Consider the case where $A=E$ is an elliptic curve, the number of components is $m=1$, the automorphism $\tau=\tau_s$ is a translation by some $s\in E$, and $L\in \pic^0(E)$. Then $Y=Y_1(E,s,L)$ is the pushout
\[
\xymatrix{
\Sigma^{(0)} \coprod \Sigma^{(\infty)}\ar[d]_q\ar[r] & \PP(\sO_E\oplus L) \ar[d]\\
E \ar[r] & Y\\ 
}
\]
along the morphism $q$, which on $\Sigma^{(0)}$ is restriction of the bundle projection $\PP(\sO_E\oplus L) \to E$ and on $\Sigma^{(\infty)}$ is the restriction of the same projection composed with the translation $\tau_s$. This is due to~\cite[Theorem~2.2]{Fer03}; see also~\cite[\href{https://stacks.math.columbia.edu/tag/0E25}{Proposition 0E25}]{Stacks}. Note that~\cite{Fer03} only contains the local version of the claimed descent result explicitly; see however~\cite[Lemma~2.6 and Proposition~3.8~(i)]{TT16} for a global statement.

Applied to our situation, the condition for a line bundle $G=\sO_{Y_1}(k)\tensor \pi^*F$ on $Y^\nu=Y_1$ to descend to $Y$ is that 
\begin{equation}\label{eq descent condition line bundle}
L^k \isom F^* \tensor \tau_s^*F \isom \sO_E(ds - 0),
\end{equation}
where $d=\deg F$ and $0\in E$ is a chosen origin. Since $L \in {\pic}^0(E)$, the vector bundle $\sO_E \oplus L$ is nef, i.e.  $\sO_{Y_1}(1)$ is nef on $Y_1$; see e.g.~\cite[II, Theorem~6.2.12]{Laz04}. 
The descent is ample on $Y$ if and only if $G$ is ample on $Y^\nu$. Since $\sO_{Y_1}(1)$ is nef and not big, this is the case if and only if $F$ is ample on $E$ and $k>0$. 
It follows that there are ample bundles on $Y$ if and only if equation~\eqref{eq descent condition line bundle} has a solution. If we write $L\isom \sO_E(p-0)$, we see that this is the case if and only if $\Z_{>0} \cdot s \cap \Z_{>0}\cdot p\neq \emptyset$ inside $E$.
\end{remark}

\begin{definition}\label{definition cycle of p1 bundles}
Let $A$ be an abelian variety. A compact normal crossing variety $Y$ without triple points is called a \emph{cycle of $\PP^1$-bundles} over $A$ if the following holds.
\begin{enumerate}
	\item If $Y^\nu= Y_1\cup \ldots Y_m$ is a decomposition of the normalisation $\nu\colon Y^\nu \to Y$ into irreducible components, then each $Y_i$ is a $\PP^1$-bundle over $A$. Let us denote $\pi_i\colon Y_i \to A$ the bundle projection.
	\item The conductor $\cond \subset Y^\nu$ intersects each component $Y_i$ in two disjoint sections $\Sigma_i^{(0)}$, $\Sigma_i^{(\infty)}$ of the bundle $\pi_i$. 
\item Up to renumbering the $Y_i$ and renaming the sections, $\Sigma_i^{(\infty)}$ is glued isomorphically to $\Sigma_{i+1}^{(0)}$ under the normalisation map $\nu$. Here, $i=1,\ldots,m$ and the indices have to be read modulo $m$.
\end{enumerate}

We call $Y$ \emph{closed chain of $\PP^1$-bundles} over $A$ if (1), (2), and instead of (3) the condition $(3')$ below holds.
\begin{enumerate}
	\item[$(3')$] Up to renumbering the $Y_i$ and renaming the sections, $\Sigma_i^{(\infty)}$ is glued isomorphically to $\Sigma_{i+1}^{(0)}$ under the normalisation map $\nu$, where $i=1,\ldots,m-1$. Moreover, the restriction of $\nu$ to $\Sigma_1^{(0)}$ and to $\Sigma_m^{(\infty)}$ is an étale double cover onto its image and $\nu(\Sigma_1^{(0)})$ and $\nu(\Sigma_m^{(\infty)})$ are disjoint.
\end{enumerate}
\end{definition}

Notice that a closed chain or a cycle of $\PP^1$-bundles over $A$ does not come with a morphism to $A$ in general. Only the normalisation maps to $A$.

\begin{remark}\label{remark dual graph}
In terms of the dual graph (see Definition~\ref{definition dual graph}), Definition~\ref{definition cycle of p1 bundles} can be recast as follows. We require $(1)$ and $(2)$ as in the definition. Then $(3)$ is equivalent to the dual graph being a cycle with $m$ vertices\footnote{For $m=1$, this is a vertex with a loop of multiplicity two.}, while~$(3')$ is equivalent to the dual graph being a chain with loops of multiplicity one at each end. See Figure~\ref{figure gluing} for an illustration.
\end{remark}

\begin{figure}
\begin{tikzpicture}[vertex/.style={circle,draw,minimum size=3mm,inner sep=5pt, fill=black},thick]
\node[vertex] (a1) at (0,0) {};
\node[vertex] (a2) at (1,0) {};
\node[vertex] (a3) at (2,0) {};
\node[vertex] (a4) at (3,0) {};

\draw (a1)--(a2);
\draw[dashed] (a2)--(a3);
\draw (a3)--(a4);

\draw[-] (a1) edge[out=130,in=230,looseness=8] (a1);
\draw[-] (a4) edge[out=50,in=310,looseness=8] (a4);

\begin{scope}[xshift=7cm]
\node[vertex] (b1) at (180:1) {};
\node[vertex] (b2) at (108:1) {};
\node[vertex] (b3) at (36:1) {};
\node[vertex] (b4) at (-36:1) {};
\node[vertex] (b5) at (-108:1) {};

\draw (b4)--(b5)--(b1)--(b2)--(b3);
\draw[dashed] (b3)--(b4);

\end{scope}
\end{tikzpicture}
\caption{The possible dual graphs of $Y$. Only multiplicities different from one are displayed.}
\label{figure gluing}
\end{figure}

\begin{lemma}\label{lemma cycle of p1 bundles}
A cycle of $\PP^1$-bundles is isomorphic to a variety of the form $Y_m(A,\tau, L_1,\ldots,L_m)$ for some $L_1,\ldots,L_m$ and $\tau \in \Aut(A)$. A closed chain of $\PP^1$-bundles is isomorphic to a variety of the form $Y_m(A, L_1,\ldots,L_m,\iota_1,\iota_2)$ for some $L_1,\ldots,L_m$ and fixed point free involutions $\iota_1,\iota_2 \in \Aut(A)$. For a cycle, the automorphism $\tau$ is uniquely determined up to conjugation in $\Aut(A)$. For a closed chain, the involutions $\iota_1,\iota_2$ are uniquely determined up to simultaneous conjugation in $\Aut(A)$.

\end{lemma}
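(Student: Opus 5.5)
The strategy is to reconstruct $Y$ from its normalisation and the gluing data, and to normalise each component so that it looks like a piece of Construction~\ref{construction type i fiber} or~\ref{construction type i bad fiber}.

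\emph{Step 1: the components.} Each component $Y_i$ is a $\PP^1$-bundle $\pi_i\colon Y_i\to A$ with two disjoint sections $\Sigma_i^{(0)},\Sigma_i^{(\infty)}$. By the discussion before \eqref{eq p1 bundle with two sections} (using \cite[Exercise~V.2.2]{Har77}), the underlying rank two bundle splits. After twisting by a line bundle we may write $Y_i\isom\PP(\sO_A\oplus L_i)$ for some $L_i\in\pic(A)$, and choose the isomorphism so that $\Sigma_i^{(0)}$ and $\Sigma_i^{(\infty)}$ correspond to the quotients $\sO_A\oplus L_i\to\sO_A$ and $\sO_A\oplus L_i\to L_i$. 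This identification is unique only up to automorphisms of $Y_i$ preserving both sections. Those automorphisms act on the base $A$ by arbitrary automorphisms $\sigma_i$ with $\sigma_i^*L_i\isom L_i$, possibly together with swapping $0$ and $\infty$.

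\emph{Step 2: the gluing maps.} Via $\pi_i$ and $\pi_{i+1}$, the gluing isomorphism $\Sigma_i^{(\infty)}\to\Sigma_{i+1}^{(0)}$ becomes an automorphism $\phi_i\in\Aut(A)$.

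For a cycle, I will re-identify $Y_{i+1}$ with $A$ by precomposing its bundle structure with $\phi_i$, replacing $L_{i+1}$ by $\phi_i^*L_{i+1}$. Pulling a $\PP^1$-bundle $\PP(\sO\oplus L)$ back along an automorphism of the base gives $\PP(\sO\oplus\phi^*L)$, so the new $Y_{i+1}$ again has the required form. Doing this inductively for $i=1,\ldots,m-1$ makes $\phi_1=\cdots=\phi_{m-1}=\id$. The only data left is the last gluing $\Sigma_m^{(\infty)}\to\Sigma_1^{(0)}$, which is an automorphism $\tau:=\phi_m\circ\cdots\circ\phi_1$ in the original coordinates. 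This yields an isomorphism $Y\isom Y_m(A,\tau,L_1',\ldots,L_m')$. The identification is legitimate because $Y$ is the pushout of $Y^\nu$ along the conductor; this holds for normal crossing varieties (seminormal, Construction~\ref{construction kollar}), and \cite{Fer03} as quoted in the Remark applies.

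For a closed chain, the same normalisation handles the internal gluings $i=1,\ldots,m-1$. The end pieces $\nu|_{\Sigma_1^{(0)}}$ and $\nu|_{\Sigma_m^{(\infty)}}$ are étale double covers onto their images. Because $Y$ has normal crossings with no triple points, each of these double covers is the quotient by a fixed point free involution of $\Sigma\isom A$; transported to $A$, these are $\iota_1$ and $\iota_2$. This gives $Y\isom Y_m(A,L_1,\ldots,L_m,\iota_1,\iota_2)$. One should still check that $\iota_1$ and $\iota_2$ are automorphisms of $A$ as a variety. They are not necessarily group automorphisms, which the statement allows.

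\emph{Step 3: uniqueness.} The remaining freedom is the choice of identifications $\psi_i\in\Aut(A)$ for each component, compatible with the internal gluings, so all $\psi_i$ equal one $\psi$; there is also the freedom to renumber the components cyclically or reverse the orientation. Changing the identification by $\psi$ turns $\tau$ into $\psi^{-1}\tau\psi$, and $(\iota_1,\iota_2)$ into the simultaneous conjugate. A cyclic shift of the starting component also conjugates $\tau$, by a partial product of the $\phi_i$.

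The main obstacle is orientation reversal, that is, swapping the roles of $0$ and $\infty$. Reversal sends $\tau$ to $\tau^{-1}$, which need not be conjugate to $\tau$. I expect this is handled by fixing the orientation as part of the data, or by noting that the renaming of sections in Definition~\ref{definition cycle of p1 bundles} is fixed. I would state this explicitly and otherwise verify only that the conjugation bookkeeping is consistent.
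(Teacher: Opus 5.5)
Your proposal is correct and follows the paper's proof. You reparametrise the bundle projections by automorphisms of $A$ inductively so that the first $m-1$ gluings become the identity, read off $\tau$ as the ordered product of the gluing automorphisms (respectively $\iota_1,\iota_2$ from the deck involutions of the two end double covers), and note that the only remaining freedom is one overall automorphism of $A$, which acts by (simultaneous) conjugation; your orientation caveat is legitimate and is not addressed in the paper, which tacitly fixes the numbering and the labels $\Sigma_i^{(0)},\Sigma_i^{(\infty)}$ (reversal replaces $\tau$ by $\tau^{-1}$ and swaps $\iota_1,\iota_2$, but this is harmless for the translations used later, since $\tau_s^{-1}=\tau_{-s}$ is conjugate to $\tau_s$ via $-\id_A$).
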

\begin{proof}
In either case, by definition, each irreducible component of $Y^\nu$ is a $\PP^1$-bundle over $A$ admitting two disjoint sections. Thus, it is necessarily of the form $\PP(V)$ for a decomposable vector bundle $V$. More precisely, there are line bundles $L_i$ on $A$ such that $Y_i \isom \PP(\sO_A \oplus L_i)$ and the given sections $\Sigma_i^{(0)}$ and $\Sigma_i^{(\infty)}$ correspond under this isomorphism to those sections determined by the surjections $\sO_A \oplus L_i \to \sO_A$ respectively $\sO_A \oplus L_i \to L_i$ in this order. 

Let us first assume that $Y$ is a cycle of $\PP^1$-bundles. We will show next that the components are glued as in Construction~\ref{construction type i fiber}. For this purpose, we denote by 
\[
\varphi_i\colon \Sigma_i^{(\infty)}\xrightarrow{\sim}\Sigma_{i+1}^{(0)}\qquad (i=1,\dots,m),
\]
where indices are taken modulo $m$, the isomorphisms that specify how $Y$ is obtained from gluing the $Y_i$. In other words,
\[
\varphi_i = \bigl(\nu|_{\Sigma_{i+1}^{(0)}}\bigr)^{-1}\circ \nu|_{\Sigma_{i}^{(\infty)}},
\]
where $\nu\colon Y^\nu \to Y$ denotes the normalisation. 
Let us denote 
\[
\pi_i\colon Y_i\to A
\]
the corresponding bundle projection. Then the restrictions
\[
\pi_i|_{\Sigma_i^{(0)}}\colon \Sigma_i^{(0)}\xrightarrow{\sim}A,\qquad
\pi_i|_{\Sigma_i^{(\infty)}}\colon \Sigma_i^{(\infty)}\xrightarrow{\sim}A
\]
are isomorphisms. Using the projections, each $\varphi_i$ determines a unique automorphism $a_i\in\Aut(A)$ by
\begin{equation}\label{eq automorphisms cycle}
a_i:=(\pi_{i+1}|_{\Sigma_{i+1}^{(0)}})\circ\varphi_i\circ(\pi_i|_{\Sigma_i^{(\infty)}})^{-1}
\end{equation}
for $i=1,\ldots,m$, where again the indices are to be understood modulo $m$. If we change the bundle projection $\pi_i$ by postcomposing $\pi_i$ with any $b_i\in\Aut(A)$, i.e. 
\[
\pi_i'=b_i\circ\pi_i,
\]
and if we denote $a_i'\in\Aut(A)$ the automorphism defined from $\varphi_i$ using $\pi_i'$ instead of $\pi_i$ in \eqref{eq automorphisms cycle}, then we have
\[
a_i'=b_{i+1}a_i b_i^{-1},
\]
where the indices have to be understood modulo $m$. We choose $b_1 =\id$ and define these reparametrisations inductively by
\[
\qquad b_{i+1}:=b_i a_i^{-1} \qquad \textrm{ for } i=1,\ldots,m-1.
\]
Then for $i=1,\dots,m-1$, we obtain
\[
a_i'=b_{i+1}a_i b_i^{-1}=(b_i a_i^{-1})a_i b_i^{-1}=\id_A.
\]
Hence, for the new bundle projections, we obtain
\[
\varphi_i=(\pi'_{i+1}|_{\Sigma_{i+1}^{(0)}})^{-1} \circ \pi'_i|_{\Sigma_i^{(\infty)}} \colon \Sigma_i^{(\infty)}\xrightarrow{\isom}\Sigma_{i+1}^{(0)}
\]
for $i=1,\ldots,m-1$. So the first $m-1$ gluings are given by the projections, as stipulated in \eqref{eq construction cycle identifications via bundle projections} of Construction~\ref{construction type i fiber}. For the last one,
\[
\tau:= a_m'=b_1 a_m b_m^{-1}= a_m a_{m-1}\cdots a_1\in\Aut(A).
\]
The only freedom we had was the choice of $b_1$. For an arbitrary choice $b_1:=b\in \Aut(A)$, we get that $b_m= ba_1^{-1}\cdots a_{m-1}^{-1}$ so that $a_m'$ is the conjugate of the above expression by $b$. 

For a closed chain of $\PP^1$ bundles, we obtain similarly automorphisms $a_i$ by the formula from~\eqref{eq automorphisms cycle}, this time for $i=1,\ldots,m-1$. The same choices $b_1,\ldots,b_m$ of altering the bundle projections give that the gluing is given by the bundle projections as for a cycle. Again, the only freedom we have left is to change all $b_i$ simultaneously by a different choice of $b:=b_1$. This leaves the gluings invariant and we claim that it acts by conjugation on the two involutions. To see this, let $\vartheta_1\colon\Sigma_1^{(0)}\to \Sigma_1^{(0)}$, $\vartheta_2\colon\Sigma_m^{(\infty)}\to\Sigma_m^{(\infty)}$ the covering involutions of $\Sigma_1^{(0)}$ respectively $\Sigma_m^{(\infty)}$ of the covers given by the restrictions of the normalisation. These are intrinsic to $Y$, i.e. they do not depend on the bundle projections. Having chosen bundle projections $\pi_i'$ as before, the involutions from Construction~\ref{construction type i bad fiber} are defined as
\[
\iota_1:=(\pi'_{1}|_{\Sigma_{1}^{(0)}}) \circ \vartheta_1 \circ (\pi'_{1}|_{\Sigma_{1}^{(0)}})^{-1}, \quad \iota_2:=(\pi'_{m}|_{\Sigma_{m}^{(0)}}) \circ \vartheta_2 \circ (\pi'_{m}|_{\Sigma_{m}^{(0)}})^{-1}.
\]
From this, it is clear that changing all $b_i$ by an overall left multiplication with $b\in\Aut(A)$, both $\iota_1$ and $\iota_2$ change by a conjugation by $b$, as claimed.
\end{proof}

\begin{definition}
Let $Y$ be a cycle of $\PP^1$-bundles over $A$. An automorphism $\tau$ as in Lemma~\ref{lemma cycle of p1 bundles} is called a \emph{gluing automorphism} of the cycle~$Y$. 
If $Y$ is a closed chain of $\PP^1$-bundles over $A$, involutions $\iota_1, \iota_2$ as in Lemma~\ref{lemma cycle of p1 bundles} are called \emph{gluing involutions} of the closed chain~$Y$.
\end{definition}

The following is well known, but spelled out here for clarity.

\begin{lemma}\label{lemma smooth ktrivial with forms is abelian}
Let $Y$ be a smooth projective variety of dimension $n$ with $h^{1,0}(Y)=n-1$. Then $\omega_Y\not\cong \sO_Y$.
\end{lemma}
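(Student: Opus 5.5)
We argue by contradiction: assume $\omega_Y\isom\sO_Y$. Then $Y$ is a smooth projective variety with trivial canonical bundle, so by the Beauville--Bogomolov decomposition theorem a finite étale cover $Y'\to Y$ splits as
\[
Y'\isom T\times \prod_j V_j\times\prod_k W_k ,
\]
where $T$ is an abelian variety, the $V_j$ are strict Calabi--Yau manifolds, and the $W_k$ are irreducible holomorphic symplectic manifolds. Since the $V_j$ and $W_k$ are simply connected with $h^{1,0}=0$, the Künneth formula gives $h^{1,0}(Y')=\dim T$.

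We then compare $h^{1,0}(Y)$ with $\dim T$. The pullback $H^0(Y,\Omega^1_Y)\to H^0(Y',\Omega^1_{Y'})$ is injective, so $\dim T=h^{1,0}(Y')\geq h^{1,0}(Y)=n-1$. The remaining factors therefore have total dimension at most one. But a strict Calabi--Yau or hyperkähler factor has dimension at least two, so there are none, and $\dim T=n$. Hence $Y'$ is an abelian variety and $Y$ is a quotient of an abelian variety by a finite group $G$ acting freely.

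Finally we exploit triviality of the canonical bundle. Since $\omega_Y\isom\sO_Y$, the group $G$ acts trivially on $H^0(Y',\omega_{Y'})=\Lambda^n H^0(Y',\Omega^1_{Y'})$, that is, $\det$ of the linear action of $G$ on $H^0(Y',\Omega^1)\isom\C^n$ is trivial. Moreover $H^0(Y,\Omega^1_Y)$ is exactly the $G$-invariant subspace, which by hypothesis has dimension $n-1$. This step is the crux of the argument. Since $G$ is finite, its representation on $\C^n$ is completely reducible, so it splits as the $(n-1)$-dimensional trivial part plus a one-dimensional character $\chi$. The determinant condition then forces $\chi$ to be trivial. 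Thus $G$ acts trivially on all of $H^0(Y',\Omega^1)$, and $h^{1,0}(Y)=n$, contradicting $h^{1,0}(Y)=n-1$. Hence $\omega_Y\not\isom\sO_Y$.
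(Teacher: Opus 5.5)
Your proof is correct and is essentially the paper's argument. Beauville--Bogomolov yields an \'etale cover by an abelian variety, which one takes to be Galois. The paper says this explicitly, and you should add that the Galois closure of your cover $Y'\to Y$ is again finite \'etale over an abelian variety, hence abelian. The $(n-1)$-dimensional space of $G$-invariants in $H^0(Y',\Omega^1_{Y'})$ then forces $G$ to act nontrivially on $H^0(Y',\omega_{Y'})$, contradicting $\omega_Y\cong\mathcal{O}_Y$.

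Beyond this, you only make explicit two points the paper leaves to the reader: why the cover is abelian (the non-torus factors would have total dimension at most one), and the complete-reducibility computation of the determinant character.
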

\begin{proof}
Assume $\omega_Y\isom\sO_Y$. One then deduces from the Beauville--Bogomolov decomposition theorem~\cite[Théorème~1]{Bea83} that there is a finite étale cover $\pi\colon Y'\to Y$ such that $Y'$ is abelian. We may assume that $\pi$ is Galois. Let $G$ denote the Galois group. By assumption on $h^{1,0}(Y)$, the variety $Y$ cannot be abelian itself. Moreover, $G$ acts on $H^0(Y',\Omega_{Y'})$ with an $(n-1)$-dimensional space of invariants. But then it cannot act trivially on $H^0(Y',\omega_{Y'})$, contradicting the hypotheses.
\end{proof}

Recall that $\tOm_Y$ denotes the torsion free Kähler differentials; see Definition~\ref{definition torsion free differentials}.

\begin{lemma}\label{lemma gluing one forms}
Let $Y$ be a connected projective normal crossing variety without triple points. Assume that $\omega_Y \isom \sO_Y$. If $Y^\nu=Y_1\cup\ldots\cup Y_m$ is a decomposition into irreducible components and $\alpha_i\colon Y_i\to A_i$ is the Albanese morphism, then
\[
h^0(Y,\tOm_Y) \leq \min_{i=1}^m \; \dim A_i.
\]
Moreover, if $Y$ is not smooth, then the conductor $\cond \subset Y^\nu$ meets every $Y_i$ nontrivially and $Y_i\cap \cond$ is an anticanonical divisor in $Y_i$. In particular, every $Y_i$ is uniruled if $Y$ is not smooth.
\end{lemma}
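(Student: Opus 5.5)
The plan is to pull everything back to the normalisation and use the conductor formula together with Albanese functoriality. Write $\nu\colon Y^\nu\to Y$ for the normalisation and $\nu_i\colon Y_i\to Y$ for its restriction to a component. Since $Y$ is a normal crossing variety, it is Gorenstein, so Proposition~\ref{normalisation-formula} applies and gives
\[
K_{Y_i}\sim \nu_i^*K_Y - \cond|_{Y_i}\sim -\cond|_{Y_i},
\]
using $\omega_Y\isom\sO_Y$. Because $Y$ has only double points, $\cond|_{Y_i}=Y_i\cap\cond$ is a reduced (indeed smooth) divisor. This already shows that $Y_i\cap\cond$ is anticanonical in $Y_i$.

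For the nontriviality claim, suppose that $Y$ is not smooth but $\cond\cap Y_i=\emptyset$ for some $i$. Then $\nu_i$ is an isomorphism onto a connected component of $Y$. Since $Y$ is connected, this forces $Y=Y_i$ smooth, which is a contradiction. Hence every $Y_i$ meets $\cond$, and $-K_{Y_i}$ is effective and nonzero. A smooth projective variety with nonzero effective anticanonical divisor is uniruled: $K_{Y_i}\cdot H^{n-1}<0$ for an ample $H$, so $K_{Y_i}$ is not pseudoeffective, and we conclude by Boucksom--Demailly--P\u{a}un--Peternell (or Miyaoka--Mori).

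For the inequality, the step I would carry out is to show that the composite
\[
H^0(Y,\tOm_Y)\to H^0(Y_i,\Omega_{Y_i})
\]
is injective for every $i$. Injectivity into $\bigoplus_i H^0(Y_i,\Omega_{Y_i})$ follows from the remark after Definition~\ref{definition torsion free differentials}, since $\nu$ is a resolution of $Y$. To get injectivity into a single component, I would propagate vanishing across the double locus. Let $\sigma$ be a form on $Y$ whose pullback to $Y_i$ vanishes, and let $Y_j$ be glued to $Y_i$ along a component $B$ of $\cond$. The pullback $\nu_j^*\sigma$ restricts on $B$ to the same form as $\nu_i^*\sigma|_B$, namely zero. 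A holomorphic $1$-form on a smooth projective variety is pulled back from its Albanese, so $\nu_j^*\sigma=\alpha_j^*\beta$ with $\beta\in H^0(A_j,\Omega_{A_j})$, and we know $\beta$ vanishes on the image of $B$ in $A_j$.

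The hard point is then to show that $B$ generates $A_j$, i.e. that $H^0(A_j,\Omega)\to H^0(B,\Omega_B)$ is injective. I would first try to deduce this from the anticanonical structure: for general fibres $F$ of $\alpha_j$, the divisor $B\cap F$ is anticanonical in $F$ and so nonempty, hence $\alpha_j(B)=\alpha_j(Y_j)$, and $\alpha_j(Y_j)$ generates $A_j$. (If $\alpha_j$ is not surjective, I would instead argue via the fibres of $\alpha_j$ onto its image.) If this works, then $\beta=0$, and connectedness of the dual graph propagates the vanishing of $\sigma$ to all components. Then $\nu^*\sigma=0$, and the injectivity above gives $\sigma=0$. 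Consequently $h^0(Y,\tOm_Y)\le h^0(Y_i,\Omega_{Y_i})=\dim A_i$ for each $i$, which gives the bound with the minimum. The smooth case is trivial, since there $Y$ is irreducible and $h^0=\dim A_1$.
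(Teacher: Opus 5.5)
Your reductions are correct: the conductor formula, the argument that $\cond$ meets every $Y_i$, uniruledness via BDPP, and the propagation of vanishing along the dual graph. Propagating along the dual graph is equivalent to the paper's induction over a partial normalisation whose dual graph is a tree. Both arguments therefore hinge on the same key input: for each connected component $B$ of $\cond\cap Y_j$, the restriction $H^0(Y_j,\Omega_{Y_j})\to H^0(B,\Omega_B)$ is injective, i.e.\ $B$ dominates (or generates) $A_j$.

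Your justification of this step fails. It is not true that $B\cap F$ is anticanonical in a general fibre $F$ of $\alpha_j$. Only the whole divisor $\cond\cap F$ is anticanonical, and $B$ is just one connected component of $\cond\cap Y_j$. Consider exactly the situation in which this lemma is applied: $Y_j$ is a $\PP^1$-bundle over $A$ and $\cond\cap Y_j=\Sigma_j^{(0)}\sqcup\Sigma_j^{(\infty)}$. There $B\cap F$ is a single point of $F\isom\PP^1$, while $-K_F$ has degree two. So your anticanonical argument only shows that \emph{some} component of $\cond\cap Y_j$ dominates $\alpha_j(Y_j)$. It does not show that the particular component $B$ along which $Y_j$ is glued to $Y_i$ does, and this is what the propagation needs.

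The missing ingredient is the following statement. Note that $K_{Y_j}+\cond|_{Y_j}\sim 0$, and each component of the smooth divisor $\cond\cap Y_j$ is a log canonical centre of the log Calabi--Yau pair $(Y_j,\cond\cap Y_j)$. One needs that every such log canonical centre surjects onto the Albanese variety $A_j$. The paper obtains this from \cite[Theorem~1.5]{BFPT24}. It combines this with Zhang's result that $\alpha_j$ is a fibration, which also removes your parenthetical case of a non-surjective $\alpha_j$.

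This input is genuinely nontrivial. Ruling out a component $B$ with $\alpha_j(B)\subsetneq A_j$ requires something like Koll\'ar's connectedness theorem for relative log Calabi--Yau pairs over $A_j$. Without it, your propagation argument does not go through.
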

\begin{proof}
The case where $Y$ is smooth follows from the fact that all holomorphic one-forms are obtained by pullback from the Albanese variety. We will thus assume that $Y$ is singular. As $Y$ is connected, the conductor $\cond \subset Y^\nu$ meets every irreducible component $Y_i$. By Proposition~\ref{normalisation-formula}, the condition $\omega_Y \isom \sO_Y$ implies that the conductor $\cond \subset Y^\nu$ is an anticanonical divisor. Hence, $-K_{Y_i}$ is effective for every $i$ and $Y_i$ is uniruled by~\cite[Corollary~0.3]{BDPP13}. 

Let $\mu\colon Y^\mu \to Y$ be a partial normalisation such that $Y^\mu$ is connected, has simple normal crossings, and the dual graph is a tree, that is, does not have loops or cycles. Then pullback gives an injection $H^0(Y,\tOm_Y) \to H^0(Y^\mu,\tOm_{Y^\mu})$. It thus suffices to show that 
\[
h^0(Y^\mu,\tOm_{Y^\mu})\leq \min_{i=1}^m \; \dim A_i.
\]
Since $(Y_i,\cond\cap Y_i)$ is log canonical, each $\alpha_i$ is a fibration over $A_i$ by~\cite[Corollary~2]{Zha05}. As  $Y^\mu$ has simple normal crossings, the irreducible components of $Y^\mu$ are those of $Y^\nu$. Up to renumbering them, we may assume that
\[
Y^\mu_{\leq j}:= Y_1 \cup \ldots\cup Y_j \subset Y^\mu
\]
is connected for all $j$ and that $\dim A_1 = \min_{i=1}^m \; \dim A_i$. We will prove the statement by induction on $j$. 

The base case $j=1$ is trivial:
\[
h^0(Y^\mu_{\leq 1},\tOm_{Y^\mu_{\leq 1}}) = h^0(Y_1,\Omega_{Y_1}) \leq \dim A_1 = \min_{i=1}^m \; \dim A_i,
\]
again, since all holomorphic one forms on a variety come by pullback from the Albanese.

For the induction step, note that $Y^\mu_{\leq j+1}=Y^\mu_{\leq j} \cup Y_{j+1}$. Since $Y^\mu$ was chosen so that its dual graph is a tree, there is a unique irreducible component of $Y^\mu_{\leq j}$ that intersects $Y_{j+1}$ and this intersection is irreducible. Without loss of generality, we may assume that $\Sigma:=Y_j\cap Y_{j+1}\neq \emptyset$ as subsets of $Y^\mu$. As in
\cite[Proposition~1.5]{Fri83}, one derives a short exact sequence
\[
0 \to \tOm_{Y^\mu_{\leq j+1}} \to \tOm_{Y^\mu_{\leq j}} \oplus \Omega_{Y_{j+1}} \xrightarrow{\delta} \Omega_\Sigma \to 0,
\]
where $\delta(\eta_1,\eta_2):=\eta_1|_\Sigma - \eta_2|_\Sigma$. Exactness on the right follows since there are no triple points. Taking global sections, we obtain the top row of the following diagram.
\[
\xymatrix@C=4.5mm{
0 \ar[r] & H^0(Y^\mu_{\leq j+1},\tOm_{Y^\mu_{\leq j+1}}) \ar[r] & H^0(Y^\mu_{\leq j},\tOm_{Y^\mu_{\leq j}}) \oplus H^0(Y_{j+1},\Omega_{Y_{j+1}}) \ar[r] & H^0(\Sigma,\Omega_\Sigma)\\
&& \phantom{H^0(Y^\mu_{\leq j},\tOm_{Y^\mu_{\leq j}}) \oplus} H^0(A_{j+1},\Omega_{A_{j+1}}) \ar@<-3em>[u]^{0\oplus\alpha_{j+1}^*} \ar@{^(->}@[]+R+<3pt,0pt>;[ur]+D+<0pt,-3pt> &\\
}
\]
Since $\Sigma$ is a log canonical centre of $(Y_{j+1},Y_{j+1}\cap \cond)$, it follows from~\cite[Theorem~1.5]{BFPT24} that $\Sigma$ surjects onto $A_{j+1}$ under the Albanese morphism $\alpha_{j+1}$. This implies that the above morphism $H^0(A_{j+1},\Omega_{A_{j+1}}) \to H^0(\Sigma,\Omega_\Sigma)$ is injective. Hence, the rank of the last horizontal morphism in the diagram has rank at least $\dim A_{j+1}$. We obtain
\[
\begin{aligned}
h^0(Y^\mu_{\leq j+1},\tOm_{Y^\mu_{\leq j+1}}) &\leq h^0(Y^\mu_{\leq j},\tOm_{Y^\mu_{\leq j}}) + h^0(Y_{j+1},\Omega_{Y_{j+1}}) - h^0(A_{j+1},\Omega_{A_{j+1}})\\
&=  h^0(Y^\mu_{\leq j},\tOm_{Y^\mu_{\leq j}}) \\
&\leq \min_{i=1}^m \; \dim A_i.\\
\end{aligned}
\]
The claim follows since $Y_{\leq m}^\mu=Y^\mu$.
\end{proof}

\begin{corollary}\label{corollary p1 bundles from locally trivial deformations}
Let $Y$ be a connected projective normal crossing variety of dimension $n$ without triple points. Denote by $\nu\colon Y^\nu \to Y$ the normalisation, and let $Y^\nu=Y_1 \cup \ldots \cup Y_m$ be the decomposition into irreducible components. Assume that $\omega_Y\isom \sO_Y$, that $-K_{Y^\nu}$ is nef, and that $h^0(Y,\tOm_Y)\geq n-1$. 

Then for each $i$, the Albanese morphism $\alpha_i\colon Y_i \to A_i$ is a $\PP^1$-bundle over an abelian variety of dimension $(n-1)$. Moreover, we have $$A_1\isom \ldots \isom A_m=:A.$$
\end{corollary}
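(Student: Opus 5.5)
We argue in three steps: first bound the Albanese dimensions from below, then identify the structure of each component, and finally compare the abelian varieties.

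\emph{Step 1: each $\dim A_i = n-1$.} By Lemma~\ref{lemma gluing one forms}, $h^0(Y,\tOm_Y)\leq \min_i \dim A_i$, hence $\dim A_i\geq n-1$ for every $i$. The case where $Y$ is smooth is excluded. Indeed, then $Y=Y_1$ with $h^{1,0}(Y)\geq n-1$ and $\omega_Y\isom\sO_Y$. If $h^{1,0}=n$, Beauville--Bogomolov makes $Y$ abelian, which we exclude, implicitly, since $\alpha_i$ should be a $\PP^1$-bundle; presumably the smooth case is tacitly excluded, or handled separately. If $h^{1,0}=n-1$, this contradicts Lemma~\ref{lemma smooth ktrivial with forms is abelian}. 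So $Y$ is singular. Lemma~\ref{lemma gluing one forms} then shows that every $Y_i$ is uniruled and that $\cond_i:=\cond\cap Y_i$ is a nonzero anticanonical divisor. Since $Y_i$ is uniruled, $\alpha_i$ is not generically finite onto its image, so $\dim A_i\leq n-1$. Combined with the lower bound, $\dim A_i=n-1$.

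\emph{Step 2: $\alpha_i$ is a $\PP^1$-bundle.} As in the proof of Lemma~\ref{lemma gluing one forms}, $(Y_i,\cond_i)$ is log canonical with $K_{Y_i}+\cond_i\sim 0$, so Zhang's result gives that $\alpha_i$ is surjective with connected fibres. Its general fibre is then a curve $C$ with $-K_C=-K_{Y_i}|_C$ nef of positive degree (the conductor meets $C$ since $\cond_i$ dominates $A_i$ by the lc-centre argument of~\cite{BFPT24}). Hence $C\isom\PP^1$. To upgrade this to a $\PP^1$-bundle, I would use the structure theorem for varieties with nef anticanonical bundle: the Albanese map of a smooth projective variety with $-K$ nef is locally trivial (Cao's theorem). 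So $\alpha_i$ is a locally trivial fibration with fibre $\PP^1$, i.e.\ a $\PP^1$-bundle. The main obstacle is exactly this step, namely ensuring equidimensionality and smoothness of all fibres. If Cao's local triviality is not taken as available, I would instead argue that $\cond_i$ meets each fibre with degree $2$ and use that the non-smooth fibres would yield $K$-negative extremal contractions contradicting nefness of $-K_{Y_i}$ via a relative MMP over $A_i$.

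\emph{Step 3: the $A_i$ are isomorphic.} Take a connected component $\Sigma$ of $\nu^{-1}(Y^\sing)$ lying in $Y_i$, glued to a component $\Sigma'\subset Y_j$. Since $\cond_i$ has degree $2$ on the $\PP^1$-fibres and has no triple points, each component of $\cond_i$ is either a section or an étale bisection over $A_i$ (using smoothness of $\cond_i$). In either case it is an étale cover of $A_i$, hence itself an abelian variety isogenous to $A_i$, with Albanese $\Sigma\to A_i$ étale. The gluing isomorphism $\Sigma\isom\Sigma'$ then shows that $A_i$ and $A_j$ are isogenous. To get actual isomorphism, I would use $h^0(Y,\tOm_Y)\geq n-1$ together with the exact sequence from the proof of Lemma~\ref{lemma gluing one forms}: equality forces the images of $\alpha_i^*H^0(\Omega_{A_i})$ and $\alpha_j^*H^0(\Omega_{A_j})$ to coincide in $H^0(\Sigma,\Omega_\Sigma)$. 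This forces the étale maps $\Sigma\to A_i$ and $\Sigma\to A_j$ to have the same kernel, hence $A_i\isom A_j$. Connectedness of the dual graph then gives $A_1\isom\cdots\isom A_m$.
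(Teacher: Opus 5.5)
Your Step 3 has a genuine gap. You allow $\cond_i:=\cond\cap Y_i$ to be a single connected étale bisection of $\alpha_i$, and in that case your argument only produces isogenies. If $\cond_i$ is glued to a section $\Sigma'$ of $Y_j\to A_j$, then $A_j\isom\Sigma'\isom\cond_i$ is a degree-two étale cover of $A_i$, which in general is not isomorphic to $A_i$.

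Your proposed upgrade via one-forms cannot repair this. For any étale isogeny $\Sigma\to A_i$ of $(n-1)$-dimensional abelian varieties, the pullback $H^0(A_i,\Omega_{A_i})\to H^0(\Sigma,\Omega_\Sigma)$ is injective between spaces of the same dimension, hence an isomorphism. So the images of $\alpha_i^*$ and $\alpha_j^*$ in $H^0(\Sigma,\Omega_\Sigma)$ always coincide, whatever the kernels are. The hypothesis on $h^0(Y,\tOm_Y)$ therefore cannot distinguish isogenous abelian varieties.

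The missing idea is to rule out connected étale bisections using the canonical class rather than one-forms. Since $\omega_Y\isom\sO_Y$, Proposition~\ref{normalisation-formula} gives $K_{Y_i}+\cond_i\sim 0$. On the other hand, Proposition~\ref{proposition no etale bisection} (applicable since $K_{A_i}$ is trivial) shows that $K_{Y_i}+A'$ is a non-trivial $2$-torsion bundle for any connected étale bisection $A'$. Hence $\cond_i$ is a disjoint union of two sections, each mapped isomorphically onto $A_i$ by $\alpha_i$. Gluing a section of $Y_i$ to a section of $Y_j$ then gives $A_i\isom A_j$ directly, and connectedness of $Y$ does the rest. This is exactly how the paper argues.

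Your Steps 1 and 2 agree with the paper (Lemma~\ref{lemma gluing one forms} plus Cao's theorem). The one ordering issue is that the bound $\dim A_i\le n-1$ from uniruledness needs surjectivity of $\alpha_i$, which you only obtain in Step 2.

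Your remark on the smooth case is correct: an abelian variety satisfies all the hypotheses. The paper's appeal to Lemma~\ref{lemma smooth ktrivial with forms is abelian} only rules out the case $h^{1,0}=n-1$. In the paper's applications, however, either $Y$ is singular or $h^0(Y,\tOm_Y)=n-1$.
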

\begin{proof}
Since $\omega_Y$ is trivial, $Y$ cannot be smooth by Lemma~\ref{lemma smooth ktrivial with forms is abelian}. It follows from Lemma~\ref{lemma gluing one forms} that all $Y_i$ are uniruled and $\dim A_i \geq n-1$ for all $i$. By~\cite[Theorem~1.2]{Cao19}, each $\alpha_i$ is a $\PP^1$-bundle over $A_i$ and $\dim A_i=n-1$. Again by Lemma~\ref{lemma gluing one forms}, the conductor $\cond \subset Y^\nu$ is an anticanonical divisor in each $Y_i$. More precisely, since $\omega_Y\isom \sO_Y$ and $Y$ has no triple points, we infer that $Y_i \cap \cond$ is an étale bisection of the Albanese fibration. Then by Proposition~\ref{proposition no etale bisection}, each $\cond \cap Y_i$ is disconnected, hence a disjoint union of two sections $\Sigma_i^{(0)}$ and $\Sigma_i^{(\infty)}$. In particular, $\Sigma_i^{(0)} \isom A_i \isom\Sigma_i^{(\infty)}$. Whenever $Y_i\cap Y_j \neq \emptyset$ for $i\neq j$, they are glued by identifying one such section in $Y_i$ with another one in $Y_j$. In particular, $A_i \isom A_j$ in this case. Since $Y$ is connected, all $A_i$ are pairwise isomorphic.
\end{proof}

\begin{proposition}\label{proposition torsion cycle or chain from torsion free differentials}
Let $Y$ be a connected projective normal crossing variety of dimension $n$ without triple points. Denote by $\nu\colon Y^\nu \to Y$ the normalisation. Assume that $\omega_Y\isom \sO_Y$, that $-K_{Y^\nu}$ is nef, and that $h^0(Y,\tOm_Y)=n-1$. 

Then $Y$ is either a closed chain or a cycle of $\PP^1$-bundles over~$A$.
\end{proposition}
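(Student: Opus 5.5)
By Corollary~\ref{corollary p1 bundles from locally trivial deformations}, each irreducible component $Y_i$ of $Y^\nu$ is a $\PP^1$-bundle $\alpha_i\colon Y_i\to A$ over a common abelian variety $A$ of dimension $n-1$. The proof of that corollary also shows more: $\cond\cap Y_i$ is an étale bisection of $\alpha_i$. By Proposition~\ref{proposition no etale bisection} this bisection is disconnected, so it is the disjoint union of two sections $\Sigma_i^{(0)}$, $\Sigma_i^{(\infty)}$. Conditions (1) and (2) of Definition~\ref{definition cycle of p1 bundles} therefore hold, and what remains is to determine how the $2m$ sections are glued by $\nu$.

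Since $Y$ is normal crossing without triple points, $\nu|_\cond\colon\cond\to\nu(\cond)=Y^\sing$ is étale of degree two. Take a connected component $\Sigma$ of $\cond$; it is one of the sections $\Sigma_i^{(\ast)}$, and there are two possibilities.
\begin{itemize}
\item[(a)] $\nu|_\Sigma$ is an isomorphism onto its image. Then the covering involution of $\cond\to Y^\sing$ sends $\Sigma$ to a different section $\Sigma'$, which may lie on the same component or on another one, and $\Sigma$ is glued isomorphically to $\Sigma'$.
\item[(b)] $\nu|_\Sigma$ is itself an étale double cover onto its image. Then the involution preserves $\Sigma$, and in the dual graph of Definition~\ref{definition dual graph} this gives a loop of multiplicity one at the corresponding vertex.
\end{itemize}
In the language of Definition~\ref{definition dual graph}, each vertex has valency exactly $2$ counted with multiplicity, since $\cond\cap Y_i$ has two components as noted in Example~\ref{example dual graph}. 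A loop of multiplicity one contributes one to the valency, and every other edge end contributes one.

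The second step is purely combinatorial. The graph $\Delta(Y)$ is connected because $Y$ is connected. Take the half-edges together with the multiplicity-one loops, which act as ``dead ends''. A connected graph in which every vertex has degree $2$, where multiplicity-one loops count as a single dead-end half-edge, is one of two types:
\begin{itemize}
\item a cycle, which includes the case of a single vertex with a multiplicity-two loop, obtained when $\Sigma_1^{(\infty)}$ is glued to $\Sigma_1^{(0)}$; or
\item a chain whose two endpoints each carry a multiplicity-one loop.
\end{itemize}
There is one degenerate case: a single vertex carrying two multiplicity-one loops, which is the chain with $m=1$. The chain must have both ends of dead-end type, since otherwise some vertex would have valency below $2$.

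Next we relabel so that the gluings have the right form. In a cycle we order the components along it. Each $Y_i$ meets two edges, corresponding to its two sections, so after renaming the sections we may assume that $\Sigma_i^{(\infty)}$ is glued to $\Sigma_{i+1}^{(0)}$; this is condition (3). In a chain we do the same, which yields condition $(3')$. In that case $\nu(\Sigma_1^{(0)})$ and $\nu(\Sigma_m^{(\infty)})$ are distinct connected components of $Y^\sing$, hence disjoint. When $m=1$ they are images of two disjoint sections, each folded onto itself, so they are again different components of $Y^\sing$.

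The main obstacle is the first step: making sure that the dichotomy (a)/(b) above is exhaustive and well defined, in particular that the covering involution maps whole sections to whole sections. This holds because the involution is an automorphism of $\cond$ and the sections are exactly the connected components of $\cond$. Once that is settled, the rest is the dual-graph bookkeeping of Remark~\ref{remark dual graph}.
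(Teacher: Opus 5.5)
Your proposal is correct and follows essentially the same route as the paper. Both use Corollary~\ref{corollary p1 bundles from locally trivial deformations}, together with its proof via Proposition~\ref{proposition no etale bisection}, to get the $\PP^1$-bundle structure and the splitting of $\cond\cap Y_i$ into two disjoint sections, and then conclude with the valency-two dual-graph argument and Remark~\ref{remark dual graph}. You spell out the action of the covering involution on $\pi_0(\cond)$, the relabelling, and the disjointness of the end components more explicitly than the paper does, and your valency convention (a multiplicity-one loop contributes one, every other edge end contributes one) is the one the paper intends.
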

\begin{proof}
Let $Y^\nu=Y_1 \cup \ldots \cup Y_m$ be the decomposition into irreducible components. By Corollary~\ref{corollary p1 bundles from locally trivial deformations}, there is an abelian variety $A$ such that each $Y_i$ is a $\PP^1$-bundle over $A$. By Lemma~\ref{lemma gluing one forms}, the conductor $\cond \subset Y^\nu$ meets every component $Y_i$ nontrivially. Since there are no triple points, the restriction $\nu_\cond\colon\cond \to \nu(\cond)$ is an étale double cover. Let us denote by $\iota\colon \cond \to \cond$ its deck involution. Then $\iota$ acts on $\pi_0(\cond)$, the set of connected components of~$\cond$. 

The global structure of $Y$ is described by its dual intersection graph; see Definition~\ref{definition dual graph}. It has $m$ vertices, and each vertex has two incident edges, counted with multiplicity, corresponding to the two connected components of the anticanonical divisor. 
Since $Y$ is connected, its dual graph is either a cycle with $m$ vertices or a chain with loops of multiplicity one at each end. Then $Y$ is either a cycle or a closed chain of $\PP^1$-bundles by Remark~\ref{remark dual graph}.
\end{proof}

\begin{lemma}\label{lemma gluing invariant is translation}
Let $Y$ be a closed chain or a cycle of $\PP^1$-bundles over an abelian variety $A$ of dimension $n-1$ and let $\nu\colon Y^\nu \to Y$ be the normalisation. Then $h^0(Y, \tOm_Y) \leq n-1$ and the following statements are equivalent.
\begin{enumerate}
	\item $\dim H^0(Y,\tOm_Y) = n-1$.
	\item $H^0(A,\Omega_A) = H^0(Y,\tOm_Y)$ as subspaces of $H^0(Y^\nu,\Omega_{Y^\nu})$.
	\item Any gluing automorphism of $Y$ is a translation (in case $Y$ is a cycle) respectively any gluing involutions of $Y$ are translations by $2$-torsion points (in case $Y$ is a closed chain). 
\end{enumerate}
\end{lemma}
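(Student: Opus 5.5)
Let $Y_i=\PP(\sO_A\oplus L_i)$ with bundle projections $\pi_i$, so that $H^0(Y_i,\Omega_{Y_i})=\pi_i^*H^0(A,\Omega_A)$, since fibres are $\PP^1$. The bound $h^0(Y,\tOm_Y)\le n-1$ follows from Lemma~\ref{lemma gluing one forms}, or directly: a torsion free form pulls back injectively to $Y^\nu$, and its restriction to $Y_1$ is $\pi_1^*\eta_1$ for a unique $\eta_1\in H^0(A,\Omega_A)$. I will make this precise by describing $H^0(Y,\tOm_Y)$ as a gluing condition. Using the exact sequence of Lemma~\ref{lemma gluing one forms}, adapted to the non-simple case via the conductor involution $\iota$ on $\cond$, we get
\[
H^0(Y,\tOm_Y)=\{(\eta_i)\in \textstyle\bigoplus_i H^0(A,\Omega_A)\ :\ (\pi_i^*\eta_i)|_{\cond}\ \text{is }\iota\text{-invariant}\}.
\]
Choose the normalised projections from Lemma~\ref{lemma cycle of p1 bundles}. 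The gluings $\Sigma_i^{(\infty)}\to\Sigma_{i+1}^{(0)}$ for $i<m$ are then compatible with the projections, so the condition there reads $\eta_i=\eta_{i+1}$. All $\eta_i$ therefore equal a single $\eta\in H^0(A,\Omega_A)$, which already gives $h^0\le n-1$.

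The remaining condition comes from the last gluing. For a cycle it is $\tau^*\eta=\eta$. For a closed chain it is $\iota_1^*\eta=\eta$ and $\iota_2^*\eta=\eta$, since a form on $\Sigma_1^{(0)}$ descends to the étale quotient exactly when it is invariant under the deck involution. Thus $H^0(Y,\tOm_Y)$ is identified with the invariant subspace $H^0(A,\Omega_A)^{\tau}$, respectively $H^0(A,\Omega_A)^{\langle\iota_1,\iota_2\rangle}$, embedded in $H^0(Y^\nu,\Omega_{Y^\nu})$ via $\oplus\pi_i^*$. This gives (1)$\Leftrightarrow$(2): equality of dimensions holds exactly when every $\eta$ is invariant, i.e.\ when the whole of $H^0(A,\Omega_A)$ (diagonally pulled back) lies in $H^0(Y,\tOm_Y)$.

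For (2)$\Leftrightarrow$(3), write an automorphism of $A$ as $a\mapsto g(a)+s$ with $g$ a group automorphism. Its action on $H^0(A,\Omega_A)\isom T_0A^*$ is through the linear part $dg$, and $dg=\id$ forces $g=\id$, since $g$ is determined by its analytic representation. So full invariance holds if and only if $\tau$, respectively $\iota_1,\iota_2$, are translations. An involution that is a translation $a\mapsto a+s$ has $2s=0$, and being fixed point free forces $s\neq0$, so $s$ is a nonzero $2$-torsion point. The conjugation ambiguity in Lemma~\ref{lemma cycle of p1 bundles} is harmless, because conjugates of translations are translations; hence ``any gluing automorphism'' and ``some gluing automorphism'' are equivalent.

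The main obstacle is the first step: justifying the description of $H^0(Y,\tOm_Y)$ as pairs agreeing on the conductor in the non-simple normal crossing setting, including the self-gluing loops of the closed chain. I would handle this by a local computation at nc points of multiplicity two. At such a point $Y$ is locally $\{xy=0\}$, and a torsion free form is a pair of forms on the two branches whose restrictions to the double locus agree; this gives the displayed description.
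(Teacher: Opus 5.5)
Your proposal is correct and is essentially the paper's argument: all forms on $Y^\nu$ come from $A$, and with the normalised projections of Lemma~\ref{lemma cycle of p1 bundles} the gluing conditions collapse to invariance under $\tau$ (resp.\ $\iota_1,\iota_2$). The paper handles the last gluing through a partial normalisation $Y^\mu$ with tree dual graph and the sequence $0\to\tOm_Y\to\mu_*\tOm_{Y^\mu}\xrightarrow{\delta}\Omega_A\to 0$ with $\delta(\eta)=s_1^*\eta-s_2^*\eta$, and the only blemish in your version is the aside citing Lemma~\ref{lemma gluing one forms} for the bound, whose hypotheses ($\omega_Y\isom\sO_Y$, projectivity) are not assumed here; your direct gluing computation already gives $h^0(Y,\tOm_Y)\le n-1$, so nothing is lost.
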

\begin{proof}
We consider a partial normalisation $\mu\colon Y^\mu \to Y$ such that $Y^\mu$ has simple normal crossings and its dual graph is a tree. Then there is a projection $\pi\colon Y^\mu \to A$ and a commutative diagram
\[
\xymatrix@R=1mm@C=15mm{
&& Y\\
Y^\nu \ar[r]|(0.55){\,\lambda\,} \ar@/^/[rru]^\nu \ar@/_/[rrd]_{\rho} & Y^\mu \ar[ru]|{\,\mu\,}\ar[rd]|{\,\pi\,}\\
&& A\\
}
\]
where $\lambda\colon Y^\nu \to Y^\mu$ is an isomorphism over the normal locus of $Y^\mu$. 

By pullback of holomorphic forms we obtain morphisms
\[
\xymatrix{
H^0(Y,\tOm_Y) \ar@{^(->}[r] & H^0(Y^\mu,\tOm_{Y^\mu}) & H^0(A,\Omega_A). \ar[l]\\
}
\]
We claim that the right morphism is an isomorphism. To see this, note that torsion free differentials inject into the pushforward of Kähler differentials from the normalisation, i.e. $\tOm_{Y^\mu} \into \lambda_*\Omega_{Y^\nu}$. Applying $\pi_*$, we obtain morphisms
\[
\xymatrix{
\Omega_A \ar@{^(->}[r] & \pi_*\tOm_{Y^\mu} \ar@{^(->}[r] & \rho_*\Omega_{Y^\nu}.
}
\]
Their composition is an isomorphism since $\rho_*\Omega_{Y^\nu/A}=0$ and therefore also the first morphism is an isomorphism. The claim follows by taking global sections.

Consequently, we have that 
\begin{equation}\label{eq inclusion differential forms p1 bundles}
H^0(Y,\tOm_Y) \subset H^0(Y^\mu,\tOm_{Y^\mu}) = H^0(A,\Omega_A)
\end{equation}
as subspaces of $H^0(Y^\nu,\Omega_{Y^\nu})$. This implies (1)$\Longleftrightarrow$(2). 

The equivalence with (3) follows since sections of $H^0(Y^\mu,\tOm_{Y^\mu})$ descend to $H^0(Y,\tOm_Y)$ if and only if they are invariant under the gluing. We will only treat the case of a cycle. The case of a closed chain works completely analogously. 

Let us thus assume that $Y$ is a cycle of $\PP^1$-bundles. Then there is a unique connected component $S\subset Y^\sing$ of the singular locus such that $\mu$ is resolves the singularities precisely along $S$. More precisely, let $s_1,s_2\colon A \to Y^\mu$ be sections such that $Y$ is obtained from $Y^\mu$ by quotienting by the equivalence relation generated by $s_1(a) \sim s_2(a)$ for $a\in A$. The images of these sections are the connected components of $\mu^{-1}(S)$ and by Lemma~\ref{lemma cycle of p1 bundles} we may assume that $\pi\circ s_1=\id_A$ and $\pi\circ s_2=\tau$, where $\tau$ is a chosen gluing automorphism of~$Y$. Note that the gluing automorphism is only well defined up to conjugation by Lemma~\ref{lemma cycle of p1 bundles}, but the choice does not matter since translations are a normal subgroup of $\Aut(A)$. 
As in~\cite[Proposition~1.5]{Fri83}, one derives a short exact sequence 
\[
0 \to \tOm_Y \to \mu_* \Omega_{Y^\mu} \xrightarrow{\delta} \Omega_A \to 0,
\]
where $\delta(\eta):=s_1^*\eta - s_2^*\eta$ and $A$ is identified with $S \subset Y$ via $\mu\circ s_1$. The composition
\[
H^0(A,\Omega_A) \xrightarrow{\pi^*} H^0(Y^\mu,\Omega_{Y^\mu}) \xrightarrow{H^0(\delta)} H^0(A,\Omega_A)
\]
is therefore given by $\eta \mapsto \eta - \tau^*\eta$. The first inclusion from \eqref{eq inclusion differential forms p1 bundles} is an equality if and only if $H^0(\delta)$ is identically zero, which in turn is equivalent to $\tau^*=\id$ on $H^0(A,\Omega_A)$. This is equivalent to $\tau$ being a translation.
\end{proof}

\begin{theorem}\label{theorem fibers nc case}
Let $Y$ be a connected projective normal crossing variety of dimension $n$ without triple points, and denote by $\nu\colon Y^\nu \to Y$ the normalisation. Assume that $Y$ is $d$-semistable, $h^0(Y,\tOm_Y)=n-1$, the canonical sheaf $\omega_Y$ is torsion, and $-K_{Y^\nu}$ is nef. 

Let $p\colon Y'\to Y$ be the index one cover. Then either $Y'$ is an abelian variety or there is an abelian variety $A$, a point $s\in A$, and $L\in \pic^0(A)$ such that $Y'\isom Y_m(A,s,L)$ as in Construction~\ref{construction type i fiber}. 
\end{theorem}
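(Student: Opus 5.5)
The plan is to pass to the index one cover, identify it via Proposition~\ref{proposition torsion cycle or chain from torsion free differentials} as a cycle or closed chain of $\PP^1$-bundles, and then pin down the gluing data and the line bundles using Lemma~\ref{lemma gluing invariant is translation} and $d$-semistability.

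\emph{Index one cover.} Since $\omega_Y$ is torsion of order $r$, the index one cover $p\colon Y'\to Y$ is the cyclic étale cover associated to $\omega_Y$, of degree $r$. It is étale, so by Lemma~\ref{lemma log differentials pull back} (and local considerations) $Y'$ is again a normal crossing variety without triple points, and it is $d$-semistable because $T^1_{Y'}=p^*T^1_Y\isom\sO_{Y'^{\sing}}$. Moreover $\omega_{Y'}\isom p^*\omega_Y$ is trivial, and $-K_{Y'^\nu}$ is nef, being the pullback of $-K_{Y^\nu}$ under the étale map $Y'^\nu\to Y^\nu$. For the forms, pullback gives $H^0(Y,\tOm_Y)\into H^0(Y',\tOm_{Y'})$, so $h^0(Y',\tOm_{Y'})\geq n-1$. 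We may also assume $Y'$ is connected, after replacing it by a connected component.

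\emph{Smooth case.} If $Y'$ is smooth, then Lemma~\ref{lemma gluing one forms} together with $h^0\geq n-1$ and $\omega_{Y'}$ trivial forces the Beauville--Bogomolov picture. We get $h^{1,0}\geq n-1$, and Lemma~\ref{lemma smooth ktrivial with forms is abelian} excludes exactly $n-1$, so $h^{1,0}(Y')=n$ and $Y'$ is abelian (for instance the Albanese map is an isomorphism, since $K$ is trivial).

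\emph{Singular case.} If $Y'$ is singular, Corollary~\ref{corollary p1 bundles from locally trivial deformations} applies and gives $h^0\leq n-1$ by Lemma~\ref{lemma gluing one forms}, hence equality. Proposition~\ref{proposition torsion cycle or chain from torsion free differentials} then says $Y'$ is a cycle or a closed chain of $\PP^1$-bundles over $A$.

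\emph{Excluding the closed chain.} I expect the closed chain case to be the main obstacle. The idea is to compare the conductor data with triviality of $\omega_{Y'}$: the ends $\Sigma_1^{(0)}$ and $\Sigma_m^{(\infty)}$ map $2\colon1$ to their images. I would try to show that $\omega_{Y'}$ restricted to such an end is a nontrivial $2$-torsion bundle, in the spirit of Proposition~\ref{proposition no etale bisection}. Concretely, the residue of a generator of $\omega_{Y'}$ along the end double cover must be anti-invariant under the covering involution, which contradicts triviality. The alternative route, if that fails, is to show that the index one cover of a closed chain unfolds into a cycle; then, since $\omega$ is already trivial, the closed chain cannot occur.

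\emph{Cycle case.} For a cycle, Lemma~\ref{lemma gluing invariant is translation} shows the gluing automorphism is a translation $\tau_s$, and Lemma~\ref{lemma cycle of p1 bundles} gives $Y'\isom Y_m(A,s,L_1,\ldots,L_m)$. It remains to see $L_i\in\pic^0(A)$ and that all the $L_i$ agree. Nefness of $-K_{Y_i}$ with Lemma~\ref{lemma nef anticanonical} gives $L_i\in\pic^0(A)$. $d$-semistability, via Remark~\ref{remark d-semistable two components}, says $N_{\Sigma_i^{(\infty)}/Y_i}\isom N_{\Sigma_{i+1}^{(0)}/Y_{i+1}}^*$. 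Computing with Lemma~\ref{lemma class of section of p1 bundle in pic}, these normal bundles are $L_i$ and $L_{i+1}^{-1}$ up to sign conventions, the last one twisted by $\tau_s^*$. Since translations act trivially on $\pic^0$, this yields $L_i\isom L_{i+1}$ for all $i$, hence $Y'\isom Y_m(A,s,L)$.
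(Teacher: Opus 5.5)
Your overall route is the paper's: pass to the index one cover, handle the smooth case via Lemma~\ref{lemma smooth ktrivial with forms is abelian}, get a cycle or closed chain from Proposition~\ref{proposition torsion cycle or chain from torsion free differentials}, make the gluing a translation with Lemma~\ref{lemma gluing invariant is translation}, then get $L_i\in\pic^0(A)$ from Lemma~\ref{lemma nef anticanonical} and $L_i\isom L_{i+1}$ from $d$-semistability. Those steps are fine. The genuine gap is in excluding the closed chain.

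\textbf{The closed chain step.} Anti-invariance of the residue along an end is correct. It is exactly the paper's local computation: $\omega_{Y'}|_S$ is the $(-1)$-eigensheaf of $p_*\omega_\Sigma$ for the covering involution $\iota$ of $p\colon\Sigma\to S$. By itself, however, it contradicts nothing. A generator of $\omega_{Y'}$ has as residue a nowhere vanishing $\eta\in H^0(\Sigma,\omega_\Sigma)\isom\C$. The relation $\iota^*\eta=-\eta$ only says that $\iota$ acts by $-1$ on the volume form of $\Sigma\isom A$, and fixed point free involutions can do this.

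\textbf{Counterexample to the argument as written.} Take $A=E\times E$ and $\iota(x,y)=(x+t,-y)$ with $t$ a nontrivial $2$-torsion point. Let $Y_1=A\times\PP^1$, with both ends glued to themselves via $\iota$. The residues $\pm\,dx\wedge dy$ of $dx\wedge dy\wedge dz/z$ are anti-invariant. So this closed chain has trivial dualising sheaf and nef $-K_{Y^\nu}$, and your argument would wrongly exclude it.

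\textbf{The missing input.} What rules such examples out is the hypothesis $h^0(\tOm)=n-1$; in the example $h^0(\tOm)=1=n-2$. You must use this hypothesis a second time. Lemma~\ref{lemma gluing invariant is translation} also applies to closed chains and forces $\iota_1,\iota_2$ to be translations by $2$-torsion points. Translations fix the invariant volume form of $A$, so $p_*\omega_\Sigma\isom p_*\sO_\Sigma$ equivariantly. Hence $\omega_{Y'}|_S$ is isomorphic to the $(-1)$-eigensheaf $T$ of $p_*\sO_\Sigma$, which is a nontrivial $2$-torsion bundle. This is the paper's Claim.

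\textbf{Your alternative route.} It does not help. Since $\omega_{Y'}$ is already trivial, $Y'$ is its own index one cover. So the statement that ``the index one cover of a closed chain unfolds into a cycle'' just restates what has to be proved, and it fails in the example above.
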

\begin{proof}
Let $p\colon Y'\to Y$ be the index one cover. Then $Y'$ is a connected projective normal crossing variety without triple points. Moreover, $\omega_{Y'}$ is trivial and $-K_{{Y'}^\nu}=-p^*K_{Y^\nu}$ is nef, where ${Y'}^\nu\to Y'$ is the normalisation. By Lemma~\ref{lemma log differentials pull back}, also $Y'$ is $d$-semistable. Since pullback is injective, the space of torsion free one forms on $Y'$ has at least dimension $n-1$. If this dimension is strictly bigger than $n-1$, then the Albanese dimension of each irreducible component of ${Y'}^\nu$ is $n$ by Lemma~\ref{lemma gluing one forms}. But this implies that the conductor is zero, so $Y'$ 
is an abelian variety and $Y$ is a finite étale quotient\footnote{One may always pass to a Galois closure of $p$.} of it.

It remains to treat the case where $h^0(Y',\tOm_{Y'})=n-1$. Replacing $Y$ by $Y'$, we may assume that $\omega_Y$ is already trivial. 
Let us denote $Y^\nu=Y_1\cup \ldots \cup Y_m$ the decomposition into irreducible components. Then $Y$ is either a closed chain or a cycle of $\PP^1$ bundles over a abelian variety $A$ of dimension $n-1$ by Proposition~\ref{proposition torsion cycle or chain from torsion free differentials}. By Lemma~\ref{lemma cycle of p1 bundles}, there is an abelian variety $A$, line bundles $L_1,\ldots,L_m$, and an automorphism $\tau \in \Aut(A)$ in the case of a cycle respectively fixed point free involutions $\iota_1,\iota_2 \in \Aut(A)$ in the case of a closed chain such that $Y$ is isomorphic to a variety of the form $Y_m(A,\tau, L_1,\ldots,L_m)$ or to a variety of the form $Y_m(A, L_1,\ldots,L_m,\iota_1,\iota_2)$. Moreover, by Lemma~\ref{lemma gluing invariant is translation} we have that $\tau$ is a translation respectively $\iota_1, \iota_2$ are translations by $2$-torsion points. The next claim will rule out the case of a closed chain.

\emph{Claim:} Let $P \to A$ be a $\PP^1$-fibration over $A$, and let $\Sigma\subset P$ be a section. Let $\iota\colon \Sigma\to\Sigma$ be a translation by a $2$-torsion point, and consider the pushout diagram
\begin{equation}\label{eq pushout diagram bad gluing}
\xymatrix{
\Sigma \ar[d]_p\ar@{^(->}[r] & P \ar[d]^\nu\\
S \ar@{^(->}[r] & Z,\\
}
\end{equation}
where $S:=\Sigma/\iota$. Then the dualizing sheaf $\omega_Z|_S$ is non-trivial.

\emph{Proof of the Claim.}
By~\cite[Theorem~2.6]{Rei94}, there is an exact sequence 
\[
0 \to \omega_Z \to \nu_*\omega_P(\Sigma) \to \omega_S \to 0.
\]
We will show that this sequence remains exact when restricted to $S$. Since restriction and pushforward along affine morphisms commute, we obtain from \eqref{eq pushout diagram bad gluing} that
\[
\nu_*\omega_P(\Sigma)|_S \isom p_*\left(\omega_P(\Sigma)|_\Sigma\right) \isom p_*\omega_\Sigma,
\]
where the second isomorphism is given by adjunction. Restricting to $S$ is right exact, so in the resulting sequence
\begin{equation}\label{eq commutative diagram restriction}
0 \to \omega_Z|_S \to p_*\omega_\Sigma \to \omega_S \to 0,
\end{equation}
it only remains to show exactness at $\omega_Z|_S$. 
But $p_*\omega_\Sigma$ is locally free of rank two and $\omega_S$ of rank one, so $\ker\left(p_*\omega_\Sigma \to \omega_S\right)$ is also locally free of rank one. As $Z$ is Gorenstein, $\omega_Z$ is locally free of rank one and surjects onto this kernel. Therefore, it is isomorphic to it, that is, \eqref{eq commutative diagram restriction} is exact.

We will show next that $\omega_Z|_S \subset p_*\omega_\Sigma$ is the $(-1)$-Eigensheaf for the involution $\iota$ acting on $p_*\omega_\Sigma$; cf.~\cite[Proposition~5.8]{Kol13}.
This claim can be verified locally. Let $Z'\subset Z$ be an open subspace that is isomorphic to the complex space given by the equation $xy=0$ inside an open subset $U \subset \C^{n+1}$. Here, we denote $x,y,z_1,\ldots,z_{n-1}$ the coordinates of $\C^{n+1}$. Let 
\[
\xymatrix{
\Sigma' \ar[d]_p\ar@{^(->}[r] & P' \ar[d]^\nu\\
S' \ar@{^(->}[r] & Z'\\
}
\]
be the diagram obtained from \eqref{eq pushout diagram bad gluing} by base change along $Z'\to Z$, that is, by taking preimages of $Z'$ under all morphisms in the diagram. It now suffices to verify that $\omega_{Z'}|_{S'}$ is the $(-1)$-Eigensheaf for the action of the involution $\iota|_{\Sigma'}$ on $\nu_*\omega_{P'}(\Sigma')|_{S'}$. This will follow from an explicit description of this subsheaf in the chosen local coordinates.

By adjunction, we have $\omega_{Z'} \isom \omega_{U}(Z')|_{Z'}$ and a generator of the right hand side is given by $\vol:=\frac{dx}{x}\wedge \frac{dy}{y} \wedge \eta$ where $\eta:=dz_1\wedge \ldots \wedge dz_{n-1}$. Let $\zeta$ be the corresponding local generator of $\omega_{Z'}$. The subsheaf in question was obtained as the composition
\begin{equation}\label{eq pullback and adjunction}
\xymatrix{
\omega_{Z'}|_{S'} \ar@{^(->}[r] &  \nu_*\omega_{P'}(\Sigma')|_{S'} \ar[r]^(0.6)\isom & p_*\omega_{\Sigma'}\\
}
\end{equation}
of pullback and adjunction, so in local coordinates it can be described as
\begin{equation}\label{eq residue in coordinates}
\begin{aligned}
\zeta &\longmapsto \bigl(\res_{x=0} (\vol), \res_{y=0}(\vol) \bigr) = \Bigl( \frac{dy}{y}\wedge \eta, -\frac{dx}{x}\wedge \eta\Bigr)\\
& \longmapsto (\eta,-\eta).
\end{aligned}
\end{equation}
Note that $S'=\{x=y=0\}$ in these coordinates.

To describe the involution $\iota|_{\Sigma'}$, we consider the involution $\iota'\colon P' \to P'$ interchanging the branches. In coordinates, this is given by $(x,y)\mapsto (y,x)$. We observe that, after possibly rescaling the coordinates, the involution $\iota'|_{\Sigma'}$ coincides with $\iota|_{\Sigma'}$. Thus, \eqref{eq residue in coordinates} implies that $\omega_{Z'}|_{S'}$ is indeed the $(-1)$-Eigensheaf for this involution.

To conclude the proof of the Claim, we will show now that $\omega_Z|_S$ is not trivial. For this, we observe that $p_*\omega_\Sigma$ is equivariantly isomorphic to $p_*\sO_\Sigma$, as translation acts trivially on both sides and the Eigensheaf decomposition is given by $p_*\sO_\Sigma = \sO_S \oplus T$ where the $(-1)$-Eigensheaf $T$ is a $2$-torsion line bundle on $S$. Since $\Sigma$ is compact, reduced, connected, and positive dimensional, $T$ is nontrivial (cp. the proof of Proposition~\ref{proposition no etale bisection}). This finishes the proof of the Claim.\qed

Hence, $Y$ is isomorphic to $Y(A,s,L_1,\ldots,L_m)$ for some $s\in A$. By assumption, $-K_{Y_i}$ is nef. But then by Lemma~\ref{lemma nef anticanonical}, we must have $L_i\in\pic^0(A)$. 

It remains to show that the bundles $L_i$ are all isomorphic for $i=1,\ldots,m$. By Lemma~\ref{lemma class of section of p1 bundle in pic}, we have isomorphisms
\[
\sO_{Y_i}(\Sigma_i^{(0)}) = \sO_{\PP(\sO_A \oplus L_i)}(1) \tensor L_i^*, \qquad \sO_{Y_i}(\Sigma_i^{(\infty)}) = \sO_{\PP(\sO_A \oplus L_i)}(1).
\]
Hence, for the normal bundles, we obtain 
\[
N_{\Sigma_i^{(0)}/\PP(\sO_A \oplus L_i)} \isom \sO_{Y_i}(\Sigma_i^{(0)})\big\vert_{\Sigma_i^{(0)}} = L_i^*
\]
and
\[
N_{\Sigma_i^{(\infty)}/\PP(\sO_A \oplus L_i)} \isom \sO_{Y_i}(\Sigma_i^{(\infty)})\big\vert_{\Sigma_i^{(\infty)}} = L_i.
\]
The claim now follows from the $d$-semistability assumption; cf. Remark~\ref{remark d-semistable two components}.
\end{proof}

\begin{proposition}\label{proposition construction is d semistable}
Let $Y=Y_m(A,s,L)$ be as in Construction~\ref{construction type i fiber} for some $s\in A$. Assume that $L\in\pic^0(A)$. Then $Y$ is $d$-semistable.
\end{proposition}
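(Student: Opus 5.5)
We will verify $d$-semistability through the criterion recalled in Remark~\ref{remark d-semistable two components}. The variety $Y=Y_m(A,s,L)$ has no triple points. Its singular locus is a disjoint union of $m$ copies of $A$, namely the images $S_i:=\nu(\Sigma_i^{(\infty)})=\nu(\Sigma_{i+1}^{(0)})$ for $i=1,\ldots,m$, with indices read modulo $m$. Each connected component $S_i$ has exactly two embeddings into the normalisation $Y^\nu=\coprod Y_i$. For $m=1$ the two embeddings land in the same component $Y_1$, but they are still two distinct sections. It therefore suffices to check, for each $i$, that
\[
N_{\Sigma_i^{(\infty)}/Y_i}\isom \bigl(N_{\Sigma_{i+1}^{(0)}/Y_{i+1}}\bigr)^*
\]
after identifying both sections with $S_i$ via the gluing map. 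For $m=1$ one should double check that Friedman's local computation of $T^1_Y$ still reduces to this condition; it does, because $T^1_Y|_{S_i}$ is locally $N_{S_i/\text{branch}_1}\otimes N_{S_i/\text{branch}_2}$. This is the normal crossing extension stated at the end of the remark.

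Next we compute the normal bundles. As in the end of the proof of Theorem~\ref{theorem fibers nc case}, Lemma~\ref{lemma class of section of p1 bundle in pic} gives $N_{\Sigma_i^{(0)}/Y_i}\isom L^*$ and $N_{\Sigma_i^{(\infty)}/Y_i}\isom L$, where both sections are identified with $A$ via $\pi_i$. For $i=1,\ldots,m-1$ the gluing map is $\pi_{i+1}^{-1}\circ\pi_i$, which is compatible with these identifications. The condition then reads $L\isom (L^*)^*$, which holds trivially.

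The last gluing $\Sigma_m^{(\infty)}\to\Sigma_1^{(0)}$ is $\pi_1^{-1}\circ\tau_s\circ\pi_m$, and this is the only place where work is needed. Transporting the normal bundle of $\Sigma_1^{(0)}$ through this identification, the condition becomes $L\isom \tau_s^*L$ on $A$. This holds because $L\in\pic^0(A)$ and line bundles in $\pic^0(A)$ are translation invariant, i.e. $\tau_s^*L\isom L$ for all $s\in A$. The hypothesis $L\in\pic^0(A)$ is used exactly here; without it the statement fails, since for $L$ nondegenerate $\tau_s^*L\not\isom L$ for general $s$.

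The main obstacle is bookkeeping rather than an idea: we must make sure that the identification of $T^1_Y|_{S_i}$ with $N_{S_i/Y_i}\otimes N_{S_i/Y_{i+1}}$ is made via the actual gluing isomorphisms. We must also check that the local isomorphisms $T^1_Y|_{S_i}\isom\sO_{S_i}$ on the distinct components $S_i$ assemble to a global $T^1_Y\isom\sO_{Y^\sing}$. The latter is automatic, because the components are disjoint and $T^1_Y$ is a line bundle supported on $Y^\sing$.
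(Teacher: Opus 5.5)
Your proposal is correct and is essentially the paper's proof. It reduces via Remark~\ref{remark d-semistable two components} to duality of the two normal bundles along each component of $Y^\sing$, computes them as $L$ and $L^*$ via Lemma~\ref{lemma class of section of p1 bundle in pic}, and invokes $\tau_s^*L\isom L$ for $L\in\pic^0(A)$ at the one twisted gluing. Your assignment $N_{\Sigma_i^{(0)}/Y_i}\isom L^*$, $N_{\Sigma_i^{(\infty)}/Y_i}\isom L$ is the one that actually follows from that lemma; the paper's $m=1$ paragraph has them swapped, which is harmless since the condition is symmetric under $L\leftrightarrow L^*$.
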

\begin{proof}
Since there are no triple points, $d$-semistability is equivalent to the statement that for each component of $Y^\sing$, the normal bundles for the two embeddings into $Y^\nu$ are dual to one another; see Remark~\ref{remark d-semistable two components}. For $m=1$, this translates to $$N_{\Sigma_1^{(0)}/Y_1} \isom \tau_s^* N_{\Sigma_1^{(\infty)}/Y_1}^*,$$ where we identify $\Sigma_1^{(0)}\isom A \isom \Sigma_1^{(\infty)}$ via the bundle projection and $\tau_s$ is the translation by $s$. Since 
\[
N_{\Sigma_1^{(0)}/Y_1}\isom L \quad \textrm{ and } \quad N_{\Sigma_1^{(\infty)}/Y_1} \isom L^*,
\]
this is equivalent to $L \isom  \tau_s^* L$, hence satisfied for $L\in {\pic}^0(A)$.

Now assume $m \geq 1$. Then for $i=1,\ldots,m-1$, we glue \smash{$\Sigma_i^{(\infty)}$ to $\Sigma_{i+1}^{(0)}$} via the canonical isomorphism and furthermore we glue $\Sigma^{(\infty)}_m$ to $\Sigma_1^{(0)}$ with an additional translation by $s\in A$. Then as above $d$-semistability is equivalent to the conditions
\[
N_{\Sigma_{i+1}^{(\infty)}/Y_{i+1}} \isom\ N_{\Sigma_i^{(\infty)}/Y_i}^* \quad i=1,\ldots,m-1, \quad \textrm{ and } N_{\Sigma_1^{(0)}/Y_{1}} \isom\ \tau_s^*N_{\Sigma_m^{(\infty)}/Y_m}^*.
\]
These are in turn equivalent to 
\[
L_{i+1}^* \isom\ L_i^*, \quad i=1,\ldots,m-1, \quad \textrm{ and } \quad L_1^* \isom\ \tau_s^*L_m^*.
\]
Hence, the claim follows.
\end{proof}

For $m=1$ and $\dim Y=2$, the $d$-semistability has already been examined in~\cite[Proposition~2.1.4]{Mar24}.

%----------------------------------------------------
%----------------------------------------------------
\section{Normal bundles of normal crossing Lagrangians}\label{section lagrangian normal crossing deformations}
%----------------------------------------------------
%----------------------------------------------------

Our goal in this section is to prove the following result.

\begin{theorem}\label{theorem trivial normal bundle}
Let $Y$ be a Lagrangian subvariety in a holomorphic symplectic manifold $X$. Assume that there exist a finite étale cover $p\colon Y'\to Y$, a point $s\in A$, and a line bundle $L\in\pic^0(A)$ such that $Y'\cong Y_m(A,s,L)$ as in Construction~\ref{construction type i fiber}. Then the pullback $p^*N_{Y/X}$ of the normal bundle is trivial.
\end{theorem}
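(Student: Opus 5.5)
The plan is to identify the normal bundle of a Lagrangian normal crossing subvariety with its sheaf of semistable relative log differentials, and then compute the latter on the explicit model $Y_m(A,s,L)$.

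\emph{Step 1: the local isomorphism.} At a smooth point, the symplectic form gives the classical isomorphism $N_{Y/X}\isom \Omega_Y$, via $v\mapsto \sigma(v,\cdot)|_{T_Y}$. At a normal crossing point, Lemma~\ref{lemma snc sub is sncsub} lets us embed $Y$ locally into a smooth hypersurface $V$ as $\{z_1z_2=0\}$. Using a Darboux-type normal form adapted to the Lagrangian condition, we expect $\sigma$ to identify $N_{Y/X}$ with the dual of the logarithmic tangent sheaf. In these coordinates the section of $N_{Y/X}$ given by the equation $z_1z_2$ should correspond to the form $d\log z_1+d\log z_2$. One then checks that $\sigma$ induces a local isomorphism $N_{Y/X}\isom \Omega_Y(\log)$ of locally free sheaves.

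\emph{Step 2: globalisation.} The local isomorphisms must be shown to be canonical, i.e.\ independent of the chosen coordinates. For this I would use Corollary~\ref{corollary independence of log differentials}: both sides embed into $\Omega_Y\tensor\sK_Y$ compatibly with the smooth-locus isomorphism $N\isom\Omega$. Since the embedding into meromorphic differentials is canonical, the local isomorphisms glue.

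A side point concerns $Y$ itself: $Y$ need not be $d$-semistable, and Definition~\ref{definition relative omega log} requires a log structure of semistable type. I would use that $N_{Y/X}$ restricts to $T^1_Y$-type data on $Y^\sing$. More simply, I would pass to $Y'$ and use that $p^*N_{Y/X}=N_{Y'/X}$ for the étale map $Y'\to X$. By Proposition~\ref{proposition construction is d semistable}, $Y'$ is $d$-semistable, so Lemma~\ref{lemma log differentials pull back} and the local statement (which is étale local) give
\[
p^*N_{Y/X}\isom \Omega_{Y'}(\log).
\]

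\emph{Step 3: compute $\Omega_{Y'}(\log)$.} By Corollary~\ref{corollary relative log differentials on normalization component}, on each component
\[
Y_i=\PP(\sO_A\oplus L)
\]
we have
\[
\nu_i^*\Omega_{Y'}(\log)\isom\Omega_{Y_i}(\log(\Sigma_i^{(0)}+\Sigma_i^{(\infty)})).
\]
This sheaf sits in the exact sequence
\[
0\to\pi_i^*\Omega_A\to\Omega_{Y_i}(\log)\to\Omega_{Y_i/A}(\log)\isom\sO_{Y_i}\to0,
\]
using Corollary~\ref{corollary log canonical bundle} for the last identification. The $\C^*$-action on $\PP(\sO\oplus L)$ gives the global log vector field (the Euler field), which splits the sequence. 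Hence $\nu_i^*\Omega_{Y'}(\log)\isom\sO^{\oplus n}$, with a global frame consisting of $\pi_i^*$ of translation-invariant forms on $A$ together with $d\log$ of the fibre coordinate.

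\emph{Step 4: descend the frame.} The frame is compatible along the gluings. The forms on $A$ glue because the gluing maps are identity or translation, as in Lemma~\ref{lemma gluing invariant is translation}. The $d\log$ parts are the global section $d\log$ of the semistable structure. These compatible frames therefore glue to a trivialisation of $\Omega_{Y'}(\log)$, and hence of $p^*N_{Y/X}$.

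\emph{Main obstacle.} I expect the main obstacle to be Step 1/2: the canonical local identification $N\isom\Omega(\log)$ at double points via $\sigma$, together with its independence of coordinates.
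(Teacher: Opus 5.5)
Your Steps 1--2 follow the paper's route (Lemma~\ref{lemma moser snc lagrangian}, Proposition~\ref{proposition normal sheaf}, Lemma~\ref{lemma log differentials pull back}), with two small corrections. First, in the semistable relative sheaf $d\log z_1+d\log z_2=0$, so the normal section dual to the equation $z_1z_2$ corresponds to $d\log z_2=-d\log z_1$, not to the sum. Second, $Y$ is automatically $d$-semistable (Theorem~\ref{theorem lagrangian d semistable}), so you may identify $N_{Y/X}\cong\Omega_Y(\log)$ on $Y$ itself and then pull back.

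The genuine gap is in Step~3. The Euler field $\xi=t\partial_t$ does not split
\[
0\to\pi_i^*\Omega_A\to\Omega_{Y_i}\bigl(\log(\Sigma_i^{(0)}+\Sigma_i^{(\infty)})\bigr)\to\sO_{Y_i}\to0 .
\]
Contraction with $\xi$ is just the quotient map; dually, $\xi$ spans the subsheaf $\sO_{Y_i}\subset T_{Y_i}(-\log)$, not a complement. A splitting requires a global log form $\eta$ with $\iota_\xi\eta=1$. Locally $dt_\alpha/t_\alpha$ works, but on overlaps these differ by $\pi_i^*d\log g_{\alpha\beta}$. So the extension class is (the pullback of) the Atiyah class of $L$, i.e.\ up to a nonzero constant $c_1(L)\in H^1(A,\Omega_A)$. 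Hence the sequence splits if and only if $L\in\pic^0(A)$ (compare Proposition~\ref{proposition omega log trivial for snc}). Your argument never uses $L\in\pic^0(A)$, while the Euler field exists for every $L$, so as written it would prove a false statement.

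Step~4 has the analogous problem. The ``$d\log$ of the semistable structure'' is $d\log$ of the section of $M$ pulled back from the standard log point. That section has $d\log$ equal to zero in $\Omega_{Y'}(\log)$, so it gives no global section with nonzero residues along ${Y'}^{\sing}$. Moreover, even given forms $\eta_i$ on the $Y_i$ with residues $\pm1$, gluing is not automatic. Adding $\pi_i^*\alpha_i$ with $\alpha_i\in H^0(A,\Omega_A)$ changes the discrepancy along the $i$-th component of ${Y'}^{\sing}$ only by $\alpha_i-\alpha_{i+1}$. So the sum of the discrepancies is an obstruction in $H^0(A,\Omega_A)$. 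This is exactly what the paper controls in Theorem~\ref{theorem trivial log differentials}. It uses the residue sequence $0\to\tOm_{Y'}\to\Omega_{Y'}(\log)\to\sO_{{Y'}^{\sing}}\to0$ and the computation $\delta(e_i)=c_1(M_i)$. Mixed Hodge theory (Lemma~\ref{lemma picard pure weight two}) applied to $\sum_i\nu^*c_1(M_i)=\sum_i c_1(\pi_i^*L)=0$ then gives $e_1+\cdots+e_m\in\ker\delta$.

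Your route can be repaired without Hodge theory, and it then becomes a genuinely more elementary proof. Since $L\in\pic^0(A)$, it comes from a unitary character of $\pi_1(A)$, so it has translation-invariant local trivialisations with constant transition functions $c_{\alpha\beta}$. Then $dt/t$ is a global log form on each $Y_i$, and this is the splitting. Near $\Sigma_i^{(\infty)}\sim\Sigma_{i+1}^{(0)}$, take the charts $x=1/t^{(i)}_\alpha$ and $y=t^{(i+1)}_\alpha$; for the gluing by $\tau_s$, use the translation invariance. Their transition units are $c_{\alpha\beta}^{-1}$ and $c_{\alpha\beta}$, with product $1$. Hence they form a log atlas as in Definition~\ref{definition log atlas kawamata namikawa}, in which $-dx/x=dy/y$. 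So the forms $dt^{(i)}/t^{(i)}$ glue; the choice of log structure is irrelevant by Corollary~\ref{corollary independence of log differentials}. Together with $H^0(A,\Omega_A)=H^0(Y',\tOm_{Y'})$ (Lemma~\ref{lemma gluing invariant is translation}), they give a frame of $\Omega_{Y'}(\log)$, as one checks after pulling back to each $Y_i$.
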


For the proof, we relate the normal bundle to the sheaf of Kähler differentials of $Y$ as follows. Let $I \subset \sO_X$ denote the ideal sheaf of $Y$.  Recall from~\cite[Lemma~3.5]{Leh16} that there is the following diagram with exact rows:

\begin{equation}\label{eq omega to tangent}
\xymatrix{
0 \ar[r]
& T_{Y} \ar[r] & T_{X}\otimes \sO_Y \ar[r] & N_{Y/X} \ar[r] & T^{1}_{Y} \ar[r] & 0 \\
& I/I^{2} \ar[u] \ar[r] & \Omega_{X}\otimes \sO_Y \ar[u]_\isom \ar[r] & \Omega_{Y} \ar[u] \ar[r] & 0
}
\end{equation}
From this diagram, we see that the sequence
\begin{equation}\label{eq omega to normal}
\Omega_Y \to N_{Y/X} \to T^1_Y \to 0
\end{equation}
is exact. In particular, we obtain the following situation
\[
\Omega_Y \to N_{Y/X} \subset \Omega_Y\tensor \sK_Y = N_{Y/X}\tensor \sK_Y,
\]
where $\sK_Y$ is the sheaf of total quotient rings. For special singularities, one might hope to identify $N_{Y/X}$ explicitly as a certain sheaf of meromorphic sections of $\Omega_Y$. Note that abstractly, this is always possible if $Y$ is lci, due to the following general observation.
\begin{lemma}\label{lemma extension meromorphic sections}
Let $Y$ be a scheme and $\phi\colon  E \to F$ a morphism of quasi-coherent sheaves on $Y$ such that the induced morphism $\phi_\sK\colon E \tensor \sK_Y \to F \tensor \sK_Y$ is an isomorphism. If $F$ is locally free, then there is a unique subsheaf $E_\mer \subset E \tensor \sK_Y$ such that $\phi$ factors through an isomorphism \mbox{$\phi_\mer\colon E_\mer \to F$}.
\end{lemma}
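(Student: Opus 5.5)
The plan is to construct $E_\mer$ inside $F\tensor\sK_Y$ rather than inside $E\tensor\sK_Y$, and then pull it back along the isomorphism $\phi_\sK$. The first step is to observe that, because $F$ is locally free, the natural map $F\to F\tensor\sK_Y$ is injective. Indeed, locally $F\isom\sO_Y^{\oplus r}$, and $\sO_Y\to\sK_Y$ is injective, since by definition $\sK_Y$ is obtained by inverting non-zero-divisors. This lets us regard $F$ as a subsheaf of $F\tensor\sK_Y$. We then set
\[
E_\mer:=\phi_\sK^{-1}(F)\subset E\tensor\sK_Y .
\]
This is a quasi-coherent subsheaf: it is the preimage of a quasi-coherent subsheaf under an isomorphism of quasi-coherent $\sO_Y$-modules. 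With this definition, $\phi_\mer:=\phi_\sK|_{E_\mer}\colon E_\mer\to F$ is an isomorphism by construction.

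Next we check that $\phi$ factors through $\phi_\mer$. This means checking that the image of the canonical map $E\to E\tensor\sK_Y$ lands in $E_\mer$ and that the resulting map $E\to E_\mer$ composed with $\phi_\mer$ equals $\phi$. Both statements follow from the commutative square $\phi_\sK\circ(E\to E\tensor\sK_Y)=(F\to F\tensor\sK_Y)\circ\phi$, which is just naturality of $-\tensor\sK_Y$. The image of $E$ therefore maps into $F\subset F\tensor\sK_Y$, so it lies in $E_\mer$, and the composition agrees with $\phi$.

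For uniqueness, suppose $E'\subset E\tensor\sK_Y$ is another subsheaf through which $\phi$ factors via an isomorphism $\phi'\colon E'\to F$, where $\phi'$ is understood as the restriction of $\phi_\sK$. Then $\phi_\sK(E')=F$, and since $\phi_\sK$ is bijective this forces $E'=\phi_\sK^{-1}(F)=E_\mer$.

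The only delicate point is to make precise what "factors" means. The map $\phi_\mer$ should be the restriction of $\phi_\sK$, not an arbitrary isomorphism; otherwise uniqueness can fail, for instance by rescaling $E_\mer$ by a unit of $\sK_Y$. I would therefore phrase the factorisation as compatibility with $\phi_\sK$. With that convention, the only real input is the injectivity of $F\to F\tensor\sK_Y$, and this is exactly where local freeness of $F$ is used.
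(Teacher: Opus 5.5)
Your proposal is correct and is essentially the paper's proof. Both use the injectivity of $F\to F\tensor\sK_Y$ from local freeness, define $E_\mer:=\phi_\sK^{-1}(F)$, get the factorisation by naturality, and get uniqueness from the bijectivity of $\phi_\sK$; the paper also tacitly takes $E'\to F$ to be the restriction of $\phi_\sK$, which is the convention you make explicit.

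There are two minor quibbles. First, quasi-coherence of $E_\mer$ is not needed, and it is better justified by $E_\mer\isom F$ than by quasi-coherence of $E\tensor\sK_Y$, which can fail on an arbitrary scheme. Second, your rescaling caveat only applies if the map $E\to E'$ is allowed to be non-canonical: when it is the natural map, clearing denominators together with the injectivity of $F\to F\tensor\sK_Y$ already forces $E'\to F$ to agree with $\phi_\sK$.
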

\begin{proof}
Since $F$ is locally free, the morphism $F \to F \tensor \sK_Y$ is injective. Define $E_\mer:=\phi_\sK^{-1}(F)$. Then clearly $\phi$ factors as $E\to E_\mer \to F$. As $\phi_\sK$ is an isomorphism, any subsheaf of $E \tensor \sK_Y$ through which $\phi$ factors, has to map injectively to $F$ under $\phi_\sK$. The requirement that $\phi_\mer$ is an isomorphism thus stipulates that $E_\mer$ must coincide with $\phi_\sK^{-1}(F)$. This proves the uniqueness claim.
\end{proof}

In order to identify the normal bundle of a normal crossing Lagrangian subvariety, we need a holomorphic version of Moser's theorem. The proof is essentially the same as in the $C^\infty$ case; see e.g.~\cite[Theorem~7.4]{CdS01}.

\begin{lemma}\label{lemma moser snc lagrangian}
Let $(U,\sigma)$ be a holomorphic symplectic manifold with $U\subset \C^{2n}$ open, and let $Y \subset U$ be a Lagrangian normal crossing subvariety. Then, possibly after shrinking $U$, there is an open set $V \subset \C^{2n}$ and biholomorphic map $\phi\colon V \to U$ such that
\begin{equation}\label{eq pullback is standard symplectic}
\phi^*\sigma=\sum_{i=1}^n dx_i\wedge dy_i
\end{equation}
and 
\[
\phi^{-1}\left(Y\right)=\begin{cases} \; \{y_1=\ldots=y_n=0\} & \textrm{ if } Y \textrm{ is smooth};\\[3pt] \;\{x_1y_1=0,\ y_2=\ldots=y_n=0\} & \textrm{ otherwise.}\\
\end{cases}
\]
\end{lemma}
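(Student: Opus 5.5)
We prove a holomorphic Moser-type statement in two stages. First we straighten the subvariety $Y$. Then we deform the symplectic form to the standard one by a flow that preserves $Y$.

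\emph{Smooth case.} If $Y$ is smooth Lagrangian, the statement is the holomorphic Weinstein/Darboux theorem for a Lagrangian submanifold. We choose local coordinates $(x,y)$ with $Y=\{y=0\}$. We may then arrange that $\sigma_0:=\sum dx_i\wedge dy_i$ agrees with $\sigma$ at the points of $Y$. To do this we adjust the coordinates by a linear change along $Y$: the fibres $\{x=\mathrm{const}\}$ form a complementary foliation, and a symplectic complement to $TY$ can be made Lagrangian. Then we apply Moser's trick to the path $\sigma_t=\sigma_0+t(\sigma-\sigma_0)$. All $\sigma_t$ are closed and nondegenerate near $Y$ after shrinking, since they agree along $Y$. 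By the relative Poincaré lemma, $\sigma-\sigma_0=d\beta$ with $\beta$ vanishing to second order along $Y$ (use the homotopy operator for the radial retraction $(x,y)\mapsto(x,ty)$). The time-dependent vector field $v_t$ defined by $\iota_{v_t}\sigma_t=-\beta$ vanishes on $Y$. Its flow up to time $1$ therefore exists near $Y$ (shrinking $U$), fixes $Y$ pointwise, and satisfies $\phi_1^*\sigma=\sigma_0$. Everything is holomorphic because $\beta$ is built by holomorphic integration.

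\emph{Singular case.} Here $Y$ is locally $Y_1\cup Y_2$, two smooth Lagrangian $n$-folds meeting transversally inside a smooth $(n+1)$-dimensional $V$ along the smooth $(n-1)$-fold $S=Y_1\cap Y_2$. First we build coordinates in which $Y=\{x_1y_1=0,\,y_2=\dots=y_n=0\}$ and $\sigma$ equals the standard form at $0$, or better along $S$.

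\begin{enumerate}
\item Since $Y_1,Y_2$ are Lagrangian, $TS\subset TY_1\cap TY_2$ is isotropic and $TS^{\perp_\sigma}=TY_1+TY_2$. Linear symplectic algebra then gives a basis adapted to $TY_1=\langle e_1,\dots,e_n\rangle$ and $TY_2=\langle f_1,e_2,\dots,e_n\rangle$ at $0$.
\item Apply the smooth case to $Y_1$. This gives Darboux coordinates with $Y_1=\{y=0\}$.
\item $Y_2$ is then a Lagrangian transverse to $Y_1$ along $S$. Near $S$ it is the graph over the $(y_1,x_2,\dots,x_n)$-directions of the differential of a generating function. We remove this by a symplectomorphism preserving $Y_1$, namely the time-one flow of a Hamiltonian vanishing to second order on $Y_1$. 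This maps $Y_2$ to $\{x_1=0,\,y_2=\dots=y_n=0\}$.
\end{enumerate}

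Concretely, $Y_2$ is Lagrangian and contains $S=\{x_1=y=0\}$, so it can be written as $\{x_1=\partial_{y_1}G,\ y_j=\partial_{x_j}G\}$ for a function $G(y_1,x_2,\dots,x_n)$ with $G=O(y_1^2)$. The Hamiltonian flow of $H=G$ (suitably extended) straightens it, and since $\partial G$ vanishes on $y_1=0$ it preserves $Y_1$.

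The main obstacle is step 3: we need one symplectomorphism that fixes $Y_1$ and straightens $Y_2$. The key point is that $G$ vanishes to second order along $S$, so the flow leaves $\{y=0\}$ invariant. The alternative is to run Moser directly with $\beta$ vanishing on both branches. There the difficulty is a relative Poincaré lemma on the union $Y_1\cup Y_2$. The radial homotopy $(x,y)\mapsto(x_1,x',ty)$ preserves both branches (it scales $y_1$ and fixes $x_1$), so it yields a primitive $\beta$ with $\beta|_{Y}=0$ whenever $(\sigma-\sigma_0)|_{Y_i}=0$. If the generating-function route fails, I would use this as the fallback, since it makes $v_t$ vanish on $Y$ and the Moser flow then fixes $Y$.
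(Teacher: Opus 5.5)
Your proof is correct. Your main route differs from the paper's, and your fallback is essentially the paper's argument.

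\textbf{Comparison of routes.} The paper proceeds in three steps.
\begin{enumerate}
\item It puts $Y$ into the form $\{x_1y_1=0,\ y_2=\dots=y_n=0\}$ using non-symplectic coordinates (Lemma~\ref{lemma snc sub is sncsub}).
\item It normalises $\sigma$ at the base point by a linear change that preserves both branches.
\item It runs a single Moser deformation.
\end{enumerate}
The real work there is finding a primitive $\alpha$ of $\eta=\sigma-\sigma_{\mathrm{std}}$ whose pullbacks to both branches vanish. The paper takes an arbitrary primitive $\alpha'$, writes $\alpha'|_{Y_i}=dg_i$, matches the constants on $Y_1\cap Y_2$, and subtracts $dg$ for a common extension $g$.

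You instead normalise symplectically in two stages: first a Weinstein chart for $Y_1$, then an explicit symplectomorphism that removes the generating function of $Y_2$. This shows directly why the two-branch configuration has no local symplectic invariants: the only datum is $G(y_1,x')$, and it vanishes to second order along $\{y_1=0\}$. It also avoids any relative Poincar\'e lemma on a singular set. The price is that you need the $Y_1$-chart to straighten $S$ as well, and you need the graph description of $Y_2$.

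Your fallback builds the primitive with the homotopy operator of $(x,y)\mapsto(x,ty)$, which preserves both branches. This is slicker than the paper's gluing of $g_1,g_2$. Moreover, that $\beta$ vanishes along $Y_1\ni 0$, so the Moser field vanishes at the base point and the time-one flow is defined near it with no further argument.

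\textbf{Small points to fill in.}
\begin{itemize}
\item \emph{No triple points.} You use without comment that there are no triple points; the paper cites \cite[Lemma~3.2]{Leh16}. A direct argument: each branch tangent space $T_0Y_i$ is Lagrangian and lies in $T_0V$, so it contains the $(n-1)$-dimensional space $(T_0V)^{\perp_\sigma}$. Three normal crossing hyperplanes of $T_0V$, however, meet in dimension $n-2$, so a third branch is impossible.
\item \emph{Straightening $S$.} In step 3 you tacitly assume $S=\{x_1=0,\ y=0\}$. To arrange this, start your smooth-case Moser argument from coordinates with $Y_1=\{y=0\}$ and $S=\{x_1=0,\ y=0\}$; since the flow fixes $Y_1$ pointwise, $S$ stays straight. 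Then $T_0S\subset T_0Y_2\subset (T_0S)^{\perp}$ together with $T_0Y_2\neq T_0Y_1$ gives the graph property over the $(y_1,x')$-coordinates.
\item \emph{Sign convention.} With $\sigma_0=\sum dx_i\wedge dy_i$ the Lagrangian graph is $Y_2=\{x_1=-\partial_{y_1}G,\ y_j=\partial_{x_j}G\}$; your sign $x_1=+\partial_{y_1}G$ is not Lagrangian in general. The straightening map is $(x_1,y_1,x_j,y_j)\mapsto(x_1+\partial_{y_1}G,\ y_1,\ x_j,\ y_j-\partial_{x_j}G)$. It is symplectic by symmetry of the Hessian of $G$, and it is the identity on $\{y_1=0\}\supset Y_1$.
\item \emph{Smooth case.} You do not need $\sigma=\sigma_0$ along all of $Y$. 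Agreement at the base point suffices, because the radial primitive vanishes along $\{y=0\}$ anyway. This sidesteps your somewhat vague ``linear change along $Y$''.
\end{itemize}
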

\begin{proof}
A Lagrangian subvariety in $X$ with normal crossing singularities has no triple points; see~\cite[Lemma~3.2]{Leh16} or~\cite[Lemma~5.3]{GLR14}. So by Lemma~\ref{lemma snc sub is sncsub}, we can choose holomorphic coordinates $x_1,\ldots,x_n,y_1,\ldots, y_n$ such that the defining equations for $Y$ are as required. It remains to find a coordinate change that preserves $Y$ and turns $\sigma$ into the standard symplectic form $\sigma_\std$, i.e. the right hand side of \eqref{eq pullback is standard symplectic}. We will treat the case where $Y$ has irreducible components 
\[
Y_1=\{y_1=\dots=y_n=0\},
\qquad
Y_2=\{x_1=0,\ y_2=\dots=y_n=0\};
\]
the case of a smooth Lagrangian is similar. By a linear change of coordinates, the form $\sigma$ can be brought to standard form on a given tangent space, that is, we may assume that $\sigma(0)=\sigma_\std(0)$, where $0$ has to be understood in the chosen coordinates.
We set
\[
\eta:=\sigma-\sigma_\std.
\]
Then $d\eta=0$ and $\eta(0)=0$. Moreover, since $Y_1$ and $Y_2$ are Lagrangian for both $\sigma$ and $\sigma_\std$, we have
\begin{equation}\label{eq moser restriction to components}
\eta|_{Y_1}=0,
\qquad
\eta|_{Y_2}=0.
\end{equation}
We will show that, up to shrinking $U$, around any given point $p\in Y_1\cap Y_2$ we can find a holomorphic $1$-form $\alpha$ on $U$ such that $d\alpha=\eta$ and
\[
\alpha|_{Y_1}=0,
\qquad
\alpha|_{Y_2}=0.
\]
First, by the holomorphic Poincaré lemma, we find a holomorphic $1$-form $\alpha'$ on $U$, possibly after shrinking the latter around $p$, such that $d\alpha'=\eta$. Then by \eqref{eq moser restriction to components}, we have $d\alpha'|_{Y_i}=0$ so that after shrinking $U$ further around $p$ there are holomorphic functions $g_i$ on $Y_i$ with $dg_i = \alpha'|_{Y_i}$ for $i=1,2$. In particular, $dg_1|_{Y_1 \cap Y_2} = \alpha'|_{Y_1 \cap Y_2} = dg_2|_{Y_1 \cap Y_2}$ so that $g_1|_{Y_1 \cap Y_2}-g_2|_{Y_1 \cap Y_2}$ is constant. Adding this constant to $g_2$, we may assume that both functions coincide on $Y_1\cap Y_2$. Thus, after shrinking $U$ once more around $p$, we find a holomorphic function $g$ on $U$ such that $dg|_{Y_i} = \alpha'|_{Y_i}$ for $i=1,2$. Then the holomorphic $1$-form $\alpha:=\alpha' - dg$ has all the required properties. 

Then we consider the family of holomorphic $2$-forms
\[
\sigma_t:=\sigma_\std+t\eta
=
(1-t)\sigma_\std+t\sigma
\]
for a complex parameter $t$. Thus, $\sigma_0=\sigma_\std$ and $\sigma_1=\sigma$. After shrinking $U$, we may assume that $\sigma_t$ is nondegenerate for every $t\in [0,1]$. This follows because nondegeneracy is an open condition and $\sigma_t(0)=\sigma_\std(0)$ for all $t\in\C$. Moreover, $Y$ remains Lagrangian for every $\sigma_t$ since both $\sigma_\std$ and $\eta$ vanish on either $Y_i$.

We define a parameter-dependent holomorphic vector field $\mathfrak X_t$ by
\[
\iota_{\mathfrak X_t}\sigma_t=-\alpha.
\]
This equation has a unique solution because $\sigma_t$ is nondegenerate. Since $Y$ is Lagrangian for each $\sigma_t$, the vector field $\mathfrak X_t$ is tangent to $Y$ for all $t$. Consequently, the local flow $\Phi_t$ of $\mathfrak X_t$ preserves $Y$.

We now compute $\Phi_t^*\sigma_t$. From by~\cite[Theorem~6.4]{CdS01}, we get
\[
\frac{d}{dt}\Phi_t^*\sigma_t
= \Phi_t^*\left(\frac{d}{dt}\sigma_t+\scrL_{\mathfrak X_t}\sigma_t\right).
\]
Using Cartan's formula and $d\sigma_t=0$, we have
\[
\scrL_{\mathfrak X_t}\sigma_t
= d(\iota_{\mathfrak X_t}\sigma_t)+\iota_{\mathfrak X_t}d\sigma_t
= d(\iota_{\mathfrak X_t}\sigma_t)
= -d\alpha 
= -\eta
\]
so that
\[
\frac{d}{dt}\sigma_t+\scrL_{\mathfrak X_t}\sigma_t = \eta-\eta = 0.
\]
It follows that
\[
\frac{d}{dt}\Phi_t^*\sigma_t=0.
\]
Hence, $\Phi_t^*\sigma_t$ is independent of $t$. Since $\Phi_0=\operatorname{id}$, we obtain
\[
\Phi_1^*\sigma = \Phi_1^*\sigma_1 = \Phi_0^*\sigma_0 = \sigma_\std.
\]
Finally, because $\phi:=\Phi_1$ preserves $Y$ as a set, the claim follows. 
\end{proof}

\begin{theorem}\label{theorem lagrangian d semistable}
Let $Y \subset X$ be a normal crossing Lagrangian subvariety in a holomorphic symplectic manifold $(X,\sigma)$. Then $Y$ is $d$-semistable.
\end{theorem}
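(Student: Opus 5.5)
The plan is to verify $T^1_Y\isom\sO_{\Ysing}$ (Definition~\ref{definition d-semistable nc}) by producing an explicit nowhere-vanishing global section of $T^1_Y$ from the symplectic form. I will use the exact sequence \eqref{eq omega to normal}, $\Omega_Y\to N_{Y/X}\to T^1_Y\to 0$, together with the identification $N_{Y/X}\isom\sHom(I/I^2,\sO_Y)$. The symplectic form gives an isomorphism $T_X\isom\Omega_X$, and since $Y$ is Lagrangian this isomorphism carries the image of $T_Y$ to the conormal directions. On the smooth locus it induces the classical isomorphism $N_{Y/X}\isom\Omega_Y$. I want to show that globally on $Y$ the contraction with $\sigma$ gives a canonical isomorphism $N_{Y/X}\isom\Omega_Y(\log)$. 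Here $\Omega_Y(\log)$ is the locally free sheaf of log differentials, a subsheaf of $\Omega_Y\tensor\sK_Y$ in the sense of Lemma~\ref{lemma extension meromorphic sections}, and a priori it is defined only locally, where a log chart exists. The quotient $\Omega_Y(\log)/\mathrm{im}(\Omega_Y)$ is then identified with $T^1_Y$, and the global structure should force it to be trivial.

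The key steps are local, and Lemma~\ref{lemma moser snc lagrangian} makes them concrete: near a singular point we may assume $\sigma=\sum dx_i\wedge dy_i$ and $Y=\{x_1y_1=0,\ y_2=\dots=y_n=0\}$. In these coordinates the ideal is $I=(x_1y_1,y_2,\dots,y_n)$, and $N_{Y/X}$ is free on the duals $e_0,e_2,\dots,e_n$ of these generators. Under the map $\Omega_X|_Y\to N_{Y/X}$, where a form $\beta$ goes to its $\sigma$-dual vector field $v$ and then to $f\mapsto v(f)|_Y$, one computes the images. The form $dx_1$ corresponds to $\pm\partial_{y_1}$, which sends $x_1y_1\mapsto \pm x_1$, and $dy_1$ similarly sends $x_1y_1\mapsto \mp y_1$. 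The forms $dx_j$ for $j\ge 2$ give $\pm e_j$. So the image of $\Omega_Y$ is spanned by $x_1e_0,\ y_1e_0,\ e_2,\dots,e_n$, and the cokernel is $\sO_{\Ysing}\cdot e_0$, matching $T^1_Y$. The natural candidate for a global trivializing section is the class of the local generator $e_0$, which is meromorphically $\frac{dx_1}{x_1}=-\frac{dy_1}{y_1}$ on $Y$. This is precisely the local generator $d\log$ of a semistable log structure.

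The next step is to check that the class of $e_0$ in $T^1_Y$ is independent of the chart up to units that restrict to $1$ on $\Ysing$. Equivalently, the section of $N_{Y/X}\tensor\sK_Y$ defined by $\frac{dx_1}{x_1}$ via $\sigma$ should glue. The approach is to compare two Moser charts: a symplectomorphism preserving $Y$ changes $x_1,y_1$ to $u x_1, v y_1$ on the components. Preservation of $\sigma$ restricted to the normal direction forces $uv|_{\Ysing}=1$, which is exactly the Kawamata--Namikawa condition \eqref{eq kawamata namikawa unit condition}. So the local charts form a log atlas (Definition~\ref{definition log atlas kawamata namikawa}), $Y$ carries a log structure of semistable type by Lemma~\ref{lemma kawamata namikawa is semistable type}, and Proposition~\ref{proposition friedman kawamata namikawa} yields $d$-semistability. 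A cleaner alternative is to argue intrinsically. The element of $\mathbb{C}$-bilinear data given by $\sigma$ pairs the two normal directions $N_{\Ysing/Y_1}$ and $N_{\Ysing/Y_2}$, and the Lagrangian condition together with nondegeneracy makes this pairing perfect. That gives $N_{\Ysing/Y_1}\tensor N_{\Ysing/Y_2}\isom\sO_{\Ysing}$, which is the criterion of Remark~\ref{remark d-semistable two components}. The pairing should be well defined on the local branches even in the non-simple case, since it is canonical.

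The main obstacle is this last gluing step: making the pairing of normal bundles genuinely canonical. In particular I need to check that $\sigma$ restricted along $\Ysing$ descends to a nondegenerate pairing $N_{\Ysing/Y_1}\times N_{\Ysing/Y_2}\to\sO_{\Ysing}$, independent of choices and compatible with the branch-swapping monodromy when $Y$ is only normal crossing. I would try first to define it pointwise. At $p\in\Ysing$ one has $T_pY_1=T_p\Ysing\oplus\langle\partial_{x_1}\rangle$ and $T_pY_2=T_p\Ysing\oplus\langle\partial_{y_1}\rangle$. Both are Lagrangian, so $\sigma$ vanishes on $T_p\Ysing\times T_pY_i$. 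Hence $\sigma$ induces a pairing $T_pY_1/T_p\Ysing\times T_pY_2/T_p\Ysing\to\C$, and it is nonzero by nondegeneracy. This pairing is manifestly symmetric under interchanging the branches up to sign. The remaining point is to match this pairing with Friedman's $T^1_Y\isom (N_{\Ysing/Y_1}\tensor N_{\Ysing/Y_2})^{*}$, which is a standard local computation.
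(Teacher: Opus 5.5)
Your main route is the paper's proof. It uses the Moser charts of Lemma~\ref{lemma moser snc lagrangian}, notes that the transitions are symplectomorphisms preserving $Y$, and compares $\sigma$ on the Zariski tangent space at points of $\Ysing$ to get $uv|_{\Ysing}=1$. This yields a log atlas, and $d$-semistability follows via Lemma~\ref{lemma kawamata namikawa is semistable type} and Proposition~\ref{proposition friedman kawamata namikawa}.

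Your alternative is a shorter, chart-free version of the same computation. It uses the pairing $N_{\Ysing/Y_1}\tensor N_{\Ysing/Y_2}\to\sO_{\Ysing}$, $v_1\otimes v_2\mapsto\sigma(v_1,v_2)$, combined with Remark~\ref{remark d-semistable two components}. The pairing is well defined because both branches are Lagrangian. It is perfect because $(T_pY_1+T_pY_2)^{\perp_\sigma}=T_pY_1\cap T_pY_2=T_p\Ysing$. (Incidentally, $T^1_Y\isom N_{\Ysing/Y_1}\tensor N_{\Ysing/Y_2}$, not its dual; this is harmless.) Both versions are complete whenever every connected component of $\Ysing$ has two distinct preimage components in $Y^\nu$, in particular when $Y$ is snc.

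The step you flagged as the main obstacle is a genuine gap, and it cannot be filled. Suppose a transition exchanges the branches, $x_1^{(i)}=u\,y_1^{(j)}$ and $y_1^{(i)}=v\,x_1^{(j)}$. Then the same tangent-space comparison gives $uv|_{\Ysing}=-1$, because $dy_1\wedge dx_1=-dx_1\wedge dy_1$. Intrinsically, your pairing is antisymmetric under exchanging the two branches, while the identification $T^1_Y\isom N_{\Ysing/Y_1}\tensor N_{\Ysing/Y_2}$ (coming from $\sO_V(Y)|_{\Ysing}$, i.e. from $xy=yx$) is symmetric. So $\sigma$ only gives $T^1_Y\isom\epsilon\tensor_{\C}\sO_{\Ysing}$, where $\epsilon$ is the sign local system of the double cover $\cond\to\Ysing$. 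This is non-trivial on every compact component of $\Ysing$ whose preimage in $\cond$ is connected. Your sentence that the pairing is well defined in the non-simple case ``since it is canonical'' is exactly where the argument breaks. The paper's ``up to renaming'' passes over the same point.

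Here is a concrete counterexample. Let $E,E'$ be elliptic curves and $t\in E'$ a point of order $4$. On $E\times E\times E'\times E'$ take linear coordinates $(x,y,z,w)$, the form $\sigma=dx\wedge dy+dz\wedge dw$, and the map $g(x,y,z,w)=(y,-x,z+t,w)$.
\begin{itemize}
\item We have $g^*\sigma=\sigma$, and $\langle g\rangle\isom\Z/4\Z$ acts freely. So the quotient $X$ is a compact holomorphic symplectic fourfold.
\item The snc Lagrangian surface $Y=(E\times\{0\}\cup\{0\}\times E)\times E'\times\{0\}$ is $g$-stable, and $g$ exchanges its two components.
\item Hence $\bar Y:=Y/\langle g\rangle\subset X$ is a compact nc Lagrangian. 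Its double curve is $\bar D\isom E'/\langle t\rangle$, with connected preimage $E'/\langle 2t\rangle$ in $\bar Y^\nu$.
\item Near $D=\{0\}\times\{0\}\times E'\times\{0\}$, $Y$ is cut out in the $g$-stable threefold $\{w=0\}$ by $f=xy$, and $g^*f=-f$.
\item A section of $T^1_{\bar Y}$ is therefore a $g$-invariant section $h\,f^\vee$ of $T^1_Y$. Here $h$ is holomorphic on $E'$ with $h(z+t)=-h(z)$, so $h$ is constant and hence $h=0$.
\end{itemize}
Thus $T^1_{\bar Y}\not\cong\sO_{\bar D}$, and $\bar Y$ is not $d$-semistable. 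The statement needs the extra hypothesis above, or else the weaker conclusion $T^1_Y\isom\epsilon\tensor_{\C}\sO_{\Ysing}$.
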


In the proof, we will also derive the following description of logarithmic differential forms in local coordinates. 

\begin{proposition}\label{proposition moser coordinates for log differentials}
The sheaf $\Omega_Y(\log)$ from Definition~\ref{definition relative omega log} can be described locally in terms of the coordinates given by Lemma~\ref{lemma moser snc lagrangian}. More precisely, in these coordinates on an open set $U\subset X$ around a singular point of $Y$, we have
\[
\Omega_Y(\log)|_{Y\cap U} = \sO_{Y\cap U}\left\langle \frac{dy_1}{y_1}, \frac{dx_1}{x_1}, dx_2, \ldots, dx_n\right\rangle \mathrel{\bigg/\! \left\langle \frac{dy_1}{y_1} + \frac{dx_1}{x_1}\right\rangle}.
\]
\end{proposition}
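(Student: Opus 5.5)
The plan is to prove Theorem~\ref{theorem lagrangian d semistable} and Proposition~\ref{proposition moser coordinates for log differentials} together by exhibiting, from the ambient symplectic structure, a log structure of semistable type on $Y$. By Proposition~\ref{proposition friedman kawamata namikawa} this suffices for $d$-semistability. Concretely, I aim for an equivalence class of Kawamata--Namikawa log atlases (Definition~\ref{definition log atlas kawamata namikawa}) and then invoke Lemma~\ref{lemma kawamata namikawa is semistable type}.

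\emph{Step 1: the atlas.} Cover the singular locus of $Y$ by the Darboux-type charts of Lemma~\ref{lemma moser snc lagrangian}. In such a chart $Y=\{x_1y_1=0,\ y_2=\dots=y_n=0\}$ and $\sigma=\sum dx_i\wedge dy_i$. For each chart take $z_0:=x_1|_Y$, $z_1:=y_1|_Y$ and $z_{\ell}:=x_\ell|_Y$ for $\ell\geq 2$; these are local coordinates of $Y$ as a normal crossing variety with $r=1$. Recall that there are no triple points, by \cite[Lemma~3.2]{Leh16}.

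\emph{Step 2: the unit condition~\eqref{eq kawamata namikawa unit condition}.} This is the main obstacle. On an overlap of two such charts, after possibly swapping branches, we have $x_1'=u x_1$ and $y_1'=v y_1$ on the respective branches, with $u,v$ units on $Y$. We must show that one can arrange $uv=1$ on $Y_1\cap Y_2$, which is all the condition requires. The approach is to extract this from $\sigma$. The product $x_1y_1$ is well defined modulo the ideal of the $(n+1)$-fold $V=\{y_2=\dots=y_n=0\}$. Alternatively, and more intrinsically, consider the pairing along the double locus $S=Y_1\cap Y_2$: the form $\sigma$ induces a nondegenerate pairing between the normal directions $N_{S/Y_1}$ and $N_{S/Y_2}$. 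In Darboux coordinates we have $N_{S/Y_1}=\langle\partial_{y_1}\rangle$ and $N_{S/Y_2}=\langle\partial_{x_1}\rangle$, and $\sigma(\partial_{x_1},\partial_{y_1})=1$. Hence $\sigma|_S$ gives an isomorphism
\[
N_{S/Y_1}\otimes N_{S/Y_2}\isom \sO_S .
\]
This is precisely Friedman's criterion (Remark~\ref{remark d-semistable two components}, in the form $T^1_Y\isom (N_{S/Y_1}\otimes N_{S/Y_2})^{*}$). In fact this alone already proves $d$-semistability. The atlas statement then follows by normalising coordinate changes: since $\sigma(dx_1\text{-dual},dy_1\text{-dual})=1$ in every Darboux chart, the differentials of the units satisfy $u v|_S=1$. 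I would first try to verify this by a direct computation with the transition maps, which are symplectomorphisms preserving $Y$. I expect the only subtlety to be the branch swap $x\leftrightarrow y$, which is harmless because $\sigma$ is antisymmetric up to a sign absorbed into a coordinate.

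\emph{Step 3: log differentials.} With the semistable log structure from the atlas, the local generators of $\Omega_{(Y,M)}$ are $d\log z_0, d\log z_1, dz_2,\dots,dz_n$. Relative to the standard log point, the section $z_0z_1$ maps to the parameter, so we quotient by $d\log z_0+d\log z_1$. By Corollary~\ref{corollary independence of log differentials} the result does not depend on the choice of log structure. This yields exactly the presentation claimed in Proposition~\ref{proposition moser coordinates for log differentials}.
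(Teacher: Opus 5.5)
Your route is the paper's: use the Moser charts as a Kawamata--Namikawa log atlas, check the unit condition by comparing $\sigma$ along $S$, and read off the presentation. Your intrinsic $\sigma$-pairing $N_{S/Y_1}\otimes N_{S/Y_2}\to\sO_S$ is fine. So is your observation that only $uv|_S=1$ is needed, since the units are free on the branch where the coordinate vanishes; this is cleaner than the paper's ``the product is constant'' step. (Minor: $T^1_Y\isom N_{S/Y_1}\otimes N_{S/Y_2}$, without the dual.)

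However, the step you call harmless is a genuine gap, and the paper's ``up to renaming'' makes exactly the same move. The isomorphism $T^1_Y\isom N_{S/Y_1}\otimes N_{S/Y_2}$ is symmetric in the two branches. It comes from the Hessian of the local equation $x_1y_1$, which is unchanged when $x_1$ and $y_1$ are exchanged. Your pairing $v\otimes w\mapsto\sigma(v,w)$, by contrast, changes sign when the branches are exchanged. A single swap can be absorbed by $(x_1,y_1)\mapsto(y_1,-x_1)$, but that alters the chart on all its other overlaps. Globally the signs form the $\{\pm1\}$-cocycle of the double cover $\cond\to S$. So what $\sigma$ actually trivialises is $T^1_Y\otimes\epsilon$, where $\epsilon$ is the sign line bundle of $\cond\to S$. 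A log atlas therefore exists only if $\epsilon\isom\sO_S$. Correspondingly, Friedman's criterion in Remark~\ref{remark d-semistable two components} only applies when every component of $S$ has two distinct preimages in $Y^\nu$.

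This is not a technicality. Over a compact component $S_0$ above which $\cond$ is connected, $\epsilon$ is a nontrivial $2$-torsion bundle (argue as in the proof of Proposition~\ref{proposition no etale bisection}), so $Y$ is not $d$-semistable there. For example, let $E_0,E,E'$ be elliptic curves, let $p\in E$ have order $4$, and let $g(x_1,y_1,x_2,y_2)=(y_1,-x_1,x_2+p,y_2)$ act on $E_0\times E_0\times E\times E'$. This action is free. It preserves $\sigma=dx_1\wedge dy_1+dx_2\wedge dy_2$ and the Lagrangian surface $\bigl((E_0\times 0)\cup(0\times E_0)\bigr)\times E\times 0$. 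Since $g^*(x_1y_1)=-x_1y_1$, the local generator of $T^1$ is anti-invariant. Hence the image $Y$ in the quotient satisfies $H^0(S,T^1_Y)=0$, where $S\isom E/\langle p\rangle$. So ``every Lagrangian normal crossing subvariety is $d$-semistable'' cannot come out of this argument; in that generality it is false.

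For the proposition itself, which presupposes that $\Omega_Y(\log)$ exists, you need the following repair.
\begin{enumerate}
\item For compact $d$-semistable $Y$, the sign computation forces $\cond$ to split over every component of $S$. This also holds for the cycles $Y_m(A,s,L)$ used later.
\item Label the two branches globally along each component of $S$.
\item Choose every Moser chart so that $\{x_1=0\}$ is the same labelled branch, using the $\sigma$-preserving swap above.
\end{enumerate}
Then no transition map exchanges branches, and your pairing argument gives $uv|_S=1$. Your Step~3, together with Corollary~\ref{corollary independence of log differentials}, then yields the stated presentation.
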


\begin{proof}[Proof of Theorem~\ref{theorem lagrangian d semistable} and Proposition~\ref{proposition moser coordinates for log differentials}]
\leavevmode

By Proposition~\ref{proposition friedman kawamata namikawa}, the claim of the theorem is equivalent to $Y$ admitting a log structure of semistable type. We will construct a log atlas for~$Y$; see Definition~\ref{definition log atlas kawamata namikawa} and Lemma~\ref{lemma kawamata namikawa is semistable type}. For this, we use Lemma~\ref{lemma moser snc lagrangian} to obtain a collection $\{U_i\}_i$ of open subsets of $Y$ covering $Y^\sing$ and, for each $i$, coordinates $$x_1^{(i)}, \ldots, x_n^{(i)}, y_1^{(i)}, \ldots, y_n^{(i)}$$ on $U_i$ such that $Y \cap U_i$ is cut out in $U_i$ by the equations
\[
x^{(i)}_1\cdot y^{(i)}_1=0,\; y^{(i)}_2=\ldots=y^{(i)}_n=0
\]
and $\sigma$ takes the standard form $\sum_i dx^{(i)}_i \wedge dy^{(i)}_i$ in these coordinates. The last $n-1$ equations define a smooth variety of dimension $n+1$ where $Y \cap U_i$ is embedded as an snc divisor, as required for a log atlas.

We will show next that coordinate changes satisfy the unit condition~\eqref{eq kawamata namikawa unit condition} on $Y^\sing$. On an overlap $U_{ij}:=U_i \cap U_j$, the defining equations of the two branches are related by units. Up to renaming, we may thus assume that
\[
x_1^{(i)} = u_x^{(ij)} x_1^{(j)} \quad \textrm{ and } \quad y_1^{(i)} = u_y^{(ij)} y_1^{(j)} 
\]
for some invertible holomorphic functions $u_x^{(ij)}, u_y^{(ij)}$ on $U_{ij}$. From the conditions
 \[
\begin{aligned}
0&=d\log x_1^{(i)}+ d\log y_1^{(i)}\\
0&=d\log x_1^{(j)}+ d\log y_1^{(j)} = d\log x_1^{(i)}+ d\log y_1^{(i)} + d\log \bigl(u_x^{(ij)}\cdot u_y^{(ij)}\bigr)
\end{aligned}
\]
on $Y \cap U_{ij}$, we infer that the product $u_x^{(ij)}\cdot u_y^{(ij)}$ is constant there. On the Zariski tangent space of a point in $Y^\sing \cap U_{ij}$, we have $dx_1^{(i)}\wedge dy_1^{(i)}=dx_1^{(j)}\wedge dy_1^{(j)}$ since both represent the restriction of the symplectic form from $X$ and at the same time $dx_1^{(i)}\wedge dy_1^{(i)}=u_x^{(ij)}\cdot u_y^{(ij)} dx_1^{(j)}\wedge dy_1^{(j)}$. Clearly, $dx_1^{(i)}\wedge dy_1^{(i)} \neq 0$ at such a point so that we must have $u_x^{(ij)}\cdot u_y^{(ij)} = 1$ on~$Y^\sing$. Since we have already shown that the product is constant, it has to be equal to one everywhere on $Y\cap U_{ij}$, possibly after shrinking the $U_i$. Thus, we have proven the theorem. 

The proposition follows, since we have just proven that the coordinates from Lemma~\ref{lemma moser snc lagrangian} form a log atlas for $Y$ and hence a log structure of semistable type.
\end{proof}

\begin{proposition}\label{proposition normal sheaf}
Let $X$ be a holomorphic symplectic manifold and let $Y\subset X$ be a Lagrangian normal crossing subvariety. Then the morphism $\Omega_Y \to N_{Y/X}$ extends to an isomorphism $\Omega_Y(\log) \to N_{Y/X}$. 
\end{proposition}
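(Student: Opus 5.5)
The plan is to construct the isomorphism locally in the Moser coordinates of Lemma~\ref{lemma moser snc lagrangian} and then check that the local isomorphisms glue. The key structural input is Lemma~\ref{lemma extension meromorphic sections}. Away from $Y^\sing$, $Y$ is a smooth Lagrangian, so the map $\Omega_Y\to N_{Y/X}$ in \eqref{eq omega to normal} is the classical isomorphism induced by $\sigma$ (the $\sigma$-contraction $T_X|_Y\to\Omega_X|_Y$ followed by restriction). Since $Y$ is normal crossing, hence lci, $N_{Y/X}$ is locally free of rank $n$. The map therefore becomes an isomorphism after tensoring with $\sK_Y$, and Lemma~\ref{lemma extension meromorphic sections} yields a unique subsheaf $(\Omega_Y)_\mer\subset\Omega_Y\otimes\sK_Y$ mapping isomorphically onto $N_{Y/X}$. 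By Proposition~\ref{proposition relative log differentials into meromorphic differentials} and Corollary~\ref{corollary independence of log differentials} (applied locally, where $Y$ is snc), $\Omega_Y(\log)$ is canonically a subsheaf of the same sheaf $\Omega_Y\otimes\sK_Y$. Both are intrinsic subsheaves, so it suffices to prove the local equality $(\Omega_Y)_\mer=\Omega_Y(\log)$ near singular points. The resulting map $\Omega_Y(\log)\to N_{Y/X}$ is then a global isomorphism extending $\Omega_Y\to N_{Y/X}$, since it is unique wherever defined.

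For the local computation I would use the coordinates of Lemma~\ref{lemma moser snc lagrangian}, in which $\sigma=\sum dx_i\wedge dy_i$ and $Y=\{x_1y_1=0,\ y_2=\dots=y_n=0\}$. The conormal sheaf $I/I^2$ is free on $x_1y_1, y_2,\dots,y_n$, so $N_{Y/X}$ is free on the dual basis $\partial_{(x_1y_1)}^*,\partial_{y_2}^*,\dots,\partial_{y_n}^*$. I then trace $\Omega_Y\to N_{Y/X}$ through diagram \eqref{eq omega to tangent}. A form $dz$ is sent to the Hamiltonian vector field $\xi$ of $z$ with respect to $\sigma$, and $\xi$ is then evaluated on the generators of $I/I^2$, i.e. $dx_1\mapsto$ the functional $f\mapsto\{x_1,f\}$ restricted to $Y$, up to sign. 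Concretely, $dx_i\mapsto \pm\partial_{y_i}^*$ for $i\ge 2$. For the first pair, $\{x_1,x_1y_1\}=x_1$ and $\{y_1,x_1y_1\}=-y_1$ (signs up to convention). Hence $dx_1\mapsto x_1 e$ and $dy_1\mapsto -y_1 e$, where $e$ is the generator dual to $x_1y_1$.

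Consequently $dx_1/x_1\mapsto e$ and $dy_1/y_1\mapsto -e$ as meromorphic sections, and these are compatible with the relation $dx_1/x_1+dy_1/y_1=0$ of Proposition~\ref{proposition moser coordinates for log differentials}. The images of $dx_1/x_1, dx_2,\dots,dx_n$ are therefore a basis of $N_{Y/X}$. Comparing with the explicit description in Proposition~\ref{proposition moser coordinates for log differentials}, $\Omega_Y(\log)$ maps isomorphically onto $N_{Y/X}$ inside the meromorphic forms, and uniqueness in Lemma~\ref{lemma extension meromorphic sections} gives $(\Omega_Y)_\mer=\Omega_Y(\log)$.

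The main obstacle will be the careful identification of the map $\Omega_Y\to N_{Y/X}$ from \cite[Lemma~3.5]{Leh16} with the Poisson-bracket description, including signs. The care is needed at singular points, where $T_Y\to T_X|_Y$ is not a subbundle and one must work with the dual of $I/I^2$ rather than a quotient of $T_X|_Y$. I also need to verify that $\Omega_Y\otimes\sK_Y\to N_{Y/X}\otimes\sK_Y$ is really an isomorphism, i.e. that $\sK_Y$ only sees the generic points of the components. Once the local formula is established, gluing is automatic by the uniqueness argument.
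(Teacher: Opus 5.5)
Your proposal is correct and follows the paper's proof essentially step for step. Lemma~\ref{lemma extension meromorphic sections}, combined with the embedding $\Omega_Y(\log)\subset\Omega_Y\otimes\sK_Y$ from Proposition~\ref{proposition relative log differentials into meromorphic differentials}, reduces everything to a local check in the coordinates of Lemma~\ref{lemma moser snc lagrangian}, using the frame from Proposition~\ref{proposition moser coordinates for log differentials}. Your pairing of Hamiltonian vector fields against the basis $x_1y_1, y_2,\dots,y_n$ of $I/I^2$ is a more explicit version of the paper's remark that $\sigma$ sends $\frac{dy_1}{y_1}, dx_2,\dots,dx_n$ to $\frac{\partial_{x_1}}{y_1}, \partial_{y_2},\dots,\partial_{y_n}$, and that these generate $N_{Y/X}$.
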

\begin{proof}
By Lemma~\ref{lemma extension meromorphic sections}, there is a unique subsheaf $\Omega_{Y,\mer} \subset \Omega_Y \tensor \sK_Y$ such that $\phi\colon\Omega_Y \to N_{Y/X}$ induces an isomorphism $\phi_\mer\colon\Omega_{Y,\mer} \to N_{Y/X}$. By Proposition~\ref{proposition relative log differentials into meromorphic differentials}, we have $\Omega_Y(\log) \subset \Omega_Y \tensor \sK_Y$, so it suffices to prove that the image of $\Omega_Y(\log)$ under $\phi_\sK = \phi \tensor\sK_Y$ is exactly $N_{Y/X}$.

This can be verified locally. By Proposition~\ref{proposition moser coordinates for log differentials}, a local frame for $\Omega_{Y}(\log)$ is given by 
\[
\frac{dy_1}{y_1}=-\frac{dx_1}{x_1}, dx_2, \ldots, dx_n.
\]
The symplectic form \eqref{eq pullback is standard symplectic} maps these forms to the vector fields
\[
\frac{\partial_{x_1}}{y_1}= -\frac{\partial_{y_1}}{x_1}, \partial_{y_2}, \ldots, \partial_{y_n}.
\]
From the defining equations of $Y$ in these coordinates, one deduces that the $\sO_Y$-module generated by those vector fields surjects onto $N_{Y/X}$ on the given coordinate chart, as claimed.
\end{proof}

\begin{corollary}\label{corollary normal sheaf}
Let $X$ be a holomorphic symplectic manifold and let $Y\subset X$ be a Lagrangian normal crossing subvariety. Let $S_n:=\Spec(\C[t]/(t^{n+1})$ and let $\sY_n \subset  X \times S_n$ be a deformation of $Y$ inside $X$ over $S_n$. Then the morphism $\Omega_{\sY_n/S_n} \to N_{\sY/X\times S_n}$ extends to an isomorphism $\Omega_{\sY_n/S_n}(\log) \to N_{\sY_n/X\times S_n}$. 
\end{corollary}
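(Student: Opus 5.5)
The approach is to repeat the proof of Proposition~\ref{proposition normal sheaf} in the relative setting over $S_n$, using Lemma~\ref{lemma extension meromorphic sections}, but with the relative normal sheaf and relative differentials. The first step is to set up the relative analogue of the diagram~\eqref{eq omega to tangent}. Let $I_n\subset\sO_{X\times S_n}$ be the ideal of $\sY_n$. The conormal sequence gives $I_n/I_n^2\to\Omega_{X\times S_n/S_n}\otimes\sO_{\sY_n}\to\Omega_{\sY_n/S_n}\to 0$. The relative symplectic form $\sigma$, pulled back from $X$, identifies $\Omega_{X\times S_n/S_n}$ with $T_{X\times S_n/S_n}$. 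Composing with the restriction $T_{X\times S_n/S_n}\otimes\sO_{\sY_n}\to N_{\sY_n/X\times S_n}$ yields a map $\Omega_{\sY_n/S_n}\to N_{\sY_n/X\times S_n}$. This map kills the image of $I_n/I_n^2$ because $\sY_n$ is Lagrangian: Lagrangianity is a closed condition, and the deformed subspace stays Lagrangian relative to $S_n$, which I would check by noting that $\sigma|_{\sY_n}$ is a flat deformation of a zero form on a reduced space. I would try to justify this via the flatness of the local coordinate description in the next step.

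The second step, which I expect to be the main obstacle, is a relative Moser lemma. The aim is to show that, locally on $X\times S_n$ near a singular point of $Y$, there are coordinates $x_i,y_i$ with coefficients in $\C[t]/(t^{n+1})$ such that $\sigma=\sum dx_i\wedge dy_i$ and $\sY_n$ is cut out by $x_1y_1=f$ and $y_2=\dots=y_n=0$, with $f\in(t)$. There are two cases. If the singular locus is not smoothed to this order, then $f=0$ and we recover the $d$-semistable log structure of Example~\ref{example divisorial logarithmic structure infinitesimal}. Otherwise $f$ is a unit multiple of $t^k$ after coordinate change, matching the infinitesimal divisorial log structure on $\sY_n$. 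I would prove this by induction on $n$. First lift the normal crossing embedding coordinates, using that flat deformations of $xy=0$ are locally $xy=g(t,z)$ (versality of the $A_1$-type node in the transversal direction). Then run the Moser argument of Lemma~\ref{lemma moser snc lagrangian} verbatim over the Artinian base, solving $d\alpha=\eta$ relatively and correcting $\alpha$ by $dg$ so that it vanishes on $\sY_n$. The Lagrangian condition on each branch, or on the smoothed piece, gives the required exactness. The relative log differentials $\Omega_{\sY_n/S_n}(\log)$ are then locally generated by $dy_1/y_1=-dx_1/x_1$ and $dx_2,\dots,dx_n$, interpreted via the log atlas as in Proposition~\ref{proposition moser coordinates for log differentials}, and they embed into $\Omega_{\sY_n/S_n}\otimes\sK_{\sY_n}$ as in Proposition~\ref{proposition relative log differentials into meromorphic differentials}.

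The third step concludes using Lemma~\ref{lemma extension meromorphic sections}. The normal sheaf $N_{\sY_n/X\times S_n}$ is locally free, since $\sY_n$ is lci in $X\times S_n$. The map $\Omega_{\sY_n/S_n}\to N$ becomes an isomorphism after tensoring with $\sK_{\sY_n}$, because it is an isomorphism on the smooth locus of $\sY_n\to S_n$, which is schematically dense by flatness. So there is a unique meromorphic extension $\Omega_{\sY_n/S_n,\mer}\cong N$, and it remains to show that this extension equals $\Omega_{\sY_n/S_n}(\log)$. That is a local computation in the coordinates of the second step. Under $\sigma$, the generators map to $\partial_{x_1}/y_1=-\partial_{y_1}/x_1$ and $\partial_{y_2},\dots,\partial_{y_n}$, which form a local frame of $N$: the first is dual to the equation $x_1y_1-f$, and the others are dual to the $y_j$. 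Finally, since both sheaves are flat over $S_n$ and restrict to Proposition~\ref{proposition normal sheaf} on the central fibre, the isomorphism can alternatively be checked by Nakayama once a morphism is established.
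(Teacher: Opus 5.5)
Your architecture is the paper's: argue locally near components of $Y^\sing$, separate smoothed from locally trivial components, and identify the meromorphic extension from Lemma~\ref{lemma extension meromorphic sections} with $\Omega_{\sY_n/S_n}(\log)$ by the frame computation of Proposition~\ref{proposition normal sheaf}. However, two claims you rely on are false.

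First, Lagrangianity of $\sY_n$ does not follow from flatness. Take $X=\C^4$ with $\sigma=dx_1\wedge dy_1+dx_2\wedge dy_2$, $Y=\{y_1=y_2=0\}$, and $\sY_1=\{y_1=tx_2,\ y_2=0\}$ over $S_1$. Then $\sigma|_{\sY_1}=t\,dx_1\wedge dx_2\neq 0$ in the free module $\Omega^2_{\sY_1/S_1}$, so a flat deformation of the zero form need not vanish. Equivalently, the Poisson bracket of the two equations is $\pm t\notin I_1$. Hence $\Omega_{X\times S_1/S_1}\otimes\sO_{\sY_1}\to N_{\sY_1/X\times S_1}$ does not factor through $\Omega_{\sY_1/S_1}$ at all. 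The morphism in the statement exists precisely when $I_n$ is closed under the relative Poisson bracket. You must take this as the hypothesis implicit in ``the morphism''; it is not a local consequence of anything. For compact $Y$, as in Theorem~\ref{theorem normal crossing cover unobstructedness}, it would require a global Hodge-theoretic argument in the spirit of Voisin. This is load-bearing, since your Moser step feeds on $\eta|_{\sY_n}=0$.

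Second, on smoothed components the Moser argument does not run verbatim. Correcting $\alpha'$ by $dg$ so that it vanishes on $\sY_n$ needs $\alpha'|_{\sY_n}$ to be exact, and relatively closed $1$-forms on a smoothing need not be locally exact because of the vanishing cycle. Take $\C^2$ with $\sigma=(1+c\,xy)\,dx\wedge dy$ (so $\sigma(0)=\sigma_\std(0)$) and $\sY_1=\{xy=t\}$. Every primitive of $\eta$ restricts to $\tfrac c2\,x^2y\,dy$ modulo exact forms. This form is not exact on $\sY_1$: writing $dg=x^2y\,dy+\lambda(y\,dx+x\,dy)$ with coefficients modulo $(x^2y^2)$ and comparing mixed partials gives $y\partial_y\lambda-x\partial_x\lambda=2xy+(\text{terms without }xy)$. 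That is impossible, since the left side contains no monomials $x^iy^i$.

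More fundamentally, the smoothing parameter is a symplectic invariant, not a coordinate artefact. Over a disc it is the period of a primitive over the vanishing cycle of $\{xy=s\}$, which here equals $2\pi i(s+\tfrac c2 s^2)$. So $f$ cannot be normalised to a unit times $t^k$ independently of $\sigma$. The normal form must be $x_1y_1=f(t)$, $y_{\ge 2}=0$, with $f$ dictated by $\sigma$, and the Moser path has to move $\sY_n$ within that family. Alternatively, skip the symplectic normal form and check directly that $\sigma^{-1}(d\log x_1)$ is a regular section of $N_{\sY_n/X\times S_n}$; your Nakayama remark then finishes the proof.

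On locally trivial components your argument does go through as in Lemma~\ref{lemma moser snc lagrangian}. The paper's own two-line proof does not address either point; for smoothed components it only invokes the divisorial log structure of Example~\ref{example divisorial logarithmic structure infinitesimal}.
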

\begin{proof}
The argument is local around connected components of the double locus $Y^\sing$. If such a component is smoothed in the deformation $\sY_n\to S_n$, the log structure is just the divisorial log strucutre from Example~\ref{example divisorial logarithmic structure infinitesimal}. 
If the deformation is locally trivial around such a component, a computation as in Proposition~\ref{proposition normal sheaf} yields the result.
\end{proof}

It remains to prove the triviality of the sheaf of semistable relative log differentials of $Y$. We start with a weaker statement, namely the triviality of the sheaf of log differentials of the irreducible components $Y_i$ together with the canonical boundary divisor coming from the remaining components.

\begin{proposition}\label{proposition omega log trivial for snc}
Let $A$ be an abelian variety and $P:=\PP(\sO_A \oplus L)$. Let $\Ysing_0$ and $\Ysing_\infty$ be the sections corresponding to the two summands. Then the sheaf 
\[
\Omega_P(\log (\Ysing_0+\Ysing_\infty))
\]
of log $1$-forms is trivial if and only if $L\in {\pic}^0(A)$. 
\end{proposition}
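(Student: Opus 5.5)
We need to show that $\Omega_P(\log(\Ysing_0+\Ysing_\infty))$ is trivial if and only if $L\in\pic^0(A)$. The natural tool is the relative residue sequence for the $\PP^1$-bundle $\pi\colon P\to A$. Since $\Ysing_0+\Ysing_\infty$ is a disjoint union of two sections, hence étale over $A$, there is an exact sequence
\[
0 \to \pi^*\Omega_A \to \Omega_P(\log(\Ysing_0+\Ysing_\infty)) \to \Omega_{P/A}(\log(\Ysing_0+\Ysing_\infty)) \to 0.
\]
The relative log cotangent bundle is $\omega_{P/A}(\Ysing_0+\Ysing_\infty)$. By Corollary~\ref{corollary log canonical bundle} and $\omega_A\isom\sO_A$, it is trivial for every $L$. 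Since $\pi^*\Omega_A\isom\sO_P^{\oplus(n-1)}$, the log cotangent sheaf $\Omega_P(\log)$ is always an extension of $\sO_P$ by $\sO_P^{\oplus (n-1)}$. The question is therefore when this extension splits.

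For the direction "$L\in\pic^0(A)$ implies trivial", the plan is to write down a global logarithmic form lifting the generator of $\omega_{P/A}(\Ysing_0+\Ysing_\infty)\isom\sO_P$.

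Locally over a trivialising open set $U\subset A$ for $L$, with fibre coordinate $z$ so that $\Ysing_0=\{z=0\}$ and $\Ysing_\infty=\{z=\infty\}$, the form $dz/z$ is such a lift. Under a change of trivialisation $z\mapsto g z$ with $g\in\sO^\times(U\cap V)$, it changes by $d\log g$. The obstruction to gluing is therefore the class of $d\log$ of the transition cocycle of $L$ in $H^1(A,\Omega_A)$. This class is the Atiyah class, equivalently $c_1(L)$ in $H^{1,1}$.

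If $L\in\pic^0(A)$, this class vanishes: on an abelian variety, topologically trivial line bundles admit flat unitary transition functions, which can be chosen constant. With constant transition functions $d\log g=0$, so $dz/z$ glues to a global section $\theta$ of $\Omega_P(\log)$. Together with $\pi^*$ of a basis of $H^0(A,\Omega_A)$, the form $\theta$ gives $n$ global sections. These are pointwise independent because $\theta$ maps to a generator of the quotient. Hence they trivialise the sheaf.

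For the converse, assume $\Omega_P(\log(\Ysing_0+\Ysing_\infty))\isom\sO_P^{\oplus n}$. Then $h^0$ equals $n$, while $h^0(\pi^*\Omega_A)=n-1$. So the connecting map $H^0(P,\sO_P)\to H^1(P,\pi^*\Omega_A)$ vanishes. Its image is the extension class, which by the local computation above is $\pi^*$ of the Atiyah class of $L$. Since $\pi^*\colon H^1(A,\Omega_A)\to H^1(P,\pi^*\Omega_A)$ is injective (because $\pi_*\sO_P=\sO_A$, via the Leray spectral sequence), we get $c_1(L)=0$ in $H^{1,1}(A)$. On an abelian variety this means $L\in\pic^0(A)$.

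Alternatively, one can argue via restriction to a section. By residue, $\Omega_P(\log)|_{\Ysing_0}$ surjects onto $\sO_{\Ysing_0}$ with kernel $\Omega_{\Ysing_0}$, and the conormal data gives $L^{\pm1}$. If the sheaf is trivial, comparing with Lemma~\ref{lemma nef anticanonical} forces both $L$ and $L^*$ to be nef.

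The main point requiring care is identifying the extension class precisely with the Atiyah class of $L$, including signs and the identification with $c_1(L)$. The rest is routine.
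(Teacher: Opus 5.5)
Your main argument is correct, but it goes through a different exact sequence than the paper's.

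The paper uses the absolute residue sequence $0\to\Omega_P\to\Omega_P(\log(\Ysing_0+\Ysing_\infty))\to\sO_{\Ysing_0}\oplus\sO_{\Ysing_\infty}\to 0$. Its connecting map sends $1_{\Ysing_0}$ and $1_{\Ysing_\infty}$ to the divisor classes $[\Ysing_0],[\Ysing_\infty]\in H^{1,1}(P)$. By Lemma~\ref{lemma class of section of p1 bundle in pic} these classes differ by $\pi^*c_1(L)$. So the connecting map has rank one exactly when $L\in\pic^0(A)$, which is exactly when the log sheaf has a section not coming from $\Omega_P$. A separate fibre-by-fibre residue argument then shows that such a section is nowhere vanishing modulo $\pi^*\Omega_A$, and hence trivialises the sheaf.

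You instead use the relative sequence $0\to\pi^*\Omega_A\to\Omega_P(\log(\Ysing_0+\Ysing_\infty))\to\Omega_{P/A}(\log)\to 0$. You note that the quotient is $\sO_P$ for every $L$, and you identify the extension class in $H^1(P,\pi^*\Omega_A)\isom H^1(A,\Omega_A)$ with the Atiyah class $\{d\log g_{ij}\}$ of $L$.

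What your route buys:
\begin{itemize}
\item The frame statement becomes automatic: a lift of the generator of a trivial quotient, together with a frame of a trivial subbundle, is a frame.
\item Both directions reduce to $\mathrm{at}(L)=0\iff c_1(L)=0\iff L\in\pic^0(A)$, where the last step uses that $H^2(A,\Z)$ is torsion free.
\end{itemize}

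What the paper's route buys: the connecting map is expressed through Chern classes of boundary divisors, not through a projection to $A$. This is exactly the form reused in the proof of Theorem~\ref{theorem trivial log differentials}. There the normal crossing variety $Y$ has no morphism to $A$ in general, so your relative sequence is not available.

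Your ``alternative'' paragraph (restriction to a section) does not work as stated and should be dropped, for two reasons.
\begin{itemize}
\item The determinant $\det\Omega_P(\log(\Ysing_0+\Ysing_\infty))=\omega_P(\Ysing_0+\Ysing_\infty)$ is trivial for every $L$. So triviality of the log sheaf gives no direct handle on nefness of $\omega_P^*$, and the comparison with Lemma~\ref{lemma nef anticanonical} is not an argument.
\item The conormal bundle of $\Ysing_0$ is killed by $\Omega_P|_{\Ysing_0}\to\Omega_P(\log(\Ysing_0+\Ysing_\infty))|_{\Ysing_0}$; locally $dz\mapsto z\cdot\frac{dz}{z}$, which vanishes on $z=0$. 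Hence $L^{\pm1}$ does not appear in the restricted log sheaf at all.
\end{itemize}
Restricting your relative extension to $\Ysing_0\isom A$ would work, but it just reproduces your main argument.
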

\begin{proof}
Let us consider the 
residue sequence
\begin{equation}\label{eq residue sequence}
0 \to \Omega_P \to \Omega_P(\log(\Ysing_0 + \Ysing_\infty)) \stackrel{\mathrm{res}}{\longrightarrow} \sO_{\Ysing_0}\oplus \sO_{\Ysing_\infty} \to 0.
\end{equation}
Then the connecting homomorphism 
\begin{equation}\label{eq connecting homomorphism}
H^0(\sO_{\Ysing_0}\oplus \sO_{\Ysing_\infty}) \to H^1(P, \Omega_P)\isom H^{1,1}(P)
\end{equation}
is the morphism that to $1 \in H^0(\sO_{\Ysing_0})$ associates the cohomology class $[\Ysing_0]$ and analogously for $\Ysing_\infty$,
cp.~\cite[\S 2]{EV92}. 
It follows from Lemma~\ref{lemma class of section of p1 bundle in pic} that $[\Ysing_0]$ and $[\Ysing_\infty]$ are proportional (equivalently, the connecting homomorphism \eqref{eq connecting homomorphism} has rank one)  if and only if $L\in {\pic}^0(A)$. We are thus left to show that triviality of $\Omega_P(\log(\Ysing_0 + \Ysing_\infty))$ is equivalent to this sheaf having sections that do not come from $\Omega_P$.

All global sections of $\Omega_P$ come from $A$ via pullback. Assume there is a global section $s$ of $\Omega_P(\log(\Ysing_0 + \Ysing_\infty))$ that is not trivial along a fibre of $\pi:P \to A$. Then its image in $$H^0(P,\Omega_P(\log(\Ysing_0 + \Ysing_\infty))/\pi^*\Omega_A) = H^0(P,\Omega_{P/A}(\Ysing_0 + \Ysing_\infty))$$
is not identically zero. The sheaf $\Omega_{P/A}(\Ysing_0 + \Ysing_\infty)$ restricts to the trivial line bundle on any fibre of $\pi$, so if $s$ does not vanish along a fibre of $\pi$, it is already nowhere vanishing along that fibre. In particular, it maps to a nonvanishing section of $\Omega_{\Ysing_0}$ under the residue map
\[
\mathrm{res_0}: \Omega_P(\log(\Ysing_0 + \Ysing_\infty)) \to \Omega_{\Ysing_0}.
\]
But every such section is already nowhere vanishing along $\Ysing_0 \isom A$ and thus nowhere vanishing along every fibre by the previous argument. This shows that the composition $\sO_P \xrightarrow{s} \Omega_P(\log(\Ysing_0 + \Ysing_\infty))\to \Omega_{P/A}(\Ysing_0 + \Ysing_\infty)$ is an isomorphism and the claim follows.
\end{proof}

Recall the sheaf $\tOm_Y$ of torsion free differentials from Definition~\ref{definition torsion free differentials}. By~\cite[Proposition~3.6]{Leh16}, taking the image of the canonical morphism $\Omega_Y \to N_{Y/X}$ from \eqref{eq omega to tangent} we obtain a short exact sequence
\begin{equation}\label{eq omega mod torsion sequence}
0 \to \tOm_Y \to N_{Y/X} \to T^1_Y \to 0.
\end{equation}

Recall from Proposition~\ref{proposition friedman kawamata namikawa} that a $d$-semistable normal crossing variety admits a log structure of semistable type. This is not necessarily unique, but by Corollary~\ref{corollary independence of log differentials} its associated sheaf $\Omega_{Y}(\log)$ of semistable relative log differentials are independent of the choice.

\begin{theorem}\label{theorem trivial log differentials}
Let $Y=Y(A,s,L)$ be as in Construction~\ref{construction type i fiber}. Then the sheaf $\Omega_{Y}(\log)$ is trivial.
\end{theorem}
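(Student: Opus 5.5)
We construct $n=\dim Y$ global sections of $\Omega_Y(\log)$ that generate it everywhere. Since $\Omega_Y(\log)$ is locally free of rank $n$ on the normal crossing variety $Y$, this gives an isomorphism $\sO_Y^{\oplus n}\to\Omega_Y(\log)$. The statement implicitly uses $L\in\pic^0(A)$, as in Theorem~\ref{theorem trivial normal bundle}. This makes $Y$ $d$-semistable by Proposition~\ref{proposition construction is d semistable}, so $\Omega_Y(\log)$ is defined.

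\emph{Step 1: reduction to the components.} Let $\nu\colon Y^\nu=\coprod_i Y_i\to Y$ be the normalisation. By Corollary~\ref{corollary relative log differentials on normalization component}, $\nu_i^*\Omega_Y(\log)\isom\Omega_{Y_i}(\log(\Sigma_i^{(0)}+\Sigma_i^{(\infty)}))$. Since $L\in\pic^0(A)$, Proposition~\ref{proposition omega log trivial for snc} shows that each of these sheaves is trivial. Its proof gives an explicit frame: the pullbacks $\pi_i^*\eta_1,\dots,\pi_i^*\eta_{n-1}$ of a basis of $H^0(A,\Omega_A)$, together with one section $\theta_i$ whose residues along $\Sigma_i^{(0)}$ and $\Sigma_i^{(\infty)}$ are nonzero constants. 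Concretely, $\theta_i$ is the logarithmic derivative $d\log$ of the fibre coordinate $z$ of $\PP(\sO_A\oplus L)$, corrected by the flat connection on $L$. Its residues are $+1$ at $\Sigma_i^{(0)}$ and $-1$ at $\Sigma_i^{(\infty)}$.

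\emph{Step 2: gluing.} A collection of sections $\omega_i$ of $\nu_i^*\Omega_Y(\log)$ descends to a section of $\Omega_Y(\log)$ exactly when it satisfies a local compatibility along each double locus. By Proposition~\ref{proposition moser coordinates for log differentials}, which really only uses a log atlas, this compatibility says two things. First, the restrictions of the non-residual parts to the double locus must agree under the gluing map. Second, the residues must satisfy $\res_{\Sigma_i^{(\infty)}}\omega_i+\res_{\Sigma_{i+1}^{(0)}}\omega_{i+1}=0$, reflecting the relation $\frac{dx}{x}+\frac{dy}{y}=0$. We check both conditions for our candidate sections.

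The forms $\pi_i^*\eta_k$ restrict to $\eta_k$ on the sections. For $i<m$ the gluings are induced by the bundle projections. The last gluing is the translation $\tau_s$, and translation-invariant forms satisfy $\tau_s^*\eta_k=\eta_k$. So these forms glue to $n-1$ global sections with zero residues.

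For $\theta_i$, the residue conditions read $-1+1=0$ at each junction, including the last one. The non-residual parts along the sections are the restrictions of the connection correction. With the normalisation chosen uniformly for all $i$, these restrictions are compatible under the gluings, again using $\tau_s^*L\isom L$ for $L\in\pic^0(A)$. If needed, we subtract a combination of the $\pi_i^*\eta_k$ to match them. Hence the $\theta_i$ glue to a global section $\theta$.

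\emph{Step 3: conclusion.} The $n$ glued sections restrict on every $Y_i$ to a frame of $\nu_i^*\Omega_Y(\log)$. They are therefore nowhere vanishing and pointwise independent, so they trivialise $\Omega_Y(\log)$.

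The main obstacle is Step 2 for $\theta$: making the gluing condition in log coordinates precise, and checking that the translation by $s$ together with the identification $\tau_s^*L\isom L$ does not introduce a nontrivial $\C^*$-twist. If a twist did appear, one would only obtain a flat line bundle in place of the trivial one. The way around this is to use instead the canonical section of $M$ from the semistable-type log structure, whose $d\log$ is zero relative to the standard log point. This identifies the compatible $\theta$ with the image of $d\log z$ from the log atlas, which is well defined by the unit condition~\eqref{eq kawamata namikawa unit condition}.
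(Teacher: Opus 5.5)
Your route differs from the paper's. The paper uses the residue sequence $0\to\tOm_Y\to\Omega_Y(\log)\to\sO_{\Ysing}\to0$, computes $\delta(e_i)=c_1(M_i)$, and then uses weights (Lemma~\ref{lemma picard pure weight two}) to get $\delta(e_1+\dots+e_m)=0$. You instead glue explicit frames. The gluing of the forms $\pi_i^*\eta_k$ and your Step~3 are fine. The decisive step, gluing $\theta$, is not proved, and the fixes you propose do not work.

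\textbf{The gap.} At the $j$-th component $\Ysing_j$ of the double locus, compare the restrictions of $\theta_j$ and $\theta_{j+1}$ as sections of $\Omega_Y(\log)|_{\Ysing_j}$. Their residues agree, so the difference is a class $\beta_j\in H^0(\Ysing_j,\Omega_{\Ysing_j})\isom H^0(A,\Omega_A)$, and the $\theta_i$ glue iff all $\beta_j=0$.

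Every log form on $Y_i$ with the prescribed residues has the form $\theta_i+\pi_i^*\gamma_i$, because $H^0(Y_i,\Omega_{Y_i})=\pi_i^*H^0(A,\Omega_A)$. This replacement changes $\beta_j$ by $\gamma_{j+1}-\gamma_j$, with indices mod $m$ and $\tau_s^*$ acting trivially on invariant forms. Hence $\sum_j\beta_j$ does not depend on any choice, and a global section with nonzero residues exists iff $\sum_j\beta_j=0$. That is exactly the statement $\delta(e_1+\dots+e_m)=0$ which the paper works to prove.

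\begin{itemize}
\item ``Subtract a combination of the $\pi_i^*\eta_k$'' only changes the $\beta_j$ by coboundaries, so it cannot achieve this.
\item ``Compatible under the gluings, using $\tau_s^*L\isom L$'' simply asserts the conclusion.
\item Your log-atlas fallback fails as stated. The unit condition \eqref{eq kawamata namikawa unit condition} only constrains the product of the units. Since $d\log x^{(i)}=d\log x^{(j)}+d\log u^{(ij)}$, $d\log$ of a chart coordinate is not well defined unless the transition units are locally constant.
\item The $\C^*$-twist you worry about is harmless, since $d\log$ of a constant vanishes. A nonzero $\sum_j\beta_j$ would not give a flat line bundle; it would mean there is no section with nonzero residue at all.
\end{itemize}

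\textbf{How to close it along your lines.} Use flat coordinates.
\begin{itemize}
\item Since $L\in\pic^0(A)$, it comes from a unitary character $\chi$ of $\Lambda=\pi_1(A)$. So $Y_i\isom(\C^{n-1}\times\PP^1)/\Lambda$ with $\lambda\cdot(v,z)=(v+\lambda,\chi(\lambda)z)$.
\item Then $\theta_i:=dz/z$ is a global log form with residues $\pm1$, and no connection correction is needed.
\item At a junction, take $w=1/z$ on $Y_i$ near $\Sigma_i^{(\infty)}$ and $z'$ on $Y_{i+1}$ near $\Sigma_{i+1}^{(0)}$, over base coordinates $v$ and $v$. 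At the last junction use $v$ and $v+\tilde s$; translation by a lift $\tilde s$ commutes with $\Lambda$.
\item Since $w\mapsto\chi(\lambda)^{-1}w$ and $z'\mapsto\chi(\lambda)z'$, these charts form a log atlas whose transition units are constants with product one. By Corollary~\ref{corollary independence of log differentials} you may compute $\Omega_Y(\log)$ in this atlas.
\item There $\theta_i=-dw/w$ and $\theta_{i+1}=dz'/z'$ agree, so every $\beta_j=0$.
\end{itemize}

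With this computation added, your argument is a correct and more explicit alternative to the paper's. It produces the extra section directly as $d\log z$ and avoids mixed Hodge theory. The paper instead detects the obstruction cohomologically and kills it because $c_1(M)$ is a rational class in $F^1$, whose $W_1$-part must vanish.
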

\begin{proof}
The proof is analogous to the one of Proposition~\ref{proposition omega log trivial for snc}. Let us consider the exact sequence
\[
0\to \tOm_Y \to \Omega_Y(\log) \to Q \to 0, 
\]
where $Q$ is defined as the cokernel of the first inclusion. Let $\Ysing:=Y^\sing$ denote the double locus. Taking groups of fractions, from \eqref{eq sequence divisorial log structure} and the fact that $Y$ has no triple points by~\cite[Lemma~3.2]{Leh16}, we obtain a residue map 
\[
\res\colon \Omega_Y(\log) \to \sO_\Ysing \tensor \bigl( \nu_*\Z_{Y^\nu}/\Z_Y\bigr) \cong \sO_\Ysing.
\]
Note that the last isomorphim is only canonical up to a sign since there are two canonical choices for a generator. As sections in $\tOm_Y$ have trivial residue, we obtain a morphism $Q \to \sO_\Ysing$. One verifies in local coordinates that it is an isomorphism. We consider the induced long exact sequence in cohomology
\begin{equation}\label{eq long exact sequence omega log trivial}
\scalebox{0.95}[1]{$0 \to H^0(Y,\tOm_Y) \to H^0(Y,\Omega_Y(\log)) \to H^0(\Ysing,\sO_\Ysing) \xrightarrow{\delta} H^1(Y,\tOm_Y)  \to \ldots$}
\end{equation}
In order to identify $\delta$, recall that Friedman has shown in~\cite[Proposition~1.5]{Fri83} that the complex $\tOm^\bullet_Y$ is a resolution of the constant sheaf $\C_Y$ and together with the stupid filtration computes Deligne's mixed Hodge structure on the cohomology groups $H^i(Y,\C)$ of $Y$. In particular, there is a canonical identification $H^1(Y,\tOm_Y) \isom \Gr_F^1H^2(Y,\C)$.

Let $\Ysing=\cup_{i=1}^m \Ysing_i$ be a decomposition into connected components and let $\Sigma_i^{(\infty)}\subset Y_i$ and $\Sigma_{i+1}^{(0)} \subset Y_{i+1}$ be the preimages of $\Ysing_i$ in the normalisation.

\emph{Claim:} Let $e_i \in H^0(\Ysing,\sO_\Ysing)$ be the function that is $1$ on $\Ysing_i$ and $0$ on all other connected components. Then
\begin{equation}\label{eq description connecting homomorphism}
\delta(e_i)= c_1(M_i),
\end{equation}
where $M_i$ is the unique line bundle, up to isomorphism, satisfying 
\[
M_i|_{Y_i}\isom\sO_{Y_i}(\Sigma_i^{(\infty)}), \qquad M_i|_{Y_{i+1}} \isom \sO_{Y_{i+1}}(-\Sigma_{i+1}^{(0)}) \quad \textrm{ and}
\]
\[
M_i|_{Y_k}\isom\sO_{Y_k} \qquad \textrm{ for all } k\neq i,i+1.
\]
Note that $\sO_{Y_i}(\Sigma_i^{(\infty)}) \isom \sO_{Y_{i+1}}(-\Sigma_{i+1}^{(0)})$ by $d$-semistability, which in turn is shown in Proposition~\ref{proposition construction is d semistable}. Thus, the line bundle that was just defined on the normalisation $Y^\nu=\coprod_i Y_i$ indeed descends to a line bundle $M_i$ on $Y$. Further, by Lemma~\ref{lemma picard pure weight two}, the first Chern class of a line bundle on $Y$ lies in $F^1H^2(Y,\C)$, so that we can take its class modulo $F^2H^2(Y,\C)$, which is how the right hand side of \eqref{eq description connecting homomorphism} has to be interpreted.

Admitting the claim for the moment, the proof of the theorem is completed as follows. We will first show that $\rk \delta \leq m-1$. Let us start by recalling that by the degeneration of the weight spectral sequence on $E_2$~\cite[p.71]{GS75}, there is an exact sequence
\[
H^2(Y,\Q) \xrightarrow{\nu^*} H^2(Y^\nu,\Q) \to H^2(\Ysing,\Q)
\]
and the kernel of the first morphism is exactly the part $W_{1}H^2(Y,\Q)$ of the weight filtration. The map $\nu^*$ sends $c_1(M_i) \in H^2(Y,\Q)$ to $c_1\bigl(\sO_{Y_i}(\Sigma_i^{(\infty)})\bigr) - c_1\bigl(\sO_{Y_{i+1}}(\Sigma_{i+1}^{(0)})\bigr) \in H^2(Y^\nu,\Q)$. 
By Lemma~\ref{lemma class of section of p1 bundle in pic}, we have
\[
\sO_{Y_i}\bigl(\Sigma_i^{(\infty)}-\Sigma_i^{(0)}\bigr)
\cong
\pi_i^*L.
\]
Hence, we deduce $c_1\bigl(\sO_{Y_i}(\Sigma_i^{(\infty)})\bigr) = c_1\bigl(\sO_{Y_i}(\Sigma_i^{(0)})\bigr)$ and obtain that
\[
e_1+\cdots+e_m\in \ker(\nu^*\circ \delta).
\] Since $\nu^*$ is injective on the image of the Picard group by Lemma~\ref{lemma picard pure weight two}, we obtain the desired inequality $\rk(\delta)\leq m-1$. Since $\dim H^0(\Ysing,\sO_\Ysing)=m$, we conclude from \eqref{eq long exact sequence omega log trivial} that $H^0(Y,\Omega_Y(\log))$ has a section not contained in $H^0(Y,\tOm_Y)$. One concludes as in the proof of Proposition~\ref{proposition omega log trivial for snc} that such a section is everywhere linearly independent from the sections in $H^0(Y,\tOm_Y)$. By Lemma~\ref{lemma gluing invariant is translation}, we have $\dim H^0(Y,\tOm_Y) = \dim A$ so that $\Omega_Y(\log)$ is generated by $\dim Y$ many global sections. Therefore, it has to be trivial. It remains to verify the claim.

\emph{Proof of Claim:} 
Let us first prove that $\delta$ indeed takes values in the image of the Picard group in $H^1(Y,\tOm_Y)$. For this, consider the the following morphism of short exact sequences.
\[
\xymatrix{
1 \ar[r] & \sO_Y^\times \ar[r] \ar[d]_{d\log} & M_{\mathrm{rel}}^{\gp} \ar[r] \ar[d]^{d\log} & \Z_{\Ysing} \ar[r] \ar[d] & 0 \\
0 \ar[r] & \widetilde\Omega_Y \ar[r] & \Omega_Y(\log) \ar[r] & \sO_\Ysing \ar[r] & 0.
}
\]
Here, $M^\gp_{\mathrm{rel}}$ denotes the group of fractions of the quotient $M/L$, where $M$ is a chosen log structure of semistable type and $L$ is the pullback of the log structure from the standard log point. Note that $\Z_\Ysing$ is generated by the sections $e_i$ and the rightmost vertical map is just the inclusion. This diagram induces the commutative diagram
\[
\xymatrix{
H^0(\Ysing,\Z_{\Ysing}) \ar[r] \ar[d] & H^1(Y,\sO_Y^\times) \ar[d] \\
H^0(\Ysing,\sO_\Ysing) \ar[r]^\delta & H^1(Y,\widetilde\Omega_Y),\\
}
\]
where the horizontal maps are connecting homomorphisms. As observed before, the equality $H^0(\Ysing,\sO_\Ysing) = H^0(\Ysing,\Z_{\Ysing})\tensor_\Z \C$ holds so that $\delta$ indeed takes values in the image of the Picard group.

Next we describe the composition $\nu^*\circ \delta$. 
By Corollary~\ref{corollary relative log differentials on normalization component}, the residue sequence 
\[
0\to \widetilde\Omega_Y
\to \Omega_Y(\log)
\to \sO_\Ysing
\to 0
\]
pulls back to the exact sequence
\[
\nu_j^*\tOm_Y \to \Omega_{Y_j}\bigl(\log(\Sigma_j^{(0)}+\Sigma_j^{(\infty)})\bigr) \to \sO_{\Sigma_j^{(0)}}\oplus \sO_{\Sigma_j^{(\infty)}} \to 0,
\]
where again the signs for the last map are determined by the identification of $\nu_*\Z_{Y^\nu}/\Z_Y$ with $\Z_{\Ysing_i}$ locally on each $\Ysing_i$. Replacing the first morphism by its image, we obtain the usual residue sequence

\[
0\to \Omega_{Y_j} \to \Omega_{Y_j}\bigl(\log(\Sigma_j^{(0)}+\Sigma_j^{(\infty)})\bigr) \to \sO_{\Sigma_j^{(0)}}\oplus \sO_{\Sigma_j^{(\infty)}} \to 0, 
\]
as a direct computation shows. With the aforementioned choice of identifications, the section $e_i$ restricts on the normalisation as
\[
\begin{aligned}
e_i&\mapsto \phantom{-}1_{\Sigma_i^{(\infty)}}\quad 
&&\text{on }Y_i,\\
e_i&\mapsto -1_{\Sigma_{i+1}^{(0)}}
&&\text{on }Y_{i+1},\\
\end{aligned}
\]
and to $0$ on all other components. 
By Proposition~\ref{proposition omega log trivial for snc} applied to $Y_i$, the connecting homomorphism for \eqref{eq residue sequence} maps
\[
1_{\Sigma_i^{(\infty)}} \mapsto c_1\bigl(\sO_{Y_i}(\Sigma_i^{(\infty)})\bigr).
\]
Similarly, on $Y_{i+1}$ it maps 
\[
1_{\Sigma_{i+1}^{(0)}} \mapsto c_1\bigl(\sO_{Y_{i+1}}(\Sigma_{i+1}^{(0)})\bigr)
\]
so that 
\[
\left.\nu^*\delta(e_i)\right|_{Y_i} = c_1\bigl(\sO_{Y_i}(\Sigma_i^{(\infty)})\bigr),
\]
\[
\left.\nu^*\delta(e_i)\right|_{Y_{i+1}} = - c_1\bigl(\sO_{Y_{i+1}}(\Sigma_{i+1}^{(0)})\bigr),
\]
and
\[
\left.\nu^*\delta(e_i)\right|_{Y_j}=0 \qquad\text{for }j\neq i,i+1,
\]
as needed. Again, the morphisms of short exact sequences give rise to the commutative diagram
\[
\xymatrix{
H^0(\Ysing,\sO_\Ysing) \ar[r]^\delta \ar@{^(->}[0,0]!<0pt,-2.5ex>;[d]+<0pt,+2.8ex> & H^1(Y,\widetilde\Omega_Y)\ar[d]+<0pt,+2.8ex>\\
\displaystyle\bigoplus_{i=1}^m H^0\bigl(\sO_{\Sigma_i^{(0)}}\oplus \sO_{\Sigma_i^{(\infty)}} \bigr) \ar[r] & \displaystyle\bigoplus_{i=1}^m H^1(Y_i,\Omega_{Y_i}).\\
}
\]
Since we have already seen that the image of $\delta$ lies in the image of the Picard group, the right vertical map is injective on the image of $\delta$. Hence, the description of $\nu^*\circ \delta$ given above is sufficient to determine $\delta$. The claim follows.
\end{proof}

\begin{lemma}\label{lemma rational sub in f1 pure weight two}
Let $H$ be a rational mixed Hodge structure with weights in the interval $[0,2]$, and assume that $\Gr^W_0H$ is of type
$(0,0)$. Let $P\subset H$ be a rational subspace such that
$P_{\C}\subset F^1H_{\C}$. Then the natural map
\[
P \to \Gr^W_2 H
\]
is injective. 
\end{lemma}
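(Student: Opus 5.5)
The plan is to show that $P\cap W_1H=0$. Injectivity of $P\to \Gr^W_2H = H/W_1H$ is equivalent to this, since the weights lie in $[0,2]$ and hence $W_2H=H$. So let $v\in P\cap W_1H$; we aim to prove $v=0$. The key point is that $P\cap W_1H$ is a \emph{rational} subspace whose complexification lies in $F^1H_\C\cap W_1H_\C$.

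The first step is to handle the part of weight one. The image $\bar v$ of $v$ in $\Gr^W_1H$ is a rational vector lying in $F^1\Gr^W_1H_\C$, because the induced filtration on $\Gr^W_1$ is the image of $F$. Now $\Gr^W_1H$ is pure of weight one, so
\[
\Gr^W_1 H_\C = H^{1,0}\oplus H^{0,1}, \qquad F^1=H^{1,0}.
\]
Since $\bar v$ is rational, it is real, and hence $\bar v=\overline{\bar v}\in \overline{H^{1,0}}=H^{0,1}$. Therefore $\bar v\in H^{1,0}\cap H^{0,1}=0$, which gives $v\in W_0H$.

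The second step is to handle the part of weight zero. We now have $v\in W_0H=\Gr^W_0H$, which is of type $(0,0)$ by hypothesis, so $F^1W_0H_\C=0$. Strictness of morphisms of mixed Hodge structures with respect to $F$ (equivalently, the fact that the Hodge filtration induced on $W_0$ is $F\cap W_0$) gives $v\in F^1H_\C\cap W_0H_\C=F^1(W_0H_\C)=0$. Hence $v=0$.

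The main subtlety, though a mild one, is in justifying that $F^1H_\C\cap W_kH_\C$ maps into $F^1\Gr^W_k$ and equals $F^1W_k$. By definition of a mixed Hodge structure, the filtration induced by $F$ on $W_k$ is $F\cap W_k$, and its image in $\Gr^W_k$ is the Hodge filtration there. Both steps use only this, so I do not expect a real obstacle. The lemma will then be applied to $H=H^2(Y,\Q)$ with $P$ the image of $\pic(Y)\otimes\Q$ (Lemma~\ref{lemma picard pure weight two}).
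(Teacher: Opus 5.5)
Your proposal is correct and matches the paper's proof step for step. You show $P\cap W_1H=0$ by first noting that a rational class in $F^1\Gr^W_1H_\C=H^{1,0}$ must vanish, which forces $v\in W_0H$, and then that $F^1H_\C\cap W_0H_\C=F^1\Gr^W_0H_\C=0$ because $\Gr^W_0H$ is of type $(0,0)$. The appeal to strictness is unnecessary, since $W_{-1}=0$ and the definition of the induced Hodge filtration on $\Gr^W_0=W_0$ already give this equality.
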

\begin{proof}
We show that $P\cap W_1H=0$. Let $v\in P\cap W_1H$. Since $P_{\C}\subset F^1H_{\C}$, the class of $v$ in
$\Gr^W_1H$ lies in $F^1\Gr^W_1H_{\C}$. Recall that the Hodge filtration on the weight graded pieces is defined by
\[
F^p\Gr^W_kH_{\C}
= \frac{(F^pH_{\C}\cap W_kH_{\C})+W_{k-1}H_{\C}} {W_{k-1}H_{\C}}.
\]
But $\Gr^W_1H$ is pure of weight $1$, so
\[
\Gr^W_1H_{\C}
= H^{1,0}\oplus H^{0,1}, \qquad F^1\Gr^W_1H_{\C}=H^{1,0}.
\]
A rational class lying in $H^{1,0}$ must be zero, since it is fixed by complex conjugation. Hence, $v\in W_0H$. But since 
$\Gr^W_0H$ is of type $(0,0)$, we have
\[
F^1\Gr^W_0H_{\C}=0 
\]
so that $v=0$. The claim follows.
\end{proof}

Recall that a reduced compact Kähler space admits a resolution of singularities by a compact Kähler manifold. From this, one may construct a cubical hyperresolution consisting only of compact Kähler manifolds; see \cite[Theorem~I.2.6]{GNPP88} or~\cite[Theorem~5.25]{PS08}. Therefore, the cohomology groups of a compact Kähler space possess a functorial mixed Hodge structure. 

\begin{lemma}\label{lemma picard pure weight two}
Let $X$ be a compact Kähler space. Then the image of the map $c_1\colon \pic(X) \to H^2(X,\Q)$ is contained in $F^1H^2(X,\C)$ and injects into $\Gr_2^WH^2(X,\Q)$. 
\end{lemma}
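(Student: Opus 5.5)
The plan is to reduce both claims to Lemma~\ref{lemma rational sub in f1 pure weight two}, applied to $H:=H^2(X,\Q)$ with its functorial mixed Hodge structure. Two things are needed: that $H^2(X,\Q)$ has weights in $[0,2]$ with $\Gr^W_0$ of type $(0,0)$, and that the rational subspace $P:=c_1(\pic(X))\otimes\Q$ satisfies $P_\C\subset F^1H^2(X,\C)$.

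\emph{Weights.} We choose a cubical hyperresolution $X_\bullet\to X$ by compact Kähler manifolds, as recalled above. The associated weight spectral sequence has $E_1^{p,q}=H^q(X_p,\Q)$ contributing to $\Gr^W_qH^{p+q}$. For $H^2$ only $q\in\{0,1,2\}$ occur, so the weights lie in $[0,2]$. Moreover $\Gr^W_0H^2$ is a subquotient of $H^0(X_2,\Q)$, which is of type $(0,0)$. This is the standard argument of~\cite{PS08}, and I would simply cite it.

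\emph{$F^1$ condition.} We use the exponential sequence $0\to\Z_X\to\sO_X\to\sO_X^\times\to0$. The map $c_1$ is its connecting homomorphism, so $c_1(\pic X)$ lies in the kernel of the natural map $H^2(X,\C)\to H^2(X,\sO_X)$. It then remains to identify $H^2(X,\C)/F^1$ with a quotient receiving an injection compatible with $H^2(X,\C)\to H^2(X,\sO_X)$. More precisely, the Hodge filtration is induced by the stupid filtration on the de Rham complex of the hyperresolution $\Omega^\bullet_{X_\bullet}$. This gives a map of complexes $\C_X\to R\epsilon_*\sO_{X_\bullet}$ factoring $\C_X\to\sO_X$, and its induced map on $H^2$ has kernel $F^1$ by the degeneration of the Hodge-to-de Rham spectral sequence (\cite{PS08}). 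Since $\C_X\to\sO_X\to R\epsilon_*\sO_{X_\bullet}$ commutes, any class killed in $H^2(X,\sO_X)$ is killed in $\mathbb H^2(R\epsilon_*\sO_{X_\bullet})=\Gr_F^0H^2$. Hence $c_1(\pic X)\subset F^1H^2(X,\C)$.

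\emph{Conclusion.} Applying Lemma~\ref{lemma rational sub in f1 pure weight two} with $P$ as above gives injectivity of $P\to\Gr^W_2H^2(X,\Q)$. This is the claimed statement for the image of $c_1$ tensored with $\Q$, which is how it is used in Theorem~\ref{theorem trivial log differentials}.

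The main obstacle is the compatibility step in the $F^1$ argument: checking that the Hodge filtration from the hyperresolution really satisfies $F^0/F^1\cong\mathbb H^2(X,R\epsilon_*\sO_{X_\bullet})$, and that this receives the natural map from $H^2(X,\sO_X)$. I would handle it by cohomological descent together with the $E_1$-degeneration for mixed Hodge complexes, which is standard but must be quoted carefully in the Kähler (non-algebraic) setting.
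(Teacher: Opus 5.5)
Your proposal is correct and follows essentially the same route as the paper. Both arguments get weights in $[0,2]$ via the hyperresolution argument of \cite{PS08}, and both get the $F^1$ condition from the exponential sequence together with the map $\sO_X\to R\epsilon_*\sO_{X_\bullet}$, which is exactly the paper's morphism to the zeroth graded piece $\underline{\Omega}^0_X$ of the Du Bois complex, with $\mathbb H^2(X,\underline{\Omega}^0_X)\isom \Gr_F^0H^2(X,\C)$; both then conclude with Lemma~\ref{lemma rational sub in f1 pure weight two}. The only difference is that the paper states explicitly the harmless reduction to reduced $X$ (since $c_1$ factors through $\pic(X_\red)$), which you use implicitly when you take a hyperresolution.
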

\begin{proof}
Since $c_1\colon \pic(X) \to H^2(X,\Q)$ factors through $\pic(X_\red)$ by the exponential sequence, we may assume that $X$ is reduced. Since $X$ is compact Kähler, its singular cohomology groups carry a functorial mixed Hodge structure. As in~\cite[Theorem~5.39]{PS08} one shows that the weights of $H^2(X,\Q)$ lie in the interval $[0,2]$. Thus, the second statement follows from the first by Lemma~\ref{lemma rational sub in f1 pure weight two}. Therefore, it remains to show that $c_1\left(\pic(X)\right)_\C \subset F^1H^2(X,\C)$. This in turn is deduced from the exponential sequence and the canonical morphism $\sO_X \to \underline{\Omega}_X^0$ where $\underline{\Omega}_X^0$ denotes the $0$-th graded piece of the du Bois complex. Recall that $\mathbb H^2(X,\underline{\Omega}_X^0) \isom \Gr_F^0H^2(X,\C)$.
\end{proof}

We may now assemble the pieces and obtain the proof of the section's main result.

\begin{proof}[Proof of Theorem~\ref{theorem trivial normal bundle}]
By Proposition~\ref{proposition normal sheaf}, the normal sheaf of $Y$ in $X$ is isomorphic to the sheaf of log differentials $\Omega_Y(\log)$. From Proposition~\ref{proposition construction is d semistable}, the variety $Y'$ is $d$-semistable. Using Lemma~\ref{lemma log differentials pull back}, we have
\[
p^*N_{Y/X}\isom p^*\Omega_Y(\log) \isom \Omega_{Y'}(\log)
\]
Then $\Omega_{Y'}(\log)$ is trivial by Theorem~\ref{theorem trivial log differentials}, and this completes the proof.
\end{proof}

%----------------------------------------------------
%----------------------------------------------------
\section{Unobstructedness Results}\label{section unobstructedness}
%----------------------------------------------------
%----------------------------------------------------

The following result generalises results from~\cite{Voi92,Leh16}. It is the main ingredient in the proof of our main theorem in Kodaira type I. 
Let $f\colon Y' \to X$ be a morphism of compact analytic spaces.  Recall from Notation~\ref{notation deformation functors} the functor $D_f^\ft$ for deformations with fixed target associated to such a morphism. 

\begin{theorem}\label{theorem normal crossing cover unobstructedness}
Let $X$ be a holomorphic symplectic manifold, let $Y \subset X$ be a Lagrangian normal crossing subvariety, let $p\colon Y'\to Y$ be a finite étale cover, and denote $f\colon Y' \to X$ the composition of $p$ with the inclusion. Then the functor $D_f^\ft$ is unobstructed. The tangent space is given by
\[
T^1_{D_f^\ft}= H^0(Y',p^*N_{Y/X}) = H^0(Y',\Omega_{Y'}(\log)).
\]
\end{theorem}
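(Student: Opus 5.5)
The plan has three parts: compute the tangent space, describe a compatible $T^1$-space for every infinitesimal deformation, and then apply the $T^1$-lifting criterion of Kawamata--Ran in the form \eqref{eq kawamata t1 lifting}. By Proposition~\ref{proposition universal deformation} the functor $D_f^\ft$ is prorepresentable, so the criterion applies.

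\emph{Tangent space.} By Corollary~\ref{corollary tangent spaces}, $T^1_{D_f^\ft}=\mathbb Ext^1_{Y'}(K_f^\bullet,\sO_{Y'})$ with $K_f^\bullet=[f^*\Omega_X\to\Omega_{Y'}]$. Since $p$ is étale, $\Omega_{Y'}=p^*\Omega_Y$ and $K_f^\bullet=p^*[\Omega_X|_Y\to\Omega_Y]$. The cohomology sheaves of $K_f^\bullet$ are $\mathcal H^0=0$ and $\mathcal H^{-1}=p^*(I/I^2)$, because $\Omega_X|_Y\to\Omega_Y$ is surjective. The first terms of the sequence in \eqref{eq omega to tangent} are not exact on the left, so we use the local-to-global spectral sequence instead. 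The local term $\mathcal Ext^1(K_f^\bullet,\sO)$ is $\mathcal Hom(\mathcal H^{-1},\sO)=p^*N_{Y/X}$, while $\mathcal Ext^0$ vanishes. Indeed, $\Omega_X|_Y$ is locally free and the map $\mathcal Hom(\Omega_X|_Y,\sO)\to$ the kernel is an isomorphism; I would check this in the local Moser coordinates of Lemma~\ref{lemma moser snc lagrangian}, where $Y$ is locally a complete intersection. We therefore get $T^1=H^0(Y',p^*N_{Y/X})$. Proposition~\ref{proposition normal sheaf} together with Lemma~\ref{lemma log differentials pull back} identifies this with $H^0(Y',\Omega_{Y'}(\log))$.

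\emph{Relative version.} Let $F_n\colon\sY_n\to X\times S_n$ be a deformation over $A_n=\C[t]/(t^{n+1})$. The same computation, now using Corollary~\ref{corollary kawamata t1 for fixed target}, gives $T^1(F_n/A_n)=H^0(\sY_n,N_n)$. Here $N_n$ is the pullback to $\sY_n$ of the normal sheaf of the image $\mathcal Z_n\subset X\times S_n$, and $\sY_n\to\mathcal Z_n$ is étale because étale covers deform uniquely. By Corollary~\ref{corollary normal sheaf}, $N_n\isom\Omega_{\sY_n/S_n}(\log)$. This sheaf is locally free and flat over $S_n$, and it restricts to $N_{n-1}$ over $A_{n-1}$. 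The lifting condition \eqref{eq kawamata t1 lifting} thus becomes surjectivity of
\[
H^0(\sY_n,\Omega_{\sY_n/S_n}(\log))\to H^0(\sY_{n-1},\Omega_{\sY_{n-1}/S_{n-1}}(\log)).
\]

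\emph{Main obstacle.} The hard step is proving this surjectivity. I would follow Voisin and Ran: show that $R^0\pi_*\Omega_{\sY_n/S_n}(\log)$ is free over $A_n$ and commutes with base change. This follows if $h^0$ is constant, i.e.\ if $\dim_\C H^0$ over $A_n$ equals $(n+1)h^0(Y',\Omega_{Y'}(\log))$. To get there, I would use that relative log de Rham cohomology of a log smooth family is locally free with Hodge--de Rham degeneration (Steenbrink, Kawamata--Namikawa): the relative log de Rham complex computes a locally free module, and the $E_1$-degeneration of the Hodge filtration holds in the log-Kähler setting. Degeneration then implies that the $H^0$ of log 1-forms is free of constant rank, so every section over $A_{n-1}$ lifts. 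The delicate point is justifying this log smoothness and degeneration for $\sY_n$, which may mix locally trivial and smoothing behaviour; by Example~\ref{example divisorial logarithmic structure infinitesimal} both cases are log smooth over the log base. I expect this to be the core difficulty.

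As an alternative, I would use the symplectic form to identify $N_n\isom\Omega_{\sY_n/S_n}(\log)$ and argue as in Voisin that $H^0$ of log one-forms equals $\Gr_F^1$ of the limiting mixed Hodge structure, which is constant in $n$.
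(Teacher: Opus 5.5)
Your tangent-space computation is correct. Since $Y$ is reduced and lci, the differential of $K_f^\bullet$ is surjective with kernel $p^*(I/I^2)$, so $K_f^\bullet\simeq p^*(I/I^2)[1]$ directly and no spectral sequence is needed. Your overall architecture also matches the paper: $T^1$-lifting, reduction to surjectivity of $H^0(\sY_n,\Omega_{\sY_n/S_n}(\log))\to H^0(\sY_{n-1},\Omega_{\sY_{n-1}/S_{n-1}}(\log))$, and local freeness plus base change of log Hodge cohomology, which the paper simply imports from \cite[Theorem~1.10]{FFR21}.

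The genuine gap is your relative identification $T^1(F_n/A_n)\isom H^0(\sY_n,N_n)$. You define $N_n$ through the scheme-theoretic image $\mathcal Z_n\subset X\times S_n$ and assert that $\sY_n\to\mathcal Z_n$ is étale. This is false in general. Suppose it held. Étale surjections are faithfully flat, so $\mathcal Z_n$ would be flat over $S_n$ with (reduced) special fibre $Y$, i.e.\ an embedded deformation of $Y$. Then $F_n$ would be the unique étale deformation of $p$ over it. Every deformation of $f$ would therefore be induced from a deformation of $Y\subset X$, and $T^1_{D_f^\ft}$ would be only the deck-invariant part of $H^0(Y',p^*N_{Y/X})$, namely the image of $H^0(Y,N_{Y/X})$.

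This already fails for $Y$ a bielliptic surface embedded as the zero section of $X=T^*Y$, with $p$ its abelian cover. There $h^0(Y,N_{Y/X})=h^0(Y,\Omega_Y)=1$, while $h^0(Y',p^*N_{Y/X})=h^0(Y',\Omega_{Y'})=2$. The sheets of the cover can move independently, and then the image of $\sY_1$ is not even flat over $S_1$. This is exactly the non-invariance flagged after Proposition~\ref{proposition group action}, and it is essential in Theorem~\ref{theorem normal crossing cover corank one smoothing}.

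The missing idea is to replace the image by an étale model of $X$ in which $\sY_n$ embeds.
\begin{itemize}
\item Choose a neighbourhood $U$ of $Y$ in $X$ that retracts onto $Y$. Then $p$ extends to a finite étale cover $\varpi\colon U'\to U$ with $\varpi^{-1}(Y)=Y'$.
\item Because $\varpi$ is étale, $F_n$ lifts uniquely to a closed immersion $F_n'\colon\sY_n\into U'\times S_n$.
\item Since $U'\to X$ is étale, $K_{F_n}^\bullet\simeq J_n/J_n^2[1]$, where $J_n$ is the ideal of $\sY_n$ in $U'\times S_n$. Hence $T^1(F_n/S_n)\isom H^0(\sY_n,N_{\sY_n/U'\times S_n})$.
\item Corollary~\ref{corollary normal sheaf}, applied to the Lagrangian normal crossing subvariety $Y'$ of $(U',\varpi^*\sigma)$, identifies this with $H^0(\sY_n,\Omega_{\sY_n/S_n}(\log))$.
\end{itemize}
In other words, deformations of $f$ must be treated as embedded deformations of $Y'$ in the symplectic manifold $U'$, not as deformations of the image in $X$. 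With this in place, your reduction to freeness and base change of relative log one-forms goes through as you describe.
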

\begin{proof}
Let $K_f^\bullet$ be the complex $\bigl[f^*\Omega_X\to \Omega_{Y'}\bigr]$ concentrated in degrees $-1,0$.  Recall from Corollary~\ref{corollary tangent spaces} that the tangent space our deformation functor~is
\[
T^1_{D_f^\ft} = \mathbb E xt^1_{Y'}(K_f^\bullet,\sO_{Y'}).
\]
Since $f$ is a composition $\xymatrix{Y' \ar[r]^p & \,Y\, \ar@{^(->}[r]^\iota & X}$, where $p$ is étale and $\iota$ is a closed embedding, the nonzero differential in $K_f^\bullet$ is actually surjective and we obtain a quasi-isomorphism $p^*I/I^2[1] \to K_f^\bullet$, where $I\subset \sO_X$ is the ideal sheaf of $Y$. It follows that
\[
T^1_{D_f^\ft} = \mathbb E xt^1_{Y'}(p^*I/I^2[1],\sO_{Y'}) = H^0(Y',p^*N_{Y/X}).
\]
Here, we used that $\iota$ is a regular embedding so that $I/I^2$ is locally free. In addition, $Y$ is Lagrangian by assumption, so that $N_{Y/X} \isom \Omega_{Y}(\log)$ by Proposition~\ref{proposition normal sheaf}. Since $p$ is étale, $p^*\Omega_{Y}(\log) = \Omega_{Y'}(\log)$ by Lemma~\ref{lemma log differentials pull back}. This proves the claim about the tangent space. 

For the unobstructedness, we would naively like to use a relative version of the above argument. The problem is that $I/I^2$ is no longer the right object, since the image of a deformation is not necessarily a deformation of the image. One can circumvent this by either gluing local ideal sheaves or, as we chose, lift the étale cover $p$ to an open neighbourhood of $Y$ inside $X$. Let us first set up some notation.

For $n\in \N$, we denote $A_n:=\C[t]/(t^{n+1})$ and $S_n:=\Spec(A_n)$. 
To prove smoothness for a deformation functor $D\colon \Art_\C \to \Set$, by the $T^1$-lifting principle (see Section~\ref{section t1 lifting}), it suffices to show that for a deformation $\xi_n \in D(A_n)$ with image $\xi_{n-1}\in D(A_{n-1})$, the map between Kawamata's $T^1$-spaces $T^1(\xi_n/S_n) \to T^1(\xi_{n-1}/S_{n-1})$ is surjective. 

Let $F_n\colon \sY'_n \to X\times S_n$ be a deformation of $f$ over $S_n$, let $\sY'_{n-1}:=\sY'_n\times_{S_n}S_{n-1}$ and denote by $F_{n-1}\colon \sY'_{n-1}\to X \times S_{n-1}$ the base change of $F_n$.

\emph{Claim:} There is a canonical isomorphism $T^1(F_n/S_n)\xrightarrow{\;\isom\;}\Omega_{\sY'_n/S_n}(\log)$ and under this isomorphism, the map becomes the ordinary restriction of differential forms.

Admitting the claim for the moment, we infer from~\cite[Theorem~1.10]{FFR21} that the $H^q(\sY'_n,\Omega^p_{\sY'_n/S_n}(\log))$ are locally free and compatible with base change for all $p$ and $q$. The sought-for surjectivity follows from the commutativity of the following diagram of $A_n$-modules.
\[
\xymatrix@C=1cm{
T^1(F_n/S_n) \ar[r]\ar[d]_\isom & T^1(F_{n-1}/S_{n-1}) \ar[d]^\isom\\
H^1(\sY'_n,\Omega_{\sY'_n/S_n}(\log)) \ar@{->>}@(dr,l)[rd] & H^1(\sY'_{n-1},\Omega_{\sY'_{n-1}/S_{n-1}}(\log))\\
&  H^1(\sY'_n,\Omega_{\sY'_n/S_n}(\log))\tensor_{A_n} A_{n-1}\ar@{=}[u]\\
}
\]
To complete the argument, it remains to verify the claim.

\emph{Proof of the Claim.} Let us denote by $K_{F_n}^\bullet$ the complex $\bigl[F_n^*\Omega_{X\times S_n/S_n}\to \Omega_{\sY'_n/S_n}\bigr]$ concentrated in degrees $-1,0$. By Corollary~\ref{corollary kawamata t1 for fixed target}, there is an isomorphism
\[
T^1(F_n/S_n)\xrightarrow{\;\isom\;}\mathbb E xt^1_{\sY_n'}(K_{F_n}^\bullet,\sO_{\sY_n'}).
\]
We would like to identify the right hand side with $H^0(\sY_n',\Omega_{\sY'_n/S_n}(\log))$. As explained above, a technical obstacle is that the ideal sheaf of the image is no longer the correct thing to look at. We solve this problem by embedding $\sY'$ into an étale cover of an open neighbourhood of $Y$ in $X$. By~\cite[I, Theorem~8.8]{BHPV04}, there exists an open neighbourhood $U \subset X$ of $Y$ such that the inclusion $Y \subset U$ is a deformation retract. In particular, the induced morphism $\pi_1(Y,y) \to \pi_1(U,y)$ on fundamental groups is an isomorphism for any choice of basepoint $y\in Y$. Hence, there is a diagram
\[
\xymatrix{
Y' \ar@{^(->}[d]\ar[r]^p & Y \ar@{^(->}[d]\ar[r] & X \\
U' \ar[r]^\varpi & U \ar@{^(->}[ru] &
}
\]
where $\varpi$ is a finite étale cover and pushforward along $Y'\to U'$ is an isomorphism on fundamental groups. Since $F_n\colon \sY_n \to X\times S_n$ factors through $U\times S_n$ and $\varpi$ is a topological cover, there is a unique lift in the following diagram.
\[
\xymatrix{
Y' \ar[d]\ar[r] & U' \ar[r] &U'\times S_n \ar[d]\\
\sY_n' \ar[rr]^{F_n}\ar@{-->}[rru]^{\exists ! {F'_n}} && U\times S_n\\
}
\]
In the algebraic category, this follows from the lifting property for formally étale morphisms; see~\cite[Corollaire~17.6.2]{EGAIV4}. In the analytic category, we did not find a suitable reference, but the claim about existence and uniqueness of lifts follows easily from the fact that an étale morphism has local inverses. With $Y' \to U'$ also $F'_n\colon \sY_n \to U'\times S_n$ is a closed immersion. Since $U'\to X$ is étale, we identify
\[
K_{F_n}^\bullet \isom \bigl[{F'_n}^*\Omega_{U'\times S_n/S_n}\to \Omega_{\sY'_n/S_n}\bigr] \isom J_n/J_n^2[1],
\]
where $J_n \subset \sO_{U'\times S_n}$ is the ideal sheaf of $\sY_n'$. As in the absolute case, we deduce
\[
T^1(F_n/S_n) \isom H^0(\sY_n',N_{\sY_n'/U'\times S_n}).
\]
The claim now follows from Corollary~\ref{corollary normal sheaf}. We conclude the proof.
\end{proof}

The following theorem is the relevant application for our main result. Recall that deformations of morphisms which are étale over closed subvarieties possess a universal deformation by Proposition~\ref{proposition universal deformation}.

\begin{theorem}\label{theorem normal crossing cover corank one smoothing}
Let $X$ be a compact hyperkähler manifold of dimension~$2n$, and let $Y \subset X$ be a normal crossing Lagrangian closed subvariety with torsion dualizing sheaf $\omega_Y$. Let $f\colon Y' \to X$ be the composition of the index one cover $p\colon Y'\to Y$ with the inclusion $Y\subset X$. Assume that $h^0(\tOm_Y) = n-1$ and that $-K_{Y^\nu}$ is nef where $Y^\nu$ denotes the normalisation. Let 
\begin{equation}\label{eq universal deformation}
\xymatrix{
\mathfrak U \ar[r]^{F} \ar[d]_\pi & X\\
\mathfrak D &\\
}
\end{equation}
be a representative of the universal deformation of $f$. Then $\dim \mathfrak D = n$, the map $F$ is generically étale (in fact, étale in a neighbourhood of ${Y'}^\reg\subset \mathfrak U$), and the general fibre of $\pi$ maps isomorphically to a Lagrangian subvariety of $X$ that is an abelian variety. 
\end{theorem}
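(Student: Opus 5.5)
First I pin down the structure of $Y'$. The index one cover $Y'$ is a connected projective normal crossing variety without triple points (Lagrangian normal crossing subvarieties have no triple points) with trivial $\omega_{Y'}$. By Theorem~\ref{theorem lagrangian d semistable}, $Y$ is $d$-semistable. Hence Theorem~\ref{theorem fibers nc case} applies, and either $Y'$ is an abelian variety or $Y'\isom Y_m(A,s,L)$ with $L\in\pic^0(A)$. In the abelian case, $p$ étale means $Y$ is smooth. Then $Y'\to X$ has trivial normal bundle $p^*N_{Y/X}\isom p^*\Omega_Y$, and everything follows from Voisin-type arguments and the same count as below.

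So assume $Y'\isom Y_m(A,s,L)$. By Theorem~\ref{theorem normal crossing cover unobstructedness}, $D^\ft_f$ is unobstructed with tangent space $H^0(Y',p^*N_{Y/X})=H^0(Y',\Omega_{Y'}(\log))$. By Theorem~\ref{theorem trivial normal bundle} (equivalently Theorem~\ref{theorem trivial log differentials}) this sheaf is trivial of rank $n$, so the tangent space is $n$-dimensional. Proposition~\ref{proposition universal deformation} gives a universal deformation, and unobstructedness makes $\mathfrak D$ smooth of dimension $n$.

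Next I show $F$ is étale near ${Y'}^\reg$. At a point $y\in {Y'}^\reg$, the differential of $F$ along the normal directions is the evaluation map $H^0(Y',p^*N_{Y/X})\otimes\sO\to p^*N_{Y/X}$. This map is an isomorphism because the bundle is trivial and globally generated by these $n$ sections. Combining this with $dF|_{T_{Y'}}$ being injective at smooth points, $dF$ is an isomorphism $T_{\mathfrak U,y}\to T_{X,F(y)}$ in dimension $2n$. So $F$ is étale near ${Y'}^\reg$, in particular generically étale and dominant.

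The main obstacle is showing that the general fibre is smooth, hence an abelian variety embedded as a Lagrangian. For smoothing I use the first-order analysis. The composite $H^0(\Omega_{Y'}(\log))\to H^0(T^1_{Y'})$ is given by the residue map $\Omega_{Y'}(\log)\to\sO_{\Ysing}$ (with $T^1_{Y'}\isom\sO_{\Ysing}$ by $d$-semistability). By the proof of Theorem~\ref{theorem trivial log differentials}, there is a global section with nonzero residue on every component of $\Ysing$; that section is nowhere vanishing and independent from $H^0(\tOm)$. A one-parameter deformation in that direction therefore has nonzero first-order smoothing on every double component. I would argue, via Example~\ref{example divisorial logarithmic structure infinitesimal} and Corollary~\ref{corollary normal sheaf}, that it is locally $xy=t\cdot u$ with $u$ a unit, so the nearby fibres are smooth. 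Since smoothness of fibres is open, the general fibre $\mathfrak U_t$ is smooth.

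The general fibre is then a smooth deformation of $Y'$, so $\omega_{\mathfrak U_t}$ is trivial. It has $h^{1,0}=n$ by flatness of $H^0(\Omega(\log))$ (\cite[Theorem~1.10]{FFR21}), and it carries $n$ independent nowhere vanishing $1$-forms coming from the trivial normal bundle. A compact Kähler $n$-fold with $n$ everywhere independent $1$-forms is a torus, and it is projective, hence an abelian variety. Its image is Lagrangian because the Lagrangian condition is closed and $F^*\sigma$ vanishes on the central fibre. Finally, to get that $\mathfrak U_t\to F(\mathfrak U_t)$ is an isomorphism, I would use the fact that $F$ is étale near the fibre and the pairwise-disjointness coming from nearby fibres sweeping out $X$ (Lagrangian tori with trivial normal bundle). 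A self-intersection of the image would give a double point forcing a nonzero intersection number $[F(\mathfrak U_t)]^2$, but this number is zero, as for fibres near $Y$.
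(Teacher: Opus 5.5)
Your outline agrees with the paper through the smoothing step: the structure of $Y'$ via Theorem~\ref{theorem fibers nc case}, unobstructedness with an $n$-dimensional tangent space, étaleness near ${Y'}^{\reg}$ by the five lemma, and a log form with nonzero residues giving local equations $xy=tu$. Two later steps fail as written, however.

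\textbf{The Lagrangian step.} The set of $t$ with $F^*\sigma|_{\mathfrak U_t}=0$ is \emph{closed} in $\mathfrak D$. Knowing that $[f]$ lies in it therefore says nothing about general $t$; closedness runs the wrong way. What is needed is a topological rigidity argument, and the paper supplies one. The class $F_*[\mathfrak U_t]\in H_{2n}(X,\Z)$ is locally constant, hence equal to $\deg(p)[Y]$. So $\int_{\mathfrak U_t}F^*\sigma\wedge F^*\bar\sigma\wedge\kappa^{n-2}=\deg(p)\int_Y\sigma\wedge\bar\sigma\wedge\kappa^{n-2}=0$, and on a smooth fibre positivity then forces $F^*\sigma|_{\mathfrak U_t}=0$.

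Relatedly, your ``$n$ independent $1$-forms coming from the trivial normal bundle'' presupposes two things you do not yet have. It needs this Lagrangian property, and it needs $F$ to be étale along all of $\mathfrak U_t$, whereas you only know this near ${Y'}^{\reg}$. Also, triviality of $\omega_{\mathfrak U_t}$ is not automatic for a smoothing. The torus structure should instead come from extending a global frame of $\Omega_{Y'}(\log)$ through the locally free, base-change compatible sheaf $\pi_{\Delta*}\Omega_{\mathfrak U_\Delta/\Delta}(\log Y')$, as the paper does. Separately, projectivity of $Y$, which Theorem~\ref{theorem fibers nc case} requires, comes from \cite[Theorem~1.1]{Leh16}.

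\textbf{Injectivity on fibres.} This is the more serious gap. Write $\ell=\deg p$. The central fibre already maps to $X$ as the degree-$\ell$ étale cover $Y'\to Y$, and nothing in your argument prevents the general fibre from doing the same. A finite étale map of degree $>1$ onto a smooth Lagrangian torus has no double points and zero self-intersection, so no intersection-number count can detect it.

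Your ``pairwise disjointness'' of images is also false. $F$ has degree $\ell$ onto its image, and by Proposition~\ref{proposition group action} the Galois group $G$ of $p$ acts on $\mathfrak U$ and $\mathfrak D$ compatibly with $F$. Hence all fibres over one $G$-orbit have the same image. The abelian case you wave through is exactly this situation: there $Y$ is smooth with $h^0(\tOm_Y)=n-1$, which forces $\omega_Y\not\isom\sO_Y$ and hence $\ell>1$.

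The missing idea is to use this action, as the paper does:
\begin{enumerate}
\item After shrinking, $G$ acts freely on $\mathfrak U$.
\item $F$ factors through $\bar F\colon\mathfrak V:=\mathfrak U/G\to X$, which has degree one and is therefore an open embedding, since $X$ is normal.
\item The general fibre $V$ of $\mathfrak V\to\mathfrak D/G$ thus embeds in $X$ as a Lagrangian étale quotient of an abelian variety.
\item The Douady space of $X$ has dimension $\geq n$ at $[V]$, so $h^0(V,\Omega_V)=h^0(V,N_{V/X})\geq n$, which forces $V$ to be abelian.
\end{enumerate}
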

\begin{proof}
By Theorem~\ref{theorem normal crossing cover unobstructedness}, the functor $D^\ft_f$ is unobstructed with tangent space
\[
T^1:=T^1_{D^\ft_f} = H^0(Y',\Omega_{Y'}(\log)).
\]
From~\cite[Theorem~1.1]{Leh16}, we deduce that $Y$, and hence also $Y'$, is projective. Thus, by Theorem~\ref{theorem fibers nc case}, either $Y'$ is abelian or there is an abelian variety $A$, a point $s\in A$, and a line bundle $L\in \pic^0(A)$ such that $Y'\isom Y_m(A,s,L)$ as in Construction~\ref{construction type i fiber}. 
In the first case, $\Omega_{Y'}(\log)=\Omega_{Y'}$ as $Y'$ is smooth and $\Omega_{Y'}$ is trivial for abelian varieties. In the second case, $\Omega_{Y'}(\log)$ is trivial by Theorem~\ref{theorem trivial log differentials}. In either case, $T^1$ has dimension $n$, hence the first claim follows.

Let us denote by $[f]\in \mathfrak D$ the distinguished point and note that the fibre of $\pi$ over it is $\mathfrak U_{[f]} = Y'$ with $F|_{Y'}=f$. For the second claim, we will compute the rank of the differential of $F_*:T_{\mathfrak U} \to F^*T_X$ at a general point of $Y'$. For this purpose, fix a smooth point $y'\in Y'$ and consider the commutative diagram 
\[
\xymatrix{
0 \ar[r] & T_{Y',y'} \ar[r] \ar@{=}[d] & T_{\mathfrak U,y'} \ar[r] \ar[d]^{F_*}  & T_{\mathfrak D,[f]} \ar[d]^{\ev} \ar[r] & 0 \\
0 \ar[r] & T_{Y',y'} \ar[r] & T_{X,f(y')} \ar[r] & N_{Y/X,f(y')} \ar[r] & 0 \\
}
\]
between the tangent spaces. Since the deformation problem is unobstructed and $p$ is étale, $y'$ is a smooth point of $\mathfrak U$ and $f(y')$ is a smooth point of $Y$. This shows that the rows in the diagram are exact. It follows from deformation theory\footnote{This follows directly from the construction of first order deformations out of global sections of the normal sheaf. It can be found in~\cite[Proposition~4.8]{Deb01} or~\cite[Proposition~3.4]{Kol96} when the source is a rational curve. The geometry of the source is however irrelevant to the argument.} that $\ev$ is the natural evaluation map from $T_{\mathfrak D,[f]} = H^0(Y',p^*N_{Y/X})$ to $N_{Y/X,f(y')}$, that is, $\ev(s) = s(f(y'))$. But $p^*N_{Y/X} \isom \Omega_{Y'}(\log)$ is trivial and thus in particular generated by global sections. This implies that $\ev$ is surjective (and even an isomorphism for dimension reasons). The same is true for $F_*$ by the five lemma. Therefore,  $F$ is étale at a general point of $\mathfrak U$. 

For the last claim, we will show first that the general fibre of $\pi$ is an abelian variety and the pullback of the symplectic form on $X$ to it is identically zero. If $Y'$ is an abelian variety, it is clear that the general fibre of $\pi$ is a complex torus of dimension $n$. If instead $Y'\isom Y_m(A,s,L)$, the sheaf $\Omega_{Y'}(\log)$ has a nowhere vanishing section which is everywhere linearly independent from any element in $H^0(Y',\tOm_{Y'})$. Denote $\eta \in H^0(Y',\Omega_{Y'}(\log))$ such a section. By~\eqref{eq residue sequence}, it has nonvanishing residues at all points of the double locus ${Y'}^\sing$. Then a computation as in Proposition~\ref{proposition normal sheaf} shows that the image of $\eta$ under 
\[
H^0(Y',\Omega_{Y'}(\log)) \isom H^0(Y',\pi^*N_{Y/X}) \xrightarrow{\gamma} H^0(Y',T^1_{Y'})
\]
generates $T^1_{Y'}$ at every point. The induced deformation of the germ $(Y',y')$ is given by a the germ $\gamma(\eta)_{y'}\in T^1_{Y',y'}$. Recall that $T^1_{Y'}$ is (the push forward of) a line bundle on ${Y'}^\sing$. Hence, locally around $y'$, we can trivialise $T^1_{Y',y'}\isom \sO_{{Y'}^\sing,y'}$ and choose a representative $u \in \sO_{Y',y'}^\times$ of the function corresponding to $\gamma(\eta)_{y'}$. 
If $Y'$ is given by an equation $xy=0$ in local coordinates around $y'$, then the first order deformation (i.e. over the ring of dual numbers $\C[\varepsilon]$) induced by $\eta$ of the germ $(Y',y')$ can be written as $xy-\varepsilon u=0$; see e.g. the proof of~\cite[Corollary~2.1.25]{GLS25} or apply~\cite[Corollary~2.1.17]{GLS25} to a slice transversal to ${Y'}^\sing$. We conclude that any $1$-parameter deformation of $f$ with tangent direction $\eta$ is a smoothing of $Y'$. 

Take a disk $\Delta \subset \mathfrak D$ through $[f]$ with tangent direction $\eta$ and denote $\mathfrak U_\Delta:=\mathfrak U\times_{\mathfrak D} \Delta \xrightarrow{\pi_\Delta} \Delta$. Then $\pi_{\Delta *}\Omega_{\mathfrak U_\Delta/\Delta}(\log Y')$ is locally free of rank $n$ and compatible with arbitrary base change; see e.g.~\cite[Corollary~11.18]{PS08}. 
But the fibre of $\pi_{\Delta *}\Omega_{\mathfrak U_\Delta/\Delta}(\log Y')$ at zero is precisely $H^0(Y',\Omega_{Y'}(\log))$, hence $H^0(\mathfrak U_t,\Omega_{\mathfrak U_t})$ is of dimension $n$ for all $t\in \Delta\setminus\{0\}$, again by Theorem~\ref{theorem trivial log differentials}. Moreover, we find $n$ linearly independent sections of $\pi_{\Delta *}\Omega_{\mathfrak U_\Delta/\Delta}(\log Y')$ over $\Delta$ whose restrictions generate $\Omega_{\mathfrak U_t}$ at any point $x \in \mathfrak U_t$. Thus, $\Omega_{\mathfrak U_t}$ is trivial and $\mathfrak U_t$ is a complex torus. 

Let us prove that $\mathfrak U_t$ is Lagrangian for general $t\in \mathfrak D$. As in~\cite[Lemma~5.17]{BL22}, one shows that the fundamental class $[\mathfrak U_t]\in H_{2n}(X,\Z)$ does not depend on $t\in\mathfrak D$. In particular, $F_*[\mathfrak U_t]=\deg(p) [Y]$ for all $t$. Let $\kappa$ be a Kähler form on $X$. Since $Y$ is Lagrangian, $\int_Y \sigma\wedge \bar\sigma\wedge\kappa^{\wedge n-2} = 0$. Put together,
\[
\int_{\mathfrak U_t} F^*\sigma\wedge F^*\bar\sigma\wedge \kappa^{\wedge n-2} = \deg(p) \int_Y \sigma\wedge \bar\sigma\wedge\kappa^{\wedge n-2} =0
\]
for all $t\in \mathfrak D$. For smooth fibres of $\pi$, this implies that they are Lagrangian with respect to $F^*\sigma$. Note that this form is symplectic along those fibres since $F$ is étale there. So their images are Lagrangian with respect to $\sigma$. 

Again by~\cite[Theorem~1.1]{Leh16}, the general fibre of $\pi$ is projective. 
Put together, we conclude that in both cases, the general fibre $\pi$ is an abelian variety and the pullback of the symplectic form to it vanishes.

It remains to prove that $F$ is an embedding when restricted to the general fibre of $\pi$. By construction, the index one cover $p$ is an étale Galois cover with Galois group $G=\Z/\ell\Z$, where $\ell=\deg p$. Since $f\colon Y' \to X$ is invariant for the action of $G$, this group acts (after potentially shrinking the representative $\mathfrak D$) on both $\mathfrak D$ and $\mathfrak U$ such that $\pi$ is equivariant, $[f]\in\mathfrak D$ is a fixed point, and the action on $\mathfrak U$ restricts to the the Galois action on $Y'=\mathfrak U_{[f]}$; see Proposition~\ref{proposition group action}. Next observe that (again after possibly shrinking $\mathfrak D$) we have $\deg F = \deg p$, where the left hand side is meant to denote the degree of $F$ as a morphism onto its image. Since $G$ acts without fixed points on $Y=\mathfrak U_{[f]}$, it does so on the whole of $\mathfrak U$. We obtain a factorisation of \eqref{eq universal deformation} as follows.
\[
\xymatrix{
\mathfrak U \ar[r] \ar@/^1.5pc/[rr]^{F} \ar[d]_\pi & \mathfrak V:=\mathfrak U/G \ar[r]_(0.6){\bar F} \ar[d]_\rho & X\\
\mathfrak D \ar[r]^(0.4)h & \mathfrak E:=\mathfrak D/G&\\
}
\]
Observe that the following hold. 
\begin{enumerate}
	\item The morphism $\bar F$ has degree one onto its image and hence, since $X$ is normal, is an open embedding. 
	\item The general fibres of $\rho$ are étale quotients of abelian varieties and Lagrangian subvarieties of $X$. 
\end{enumerate}
If the general fibres of $\rho$ were themselves abelian, the claim would follow. That in fact they are can be seen as follows. Let $V:=\mathfrak V_t$ be a general fibre of $\rho$. Since the image of $\bar F$ is open, in particular, of dimension $2n$, the Douady space of $X$ has dimension at least $n$ at $[V]$. The same holds for its tangent space $H^0(V,N_{V/X}) \isom H^0(V,\Omega_V)$. But since $V$ is an étale quotient of an abelian variety, the inequality $h^0(V,\Omega_V)\geq n$ implies that $V$ is itself abelian.
\end{proof}

%----------------------------------------------------
%----------------------------------------------------
\section{Proof of the Main Result}\label{section proof main result}
%----------------------------------------------------
%----------------------------------------------------

In this section, we assemble the results from the previous sections to prove our main theorems. We start with Theorem~\ref{theorem main fibration}, since it will be used in the proof of Theorem~\ref{theorem main semiampleness}.

\begin{proof}[Proof of Theorem~\ref{theorem main fibration}]
With the notation of \eqref{eq universal deformation}, we consider the universal deformation of $f$. By Theorem~\ref{theorem normal crossing cover corank one smoothing}, the general fibre of $\pi$ maps onto a Lagrangian subvariety of $X$ that is an abelian variety. If $X$ is nonprojective, the claim follows from~\cite[Theorem~4.1]{GLR13}. For projective $X$, one uses~\cite[Theorem~1.2]{HW13} and~\cite[Corollary~1]{Mat14}.
\end{proof}

\begin{remark}\label{remark hulek spandaw}
The hypothesis that $-K_{Y^\nu}$ is nef in Theorem~\ref{theorem main fibration} is natural from the point of view of degenerations of abelian varieties. It is used there through Theorem~\ref{theorem normal crossing cover corank one smoothing} to ensure that $Y'\isom Y_m(A,\tau,L)$ as in Construction~\ref{construction type i fiber}, where $Y'\to Y$ is the index one cover. At this point, the nefness of $-K_{Y^\nu}$ equivalent to $L\in\pic^0(A)$ by Corollary~\ref{corollary nef iff not big}; cp. e.g.~\cite[Proposition~2]{HS96}.
\end{remark}

\begin{lemma}\label{lemma singular fiber vs divisor}
Let $X$ be a compact hyperkähler manifold of dimension $2n$. Suppose that there is a prime divisor $D \subset X$ with an almost holomorphic fibration $\phi\colon D\ratl U$ over a variety $U$ of dimension $n-1$ such that the general fibre $Y$ of $\phi$ is Lagrangian of Kodaira type I. Then $h^0(Y,\tOm_Y)\geq n-1$.
\end{lemma}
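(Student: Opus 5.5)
The plan is to produce $n-1$ independent torsion free $1$-forms on $Y$ from the first-order locally trivial deformations of $Y$ coming from the family $\phi$. Since $Y$ is Lagrangian, the symplectic form converts these normal vector fields into $1$-forms.

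First I restrict to a Zariski open $U_\circ\subset U$ over which $\phi$ is a proper, flat, locally trivial family of Lagrangian subvarieties with Kodaira type I singularities. Local triviality should follow after shrinking, as in Section~\ref{section:uniruled}: the double locus of the fibres is the restriction of a smooth surface in $D$ along which $D$ has normal crossings of multiplicity two, and the singular structure is transversally constant. Over $U_\circ$ we get the classifying map $U_\circ\to \mathrm{Douady}(X)$, which is injective because fibres are disjoint. This gives an injective map
\[
\kappa\colon T_{U,u}\to H^0(Y,N_{Y/X}),\qquad u=\phi(Y),
\]
whose image is an $(n-1)$-dimensional space of \emph{locally trivial} first-order deformations.

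Next I use the exact sequence \eqref{eq omega mod torsion sequence}, $0\to \tOm_Y\to N_{Y/X}\to T^1_Y\to 0$, which holds for Lagrangian $Y$ by~\cite[Proposition~3.6]{Leh16}. The composition $T_{U,u}\to H^0(Y,N_{Y/X})\to H^0(Y,T^1_Y)$ measures the local smoothing component of the deformation. For a locally trivial deformation this component vanishes, since the induced deformation of each singular germ is trivial and $T^1_Y$ classifies germ deformations. Hence $\kappa$ factors through $H^0(Y,\tOm_Y)$, and injectivity of $\kappa$ gives $h^0(Y,\tOm_Y)\geq n-1$. Concretely, a section of $N_{Y/X}$ is the contraction of $\sigma$ with a lift of the normal field. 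For locally trivial deformations the lift can be chosen tangent to $X$ along $Y$ and preserving the branches, so the resulting form is a Kähler differential on $Y$ and not merely a logarithmic one. In the coordinates of Lemma~\ref{lemma moser snc lagrangian} and Proposition~\ref{proposition normal sheaf}, this means the $\partial_{x_1}/y_1$-component has no pole.

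The step I expect to be the main obstacle is establishing local triviality of $\phi$ near the general fibre, i.e.\ that the family does not partially smooth the double locus to first order. My approach would be to note that every fibre over $U_\circ$ is contained in $D$, which has only normal crossing singularities of multiplicity two along a smooth surface $\Theta_\circ$ meeting each fibre exactly in its double locus. In local coordinates $D=\{x_1y_1=0\}\times(\text{smooth})$, and the fibres are slices by level sets of the fibration coordinates transverse to $\Theta_\circ$. The family is therefore locally a product of the germ $\{x_1y_1=0\}$ with a smooth parameter space. This is where genericity of the fibre, and Lemma~\ref{lemma:lagrangian5} in dimension four, would be used.
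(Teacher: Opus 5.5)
Your overall route is the paper's. You shrink $U$ until $\phi$ is a locally trivial family near $Y$, send $T_{U,u}$ into the first-order locally trivial embedded deformations, i.e.\ into $\ker\bigl(H^0(Y,N_{Y/X})\to H^0(Y,T^1_Y)\bigr)$, and identify this kernel with $H^0(Y,\tOm_Y)$ via \cite[Proposition~3.6]{Leh16}. That part is fine.

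\textbf{The gap.} It lies in the step you flag yourself, local triviality, because the input you use is not among the hypotheses. The lemma is stated for arbitrary $n$ and an arbitrary prime divisor $D$ with an almost holomorphic fibration. Its only hypothesis on singularities is that the general fibre $Y$ is a normal crossing (Kodaira type I) Lagrangian. Nothing says that $D$ itself is normal crossing of multiplicity two along a smooth $(2n-2)$-dimensional $\Theta_\circ$ transverse to the fibres. Lemma~\ref{lemma:lagrangian5} is a statement about fourfolds in Setup~\ref{setup:section7} with reduced horizontal conductor, and none of that is assumed here. As written, you only treat the situation arising in the proof of Theorem~\ref{thm:only-case-left}, not the lemma.

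\textbf{How to close it.} Local triviality has to be derived from the fibres alone. The paper shrinks $U$ so that $\phi$ and its restriction to the relative singular locus are flat. It then uses that a deformation of a normal crossing singularity without triple points is either locally trivial or a smoothing; a smoothing is excluded by flatness of the singular locus.

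Your coordinate picture can also be justified directly, in three steps.
\begin{enumerate}
\item By generic smoothness, singular points of a general fibre lie in $D_{\sing}$. So at such a point $p$, a local equation $h$ of $D$ has $dh(p)=0$, and $T_pY=\ker d\widetilde\phi(p)$ for a local extension $\widetilde\phi$ of $\phi$ to $X$. Since $\dim T_pY=n+1$, the map $\widetilde\phi$ is a submersion at $p$.
\item After shrinking $U$ (generic flatness and generic reducedness, the fibres $Y_u^{\sing}$ being smooth), the reduced non-smooth locus $\Sigma$ of $\phi$ is smooth over $U$ near $Y$.
\item Choose coordinates $(a,b,z,w)$ with $w=\widetilde\phi$ and $\Sigma=\{a=b=0\}$. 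Each $h(\cdot,\cdot,z,w)$ has a nondegenerate critical point at the origin, since a transverse slice of a normal crossing double locus is a node. The Morse lemma with parameters then gives $D=\{a'b'=0\}$ with $\phi=w$, which is the product structure you want.
\end{enumerate}

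\textbf{A smaller point.} Injectivity of the classifying map on points does not by itself give injectivity of $\kappa$ on tangent spaces. Evaluate instead at a point $y\in Y$ where $D$ is smooth and $\phi$ is submersive. There $\kappa(v)(y)$ is the image of $v$ under $T_{U,u}\isom T_{D,y}/T_{Y,y}\hookrightarrow N_{Y/X,y}$, which is nonzero for $v\neq 0$.
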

\begin{proof}
Since the general fibre of $\phi$ has normal crossing singularities, there is a dense open set $U^\circ \subset U$, over which the restriction $\phi|_{U^\circ}$ is a locally trivial deformation (in the sense of~\cite[Definition~4.1]{BL22}) of any of its fibres: to see this, one chooses $U^\circ$ such that $\phi|_{U\circ}$ and its restriction to the singular locus of $\phi$ are flat, and the fibres have no worse than normal crossing singularities. By properness, this forces the number of connected components of the singular locus of $\phi$ to be constant. For normal crossing singularities without triple points, any deformation is either locally trivial or a smoothing. The latter would contradict the flatness assumption on the singular locus of $\phi$. 

Since $\dim U = n-1$ and the deformation $\phi$ is locally trivial, we have that 
\[
n-1\leq \dim T_{U,t} \leq \dim T^1_{D_{Y\subset X}^\lt} = h^0(Y,N'_{Y/X}),
\]
where $N'_{Y/X}:=\ker\left(N_{Y/X}\to T^1_Y\right)$. Since $$H^0(Y,N'_{Y/X}) \isom H^0(Y,\tOm_Y)$$ by~\cite[Proposition~3.6]{Leh16}, the claim follows.
\end{proof}

We come to the proof of our main theorem. It is essentially a consequence of Theorem~\ref{thm:only-case-left} and the special case $n=2$ of Theorem~\ref{theorem main fibration}.

\begin{proof}[Proof of Theorem~\ref{theorem main semiampleness}]
If $\kappa(X,L)>0$, the result follows from~\cite[Theorem~4.1]{HLL25}. 
Arguing by contradiction let us assume that $\kappa(X,L)=0$. Then by Theorem~\ref{thm:only-case-left}, there is a unique prime divisor $D\subset X$, and the canonical divisor $K_D$ is semiample of numerical dimension one such that the corresponding fibration $\phi\colon D \to U$ has general fibres of Kodaira type I. More preceisely, let $Y:=\phi^{-1}(t) \subset D$ be a fibre over a general point $t\in U$. 
Then $Y$ has normal crossing singularities without triple points, and the dualizing sheaf $\omega_Y$ is a torsion line bundle by adjunction.

Moreover, $Y$ is uniruled by Lemma~\ref{lemma:lagrangian2}. In particular, $Y\subset X$ is Lagrangian. Hence, we infer $h^0(Y,\tOm_Y) \geq~1$ from Lemma~\ref{lemma singular fiber vs divisor}. Again by Lemma~\ref{lemma:lagrangian2}, the divisor $K_{Y^\nu}$ is not big. But then $-K_{Y^\nu}$ is nef by Corollary~\ref{corollary nef iff not big}. 
Then we may deduce from Corollary~\ref{corollary p1 bundles from locally trivial deformations} applied to the index one cover of~$Y$ that this is in fact an equality. 
From Theorem~\ref{theorem main fibration}, we conclude that there is a Lagrangian fibration $f\colon X\to B$. The pull back of a very ample line bundle along $f$ thus has positive Kodaira dimension, contradicting the assumptions $\pic(X) = \Z L$ and $\kappa(X,L)=0$.
\end{proof}

\bibliographystyle{amsalpha}
\bibliography{biblio}

\end{document}